\numberwithin{equation}{section}
\let\ep=\epsilon
\let\pa=\partial
\def\sech{{\rm{sech}}}
\def\curl{\mathop{\rm curl}\nolimits}
\def\gep{\gamma_\epsilon}
\newcommand{\beq}{\begin{equation}}
\newcommand{\eeq}{\end{equation}}
\newcommand{\ben}{\begin{eqnarray}}
\newcommand{\een}{\end{eqnarray}}
\newcommand{\beno}{\begin{eqnarray*}}
\newcommand{\eeno}{\end{eqnarray*}}
\newtheorem{Theorem}{Theorem}[section]
\newtheorem{definition}[Theorem]{Definition}
\newtheorem{lemma}[Theorem]{Lemma}
\newtheorem{proposition}[Theorem]{Proposition}
\newtheorem{remark}[Theorem]{Remark}
\newtheorem{Lemma}[Theorem]{Lemma}
\newtheorem{Corollary}[Theorem]{Corollary}
\begin{document}
\begin{CJK*}{GBK}{song}
\title[Stability and instability of Kelvin--Stuart cat's-eye flows]{\textbf {On the  stability and instability of Kelvin--Stuart cat's-eye flows}}

\author{Shasha Liao}
\address{Department of Mathematics, Georgia Institute of Technology, Atlanta, USA}
\email{ssliao@outlook.com}

\author{Zhiwu Lin}
\address{School of Mathematical Sciences, Fudan University,  200433, Shanghai, P. R. China}
\email{zwlin@fudan.edu.cn}

\author{Hao Zhu}
\address{School of Mathematics, Nanjing University,  210093, Nanjing, Jiangsu, P. R. China \&
 Faculty of Mathematics, University of Vienna, Oskar-Morgenstern-Platz 1, 1090 Vienna, Austria}
\email{haozhu@nju.edu.cn \& hao.zhu@univie.ac.at}

\date{\today}

\maketitle

\begin{abstract}
Kelvin--Stuart vortices are classical mixing layer flows with many applications in fluid mechanics, plasma physics and astrophysics. We prove that the whole family of Kelvin--Stuart vortices is nonlinearly orbitally stable
for co-periodic perturbations, and linearly unstable for multi-periodic and modulational perturbations.
This verifies a long-standing conjecture  since the discovery  of the Kelvin--Stuart  cat's-eye flows in the 1960s.
Kelvin--Stuart cat's eyes also appear as magnetic islands which are magnetostatic equilibria for the planar ideal MHD equations in plasmas.
We prove nonlinear orbital stability of Kelvin--Stuart magnetic islands for co-periodic perturbations, and give the first rigorous proof of coalescence instability for the whole family, which is important for magnetic reconnection.
\end{abstract}
\tableofcontents
\section{Introduction}
\subsection{Motivation and background}
In the 1960s, Kelvin--Stuart cat's-eye flows were discovered as one of the classical explicit families of non-parallel  steady solutions of the two-dimensional incompressible Euler equation. At about the same time, this structure also appeared in plasma physics as a family of static magnetic island equilibria of the planar ideal MHD equations. Their stability properties are tied to physically important phenomena, including vortex pairing and merger in hydrodynamics, as well as coalescence instability and magnetic reconnection in plasmas. In contrast with the extensively studied shear-flow setting, the linearized operators around these non-shear equilibria depend genuinely on both spatial variables. This inherent non-separability precludes a standard modal decomposition, rendering the stability analysis mathematically intractable by conventional means. This is one of the main reasons why the stability problem for the whole Kelvin--Stuart family has remained unresolved for more than half a century.

The purpose of this paper is to give a complete stability/instability theory for the whole Kelvin--Stuart family in the original unbounded strip. For the 2D Euler equation, we prove that every Kelvin--Stuart vortex is spectrally stable for co-periodic perturbations, nonlinearly orbitally stable in the co-periodic class, and linearly unstable for all multi-periodic and all modulational perturbations. For the planar ideal MHD equations, we prove co-periodic nonlinear orbital stability of the whole family of Kelvin--Stuart magnetic islands and give the first rigorous proof of coalescence instability for the whole family. A central new ingredient is a nonlinear change of variables revealing a hidden isospectral structure of the Kelvin--Stuart family, which is the structural reason why a complete analysis of the whole family becomes possible.

\subsubsection{Kelvin--Stuart  cat's-eye flows}
Consider the 2D Euler equation  for an incompressible inviscid fluid
 \begin{equation}\label{euler}
 \partial_t \vec{u} + (\vec{u}\cdot \nabla) \vec{u}  = -\nabla p, \quad \nabla \cdot \vec{u} = 0,
 \end{equation}
where $\vec{u} = (u_1, u_2)$ is the velocity field  and $p$ is the pressure. We study the fluid in the unbounded domain
$\Omega=\mathbb{T}_{2\pi}\times \mathbb{R}$, where $\mathbb{T}_{2\pi}$ means that the period is $2\pi$ in the $x$ direction.
The stream function $\psi$  satisfies   $ \vec{u} = \nabla ^\bot \psi = (\psi_y, -\psi_x)$.
 Taking the curl of \eqref{euler} gives the following evolution equation for the scalar-valued vorticity $\omega =- \Delta \psi$:
\begin{equation} \label{vor}
\partial_t \omega +  \{\omega , \psi\} = 0,
\end{equation}
where  $\{\omega , \psi\}  := \partial_y\psi \partial_x \omega - \partial_x\psi\partial_y \omega$ is the canonical Poisson bracket.

In 1967, Stuart \cite{stuart1967finite} found a family of exact solutions  to the 2D steady  Euler equation \eqref{vor}, now known as Kelvin--Stuart cat's-eye flows. Their stream functions are given explicitly by
\begin{equation}\label{catseye}
\psi_\epsilon(x,y) =  \ln \left(\frac{\cosh (y) + \epsilon \cos (x)}{\sqrt{1-\epsilon^2}} \right),\quad x\in\mathbb{T}_{2\pi},\quad y\in\mathbb{R}
\end{equation}
 with parameter $\epsilon \in [0, 1)$.  The streamlines for $\ep=0.5$ are shown in Figure \ref{fig:firstFig}. Such streamline patterns were already described by Kelvin \cite{kelvin1880disturbing}, and they model the rolling-up of a mixing layer into a chain of co-rotating vortices \cite{Tabeling-Perrin-Fauve1987}. Such cat's-eye flows have many applications. For example, their streamline patterns arise naturally in models of wave-current interaction in the ocean \cite{Martin2018}. They have also been proposed as potentially effective mixing mechanisms in industrial applications \cite{Rossi-Doorly-Kustrin2013}, and have been used to describe  tropical storms \cite{Dunkerton-Montgomery-Wang2009}. The vorticity and  velocity of the  Kelvin--Stuart cat's-eye flows are given by
 \begin{align} \label{steadyw}
\omega_\epsilon =& -\Delta \psi_\epsilon = \frac{-(1- \epsilon^2 )}{(\cosh y + \epsilon\cos x)^2},\\\label{steadyv}
\vec{u}_\epsilon =& (u_{\ep,1},u_{\ep,2})= (\partial_y\psi_{\epsilon }, -\partial_x\psi_{\epsilon })= \left(\frac{\sinh(y)}{\cosh y + \epsilon\cos x}, \frac{\epsilon\sin(x)}{\cosh y + \epsilon\cos x}\right).
\end{align}
  This family connects two important limiting regimes:
\begin{itemize}
\item \textbf{Shear case} ($\epsilon = 0$): hyperbolic tangent flow $$\psi_0 = \ln(\cosh (y)), \quad \omega_0 =\frac{-1}{\cosh ^2 (y)}, \quad \vec{u}_0 = (\tanh y, 0).$$
\item \textbf{Singular case} ($\epsilon =1$): a point-vortex row with vorticity concentrating at $$\{\cdots,(-3\pi,0),(-\pi,0),(\pi, 0), (3\pi,0),\cdots\}.$$
\end{itemize}

\begin{figure}[ht]
    \centering
\includegraphics[width=0.48\textwidth]{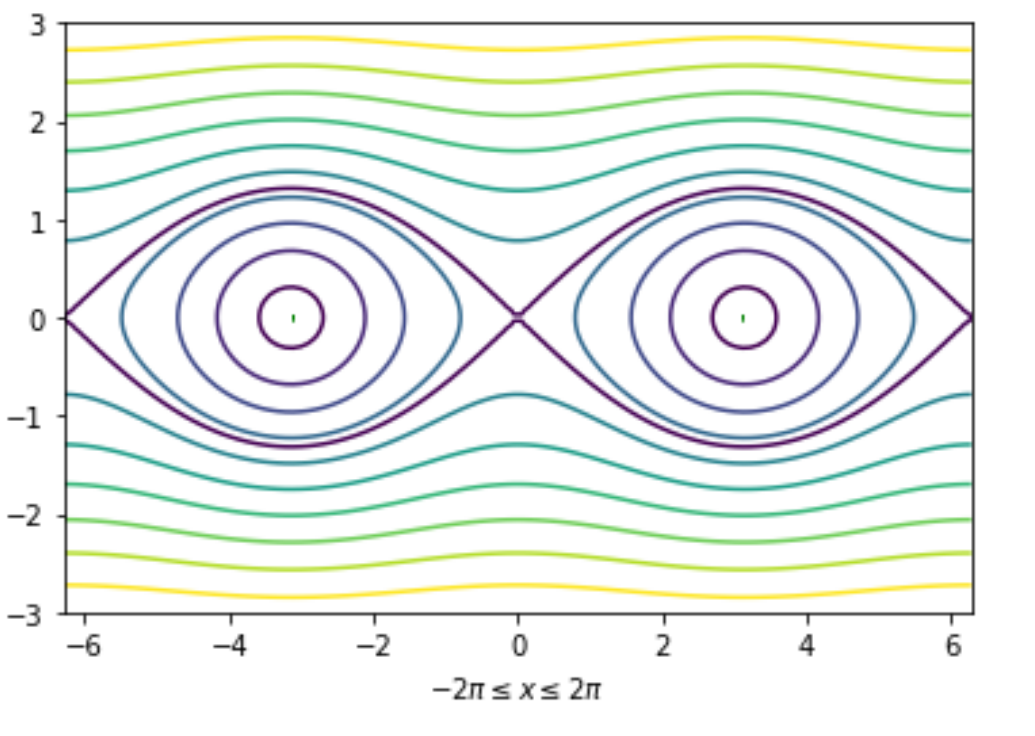}
\caption{Streamlines  for $\ep=0.5$}
	\label{fig:firstFig}
\end{figure}

The stream functions  satisfy the  Liouville's equation
 \begin{equation}\label{elip}
  - \Delta \psi_\epsilon = g(\psi_\epsilon)\quad \text{with}\quad g(\psi_\epsilon) =- e^{-2\psi_\epsilon},
 \end{equation}
 where $\ep\in[0,1)$.
Liouville-type equations $\Delta \phi = c_1 e^{c_2\phi}$ have important applications in fluid dynamics, space plasma physics, high-energy physics and differential geometry, where $c_1$ and $c_2$ are real numbers. We refer to the references \cite{Liouville1853,Richardson1921,Schindler2006,Bogatov-Kichenassamy22} for background on this broader context. Several exact solutions of Liouville's equation, including the Kelvin--Stuart cat's eyes, are known in the literature; see \cite{Crowdy97} and the references therein. In particular, Taylor \cite{Taylor2018} obtained a two-parameter family of cat's-eye solutions to \eqref{elip} with stream functions of the form
\begin{align}\label{Taylor 2 parameter solutions}
\psi_{\gamma,\sigma}(x,y)=\ln\left({\gamma\over2}e^y+{\sigma^2+1\over2\gamma}e^{-y}+\sigma\cos(x)\right),
\end{align}
where $\gamma$ and $\sigma$ are two independent positive numbers. Let $\sigma^2={\ep^2\over 1-\ep^2}$ and $\gamma={\kappa \over \sqrt{1-\ep^2}}$ for $\ep\in(0,1)$ and $\kappa>0$. Then
\begin{align*}
\psi_{\gamma,\sigma}(x,y)=\phi_{\kappa,\ep}(x,y)\triangleq\ln\left({{\kappa\over 2}e^y+{1\over 2\kappa}e^{-y}+\ep\cos(x)\over \sqrt{1-\ep^2}}\right)=\ln\left({\cosh(y+\ln(\kappa))+\ep\cos(x)\over \sqrt{1-\ep^2}}\right),
\end{align*}
which is exactly the translation of Stuart's solution $\psi_\epsilon(x,y)$ (see \eqref{catseye}) by $\ln(\kappa)$ in the $y$ direction.

Stability and instability of Stuart's exact solutions have attracted much attention since their discovery. A classical question, already raised in Stuart's original paper \cite{stuart1967finite}, is whether wavelength-doubling instability holds for the whole Kelvin--Stuart family \eqref{steadyw}. In modern terms, this asks whether every Kelvin--Stuart vortex is linearly unstable under double-periodic, or more generally multi-periodic, perturbations, a mechanism closely related to vortex pairing and merger. In the two extreme cases, Lamb \cite{lamb1932hydrodynamics} proved double-periodic instability for the singular row of point vortices corresponding to $\epsilon=1$, while for $\epsilon=0$ it follows from \cite{lin2003instability} that the hyperbolic tangent shear flow is unstable under all multi-periodic perturbations. For $0<\epsilon\ll 1$, Kelly \cite{kelly1967stability} numerically observed double-periodic instability. Motivated by these observations,
 Stuart himself conjectured in \cite{stuart1967finite} that ``{\it from a stability analysis, the wavelength doubling phenomenon might be
typical for all or many members of the class.}"
This conjectural picture is supported by further numerical studies: Pierrehumbert and Windnall \cite{pierrehumbert1982two} found double-periodic instability for $0\le \epsilon\le 0.3$ with a real most unstable eigenvalue, and Klaassen and Peltier \cite{Klaassen-Peltier1987} observed a slowly growing double-periodic mode at $\epsilon=0.1$. Triple-periodic instability is also physically relevant for collective vortex amalgamation, since it may drive the merger of three vortices into one or two \cite{Klaassen-Peltier1989}.

Modulational instability is a stronger notion than multi-periodic instability. Here, the perturbation takes the form $\omega(x,y)e^{i\alpha x}$, where $\omega$ is $2\pi$-periodic in $x$ and $\alpha\in\mathbb{R}\setminus\mathbb{Z}$. This type of instability is classical in the water-wave theory, beginning with the Benjamin--Feir instability of small-amplitude Stokes waves \cite{Benjamin-Feir1967}; for later developments, see, for instance, \cite{Bridges-Mielke1995,Berti-Maspero-Ventura2022,Nguyen-Strauss2023,Chen-Su2020}. More broadly, modulational instability has been studied in many dispersive models; see the survey \cite{Bronski-Hur-Johnson2016}. For a class of dispersive systems, it is proved that linear modulational instability  implies nonlinear instability \cite{Jin-Liao-Lin2019}.

For co-periodic perturbations, Holm, Marsden and Ratiu \cite{holm1986nonlinear} proved a nonlinear stability result in a truncated domain bounded by a pair of steady streamlines, and only for a restricted subfamily of Kelvin--Stuart vortices. For example,
in the domain bounded exactly by the separatrices (i.e. the trapped region), nonlinear stability holds true only for $\ep\in[0,\ep_0]$ according to their theory, where $\ep_0\approx0.525$. In the original unbounded strip $\Omega$, however, even the linear co-periodic stability of the whole family was previously unknown, let alone nonlinear orbital stability.

\subsubsection{Kelvin--Stuart  magnetic islands}
Independently, in 1965, Schmid-Burgk \cite{Schmid-Burgk1965} found the same family when studying self-gravitating isothermal gas layers, where \eqref{catseye} acts as the scaled gravitational potential. At about the same time, Fadeev {\it et al.} \cite{Fadeev et al-1965} found that the Kelvin--Stuart cat's-eye profiles also give static magnetic island equilibria of the planar ideal MHD equations, where \eqref{catseye} serves as the magnetic potential, see \eqref{Kelvin--Stuart cat's eyes-mhd-m-p}. For a plasma model including both gravitational and magnetic fields, Fleischer \cite{Fleischer1998} constructed a magnetohydrostatic equilibrium whose gravitational potential reduces to Schmid-Burgk's solution in the pure gravitational limit and whose magnetic flux function reduces to the solution of Fadeev {\it et al.} in the MHD limit.

We also study the stability and instability of the Kelvin--Stuart magnetic islands introduced in \cite{Fadeev et al-1965}. In the planar incompressible ideal MHD equations, writing the velocity field and magnetic field as $\vec{v}=\nabla^{\bot}\psi$ and $\vec{B}=\nabla^{\bot}\phi$, the scalar vorticity and current density are $\omega=-\Delta\psi$ and $J=-\Delta\phi$, where $\psi$ and $\phi$ are the scalar stream function and magnetic potential, respectively. The planar ideal MHD equations take the form (see \cite{holm1985nonlinear})
 \begin{align}\label{mhd}
\left\{ \begin{array}{lll} \partial_t \phi=\{\psi,\phi\},\\
 \partial_t \omega=\{\psi,\omega\}+\{J,\phi\}.
 \end{array} \right.
\end{align}
The Kelvin--Stuart magnetic island equilibria are $(\omega=0,\phi_\ep)$, where the steady magnetic potential
 \begin{align}\label{Kelvin--Stuart cat's eyes-mhd-m-p}\phi_\ep(x,y)=\ln \left(\frac{\cosh (y) + \epsilon \cos (x)}{\sqrt{1-\epsilon^2}} \right),\quad x\in\mathbb{T}_{2\pi},\quad y\in\mathbb{R}
 \end{align}
satisfies
 \begin{align*}
J^\epsilon =& -\Delta \phi_\epsilon = \frac{-(1- \epsilon^2 )}{(\cosh y + \epsilon\cos x)^2} =g(\phi_\epsilon),
\\
\vec{B}^\epsilon =& (B_{1,{\ep}},B_{2.{\ep}})= (\partial_y\phi_{\epsilon }, -\partial_x\phi_{\epsilon })= \left(\frac{\sinh(y)}{\cosh y + \epsilon\cos x}, \frac{\epsilon\sin(x)}{\cosh y + \epsilon\cos x}\right).
\end{align*}

For a chain of magnetic islands in a current slab, neighboring islands tend to merge in the nonlinear evolution. This coalescence instability is important in magnetic reconnection; see \cite{Pontin-Priest2022,Priest1985,Priest-Forbes2000}. At the linear level, it corresponds to double-periodic instability of $(\omega=0,\phi_{\ep})$. Finn and Kaw \cite{Finn-Kaw1977} numerically found that these magnetic island solutions are coalescence unstable for $\ep$ not close to $0$, predicting an instability threshold at some $\ep_0\in(0,1)$, with instability for $\ep\in(\ep_0,1)$ and stability for $\ep\in[0,\ep_0]$.
Pritchett and Wu \cite{Pritchett-Wu1979} numerically  obtained the instability growth rates as $\ep\to0$, thereby refuting the Finn-Kaw threshold hypothesis. Later, Bondeson  \cite{Bondeson1983} confirmed the coalescence instability for small $\ep$. However, no rigorous proof was previously known for the whole family.

For co-periodic perturbations, similar to the Euler case \cite{holm1986nonlinear}, Holm {\it et al.} \cite{holm1985nonlinear} proved nonlinear orbital stability of Kelvin--Stuart magnetic islands in a truncated domain for a restricted range of $\ep$. In the same truncated setting, Tassi \cite{Tassi2022} obtained a related stability result in a hot-ion model for a smaller parameter range. Nonlinear orbital stability of the whole family of Kelvin--Stuart magnetic islands in the original unbounded strip has remained open.

\subsection{Main results}

\subsubsection{Main results for the 2D Euler equation}

We now state our main results for the 2D Euler equation. Theorems \ref{main result2-multi-periodic perturbations}-\ref{main result4-nonlinear orbital stability} show that the Kelvin--Stuart family is unstable under all multi-periodic and all modulational perturbations, but stable in the co-periodic class. In particular, Theorem \ref{main result2-multi-periodic perturbations} gives a complete answer to Stuart's wavelength-doubling conjecture, while Theorem \ref{main result4-nonlinear orbital stability} proves co-periodic nonlinear orbital stability in the original unbounded strip.

 First, we provide a complete answer to Stuart's conjecture.

\begin{Theorem}\label{main result2-multi-periodic perturbations}
Let
$0 \leq \ep < 1$. Then the steady state $\omega_\ep$ in \eqref{steadyw} is linearly unstable for $2m\pi$-periodic perturbations, where $m\geq2$ is an integer.
\end{Theorem}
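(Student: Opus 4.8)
I would reformulate the linearized Euler equation around $\omega_\epsilon$ using the special structure $\omega_\epsilon = g(\psi_\epsilon)=-e^{-2\psi_\epsilon}$, and reduce the existence of an exponentially growing mode to a spectral count for an explicit Schr\"odinger operator. Writing a perturbation as $\omega=-\Delta\psi$ and using $\nabla\omega_\epsilon=g'(\psi_\epsilon)\nabla\psi_\epsilon$, the linearized vorticity equation $\partial_t\omega+\{\omega,\psi_\epsilon\}+\{\omega_\epsilon,\psi\}=0$ collapses (the two advection terms combining into a single Poisson bracket) to
\[
\partial_t(-\Delta\psi)=\{\psi_\epsilon,\,L_\epsilon\psi\},\qquad L_\epsilon:=-\Delta-g'(\psi_\epsilon)=-\Delta-2e^{-2\psi_\epsilon}=-\Delta+2\omega_\epsilon,
\]
where $L_\epsilon$ is self-adjoint and $\{\psi_\epsilon,\cdot\}$ is anti-self-adjoint on $L^2(\mathbb{T}_{2m\pi}\times\mathbb{R})$, so the growing-mode equation is the Hamiltonian eigenvalue problem $\lambda(-\Delta)\phi=\{\psi_\epsilon,L_\epsilon\phi\}$ with energy quadratic form $\langle L_\epsilon\phi,\phi\rangle=\int|\nabla\phi|^2+2\omega_\epsilon\phi^2\,dxdy$. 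Differentiating the steady equation $-\Delta\psi_\epsilon=g(\psi_\epsilon)$ in $x$ and $y$ gives the zero modes $L_\epsilon(\partial_x\psi_\epsilon)=L_\epsilon(\partial_y\psi_\epsilon)=0$ coming from the translation symmetries; only $\partial_x\psi_\epsilon=-\epsilon\sin x/(\cosh y+\epsilon\cos x)$ is square-integrable (it decays like $e^{-|y|}$), and it is the generator of $x$-translation. By the instability index theorem for linearized 2D Euler flows, an exponentially growing solution exists once the negative index of $\langle L_\epsilon\cdot,\cdot\rangle$ on the admissible subspace $R=\{\phi:-\Delta\phi\in\overline{\mathrm{Range}}\{\psi_\epsilon,\cdot\}\}$ exceeds the part absorbed by the symmetry kernel; in particular an odd index forces a real positive eigenvalue.

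Next I would count negative directions using the nodal structure of $\partial_x\psi_\epsilon$. Since $\psi_\epsilon$ is even in $x$, the reflection $x\mapsto-x$ preserves $R$ and commutes with $L_\epsilon$, and on the $x$-odd part of $R$ the admissibility constraint is vacuous (every element of $\ker\{\psi_\epsilon,\cdot\}$ is constant along streamlines, hence $x$-even); $x$-odd $2m\pi$-periodic functions are exactly the $H^1_0$-functions on the strip $S_m=(0,m\pi)\times\mathbb{R}$. For $\epsilon\in(0,1)$, $\partial_x\psi_\epsilon$ is a square-integrable Dirichlet eigenfunction of $L_\epsilon$ on $S_m$ with eigenvalue $0$ whose zero set is $\{x=j\pi:1\le j\le m-1\}$, so it has exactly $m$ nodal domains. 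Its restrictions to these nodal domains span an $m$-dimensional subspace on which $\langle L_\epsilon\cdot,\cdot\rangle$ vanishes identically, whence by min-max $m\le n^-(L_\epsilon|_{S_m})+\dim\ker(L_\epsilon|_{S_m})$; combined with the fact that this kernel is one-dimensional, spanned by $\partial_x\psi_\epsilon$ (using the exponential decay of the potential $2\omega_\epsilon$, unique continuation, and the Liouville structure of $\psi_\epsilon$), this yields $n^-(L_\epsilon|_{S_m})\ge m-1\ge1$ for every $m\ge2$. The case $\epsilon=0$ is degenerate since $\partial_x\psi_0\equiv0$; there one invokes the known instability of the hyperbolic tangent shear flow for multi-periodic perturbations, or simply observes that $\sin(x/m)\,\sech y$ is an eigenfunction of $L_0=-\Delta-2\sech^2 y$ with eigenvalue $1/m^2-1<0$.

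Feeding $n^-\big(L_\epsilon|_{R\cap\{x\text{-odd}\}}\big)\ge m-1$ into the index theorem — the $x$-translation kernel $\partial_x\psi_\epsilon$ sitting at eigenvalue $0$, not below — produces an unstable eigenvalue of the linearized operator for all $\epsilon\in[0,1)$ and $m\ge2$; for $m=2$ the $x$-odd count is exactly $1$, which is odd and hence yields a purely growing (real) eigenvalue, consistent with the real most-unstable mode found numerically, and every even $m$ then follows since period-$4\pi$ perturbations are also $2m\pi$-periodic. The argument is sharp in $m$: for $m=1$ the function $\partial_x\psi_\epsilon$ is single-signed on $(0,\pi)\times\mathbb{R}$, hence the ground state of $L_\epsilon$ there with eigenvalue exactly $0$, so the $x$-odd sector carries no negative direction; together with the $x$-even analysis — where the admissibility constraint discards the sign-definite ground state of $L_\epsilon$ — this is precisely the mechanism underlying the co-periodic nonlinear stability proved in the rest of the paper. (Alternatively, Theorem \ref{main result2-multi-periodic perturbations} is the special case $\alpha=1/m$ of the modulational instability result.)

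The main obstacles are analytic. The domain is unbounded and $2\omega_\epsilon\to0$ at infinity, so $\sigma_{\mathrm{ess}}(L_\epsilon)=[0,\infty)$ and $\partial_x\psi_\epsilon$ is a threshold eigenvalue; one must justify the min-max/nodal-domain count at the threshold, establish discreteness and finiteness of the negative spectrum together with exponential decay of the corresponding eigenfunctions, and — the crux — verify that the abstract index theorem genuinely applies here (correct description of the symplectic leaf $R$ and its structure, absence of obstructions from the continuous spectrum on the imaginary axis, and the passage from ``negative index'' to a genuine growing mode rather than a neutral mode of negative energy signature). The second delicate point is pinning $\dim\ker(L_\epsilon|_{S_m})$ to one, which is what makes the count tight at the borderline $m=2$; for $m\ge3$ a weaker kernel bound already suffices, and an alternative is a continuity argument in $\epsilon$ anchored at the shear flow, but that requires showing the unstable eigenvalue never collides with the origin — equivalently, that $L_\epsilon$ acquires no extra zero mode on $R$ for any $\epsilon\in(0,1)$ — which is again a spectral statement about $L_\epsilon$.
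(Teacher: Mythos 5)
Your reduction of the linearized problem to the operator $-\Delta-g'(\psi_\ep)$ on stream functions and your wish to count its negative directions on a constrained ("admissible") subspace is in the right spirit, but the step on which everything hinges is false: you claim that on the $x$-odd sector the admissibility constraint is vacuous because "every element of $\ker\{\psi_\ep,\cdot\}$ is constant along streamlines, hence $x$-even." Functions constant along streamlines may assign \emph{different} constants to the $m$ distinct cat's-eye trapped regions in $\mathbb{T}_{2m\pi}\times\mathbb{R}$, and the reflection $x\mapsto-x$ permutes these regions (it sends the eye centred at $x=\pi$ to the one centred at $x=(2m-1)\pi$) rather than fixing them; such functions are therefore not $x$-even, and an $x$-odd test function has no reason to have vanishing average $\oint_{\Gamma(\rho)}\phi/|\nabla\psi_\ep|$ over a closed orbit centred at $x=\pi$. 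Consequently the nonlocal constraint (in the paper, the projection $\hat P_{\ep,e}$ onto streamline averages entering $\hat A_{\ep,e}=-\Delta-g'(\psi_\ep)(I-\hat P_{\ep,e})$) does \emph{not} disappear on your half-strip, and your unconstrained nodal-domain count for $\partial_x\psi_\ep$ bounds the wrong quantity. This is exactly where the actual difficulty lies: for multiples $m=2k$ one can choose a test function odd about the eye centres $x=(2j-1)\pi$ so that the streamline averages vanish identically, but for odd multiples no such symmetry is available, and the proof has to estimate the projection term quantitatively (via the monotone nesting of the trapped regions in the $(\theta_\ep,\gamma_\ep)$ variables and explicit integral bounds, with two different test functions on $\ep\in[0,4/5]$ and $\ep\in(4/5,1)$). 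Your argument, as written, would "prove" instability while bypassing the term that actually threatens it.

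A second, related gap is the passage from a negative count to a genuine growing mode. The parity you split by ($x\mapsto-x$) does give a formally separable structure, but both parity sectors then contain negative directions of $-\Delta-g'(\psi_\ep)$ (the low eigenfunctions $(1-\gamma_\ep^2)^{i/2m}\cos(i\theta_\ep/m)$ and $(1-\gamma_\ep^2)^{i/2m}\sin(i\theta_\ep/m)$, $1\le i\le m-1$, split between the two), so the separable-Hamiltonian index formula—which requires one factor to be nonnegative with finite-dimensional kernel—does not apply; the paper instead splits by parity in $y$, precisely because $L_{\ep,o}\ge0$ there, and then the number of unstable modes equals $n^-\bigl(L_{\ep,e}|_{\overline{R(B_\ep)}}\bigr)$ with no spectral information on the imaginary axis needed. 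Your fallback, "an odd index forces a real positive eigenvalue," requires showing that the generalized kernel of the full linearized operator carries no nonpositive energy directions ($k_0^{\le0}=0$), which you do not address, and the threshold issues you list (zero at the bottom of the essential spectrum, one-dimensionality of the kernel on the half-strip) are additional unproven inputs your tight $m=2$ count depends on. So the proposal is not a repair-by-details away from a proof: the constrained count and the choice of separable structure both need to be redone along the lines the paper takes.
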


Linear instability for multi-periodic perturbations  implies modulational instability for some but not all rational modulational parameters, and thus far from all modulational  parameters. Our next  result is to cover all modulational parameters, which is stronger  than Theorem \ref{main result2-multi-periodic perturbations}.

\begin{Theorem}\label{main result3-modulation-unstable}
Let $0 \leq \ep < 1$. Then the steady state $\omega_\ep$ in \eqref{steadyw} is linearly modulationally unstable  for all $\alpha\in\mathbb{R}\setminus\mathbb{Z}$.
\end{Theorem}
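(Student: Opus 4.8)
The plan is to recast the search for a growing modulational mode as a Bloch‑type eigenvalue problem, cut down the range of $\alpha$ by symmetries, apply the instability index theorem for linearized $2$D Euler flows, and thereby reduce everything to a Morse‑index inequality for an explicit Schr\"odinger‑type operator, which is then established by anchoring at the endpoints $\epsilon=0$ and $\epsilon\to1$ and continuing in between. Concretely, writing a solution of the linearization of \eqref{vor} at $\omega_\epsilon$ as $\omega(t,x,y)=e^{\lambda t}e^{i\alpha x}\phi(x,y)$ with $\phi$ being $2\pi$‑periodic in $x$, and correspondingly $\psi=e^{\lambda t}e^{i\alpha x}\Phi$ with $\Phi=(-\Delta_\alpha)^{-1}\phi$, $\Delta_\alpha:=(\partial_x+i\alpha)^2+\partial_y^2$, the eigenvalue equation becomes $\lambda\phi=-\mathcal{L}_\alpha\phi$ with
$$\mathcal{L}_\alpha\phi:=\vec u_\epsilon\cdot\nabla_\alpha\phi+(\nabla_\alpha^{\perp}\Phi)\cdot\nabla\omega_\epsilon,\qquad \nabla_\alpha:=(\partial_x+i\alpha,\ \partial_y),\quad \nabla_\alpha^{\perp}:=(\partial_y,\ -(\partial_x+i\alpha)),$$
where all coefficients are genuinely $2\pi$‑periodic, so $\mathcal{L}_\alpha$ acts on $2\pi$‑periodic functions with suitable decay in $y$, and linear modulational instability with parameter $\alpha$ means precisely that $\mathcal{L}_\alpha$ has an eigenvalue with $\mathrm{Re}\,\lambda>0$. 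Two reductions are immediate: multiplying $\phi$ by $e^{ix}$ conjugates $\mathcal{L}_{\alpha+1}$ to $\mathcal{L}_\alpha$, so the spectrum is $1$‑periodic in $\alpha$; and complex conjugation composed with the reflection $x\mapsto-x$ (under which $u_{\epsilon,1}$ and $\omega_\epsilon$ are even while $u_{\epsilon,2}$ is odd in $x$) conjugates $\mathcal{L}_\alpha$ to $\mathcal{L}_{-\alpha}$. Hence it suffices to treat $\alpha\in(0,\tfrac12]$ and $\epsilon\in[0,1)$; moreover the reflection $y\mapsto-y$ anticommutes with $\mathcal{L}_\alpha$, and with the Hamiltonian structure eigenvalues occur in quadruples $\{\pm\lambda,\pm\bar\lambda\}$.

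Since $\omega_\epsilon=g(\psi_\epsilon)$ with $g(\psi)=-e^{-2\psi}$, one has $g'(\psi_\epsilon)=2e^{-2\psi_\epsilon}=-2\omega_\epsilon=\tfrac{2(1-\epsilon^2)}{(\cosh y+\epsilon\cos x)^2}>0$, so the energy quadratic form of the linearized flow in the $\alpha$‑sector is governed by the self‑adjoint operator
$$\mathcal{A}_\alpha:=-\Delta_\alpha-g'(\psi_\epsilon)=(-i\partial_x+\alpha)^2+(-i\partial_y)^2-\frac{2(1-\epsilon^2)}{(\cosh y+\epsilon\cos x)^2}$$
on $2\pi$‑periodic complex functions. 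By the instability index theorem (Lin--Zeng) for linearized $2$D Euler, the number of unstable eigenvalues of $\mathcal{L}_\alpha$, counted with algebraic multiplicity, is at least $n^-(\mathcal{A}_\alpha|_{X_\alpha})-\kappa_\alpha$, where $X_\alpha$ is an explicit closed subspace of finite codimension (the range of the skew‑adjoint part of $\mathcal{L}_\alpha$, i.e.\ the part of phase space from which the Casimir and symmetry directions have been removed) and $\kappa_\alpha$ is a computable nonnegative integer tied to the kernel of $\mathcal{L}_\alpha$, equal to $0$ under a standard non‑degeneracy condition. It therefore suffices to prove $n^-(\mathcal{A}_\alpha|_{X_\alpha})\ge1$ (with $\kappa_\alpha=0$) for all $\alpha\in(0,\tfrac12]$ and $\epsilon\in[0,1)$; note that for $\alpha\notin\mathbb{Z}$ the symmetry‑generated kernel of $\mathcal{L}_0$ (spanned by $\partial_x\omega_\epsilon$ and its relatives) disappears, which should make the non‑degeneracy condition routine away from isolated degenerate parameters.

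The index inequality holds at the two endpoints. At $\epsilon=0$ the operator $\mathcal{A}_\alpha$ decouples over $x$‑Fourier modes and acts on the $k$‑th mode as $(k+\alpha)^2-\partial_y^2-2\,\sech^2 y$; the P\"oschl--Teller operator $-\partial_y^2-2\,\sech^2 y$ has the single negative eigenvalue $-1$ (eigenfunction $\sech y$), so the $k=0$ mode yields a negative direction for every $\alpha\in(0,1)$ --- equivalently, $\epsilon=0$ is the $\tanh y$ shear, whose Rayleigh analysis gives a purely growing mode at each wavenumber in $(0,1)$, and reducing $\alpha\in\mathbb{R}\setminus\mathbb{Z}$ modulo $1$ into $(0,1)$ already settles the statement for $\epsilon=0$. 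At the other extreme, as $\epsilon\to1$ the potential $g'(\psi_\epsilon)$ concentrates at $\{(\pm(2j+1)\pi,0)\}$ while keeping the fixed mass $\iint g'(\psi_\epsilon)=8\pi$; after rescaling, a test function localized at $(\pi,0)$ shows $\inf\mathrm{spec}\,\mathcal{A}_\alpha\to-\infty$, so $n^-(\mathcal{A}_\alpha|_{X_\alpha})$ becomes large there, consistently with Lamb's instability of the periodic point‑vortex row. Finally, applying Theorem \ref{main result2-multi-periodic perturbations} with period $4\pi$ together with the co‑periodic stability proved in this paper (which rules out unstable eigenvalues of $\mathcal{L}_0$) shows instability along the whole line $\{\alpha=\tfrac12\}$, $\epsilon\in[0,1)$.

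The main obstacle, and the bulk of the work, is to propagate $n^-(\mathcal{A}_\alpha|_{X_\alpha})\ge1$ across the interior of the parameter rectangle. Since $n^-(\mathcal{A}_\alpha|_{X_\alpha})$ is lower semicontinuous in $(\alpha,\epsilon)$ and can drop only where $\mathcal{A}_\alpha|_{X_\alpha}$ degenerates --- which corresponds to $\lambda=0$ becoming a genuinely new neutral mode of $\mathcal{L}_\alpha$ --- one must either exclude such zero‑crossings or, preferably, exhibit a negative direction of $\mathcal{A}_\alpha|_{X_\alpha}$ uniformly. A word of caution: the linearized Euler operator has nontrivial essential spectrum (the flow has a hyperbolic stagnation point at $(0,0)$, so one cannot simply track a point eigenvalue as for shear flows), which is precisely why the argument is routed through the index theorem, where this essential spectrum is already accounted for. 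The natural construction of a uniform test function interpolates between the $\sech y$‑profile (good near $\epsilon=0$) and the rescaled localized profile at $(\pi,0)$ (good near $\epsilon=1$); the naive candidate $f=\partial_x\psi_\epsilon$ fails, for although it lies in $\ker\mathcal{A}_0$ (differentiating $-\Delta\psi_\epsilon=g(\psi_\epsilon)$ in $x$ gives $\mathcal{A}_0\,\partial_x\psi_\epsilon=0$) it satisfies $\langle\mathcal{A}_\alpha\partial_x\psi_\epsilon,\partial_x\psi_\epsilon\rangle=\alpha^2\|\partial_x\psi_\epsilon\|^2>0$ for $\alpha\ne0$; one must instead follow the degenerating eigenfunction, solving the first‑order correction $\mathcal{A}_0f_1=-2(-i\partial_x)\partial_x\psi_\epsilon$ and controlling the sign of the second variation $\mu''(0)=2-8\langle(-i\partial_x)\partial_x\psi_\epsilon,\mathcal{A}_0^{-1}(-i\partial_x)\partial_x\psi_\epsilon\rangle/\|\partial_x\psi_\epsilon\|^2$ near $\alpha=0$, together with analogous quantitative estimates for $\mathcal{A}_\alpha$ with the explicit potential $\tfrac{2(1-\epsilon^2)}{(\cosh y+\epsilon\cos x)^2}$ over the remaining range, with the singular limit $\epsilon\to1$ --- where one must match with the point‑vortex picture --- being the other delicate point.
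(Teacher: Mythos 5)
There is a genuine gap, and it sits exactly where the real work of the theorem lies. First, your index-theoretic reduction is not correct as stated. The general Hamiltonian index formula \eqref{index-formula-stuart} gives $k_r+2k_c+2k_i^{\leq0}+k_0^{\leq0}=n^-(L)$, so a lower bound on the number of unstable modes requires controlling the contributions $k_i^{\leq0},k_0^{\leq0}$ from the imaginary axis; these are not ``a computable nonnegative integer tied to the kernel, equal to $0$ under a standard non-degeneracy condition,'' and for a non-shear flow with a hyperbolic stagnation point they are precisely the intractable part. The paper avoids them by exploiting the $y$-parity of $\omega_\ep$ to rewrite the $\alpha$-sector as the real separable Hamiltonian system \eqref{sep-hamiltonian-alpha}, for which Lemma \ref{indice-theorem-sep} gives an \emph{exact} count of unstable modes, namely $n^-\left(L_{\alpha,e}|_{\overline{R(B_\alpha)}}\right)$. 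Moreover the correct constrained quadratic form is not your $\mathcal{A}_\alpha=-\Delta_\alpha-g'(\psi_\ep)$ on a subspace of finite codimension: by Lemma \ref{L e-hat A-alpha} it is $\hat A_{\alpha,e}=-\Delta_\alpha-g'(\psi_\ep)(I-\hat P_{\alpha,e})$, where $\hat P_{\alpha,e}$ is an \emph{infinite-rank} nonlocal projection onto twisted streamline averages $h(\psi_\ep)e^{-i\alpha x}$ supported in the trapped region (the kernel of $\vec u_\ep\cdot\nabla_\alpha$), so $\overline{R(B_\alpha)}$ has infinite codimension. The projection adds the nonnegative term $b_{\alpha,2}$ in \eqref{func-b2-alpha}, and negativity of your unprojected $\mathcal{A}_\alpha$ does not imply instability; the co-periodic analysis of Section \ref{co-periodic-linear} shows exactly how such constraint/projection terms can restore nonnegativity, so this correction cannot be dropped or waved away by a finite-codimension argument.

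Second, even granting a correct criterion, your proof of $n^-\geq1$ on the whole rectangle $(\alpha,\ep)\in(0,\tfrac12]\times[0,1)$ is only a program: endpoint computations at $\ep=0$, a heuristic concentration argument as $\ep\to1$, the line $\alpha=\tfrac12$, and then ``continuation.'' Lower semicontinuity of the Morse index does not propagate a negative direction from the boundary of the parameter set into its interior (the index may drop as soon as you leave the endpoints), and excluding zero crossings of the constrained form is essentially as hard as the original problem; you acknowledge this but do not supply the missing estimates. The paper closes this gap constructively: after solving the associated eigenvalue problem \eqref{elip02-alpha} completely via the twist $\widetilde\Psi=\widetilde\psi\,e^{i\alpha(x-\theta_\ep)}$ and Gegenbauer polynomials (Theorem \ref{sol to eigenvalue problem varepsilon=0-pde-alpha}), it takes the explicit ground state $\widetilde\psi_{\ep,\alpha}=(1-\gamma_\ep^2)^{\alpha/2}e^{i\alpha(\theta_\ep-x)}$ as test function, computes $b_{\alpha,1}$ exactly in \eqref{func-b1-alpha-2}, reduces $b_{\alpha,2}$ to explicit one-dimensional integrals over the level curves \eqref{Gamma-rho-nabla-psi1}--\eqref{Gamma-rho-nabla-tilde-psi2}, and verifies (with computer assistance) the uniform bound \eqref{test-function-modulational-instability-quadratic form} for all $\alpha\in(0,\tfrac12]$, $\ep\in[0,1)$. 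That uniform, quantitative control of the projection term is the substance of the proof and is absent from your proposal; without it (or some replacement of comparable strength, e.g.\ a test function such as the paper's alternative choice at $\alpha=\tfrac12$ for which the projection vanishes by symmetry), the argument does not go through.
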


Based on Theorems \ref{main result2-multi-periodic perturbations}-\ref{main result3-modulation-unstable}, it is expected to prove nonlinear instability for  multi-periodic or localized perturbations. To prove nonlinear instability for  localized perturbations in $\mathbb{R}^2$,
one may construct the unstable initial data in the form
$\omega_\ep(x,y)+2Re(\int_I\omega_u(\alpha;x,y) e^{i\alpha x}d\alpha)$, where
$I$ is a small interval near the most unstable frequency $\alpha_0$,
 $\omega_u(\alpha;x,y)$ is an eigenfunction of the eigenvalue $\lambda(\alpha)$ for the linearized operator  $J_{\ep,\alpha} L_{\ep,\alpha}$,
 $\{\lambda(\alpha):\alpha\in I\}$ is a curve of  unstable eigenvalues   bifurcating from the most unstable eigenvalue $\lambda(\alpha_0)$,   and $J_{\ep,\alpha}, L_{\ep,\alpha}$ are defined in \eqref{def-J-ep-al}-\eqref{def-L-ep-al}.

We next prove spectral stability of the whole family of Kelvin--Stuart vortices for co-periodic perturbations. We first state
our linear result.

\begin{Theorem}\label{main result1-co-periodic perturbations}
Let $0 \leq \ep < 1$. Then
the steady state $\omega_\ep$ in \eqref{steadyw} is spectrally stable for co-periodic perturbations.
\end{Theorem}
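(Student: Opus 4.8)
The plan is to put the linearization around $\omega_\ep$ into Hamiltonian form, isolate the single self-adjoint operator whose negative directions could produce instability, and then settle the resulting spectral question using the explicit Liouville structure of Stuart's solution.

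\emph{Hamiltonian form, and why Arnold's method fails.} Writing $\omega=\omega_\ep+w$, $\psi=\psi_\ep+\phi$ with $w=-\Delta\phi$, the linearized vorticity equation is $\partial_t w=J_\ep L_\ep w$, where $J_\ep:=\{\,\cdot\,,\omega_\ep\}$ is the transport by the area-preserving field $\nabla^\perp\omega_\ep$ (formally skew-adjoint on $L^2(\Omega)$) and $L_\ep:=(-\Delta)^{-1}+\tfrac{1}{2\omega_\ep}$ is the Hessian $D^2H_\ep(\omega_\ep)$ of the conserved energy--Casimir $H_\ep(\omega)=\tfrac12\int\omega(-\Delta)^{-1}\omega+\int F(\omega)$, $F'=-g^{-1}$, $g(s)=-e^{-2s}$, of which $\omega_\ep$ is a constrained critical point (using $\omega_\ep=g(\psi_\ep)$, $g'(\psi_\ep)=-2\omega_\ep>0$). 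Since $\omega_\ep$ decays exponentially in $y$, the non-transport part of $J_\ep L_\ep$ is relatively compact and $\sigma_{\mathrm{ess}}(J_\ep L_\ep)\subset i\mathbb R$, so only eigenvalues off the imaginary axis are at issue. The conserved quadratic form $Q_\ep(w):=\langle L_\ep w,w\rangle=\|\nabla\phi\|_{L^2}^2-\int_\Omega w^2/(-2\omega_\ep)\,dx\,dy$ is \emph{indefinite with infinite-dimensional negative subspace}, because the weight $-1/(2\omega_\ep)=(\cosh y+\ep\cos x)^2/(2(1-\ep^2))$ grows like $e^{2|y|}$ — and this persists after removing the linearized Casimirs — so the Arnold/energy--Casimir method cannot prove stability here; this is precisely why Holm--Marsden--Ratiu were forced to a truncated domain and a limited $\ep$-range. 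What survives is the classical fact that every unstable eigenfunction ($J_\ep L_\ep w=\lambda w$, $\operatorname{Re}\lambda>0$, and such $w$ decays exponentially) lies on the null cone $\{Q_\ep=0\}$; spectral stability is thus equivalent to excluding such eigenfunctions.

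\emph{Reduction to the linearized Liouville operator.} Using $\{h(\psi_\ep),\psi_\ep\}=0$ one rewrites the eigenvalue problem $J_\ep L_\ep w=\lambda w$ as
\[
-\lambda\,\Delta\phi=\vec u_\ep\cdot\nabla(M_\ep\phi),\qquad M_\ep:=-\Delta-g'(\psi_\ep)=-\Delta+2\omega_\ep=-\Delta-\frac{2(1-\ep^2)}{(\cosh y+\ep\cos x)^2},
\]
so the growth rate is controlled by the Schrödinger-type operator $M_\ep$ — the linearization of Liouville's equation $-\Delta\psi_\ep=g(\psi_\ep)$ about $\psi_\ep$ — rather than by $L_\ep$ itself. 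The decisive point is that $M_\ep$ has \emph{finite} negative index: its potential is an attractive, exponentially decaying, two-dimensional P\"oschl--Teller-type well, and $\|2\omega_\ep\|_{L^1(\Omega)}=2\,|\iint_\Omega\omega_\ep|=8\pi$ \emph{independently of} $\ep$ (the total vorticity does not depend on $\ep$), which should give an $\ep$-uniform bound on $n^-(M_\ep)$. At $\ep=0$, $M_0$ acts on the mode $e^{ikx}$ as $-\partial_{yy}+k^2-2\operatorname{sech}^2y$, with ground state $\operatorname{sech} y$ of eigenvalue $k^2-1$; hence $n^-(M_0)=1$ (the $k=0$ mode) and $\ker M_0=\operatorname{span}\{\operatorname{sech} y\,e^{\pm ix}\}$. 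For general $\ep$, $\ker M_\ep$ contains the explicit directions $\partial_x\psi_\ep$ (from $x$-translation) and $\partial_\ep\psi_\ep$ (from the parameter family, suitably normalized), which pin the bottom of $\sigma(M_\ep)$, and the Green's function of $M_\ep$ can be written down from the Liouville structure.

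\emph{Excluding unstable modes; continuation in $\ep$.} I would then argue as follows. Decompose in $x$-Fourier modes $e^{ikx}$. For $|k|\ge1$ the cylinder gives the Poincar\'e inequality $\|\nabla\phi_k\|^2\ge k^2\|\phi_k\|^2$, and at $\ep=0$ the lowest mode $|k|=1$ sits exactly at the P\"oschl--Teller/Rayleigh threshold; combining the energy identity $Q_\ep(w)=0$ for an unstable mode with the definite sign information on $M_\ep$, one derives a contradiction — an unstable eigenfunction would force a strictly negative pairing ruled out by the explicit spectral data of $M_\ep$. The essentially two-dimensional obstruction is that the $x$-dependence of $\omega_\ep$ couples the Fourier modes (no exact separation of variables, unlike the shear case), and this coupling must be controlled through the explicit resolvent of $M_\ep$. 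Finally, a continuation argument in $\ep\in[0,1)$, anchored at the classical spectral stability of the $\tanh$ shear flow for $2\pi$-periodic perturbations, transports the conclusion across the whole family: an eigenvalue cannot leave $i\mathbb R$ without a Krein collision — excluded by the sign structure above — or a bifurcation from $\lambda=0$ — excluded because $\dim\ker(J_\ep L_\ep)$ stays accounted for by the symmetries and the Casimirs throughout.

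\emph{Main obstacle.} The hard part is carrying this out \emph{uniformly over the entire family} $\ep\in[0,1)$. At $\ep=0$ the estimate is razor-sharp — the $\tanh$ shear flow is exactly at the Rayleigh critical wavenumber for period $2\pi$, so there is no slack and the neutral mode $\operatorname{sech} y$ must be tracked precisely. As $\ep\to1$ the flow degenerates to a periodic point-vortex row, the well $2\omega_\ep$ deepens and concentrates near the vortex centers, and the streamline geometry becomes singular, so all estimates (and the change of variables to a bounded strip used to organize them) must survive this limit. Keeping the exact kernel/negative-index bookkeeping while handling the genuine, non-removable coupling between $x$-Fourier modes is the technical heart of the proof.
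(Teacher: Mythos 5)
Your proposal hinges on the assertion that the energy--Casimir quadratic form is ``indefinite with infinite-dimensional negative subspace'' on the co-periodic space, so that the energy method must be abandoned in favour of a null-cone argument for $M_\ep=-\Delta-g'(\psi_\ep)$ plus a continuation in $\ep$. That assertion is the opposite of what is true, and it is exactly where the argument breaks. With the sign dictated by the Casimir ($h''(\omega_\ep)=1/g'(\psi_\ep)>0$), the relevant form is $\langle L_\ep w,w\rangle=\iint_\Omega w^2/g'(\psi_\ep)\,dxdy-\|\nabla\phi\|_{L^2(\Omega)}^2$ with $\phi=(-\Delta)^{-1}w$; the growth $1/g'(\psi_\ep)\sim e^{2|y|}$ strengthens the \emph{positive} term, and on the constrained space $X_\ep$ (finite weighted norm together with $\iint_\Omega w\,dxdy=0$, forced by the fixed velocity limits at $y=\pm\infty$) one has $\langle L_\ep w,w\rangle\ge 0$ with a three-dimensional kernel. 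Establishing this non-negativity is the actual engine of the theorem: by duality it is equivalent to the statement that the principal eigenvalue of $-\Delta\psi=\lambda\,g'(\psi_\ep)(\psi-P_\ep\psi)$ on $\tilde X_\ep$ equals $1$, which the paper proves by the change of variables $(x,y)\mapsto(\theta_\ep,\gamma_\ep)$ reducing the cat's-eye problem to the shear case and solving it exactly via Legendre polynomials. Once $n^-(L_\ep)=0$ is known, the Hamiltonian index formula \eqref{index-formula-stuart} gives $k_{r,\ep}=k_{c,\ep}=0$ for every $\ep\in[0,1)$ at a stroke; no localization of the essential spectrum, no resolvent of $M_\ep$, and no continuation in $\ep$ are needed. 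In short, you discard the inequality that constitutes the proof and explicitly assert its negation.

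Because you deny yourself that inequality, the substitute plan does not close. Your own computation already exhibits the obstruction: $M_\ep$ has $n^-(M_0)\ge1$ (the $k=0$ P\"oschl--Teller bound state), so there is no ``definite sign information on $M_\ep$'' from which ``one derives a contradiction''---that sentence is a placeholder, not an argument. The negative direction can only be removed through the zero-mean constraint and the induced projection $P_\ep$, together with the correct normalization of the stream-function space (the condition $\int_0^{2\pi}\psi(x,0)(1+\ep\cos x)^{-1}dx=0$ in $\tilde X_\ep$; with the naive normalization one even has $n^-(A_\ep)\ge1$, see Remark~\ref{X-ep-X0-A-ep-neg}), none of which enters your outline. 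The continuation-in-$\ep$ step is moreover circular: excluding Krein collisions and bifurcations from $\lambda=0$ requires precisely the sign-definiteness of $\langle L_\ep\cdot,\cdot\rangle$ on the relevant spectral subspaces that you claim is unavailable, and the multi-periodic and modulational instabilities of the very same family (Theorems~\ref{main result2-multi-periodic perturbations} and \ref{main result3-modulation-unstable}) show that such crossings are not excluded by soft structural reasoning alone. As it stands the proposal is missing the sharp principal-eigenvalue/Poincar\'e-type computation in the new variables and replaces it with steps that either presuppose it or cannot be carried out.
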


Based on spectral stability in Theorem \ref{main result1-co-periodic perturbations}, our main result for co-periodic perturbations is  that the whole family of Kelvin--Stuart vortices is nonlinear orbitally stable.

\begin{Theorem}\label{main result4-nonlinear orbital stability}
Let $\ep_0 \in (0, 1)$. For any $\kappa>0$, there exists $\delta=\delta(\ep_0,\kappa)>0$ such that if
\begin{align}\label{initial-condition-main result4-nonlinear orbital stability-2}
\inf_{(x_0,y_0)\in\Omega} d(\tilde \omega_0,\omega_{\ep_0}(x+x_0,y+y_0))+\inf_{(x_0,y_0)\in\Omega}\|\tilde \omega_0-\omega_{\ep_0}(x+x_0,y+y_0)\|_{L^2(\Omega)}<\delta,\end{align}
 then for any $t\geq0$, we have
\begin{align}\label{onlinear orbital stability-goal}
\inf_{(x_0,y_0)\in\Omega}d(\tilde \omega(t),\omega_{\ep_0}(x+x_0,y+y_0))<\kappa,
\end{align}
where $\tilde \omega(t)=\curl(\vec{v}(t))$, $\vec{v}(t)$ is a weak solution to the nonlinear 2D Euler equation \eqref{euler} with the initial vorticity
 \begin{align}\label{def-X-non-ep}
\tilde \omega(0)=\tilde \omega_0\in Y_{non}= \bigg\{ \tilde \omega|\tilde\omega\in L^1(\Omega)\cap L^2(\Omega),
  y\tilde \omega\in L^1(\Omega),\tilde \omega<0,\iint_{\Omega}\tilde \omega dxdy=-4\pi\bigg\}.
\end{align} The distance functional $d$ is defined by
\begin{align}\nonumber
d(\tilde\omega,\omega_\ep)&=\iint_{\Omega}(h(\tilde \omega ) - h(\omega_\ep)  - \psi_\ep(\tilde  \omega-\omega_\ep)+(G*(\tilde \omega-\omega_\ep))(\tilde \omega-\omega_\ep))dxdy, \quad\tilde \omega\in Y_{non},
\end{align}
where $h(s)={1\over2}(s-s\ln(-s))$ for $s<0$ and $G(x,y)=-{1\over 4\pi}\ln(\cosh(y)-\cos(x))$.
\end{Theorem}

\medskip
\noindent

The initial smallness assumption in \eqref{initial-condition-main result4-nonlinear orbital stability-2} is not optimal; see Remark \ref{rem-weaken-condition-main-result4} for a refinement in which the smallness of the $L^2$ initial vorticity perturbation is replaced by an $L^2$-bound on the initial vorticity. Theorem \ref{main result4-nonlinear orbital stability} also yields quantitative control of the vorticity in $L^a(\Omega)$ for every
$a\in[1,2)$, and in $L^2(\Omega)$ under an additional $L^3$-bound on the initial vorticity.
See Remark \ref{rem-La-control-main-result4}.

\bigskip

\noindent\textbf{Remark on the admissible perturbation class.} The vorticity \(\omega_\epsilon\) of the Kelvin--Stuart cat's-eye flow decays exponentially as
\(y\to\pm\infty\). The admissible perturbed vorticity \(\tilde\omega\in Y_{non}\), however, is required only to satisfy the mild decay and moment conditions
\[
\tilde\omega\in L^1(\Omega)\cap L^2(\Omega),\qquad
y\tilde\omega\in L^1(\Omega),
\]
which ensure that the pseudoenergy is well-defined. Since the background velocity converges to \((\pm1,0)\) as \(y\to\pm\infty\), we impose the circulation constraint
\[
\iint_\Omega \tilde\omega\,dxdy=-4\pi,
\]
so that the perturbed flow has the same asymptotic velocity jump.

The sign condition \(\tilde\omega<0\) is not a pointwise decay requirement on
\(\tilde\omega-\omega_\ep\). It only requires the perturbed vorticity itself to remain in the same negative-vorticity regime as the Kelvin--Stuart profile
\[
\omega_\ep=-e^{-2\psi_\ep}<0,
\]
so that the Casimir functional is well-defined. Physically, this corresponds to redistributing the same-signed vorticity of the rolled-up shear layer, rather than introducing oppositely signed, counter-rotating vorticity.

\subsubsection{Main results for the MHD equations}
We next state the corresponding results for the Kelvin--Stuart magnetic islands. The first theorem gives a rigorous proof of coalescence instability for the whole family at the linear level.

\begin{Theorem}\label{main result1-mhd-all1} Let $0 \leq \ep < 1$.  Then

$(1)$
 the  magnetic island solution $(\omega=0,\phi_{\ep})$ is linearly unstable for double-periodic perturbations,

$(2)$
 the  magnetic island solution $(\omega=0,\phi_{\ep})$  is spectrally stable for co-periodic perturbations.
\end{Theorem}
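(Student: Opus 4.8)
The plan is to deduce both statements from the linearization of \eqref{mhd} at the Kelvin-Stuart magnetic island $(\omega=0,\phi_\ep)$, whose background velocity vanishes. Writing $(\omega,\phi)=(\delta\omega,\phi_\ep+\delta\phi)$ with $\delta\omega=-\Delta\delta\psi$, $\delta J=-\Delta\delta\phi$ and keeping linear terms gives $\partial_t\delta\phi=\{\delta\psi,\phi_\ep\}$ and $\partial_t\delta\omega=\{J^\ep,\delta\phi\}+\{\delta J,\phi_\ep\}$; since $J^\ep=g(\phi_\ep)$, the bracket $\{J^\ep,\delta\phi\}$ equals $-g'(\phi_\ep)B_\ep\delta\phi$, where $B_\ep:=\{\,\cdot\,,\phi_\ep\}=\vec B^\ep\cdot\nabla$ is the derivative along field lines, skew-adjoint because $\mathrm{div}\,\vec B^\ep=0$. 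Hence the linearized system has the Hamiltonian form $\partial_t u=\mathcal J_\ep\mathcal L_\ep u$, $u=(\delta\omega,\delta\phi)$, with $\mathcal J_\ep=\left(\begin{smallmatrix}0&B_\ep\\ B_\ep&0\end{smallmatrix}\right)$ skew-adjoint and $\mathcal L_\ep=\left(\begin{smallmatrix}(-\Delta)^{-1}&0\\ 0&-T_\ep\end{smallmatrix}\right)$ self-adjoint, where $T_\ep:=\Delta+g'(\phi_\ep)=\Delta+\tfrac{2(1-\ep^2)}{(\cosh y+\ep\cos x)^2}$ (the analogue for the MHD case of the operators $J_{\ep,\alpha},L_{\ep,\alpha}$). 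Eliminating $\delta\psi,\delta\omega$ gives the reduced wave-type equation $\partial_t^2\delta\phi=\mathcal S_\ep T_\ep\delta\phi$ with $\mathcal S_\ep:=-B_\ep(-\Delta)^{-1}B_\ep\ge0$, $\ker\mathcal S_\ep=\ker B_\ep=\{f(\phi_\ep)\}$, so $\overline{\mathrm{Range}\,\mathcal S_\ep^{1/2}}=\{f(\phi_\ep)\}^{\perp}$. Since $\mathcal S_\ep^{1/2}T_\ep\mathcal S_\ep^{1/2}$ is self-adjoint, with essential spectrum in $(-\infty,0]$ by a symbol computation at spatial and frequency infinity, there is an unstable mode $e^{\lambda t}\eta$, $\lambda>0$, precisely when this operator has a positive eigenvalue, i.e. when
\[
\langle T_\ep w,w\rangle=\int_\Omega\Bigl(-|\nabla w|^2+\tfrac{2(1-\ep^2)}{(\cosh y+\ep\cos x)^2}\,w^2\Bigr)\,dx\,dy>0\ \text{ for some }w\in\{f(\phi_\ep)\}^{\perp},
\]
whereas $\langle T_\ep w,w\rangle\le0$ on all of $\{f(\phi_\ep)\}^{\perp}$ means $\mathcal L_\ep\ge0$ on the invariant subspace $\overline{\mathrm{Range}\,\mathcal J_\ep}=\{f(\phi_\ep)\}^{\perp}\times\{f(\phi_\ep)\}^{\perp}$, hence spectral stability. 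Thus everything reduces to locating the positive spectrum of $T_\ep$ on $\{f(\phi_\ep)\}^{\perp}$: on the $2\pi$-periodic (co-periodic) space for part $(2)$, on the $4\pi$-periodic (double-periodic) space for part $(1)$.

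\textbf{Part $(2)$.} I would prove the constrained inequality $T_\ep\le0$ on $\{f(\phi_\ep)\}^{\perp}$ in $L^2(\mathbb T_{2\pi}\times\mathbb R)$, uniformly in $\ep\in[0,1)$. The structural input is that the profile equation $-\Delta\phi_\ep=-e^{-2\phi_\ep}$ is invariant under $x$- and $y$-translation and under the $\ep$-parametrization; differentiating it produces three explicit elements of $\ker T_\ep$: $\partial_x\phi_\ep$ (odd in $x$, even in $y$, and in $L^2$), $\partial_y\phi_\ep$ (even in $x$, odd in $y$) and $\partial_\ep\phi_\ep$ (even in $x$, even in $y$). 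As every $f(\phi_\ep)$ is even in both variables, I split $w$ by its parities in $x$ and $y$. In the three sectors other than (even,even) there is, on a fundamental domain, a positive solution of $T_\ep u=0$ vanishing on the boundary — $|\partial_x\phi_\ep|$ on $\{0<x<\pi\}$ and $\partial_y\phi_\ep$ on $\{y>0\}$ — so Picone's identity $\int(|\nabla w|^2-g'(\phi_\ep)w^2)=\int u^2|\nabla(w/u)|^2\ge0$ gives $\langle T_\ep w,w\rangle\le0$ there, with no constraint needed. In the (even,even) sector $\partial_\ep\phi_\ep$ changes sign, so there is no positive zero mode and the orthogonality $w\perp\{f(\phi_\ep)\}$ must genuinely be used: here I would pass to the stream-function coordinates $(\theta_\ep,\gamma_\ep)$ carrying $\Omega$ onto $\mathbb T_{2\pi}\times[-1,1]$, in which functions of $\phi_\ep$ become functions of $\gamma_\ep$ alone, the constraint becomes ``zero weighted $\theta_\ep$-average on each level $\gamma_\ep$'', and the $\theta_\ep$-Fourier expansion reduces $T_\ep$ to a family (indexed by the nonzero modes) of one-dimensional Sturm-Liouville operators in $\gamma_\ep$, for each of which nonpositivity is a one-variable oscillation argument. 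Once $T_\ep\le0$ on $\{f(\phi_\ep)\}^{\perp}$ is known, the conserved quadratic form $\langle\mathcal L_\ep\cdot,\cdot\rangle$ is $\ge0$ on $\overline{\mathrm{Range}\,\mathcal J_\ep}$ with only finite-dimensional degeneracy (the translation-symmetry kernel) and a spectral gap above it, and the general spectral theory for operators $\mathcal J_\ep\mathcal L_\ep$ — exactly as in the Euler treatment — forces $\sigma(\mathcal J_\ep\mathcal L_\ep)\subset i\mathbb R$: spectral stability for every $\ep\in[0,1)$.

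\textbf{Part $(1)$.} By the reduction it suffices to produce, for each $\ep\in[0,1)$, a $4\pi$-periodic $w\perp\{f(\phi_\ep)\}$ with $\langle T_\ep w,w\rangle>0$; the extra room over part $(2)$ is that any $4\pi$-periodic function with zero mean against every $2\pi$-periodic function is automatically in $\{f(\phi_\ep)\}^{\perp}$. For $\ep$ in a compact subinterval of $[0,1)$ I would take $w=\cos(\tfrac x2)\,\eta_\ep(y)$: at $\ep=0$ the choice $\eta_0(y)=\sech y$ already gives $\langle T_0 w,w\rangle=3\pi>0$, and optimizing $\eta_\ep$ in the one-dimensional quadratic form obtained after integrating out $x$ keeps the value positive as $\ep$ grows. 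For $\ep$ near $1$, where $g'(\phi_\ep)$ concentrates near $(\pm\pi,0)$, I would instead take $w=\chi\!\bigl(\tfrac{x-\pi}{\sqrt{1-\ep}},\tfrac{y}{\sqrt{1-\ep}}\bigr)-\chi\!\bigl(\tfrac{x-3\pi}{\sqrt{1-\ep}},\tfrac{y}{\sqrt{1-\ep}}\bigr)$ with $\chi$ a fixed bump even in its first variable: the two pieces sit near the two points of $\mathbb T_{4\pi}$ identified in $\mathbb T_{2\pi}$, so $w\perp\{f(\phi_\ep)\}$ and the supports are disjoint, while after rescaling $g'(\phi_\ep)\to\tfrac{4}{(1+r^2/2)^2}$ on $\mathbb R^2$ and $\langle T_\ep w,w\rangle\to 2\bigl(-\int|\nabla\chi|^2+\int\tfrac{4}{(1+r^2/2)^2}\chi^2\bigr)$, which is positive for suitable $\chi$ since $-\Delta-\tfrac{4}{(1+r^2/2)^2}$ has negative spectrum on $\mathbb R^2$. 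Any such $w$ makes $\mathcal S_\ep^{1/2}T_\ep\mathcal S_\ep^{1/2}$ (essential spectrum $\subseteq(-\infty,0]$) have a positive eigenvalue $\mu$, so $\lambda=\sqrt\mu>0$ with eigenfunction $\mathcal S_\ep^{1/2}\zeta_\mu$ is a genuine unstable eigenvalue of the linearized MHD system; equivalently, on a suitable invariant subspace (e.g. $4\pi$-periodic and odd in $x$) where $n_-(\mathcal L_\ep)$ is finite and odd, the Hamiltonian index identity $k_r+2k_c+2k_i^{\le0}=n_-(\mathcal L_\ep)$ yields a real unstable pair. Hence $(\omega=0,\phi_\ep)$ is linearly unstable for double-periodic perturbations for all $\ep\in[0,1)$.

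\textbf{Main obstacle.} The heart of the matter is the (even,even) sector of part $(2)$: there $T_\ep$ itself has a positive eigenvalue, so the required inequality can hold only after imposing the codimension-infinity constraint $w\perp\{f(\phi_\ep)\}$, and capturing its effect forces the change to stream-function coordinates together with a delicate, $\ep$-uniform analysis of the resulting constrained one-dimensional operators (including control of the spectral gap). Closely related is the degenerate limit $\ep\to1$ (a row of point vortices), where $g'(\phi_\ep)$ becomes singular and both parts need separate uniform estimates — in particular the matching of the ``moderate $\ep$'' and ``$\ep$ near $1$'' test-function families in part $(1)$.
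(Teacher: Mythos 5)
Your overall reduction (the separable Hamiltonian form with the field-line derivative $B_\ep=\vec B^\ep\cdot\nabla$, and the criterion that stability/instability is decided by the sign of $-\Delta-g'(\phi_\ep)$ restricted to the closure of the range of $B_\ep$) is the same skeleton the paper uses, but both places where you actually verify the sign condition have genuine gaps. For part (2), your (even,even) sector argument rests on the claim that in the coordinates $(\theta_\ep,\gamma_\ep)$ of \eqref{transf1}--\eqref{transf2} functions of $\phi_\ep$ become functions of $\gamma_\ep$ alone. That is false for $\ep>0$: by \eqref{omega-xi-eta-gamma-ep}, $e^{-2\phi_\ep}=\frac{(\xi_\ep-\ep)^2}{1-\ep^2}+\eta_\ep^2$ depends on both variables, and the level curves of $\phi_\ep$ are the ellipse arcs described in Lemma \ref{D-theta-gamma-ep-nest}, not the lines $\{\gamma_\ep=\mathrm{const}\}$. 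Consequently your reformulation of the constraint as ``zero weighted $\theta_\ep$-average on each level $\gamma_\ep$'' and the ensuing decoupling into one-dimensional Sturm--Liouville problems does not exist, and the hardest part of (2) is left unproven. (It is also not true that an infinite-codimension constraint is needed there: the single condition $\iint_\Omega g'(\phi_\ep)\phi\,dxdy=0$ suffices, and the paper obtains nonnegativity by solving the full eigenvalue problem $-\Delta\psi=\lambda g'(\psi_\ep)(\psi-P_\ep\psi)$ in the new variables via Legendre polynomials, principal eigenvalue $1$; see Theorem \ref{associate_ep-new-variable-original-variable}.)

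For part (1), the instability criterion requires the test function to lie in $\overline{R(D_{\ep,2})}=(\ker B_\ep)^{\perp}$, i.e.\ to have zero $\frac{1}{|\nabla\phi_\ep|}$-weighted average on \emph{each} closed field line. Your identification $\ker B_\ep=\{f(\phi_\ep)\}$ is too small: $\ker B_\ep$ contains functions that are constant on each closed field line separately, e.g.\ equal to $1$ on one island and $0$ elsewhere. Orthogonality to all $f(\phi_\ep)$ is therefore strictly weaker than admissibility, and your near-$\ep=1$ test function (two opposite-sign bumps centred in the two islands) fails the true constraint: pairing it with the kernel element supported on a single island gives a nonzero result, so it cannot be used. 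Your moderate-$\ep$ family $\cos(\tfrac x2)\eta_\ep(y)$ is admissible (the same odd symmetry about $x=\pi$ along trajectories that the paper exploits), but you give no argument that the optimized one-dimensional form stays positive up to a threshold meeting the $\ep\to1$ regime, so the whole range $\ep\in[0,1)$ is not covered. The paper avoids both issues with one test function, $\cos(\theta_\ep/2)(1-\gamma_\ep^2)^{1/4}$ in \eqref{test-even}: it is an exact eigenfunction with eigenvalue $\tfrac38$ for every $\ep$, so the quadratic form equals $-\tfrac54\pi^2<0$ uniformly in $\ep$, and its symmetry places it in $\overline{R(D_{\ep,2})}$. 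A smaller point: your ``iff'' via the essential spectrum of $\mathcal{S}_\ep^{1/2}T_\ep\mathcal{S}_\ep^{1/2}$ is heuristic; the rigorous passage from the sign information to spectral stability/instability is the index theorem for separable Hamiltonian systems, Lemma \ref{indice-theorem-sep}, whose hypotheses must be verified as in the paper.
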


Then we prove nonlinear orbital stability of the whole family of Kelvin--Stuart magnetic islands for co-periodic perturbations.

\begin{Theorem}\label{main result1-mhd-all}
 Assume that

 $({\rm i})$ for the initial data $\tilde \omega(0)=\tilde \omega_0\in\tilde Y$ and $\tilde \phi(0)=\tilde \phi_0\in\tilde Z_{non,\ep}$,  there exists a global weak solution $(\tilde \omega(t),\tilde \phi(t))$ in the distributional sense to the nonlinear MHD equations \eqref{mhd} such that $\tilde \omega(t)\in\tilde Y$ and $\tilde \phi(t)\in \tilde  Z_{non,\ep}$ for $t\geq0$,

 $({\rm ii})$ the distance functional $\hat d((\tilde \omega(t),\tilde \phi(t)),(0,\phi_{\ep}))$
is  continuous on $t$,

$({\rm iii})$  the energy-Casimir functional $\hat H$ satisfies that  $\hat H(\tilde \omega(t),\tilde \phi(t))\leq \hat H(\tilde \omega(0),\tilde \phi(0))$  and $\iint_\Omega e^{-j\tilde\phi(t)}dxdy$ is conserved for $t\geq0$ and $j=2,3$. \\
Let $\ep_0\in(0,1)$. For any $\kappa>0$, there exists $\delta=\delta(\ep_0,\kappa)>0$ such that if
\begin{align}\label{initial data-mhd}
\inf_{(x_0,y_0)\in\Omega} \hat d((\tilde\omega_0,\tilde \phi_0),(0,\phi_{\ep_0}(x+x_0,y+y_0)))+\left|\iint_{\Omega}(e^{-2\tilde \phi_0}-e^{-2\phi_{\ep_0}})dxdy\right|<\delta,
 \end{align}
  then
  for any $t\geq0$, we have
\begin{align}\label{onlinear orbital stability-goal-mhd}
\inf_{(x_0,y_0)\in\Omega} \hat d((\tilde\omega(t),\tilde \phi(t)),(0,\phi_{\ep_0}(x+x_0,y+y_0)))<\kappa,
\end{align}
where the distance  $\hat d$ is defined in \eqref{distance-mhd}, the functional $\hat H$ is defined in \eqref{EC-functional-mhd}, and the spaces $\tilde Y, \tilde Z_{non,\ep}$ are defined in \eqref{tilde Y-space}, \eqref{def-Z-non-ep}, respectively.
\end{Theorem}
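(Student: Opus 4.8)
The plan is to run the energy--Casimir (Arnold) method, closely paralleling the proof of Theorem~\ref{main result4-nonlinear orbital stability} for the 2D Euler equation; the two new features are the velocity--magnetic coupling in \eqref{mhd} and the fact, manifested by the coalescence instability (Theorem~\ref{main result1-mhd-all1}(1)), that the relevant quadratic form is indefinite even on the co-periodic space. First I would recall that the energy--Casimir functional $\hat H$ of \eqref{EC-functional-mhd} is the sum of the conserved MHD energy $\tfrac12\iint_\Omega(|\vec v|^2+|\vec B|^2)\,dxdy$ and a magnetic Casimir $\iint_\Omega F(\tilde\phi)\,dxdy$; since at the equilibrium $\omega\equiv 0$ (hence $\vec v\equiv 0$) and $-\Delta\phi_{\ep_0}=g(\phi_{\ep_0})$, the critical-point condition forces $F'(\phi_{\ep_0})=-g(\phi_{\ep_0})=e^{-2\phi_{\ep_0}}$, i.e.\ $F(s)=-\tfrac12 e^{-2s}$, so the magnetic Casimir in $\hat H$ is $-\tfrac12\iint_\Omega e^{-2\tilde\phi}\,dxdy$ and $(0,\phi_{\ep_0})$ is a critical point of $\hat H$. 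Using this, expanding $\hat H$ about $(0,\phi_{\ep_0})$ yields a decomposition of the form
\begin{align*}
\hat H(\tilde\omega,\tilde\phi)-\hat H(0,\phi_{\ep_0})=\hat d\big((\tilde\omega,\tilde\phi),(0,\phi_{\ep_0})\big)+\sum_{j=2,3}\mu_j\Big(\iint_\Omega e^{-j\tilde\phi}\,dxdy-\iint_\Omega e^{-j\phi_{\ep_0}}\,dxdy\Big),
\end{align*}
where $\hat d$ of \eqref{distance-mhd} gathers the nonnegative energy quadratic form in the velocity perturbation together with the Bregman/Taylor-remainder divergences of the convex Casimirs, so that $\hat d\ge 0$ with equality only at $(0,\phi_{\ep_0})$, and the last sum is a linear combination of the deviations of the conserved exponential Casimirs $\iint_\Omega e^{-j\tilde\phi}\,dxdy$, $j=2,3$, from their equilibrium values.

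The analytic core is a coercivity estimate modulo the two-parameter translation group, which is the quantitative counterpart of the spectral stability of Theorem~\ref{main result1-mhd-all1}(2). Because $\vec v\equiv 0$ at the equilibrium, the second variation of $\hat H$ at $(0,\phi_{\ep_0})$ decouples into the manifestly positive energy form $\iint_\Omega|\nabla\delta\psi|^2\,dxdy$ in the stream-function perturbation and the Schr\"odinger-type form $\langle(-\Delta-g'(\phi_{\ep_0}))\delta\phi,\delta\phi\rangle$ with the explicit potential $g'(\phi_{\ep_0})=\tfrac{2(1-\ep_0^2)}{(\cosh y+\ep_0\cos x)^2}$, so all the indefiniteness sits in the magnetic block. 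A key structural observation is that the \emph{whole} family $\{\phi_\ep\}_{\ep\in[0,1)}$ solves the \emph{same} semilinear equation $-\Delta\phi_\ep=g(\phi_\ep)$, so the kernel of $-\Delta-g'(\phi_{\ep_0})$ on the co-periodic space contains, besides the translation generators $\partial_x\phi_{\ep_0}$ and $\partial_y\phi_{\ep_0}$, the family direction $\partial_\ep\phi_{\ep_0}$; moreover, since $\partial_x\phi_{\ep_0}\in L^2$ is a sign-changing zero mode, the positive ground state lies strictly below $0$, so there is at least one (and, as the spectral analysis shows, exactly one) negative direction. The two conserved exponential Casimirs are exactly what absorb these extra directions: $\iint_\Omega e^{-2\phi_\ep}\,dxdy\equiv 4\pi$ is independent of $\ep$, so its linearization $e^{-2\phi_{\ep_0}}$ is orthogonal to $\partial_\ep\phi_{\ep_0}$ but, being positive, is not orthogonal to the ground state, so imposing $\iint_\Omega e^{-2\tilde\phi}\,dxdy=4\pi$ removes the negative direction; while $\iint_\Omega e^{-3\phi_\ep}\,dxdy$ is \emph{not} constant in $\ep$, so the further constraint $\iint_\Omega e^{-3\tilde\phi}\,dxdy=\iint_\Omega e^{-3\phi_{\ep_0}}\,dxdy$ breaks the degeneracy along $\{\phi_\ep\}$ (while, by parity in $x$ and $y$, still annihilating $\partial_x\phi_{\ep_0}$ and $\partial_y\phi_{\ep_0}$). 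Hence on the codimension-two constraint manifold $\{\iint_\Omega e^{-j\tilde\phi}\,dxdy=\iint_\Omega e^{-j\phi_{\ep_0}}\,dxdy,\ j=2,3\}$ the second variation is positive semidefinite with kernel exactly $\mathrm{span}\{\partial_x\phi_{\ep_0},\partial_y\phi_{\ep_0}\}$, and one upgrades this to: there are $\rho,C>0$ (depending on $\ep_0$) with
\begin{align*}
\hat H(\tilde\omega,\tilde\phi)-\hat H(0,\phi_{\ep_0})\ \ge\ &C\inf_{(x_0,y_0)\in\Omega}\hat d\big((\tilde\omega,\tilde\phi),(0,\phi_{\ep_0}(x+x_0,y+y_0))\big)\\
&-\frac1C\Big|\iint_\Omega(e^{-2\tilde\phi}-e^{-2\phi_{\ep_0}})\,dxdy\Big|
\end{align*}
whenever $\inf_{(x_0,y_0)\in\Omega}\hat d((\tilde\omega,\tilde\phi),(0,\phi_{\ep_0}(x+x_0,y+y_0)))<\rho$, the $j=3$ deviation not needing to appear separately because the structure of $\hat d$ already controls it; a matching upper bound $\hat H(\tilde\omega,\tilde\phi)-\hat H(0,\phi_{\ep_0})\le \tfrac1C\hat d((\tilde\omega,\tilde\phi),(0,\phi_{\ep_0}))+\tfrac1C|\iint_\Omega(e^{-2\tilde\phi}-e^{-2\phi_{\ep_0}})\,dxdy|$ is read off from the displayed decomposition when $\hat d$ is small.

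With these estimates the conclusion follows from the standard Lyapunov/continuation argument. Set $\Phi(t)=\inf_{(x_0,y_0)\in\Omega}\hat d((\tilde\omega(t),\tilde\phi(t)),(0,\phi_{\ep_0}(x+x_0,y+y_0)))$, which is continuous in $t$ by hypothesis~$(\mathrm{ii})$. By hypothesis~$(\mathrm{iii})$, $\hat H(\tilde\omega(t),\tilde\phi(t))\le\hat H(\tilde\omega_0,\tilde\phi_0)$ while each $\iint_\Omega e^{-j\tilde\phi(t)}\,dxdy$ stays equal to its initial value; plugging this and the decomposition of $\hat H$ into the coercivity estimate, on the open set of times where $\Phi(t)<\rho$ one gets $C\,\Phi(t)\le \hat H(\tilde\omega_0,\tilde\phi_0)-\hat H(0,\phi_{\ep_0})+\tfrac1C\delta\lesssim\delta$, where \eqref{initial data-mhd} was used. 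Choosing $\delta=\delta(\ep_0,\kappa)$ small enough that this bound is $<\min(\rho,\kappa)/2$ and observing $\Phi(0)<\delta<\rho$, a connectedness argument in $t$ shows $\Phi(t)<\rho$, hence $\Phi(t)<\kappa$, for all $t\ge 0$, which is \eqref{onlinear orbital stability-goal-mhd}.

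The step I expect to be the main obstacle is the coercivity estimate above, and within it two intertwined points. First, the spectral bookkeeping for $-\Delta-g'(\phi_{\ep_0})$ on the co-periodic space---that apart from the three-dimensional kernel $\mathrm{span}\{\partial_x\phi_{\ep_0},\partial_y\phi_{\ep_0},\partial_\ep\phi_{\ep_0}\}$ it has exactly one negative eigenvalue, and that $e^{-2\phi_{\ep_0}}$ and $e^{-3\phi_{\ep_0}}$ are genuinely transverse to the ``bad'' subspace---requires fairly precise control of the bottom of the spectrum and of the first eigenfunctions for the explicit $\ep_0$-dependent potential, and is made more delicate by the fact that the zero eigenvalue sits at the very bottom of the essential spectrum $[0,\infty)$ (a threshold eigenvalue). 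This is the quantitative version of the analysis behind Theorem~\ref{main result1-mhd-all1}. Second, the unbounded domain $\Omega$: because $\phi_{\ep_0}$, $\partial_y\phi_{\ep_0}$ and $\partial_\ep\phi_{\ep_0}$ do not decay and there is no Poincar\'e inequality, one must work in the weighted/homogeneous spaces implicit in $\tilde Z_{non,\ep}$, control the cubic and higher Bregman remainders of $e^{-j\tilde\phi}$ there (this is where the sign condition $\tilde\phi\in\tilde Z_{non,\ep}$ and the prescribed far-field behaviour of $\tilde\phi$ enter), and guarantee that the infimum over translations is attained with a measurable, non-escaping choice of $(x_0(t),y_0(t))$---exactly the issues already handled in the Euler proof of Theorem~\ref{main result4-nonlinear orbital stability}, here complicated by the $2\times 2$ block structure of the linearized operator coming from the velocity--magnetic coupling in \eqref{mhd}.
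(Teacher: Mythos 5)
Your overall philosophy (energy--Casimir with the two exponential Casimirs, plus a continuation argument) is the right one, but the quantitative core of your proposal does not hold as stated, and it is not how the paper proceeds. First, your claimed expansion of $\hat H$ is not an identity: by \eqref{H-omega-phi} one has exactly $\hat H(\tilde\omega,\tilde\phi)-\hat H(0,\phi_{\ep_0})=\tfrac12\hat d_1+\tfrac12\hat d_2-\hat d_3$, with no Casimir-deviation terms, and since $\hat h(s)=-\tfrac12e^{-2s}$ is \emph{concave} the term $-\hat d_3$ enters with the wrong sign relative to the distance $\hat d=\hat d_1+\hat d_2+\hat d_3$; so neither the upper nor the lower bound can be ``read off'', and the lower bound is precisely where the linear theory must be injected. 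Second, your coercivity inequality, with the infimum taken over translations only at the fixed parameter $\ep_0$ and only the $j=2$ deviation subtracted, requires the family direction $\partial_\ep\phi_{\ep_0}$ (which lies in the kernel of the constrained Hessian, since $\iint e^{-2\phi_\ep}\,dxdy\equiv4\pi$) to be removed by transversality of the $j=3$ Casimir, i.e.\ $\frac{d}{d\ep}\iint e^{-3\phi_\ep}\,dxdy\neq0$ at $\ep_0$. You never verify this, and the paper does not either: Lemma \ref{intOmega2} only gives that the real-analytic function $\ep\mapsto I(\omega_\ep)$ is non-constant on intervals, which is compatible with $\ep_0$ being a critical point. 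The paper's route is arranged so that only non-constancy is needed: it first proves stability relative to the full three-parameter family $\{\phi_{\ep}(x+x_0,y+y_0)\}$ (using Lemma \ref{imp-vertical condition} to choose continuously a modulation $(x_1(t),y_1(t),\ep_1(t))$ orthogonal to $\ker\tilde A_{\ep_1(t)}=\mathrm{span}\{\eta,\gamma,\xi\}$, together with the identity that $\hat H(0,\phi_\ep)+4\pi\ln\sqrt{1-\ep^2}$ is independent of $\ep$), and only afterwards pins $\ep_1(t)$ near $\ep_0$ by combining conservation of $\iint e^{-3\tilde\phi}$, the estimate $|I(\tilde J(t))-I(J^{\ep_1(t)})|\lesssim\hat d_3^{1/4}$, continuity of $\ep_1(t)$, and Lemma \ref{intOmega2}.

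There are two further missing ingredients that your outline does not supply. (i) Regularity: the EC functional is not $C^2$ in the vorticity/current variables, so the ``second variation plus controllable remainder'' step behind any coercivity estimate cannot be run on $\hat H$ directly; the paper passes to the dual functional $S_\ep$ of \eqref{def-functional-S}, proves $S_\ep\in C^2(\tilde X_\ep)$ with $S_\ep''(0)=\tilde A_\ep$ (Lemma \ref{C2-mhd}), and — this is the MHD-specific point — must insert the projection $P_\ep$ because the magnetic perturbation cannot be renormalized by an additive constant; the resulting remainder $R_\ep$ is then controlled by $\hat d_3^2$ plus $\bigl|\iint(e^{-2\tilde\phi}-e^{-2\phi_{\ep_0}})\,dxdy\bigr|$ (Lemma \ref{remainder term R}). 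This is exactly how the $j=2$ Casimir deviation in hypothesis \eqref{initial data-mhd} enters, not as a linear term in an expansion of $\hat H$. (ii) The spectral input: you propose to re-derive positivity on the $j=2$ constraint from ``sign-changing zero mode $\Rightarrow$ exactly one negative direction, removed by the constraint'', but ``exactly one'' is asserted rather than proved, and, as you yourself note, the zero modes sit at the bottom of the essential spectrum, so the ground-state argument is delicate there; the paper avoids this entirely because its change-of-variables analysis gives the explicit statement $\tilde A_\ep\geq0$ on $\tilde X_\ep$ with kernel $\mathrm{span}\{\eta_\ep,\gamma_\ep,\xi_\ep\}$ (Corollary \ref{kernel of  the operator tilde A-ep and a decomposition of tilde Xep}, used via \eqref{n-tilde A-w-x}), and you should simply quote it rather than re-prove a weaker version.
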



\subsection{Main ideas in the proof}
This paper has one core structural discovery -- the hidden isospectral/integrable structure revealed by the
nonlinear change of variables $(x,y)\mapsto(\theta_\ep,\gamma_\ep)$ defined by \eqref{transformation-isospectrum-inroduction1}-\eqref{transformation-isospectrum-inroduction2} -- and each part of the proof
develops a different consequence of this underlying structure. The co-periodic spectral analysis identifies the exact
linear structure of the whole family; the multi-periodic and modulational instability arguments
exploit more involved transformed spectral problems and the separable Hamiltonian structures; the nonlinear Euler stability theory combines
this spectral information with a dual pseudoenergy-Casimir framework adapted to the unbounded
strip; and the MHD analysis is a further application of the same geometry and Hamiltonian
ideas.

Theorem~1.4 should also be compared with the work of Holm--Marsden--Ratiu \cite{holm1986nonlinear}. Their theory applies in a truncated domain bounded by a pair of steady streamlines and only for a restricted subfamily of Kelvin--Stuart vortices, whereas the present paper treats the whole family in the original unbounded strip. This is not a routine extension: in the unbounded setting one needs both a full co-periodic linear analysis and a different nonlinear framework. A more detailed comparison is given in Subsection \ref{Proof-nonlinear-orbital-stability-Kelvin--Stuart-vortices-co-periodic-perturbations}.

\subsubsection{Proof of spectral  stability of Kelvin--Stuart vortices for co-periodic perturbations} \label{subsection-Proof-of-spectral-stability-of-Kelvin--Stuart-vortices-for-co-periodic-perturbations} We begin with the co-periodic spectral problem. In general, the linear stability analysis of
non-parallel flows is difficult. Our starting point is that the linearized vorticity equation
around $\omega_\ep$ has the Hamiltonian form
 \begin{equation}\label{hami}
 \partial_t \omega = J_\epsilon L_\epsilon\omega, \quad \omega \in X_\ep,
 \end{equation}
 where
 \begin{align}\label{J-ep-J-ep-def}J_\epsilon = -g'(\psi_\epsilon)\vec{u}_\epsilon\cdot\nabla: X_\ep^* \supset D(J_\epsilon) \rightarrow X_\ep, \quad
 L_\epsilon = \frac {1} {g'(\psi_\epsilon)} - (-\Delta)^{-1}: X_\ep \rightarrow X_\ep^*,\end{align}
\begin{align}\label{spaceXep}
X_\ep = \left\{\omega\bigg| \iint_\Omega \frac{|\omega|^2}{g'_\epsilon(\psi_\epsilon)} dxdy < \infty, \iint_\Omega \omega dxdy = 0 \right\},\quad \epsilon\in[0,1),
\end{align}
and $(-\Delta)^{-1}\omega$ is clarified in Lemmas \ref{1-1correspond} and \ref{1-1correspond-ep}.
The constraint $\iint_\Omega\omega dxdy=0$ in $X_\ep$ is   again due to the asymptotic behavior of the velocity.
In contrast with the truncated-domain setting in \cite{holm1986nonlinear}, the
original unbounded strip $\Omega$ requires several new ingredients to handle the loss of
compactness. In particular, we introduce two weighted Poincar\'e-type inequalities, see
\eqref{Poincare inequality I-ep22} and \eqref{Poincare inequality II-ep22}, in a new Hilbert
space $\tilde X_\ep$ of stream functions, defined in \eqref{tilde-X-e}.

The Hamiltonian structure \eqref{hami} allows us to use the index formula
 \begin{align}\label{index-formula-stuart}
k_{r,\ep} + 2k_{c,\ep}+2k_{i,\ep}^{\leq0}+k_{0,\ep}^{\leq0} = n^-(L_\ep),
\end{align}
where $k_{r,\ep}$ is the sum of algebraic multiplicities of positive eigenvalues of $J_\ep L_\ep$,
$k_{c,\ep}$ is the sum of algebraic multiplicities of eigenvalues of $J_\ep L_\ep$ in the first
quadrant, $k_{i,\ep}^{\leq 0}$ is the total number of non-positive directions of
$\langle L_\ep\cdot,\cdot\rangle$ restricted to the generalized eigenspaces of pure imaginary
eigenvalues of $J_\ep L_\ep$ with positive imaginary parts, and $k_{0,\ep}^{\leq 0}$ is the number
of non-positive directions of $\langle L_\ep\cdot,\cdot\rangle$ restricted to the generalized
kernel of $J_\ep L_\ep$ modulo $\ker L_\ep$. The formula \eqref{index-formula-stuart} was
developed for general Hamiltonian systems in \cite{lin2022instability}.

By  \eqref{index-formula-stuart}, a sufficient condition for the spectral stability of the Kelvin--Stuart vortex is that
the energy quadratic form is non-negative, that is,
\begin{align*}
\langle L_\epsilon\omega,\omega\rangle\geq0,\quad\omega\in  X_\ep.
\end{align*}
This is equivalent to the non-negativity of the dual quadratic form:
 \begin{align}\label{dual energy quadratic form non-negative}
 \langle\tilde A_\epsilon\psi,\psi\rangle\geq0,\quad \psi\in \tilde{X}_\ep,
 \end{align}
where
    \begin{align*}
 \tilde{A}_\ep=-\Delta-g'(\psi_\ep)(I - P_\ep): \tilde{X}_\ep \rightarrow \tilde{X}_\ep^*,
\end{align*}
and the $1$-dimensional projection $P_\ep \psi={1\over8\pi}\iint_\Omega g'(\psi_\ep)\psi \,dxdy$
arises from the constraint $\iint_\Omega\omega$ $dxdy=0$.
To confirm that $\tilde A_\epsilon\geq0$, it is equivalent to show that the principal eigenvalue of the associated  PDE eigenvalue problem
\begin{align}\label{eigenvalue problem-introduction}
-\Delta \psi = \lambda g'(\psi_\ep)(\psi -  P_\ep\psi), \quad \psi \in \tilde{X}_\ep
\end{align}
is equal to $1$. We also prove that
 \begin{align}\label{dual energy quadratic form ker}
 \dim(\ker(\tilde A_\epsilon))=3,
 \end{align}
where the kernel directions come from translation in $x$, translation in $y$, and variation of
the parameter $\ep$. This non-degeneracy property is crucial in the nonlinear orbital stability theory.

We first consider the shear case ($\ep=0$). Because the variables separate, the problem \eqref{eigenvalue problem-introduction} reduces
to a family of Sturm-Liouville ODE eigenvalue problems \eqref{mode0}-\eqref{modek} for the
Fourier modes. Guided by the numerical computation in Subsection \ref{eigenfunction-motivation}
and by the first few exact eigenpairs in \eqref{eigen value-function}, we introduce the change
of variable $\gamma = \tanh(y)$. This unexpectedly transforms \eqref{mode0}-\eqref{modek}
into the classical Legendre-type equations \eqref{eigenvalue problem for 0 mode} and
\eqref{eigenvalue problem2 non-zero modes varepsilon=0}, which can then be solved explicitly
in terms of the Legendre and associated Legendre polynomials. In particular, the principal
eigenvalue of \eqref{eigenvalue problem-introduction} is $1$, which yields spectral stability for
$\ep=0$.

For the Kelvin--Stuart vortices ($0<\ep<1$), the PDE eigenvalue problem
\eqref{eigenvalue problem-introduction} cannot be solved by separating the original variables
$(x,y)$. This is the main  difficulty in the linear analysis. We overcome it by introducing a nonlinear change
of variables $(x,y)\mapsto(\theta_\ep,\gamma_\ep)$ under which the associated PDE eigenvalue
problems \eqref{eigenvalue problem-introduction} decouple. The change of variables is
  \begin{align}\label{transformation-isospectrum-inroduction1}
 \theta_\ep(x,y) & = \left\{ \begin{array}{llll} \arccos \left( \frac{\xi_\ep}{\sqrt{1-\gamma_\ep^2}} \right) & \mbox{ for } & (x,y) \in [0, \pi]\times\mathbb{R}, \\
 2\pi - \arccos \left( \frac{\xi_\ep}{\sqrt{1-\gamma_\ep^2}} \right) & \mbox{ for } & (x,y) \in (\pi, 2\pi]\times\mathbb{R}, \end{array}\right.\\\label{transformation-isospectrum-inroduction2}
 \gamma_\ep(x,y) & = \frac{\sqrt{1 - \epsilon^2}\sinh(y)}{\cosh(y)+\epsilon \cos(x)}\quad\text{for}\quad  (x,y) \in [0, 2\pi]\times\mathbb{R},
 \end{align}
where
$\xi_\ep(x, y)   = (1 - \epsilon^2) \frac{ \partial \psi_\ep}{\partial \ep} = \frac{\epsilon \cosh(y) + \cos(x)}{\cosh(y)+\epsilon \cos(x)}$.
These variables are compatible with the shear case, and the parameter $\ep$ for the whole
family is fully encoded in them. Under this transformation, we prove that $\tilde A_\ep$ is
isospectral to $\tilde A_0$ (i.e. they have the same eigenvalues). In particular,
\eqref{dual energy quadratic form non-negative} and \eqref{dual energy quadratic form ker}
follow for the whole family, which is exactly the information needed later in the proof of
nonlinear orbital stability. For the motivation behind the variables
$(\theta_\ep,\gamma_\ep)$, we refer to \eqref{eigen-p-cat-eyes}-\eqref{def-xi-ep}.

\subsubsection{Proof of linear  instability of Kelvin--Stuart vortices for multi-periodic perturbations}
As in the co-periodic case, the linearized equation around $\omega_\ep$ can be written as the
Hamiltonian system
$
 \partial_t \omega = J_{\epsilon,m} L_{\epsilon,m}\omega,  \omega \in X_{\ep,m},
 $
where the subscript $m$ denotes $2m\pi$-periodic perturbations with $m\geq2$.
The difference from the co-periodic problem is that $n^-(L_{\epsilon,m})>0$, where
$n^-(L_{\epsilon,m})$ is the negative dimension of the energy quadratic form
$\langle L_{\epsilon,m}\cdot,\cdot\rangle$. If one tries to use the same type of index formula
$
 k_{r,\ep,m} + 2k_{c,\ep,m}+2k_{i,\ep,m}^{\leq0}+k_{0,\ep,m}^{\leq0} = n^-(L_{\ep,m})
$
as in the co-periodic case, then one must compute $k_{i,\ep,m}^{\leq0}$ and $k_{0,\ep,m}^{\leq0}$,
which depend on the spectral information of $J_{\epsilon,m}L_{\epsilon,m}$ on the imaginary axis
and are difficult to analyze. Here the indices $k_{r,\ep,m}, k_{c,\ep,m}, k_{i,\ep,m}^{\leq0},
k_{0,\ep,m}^{\leq0}$ are defined similarly as in \eqref{index-formula-stuart}.

A key observation is that the linearized vorticity equation can be reformulated as the separable
Hamiltonian system
\begin{align}\label{s Hamiltonian system-introduction}
\partial_t \left( \begin{array}{c} \omega_1 \\ \omega_2 \end{array} \right) = \left( \begin{array}{cc} 0 & B_\ep \\ -B'_\ep & 0 \end{array} \right)\left( \begin{array}{cc} L_{\ep,e} & 0 \\ 0 & L_{\ep,o} \end{array} \right) \left( \begin{array}{c} \omega_1 \\ \omega_2 \end{array} \right),
\end{align}
which reflects the symmetry of the steady state in the $y$-direction together with the fact
that $L_{\ep,o}\geq0$. Here,
\begin{align*}B_\ep &= -g'(\psi_\ep) \vec{u}_\ep \cdot \nabla : X_{\ep, o}^* \supset D(B_\ep) \rightarrow X_{\ep, e}, \\
 L_{\ep,o} &= \frac{1}{g'(\psi_\ep)} - (-\Delta)^{-1}: X_{\ep, o} \rightarrow X_{\ep, o}^*, \quad\quad
 L_{\ep,e} = \frac{1}{g'(\psi_\ep)} - (-\Delta)^{-1}: X_{\ep, e} \rightarrow X_{\ep, e}^*,
 \end{align*}
$X_{\ep, e} = \left\{ \omega \in X_{\ep,m} \mid \omega \text{ is even in }y \right\}$ and
$X_{\ep, o} = \left\{ \omega \in X_{\ep,m} \mid \omega \text{ is odd in }y \right\}$.
This formulation leads to the exact unstable-mode counting formula
$n^-\left(L_{\ep,e}|_{\overline{{R}(B_\ep L_{\ep,o})}}\right)$. Moreover,
$\overline{{R}(B_\ep L_{\ep,o})}=\overline{{R}(B_\ep)}$ by Lemma \ref{range BLo-B}. Hence,
$\omega_\ep$ is linearly unstable if and only if
$$n^-\left(L_{\ep,e}|_{\overline{{R}(B_\ep )}}\right)>0.$$
This is equivalent to
\begin{align}\label{multi-periodic instabilibity criterion intruduction}
 n^-\left(\hat{A}_{\ep,e}\right)>0,
\end{align}
where the operator $\hat{A}_{\ep,e}$ is
$$\hat{A}_{\ep,e} = - \Delta - g'(\psi_\ep)(I - \hat{P}_{\ep,e}): \tilde{X}_{\ep, e} \rightarrow \tilde{X}^*_{\ep, e}.$$
Here, the operator $\hat{P}_{\ep,e}$, defined in \eqref{def-hat-P-ep-e}, is an infinite-dimensional
projection onto $\ker (B_\ep')$ and can be traced back to the constraint space $\overline{{R}(B_\ep)}$ for
$L_{\ep,e}$.

Because of the nonlocal projection $\hat{P}_{\ep,e}$, the spectrum of $\hat{A}_{\ep,e}$ is difficult to find  explicitly. To prove instability it is therefore enough to construct a suitable  test function
$\psi$ such that $\langle\hat{A}_{\ep,e}\psi,\psi\rangle<0$. For the $4k\pi$-periodic case, the test
function \eqref{test-even} is built from an explicit eigenfunction of the associated PDE
eigenvalue problem
\begin{align}\label{eigenvalue-problem-elliptic-m}
-\Delta \psi = \lambda g'(\psi_\ep)(\psi -  P_{\ep,m}\psi),  \psi \in \tilde{X}_{\ep,m},
\end{align}
for which the nonlocal projection term vanishes,
where $P_{\ep,m}$ is a one-dimensional projection defined
analogously to $P_\ep$. For the $(4k+2)\pi$-periodic case, however,
one cannot choose a periodic test function that annihilates the nonlocal term, and the
construction is much more subtle. Our test functions are delicate combinations of explicit
eigenfunctions in different regions: see \eqref{test-odd} for $\ep\in[0,{4\over5}]$ and
\eqref{test-odd-2} for $\ep\in\left({4\over5},1\right)$. The split into these two parameter
ranges is made so as to keep the contribution of the projection term sufficiently small. To
control this term, we reduce the estimates to a nested property of the trapped regions in the
variables $(\theta_\ep,\gamma_\ep)$; see Lemma \ref{D-theta-gamma-ep-nest}. In particular, we find that the
level curves of $\omega_\ep$ in the alternative variables $(\xi_\ep,\eta_\ep)$ are parts of some ellipses in
the closed unit disk $D_1$, where $(\xi_\ep,\eta_\ep)$ are given in \eqref{three-kers3} and
\eqref{three-kers1}. We obtain the desired property by proving that the inner boundary elliptic curves are nested.

\subsubsection{Proof of  modulational  instability of Kelvin--Stuart vortices}
The proof is mostly analytical; the only computer-assisted step is the evaluation of the
integral in \eqref{func-b2-alpha}-\eqref{func-b1-alpha-2}.
In this setting, the linearized vorticity equation is first written as the complex Hamiltonian
system \eqref{complex Ham-modu}. To apply the index formula \eqref{index-formula-neg}, we
rewrite \eqref{complex Ham-modu} as the real separable Hamiltonian system
\eqref{sep-hamiltonian-alpha}. This leads to the instability criterion in Lemma
\ref{L e-hat A-alpha}, formulated in terms of a dual quadratic form associated with a different
nonlocal projection from the multi-periodic case. We then construct the test function
\eqref{test-function-modulational-instability} by the first eigenfunction of the associated PDE
eigenvalue problem \eqref{elip02-alpha}, and verify that the corresponding dual quadratic form
is negative for every $\alpha\in(0,{1\over2}]$.

In both the multi-periodic and modulational arguments, the test functions are built from
eigenfunctions corresponding to the first few eigenvalues of
\eqref{eigenvalue-problem-elliptic-m}
or of \eqref{elip02-alpha}. These eigenvalue problems are more involved than the co-periodic
problem \eqref{eigenvalue problem-introduction}, both in the original variables and in the
transformed variables. To solve them, we introduce two further transformations,
\eqref{transformation-modu} and \eqref{modulational-transformation}, which convert the ODEs
for the non-zero modes into Gegenbauer equations. This makes it possible to solve the relevant
eigenvalue problems explicitly in terms of Gegenbauer (ultraspherical) polynomials.

\subsubsection{Proof of nonlinear orbital stability of Kelvin--Stuart vortices for co-periodic perturbations}\label{Proof-nonlinear-orbital-stability-Kelvin--Stuart-vortices-co-periodic-perturbations}
We first recall the truncated-domain strategy in \cite{holm1986nonlinear}. There, Holm,
Marsden and Ratiu used Arnol$'$d's original energy-Casimir method
\cite{Arnold65,Arnold69} in a truncated domain $\Omega_{trun}$ bounded by a pair of streamlines.
To highlight the idea, we ignore the boundary effect here.
Writing the energy-Casimir functional as
$\tilde H(\tilde\omega)= \iint_{\Omega_{trun}}\left(h(\tilde\omega)-{1\over2}|\nabla\tilde\psi|^2\right)
 dxdy$,
 one has $\tilde H'(\omega_\ep)=0$ and
\begin{align*}
\tilde H(\tilde\omega)-\tilde H(\omega_\ep)=\iint_{\Omega_{trun}}\left((h(\tilde\omega)-h(\omega_\ep)-h'(\omega_\ep)\omega)-{1\over2}|\nabla\psi|^2\right)dxdy,
\end{align*}
where  $\tilde \omega$ and $\tilde \psi$ denote the perturbed vorticity and stream function, and $h(s)=\int_0^sg^{-1}(\tilde s)d\tilde s=-\int_0^s{1\over 2}\ln(-\tilde s)d\tilde s
={1\over2}(s-s\ln(-s))$ for $s<0$. Crucially, $h''(\omega_\ep)$ admits both a positive lower bound
$c_\ep$ and a positive upper bound $C_{trun}$ on $\Omega_{trun}$. By extending
$h|_{Ran(\omega_\ep)}$ to the whole real line with the same bounds for the second derivative,
one obtains
$${1\over2} C_{trun}\|\omega\|_{L^2(\Omega_{trun})}^2\geq\iint_{\Omega_{trun}}\left(h(\tilde\omega)-h(\omega_\ep)-h'(\omega_\ep)\omega\right)dxdy\geq {1\over2} c_\ep\|\omega\|_{L^2(\Omega_{trun})}^2,$$
where $C_{trun}\to\infty$ as the size of the truncated domain tends to infinity, while $c_\ep$
depends only on $\ep$. For the second term, one has the Poincar\'e inequality
\begin{align}\label{Poincare type inequality truncated domain}
\iint_{\Omega_{trun}}|\nabla\psi|^2dxdy\leq k_{\min}^{-2}\|\omega\|_{L^2(\Omega_{trun})}^2
\end{align}
with $k_{\min}^2$ the principal eigenvalue of $-\Delta$ on a rectangle containing $\Omega_{trun}$. If $k_{\min}^{-2}<c_\ep,$
which can be enforced by shrinking the domain and restricting the range of \(\ep\), then combining the above estimates yields
\begin{align}\label{EC-bound}
\frac12 C_{trun}\|\omega^0\|_{L^2(\Omega_{{trun}})}^2
\ge
\tilde H(\tilde\omega)-\tilde H(\omega_\ep)
\ge
\frac12 (c_\ep-k_{\min}^{-2})\|\omega\|_{L^2(\Omega_{{trun}})}^2,
\end{align}
where \(\omega^0\) is the initial perturbation.
 This establishes nonlinear stability. When the truncated
domain is large or \(\ep\) ranges over the whole family, however, the condition $k_{\min}^{-2}<c_\ep$ fails, and this argument breaks
down. In the full strip the situation is harder still: on the one hand,
\eqref{Poincare type inequality truncated domain} is unavailable, and on the other hand,
$h''(\omega_\ep)$ is unbounded from above.

We now explain our strategy in the original unbounded domain $\Omega$. Since the perturbed
velocity tends to $(\pm1,0)$ as $y\to\pm\infty$, the classical kinetic energy
$\iint_{\Omega}|\vec{u}|^2dxdy$ is not finite. We therefore replace it by the pseudoenergy
$\iint_{\Omega}(G\ast\tilde \omega)\tilde \omega \,dxdy$ and consider the pseudoenergy-Casimir
functional
$ H(\tilde\omega)= \iint_{\Omega}\left(h(\tilde\omega)-{1\over2}(G\ast\tilde \omega)\tilde \omega\right)dxdy.$ Then
\begin{align}\label{pec-introduction}
 H(\tilde\omega)- H(\omega_\ep)=\iint_{\Omega}\left((h(\tilde\omega)-h(\omega_\ep)-h'(\omega_\ep)\omega)-{1\over2}(G*\omega)\omega\right)dxdy.
\end{align}
Because $h''(\omega_\ep)$ is unbounded from above, the enstrophy norm used in the truncated
domain is no longer appropriate in $\Omega$, and one cannot extend
$h|_{Ran(\omega_\ep)}$ to a convex function on the whole real axis. Instead, we define our
distance functional as the sum of the first term in \eqref{pec-introduction} and the
pseudoenergy. This gives the required upper bound of $H(\tilde\omega)- H(\omega_\ep)$ from the initial data. The lower bound,
however, requires a new argument, since the bounded-domain proof based on
\eqref{Poincare type inequality truncated domain} is unavailable. Our strategy can be
summarized as follows.

\begin{enumerate}
\item We first try to analyze the Taylor expansion of $H$ directly at $\omega_\ep$. The first
variation satisfies $H'(\omega_\ep)=0$, and the second variation is exactly the linear energy
quadratic form:
$\langle H''(\omega_\ep)\omega,\omega\rangle=\langle L_\ep\omega,\omega\rangle$.
The difficulty is that $H$ is not $C^2$ near $\omega_\ep$, so the remainder terms cannot be
controlled directly. We therefore introduce, via the Legendre transformation, the dual functional of the stream functions
\begin{align*}
\mathscr{B}_\ep( \psi)=
  & \iint_{\Omega} \left(\frac 1 2 |\nabla \psi|^2 -\frac 1 4 g'(\psi_\ep)(e^{-2\psi} + 2\psi - 1)\right) dxdy,\quad \psi\in \tilde X_\ep,
\end{align*}
and prove that it is $C^2$ on $\tilde X_\ep$. This is enough to control the remainder terms.
Moreover, $\mathscr{B}_\ep'(0)=0$, and the second variation corresponds to the dual linear
quadratic form:
$$\langle \mathscr{B}_\ep''(0)\psi,\psi\rangle=\langle A_\ep\psi,\psi\rangle,$$
where $A_\ep=\tilde A_\ep-g'(\psi_\ep)P_\ep$.

\item
 Our precise linear spectral analysis shows that  $A_\ep\geq0$ and $\dim(\ker(A_\ep))=3$ with kernel directions generated by translation in $x$,
translation in $y$, and variation of the parameter $\ep$.  This allows us to prove nonlinear orbital stability with respect to the full three-dimensional orbit, generated by translations in $(x,y)$ together with variation along the Kelvin--Stuart family.

\item To obtain nonlinear orbital stability of a fixed Kelvin--Stuart vortex, modulo only the
translations in $x$ and $y$, we use the additional Casimir constraint
$\iint_{\Omega} (-\omega)^{3\over2} \,dxdy$ to keep the $\epsilon$-parameter variation small for all time.
This allows us to pass from $3$D orbital stability of the family to $2$D orbital stability of a
fixed member of the family.

\item Finally, if one works directly with weak solutions, the distance functional need not be
continuous in time, so the solution may jump between neighborhoods of
different steady states. To overcome this difficulty, we first smooth the initial data, construct
approximate strong solutions, and prove nonlinear orbital stability for these approximants. We
then pass to the weak limit -- using the convexity of the Casimir functional and a careful study of
the convergence of the approximating initial data -- to obtain the nonlinear orbital stability for the weak solutions.
\end{enumerate}

\medskip

\noindent{\bf{Comparison with the previous work of Holm--Marsden--Ratiu.}}\medskip

The methodological differences between our work and that of Holm--Marsden--Ratiu \cite{holm1986nonlinear} can be summarized as follows.

\begin{itemize}
\item \emph{Linear and spectral structure.} Because of the domain truncation and the corresponding restriction on the parameter $\ep$, no linear stability analysis is needed in \cite{holm1986nonlinear}. In the unbounded domain, however, the nonlinear theory for the whole family must begin with a complete co-periodic linear analysis. The principal novelty of the present paper lies here: the nonlinear change of variables \eqref{transformation-isospectrum-inroduction1}-\eqref{transformation-isospectrum-inroduction2} reveals a hidden symmetry of the spectral problem and yields an isospectral reduction of the whole Kelvin--Stuart family to the hyperbolic tangent shear case. Consequently, the seemingly non-separable PDE eigenvalue problem \eqref{eigenvalue problem-introduction} becomes exactly reducible to classical ODEs of Legendre or Gegenbauer type. This hidden structure underlies not only the co-periodic spectral stability and nonlinear orbital stability analysis, but also the multi-periodic and modulational instability arguments.

\item \emph{Nonlinear framework in the unbounded strip.} The argument in \cite{holm1986nonlinear} relies on the uniform convexity of the Casimir functional in vorticity, together with Poincar\'e-type inequalities, to obtain the upper and lower bounds \eqref{EC-bound} for the energy-Casimir functional. Our approach is fundamentally different.
    Instead of directly analyzing the non-$C^2$  pseudoenergy-Casimir functional in vorticity, we pass via the Legendre transformation to a dual functional in terms of stream function with the required $C^2$-regularity in $\tilde X_\ep$.
    This, combined with the delicate linear spectral analysis, allows us to
    establish $3$D orbital stability, incorporating translations and parameter variation. We then reduce it to $2$D orbital stability for a fixed Kelvin--Stuart vortex by means of an additional Casimir constraint. For weak solutions, we construct approximate strong solutions by mollification, establish stability for these smooth approximants, and then pass to the limit.
\end{itemize}

The behavior under double-periodic perturbations also differs sharply between the truncated domain and the original unbounded strip. In \cite{holm1986nonlinear}, a related argument gives nonlinear stability for double-periodic perturbations when the truncated domain is sufficiently small and the allowed range of $\ep$ is sufficiently restricted.
 This suggests that imposing sufficient constraints on the truncation and the $\ep$-parameter suppresses the onset of pairing instability.
 In contrast, in the original unbounded domain the whole Kelvin--Stuart family is always unstable under double-periodic perturbations, exactly as conjectured by Stuart in \cite{stuart1967finite} and proved here in Theorem~\ref{main result2-multi-periodic perturbations}.

\subsubsection{Proof of   stability and instability  of Kelvin--Stuart magnetic islands}
Compared with the separable Hamiltonian form \eqref{s Hamiltonian system-introduction} in
the Euler case, the linearized planar ideal MHD equations around the magnetic island
$(0,\phi_{\ep})$ have a different separable Hamiltonian structure
\begin{align*}
\partial_t \left( \begin{array}{c} \phi \\ \omega \end{array} \right) = \left( \begin{array}{cc} 0 & D_\ep \\ -D_{\ep}' & 0 \end{array} \right)\left( \begin{array}{cc}-\Delta-g'(\phi_{\ep}) & 0 \\ 0 & (-\Delta)^{-1} \end{array} \right) \left( \begin{array}{c} \phi \\ \omega \end{array} \right)
\end{align*}
for  co-periodic perturbations, where
$\phi\in\tilde W_{\ep}=\{ \phi \in\dot{H}^1(\Omega) | \iint_\Omega g'(\phi_\ep)\phi dxdy=0\}$ is the perturbation of magnetic potential,
$\omega\in \tilde X_\ep^*$ is the perturbation of vorticity,
  and $D_\ep=-\{\phi_{\ep},\cdot\}:\tilde X_\ep\supset D(D_\ep)\to\tilde W_{\ep}$.
Based on this structure, the criterion for co-periodic spectral stability is
$$n^-\left(\tilde A_{\ep}|_{\overline{R(D_{\ep})}}\right)=0.$$
Spectral stability of $(0,\phi_{\ep})$ is then recovered from the Euler linear analysis, since
$\tilde A_\ep|_{\tilde X_\ep}\geq0$.
Similarly, the criterion for multi-periodic linear instability is
\begin{align}\label{multi-periodic linear instability-introduction}
n^-\left(\tilde A_{\ep,m}|_{\overline{R(D_{\ep,m})}}\right)\geq1,
\end{align}
where the subscript $m$ denotes $2m\pi$-periodic perturbations with $m\geq2$.
The condition \eqref{multi-periodic linear instability-introduction} is more restrictive than
\eqref{multi-periodic instabilibity criterion intruduction} in the Euler case. Nevertheless,
thanks to the symmetry of the test function $\tilde{\psi}_\ep$ in \eqref{test-even}, this function
belongs to $\overline{R(D_{\ep,2})}$, and we obtain linear instability of $(\omega=0,\phi_{\ep})$
for double-periodic perturbations. This gives the coalescence instability for the whole family
of Kelvin--Stuart magnetic islands and rigorously confirms the physical observations in
\cite{Finn-Kaw1977,Pritchett-Wu1979,Bondeson1983}.

Nonlinear orbital stability of Kelvin--Stuart magnetic islands for co-periodic perturbations is
proved by the energy-Casimir method. Besides the difficulties already present in the Euler
case, there is an additional issue in the MHD nonlinear analysis. In the Euler case, the
perturbation of the stream function is allowed to differ by a constant because
$\iint_\Omega\omega\,dxdy=0$. In the MHD case, however, the perturbation of the magnetic
potential cannot be shifted by a constant, and after translations it need not lie in the space
$\tilde X_\ep$. Consequently, the $C^2$ regularity of the energy-Casimir functional cannot be
proved directly on $\tilde X_\ep$. Our remedy is to add the projection term
$P_\ep \phi={1\over8\pi}\iint_\Omega g'(\phi_\ep)\phi \,dxdy$ to the energy-Casimir functional,
which allows a constant discrepancy in the perturbation. This makes it possible to prove the
$C^2$ regularity of the main term of the functional on $\tilde X_\ep$ and thereby exploit the
linear analysis. The remainder term created by the projection is then shown to be higher order
with respect to the distance functional.

 \subsubsection{Further context}
 Kelvin--Stuart vortices also arise in other physical and geometric settings. They have been used in models of planetary rings, including spatial structures in Saturn's ring system \cite{Shukla-Sen1996}, and they also appear as solutions of the Liouville's equation in certain dusty plasma models. More recently, Stuart vortices have been generalized from the plane to non-rotating and rotating spheres \cite{Crowdy04,Constantin-Crowdy-Krishnamurthy-Wheeler2021}, as well as to a torus and a hyperbolic sphere \cite{Sakajo2019,Yoon20}. See also \cite{Klaassen-Peltier1991,Dauxois-Fauve-Tuckerman1996,BD01,Majda-Bertozzi02,Constantin-Krishnamurthy2019,Krishnamurthy2019,Krishnamurthy2021} for further discussions of Kelvin--Stuart vortices and related equilibria. It would be interesting to study the stability of these generalized Stuart vortices by the methods developed here.

The rest of this paper is organized as follows. In Section 2, we prove that the steady state
$\omega_\ep$ with $\ep\in[0,1)$ is spectrally stable for co-periodic perturbations. In Section 3,
we prove linear instability for multi-periodic perturbations, and in Section 4 we prove linear
modulational instability. Section 5 establishes nonlinear orbital stability of Kelvin--Stuart
vortices for co-periodic perturbations. Section 6 contains numerical illustrations. In Section 7,
we study stability and instability of the magnetic island solutions $(\omega=0,\phi_{\ep})$ of the
planar ideal MHD equations \eqref{mhd} for co-periodic and double-periodic perturbations.
In the Appendix, we prove the existence of weak solutions to the 2D Euler equation in the
unbounded domain $\Omega$ with non-vanishing velocity at infinity.

\section{Spectral stability for  co-periodic perturbations}\label{co-periodic-linear}

In this section, we study the linear stability of the steady states \(\omega_\ep\) for co-periodic perturbations. We prove that the whole Kelvin--Stuart family is spectrally stable for all \(\ep\in[0,1)\).

We first formulate the linearized vorticity equation as a Hamiltonian PDE, and then reduce the self-adjoint part of the linearized vorticity operator to an elliptic operator on the space of stream functions.
\subsection{Hamiltonian formulation of the linearized Euler equation}
Linearizing the vorticity equation \eqref{vor} around the steady state $\omega_\ep$, we have
 \begin{equation*}
 \partial_t \omega + \partial_y \psi_{\epsilon} \partial_x \omega - \partial_x \psi_{\epsilon}\partial_y \omega + \partial_y \psi \partial_x \omega_\epsilon - \partial_x\psi\partial_y \omega_\epsilon = 0,
 \end{equation*}
 which can be rewritten as
 \begin{equation}\label{linearized vorticity equation}
 \partial_t\omega = - \vec{u}_\epsilon\cdot \nabla\omega + g'(\psi_\epsilon)\vec{u}_\epsilon\cdot\nabla\psi,
 \end{equation}
 where we used $\omega_\epsilon=g(\psi_\epsilon)$ by \eqref{elip}. Note that
  \begin{align}\label{def-g-psi-ep-derivative}
    g'(\psi_\epsilon) = 2 e^{-2\psi_\epsilon} = \frac{2(1-\ep^2)}{(\cosh (y) + \ep \cos(x))^2}>0,\quad(x,y)\in\Omega,\; \epsilon\in[0,1).
  \end{align}
   The linearized equation \eqref{linearized vorticity equation} has
 the following  Hamiltonian structure
 \begin{equation*}
 \partial_t \omega = J_\epsilon L_\epsilon\omega, \quad \omega \in X_\ep,
 \end{equation*}
 where
 \begin{align*}J_\epsilon = -g'(\psi_\epsilon)\vec{u}_\epsilon\cdot\nabla: X_\ep^* \supset D(J_\epsilon) \rightarrow X_\ep, \quad
 L_\epsilon = \frac {1} {g'(\psi_\epsilon)} - (-\Delta)^{-1}: X_\ep \rightarrow X_\ep^*,\end{align*}
\begin{align*}
X_\ep = \left\{\omega\bigg| \iint_\Omega \frac{|\omega|^2}{g'_\epsilon(\psi_\epsilon)} dxdy < \infty, \iint_\Omega \omega dxdy = 0 \right\},\quad \epsilon\in[0,1),
\end{align*}
$X_\ep^*$ is the dual space of $X_\ep$ and $(-\Delta)^{-1}\omega$ is defined as the unique weak solution to the Poisson equation
\begin{align}\label{Poisson equation}
-\Delta \psi = \omega
\end{align}
 in $\tilde X_\epsilon$
 (see Lemmas \ref{1-1correspond} and \ref{1-1correspond-ep}).  Here, $\tilde X_\epsilon$ is defined in \eqref{tilde-X0} and \eqref{tilde-X-e} for $\epsilon=0$ and  $\epsilon\in(0,1)$, respectively.

The vorticity space $X_\ep$ equipped with the inner product
$$(\omega_1, \omega_2) = \iint_\Omega \frac{\omega_1\omega_2}{g'_\epsilon(\psi_\epsilon)} dxdy$$ is a Hilbert space since it is a closed subspace of the Hilbert space $L^2_{\frac{1}{g'(\psi_\ep)}}(\Omega).$
We  denote  the dual bracket between $X_\ep$ and $X_\ep^*$ by $\langle \cdot, \cdot \rangle$. Thanks to the Poincar\'e inequality in Lemmas  \ref{poincare1} and \ref{poincare1ep}, we will prove that  $\langle L_\epsilon\cdot,\cdot\rangle$ is  a bounded  symmetric bilinear form on  $X_\ep$, see Lemmas  \ref{Lbounded} and \ref{Lbounded-ep}.

We impose the condition
\[
\iint_\Omega \omega dxdy=0
\]
in the definition of \(X_\ep\) because the perturbation must preserve the asymptotic velocity jump. Indeed, by \eqref{steadyv},
\[
\lim_{y\to\pm\infty}\vec u_\ep(x,y)=(\pm1,0),
\qquad x\in\mathbb T_{2\pi},\ \ep\in[0,1),
\]
and the perturbed velocity \(\vec v=(v_1,v_2)\) is required to have the same asymptotic behavior:
\[
\lim_{y\to\pm\infty}\vec v(x,y)=(\pm1,0).
\]
Hence the perturbed vorticity \(\tilde\omega\) satisfies
\begin{align}\label{perturbed vorticity condition}
\iint_\Omega \tilde\omega(x,y)dxdy
=
-\int_0^{2\pi} v_1(x,y)\big|_{y=-\infty}^{\infty}\,dx
=
-4\pi
=
\iint_\Omega \omega_\ep(x,y)dxdy.
\end{align}
Therefore, for the vorticity perturbation \(\omega=\tilde\omega-\omega_\ep\), we must have
$
\iint_\Omega \omega dxdy=0.
$

To understand  linear stability of the steady state $\omega_\ep$, it suffices to study the spectrum of the operator $J_\ep L_\ep$ on  $X_\ep$. Based on Hamiltonian structure of the linearized equation \eqref{hami}, we will study the spectral distribution  of $J_\ep L_\ep$  by the index formula \eqref{index-formula-stuart} developed in \cite{lin2022instability}. To verify the assumptions  in the Index Theorem (see {\bf(H1)-(H3)} in Lemma \ref{theorem-index}) and compute the indices  $n^{0}(L_\ep)$ and $n^{-}(L_\ep)$ (i.e. the number of kernel and negative directions of the self-adjoint operator $L_\ep$),   we will define  a dual  elliptic operator $\tilde{A}_\ep$ on a Hilbert space $\tilde{X}_\ep$ of stream functions, and reduce the computation of  the two indices  to the kernel and negative dimensions of $\tilde{A}_\ep$.

We divide the discussions into the  case $\ep = 0$ (hyperbolic tangent shear flow) and the case $0<\ep<1 $ (Kelvin--Stuart cat's-eye flows) separately.
\subsection{Dual quadratic form and variational problem for the shear case}

The advantage of the shear case \(\ep=0\) is that $g'(\psi_0) = {2}{\sech^2(y)}$ depends only on \(y\). This allows us to separate the variables \((x,y)\) and reduce the analysis to one-dimensional problems.
\subsubsection{Space of stream functions, Poisson equation and energy quadratic form}
First, we define explicitly   the space of stream functions such that the Poisson equation \eqref{Poisson equation} is well-posed in this space.
\begin{lemma}\label{Hilbert}
The function space
\begin{align}\label{tilde-X0}
\tilde{X}_0 = \left\{ \psi \bigg| \|\nabla \psi\|_{L^2(\Omega)} < \infty\quad {\rm{ and }}\quad\widehat\psi_0(0)={1\over2\pi} \int_{0}^{2\pi} \psi(x, 0)d x= 0 \right\}
\end{align} equipped with the inner product $$(\psi_1, \psi_2) = \iint_\Omega \nabla \psi_1 \cdot \nabla \psi_2 dxdy, \quad \forall\; \psi_1, \psi_2 \in \tilde{X}_0$$ is a Hilbert space.
\end{lemma}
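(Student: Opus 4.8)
The plan is to show that $\tilde X_0$, with the given inner product, is complete; symmetry and bilinearity of $(\cdot,\cdot)$ are immediate, and positive-definiteness is the only subtle algebraic point, which I will handle first. Suppose $(\psi,\psi)=\iint_\Omega|\nabla\psi|^2\,dxdy=0$. Then $\nabla\psi=0$ a.e., so $\psi$ is (a.e. equal to) a constant on the connected domain $\Omega=\mathbb T_{2\pi}\times\mathbb R$; the normalization $\widehat\psi_0(0)=\frac1{2\pi}\int_0^{2\pi}\psi(x,0)\,dx=0$ forces that constant to be $0$. One has to make sense of the trace $\psi(\cdot,0)$: since $\psi\in H^1_{loc}(\Omega)$ (gradient in $L^2$, and any representative is locally $L^2$ once we know $\psi$ agrees a.e. with a constant — or, before that is known, we restrict to a slab $\mathbb T_{2\pi}\times(-1,1)$ where $\|\nabla\psi\|_{L^2}<\infty$ together with the trace inequality gives $\psi\in H^1$), the one-dimensional trace $x\mapsto\psi(x,0)\in L^2(\mathbb T_{2\pi})$ is well-defined and depends continuously on $\|\psi\|_{H^1(\mathbb T_{2\pi}\times(-1,1))}$. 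This makes the linear functional $\psi\mapsto\widehat\psi_0(0)$ continuous on the slab and hence the constraint defines a closed subspace.

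Next I would prove completeness. Let $\{\psi_n\}$ be Cauchy in $\tilde X_0$, i.e. $\{\nabla\psi_n\}$ Cauchy in $L^2(\Omega)^2$; let $\vec G\in L^2(\Omega)^2$ be its limit. The key step is a Poincar\'e-type control of $\psi_n$ itself in terms of $\nabla\psi_n$ on bounded pieces of $\Omega$. Fix the slab $S_R=\mathbb T_{2\pi}\times(-R,R)$. On $S_R$ the Poincar\'e inequality relative to the boundary trace at $y=0$ (or relative to the mean over $S_R$, combined with control of that mean by the trace at $y=0$) gives $\|\psi_n-\psi_m\|_{H^1(S_R)}\le C_R\|\nabla\psi_n-\nabla\psi_m\|_{L^2(S_R)}\to0$; here one uses that $\widehat{(\psi_n)}_0(0)=0$ to anchor the constant. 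Hence $\psi_n$ converges in $H^1(S_R)$ for every $R$ to some $\psi$ defined on all of $\Omega$, with $\nabla\psi=\vec G\in L^2(\Omega)$, so $\|\nabla\psi\|_{L^2(\Omega)}<\infty$. Passing to the limit in the (continuous on $S_1$) constraint functional gives $\widehat\psi_0(0)=\lim_n\widehat{(\psi_n)}_0(0)=0$, so $\psi\in\tilde X_0$, and $\|\nabla(\psi_n-\psi)\|_{L^2(\Omega)}=\lim_{R\to\infty}\|\nabla(\psi_n-\psi)\|_{L^2(S_R)}\to0$. Thus $\psi_n\to\psi$ in $\tilde X_0$ and the space is complete.

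The main obstacle is the interplay between the global finiteness condition $\|\nabla\psi\|_{L^2(\Omega)}<\infty$ and the \emph{local} nature of the normalization $\widehat\psi_0(0)=0$: one cannot invoke a global Poincar\'e inequality on the unbounded $\Omega$ (indeed functions in $\dot H^1(\Omega)$ need not be in $L^2(\Omega)$), so the argument must be localized to slabs $S_R$ and the Cauchy property transferred slab-by-slab, then reassembled by monotone convergence in $R$. The technical lemma underneath everything is the trace-plus-Poincar\'e estimate $\|\psi\|_{H^1(S_R)}\le C_R(\|\nabla\psi\|_{L^2(S_R)}+|\widehat\psi_0(0)|)$ on each bounded slab, which is standard (it is essentially the Poincar\'e--Wirtinger inequality combined with the bound $|\mathrm{mean}_{S_R}\psi-\widehat\psi_0(0)|\le C_R\|\nabla\psi\|_{L^2(S_R)}$) but is the step that genuinely uses the geometry of the domain and the specific form of the constraint.
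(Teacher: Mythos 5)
Your proposal is correct, but it follows a genuinely different route from the paper's. The paper works on the Fourier side: it splits $\psi_m=\widehat{\psi}_{m,0}(y)+\psi_{m,\neq0}(x,y)$ and uses the frequency gap $|k|\geq 1$ to get the global inequality $\|\psi_{m,\neq0}\|_{L^2(\Omega)}\leq\|\nabla\psi_{m,\neq0}\|_{L^2(\Omega)}$, so the non-shear part is Cauchy in $H^1(\Omega)$ and the constraint passes to the limit by the trace theorem, while the shear part is recovered by integrating the $L^2(\mathbb{R})$ limit of $\widehat{\psi}_{m,0}'$ from $y=0$, the normalization $\widehat{\psi}_0(0)=0$ anchoring the constant. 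You avoid the Fourier decomposition altogether and instead prove a trace-anchored Poincar\'e inequality on bounded slabs $S_R=\mathbb{T}_{2\pi}\times(-R,R)$, obtain $H^1(S_R)$ convergence for every $R$, and reassemble, passing the constraint to the limit by trace continuity on one fixed slab; your key slab estimate is indeed valid and provable exactly as sketched, since $\widehat{\psi}_0(y)-\widehat{\psi}_0(0)=\int_0^y\widehat{\psi}_0'(s)\,ds$ is controlled by $\|\nabla\psi\|_{L^2(S_R)}$ via Cauchy--Schwarz, so no tangential trace regularity is needed. What the paper's route buys is the explicit Fourier expression \eqref{tilde-X0-norm} of the norm and global $H^1(\Omega)$ control of the non-shear part, both reused repeatedly later (the Poincar\'e inequalities, the compact embedding, the mode-by-mode eigenvalue analysis); your route is more elementary and purely domain-geometric, does not rely on separability in $x$, but yields only local control of $\psi$ itself, which is all completeness requires. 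Two small expository points, not gaps: state explicitly that elements of $\tilde{X}_0$ are locally integrable functions, so that on each bounded slab they lie in $H^1$ and the trace at $y=0$ makes sense before positive-definiteness is established; and identify $\nabla\psi=\vec{G}$ from the slab limits before asserting $\|\nabla(\psi_n-\psi)\|_{L^2(\Omega)}\to0$, which then follows at once from the Cauchy property of $\{\nabla\psi_n\}$ in $L^2(\Omega)^2$ rather than from a limit in $R$.
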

Note that two functions differing by a constant represent the same element of the space $\dot{H}^1(\Omega)$. We add the condition $\widehat \psi_0(0)={1\over 2\pi}\int_{0}^{2\pi} \psi(x, 0)d x= 0$ in \eqref{tilde-X0} to fix the additive constant and make $\tilde{X}_0$ a Hilbert space.
\begin{proof}
First, we prove that $\|\psi\|_{\tilde{X}_0}= \|\nabla \psi\|_{L^2(\Omega)} = 0$ implies $\psi = 0$ in $\tilde{X}_0$. Since $\psi(x,y)=\sum_{k\in\mathbb{Z}}\widehat{\psi}_{k}(y)e^{ik x}$,  we have
\begin{align}\label{tilde-X0-norm}
\|\nabla \psi\|_{L^2(\Omega)}^2
=  2\pi \left( \int_{-\infty}^{+\infty} \sum_{k\neq0} k^2 \left|\widehat{\psi}_k(y)\right|^2 dy + \int_{-\infty}^{+\infty}\left( \left| \widehat{\psi}_0'(y)\right|^2  +  \sum_{k\neq 0}\left| \widehat{\psi}_k'(y)\right|^2\right) dy \right).
\end{align}
Then we infer from $\|\nabla \psi\|_{L^2(\Omega)} = 0$ that
$ \widehat{\psi}_k= 0$ for $k\neq 0$   and $\widehat{\psi}_0' =0$.
By the condition $\widehat{\psi}_0(0)= 0$, we have
$$\widehat{\psi}_0(y) = \widehat{\psi}_0(0) + \int_0^{y}\widehat{\psi}_0'(s) ds = 0$$
for $y \in \mathbb{R}$.
So $\widehat{\psi}_k = 0$ for  $k\in\mathbb{Z}$, and thus, $\psi= 0$.
Now we prove the completeness of the space $\tilde{X}_0$.
Let $\{\psi_m \}_{m=1}^{+\infty}$ be a Cauchy sequence in $\tilde{X}_0$, i.e.
$\|\psi_m - \psi_n\|_{\tilde{X}_0} \to 0$ as $m,n \rightarrow \infty$,
where
\begin{align}\label{psi-m-dec}
\psi_m(x,y) = \widehat{\psi}_{m,0}(y) + \sum_{k\neq0}\widehat{\psi}_{m,k}(y)e^{ikx} =: \widehat{\psi}_{m,0}(y) + {\psi}_{m,\neq 0}(x,y)
\end{align}
for $m\geq1$.
By \eqref{tilde-X0-norm}, we have
\begin{align*}
\|\psi_m\|^2_{\tilde{X}_0}
& = \| \widehat{\psi}_{m,0}'\|^2_{L^2(\Omega)} + \|\nabla\psi_{m,\neq 0}\|^2_{L^2(\Omega)} < \infty.
\end{align*}
Since
\begin{align*}
\|\psi_{m,\neq 0}\|^2_{L^2(\Omega)} =&2\pi \int_{-\infty}^{+\infty} \sum_{k\neq0}  \left|\widehat{\psi}_{m,k}(y)\right|^2 dy \\
\leq&
2\pi \int_{-\infty}^{+\infty} \sum_{k\neq0}\left(  k^2 \left|\widehat{\psi}_{m,k}(y)\right|^2 +\left| \widehat{\psi}_{m,k}'(y)\right|^2\right) dy = \|\nabla\psi_{m,\neq 0}\|^2_{L^2(\Omega)}, \end{align*}
we have $\psi_{m,\neq 0}\in H^1(\Omega)$. Similarly, we have $\|\psi_{m,\neq 0} -  \psi_{n,\neq 0}\|_{H^1(\Omega)}^2\leq 2\|\nabla(\psi_{m,\neq 0} -  \psi_{n,\neq 0})\|_{L^2(\Omega)}^2$ $\leq2\|\psi_m -  \psi_n\|_{\tilde{X}_0}^2$ for $m,n\geq1$. Since $\|\psi_m - \psi_n\|_{\tilde{X}_0} \to 0$ as $m,n \rightarrow \infty$, we obtain that $\{\psi_{m,\neq 0} \}_{m=1}^{+\infty}$ is a Cauchy sequence in the Hilbert space $H^1(\Omega)$. Then
there exists $ \psi_{\neq0} \in H^1(\Omega)$   such that
$ \psi_{m,\neq 0} \to\psi_{\neq0}$ in $H^1(\Omega)$. By the Trace Theorem, $\{\psi_{m,\neq 0}(\cdot,0) \}_{m=1}^{+\infty}$ is a Cauchy sequence in $L^{2}(\mathbb{T}_{2\pi})$ (and thus in $L^{1}(\mathbb{T}_{2\pi})$). Then
$$\widehat{\psi}_{\neq0,0}(0)={1\over2\pi}\int_{0}^{2\pi}\psi_{\neq0}(x,0)dx=\lim_{m\to\infty}{1\over2\pi}\int_{0}^{2\pi}\psi_{\neq0}(x,0)dx=0.$$
Thus, $\widehat{\psi}_{\neq0,0}\in\tilde{X}_0$.
Since $ \|\widehat{\psi}_{m,0}'-\widehat{\psi}_{n,0}'\|_{L^2(\Omega)}\leq \|{\psi}_{m}-{\psi}_{n}\|_{\tilde{X}_0}$,
 $\{\widehat{\psi}_{m,0}' \}_{m=1}^{+\infty}$ is a Cauchy sequence in the Hilbert space $L^2(\Omega)$.
Thus, there exists $ \psi^0_* \in L^2(\Omega)$ such that
$\widehat{\psi}_{m,0}'\to  \psi^0_*$ in $L^2(\Omega)$.
Now we define $$\psi^0(y) = \int_{0}^{y} \psi^0_*(s) ds\quad\text{for}\quad y\in\mathbb{R}.$$
Then $\psi^0(0) = 0$ and
$ \widehat{\psi}_{m,0} \to \psi^{0}$ in $\tilde{X}_0$.
Let $\psi^*(x,y) = \psi^0(y) + \psi_{\neq0}(x,y)$ for $(x,y)\in\Omega$. Then $\psi^*\in \tilde{X}_0$ and
$$\|\psi_m - \psi^*\|_{\tilde{X}_0}\leq  \|\widehat\psi_{m,0} - \psi^0\|_{\tilde{X}_0} +  \| \psi_{m,\neq 0} -  \psi_{\neq0}\|_{\tilde{X}_0} \to0$$
as $m\to\infty$. Thus, $\tilde{X}_0$ is a Hilbert space.
\end{proof}
\subsubsection{Poincar\'e inequalities}
First, we give a Poincar\'e-type inequality for functions with exponential decay  weight.
\begin{lemma}[Poincar\'e inequality I-$0$]\label{poincare1}
For any $\psi \in \tilde{X}_0$, we have
\begin{align}\label{Poincare inequality I022}
\iint_\Omega g'(\psi_0)|\psi|^2 dxdy  \leq C \|\nabla \psi\|_{L^2(\Omega)}^2.
\end{align}
\end{lemma}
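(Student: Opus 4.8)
The plan is to exploit the separation of variables available in the shear case and to reduce the claimed inequality to a family of one-dimensional weighted Poincaré inequalities, one for each Fourier mode in $x$. Writing $\psi(x,y)=\sum_{k\in\mathbb{Z}}\widehat{\psi}_k(y)e^{ikx}$ and recalling $g'(\psi_0)=2\,\sech^2(y)$, which depends only on $y$, Parseval's identity in $x$ gives
\begin{align*}
\iint_\Omega g'(\psi_0)|\psi|^2\,dxdy = 2\pi\sum_{k\in\mathbb{Z}}\int_{-\infty}^{+\infty} 2\,\sech^2(y)\,|\widehat{\psi}_k(y)|^2\,dy,
\end{align*}
while by \eqref{tilde-X0-norm} the right-hand side satisfies $\|\nabla\psi\|_{L^2(\Omega)}^2 = 2\pi\big(\sum_{k\neq0}\int k^2|\widehat{\psi}_k|^2 dy + \sum_k\int|\widehat{\psi}_k'|^2 dy\big)$. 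So it suffices to prove, with a constant $C$ independent of $k$, the mode-wise bounds
\begin{align*}
\int_{-\infty}^{+\infty}\sech^2(y)\,|\widehat{\psi}_k(y)|^2\,dy \leq C\int_{-\infty}^{+\infty}|\widehat{\psi}_k'(y)|^2\,dy \quad (k\neq0),
\end{align*}
and the same bound for $k=0$ using the normalization $\widehat{\psi}_0(0)=0$.

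For the nonzero modes the key point is that $f:=\widehat{\psi}_k$ lies in $H^1(\mathbb{R})$ (its $L^2$-norm is controlled by $k^2\|f\|_{L^2}^2+\|f'\|_{L^2}^2\le C\|\nabla\psi\|^2$), so $f$ is a bounded continuous function with $f(y)\to0$ as $|y|\to\infty$, and one can simply estimate $\int \sech^2(y)|f|^2\,dy \le \|f\|_{L^\infty}^2\int\sech^2(y)\,dy = 2\|f\|_{L^\infty}^2 \le 2\|f\|_{L^2}\|f'\|_{L^2}\cdot 2 \le C(\|f\|_{L^2}^2+\|f'\|_{L^2}^2)$; combined with $\|f\|_{L^2}^2\le k^{-2}\cdot k^2\|f\|_{L^2}^2$ (absorbing the low mode $k=\pm1$ separately if needed) this gives the mode-wise inequality. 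Actually, to get a constant uniform in $k$ and to avoid invoking $\|f\|_{L^2}$ at all, the cleaner route is: integrate by parts writing $\sech^2(y)=(\tanh y)'$, so $\int(\tanh y)'|f|^2\,dy = -\int \tanh(y)\,2\,\mathrm{Re}(\bar f f')\,dy \le 2\int|f||f'|\,dy$; then since $f(y)=\int_{-\infty}^y f'(s)\,ds$ one has $|f(y)|\le\|f'\|_{L^2}\cdot$ (local weight), and a Cauchy–Schwarz / Hardy-type argument bounds $\int|f||f'|$ by $C\|f'\|_{L^2}^2$ — but this last Hardy step is exactly the delicate point, because the naive Hardy inequality on $\mathbb{R}$ fails without a vanishing condition.

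The main obstacle, and the reason the normalization $\widehat{\psi}_0(0)=0$ is imposed, is precisely the zero mode $k=0$: for $k\neq0$ the function decays at both ends automatically, but $\widehat{\psi}_0$ need only have $\widehat{\psi}_0'\in L^2$, so it could grow like $\sqrt{|y|}$; the weight $\sech^2(y)$ decays exponentially and must do the work. Here I would write, using $\widehat{\psi}_0(0)=0$, $|\widehat{\psi}_0(y)|^2 = \big|\int_0^y \widehat{\psi}_0'(s)\,ds\big|^2 \le |y|\int_0^{|y|}|\widehat{\psi}_0'(s)|^2\,ds \le |y|\,\|\widehat{\psi}_0'\|_{L^2(\Omega)}^2/(2\pi)$ and then
\begin{align*}
\int_{-\infty}^{+\infty}\sech^2(y)\,|\widehat{\psi}_0(y)|^2\,dy \le \frac{\|\widehat{\psi}_0'\|_{L^2(\Omega)}^2}{2\pi}\int_{-\infty}^{+\infty}|y|\,\sech^2(y)\,dy = C\,\|\widehat{\psi}_0'\|_{L^2(\Omega)}^2,
\end{align*}
since $\int_{\mathbb{R}}|y|\sech^2(y)\,dy<\infty$. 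For the nonzero modes I would run a pointwise estimate symmetric about a well-chosen point: $|\widehat{\psi}_k(y)|^2\le 2|\widehat{\psi}_k(y_0)|^2 + 2|y-y_0|\,\|\widehat{\psi}_k'\|_{L^2}^2$ for any reference point, then optimize or average; together with the exponential decay of $\sech^2$ and, for the genuinely low modes $|k|=1$, the extra term $\int k^2|\widehat{\psi}_k|^2$ in $\|\nabla\psi\|^2$ to control the constant piece, this closes the argument. Summing over $k$ and collecting constants yields \eqref{Poincare inequality I022}.
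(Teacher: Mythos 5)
Your treatment of the zero mode is exactly the paper's argument: using $\widehat\psi_0(0)=0$ you write $\widehat\psi_0(y)=\int_0^y\widehat\psi_0'(s)\,ds$, Cauchy--Schwarz gives $|\widehat\psi_0(y)|^2\le\|\widehat\psi_0'\|_{L^2(\mathbb{R})}^2\,|y|$, and $\int_{\mathbb{R}}|y|\,\sech^2(y)\,dy<\infty$ closes that part. The gap is in your reduction for the nonzero modes. The mode-wise inequality you announce as the goal, $\int_{\mathbb{R}}\sech^2(y)|\widehat\psi_k(y)|^2\,dy\le C\int_{\mathbb{R}}|\widehat\psi_k'(y)|^2\,dy$ with $C$ independent of $k$, is false: taking $f_n$ equal to $1$ on $[-n,n]$ and decaying linearly to $0$ over a further length $n$, the left-hand side tends to $2$ while the right-hand side is of order $1/n$. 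Consequently none of the routes you sketch that use only $\|\widehat\psi_k'\|_{L^2}$ can work as stated --- in particular the integration-by-parts/Hardy route (which you yourself flag as problematic) and the final unexecuted ``pointwise estimate about a well-chosen point, then optimize or average'' step.

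The repair is much simpler than anything you attempt, and it is what the paper does: for $k\neq0$ one has $k^2\ge1$, so by \eqref{tilde-X0-norm}, $\sum_{k\neq0}\int_{\mathbb{R}}|\widehat\psi_k|^2\,dy\le\sum_{k\neq0}\int_{\mathbb{R}}k^2|\widehat\psi_k|^2\,dy\le C\|\nabla\psi\|_{L^2(\Omega)}^2$, and since $g'(\psi_0)=2\sech^2(y)\le2$ the entire nonzero-mode contribution is bounded in one line; no interpolation, no $L^\infty$ bound, and no separate treatment of $|k|=1$ is needed. Your interpolation route ($\|f\|_{L^\infty}^2\le\|f\|_{L^2}\|f'\|_{L^2}$) does close the argument once you permit yourself the term $k^2\|\widehat\psi_k\|_{L^2}^2$ for every $k\neq0$ rather than only for the ``genuinely low modes,'' but as written your nonzero-mode argument is incomplete and starts from an incorrect claimed reduction.
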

\begin{proof}
\if0
We can write $\psi(x, y) \in \tilde{X}_0$ in the Fourier series form
$$\psi(x,y) = \psi^0(y) + \sum_{k\neq 0}e^{ikx}\psi^k(y), $$
with $$\psi^k(y) = \frac{1}{2\pi}\int_{0}^{2\pi} \psi(x,y) e^{-ikx} dx.$$
So
\begin{align*}
\|\nabla \psi\|_{L^2(\Omega)}^2
& = \iint_\Omega \psi_x^2 + \psi_y^2 dxdy \\
& = \iint_\Omega \sum_{k\neq0} k^2 (\psi^k(y))^2 dxdy + \iint_\Omega (\frac{d}{dy} \psi^0(y))^2  +  \sum_{k\neq 0}(\frac{d}{dy} \psi^k(y))^2 dxdy\\
& =  2\pi \left( \int_{-\infty}^{+\infty} \sum_{k\neq0} k^2 (\psi^k(y))^2 dy + \int_{-\infty}^{+\infty} (\frac{d}{dy} \psi^0(y))^2  +  \sum_{k\neq 0}(\frac{d}{dy} \psi^k(y))^2 dy \right).\\
\end{align*}
\fi
For $\psi \in \tilde{X}_0$, we have
\begin{align*}
\iint_\Omega g'(\psi_0)|\psi|^2 dxdy
& = 2\pi\left( \int_{-\infty}^{+\infty} g'(\psi_0) \left|\widehat{\psi}_0\right|^2 dy  + \int_{-\infty}^{+\infty} g'(\psi_0) \sum_{k\neq 0}\left|\widehat{\psi}_k\right|^2 dy \right)\\
& = 2\pi (I + II).
\end{align*}
Since $0<g'(\psi_0(y)) = 2 \sech^2(y) \leq 2$ for $y\in\mathbb{R}$, we get by \eqref{tilde-X0-norm} that for the part of non-zero modes,
\begin{align*}
II \leq 2\int_{-\infty}^{+\infty} \sum_{k\neq 0}\left|\widehat{\psi}_k\right|^2 dy
\leq C \|\nabla \psi\|_{L^2(\Omega)}^2.
\end{align*}
For the  part of zero mode, by the fact that
$\widehat{\psi}_0(0)  = 0$, we have
\begin{align*}
I& = \int_{-\infty}^{+\infty} g'(\psi_0) \left|\int_0^y \widehat{\psi}_0'(s) ds\right|^2 dy
 \leq\|\widehat{\psi}_0'\|_{L^2(\mathbb{R})}^2 \int_{-\infty}^{+\infty} g'(\psi_0)|y| dy
\leq C\|\nabla \psi\|_{L^2(\Omega)}^2
\end{align*}
since $ g'(\psi_0)$ decays exponentially near $\pm\infty$.
\end{proof}

We define  a $1$-dimensional projection operator $P_0$ on $ \tilde{X}_0$ by
\begin{align}\label{P0-psi-def}
P_0\psi = \frac{\iint_\Omega g'(\psi_0)\psi dxdy}{\iint_\Omega g'(\psi_0) dxdy}=\frac{\iint_\Omega g'(\psi_0)\psi dxdy}{8\pi},\quad \psi \in \tilde{X}_0,
\end{align}
where we used
 $$\iint_\Omega g'(\psi_0) dxdy = \int_{-\infty}^{\infty} \int_0^{2\pi} 2 \sech^2(y) dxdy = 8\pi.$$
The projection $P_0$ will be used later to introduce  a suitable dual elliptic operator acting at the stream functions.
\begin{Corollary}\label{projection}
 The projection operator $P_0 $ is well-defined  on $ \tilde{X}_0$.
\end{Corollary}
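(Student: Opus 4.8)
The plan is to reduce the claim to the Poincaré inequality already proved above. Saying that $P_0$ is well-defined on $\tilde X_0$ amounts to showing that for every $\psi\in\tilde X_0$ the integral $\iint_\Omega g'(\psi_0)\psi\,dxdy$ converges absolutely, so that $P_0\psi$ is a genuine real number; ideally one also records that $P_0$ is then a bounded linear functional. Linearity is immediate from linearity of the integral, so the only real content is the finiteness.

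First I would apply the weighted Cauchy--Schwarz inequality: splitting $g'(\psi_0)\psi=\sqrt{g'(\psi_0)}\cdot\big(\sqrt{g'(\psi_0)}\,\psi\big)$ and using $g'(\psi_0)=2\sech^2(y)>0$, one obtains
\begin{align*}
\left|\iint_\Omega g'(\psi_0)\psi\,dxdy\right|
\le\left(\iint_\Omega g'(\psi_0)\,dxdy\right)^{1/2}\left(\iint_\Omega g'(\psi_0)|\psi|^2\,dxdy\right)^{1/2}.
\end{align*}
The first factor equals $\sqrt{8\pi}$ by the computation recorded just after \eqref{P0-psi-def}. For the second factor I would invoke Lemma \ref{poincare1} (Poincar\'e inequality I-$0$), which gives $\iint_\Omega g'(\psi_0)|\psi|^2\,dxdy\le C\|\nabla\psi\|_{L^2(\Omega)}^2$; this is finite precisely because membership in $\tilde X_0$ forces $\|\nabla\psi\|_{L^2(\Omega)}<\infty$. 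Combining the two bounds yields $\big|\iint_\Omega g'(\psi_0)\psi\,dxdy\big|\le\sqrt{8\pi C}\,\|\nabla\psi\|_{L^2(\Omega)}<\infty$, which shows both that $P_0\psi$ is well-defined and that $P_0\colon\tilde X_0\to\mathbb{R}$ is a bounded linear functional.

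To round off the terminology ``projection'', I would note idempotency: applying $P_0$ to a constant $c$ returns $\frac{c\iint_\Omega g'(\psi_0)\,dxdy}{8\pi}=c$, so $P_0^2=P_0$. I do not expect any genuine obstacle here: the one nontrivial ingredient --- controlling the weighted $L^2$-norm $\iint_\Omega g'(\psi_0)|\psi|^2\,dxdy$ by the Dirichlet energy $\|\nabla\psi\|_{L^2(\Omega)}^2$ --- has already been supplied by Lemma \ref{poincare1}, and everything else is Cauchy--Schwarz together with the explicit value $\iint_\Omega g'(\psi_0)\,dxdy=8\pi$.
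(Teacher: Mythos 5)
Your argument is exactly the paper's proof: weighted Cauchy--Schwarz with $\iint_\Omega g'(\psi_0)\,dxdy=8\pi$, followed by the Poincar\'e inequality of Lemma \ref{poincare1} to bound the weighted $L^2$-norm by $\|\nabla\psi\|_{L^2(\Omega)}^2$, yielding $|P_0\psi|\le C\|\nabla\psi\|_{L^2(\Omega)}$. The closing remark on idempotency is a harmless extra (note that nonzero constants do not lie in $\tilde X_0$, so it is only a formal computation with the defining formula), but the essential content matches the paper.
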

\begin{proof}
By  Lemma \ref{poincare1}, we have
\begin{align}\nonumber
|P_0\psi|
& \leq \frac{1}{8\pi} \iint_\Omega g'(\psi_0)|\psi| dxdy
 \leq \frac{1}{8\pi} \left(\iint_\Omega g'(\psi_0)|\psi|^2 dxdy\right)^{1/2} \left( \iint_\Omega g'(\psi_0)dxdy \right)^{1/2} \\\label{p0-psi-estimates-2}
& \leq C \|\nabla \psi\|_{L^2(\Omega)}.
\end{align}
\end{proof}
Next, we give another Poincar\'e-type  inequality, which involves the projection defined above.
\begin{lemma}[Poincar\'e inequality II-$0$]\label{poincare2}
For any $\psi \in \tilde{X}_0$,
we have
\begin{align}\label{Poincare inequality II022}
\iint_\Omega g'(\psi_0)|\psi - P_0\psi|^2 dxdy  \leq C \|\nabla \psi\|_{L^2(\Omega)}^2.
\end{align}
\end{lemma}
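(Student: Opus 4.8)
The plan is to deduce this second Poincar\'e-type inequality directly from the first one, Lemma~\ref{poincare1}, by exploiting the fact that $P_0$ is precisely the orthogonal projection onto the constants in the weighted Hilbert space $L^2_{g'(\psi_0)}(\Omega)$. First I would expand the weighted square, using that $P_0\psi$ is a constant:
$$\iint_\Omega g'(\psi_0)\,|\psi - P_0\psi|^2\,dxdy = \iint_\Omega g'(\psi_0)|\psi|^2\, dxdy - 2\,\mathrm{Re}\!\left(\overline{P_0\psi}\iint_\Omega g'(\psi_0)\psi\, dxdy\right) + |P_0\psi|^2 \iint_\Omega g'(\psi_0)\,dxdy.$$

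Next I would insert the two elementary identities $\iint_\Omega g'(\psi_0)\psi\, dxdy = 8\pi\,P_0\psi$, which is just the definition \eqref{P0-psi-def}, and $\iint_\Omega g'(\psi_0)\,dxdy = 8\pi$, which was computed above. The cross term then equals $2\,\mathrm{Re}(\overline{P_0\psi}\cdot 8\pi P_0\psi) = 16\pi |P_0\psi|^2$ and the last term equals $8\pi |P_0\psi|^2$, so the whole expression collapses to
$$\iint_\Omega g'(\psi_0)\,|\psi - P_0\psi|^2\,dxdy = \iint_\Omega g'(\psi_0)|\psi|^2\, dxdy - 8\pi\,|P_0\psi|^2 \le \iint_\Omega g'(\psi_0)|\psi|^2\, dxdy.$$
Finally I would invoke Lemma~\ref{poincare1} (Poincar\'e inequality I-$0$), which bounds the right-hand side by $C\|\nabla\psi\|_{L^2(\Omega)}^2$, giving the claim with the same constant $C$.

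The point worth flagging is that one cannot simply apply Lemma~\ref{poincare1} to $\psi - P_0\psi$ in place of $\psi$, because subtracting the nonzero constant $P_0\psi$ destroys the normalization $\widehat\psi_0(0)=0$ that defines $\tilde X_0$, so $\psi - P_0\psi \notin \tilde X_0$ in general; the expansion above circumvents this by showing that the weighted $L^2$ norm only decreases under the subtraction, so no Poincar\'e inequality for the shifted function is actually needed. In other words, there is essentially no analytic obstacle here: the lemma is a one-line consequence of Lemma~\ref{poincare1} once $P_0$ is recognized as an orthogonal projection in the weighted inner product. It is stated separately only because the combination $\psi - P_0\psi$, rather than $\psi$ itself, is what appears later when the dual elliptic operator $\tilde A_0$ on the space of stream functions is introduced.
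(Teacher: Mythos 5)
Your proof is correct, and it reaches the same destination as the paper's — both reduce the claim to Lemma~\ref{poincare1} — but by a slightly different and in fact cleaner mechanism. The paper simply writes $|\psi-P_0\psi|^2\le 2(|\psi|^2+|P_0\psi|^2)$ and controls the second term through Corollary~\ref{projection} (the bound $|P_0\psi|\le C\|\nabla\psi\|_{L^2(\Omega)}$), which costs a factor $2$ and an extra invocation of that corollary. You instead expand the square and use that $P_0$ is the orthogonal projection onto constants in $L^2_{g'(\psi_0)}(\Omega)$, obtaining the exact identity
\begin{align*}
\iint_\Omega g'(\psi_0)|\psi-P_0\psi|^2\,dxdy=\iint_\Omega g'(\psi_0)|\psi|^2\,dxdy-8\pi|P_0\psi|^2\le \iint_\Omega g'(\psi_0)|\psi|^2\,dxdy,
\end{align*}
so the weighted norm can only decrease and Lemma~\ref{poincare1} applies with the same constant; Corollary~\ref{projection} is not needed at all. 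Your cautionary remark is also on point: one cannot feed $\psi-P_0\psi$ back into Lemma~\ref{poincare1}, since subtracting the constant destroys the normalization $\widehat\psi_0(0)=0$ defining $\tilde X_0$, and both your argument and the paper's sidestep this in the same spirit by estimating the projection term separately rather than shifting the function. In short: no gap, same overall strategy, with your version being marginally sharper.
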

\begin{proof}
By Corollary \ref{projection}, we have
\begin{align}\label{projection-estimate-nabla-psi}
\iint_\Omega g'(\psi_0)|P_0\psi|^2 dxdy = 8\pi |P_0\psi|^2 \leq C \|\nabla \psi\|_{L^2(\Omega)}^2.
\end{align}
Then
\begin{align*}
\iint_\Omega g'(\psi_0)|\psi - P_0\psi|^2 dxdy  \leq 2 \iint_\Omega g'(\psi_0)\left(|\psi|^2  + |P_0\psi|^2 \right)dxdy  \leq C  \|\nabla \psi\|_{L^2(\Omega)}^2
\end{align*}
by  Lemma \ref{poincare1} and \eqref{projection-estimate-nabla-psi}.
\end{proof}
Now we consider the existence and uniqueness of the weak solution to the Poisson equation \eqref{Poisson equation} in $\tilde{X}_0$.
\begin{lemma}\label{1-1correspond}
For  $\omega \in X_0$, the Poisson equation
\eqref{Poisson equation}
has a unique weak solution in $\tilde{X}_0$.
\end{lemma}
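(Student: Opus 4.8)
The plan is to apply the Lax–Milgram theorem (or, equivalently, the Riesz representation theorem, since $\tilde X_0$ is a Hilbert space with inner product $(\psi_1,\psi_2)=\iint_\Omega\nabla\psi_1\cdot\nabla\psi_2\,dxdy$). A function $\psi\in\tilde X_0$ is a weak solution of $-\Delta\psi=\omega$ if $\iint_\Omega\nabla\psi\cdot\nabla\varphi\,dxdy=\iint_\Omega\omega\varphi\,dxdy$ for all $\varphi\in\tilde X_0$. The left-hand side is exactly the $\tilde X_0$-inner product, so the whole statement reduces to showing that $\varphi\mapsto\iint_\Omega\omega\varphi\,dxdy$ is a bounded linear functional on $\tilde X_0$; Riesz representation then yields a unique $\psi$.

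The key step is therefore the boundedness estimate. First I would note that the condition $\iint_\Omega\omega\,dxdy=0$ in $X_0$ means the pairing $\iint_\Omega\omega\varphi\,dxdy$ is unchanged if we replace $\varphi$ by $\varphi-c$ for any constant $c$; in particular it is well-defined on $\tilde X_0$ (which is a space of functions modulo the normalization $\widehat\psi_0(0)=0$). Then I would bound
\begin{align*}
\left|\iint_\Omega\omega\varphi\,dxdy\right|
&\leq\left(\iint_\Omega\frac{|\omega|^2}{g'(\psi_0)}\,dxdy\right)^{1/2}\left(\iint_\Omega g'(\psi_0)|\varphi|^2\,dxdy\right)^{1/2}\\
&\leq C\,\|\omega\|_{X_0}\,\|\nabla\varphi\|_{L^2(\Omega)},
\end{align*}
where the first inequality is Cauchy–Schwarz with the weight $g'(\psi_0)$ split between the two factors, and the second uses the Poincar\'e inequality I-$0$ from Lemma \ref{poincare1}. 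This shows the functional is bounded on $\tilde X_0$ with norm controlled by $\|\omega\|_{X_0}$. Applying Riesz representation gives a unique $\psi\in\tilde X_0$ with $(\psi,\varphi)=\iint_\Omega\omega\varphi\,dxdy$ for all $\varphi\in\tilde X_0$, which is precisely the desired weak solution, and uniqueness is automatic from the Hilbert space structure (if $\psi_1,\psi_2$ both work, then $\|\nabla(\psi_1-\psi_2)\|_{L^2}=0$, hence $\psi_1=\psi_2$ in $\tilde X_0$ by Lemma \ref{Hilbert}).

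The only real obstacle is the boundedness of the linear functional, and the Poincar\'e inequality of Lemma \ref{poincare1} is exactly what makes it work despite $\Omega$ being unbounded and $g'(\psi_0)$ not being bounded below; without the exponential-decay weight structure the pairing would not close. One small point of care: one must verify that the weighted Cauchy–Schwarz splitting is legitimate, i.e.\ that both weighted integrals are finite — the first because $\omega\in X_0$ by definition, the second because $\varphi\in\tilde X_0$ and Lemma \ref{poincare1} applies. Everything else is a direct invocation of Riesz representation on the Hilbert space established in Lemma \ref{Hilbert}.
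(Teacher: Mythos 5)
Your proposal is correct and is essentially the paper's own argument: bound $\iint_\Omega\omega\tilde\psi\,dxdy$ by weighted Cauchy--Schwarz together with the Poincar\'e inequality I-$0$ of Lemma \ref{poincare1}, then apply the Riesz representation theorem on the Hilbert space $\tilde X_0$ from Lemma \ref{Hilbert}. No gaps; your extra remarks (invariance of the pairing under constants, uniqueness from the inner product) are consistent with, and slightly more detailed than, the paper's proof.
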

\begin{proof}
By  Lemma \ref{poincare1},  we have
\begin{align*}
\iint_\Omega \omega \tilde {\psi} dxdy
& \leq \left( \iint_\Omega \frac{|\omega|^2}{g'(\psi_0)} dxdy  \right)^{1/2} \left( \iint_\Omega g'(\psi_0) |\tilde\psi|^2 dxdy  \right)^{1/2}  \leq C \|\omega\|_{X_0} \|\tilde{\psi}\|_{\tilde{X}_0}
\end{align*}
for any $\tilde{\psi}\in\tilde{X}_0$. Note that $\tilde{X}_0$ is a Hilbert space by Lemma \ref{Hilbert}. Thus,
by the Riesz Representation Theorem, there exists a unique $\psi \in \tilde{X}_0$ such that
$$\iint_\Omega \omega \tilde{\psi} dxdy = \langle \omega, \tilde{\psi} \rangle = (\psi, \tilde{\psi}) = \iint_\Omega \nabla \psi \cdot \nabla \tilde{\psi}dxdy.$$
Then $\psi $ is the unique weak solution in $\tilde X_0$ to the Poisson equation \eqref{Poisson equation}.
\end{proof}

For $\omega \in X_0$, we denote $(-\Delta)^{-1}\omega\in\tilde{X}_0$ to be the weak solution of the Poisson equation
\eqref{Poisson equation}.
Then we prove that the bilinear form
\begin{align}\label{L0-quadratic form}
 \langle L_0\omega_1,\omega_2\rangle=\iint_\Omega\left(\frac {\omega_1\omega_2} {g'(\psi_0)} - (-\Delta)^{-1}\omega_1\omega_2 \right) dxdy,\quad\omega_1,\omega_2\in X_0
\end{align}
 is bounded and symmetric  on $ X_0$.
\begin{lemma}\label{Lbounded}
For  $\omega_1,\omega_2 \in X_0$, we have
$\langle L_0 \omega_1, \omega_2 \rangle=\langle \omega_1, L_0 \omega_2 \rangle \leq C\|\omega_1\|_{X_0}\|\omega_2\|_{X_0}.$
\end{lemma}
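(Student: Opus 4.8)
The plan is to split the bilinear form \eqref{L0-quadratic form} into its local part $\iint_\Omega \frac{\omega_1\omega_2}{g'(\psi_0)}\,dxdy$ and its nonlocal part $\iint_\Omega (-\Delta)^{-1}\omega_1\cdot\omega_2\,dxdy$, and to bound each by $C\|\omega_1\|_{X_0}\|\omega_2\|_{X_0}$ while simultaneously reading off symmetry in $\omega_1,\omega_2$.

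For the local part I would simply apply the Cauchy--Schwarz inequality in the weighted space $L^2_{1/g'(\psi_0)}(\Omega)$, which gives
$$\left|\iint_\Omega \frac{\omega_1\omega_2}{g'(\psi_0)}\,dxdy\right|\le\left(\iint_\Omega\frac{|\omega_1|^2}{g'(\psi_0)}\,dxdy\right)^{1/2}\left(\iint_\Omega\frac{|\omega_2|^2}{g'(\psi_0)}\,dxdy\right)^{1/2}=\|\omega_1\|_{X_0}\|\omega_2\|_{X_0},$$
and this term is manifestly symmetric.

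For the nonlocal part, set $\psi_i=(-\Delta)^{-1}\omega_i\in\tilde X_0$, the weak solution furnished by Lemma \ref{1-1correspond}. The first step is to note, exactly as in the proof of that lemma, that Lemma \ref{poincare1} makes $\omega_i\psi_j$ integrable: $\iint_\Omega|\omega_i\psi_j|\,dxdy\le\|\omega_i\|_{X_0}\big(\iint_\Omega g'(\psi_0)|\psi_j|^2\,dxdy\big)^{1/2}\le C\|\omega_i\|_{X_0}\|\nabla\psi_j\|_{L^2(\Omega)}$, so the pairing is a genuine $L^1$ pairing and the weak formulation of $-\Delta\psi_i=\omega_i$ may be tested against any element of $\tilde X_0$. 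Testing $-\Delta\psi_1=\omega_1$ against $\psi_2$ and $-\Delta\psi_2=\omega_2$ against $\psi_1$ gives
$$\iint_\Omega(-\Delta)^{-1}\omega_1\cdot\omega_2\,dxdy=\iint_\Omega\nabla\psi_1\cdot\nabla\psi_2\,dxdy=\iint_\Omega(-\Delta)^{-1}\omega_2\cdot\omega_1\,dxdy,$$
which yields symmetry of the nonlocal part. Taking $\psi_2=\psi_1$ together with the integrability estimate gives $\|\nabla\psi_i\|_{L^2(\Omega)}^2=\iint_\Omega\omega_i\psi_i\,dxdy\le C\|\omega_i\|_{X_0}\|\nabla\psi_i\|_{L^2(\Omega)}$, hence $\|\nabla\psi_i\|_{L^2(\Omega)}\le C\|\omega_i\|_{X_0}$, that is, $(-\Delta)^{-1}:X_0\to\tilde X_0$ is bounded; Cauchy--Schwarz in $L^2(\Omega)$ then bounds $\iint_\Omega\nabla\psi_1\cdot\nabla\psi_2\,dxdy$ by $\|\nabla\psi_1\|_{L^2(\Omega)}\|\nabla\psi_2\|_{L^2(\Omega)}\le C\|\omega_1\|_{X_0}\|\omega_2\|_{X_0}$.

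Adding the two bounds and the two symmetry statements yields $\langle L_0\omega_1,\omega_2\rangle=\langle\omega_1,L_0\omega_2\rangle\le C\|\omega_1\|_{X_0}\|\omega_2\|_{X_0}$, as claimed. There is no serious obstacle here: the only points requiring care are the routine verifications that the nonlocal pairing is well defined as an $L^1$ pairing and that the weak formulation can be tested against arbitrary $\tilde X_0$-functions, both immediate from Lemma \ref{poincare1} and the definition \eqref{tilde-X0} of $\tilde X_0$ as the form domain of $-\Delta$. The estimate $\|\nabla\psi_i\|_{L^2(\Omega)}\le C\|\omega_i\|_{X_0}$ is the one reusable piece, and it has essentially already been extracted inside the proof of Lemma \ref{1-1correspond}.
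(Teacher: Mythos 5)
Your proposal is correct and follows essentially the same route as the paper: both reduce the nonlocal term to $\iint_\Omega\nabla\psi_1\cdot\nabla\psi_2\,dxdy$ via the weak formulation, extract the bound $\|\psi_i\|_{\tilde X_0}\leq C\|\omega_i\|_{X_0}$ from Lemma \ref{poincare1}, and finish with Cauchy--Schwarz on each of the two terms. The only difference is that you spell out the integrability and testing justifications more explicitly, which the paper leaves implicit.
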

\begin{proof}
For $\omega \in X_0$, let $\psi=(-\Delta)^{-1}\omega\in\tilde{X}_0$, we infer from  Lemma \ref{poincare1} that
\begin{align*}
\|\psi\|_{\tilde X_0}^2=\iint_\Omega \omega \psi dxdy
&   \leq C\|\omega\|_{X_0} \|\psi\|_{\tilde{X}_0},
\end{align*}
which gives $\|\psi\|_{\tilde X_0}\leq C \|\omega\|_{X_0}$. Let $\psi_i=(-\Delta)^{-1}\omega_i\in\tilde{X}_0$ for $i=1,2$. Then
\begin{align*}
\langle L_0 \omega_1, \omega_2 \rangle
 = &\iint_\Omega \left(\frac{\omega_1\omega_2}{g'(\psi_0)} dxdy -  \nabla\psi_1\cdot\nabla\psi_2\right) dxdy=
 \langle  \omega_1, L_0\omega_2 \rangle
\end{align*}
and
\begin{align*}
\langle L_0 \omega_1, \omega_2 \rangle
 \leq& \|\omega_1\|_{X_0}\|\omega_2\|_{X_0} + \|\psi_1\|_{\tilde{X}_0}\|\psi_2\|_{\tilde{X}_0}
 \leq C\|\omega_1\|_{X_0}\|\omega_2\|_{X_0}.
\end{align*}
\end{proof}

\subsubsection{Compact embedding lemma   and the variational problems}
Define
 \begin{align}\label{A0}
 \tilde{A}_0=-\Delta-g'(\psi_0)(I - P_0): \tilde{X}_0 \rightarrow \tilde{X}_0^*,
\end{align}
where the negative Laplacian operator should be understood in the weak sense.
Then
\begin{align}\label{A0-quadratic form}
 \langle\tilde{A}_0\psi,\psi\rangle=\iint_\Omega|\nabla\psi|^2-g'(\psi_0)(\psi - P_0\psi)^2dxdy,\quad\psi\in \tilde{X}_0
\end{align}
defines a  bounded symmetric quadratic form on $\tilde{X}_0$ by the Poincar\'e inequality II-0 \eqref{Poincare inequality II022}.
Define another elliptic operator without the projection
\begin{equation}\label{A0 without projection}
A_0 =-\Delta -g'(\psi_0):\tilde{X}_0 \rightarrow \tilde{X}_0^*.
\end{equation}
The corresponding quadratic form
\begin{align*}
\langle A_0 \psi,\psi\rangle=\iint_{\Omega}\left(|\nabla \psi|^2-g'(\psi_0)|\psi|^2\right)dxdy,\quad\psi\in \tilde{X}_0
\end{align*}
is bounded and symmetric  on $\tilde{X}_0$ by the Poincar\'e inequality I-$0$ \eqref{Poincare inequality I022}.
Then
\begin{align}\label{tilde A0-A0}
\langle\tilde  A_0 \psi,\psi\rangle=\langle A_0 \psi,\psi\rangle+{\left(\iint_\Omega g'(\psi_0)\psi dxdy\right)^2\over \iint_\Omega g'(\psi_0)dxdy}=\langle A_0 \psi,\psi\rangle+8\pi(P_0 \psi)^2,\quad \psi\in\tilde X_0,
\end{align}
where we used $ \iint_\Omega g'(\psi_0)dxdy=8\pi$.
In particular,
\begin{equation*}
n^{\leq0}(\tilde A_0)\leq n^{\leq0}(A_0),\quad n^{-}(\tilde A_0)\leq n^{-}(A_0),
\end{equation*}
where $n^{\leq0}(\tilde A_0)$ and $n^-(\tilde A_0)$ are the number of non-positive and negative eigenvalues of $\tilde A_0$, respectively.
The operator $A_0$ and its quadratic form are useful in our study on nonlinear stability of the steady states.

Then we show that the study  on the dimensions of kernel and negative subspaces of the quadratic form $\langle L_0\cdot,\cdot\rangle$
defined in \eqref{L0-quadratic form}
  could be reduced to the corresponding dimensions for $ \langle\tilde{A}_0\cdot,\cdot\rangle$.
\begin{lemma}\label{equal-indices0}
\begin{align*}
\dim\ker(L_0)=\dim\ker(\tilde{A}_0) \quad {\rm{and}} \quad n^-(L_0)=n^-(\tilde{A}_0).
\end{align*}
\end{lemma}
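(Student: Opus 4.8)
\emph{Proof proposal for Lemma \ref{equal-indices0}.}
The plan is to capture both the nullity and the negative index of $\langle L_0\cdot,\cdot\rangle$ on $X_0$ through the two linear maps
$$
S:X_0\to\tilde X_0,\quad S\omega=(-\Delta)^{-1}\omega,\qquad
T:\tilde X_0\to X_0,\quad T\psi=g'(\psi_0)(\psi-P_0\psi).
$$
Both are bounded — $S$ by Lemma \ref{1-1correspond} together with Poincar\'e inequality I-$0$ (Lemma \ref{poincare1}), and $T$ by Poincar\'e inequality II-$0$ (Lemma \ref{poincare2}) together with the identity $\iint_\Omega T\psi\,dxdy=0$, which follows from the definition \eqref{P0-psi-def} of $P_0$ — and both are injective: if $S\omega=0$ then $\iint_\Omega\omega\tilde\psi\,dxdy=0$ for every smooth compactly supported $\tilde\psi$ (reduce to the representative of $\tilde\psi$ in $\tilde X_0$ by subtracting the average of its trace on $\{y=0\}$, and use $\iint_\Omega\omega\,dxdy=0$), hence $\omega=0$; and $T\psi=0$ forces $\psi\equiv P_0\psi$, a constant, hence $\psi=0$ by the normalization $\widehat\psi_0(0)=0$ in \eqref{tilde-X0}.

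First I would establish two quadratic-form inequalities. For $\omega\in X_0$ with $\psi:=S\omega=(-\Delta)^{-1}\omega$, one has $\iint_\Omega|\nabla\psi|^2\,dxdy=\iint_\Omega\omega\psi\,dxdy$ from the weak formulation, and the cross term $\iint_\Omega\omega\,P_0\psi\,dxdy=P_0\psi\iint_\Omega\omega\,dxdy=0$; completing the square then gives
$$
\langle L_0\omega,\omega\rangle-\langle\tilde A_0 S\omega,S\omega\rangle
=\iint_\Omega\frac{\bigl(\omega-g'(\psi_0)(\psi-P_0\psi)\bigr)^2}{g'(\psi_0)}\,dxdy\ \ge\ 0 ,
$$
with equality exactly when $\omega=g'(\psi_0)(\psi-P_0\psi)$. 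Symmetrically, for $\psi\in\tilde X_0$ with $\omega:=T\psi$ and $\chi:=(-\Delta)^{-1}\omega$, using $\tfrac{\omega}{g'(\psi_0)}=\psi-P_0\psi$ and the pairings $\iint_\Omega\omega\chi\,dxdy=\iint_\Omega|\nabla\chi|^2\,dxdy$, $\iint_\Omega\omega\psi\,dxdy=\iint_\Omega\nabla\chi\cdot\nabla\psi\,dxdy=\iint_\Omega g'(\psi_0)(\psi-P_0\psi)^2\,dxdy$ (again the $P_0\psi$ cross term vanishes), one computes
$$
\langle\tilde A_0\psi,\psi\rangle-\langle L_0 T\psi,T\psi\rangle
=\iint_\Omega\bigl|\nabla(\psi-\chi)\bigr|^2\,dxdy\ \ge\ 0 .
$$

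These two inequalities yield the equality of negative indices by a min–max argument: if $W\subset X_0$ is a finite-dimensional subspace on which $\langle L_0\cdot,\cdot\rangle$ is negative definite, then $S(W)$ has the same dimension (injectivity of $S$) and $\langle\tilde A_0\cdot,\cdot\rangle\le\langle L_0\cdot,\cdot\rangle<0$ on it, so $n^-(\tilde A_0)\ge n^-(L_0)$; applying the second inequality to $T$ gives the reverse, whence $n^-(L_0)=n^-(\tilde A_0)$. For the nullity I would argue directly: $L_0\omega=0$ in $X_0^*$ means that $\tfrac{\omega}{g'(\psi_0)}-(-\Delta)^{-1}\omega$ annihilates all of $X_0$; since $X_0$ is precisely the $g'(\psi_0)^{-1}$-weighted orthogonal complement of $g'(\psi_0)$ in $L^2_{1/g'(\psi_0)}(\Omega)$ (this being the single constraint $\iint_\Omega\omega\,dxdy=0$) and $\iint_\Omega g'(\psi_0)\,dxdy=8\pi<\infty$, that element must be a constant; writing $\psi=(-\Delta)^{-1}\omega$, $\omega=-\Delta\psi$, and identifying the constant as $-P_0\psi$ from $\iint_\Omega\omega\,dxdy=0$, this is exactly $\tilde A_0\psi=0$. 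Conversely, $\tilde A_0\psi=0$ produces $\omega:=-\Delta\psi=g'(\psi_0)(\psi-P_0\psi)\in X_0$ with $(-\Delta)^{-1}\omega=\psi$ and $L_0\omega=-P_0\psi\equiv\mathrm{const}$, i.e.\ $\omega\in\ker L_0$; hence $\omega\mapsto(-\Delta)^{-1}\omega$ is a linear bijection $\ker L_0\to\ker\tilde A_0$ and $\dim\ker L_0=\dim\ker\tilde A_0$.

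The step I expect to be the main obstacle is the conceptual one of correctly reading off ``$L_0\omega=0$ in $X_0^{*}$'' as ``$\tfrac{\omega}{g'(\psi_0)}-(-\Delta)^{-1}\omega$ is a constant'': one must verify that the annihilator in $L^2_{1/g'(\psi_0)}(\Omega)$ of the linear functional defining $X_0$ is the span of $g'(\psi_0)$ — which uses $g'(\psi_0)\in L^2_{1/g'(\psi_0)}(\Omega)$, guaranteed by $\iint_\Omega g'(\psi_0)\,dxdy=8\pi$ — and that $\tfrac{\omega}{g'(\psi_0)}-(-\Delta)^{-1}\omega$ genuinely belongs to that weighted space, which is where Poincar\'e inequality I-$0$ (Lemma \ref{poincare1}) enters. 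Everything else is the algebra of completing squares and tracking that the $P_0$-cross terms vanish by the zero-mean constraints, all legitimate because the relevant integrals are finite by Lemmas \ref{poincare1}--\ref{Lbounded}.
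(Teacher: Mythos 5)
Your proposal is correct and follows essentially the same route as the paper: the same pair of maps $\omega\mapsto(-\Delta)^{-1}\omega$ and $\psi\mapsto g'(\psi_0)(\psi-P_0\psi)$, and the same two completing-the-square inequalities (which the paper writes as chains of estimates rather than as exact identities with a nonnegative remainder). The only cosmetic differences are that you compare $n^-$ directly through a min--max/injectivity argument where the paper compares $n^{\le 0}$ and then subtracts the kernel dimensions, and that you identify $\ker L_0$ via the annihilator characterization that $\tfrac{\omega}{g'(\psi_0)}-(-\Delta)^{-1}\omega$ must be constant, where the paper verifies the same bijection by testing against $g'(\psi_0)(\tilde\psi-P_0\tilde\psi)$ and $(-\Delta)^{-1}\tilde\omega$; both verifications are sound.
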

\begin{proof}
First, we prove that $\dim\ker(L_0) =\dim\ker(\tilde{A}_0)$.

For  $\omega \in \ker L_0$, let $\psi = (-\Delta)^{-1} \omega\in \tilde X_0$, we have
\begin{align}\label{L0-dual-omega}\langle L_0 \omega,\tilde \omega\rangle = \iint_\Omega\left(\frac{\omega\tilde \omega}{g'(\psi_0)} - \psi\tilde \omega\right) dxdy = 0,\quad\forall\; \tilde \omega\in X_0.
\end{align}
For any $\tilde \psi\in \tilde X_0$, we define $\omega_{\tilde \psi}=g'(\psi_0)(\tilde \psi - P_0\tilde\psi)$.
Then $\iint_\Omega\omega_{\tilde \psi}dxdy=0$, and thus, $\omega_{\tilde \psi}\in X_0$ by  Lemma \ref{poincare2}. By \eqref{L0-dual-omega}, we have
\begin{align*}\langle L_0 \omega, \omega_{\tilde \psi}\rangle= \iint_\Omega\left(\omega\tilde \psi - g'(\psi_0)\psi(\tilde\psi - P_0\tilde\psi)\right)dxdy =  \iint_\Omega\left(\omega\tilde \psi - g'(\psi_0)(\psi-P_0\psi)\tilde\psi\right) dxdy = 0,\end{align*}
where we used $\iint_\Omega\omega dxdy=0$ and $\iint_\Omega g'(\psi_0)(\tilde\psi - P_0\tilde\psi)dxdy=\iint_\Omega g'(\psi_0)(\psi-P_0\psi)dxdy=0$. This implies that $\psi\in \ker(\tilde{A}_0)$ since
\begin{align*}\langle\tilde{A}_0 \psi,\tilde \psi\rangle= \iint_\Omega\left(\omega\tilde \psi - g'(\psi_0)(\psi-P_0\psi)\tilde\psi \right)dxdy = 0,\quad\forall \; \tilde \psi\in \tilde X_0.\end{align*}
 Thus,  $\dim\ker(L_0) \leq \dim\ker(\tilde{A}_0)$.

For  $\psi \in \ker \tilde{A}_0$, let $\omega = g'(\psi_0)(\psi - P_0\psi)$, we have $\omega\in X_0$ and
\begin{align}\label{tilde-A0-psi}\langle\tilde{A}_0 \psi,\tilde\psi\rangle =\iint_\Omega \left(-\Delta \psi\tilde\psi - g'(\psi_0)(\psi - P_0\psi)\tilde\psi \right)dxdy =0,\quad\forall \; \tilde \psi\in \tilde X_0.\end{align}
For any $\tilde \omega\in X_0$, let $\psi_{\tilde \omega}=(-\Delta)^{-1}\tilde \omega\in \tilde X_0$, we have
\begin{align*}\langle L_0 \omega, \tilde{\omega}\rangle =& \iint_\Omega\left( {\omega\tilde \omega\over g'(\psi_0)}-(-\Delta)^{-1}\omega\tilde\omega \right)dxdy= \iint_\Omega \left((\psi-P_0\psi)\tilde \omega-\omega(-\Delta)^{-1}\tilde\omega \right) dxdy\\
=& \iint_\Omega \left(\psi(-\Delta)\psi_{\tilde \omega}-g'(\psi_0)(\psi - P_0\psi)\psi_{\tilde \omega} \right)dxdy\\
=&\iint_\Omega  \left( -\Delta\psi\psi_{\tilde \omega}-g'(\psi_0)(\psi - P_0\psi)\psi_{\tilde \omega}\right) dxdy=0
\end{align*}
by \eqref{tilde-A0-psi}, which gives  $L_0 \omega = 0$.  This proves  $\dim\ker(L_0) \geq \dim\ker(\tilde{A}_0)$, and thus, $\dim\ker(L_0) = \dim\ker(\tilde{A}_0)$.

For any $\omega \in X_0$, let $\psi = (-\Delta)^{-1} \omega\in \tilde X_0$ and we have
\begin{align}\nonumber
\langle L_0 \omega, \omega\rangle
& = \iint_\Omega \left(\frac{|\omega|^2}{g'(\psi_0)} -   \psi\omega\right) dxdy  = \iint_\Omega |\nabla \psi|^2 dxdy + \iint_\Omega \left(\frac{|\omega|^2}{g'(\psi_0)} -  2 \psi\omega\right) dxdy \\\nonumber
& =\|\nabla \psi\|_{L^2(\Omega)}^2 +\iint_\Omega\left( \frac{|\omega|^2}{g'(\psi_0)} -  2 (\psi-P_0\psi)\omega \right)dxdy\\\nonumber
& \geq\|\nabla \psi\|_{L^2(\Omega)}^2 - \iint_\Omega g'(\psi_0)(\psi - P_0\psi)^2 dxdy \\\label{L0omega-omega}
& = \|\nabla \psi\|_{L^2(\Omega)}^2 - \iint_\Omega g'(\psi_0)(\psi - P_0\psi)\psi dxdy  = \langle\tilde{A}_0 \psi, \psi\rangle.
\end{align}
Thus, $n^{\leq 0} (L_0) \leq n^{\leq 0} (\tilde{A}_0)$.

For any $\psi \in \tilde{X}_0$, let $\tilde{\omega} = g'(\psi_0)(\psi - P_0\psi)$, we have
$\tilde{\omega} \in X_0$, $\psi_{\tilde{\omega}} = (-\Delta)^{-1}\tilde{\omega}\in \tilde X_0$, and
\begin{align*}
\langle\tilde{A}_0 \psi, \psi\rangle
& = \iint_\Omega\left( |\nabla \psi|^2  - g'(\psi_0)(\psi - P_0\psi)^2 \right)dxdy
 = \iint_\Omega \left(|\nabla \psi|^2  - \frac{\tilde{\omega}^2}{g'(\psi_0)}\right)dxdy \\
& = \iint_\Omega \left(\frac{\tilde{\omega}^2}{g'(\psi_0)} + |\nabla \psi|^2  - 2  \tilde{\omega}(\psi - P_0\psi)\right)dxdy\\
&= \iint_\Omega \left(\frac{\tilde{\omega}^2}{g'(\psi_0)} + |\nabla \psi|^2  - 2  \tilde{\omega}\psi\right)dxdy
 = \iint_\Omega \left(\frac{\tilde{\omega}^2}{g'(\psi_0)} +  |\nabla \psi|^2 - 2  \nabla\psi_{\tilde\omega}\cdot\nabla\psi \right)dxdy \\
  &\geq \iint_\Omega \left(\frac{\tilde{\omega}^2}{g'(\psi_0)} -  |\nabla \psi_{\tilde \omega}|^2\right) dxdy= \langle L_0 \tilde{\omega}, \tilde{\omega}\rangle.
\end{align*}
This proves $n^{\leq 0} (L_0) \geq n^{\leq 0} (\tilde{A}_0)$. Then $n^{\leq 0} (L_0) =n^{\leq 0} (\tilde{A}_0)$, which, along with
$\dim\ker(L_0) = \dim\ker(\tilde{A}_0)$, gives $n^{-} (L_0) =n^{-} (\tilde{A}_0)$.
\end{proof}

To compute $n^{-}(\tilde{A}_0)$, we study the variational problem
\begin{align}\label{variational problem1}
\lambda_1= \inf_{\psi\in\tilde X_0}{\iint_\Omega|\nabla\psi|^2dxdy\over\iint_\Omega g'(\psi_0)(\psi - P_0\psi)^2dxdy}.
\end{align}
$\lambda_1$ is finite due to the Poincar\'e inequality II-$0$ \eqref{Poincare inequality II022}.
We
need the following compact embedding result.
\begin{lemma}\label{compact2P}
$(1)$ $\tilde X_0$ is compactly embedded in $L_{g'(\psi_0)}^2(\Omega)$.

 $ (2)$ $\tilde X_0$ is compactly embedded in
 \begin{equation*}
 Z_{0}:=\left\{\psi\bigg|\iint_{\Omega}g'(\psi_0)|\psi-P_0\psi|^2dxdy<\infty\right\}.
  \end{equation*}
\end{lemma}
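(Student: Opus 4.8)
The plan is to exploit the exponential decay of the weight $g'(\psi_0)(y)=2\,\sech^2(y)$: it concentrates $L^2_{g'(\psi_0)}(\Omega)$-mass near $y=0$, which is exactly what is needed to recover the compactness that the unbounded slab $\Omega$ would otherwise destroy. For part $(1)$ I would start from a sequence $\{\psi_n\}$ bounded in $\tilde X_0$, say $\|\nabla\psi_n\|_{L^2(\Omega)}\le M$, and first produce convergence on bounded pieces. On each slab $\Omega_R:=\mathbb{T}_{2\pi}\times(-R,R)$ the weight satisfies $g'(\psi_0)\ge 2\,\sech^2(R)>0$, so by Lemma~\ref{poincare1} the sequence $\{\psi_n\}$ is bounded in $H^1(\Omega_R)$; Rellich--Kondrachov then yields a subsequence converging in $L^2(\Omega_R)$, and a diagonal extraction over $R=1,2,\dots$ gives one subsequence (still denoted $\psi_n$) converging in $L^2(\Omega_R)$ for every $R$.

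The crucial ingredient is a uniform-in-$n$ tail estimate. Writing $\psi_n$ in its $x$-Fourier series as in \eqref{psi-m-dec} and following the zero-mode/nonzero-mode split used in the proof of Lemma~\ref{poincare1}, I would bound the nonzero-mode part of $\iint_{\Omega\setminus\Omega_R}g'(\psi_0)|\psi_n|^2\,dxdy$ by $C\,\sech^2(R)\,\|\nabla\psi_n\|_{L^2(\Omega)}^2$ (using $\sech^2(y)\le\sech^2(R)$ for $|y|\ge R$, the identity \eqref{tilde-X0-norm}, and $\sum_{k\neq0}|\widehat\psi_{n,k}|^2\le\sum_{k\neq0}k^2|\widehat\psi_{n,k}|^2$), and the zero-mode part by $C\,M^2\int_{|y|>R}|y|\,\sech^2(y)\,dy$ (using $\widehat\psi_{n,0}(0)=0$, so $|\widehat\psi_{n,0}(y)|^2\le|y|\,\|\widehat\psi_{n,0}'\|_{L^2(\mathbb{R})}^2\le C|y|M^2$, exactly as in the proof of Lemma~\ref{poincare1}). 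This gives $\iint_{\Omega\setminus\Omega_R}g'(\psi_0)|\psi_n|^2\,dxdy\le\eta(R)$ with $\eta(R)\to0$ as $R\to\infty$, independent of $n$, since both $\sech^2(y)$ and $|y|\sech^2(y)$ are integrable near $\pm\infty$. Combining this tail bound with the strong $L^2(\Omega_R)$-convergence of the subsequence (on which $g'(\psi_0)\le 2$) shows that $\{\psi_n\}$ is Cauchy in $L^2_{g'(\psi_0)}(\Omega)$, and by completeness of that space it converges there; this proves $(1)$.

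For part $(2)$, the point is that $P_0$ is continuous not merely on $\tilde X_0$ but already in the $L^2_{g'(\psi_0)}$-norm: by Cauchy--Schwarz, $|P_0\phi|\le(8\pi)^{-1}\big(\iint_\Omega g'(\psi_0)\,dxdy\big)^{1/2}\|\phi\|_{L^2_{g'(\psi_0)}(\Omega)}$ (cf.\ Corollary~\ref{projection}). Hence, given $\{\psi_n\}$ bounded in $\tilde X_0$ and the subsequence from part $(1)$ with $\psi_n\to\psi$ in $L^2_{g'(\psi_0)}(\Omega)$, I would infer $P_0\psi_n\to P_0\psi$, and then by linearity of $P_0$,
\[
\iint_\Omega g'(\psi_0)\big|(\psi_n-P_0\psi_n)-(\psi-P_0\psi)\big|^2\,dxdy\le 2\iint_\Omega g'(\psi_0)|\psi_n-\psi|^2\,dxdy+16\pi\,|P_0\psi_n-P_0\psi|^2\longrightarrow 0,
\]
which is precisely convergence in $Z_0$; therefore the inclusion $\tilde X_0\hookrightarrow Z_0$ is compact. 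I expect the only genuine obstacle to be the uniform-in-$n$ tail bound in part $(1)$ — everything else is a standard Rellich-plus-diagonalization argument, and part $(2)$ reduces softly to part $(1)$.
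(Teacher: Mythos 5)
Your proposal is correct and follows essentially the same route as the paper's proof: a zero-mode/non-zero-mode Fourier split, a uniform tail estimate driven by the exponential decay of $g'(\psi_0)$ (using $\widehat\psi_{n,0}(0)=0$ for the zero mode and $\sum_{k\neq0}|\widehat\psi_{n,k}|^2\le\sum_{k\neq0}k^2|\widehat\psi_{n,k}|^2$ for the rest), compactness on a bounded slab, and then part $(2)$ deduced from part $(1)$ via the $L^2_{g'(\psi_0)}$-continuity of $P_0$. The only cosmetic differences are that the paper applies the compact embedding to $\sqrt{g'(\psi_0)}\,\psi_n$ on $\mathbb{T}_{2\pi}\times[-K,K]$ and runs the Cauchy estimate directly on differences, whereas you use Rellich on $\psi_n$ itself (via the lower bound of the weight on the slab) with a diagonal extraction, which is equivalent.
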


\begin{proof} First, we prove (1).
By the Poincar\'e inequality I-$0$ \eqref{Poincare inequality I022}, $\tilde X_0$ is embedded in $L_{g'(\psi_0)}^2(\Omega)$. To prove that the embedding is compact, let $\{\psi_n\}_{n\geq1}$ be a bounded sequence in $\tilde{X}_0$. We decompose $\psi_n=\widehat{\psi}_{n,0}+\psi_{n,{\neq0}}$ as in \eqref{psi-m-dec}.  By  \eqref{tilde-X0-norm} we have
 \begin{align}\label{psi-n-0-mode-unidform bound}
 \|\widehat{\psi}_{n,0}'\|_{L^2(\mathbb{R})}<C \quad\text{and} \quad\|\psi_{n,{\neq0}}\|_{H^1(\Omega)}<C,\quad n\geq1.
 \end{align}
For any $\kappa > 0$, there exists $K>0$ such that $g'(\psi_0(y))=2\sech^2(y)<\kappa$ for $y\in (-\infty,-K]\cup[K,\infty)$, and
 \begin{align*}
 \int_{(-\infty, -K)\cup(K,\infty)}g'(\psi_0)|y|dy =2\int_{(-\infty, -K)\cup(K,\infty)} \sech^2(y)|y|dy <\kappa.
 \end{align*}
  Then by \eqref{psi-n-0-mode-unidform bound} and $\widehat{\psi}_{n,0}(0)=0$ for $n\geq1$, we have
\begin{align*}
&\int_{(-\infty, -K)\cup(K,\infty)}g'(\psi_0)(\widehat{\psi}_{n,0}-\widehat{\psi}_{m,0})^2d y\\
 \leq& \|\widehat{\psi}_{n,0}'-\widehat{\psi}_{m,0}'\|_{L^2(\mathbb{R})}^2\int_{(-\infty, -K)\cup(K,\infty)}g'(\psi_0)|y|dy \leq C\kappa
\end{align*}
and
\begin{align*}
\int_0^{2\pi}\int_{(-\infty, -K)\cup(K,\infty)} g'(\psi_0) (\psi_{n,{\neq 0}}-\psi_{m,{\neq 0}})^2 dydx \leq  \kappa \| \psi_{n,{\neq 0}} - \psi_{m,{\neq 0}}\|_{H^1(\Omega)}^2 \leq C \kappa
\end{align*}
for $m,n\geq1$.
Thus,
\begin{align*}
&\int_0 ^ {2\pi} \int_{(-\infty, -K)\cup(K,\infty)} g'(\psi_0) (\psi_n -\psi_m)^2 dydx \\
\leq&2\int_0 ^ {2\pi} \int_{(-\infty, -K)\cup(K,\infty)} g'(\psi_0) \left((\widehat{\psi}_{n,0}-\widehat{\psi}_{m,0})^2+ (\psi_{n,{\neq 0}}-\psi_{m,{\neq 0}})^2 \right)dydx\leq C \kappa.
\end{align*}
Since $\|\widehat{\psi}_{n,0}\|_{L^2(-K,K)}^2\leq 2K^2\|\widehat{\psi}_{n,0}'\|_{L^2(-K,K)}^2\leq C_K$, we infer from \eqref{psi-n-0-mode-unidform bound} that
$\{\sqrt{g'(\psi_0)}\psi_n\}_{n\geq1}$ is a  bounded sequence in $H^1(\mathbb{T}_{2\pi} \times [-K,K])$. Since the embedding $H^1\hookrightarrow L^2(\mathbb{T}_{2\pi} \times [-K,K])$ is compact, then up to a subsequence, there exists $N>0$ such that $\|\psi_n -\psi_m\|_{L^2_{g'(\psi_0)}(\mathbb{T}_{2\pi} \times [-K,K])}=\|\sqrt{g'(\psi_0)}(\psi_n-\psi_m)\|_{L^2(\mathbb{T}_{2\pi} \times [-K,K])}<\kappa$ for $m,n>N$. Thus, up to a subsequence,
\begin{align*}
& \|\psi_n -\psi_m\|_{L^2_{g'(\psi_0)}(\Omega)}^2 = \|\sqrt{g'(\psi_0)}(\psi_n-\psi_m)\|_{L^2(\mathbb{T}_{2\pi} \times [-K,K])}^2  \\ &+\|\sqrt{g'(\psi_0)}(\psi_n-\psi_m)\|_{L^2(\mathbb{T}_{2\pi} \times ((-\infty, -K)\cup(K,\infty)))}^2   \leq \kappa^2+ C \kappa
\end{align*}
for $m,n>N$, which implies that there exists $\psi_* \in L^2_{g'(\psi_0)}(\Omega)$ such that $\psi_n\rightarrow\psi_*$ in $ L^2_{g'(\psi_0)}(\Omega)$.

Then we prove (2).
By the Poincar\'e inequality II-$0$ \eqref{Poincare inequality II022}, $\tilde X_0$ is embedded in $Z_0$. Let $\{\psi_n\}_{n\geq1}$ be a bounded sequence in $\tilde{X}_0$. By (1), we know that there exists $\psi_* \in L^2_{g'(\psi_0)}(\Omega)$ such that, up to a subsequence, $\psi_n \rightarrow \psi_*$ in $L^2_{g'(\psi_0)}(\Omega)$, and  it follows from \eqref{p0-psi-estimates-2} that
\begin{align*}
|P_0( \psi_n - \psi_*)| \leq C \|\psi_n - \psi_*\|_{L^2_{g'(\psi_0)}(\Omega)} \rightarrow 0\quad \text{as}\quad n \rightarrow  \infty.
\end{align*}
Thus, up to a subsequence, we have
\begin{align*}
&\iint_{\Omega} g'(\psi_0) \left((\psi_n - \psi_*) - P_0 (\psi_n -  \psi_*)\right)^2 dxdy \\
\leq & 2 \iint_{\Omega} g'(\psi_0) \left((\psi_n - \psi_*)^2 + \left(P_0 (\psi_n -  \psi_*)\right)^2\right) dxdy \\
\leq & 2\|\psi_n - \psi_*\|_{L^2_{g'(\psi_0)}(\Omega)}^2 + C|P_0( \psi_n -  \psi_*)|^2 \\
\leq & C \|\psi_n - \psi_*\|_{L^2_{g'(\psi_0)}(\Omega)}^2 \rightarrow 0 \quad\text{as}\quad n \rightarrow  \infty.
\end{align*}
\end{proof}

Since  the embedding  $\tilde X_0\hookrightarrow Z_{0}$ is  compact, a standard argument in variational method implies that
 the  infimum in  \eqref{variational problem1} can be attained in $\tilde X_0$, and we can
inductively define $\lambda_n$ as follows for $n\geq1$,
\begin{align}\nonumber
\lambda_n=& \inf_{\psi \in \tilde X_0, (\psi, \psi_{i})_{Z_0} = 0, i = 1, 2, \cdots, n-1}{\iint_\Omega|\nabla\psi|^2dxdy\over\iint_\Omega g'(\psi_0)(\psi - P_0\psi)^2dxdy}\\\label{variational problem2}
=&\min_{\psi \in \tilde X_0, (\psi, \psi_{i})_{Z_0} = 0, i = 1, 2, \cdots, n-1}{\iint_\Omega|\nabla\psi|^2dxdy\over\iint_\Omega g'(\psi_0)(\psi - P_0\psi)^2dxdy},
\end{align}
where the infimum for $\lambda_i$ is attained at $\psi_{i} \in \tilde X_0$ and $\iint_\Omega g'(\psi_0)(\psi_{i} - P_0\psi_{i})^2 dxdy = 1$, $1\leq i \leq n-1$.
To solve the variational problem \eqref{variational problem2}, we compute the first variation of $G(\psi)={\iint_\Omega|\nabla\psi|^2dxdy\over\iint_\Omega g'(\psi_0)(\psi - P_0\psi)^2dxdy}$ at $\psi_{n}$:
\begin{align*}
\frac{d}{d \tau} G(\psi_{n} + \tau \psi)|_{\tau = 0} = \iint_{\Omega} 2\left(-\Delta\psi_n - \lambda_ng'(\psi_0)(\psi_n - P_0\psi_n)\right)\psi dxdy,\quad\forall\; \psi\in \tilde X_0.
\end{align*}
Due to the fact that  $\widehat{\psi}_0(0)=0$ for $\psi\in \tilde X_0$, we
derive the corresponding Euler-Lagrangian equation
\begin{align}\label{elip0}
-\Delta \psi = \lambda g'(\psi_0)(\psi - P_0\psi)+a\delta(y), \quad \psi \in \tilde{X}_0,
\end{align}
where $\delta$ is the Dirac delta function and $a\in\mathbb{R}$ is to be determined. Thanks to the projection $P_0$, integrating \eqref{elip0} on $\Omega$, we have
$$
2\pi a=\iint_\Omega-\Delta \psi -\lambda g'(\psi_0)(\psi - P_0\psi) dxdy=0\;\;\Longrightarrow \;\;a=0,
$$
and thus,  we arrive at the associated eigenvalue problem
\begin{align}\label{elip02}
-\Delta \psi = \lambda g'(\psi_0)(\psi - P_0\psi), \quad \psi \in \tilde{X}_0.
\end{align}
Since $g'(\psi_0)$ depends only on $y$, we can use the Fourier expansion of
$\psi$ to separate the variables.
Since $\psi(x,y)=\sum_{k\in\mathbb{Z}}\widehat{\psi}_{k}(y)e^{ik x} \in \tilde{X}_0$, we infer from \eqref{tilde-X0-norm} that
\begin{align}\label{def-space-Y0Yk}
\widehat{\psi}_0 \in Y_0 = \{\phi | \phi \in \dot{H}^1(\mathbb{R}), \phi(0) = 0\}\quad\text{and}\quad\widehat{\psi}_{k} \in Y_1 =  H^1(\mathbb{R})\quad \text{for}\quad k\neq0.
\end{align}
Plugging the Fourier expansion $\psi(x,y)=\sum_{k\in\mathbb{Z}}\widehat{\psi}_{k}(y)e^{ik x}$ into \eqref{elip02}, we get the eigenvalue problem for the  $0$-mode
\begin{align}\label{mode0}
- \phi'' = 2\lambda \sech^2(y) (I - P_0) \phi, \quad \phi\in Y_0,
\end{align}
with $$P_0 \phi = \frac{1}{2} \int_{\mathbb{R}} \sech^2(y) \phi(y) dy,$$
and the eigenvalue problem for the  $k$-mode
\begin{align}\label{modek}
-  \phi'' + k^2 \phi = 2\lambda \sech^2(y)\phi, \quad \phi\in Y_1, \quad k \neq 0,
\end{align}
since $$P_0 (\phi e^{ikx}) =\frac{1}{4\pi} \iint_\Omega  \sech^2(y) \phi(y)e^{ikx} dxdy = 0.$$

\subsection{Exact solutions to the associated  eigenvalue problems for the shear case}
\subsubsection{A change of variable}\label{Change of variable}

Our motivation for introducing a change of variable comes from the eigenvalue problem \eqref{mode0} for the \(0\)-mode. Differentiating the steady-state equation
\[
-\Delta \psi_0=g(\psi_0)
\]
with respect to \(y\), we find that \(\lambda=1\) is an eigenvalue of \eqref{mode0}, with corresponding eigenfunction \(\tanh(y)\); see also (16.3) in \cite{Laugesen2011}. Guided by the numerical computation in Subsection \ref{eigenfunction-motivation}, we further identify another eigenvalue \(\lambda=3\) with eigenfunction \(\tanh^2(y)\). This suggests that all eigenfunctions of \eqref{mode0} might be polynomials in \(\tanh(y)\). Substituting such polynomials into \eqref{mode0}, we obtain the following five eigenvalues and corresponding eigenfunctions:

\begin{equation}\label{eigen value-function}\begin{aligned}\begin{array}{llll}
&\lambda_1=1=1, &\phi_1(y)=\tanh(y),\\
&\lambda_2=1+2=3, &\phi_2(y)=\tanh^2(y),\\
&\lambda_3=1+2+3=6, &\phi_3(y)=5\tanh^3(y)-3\tanh(y),\\
&\lambda_4=1+2+3+4=10, &\phi_4(y)=7\tanh^4(y)-6\tanh^2(y),\\
&\lambda_5=1+2+3+4+5=15, &\phi_5(y)=9\tanh^5(y)-10\tanh^3(y)+{15\over7}\tanh(y).
\end{array}
\end{aligned}
\end{equation}
This suggests that the eigenvalues of \eqref{mode0} might be given by
\[
\lambda_n=\frac{n(n+1)}{2},
\]
with eigenfunctions that are polynomials in \(\tanh(y)\). Guided by \eqref{eigen value-function}, we therefore introduce the change of variable
\begin{align}\label{change of variable for 0 mode}
\gamma=\tanh(y)\in(-1,1).
\end{align}
The key point is that this transformation converts the eigenvalue problem \eqref{mode0} for the \(0\)-mode and the eigenvalue problem \eqref{modek} for the non-zero modes into classical Legendre and general Legendre equations, with the projection terms and function spaces built into the formulation. This will be explained in the next subsection. For the Kelvin--Stuart vortices \(\omega_\ep\) with \(0<\ep<1\), we later introduce a more delicate change of variables in Subsection \ref{change of variables for cat's eyes  flows}, which again reduces the corresponding eigenvalue problems to Legendre-type boundary value problems. As a result, the stability analysis of Kelvin--Stuart vortices turns out to be closely related to spherical harmonics.

In the new variables $(x,\gamma)$, we rewrite the spaces of stream functions $\tilde X_0$ and  $Z_{0}$,  Poincar\'e inequality I-II (see \eqref{Poincare inequality I022}, \eqref{Poincare inequality II022}) and the compact embedding   $\tilde X_0\hookrightarrow Z_{0}$, respectively. These statements in the new variables are also useful in establishing the correspondence of stream functions between the hyperbolic tangent shear case ($\ep=0$) and the cat's-eye case ($0<\ep<1$).

First, the space $\tilde X_0$ in \eqref{tilde-X0} is rewritten as the following space in the new variables $(x,\gamma)$.
\if0
\begin{lemma}\label{Hilbert}
The function space
\begin{align}\label{tilde-X0}
\tilde{X}_0 = \left\{ \psi \bigg| \|\nabla \psi\|_{L^2(\Omega)} < \infty\quad {\rm{ and }}\quad \int_{0}^{2\pi} \psi(x, 0)d x= 0 \right\}
\end{align} equipped with the inner product $$(\psi_1, \psi_2) = \iint_\Omega \nabla \psi_1 \cdot \nabla \psi_2 dxdy \quad \forall\; \psi_1, \psi_2 \in \tilde{X}_0$$ is a Hilbert space.
\end{lemma}
\fi
\begin{lemma}\label{Hilbert-new variables-0}
The function space
\begin{align}\label{tilde-Y0-def}
\tilde{Y}_0 = \left\{ \Psi\bigg|\iint_{\tilde \Omega}\left({1\over1-\gamma^2}|\Psi_{x}|^2+(1-\gamma^2)|\Psi_{\gamma}|^2\right)d x d\gamma< \infty \text{ and } \widehat{\Psi}_0(0)=0 \right\}
\end{align}
equipped with the inner product $$(\Psi_1, \Psi_2) = \iint_{\tilde{\Omega}}  \left({1\over1-\gamma^2}(\Psi_1)_{x}(\Psi_2)_{x} +(1-\gamma^2)(\Psi_1)_{\gamma}(\Psi_2)_{\gamma}\right)d x d\gamma, \quad \forall\; \Psi_1, \Psi_2 \in \tilde{Y}_0$$ is a Hilbert space, where $\tilde \Omega=\mathbb{T}_{2\pi}\times [-1,1]$.
\end{lemma}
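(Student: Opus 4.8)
The plan is to reduce the statement to Lemma \ref{Hilbert} by exhibiting an isometric isomorphism between $\tilde{X}_0$ and $\tilde{Y}_0$ induced by the change of variable $\gamma=\tanh(y)$. Recall that $y\mapsto\tanh(y)$ is a $C^\infty$ increasing bijection from $\mathbb{R}$ onto $(-1,1)$, with inverse $\gamma\mapsto\mathrm{arctanh}(\gamma)$, satisfying $dy=d\gamma/(1-\gamma^2)$ and $\partial_y=(1-\gamma^2)\partial_\gamma$; it is absolutely continuous with a.e.\ positive derivative and so is its inverse, hence it maps null sets to null sets in both directions and the pullback $T$ defined by $(T\psi)(x,\gamma)=\psi(x,\mathrm{arctanh}(\gamma))$, equivalently $\psi(x,y)=(T\psi)(x,\tanh(y))$, is well defined on equivalence classes.

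First I would check that $T$ is a linear bijection from $\tilde{X}_0$ onto $\tilde{Y}_0$ that preserves the norm. Using the change-of-variables formula for integrals together with the chain rule for weak derivatives on each compact slab $\mathbb{T}_{2\pi}\times[-R,R]$ — on which $y\mapsto\tanh(y)$ is a $C^1$ diffeomorphism onto its image — and then letting $R\to\infty$, one obtains, with $\Psi=T\psi$,
\begin{align*}
\|\nabla\psi\|_{L^2(\Omega)}^2=\iint_\Omega\bigl(|\psi_x|^2+|\psi_y|^2\bigr)\,dxdy=\iint_{\tilde\Omega}\left(\frac{|\Psi_x|^2}{1-\gamma^2}+(1-\gamma^2)|\Psi_\gamma|^2\right)dxd\gamma=\|\Psi\|_{\tilde{Y}_0}^2,
\end{align*}
so in particular one side is finite if and only if the other is. Moreover $\gamma=0$ corresponds to $y=0$, and near that slice the substitution is a genuine diffeomorphism, so the trace $\widehat{\Psi}_0(0)$ is well defined exactly as $\widehat{\psi}_0(0)$ is in Lemma \ref{Hilbert}, and $\widehat{\Psi}_0(0)=\widehat{\psi}_0(0)$. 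Hence $\psi\in\tilde{X}_0$ if and only if $\Psi\in\tilde{Y}_0$, the inverse of $T$ is $\Psi\mapsto\Psi(x,\tanh(\cdot))$, and polarizing the norm identity shows $T$ preserves the inner product as well.

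Since $\tilde{X}_0$ is a Hilbert space by Lemma \ref{Hilbert} and $T$ is an isometric isomorphism onto $\tilde{Y}_0$, any Cauchy sequence in $\tilde{Y}_0$ is the $T$-image of a Cauchy sequence in $\tilde{X}_0$, which converges in $\tilde{X}_0$; applying $T$ gives convergence in $\tilde{Y}_0$. Thus $\tilde{Y}_0$ is complete, hence a Hilbert space.

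I expect the only delicate point to be the justification that $\psi_y$ transforms into $(1-\gamma^2)\Psi_\gamma$ as weak derivatives globally for $\gamma\in(-1,1)$: the map $y\mapsto\tanh(y)$ is smooth but not bi-Lipschitz near $\gamma=\pm1$, so the chain rule should be applied on the exhausting family of compact slabs and then extended by the fact that all quantities involved are monotone limits of the corresponding integrals over those slabs. An equivalent but more pedestrian route, which avoids even this issue, is to repeat the proof of Lemma \ref{Hilbert} verbatim in the variables $(x,\gamma)$: split $\Psi$ into its zero Fourier mode and the remainder, bound $\|\Psi_{\neq0}\|_{L^2(\tilde\Omega)}$ by the weighted gradient norm, invoke the Trace Theorem on the slice $\gamma=0$, and use completeness of the one-dimensional weighted Sobolev spaces; the weights $1/(1-\gamma^2)$ and $1-\gamma^2$ play precisely the roles that the constant $1$ plays in the $(x,y)$ picture after the substitution, so the argument carries over line by line.
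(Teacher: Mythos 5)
Your proposal is correct and follows essentially the same route as the paper: both transfer the problem to $\tilde{X}_0$ via the change of variable $\gamma=\tanh(y)$, using the identity of the inner products and the correspondence $\widehat{\Psi}_0(0)=\widehat{\psi}_0(0)$ (since $y=0\Leftrightarrow\gamma=0$), and then invoke the completeness of $\tilde{X}_0$ from Lemma \ref{Hilbert}. Your extra care about the weak chain rule near $\gamma=\pm1$ and the alternative of repeating the argument directly in $(x,\gamma)$ are fine but not needed beyond what the paper does.
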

\begin{proof}
For $\psi_i(x,y)=\Psi_i(x,\gamma)$, $i=1,2$, we have
\begin{align}\label{cor-0-new}
\iint_\Omega\nabla \psi_1\cdot\nabla \psi_2dxdy
=\iint_{\tilde \Omega}\left({1\over1-\gamma^2}(\Psi_1)_{x}(\Psi_2)_{x}+(1-\gamma^2)(\Psi_1)_{\gamma}(\Psi_2)_{\gamma}\right)d x d\gamma.
\end{align}
Moreover, $y=0\Longleftrightarrow\gamma=0$, and thus,
\begin{align}\label{0 mode x gamma}
\widehat{\psi}_0(0)=\widehat{\Psi}_0(0)
\end{align}
for $\psi(x,y)=\Psi(x,\gamma)$.
The conclusion follows from \eqref{cor-0-new}-\eqref{0 mode x gamma} and  the fact that $\tilde X_0$ is a Hilbert space by Lemma \ref{Hilbert}.
\end{proof}
Let  $\psi \in \tilde{X}_0$ and $\Psi \in \tilde{Y}_0$ such that $\psi(x,y) = \Psi(x, \gamma)$. It follows from \eqref{cor-0-new} that
\begin{align}\label{cor-0-new-norm}
\|\psi\|_{\tilde X_0}^2= \|\nabla \psi\|_{L^2(\Omega)}^2=\iint_{\tilde \Omega}\left({1\over1-\gamma^2}|\Psi_{x}|^2+(1-\gamma^2)|\Psi_{\gamma}|^2\right)d x d\gamma=\|\Psi\|_{\tilde Y_0}^2.
\end{align}
Corresponding to $P_0$ in \eqref{P0-psi-def}, we define a $1$-dimensional projection operator $\tilde P_0$ on $ \tilde{Y}_0$ by
 \begin{align}\label{def-tilde-P0-Psi}
 \tilde P_0\Psi = \frac{\iint_{\tilde\Omega} \Psi dxd\gamma}{\iint_{\tilde\Omega}  dxd\gamma}=\frac{\iint_{\tilde \Omega} \Psi dxd\gamma}{4\pi},\quad \Psi \in \tilde{Y}_0.
 \end{align}
Then we prove that $\tilde P_0$ is well-defined on $ \tilde{Y}_0$, and give the Poincar\'e-type inequalities in the new variables $(x,\gamma)$.
\begin{lemma}\label{Poincare ineqalities-new-variable0}
$(1)$ Poincar\'e inequality $\textup{I-}0'$:
\begin{align*}
\|\Psi\|_{L^2(\tilde\Omega)}^2  \leq C \iint_{\tilde \Omega}\left({1\over1-\gamma^2}|\Psi_{x}|^2+(1-\gamma^2)|\Psi_{\gamma}|^2\right)d x d\gamma=C\|\Psi\|_{\tilde Y_0}^2, \quad \Psi \in \tilde{Y}_0.
\end{align*}

$(2)$ The projection operator $\tilde P_0 $ is well-defined  on $ \tilde{Y}_0$, $|\tilde P_0\Psi|\leq C \|\Psi\|_{\tilde Y_0}$, and $P_0\psi=\tilde P_0\Psi$ for  $\psi \in \tilde{X}_0$ and $\Psi \in \tilde{Y}_0$ such that $\psi(x,y) = \Psi(x, \gamma)$.

$(3)$ Poincar\'e inequality $\textup{II-}0'$:
\begin{align*}
\iint_{\tilde \Omega}|\Psi - \tilde P_0\Psi|^2 dxd\gamma  \leq C \iint_{\tilde \Omega}\left({1\over1-\gamma^2}|\Psi_{x}|^2+(1-\gamma^2)|\Psi_{\gamma}|^2\right)d x d\gamma=C\|\Psi\|_{\tilde Y_0}^2, \quad \Psi \in \tilde{Y}_0.
\end{align*}
\end{lemma}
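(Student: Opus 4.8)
The plan is to deduce all three parts by transporting the $(x,y)$-results already established — Poincaré inequalities I-$0$ and II-$0$ (Lemmas \ref{poincare1}, \ref{poincare2}) and Corollary \ref{projection} — through the change of variable $\gamma = \tanh(y)$. The single computation that powers everything is the transformation of the weighted measure: since $g'(\psi_0) = 2\sech^2(y) = 2(1-\gamma^2)$ and $dy = d\gamma/(1-\gamma^2)$, for any pair $\psi(x,y) = \Psi(x,\gamma)$ one has
$$\iint_\Omega g'(\psi_0)\,\psi\,dxdy = 2\iint_{\tilde\Omega}\Psi\,dxd\gamma, \qquad \iint_\Omega g'(\psi_0)\,|\psi|^2\,dxdy = 2\iint_{\tilde\Omega}|\Psi|^2\,dxd\gamma,$$
which, combined with the norm identity $\|\nabla\psi\|_{L^2(\Omega)}^2 = \|\Psi\|_{\tilde Y_0}^2$ from \eqref{cor-0-new-norm} and the correspondence $\tilde X_0 \leftrightarrow \tilde Y_0$ from Lemma \ref{Hilbert-new variables-0}, converts each $(x,y)$-inequality into its $(x,\gamma)$-counterpart.

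Concretely, for $(1)$, given $\Psi \in \tilde Y_0$ take $\psi \in \tilde X_0$ with $\psi(x,y) = \Psi(x,\gamma)$; then by the second measure identity and Poincaré inequality I-$0$,
$$2\|\Psi\|_{L^2(\tilde\Omega)}^2 = \iint_\Omega g'(\psi_0)|\psi|^2\,dxdy \leq C\|\nabla\psi\|_{L^2(\Omega)}^2 = C\|\Psi\|_{\tilde Y_0}^2.$$
For $(2)$, the first measure identity gives $P_0\psi = \frac{1}{8\pi}\iint_\Omega g'(\psi_0)\psi\,dxdy = \frac{1}{4\pi}\iint_{\tilde\Omega}\Psi\,dxd\gamma = \tilde P_0\Psi$, so $\tilde P_0$ is well-defined on $\tilde Y_0$ and $|\tilde P_0\Psi| = |P_0\psi| \leq C\|\nabla\psi\|_{L^2(\Omega)} = C\|\Psi\|_{\tilde Y_0}$ by Corollary \ref{projection}. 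For $(3)$, using $P_0\psi = \tilde P_0\Psi$ from $(2)$ I would pull back Poincaré inequality II-$0$ directly,
$$2\iint_{\tilde\Omega}|\Psi - \tilde P_0\Psi|^2\,dxd\gamma = \iint_\Omega g'(\psi_0)|\psi - P_0\psi|^2\,dxdy \leq C\|\nabla\psi\|_{L^2(\Omega)}^2 = C\|\Psi\|_{\tilde Y_0}^2;$$
alternatively one may argue as in the proof of Lemma \ref{poincare2} and bound $\iint_{\tilde\Omega}|\Psi - \tilde P_0\Psi|^2 \leq 2\|\Psi\|_{L^2(\tilde\Omega)}^2 + 8\pi|\tilde P_0\Psi|^2$ using $(1)$ and $(2)$.

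I do not anticipate any real obstacle: the content is pure change-of-variable bookkeeping. The only point deserving a line of care is verifying that $\psi \leftrightarrow \Psi$ genuinely identifies $\tilde X_0$ with $\tilde Y_0$, including the normalization $\widehat{\Psi}_0(0) = \widehat{\psi}_0(0) = 0$ recorded in \eqref{0 mode x gamma} — but this is exactly Lemma \ref{Hilbert-new variables-0}, already available.
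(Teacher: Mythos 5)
Your proposal is correct and coincides with the paper's own proof: both pull back Lemmas \ref{poincare1}, \ref{poincare2} and Corollary \ref{projection} through $\gamma=\tanh(y)$, using $g'(\psi_0)\,dy=2\,d\gamma$, the identity $P_0\psi=\tilde P_0\Psi$, and the norm equality \eqref{cor-0-new-norm}. No gaps.
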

\begin{proof}  Let $\psi(x,y)=\Psi(x,\gamma)$. Then $\psi\in \tilde {X}_0$. First, we prove (1).
By Lemma  \ref{poincare1} and \eqref{cor-0-new-norm}, we have
\begin{align*}
&2\iint_{\tilde \Omega}|\Psi|^2 dxd\gamma=\iint_\Omega g'(\psi_0)|\psi|^2 dxdy  \\
\leq& C \|\nabla \psi\|_{L^2(\Omega)}^2=C \iint_{\tilde \Omega}\left({1\over1-\gamma^2}|\Psi_{x}|^2+(1-\gamma^2)|\Psi_{\gamma}|^2\right)d x d\gamma.
\end{align*}

Next, we prove (2). By \eqref{P0-psi-def} and \eqref{def-tilde-P0-Psi}, we have $P_0\psi=\tilde P_0\Psi$.  Thus, we infer from
\eqref{p0-psi-estimates-2} that
\begin{align*}
|\tilde P_0\Psi|=|P_0\psi|\leq C \| \psi\|_{\tilde X_0}=C \|\Psi\|_{\tilde Y_0}.
\end{align*}

Finally, we prove (3).
By Lemma \ref{poincare2}, $P_0\psi=\tilde P_0\Psi$ and \eqref{cor-0-new-norm} we have
\begin{align*}
&2\iint_{\tilde \Omega} |\Psi -\tilde P_0\Psi|^2 dxd\gamma =\iint_\Omega g'(\psi_0)|\psi - P_0\psi|^2 dxdy  \\
\leq & C \|\nabla \psi\|_{L^2(\Omega)}^2=C \iint_{\tilde \Omega}\left({1\over1-\gamma^2}|\Psi_{x}|^2+(1-\gamma^2)|\Psi_{\gamma}|^2\right)d x d\gamma.
\end{align*}
\end{proof}

Now we give the  compact embedding lemma in the new variables.
\begin{lemma}\label{compact2P-new-variable0}
$(1)$ $\tilde Y_0$ is compactly embedded in $L^2(\tilde \Omega)$.

$(2)$  $\tilde Y_0$ is compactly embedded in
 \begin{equation*}
 \tilde Z_{0}:=\left\{\Psi\bigg|\iint_{\tilde \Omega}|\Psi-\tilde P_0\Psi|^2dxd\gamma<\infty\right\}.
  \end{equation*}
\end{lemma}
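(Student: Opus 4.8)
The plan is to deduce both statements from Lemma \ref{compact2P} by pushing everything through the change of variables $\gamma=\tanh(y)$, which we have already seen in Lemma \ref{Hilbert-new variables-0} and \eqref{cor-0-new-norm} induces an isometric isomorphism $\Psi\mapsto\psi$, $\psi(x,y)=\Psi(x,\tanh(y))$, from $\tilde Y_0$ onto $\tilde X_0$. The extra ingredient needed is the corresponding identification at the level of the weighted $L^2$ spaces: since $g'(\psi_0)\,dy = 2\sech^2(y)\,dy = 2\,d\gamma$, for $\psi(x,y)=\Psi(x,\tanh(y))$ one has
\begin{align*}
\iint_\Omega g'(\psi_0)|\psi|^2\,dxdy = 2\iint_{\tilde\Omega}|\Psi|^2\,dxd\gamma,\qquad
\iint_\Omega g'(\psi_0)\psi\,dxdy = 2\iint_{\tilde\Omega}\Psi\,dxd\gamma,
\end{align*}
so $\Psi\mapsto\psi$ is, up to the harmless factor $\sqrt2$, an isometry of $L^2(\tilde\Omega)$ onto $L^2_{g'(\psi_0)}(\Omega)$, and, by the second identity together with \eqref{P0-psi-def}--\eqref{def-tilde-P0-Psi}, it intertwines $\tilde P_0$ and $P_0$, hence is also (up to $\sqrt2$) an isometry of $\tilde Z_0$ onto $Z_0$.

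With these identifications, statement (1) follows as follows. The continuity of the embedding $\tilde Y_0\hookrightarrow L^2(\tilde\Omega)$ is exactly the Poincar\'e inequality $\textup{I-}0'$ of Lemma \ref{Poincare ineqalities-new-variable0}(1). For compactness, I would take a bounded sequence $\{\Psi_n\}_{n\geq1}$ in $\tilde Y_0$, set $\psi_n(x,y)=\Psi_n(x,\tanh(y))$, note by \eqref{cor-0-new-norm} that $\{\psi_n\}$ is bounded in $\tilde X_0$, and apply Lemma \ref{compact2P}(1) to extract a subsequence with $\psi_n\to\psi_*$ in $L^2_{g'(\psi_0)}(\Omega)$. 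Writing $\psi_*(x,y)=\Psi_*(x,\tanh(y))$ gives $\Psi_*\in L^2(\tilde\Omega)$, and the first displayed identity applied to $\psi_n-\psi_*$ shows $\|\Psi_n-\Psi_*\|_{L^2(\tilde\Omega)}^2=\tfrac12\iint_\Omega g'(\psi_0)|\psi_n-\psi_*|^2\,dxdy\to0$. Statement (2) is identical: continuity of $\tilde Y_0\hookrightarrow\tilde Z_0$ is Poincar\'e inequality $\textup{II-}0'$ of Lemma \ref{Poincare ineqalities-new-variable0}(3), and for compactness one uses Lemma \ref{compact2P}(2) to get $\iint_\Omega g'(\psi_0)|(\psi_n-\psi_*)-P_0(\psi_n-\psi_*)|^2\,dxdy\to0$ along a subsequence, then transfers back using $P_0(\psi_n-\psi_*)=\tilde P_0(\Psi_n-\Psi_*)$ to obtain $\iint_{\tilde\Omega}|(\Psi_n-\Psi_*)-\tilde P_0(\Psi_n-\Psi_*)|^2\,dxd\gamma\to0$.

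I do not expect a genuine obstacle here: the whole content is the change-of-variables bookkeeping, and the substantive compactness (which on the $(x,y)$ side has to cope with the unbounded strip $\Omega$ and is handled in Lemma \ref{compact2P} by a tail estimate using the exponential decay of $g'(\psi_0)$ together with Rellich--Kondrachov on a finite box) is already available. The only point worth a sentence of care is that the limit $\psi_*$ delivered by Lemma \ref{compact2P} lives a priori only in $L^2_{g'(\psi_0)}(\Omega)$ rather than in $\tilde X_0$, so one should check that it still corresponds to a bona fide element $\Psi_*\in L^2(\tilde\Omega)$ and that $P_0\psi_*=\tilde P_0\Psi_*$; both are immediate from the explicit integral formulas for the projections. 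Alternatively one could rerun the proof of Lemma \ref{compact2P} verbatim in the $(x,\gamma)$ variables, splitting $\tilde\Omega$ into $\mathbb{T}_{2\pi}\times[-1+\eta,1-\eta]$ and the degenerate collars near $\gamma=\pm1$ (where the weight $1-\gamma^2$ vanishes), but the transfer argument is shorter and avoids redoing the estimates.
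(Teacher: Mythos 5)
Your argument is correct and follows essentially the same route as the paper: the paper likewise transfers a bounded sequence $\{\Psi_n\}\subset\tilde Y_0$ to $\{\psi_n\}\subset\tilde X_0$ via $\psi_n(x,y)=\Psi_n(x,\tanh(y))$ and \eqref{cor-0-new-norm}, applies Lemma \ref{compact2P}, and transfers the limit back, with continuity of the embeddings supplied by Lemma \ref{Poincare ineqalities-new-variable0}. Your explicit bookkeeping of the factor $2$ coming from $d\gamma=\tfrac12 g'(\psi_0)\,dy$ and of the intertwining $P_0\psi=\tilde P_0\Psi$ is exactly the (implicit) content of the paper's one-line identification of the $Z_0$ and $\tilde Z_0$ norms, so there is nothing to add.
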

\begin{proof} We only prove (2), and the proof of (1) is similar.
By Lemma \ref{Poincare ineqalities-new-variable0} (3), $\tilde Y_0$ is embedded in $\tilde Z_{0}$. Let $\{\Psi_n\}_{n\geq1}$ be a bounded sequence in $\tilde{Y}_0$ and $\psi_n(x,y)=\Psi_n(x,\gamma)$. Then it follows from \eqref{cor-0-new-norm} that  $\{\psi_n\}_{n\geq1}$ is a bounded sequence in $\tilde{X}_0$. By Lemma \ref{compact2P} (2), there exists $\psi_*\in Z_0$ such that up to a subsequence, $\|\psi_n-\psi_*\|_{Z_0}\to0$. Let $\Psi_*(x,\gamma)=\psi_*(x,y)$. Then $\Psi_*\in \tilde Z_0$ and up to a subsequence,  $\|\Psi_n-\Psi_*\|_{\tilde Z_0}=\|\psi_n-\psi_*\|_{Z_0}\to0$.
\end{proof}
\subsubsection{Solutions to the eigenvalue problems}\label{Solutions to the eigenvalue problem ep=0}
 We  study the eigenvalue problems \eqref{mode0} for the $0$-mode and \eqref{modek} for the non-zero modes, separately.\medskip

\noindent{\bf{Eigenvalue problem for the $0$-mode.}}\medskip

In this part, we solve the eigenvalue problem \eqref{mode0} for the $0$-mode. We use the change of variable $\gamma=\tanh(y)$ and
denote $\phi(y)=\phi(\tanh^{-1}(\gamma))=\varphi(\gamma)$.
 Then $d\gamma=(1-\gamma^2)dy={1\over2}g'(\psi_0)dy$ and
\begin{align*}
\phi'(y)&=(1-\gamma^2)\varphi'(\gamma),\;\;\phi''(y)=(1-\gamma^2)(-2\gamma\varphi'(\gamma)+(1-\gamma^2)\varphi''(\gamma)),\\
P_0\phi&={1\over4}\int_{\mathbb{R}}g'(\psi_0)\phi(y)dy={1\over2}\int_{-1}^{1}\varphi(\gamma)d\gamma=:\hat P_0\varphi.
\end{align*}

Since
\begin{align}\label{phi-der-varohi-der}
\int_{\mathbb{R}}|\phi'(y)|^2dy=\int_{-1}^1(1-\gamma^2)|\varphi'(\gamma)|^2d\gamma,
\end{align}
 the space $Y_0$ (see \eqref{def-space-Y0Yk}) for $\phi$ in the variable $y$ is transformed to
\begin{equation*}
\hat Y_0=\left\{\varphi\bigg|\int_{-1}^1(1-\gamma^2)|\varphi'(\gamma)|^2d\gamma<\infty\text{ and }\varphi(0)=0 \right\}
\end{equation*}
for $\varphi$ in the new variable $\gamma$.
Thus, the eigenvalue problem \eqref{mode0} is transformed to
\begin{align}\label{eigenvalue problem for 0 mode}
-\left((1-\gamma^2)  \varphi'\right)' = 2 \lambda(\varphi-\hat{P}_0\varphi) \quad \text{on}\quad (-1,1),\quad\varphi\in \hat Y_0.
\end{align}
If we neglect the term $-2\lambda\hat P_0\varphi$ and change the space $\hat Y_0$ to $L^2(-1,1)$ for a while, \eqref{eigenvalue problem for 0 mode} surprisingly becomes the  Legendre equation
\begin{equation}\label{eigenvalue problem3}
-\left((1-\gamma^2)  \varphi'\right)' = 2 \lambda\varphi \quad \text{on}\quad (-1,1),\quad \varphi\in L^2(-1,1).
\end{equation}
If we require that  the solution is regular at $\gamma=\pm1$, then
it is well-known that the eigenvalues to the boundary value problems \eqref{eigenvalue problem3} are  $\lambda_n={n(n+1)\over2}$ for $n\geq0$, and the corresponding eigenfunctions are the Legendre polynomials
\begin{align*}
L_n(\gamma)={1\over2^nn!}{d^n\over d\gamma^n}(\gamma^2-1)^n.
\end{align*}
Moreover, $\{L_n\}_{n=0}^\infty$ is a complete and  orthogonal basis in $L^2(-1,1)$ \cite{Weidmann80}.

By \eqref{phi-der-varohi-der} and the fact that $d\gamma=(1-\gamma^2)dy={1\over2}g'(\psi_0)dy$, we get the Poincar\'e inequalities
 in the new variable $\gamma$, which are direct consequence of Lemma \ref{Poincare ineqalities-new-variable0} (1) and (3).
\begin{lemma}\label{Poincare inequalities compact embedding result  new variables}
For any $\varphi \in \hat{Y}_0$, we have
\begin{align*}
 \|\varphi\|^2_{L^2(-1,1)}   \leq C\int_{-1}^1 (1-\gamma)^2| \varphi'|^2d\gamma,\quad \|\varphi-\hat P_0\varphi\|^2_{L^2(-1,1)}   \leq C\int_{-1}^1 (1-\gamma)^2| \varphi'|^2d\gamma.
\end{align*}
\if0
$(2)$ $\hat Y_0$ is compactly embedded in $\hat Z_0 := \{ \varphi | \int_{-1}^1 |\varphi - \hat{P}_0 \varphi|^2 d \gamma < \infty \}.$
\fi
\end{lemma}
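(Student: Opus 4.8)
The plan is to establish the first inequality $\|\varphi\|_{L^2(-1,1)}^2 \le C\int_{-1}^1(1-\gamma)^2|\varphi'|^2\,d\gamma$ directly, and to deduce the second from it. At the outset I would stress that the weight $(1-\gamma)^2$ on the right is, near $\gamma=1$, strictly smaller than the weight $(1-\gamma^2)=(1-\gamma)(1+\gamma)$ that the change of variables $\gamma=\tanh(y)$ produces in \eqref{phi-der-varohi-der}, their ratio $(1+\gamma)/(1-\gamma)$ blowing up as $\gamma\to1$. Hence the stated inequality is genuinely stronger near the degenerate endpoint than what transporting Lemma \ref{Poincare ineqalities-new-variable0} through that change of variables would give, and it must be argued at $\gamma=1$ by a one-dimensional Hardy-type estimate. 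Throughout I may assume $\int_{-1}^1(1-\gamma)^2|\varphi'|^2\,d\gamma<\infty$, since otherwise the inequality is trivial; recall also that $\varphi\in\hat Y_0$ is locally $H^1$, hence absolutely continuous on $(-1,1)$, so the pointwise evaluations and integrations by parts below are legitimate on compact subintervals.

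First I would split at the anchor point $\gamma=0$, writing $\int_{-1}^1|\varphi|^2=\int_{-1}^0|\varphi|^2+\int_0^1|\varphi|^2$. On $[-1,0]$ the weight is non-degenerate, $(1-\gamma)^2\in[1,4]$, so the estimate is elementary: from $\varphi(0)=0$ one has $\varphi(\gamma)=-\int_\gamma^0\varphi'$, whence $|\varphi(\gamma)|^2\le|\gamma|\int_{-1}^0|\varphi'|^2\le\int_{-1}^0|\varphi'|^2$, and integrating in $\gamma$ over $[-1,0]$ together with $(1-\gamma)^2\ge1$ yields $\int_{-1}^0|\varphi|^2\le\int_{-1}^0(1-\gamma)^2|\varphi'|^2$.

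The core of the argument is the half $[0,1]$, where the weight vanishes at $\gamma=1$. Here the plan is to integrate by parts against the primitive $-(1-\gamma)$ of $1$, working first on $[0,1-\delta]$ to obtain the identity $\int_0^{1-\delta}|\varphi|^2 = -\delta\,|\varphi(1-\delta)|^2 + 2\int_0^{1-\delta}(1-\gamma)\,\varphi\,\varphi'\,d\gamma$. The left boundary term at $\gamma=0$ drops out because $\varphi(0)=0$, and the right boundary term $-\delta\,|\varphi(1-\delta)|^2$ carries the favorable (non-positive) sign, so it may be discarded. Cauchy-Schwarz on the remaining integral, pairing $|\varphi|$ with $(1-\gamma)|\varphi'|$, gives $\int_0^{1-\delta}|\varphi|^2\le 2\big(\int_0^{1-\delta}|\varphi|^2\big)^{1/2}\big(\int_0^{1-\delta}(1-\gamma)^2|\varphi'|^2\big)^{1/2}$, hence $\int_0^{1-\delta}|\varphi|^2\le 4\int_0^{1-\delta}(1-\gamma)^2|\varphi'|^2$; letting $\delta\to0^+$ and adding the two halves proves the first inequality with $C=4$.

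The second inequality then follows cheaply: since $\hat P_0\varphi=\tfrac12\int_{-1}^1\varphi\,d\gamma$ is a constant, Cauchy-Schwarz gives $\|\hat P_0\varphi\|_{L^2(-1,1)}^2=2|\hat P_0\varphi|^2\le\|\varphi\|_{L^2(-1,1)}^2$, so $\|\varphi-\hat P_0\varphi\|_{L^2}^2\le 2\|\varphi\|_{L^2}^2+2\|\hat P_0\varphi\|_{L^2}^2\le 4\|\varphi\|_{L^2}^2$, and the first inequality closes it. I expect the only real obstacle to be the $[0,1]$ estimate: the naive comparison $\int(1-\gamma)^2|\varphi'|^2\ge c\int(1-\gamma^2)|\varphi'|^2$ is false near $\gamma=1$, and a Cauchy-Schwarz bound of $\varphi$ by its derivative anchored at $\gamma=0$ diverges because the weight degenerates far from the anchor. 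The integration-by-parts identity above, whose right boundary term happens to have the correct sign, is exactly what makes the sharp Hardy estimate with the degenerate weight go through.
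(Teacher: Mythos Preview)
Your argument is correct for the inequality exactly as printed, with weight $(1-\gamma)^2$, and you were right to notice that this weight does not match the $(1-\gamma^2)$ that the change of variables $\gamma=\tanh y$ naturally produces via \eqref{phi-der-varohi-der}. In fact the paper's own proof consists of a single sentence invoking \eqref{phi-der-varohi-der} and saying the lemma is a ``direct consequence of Lemma \ref{Poincare ineqalities-new-variable0} (1) and (3)'' --- that argument only yields the $(1-\gamma^2)$ version. Moreover, the sole downstream use of the lemma (to show $\hat Y_0\hookrightarrow L^2(-1,1)$ in the proof of Lemma \ref{sol to eigenvalue problem}) needs the $(1-\gamma^2)$ weight, since for a generic $\varphi\in\hat Y_0$ the quantity $\int_{-1}^1(1-\gamma)^2|\varphi'|^2\,d\gamma$ can be infinite (the weight $(1-\gamma)^2$ does not vanish at $\gamma=-1$, whereas the defining weight $1-\gamma^2$ does). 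So $(1-\gamma)^2$ is almost certainly a typo for $(1-\gamma^2)$.

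That said, your direct Hardy-type route is a genuinely different and more self-contained alternative to the paper's transport argument, and it adapts to the intended weight with one modification. On $[0,1]$ your integration-by-parts identity already gives $\int_0^1|\varphi|^2\le 4\int_0^1(1-\gamma)^2|\varphi'|^2\le 4\int_0^1(1-\gamma^2)|\varphi'|^2$, since $(1-\gamma)^2\le(1-\gamma^2)$ there. On $[-1,0]$, however, your elementary Cauchy--Schwarz bound relies on $\int_{-1}^0|\varphi'|^2<\infty$, which is \emph{not} guaranteed by $\varphi\in\hat Y_0$ (again because $1-\gamma^2$ vanishes at $-1$); you would need to rerun the same Hardy trick there, integrating against the primitive $1+\gamma$ and using $(1+\gamma)^2\le 1-\gamma^2$ on $[-1,0]$. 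With that symmetric treatment at both endpoints, your method proves the $(1-\gamma^2)$ inequality directly, without appealing to the $(x,y)$ variables at all.
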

Thus, in the new variable $\gamma$, $\hat Y_0$ is embedded in $L^2(-1,1)$. Let us compare the eigenfunctions $\phi_{n}$, $1\leq n\leq 5$, in \eqref{eigen value-function} with the  Legendre polynomials
\begin{equation*}\begin{aligned}\begin{array}{llll}
&L_1(\gamma)=\gamma,
\quad L_2(\gamma)={1\over2}(3\gamma^2-1),\quad
L_3(\gamma)={1\over2}(5\gamma^3-3\gamma),\\
&L_4(\gamma)={1\over8}(35\gamma^4-30\gamma^2+3),\quad
L_5(\gamma)={1\over8}(63\gamma^5-70\gamma^3+15\gamma).
\end{array}
\end{aligned}
\end{equation*}
Then we find that up to a constant factor,
\begin{equation*}
\phi_{n}(y)=L_{n}(\tanh(y))-L_{n}(0)=L_{n}(\gamma)-L_{n}(0), \;\;1\leq n\leq 5.
\end{equation*}
This provides a hint that
 the eigenvalues  for  \eqref{eigenvalue problem for 0 mode} might be $\lambda_n={n(n+1)\over2}$, $n\geq1$, with corresponding eigenfunctions $L_{n}(\gamma)-L_{n}(0)$, which is confirmed in the next lemma.

\begin{lemma}\label{sol to eigenvalue problem}
All the eigenvalues  of the eigenvalue problem \eqref{eigenvalue problem for 0 mode} are $\lambda_n={n(n+1)\over2}$, $n\geq1$. For $n\geq1$, the eigenspace associated to $\lambda_n={n(n+1)\over2}$ is $\text{span}\{L_{n}(\gamma)-L_{n}(0)\}.$  Consequently, all the eigenvalues  of the eigenvalue problem \eqref{mode0} are $\lambda_n={n(n+1)\over2}$, $n\geq1$. For $n\geq1$, the eigenspace associated to $\lambda_n={n(n+1)\over2}$ is $\text{span}\{L_{n}(\tanh(y))-L_{n}(0)\}$.
\end{lemma}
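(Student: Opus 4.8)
The plan is to reduce \eqref{eigenvalue problem for 0 mode} to the classical Legendre eigenvalue problem \eqref{eigenvalue problem3}, whose spectrum is quoted above. The point is that the only feature distinguishing \eqref{eigenvalue problem for 0 mode} from the Legendre equation, namely the nonlocal term $2\lambda\hat P_0\varphi$, is a \emph{constant} --- a multiple of $L_0\equiv1$ --- and $L_0$ lies in the kernel of the Legendre operator $\varphi\mapsto-((1-\gamma^2)\varphi')'$. Hence, once its $L_0$-component is removed, an eigenfunction must solve the honest Legendre equation, and that $L_0$-component is pinned down by the single scalar constraint $\varphi(0)=0$ built into $\hat Y_0$; this is precisely what forces the eigenfunctions to be $L_n(\gamma)-L_n(0)$.

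Concretely, I would first take $\varphi\in\hat Y_0$ solving \eqref{eigenvalue problem for 0 mode} with eigenvalue $\lambda$ in the weak sense, i.e.\ $\int_{-1}^1(1-\gamma^2)\varphi'\phi'\,d\gamma=2\lambda\int_{-1}^1(\varphi-\hat P_0\varphi)\phi\,d\gamma$ for all $\phi\in\hat Y_0$. By the Poincar\'e inequality of Lemma \ref{Poincare inequalities compact embedding result  new variables}, $\varphi\in L^2(-1,1)$, so it has a Legendre expansion $\varphi=\sum_{n\geq0}c_nL_n$ in the complete orthogonal basis $\{L_n\}$. For each $n\geq1$ the function $L_n-L_n(0)$ lies in $\hat Y_0$ (it vanishes at $0$ and has a polynomial derivative, hence finite weighted Dirichlet integral); using it as a test function and exploiting $\int_{-1}^1(\varphi-\hat P_0\varphi)\,d\gamma=0$ together with $\int_{-1}^1L_n\,d\gamma=0$ for $n\geq1$, the right-hand side reduces to $2\lambda\langle\varphi,L_n\rangle$ (the $L^2(-1,1)$ pairing), while an integration by parts using $-((1-\gamma^2)L_n')'=n(n+1)L_n$ turns the left-hand side into $n(n+1)\langle\varphi,L_n\rangle$. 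The boundary flux $(1-\gamma^2)L_n'(\gamma)\varphi(\gamma)$ vanishes at $\gamma=\pm1$ because $(1-\gamma^2)L_n'$ vanishes linearly there while $\varphi$ grows no faster than $\bigl(\log\tfrac1{1-|\gamma|}\bigr)^{1/2}$, a consequence of $\int_{-1}^1(1-\gamma^2)|\varphi'|^2\,d\gamma<\infty$ via Cauchy--Schwarz. Thus $(n(n+1)-2\lambda)\langle\varphi,L_n\rangle=0$ for all $n\geq1$.

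From here the argument closes quickly. If $2\lambda\neq n(n+1)$ for every $n\geq1$, then $c_n=0$ for all $n\geq1$, so $\varphi$ is constant and $\varphi(0)=0$ gives $\varphi\equiv0$, which is excluded. Hence $\lambda=\lambda_n:=\tfrac{n(n+1)}2$ for some $n\geq1$; this $n$ is unique since $m\mapsto m(m+1)$ is strictly increasing on the positive integers, so $c_m=0$ for all $m\geq1$ with $m\neq n$, giving $\varphi=c_0+c_nL_n$, and $\varphi(0)=0$ then forces $c_0=-c_nL_n(0)$, i.e.\ $\varphi\in\text{span}\{L_n(\gamma)-L_n(0)\}$. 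Conversely, $\psi_n:=L_n-L_n(0)\in\hat Y_0$ really is an eigenfunction for $\lambda_n$: since $\hat P_0\psi_n=\hat P_0L_n-L_n(0)=-L_n(0)$ for $n\geq1$, one has $\psi_n-\hat P_0\psi_n=L_n$, whence $-((1-\gamma^2)\psi_n')'=-((1-\gamma^2)L_n')'=n(n+1)L_n=2\lambda_n(\psi_n-\hat P_0\psi_n)$. This proves the claim for \eqref{eigenvalue problem for 0 mode}, and the statement for \eqref{mode0} follows immediately, since the change of variable $\gamma=\tanh y$ identifies \eqref{mode0} on $Y_0$ with \eqref{eigenvalue problem for 0 mode} on $\hat Y_0$ and carries $L_n(\gamma)-L_n(0)$ to $L_n(\tanh y)-L_n(0)$.

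The one place needing a little care is the integration by parts against $L_n$ --- equivalently, verifying that testing the weak equation against $L_n-L_n(0)\in\hat Y_0$ produces the clean relation $(n(n+1)-2\lambda)\langle\varphi,L_n\rangle=0$ --- which relies on the decay of $\varphi$ near $\gamma=\pm1$ beating the vanishing of the weight $1-\gamma^2$. Beyond that, everything is routine bookkeeping on top of the completeness of the Legendre basis, so I do not anticipate a substantive obstacle.
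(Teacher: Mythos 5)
Your proof is correct, and it reaches the conclusion by a genuinely different route than the paper. The paper's proof has the same first half as yours (checking that $\varphi_n=L_n(\gamma)-L_n(0)\in\hat Y_0$ solves \eqref{eigenvalue problem for 0 mode} with $\lambda_n=\tfrac{n(n+1)}2$, using $\int_{-1}^1L_n\,d\gamma=0$), but for exhaustion of the spectrum it proves that $\{\varphi_n\}_{n\geq1}$ is a complete orthogonal system in $\hat Y_0$ for the $\hat Z_0$ inner product $(\varphi_1,\varphi_2)_{\hat Z_0}=\int_{-1}^1(\varphi_1-\hat P_0\varphi_1)(\varphi_2-\hat P_0\varphi_2)\,d\gamma$ and then appeals to the variational characterization \eqref{variational problem2} (hence to the compact embedding machinery) to conclude that these are all the eigenvalues and eigenspaces. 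You instead test the weak equation directly against the known candidates $L_n-L_n(0)\in\hat Y_0$: the mean-zero structure kills the nonlocal term, the classical Legendre ODE plus an integration by parts turns the left side into $n(n+1)\langle\varphi,L_n\rangle$, and completeness of $\{L_n\}$ in $L^2(-1,1)$ together with the constraint $\varphi(0)=0$ forces $\varphi\in\mathrm{span}\{L_n-L_n(0)\}$. Your route is more elementary and self-contained — it bypasses the variational/compactness apparatus entirely — at the price of the one analytic point you correctly flag and settle, namely the vanishing of the boundary flux $(1-\gamma^2)L_n'\varphi$ at $\gamma=\pm1$, which follows from the logarithmic growth bound $|\varphi(\gamma)|\lesssim\bigl(\log\tfrac1{1-|\gamma|}\bigr)^{1/2}$ implied by the finiteness of $\int_{-1}^1(1-\gamma^2)|\varphi'|^2\,d\gamma$ and $\varphi(0)=0$. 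The paper's version, by contrast, packages the conclusion as an orthogonal-basis statement inside the variational framework it needs anyway for the other Fourier modes and for the later index computations; both arguments ultimately rest on the completeness of the Legendre polynomials.
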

\begin{proof}
Due to the presence of the projection term, we need to  check that $\varphi(\gamma)=\varphi_n(\gamma)=L_{n}(\gamma)-L_{n}(0)\in \hat Y_0$ and $\lambda=\lambda_n={n(n+1)\over2}$  solve \eqref{eigenvalue problem for 0 mode}.
Thanks to the property of Legendre polynomials that
\begin{equation*}\label{int-0}\int_{-1}^{1} L_n(\gamma) d\gamma=0\end{equation*}
for $n\geq1$ \cite{Byerly59},
we have
 $\hat P_0\varphi_n=\hat P_0 (L_n(\gamma)-L_n(0))=-L_n(0)$,  and thus,
\begin{equation*}\begin{aligned}\begin{array}{llll}
&((1-\gamma^2)\varphi_n')'+2\lambda(\varphi_n-\hat P_0\varphi_n)=(1-\gamma^2)\varphi_n''-2\gamma\varphi_n'+2\lambda(\varphi_n-\hat P_0\varphi_n)\\
=&(1-\gamma^2)(L_n(\gamma)-L_n(0))''-2\gamma(L_n(\gamma)-L_n(0))'+2\lambda((L_n(\gamma)-L_n(0))+L_n(0))\\
=&(1-\gamma^2)L_n''(\gamma)-2\gamma L_n'(\gamma)+2\lambda L_n(\gamma)=0.
\end{array}
\end{aligned}
\end{equation*}
Since  $\varphi_n(0)=0$ and $\int_{-1}^1(1-\gamma^2)|\varphi_n'(\gamma)|^2d\gamma<\infty$, we have $\varphi_n\in \hat Y_0$.
So $\varphi_n$ solves  \eqref{eigenvalue problem for 0 mode}.

Next, we prove that the eigenspace associated to $\lambda_n={n(n+1)\over2}$ is $\text{span}\{\varphi_n\}$, and there are no more eigenvalues for \eqref{eigenvalue problem for 0 mode}. From the variational problem, we know that it suffices to prove that $\{\varphi_n\}_{n=1}^\infty$ is a complete and orthogonal basis of  $\hat Y_0$ under the inner product
$$(\varphi_1, \varphi_2)_{ \hat Z_0} = \int_{-1}^1 (\varphi_1 - \hat{P}_0\varphi_1)(\varphi_2  - \hat{P}_0 \varphi_2)  d \gamma, \quad\forall \varphi_1, \varphi_2 \in \hat Z_0,$$
where $\hat Z_0 := \{ \varphi | \int_{-1}^1 |\varphi - \hat{P}_0 \varphi|^2 d \gamma < \infty \}$ corresponds to the space $\{ \phi | \int_{\mathbb{R}} g'(\psi_0) |\phi - {P}_0 \phi|^2 d y < \infty \}$ in the original variable $y$.

 To see this, we note that
\begin{align}\nonumber
(\varphi_n,\varphi_m)_{\hat Z_0}=&\int_{-1}^1(\varphi_n-\hat P_0 \varphi_n)(\varphi_m-\hat P_0 \varphi_m)d\gamma=\int_{-1}^1(\varphi_n+L_n(0))(\varphi_m+L_m(0))d\gamma\\\nonumber
=&\int_{-1}^1L_nL_md\gamma=\left\{\begin{array}{llll}0,\;\;\;\;\;\;\;\text{if}\;\;m\neq n,\\
{2\over2n+1},\;\;\text{if}\;\;m= n.\end{array}\right.
\end{align}
This proves the orthogonality of  $\{\varphi_n\}_{n=1}^\infty$.
For any $\varphi\in \hat Y_0$, by Lemma \ref{Poincare inequalities compact embedding result  new variables}  we have  $\varphi\in L^2(-1,1)$  and thus, $\varphi(\gamma)=\sum_{n=0}^{\infty}a_nL_n(\gamma)$, where $a_n={{2n+1\over2}}\int_{-1}^1\varphi L_n d\gamma$.
Note that $\varphi\in \hat Y_0$ implies  $\varphi(0)=\sum_{n=0}^{\infty}a_nL_n(0)=0$. Thus, we have \begin{align*}
\varphi(\gamma)=\sum_{n=0}^{\infty}a_n(L_n(\gamma)-L_n(0))=\sum_{n=1}^{\infty}a_n\varphi_n(\gamma)
\end{align*}
 for $\gamma\in(-1,1)$, with
\begin{align*}
a_n={{2n+1\over2}}\int_{-1}^1(\varphi-\hat P_0 \varphi)(\varphi_n-\hat P_0 \varphi_n)d\gamma=(\varphi,\varphi_n)_{\hat Z_0}.
\end{align*}
 For any $\varepsilon>0$, there exists $N_\varepsilon>0$ such that
 \begin{align*}
 \left\|\varphi-\sum_{n=0}^{N_\varepsilon}a_nL_n\right\|_{L^2(-1,1)}<{\varepsilon\over4}\quad
 \text{and}\quad\left|\sum_{n=0}^{N_\varepsilon}a_nL_n(0)\right|<{\sqrt{2}\varepsilon\over8}. \end{align*}
 Then \begin{align*}
 \left\|\hat P_0\left(\varphi-\sum_{n=1}^{N_\varepsilon}a_n\varphi_n\right)\right\|_{L^2(-1,1)}=\sqrt{2}\left|\hat P_0\left(\varphi-\sum_{n=1}^{N_\varepsilon}a_n\varphi_n\right)\right|\leq  \left\| \varphi-\sum_{n=1}^{N_\varepsilon}a_n\varphi_n\right\|_{L^2(-1,1)},
 \end{align*}
 and
 \begin{align}\nonumber
 &\left\|\varphi-\sum_{n=1}^{N_\varepsilon}a_n\varphi_n\right\|_{\hat Z_0}\leq
 \left\|\varphi-\sum_{n=1}^{N_\varepsilon}a_n\varphi_n\right\|_{L^2(-1,1)}+\left\|\hat P_0\left(\varphi-\sum_{n=1}^{N_\varepsilon}a_n\varphi_n\right)\right\|_{L^2(-1,1)}\\\nonumber
 \leq &
2\left\|\varphi-\sum_{n=1}^{N_\varepsilon}a_n\varphi_n\right\|_{L^2(-1,1)}= 2\left\|\varphi-\sum_{n=0}^{N_\varepsilon}a_n(L_n-L_n(0))\right\|_{L^2(-1,1)}
 \\\nonumber
 \leq&
2\left\|\varphi-\sum_{n=0}^{N_\varepsilon}a_nL_n\right\|_{L^2(-1,1)}+2\left\|\sum_{n=0}^{N_\varepsilon}a_nL_n(0)\right\|_{L^2(-1,1)}<{\varepsilon\over2}+{\varepsilon
 \over2}=\varepsilon.
 \end{align}
This proves the completeness of  $\{\varphi_n\}_{n=1}^\infty$.
\if0
 Define
 $$J(\varphi)={\int_{-1}^1(1-z^2)|\varphi'(z)|^2 dz\over 2\int_{-1}^1(\varphi-\hat P \varphi)^2dz}$$
 for $\varphi\in Y_0$.
 By the Poincare-type inequality
 \begin{equation*}
 \int_{-1}^1(\varphi-\hat P \varphi)^2dz\leq C\int_{-1}^1(1-z^2)|\varphi'(z)|^2 dz,\;\;\forall \varphi\in Y_0
 \end{equation*}
 and the compactness of the embedding $Y_0\hookrightarrow Y_m$, we can inductively define $\mu_n$  as follows:
  \begin{equation*}\begin{aligned}\begin{array}{llll}
  \mu_1=\min\limits_{\varphi\in Y_0}J(\varphi)=J(\varphi_{\mu_1}),\\
  \mu_2=\min\limits_{\varphi\in Y_0,\;\langle\varphi,\varphi_{\mu_1}\rangle_{Y_m}=0}J(\varphi)=J(\varphi_{\mu_2}),\\
  \mu_3=\min\limits_{\varphi\in Y_0,\;\langle\varphi,\varphi_{\mu_i}\rangle_{Y_m}=0,\;i=1,2}J(\varphi)=J(\varphi_{\mu_3}),\cdots\\
  \mu_n=\min\limits_{\varphi\in Y_0,\;\langle\varphi,\varphi_{\mu_i}\rangle_{Y_m}=0,\;i=1,\cdots,n-1}J(\varphi)=J(\varphi_{\mu_n}),\cdots
  \end{array}
\end{aligned}
   \end{equation*}
for some  $\varphi_{\mu_n}\in Y_0$ with $\int_{-1}^1(\varphi_{\mu_n}(z)-\hat P\varphi_{\mu_n})^2dz=1$, $n\geq1$. Then
\begin{equation*}
{d\over d\tau}J(\varphi_{\mu_n}+\tau \phi)|_{\tau=0}=\int_{-1}^1[-((1-z^2)\varphi'_{\mu_n}(z))'-\mu_n(\varphi_{\mu_n}(z)-\hat P\varphi_{\mu_n})]\phi (z) dz
\end{equation*}
for any $\varphi\in Y_0$.   $\lambda=\mu_n$ and  $\varphi=\varphi_{\mu_n}$ gives all the nontrivial  solutions of  \eqref{eigenvalue problem2}.
Then $\{\lambda_n\}_{n=1}^\infty\subset\{\mu_n\}_{n=1}^\infty$, and
and $Y_0\subset\text{span}\{\varphi_n\}_{n=1}^\infty\subset\text{span}\{\varphi_{\mu_n}\}_{n=1}^\infty\subset Y_0$.
Then $\text{span}\{\varphi_n\}_{n=1}^\infty=\text{span}\{\varphi_{\mu_n}\}_{n=1}^\infty=Y_0$.
Suppose there exists $\lambda_*\in\{\mu_n\}_{n=1}^\infty\setminus\{\lambda_n\}_{n=1}^\infty$ with $\lambda_*=J(\varphi_*)$, $\int_{-1}^1(\varphi_{*}(z)-\hat P\varphi_{*})^2dz=1$. Then
$\varphi_*=\sum_{n\geq1}b_{n}\varphi_n$ for some  $b_n\in\mathbf{R}$, $n\geq1$. But $\langle\varphi_*,\varphi_n\rangle_{Y_m}=0$, $n\geq1$, gives $\varphi_*\equiv0$, which is a contradiction. Suppose there exists an eigenfunction $\phi_*\in Y_0$ of  $\lambda_{n_0}$ for some $n_0\geq1$ such that $\phi_*\neq c\varphi_{n_0}$ for any $c\neq0$. Then  there exists an eigenfunction $0\neq\phi_{n_0}\in\text{span}\{\phi_*,\varphi_{n_0}\}$ of $\lambda_{n_0}$ such that $\langle\phi_{n_0},\varphi_{n_0}\rangle_{Y_m}=0$, and $J(\phi_{n_0})=\lambda_{n_0}$. Note that  $\phi_{n_0}=\sum_{n\geq1}d_{n}\varphi_n$ for some  $d_n\in\mathbf{R}$, $n\geq1$, and
 $\langle\phi_{n_0},\varphi_{n}\rangle_{Y_m}=0$ for any $1\leq n\leq n_0$.
 Thus,  $\phi_{n_0}=\sum_{n\geq n_0+1}d_{n}\varphi_n\in\overline{\text{span}\{\varphi_n\}_{n=n_0+1}^\infty}$, which implies that
 $J(\phi_{n_0})\geq\min\limits_{\varphi\in \overline{\text{span}\{\varphi_n\}_{n=n_0+1}^\infty}}J(\varphi)$.
 Since $\{\lambda_n\}_{n=1}^\infty\subset\{\mu_n\}_{n=1}^\infty$, there exists $N_0>n_0$ such that $\lambda_{n_0+1}=\mu_{N_0}$ and $\overline{\text{span}\{\varphi_n\}_{n=n_0+1}^\infty}\subset\overline{\text{span}\{\varphi_{\mu_n}\}_{n=N_0}^\infty}$. Then $J(\phi_{n_0})\geq\min\limits_{\varphi\in \overline{\text{span}\{\varphi_{\mu_n}\}_{n=N_0}^\infty}}J(\varphi)=\mu_{N_0}=\lambda_{n_0+1}$.  This is a contradiction.
 \fi
\end{proof}

\noindent{\bf{Eigenvalue problem for the non-zero mode.}}\medskip

For the $k$-mode with $k\neq0$, we solve the eigenvalue problem
\eqref{modek}.
It suffices to consider $k\geq1$. We use the change of variable  \eqref{change of variable for 0 mode}
 and denote $\phi(y)=\varphi(\gamma)$.
Since $$\|\phi\|_{H^1(\mathbb{R})}^2=\int_{-1}^1\left({1\over1-\gamma^2}|\varphi(\gamma)|^2+(1-\gamma^2)|\varphi'(\gamma)|^2\right)d\gamma,$$
the space $Y_1=H^1(\mathbb{R})$ for $\phi$ in the variable $y$ is transformed to
\begin{align}\label{def-hat-Y1}
\hat Y_1=\left\{\varphi\bigg|\int_{-1}^1\left({1\over1-\gamma^2}|\varphi(\gamma)|^2+(1-\gamma^2)|\varphi'(\gamma)|^2\right)d\gamma<\infty \right\}
\end{align}
for $\varphi$ in the new variable $\gamma$. Then the eigenvalue problem \eqref{modek}  is equivalent to the general Legendre equation
\begin{equation}\label{eigenvalue problem2 non-zero modes varepsilon=0}
-((1-\gamma^2)\varphi')'+{k^2\over1-\gamma^2}\varphi =2\lambda\varphi \quad \text{on}\quad (-1,1),\quad\varphi\in \hat Y_1.
\end{equation}

\if0
This so-called   general Legendre equation can be obtained by first differentiating the classical Legendre equation \eqref{eigenvalue problem3} $k$ times, and then transforming  the unknown function $\varphi(z)$ to $\phi(z)=(1-z^2)^{k\over2}{d^k\over dz^k}\varphi(z)$.
\fi

 The Poincar\'e inequality in Lemma \ref{Poincare ineqalities-new-variable0} (3) reads as follows.
\begin{lemma}\label{Poincare inequalities compact embedding result new variables k mode}
For any $\varphi \in \hat{Y}_1$, we have
\begin{align*}
 \|\varphi\|^2_{L^2(-1,1)}   \leq C\int_{-1}^1\left({1\over1-\gamma^2}|\varphi(\gamma)|^2+(1-\gamma^2)|\varphi'(\gamma)|^2\right)d\gamma.
\end{align*}
\end{lemma}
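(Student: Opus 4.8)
The plan is to record that this inequality requires essentially no work: the quantity on the right-hand side is, by \eqref{def-hat-Y1}, precisely the (square of the) norm of $\hat Y_1$, so the statement amounts to the continuity of the embedding $\hat Y_1\hookrightarrow L^2(-1,1)$, and I would simply exhibit $C=1$.

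Concretely, I would first observe that $0<1-\gamma^2\le 1$ for every $\gamma\in(-1,1)$, hence $\frac{1}{1-\gamma^2}\ge 1$. Therefore, for any $\varphi\in\hat Y_1$,
\begin{align*}
\|\varphi\|_{L^2(-1,1)}^2=\int_{-1}^1|\varphi(\gamma)|^2\,d\gamma
&\le\int_{-1}^1\frac{1}{1-\gamma^2}|\varphi(\gamma)|^2\,d\gamma\\
&\le\int_{-1}^1\left(\frac{1}{1-\gamma^2}|\varphi(\gamma)|^2+(1-\gamma^2)|\varphi'(\gamma)|^2\right)d\gamma,
\end{align*}
which is the claim with $C=1$; in particular the right-hand side is finite exactly because $\varphi\in\hat Y_1$, so there is nothing further to check.

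Alternatively --- and this matches the way the estimate was announced --- I would derive it from Lemma \ref{Poincare ineqalities-new-variable0}~(3). Given a real $\varphi\in\hat Y_1$ (the general case following by treating real and imaginary parts separately), set $\Psi(x,\gamma):=\varphi(\gamma)\cos x$. Then $\Psi_x=-\varphi(\gamma)\sin x$ and $\Psi_\gamma=\varphi'(\gamma)\cos x$, so $\|\Psi\|_{\tilde Y_0}^2=\pi\int_{-1}^1\left(\frac{1}{1-\gamma^2}|\varphi|^2+(1-\gamma^2)|\varphi'|^2\right)d\gamma<\infty$, whence $\Psi\in\tilde Y_0$; moreover $\tilde P_0\Psi=\frac{1}{4\pi}\iint_{\tilde\Omega}\Psi\,dxd\gamma=0$ since $\int_0^{2\pi}\cos x\,dx=0$, and $\iint_{\tilde\Omega}|\Psi|^2\,dxd\gamma=\pi\|\varphi\|_{L^2(-1,1)}^2$. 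Applying Lemma \ref{Poincare ineqalities-new-variable0}~(3) to this $\Psi$ and cancelling the common factor $\pi$ reproduces the inequality; since $\hat Y_1$ is the single fixed space \eqref{def-hat-Y1}, independent of the mode index $k$, this one computation handles every $k\ge1$. There is no genuine obstacle here; the only point I would take care to state explicitly is that the right-hand side is the defining norm of $\hat Y_1$, so membership in $\hat Y_1$ and the asserted bound are two aspects of the same definition.
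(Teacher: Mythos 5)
Your proposal is correct. Your first argument already settles the matter and is even more elementary than what the paper does: since $\tfrac{1}{1-\gamma^2}\ge 1$ on $(-1,1)$, the $L^2$ norm is dominated pointwise by the first term of the right-hand side, so the inequality holds with $C=1$ and no Poincar\'e machinery is needed. The paper itself gives no separate proof; it presents the lemma as the restriction of the two-dimensional inequality of Lemma \ref{Poincare ineqalities-new-variable0} (3) to a single nonzero Fourier mode, which is exactly your second derivation, so that route matches the paper while the first buys simplicity and an explicit constant. The only detail you glossed over in the second route is membership in $\tilde Y_0$: by \eqref{tilde-Y0-def} this requires not only $\|\Psi\|_{\tilde Y_0}<\infty$ but also $\widehat{\Psi}_0(0)=0$; for $\Psi(x,\gamma)=\varphi(\gamma)\cos x$ this holds trivially because the zero Fourier mode of $\cos x$ vanishes identically, the same computation that gives $\tilde P_0\Psi=0$. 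With that remark added, both of your arguments are complete, and, as you note, nothing depends on the mode index $k$ since $\hat Y_1$ in \eqref{def-hat-Y1} is one fixed space.
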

Then we give all the eigenvalues of \eqref{eigenvalue problem2 non-zero modes varepsilon=0} with corresponding eigenfunctions.
\begin{lemma}\label{sol to eigenvalue problem non-zero modes varepsilon=0 original} Fix $k\geq1.$ Then
all the eigenvalues  of the eigenvalue problem \eqref{eigenvalue problem2 non-zero modes varepsilon=0} are $\lambda_n={n(n+1)\over2}$, $n\geq k$. For $n\geq k$, the eigenspace associated to $\lambda_n={n(n+1)\over2}$ is $\text{span}\{L_{n,k}(\gamma)\}$.
Consequently, all the eigenvalues  of the eigenvalue problem \eqref{modek} are $\lambda_n={n(n+1)\over2}$, $n\geq k$. For $n\geq k$, the eigenspace associated to $\lambda_n={n(n+1)\over2}$ is $\text{span}\{L_{n,k}(\tanh(y))\}$.
\end{lemma}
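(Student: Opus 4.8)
The plan is to run the same strategy as in the $0$-mode case (Lemma \ref{sol to eigenvalue problem}), replacing the Legendre polynomials by the associated Legendre functions. Recall that for $n\geq k$ the associated Legendre function $L_{n,k}(\gamma)=(1-\gamma^2)^{k/2}\frac{d^k}{d\gamma^k}L_n(\gamma)$ satisfies the associated Legendre equation $-((1-\gamma^2)\varphi')'+\frac{k^2}{1-\gamma^2}\varphi=n(n+1)\varphi$ on $(-1,1)$, so $(\lambda,\varphi)=(\lambda_n,L_{n,k})$ with $\lambda_n=\frac{n(n+1)}{2}$ solves \eqref{eigenvalue problem2 non-zero modes varepsilon=0} in the pointwise sense. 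The first step is to verify that $L_{n,k}\in\hat Y_1$. Since $k\geq1$, near $\gamma=\pm1$ one has $L_{n,k}(\gamma)=O((1-\gamma^2)^{k/2})$ and $L_{n,k}'(\gamma)=O((1-\gamma^2)^{k/2-1})$, whence $\frac{|L_{n,k}|^2}{1-\gamma^2}=O((1-\gamma^2)^{k-1})$ and $(1-\gamma^2)|L_{n,k}'|^2=O((1-\gamma^2)^{k-1})$ are both integrable on $(-1,1)$; therefore $L_{n,k}\in\hat Y_1$ and each $\lambda_n$ with $n\geq k$ is an eigenvalue of \eqref{eigenvalue problem2 non-zero modes varepsilon=0} with $L_{n,k}$ a corresponding eigenfunction. (This is the one place that needs genuine care, since unlike the $0$ mode the eigenfunctions are not polynomials but vanish to order $k/2$ at the endpoints.)

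The second step is to show that these are all the eigenvalues and that each eigenspace is exactly $\mathrm{span}\{L_{n,k}\}$. By the Poincar\'e inequality of Lemma \ref{Poincare inequalities compact embedding result new variables k mode} (indeed directly from $\frac{k^2}{1-\gamma^2}\geq k^2\geq1$) the quadratic form $\int_{-1}^1\big((1-\gamma^2)|\varphi'|^2+\frac{k^2}{1-\gamma^2}|\varphi|^2\big)d\gamma$ is bounded below and coercive on $\hat Y_1$, and $\hat Y_1$ embeds compactly in $L^2(-1,1)$: restricting the embedding of Lemma \ref{compact2P}(1) to the single Fourier mode $e^{ikx}$ (equivalently, using the exponential decay of the weight $\mathrm{sech}^2 y$ under $\gamma=\tanh y$), a bounded sequence $\{\varphi_n\}\subset\hat Y_1$ corresponds to a bounded sequence $\{\varphi_n(\tanh y)e^{ikx}\}\subset\tilde X_0$, which has a subsequence converging in $L^2_{g'(\psi_0)}(\Omega)$, i.e. $\{\varphi_n\}$ converges in $L^2(-1,1)$. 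Hence the eigenvalue problem has discrete spectrum with eigenfunctions forming an orthogonal basis of $L^2(-1,1)$, and eigenfunctions belonging to distinct eigenvalues are automatically $L^2$-orthogonal. It then suffices to recall the classical fact that $\{L_{n,k}\}_{n\geq k}$ is a complete orthogonal system in $L^2(-1,1)$: if $\mu\neq\lambda_n$ for all $n\geq k$ were an eigenvalue, any eigenfunction would be orthogonal to every $L_{n,k}$ and hence vanish; and if $\varphi$ is an eigenfunction for $\lambda_{n_0}$, its component orthogonal to $L_{n_0,k}$ is again an eigenfunction for $\lambda_{n_0}$, orthogonal to all $L_{n,k}$, hence zero, so $\varphi\in\mathrm{span}\{L_{n_0,k}\}$. (Alternatively, one reproduces verbatim the expansion argument from the proof of Lemma \ref{sol to eigenvalue problem}, expanding $\varphi\in\hat Y_1\subset L^2(-1,1)$ in the basis $\{L_{n,k}\}$ and invoking the variational characterization of the $\lambda_n$ to exclude spurious eigenvalues and eigenfunctions.)

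Finally, since the change of variable $\gamma=\tanh(y)$ identifies the space $Y_1=H^1(\mathbb{R})$ for $\phi$ with $\hat Y_1$ for $\varphi$ and transforms \eqref{modek} into \eqref{eigenvalue problem2 non-zero modes varepsilon=0}, the conclusion transfers back: all eigenvalues of \eqref{modek} are $\lambda_n=\frac{n(n+1)}{2}$, $n\geq k$, with eigenspace $\mathrm{span}\{L_{n,k}(\tanh(y))\}$. The main obstacle is the step just flagged — establishing the compact embedding $\hat Y_1\hookrightarrow L^2(-1,1)$ handling the endpoint weight $\frac{1}{1-\gamma^2}$, together with quoting the completeness of the associated Legendre functions; once these are in place the identification of the full spectrum is a soft variational argument identical in structure to the $0$-mode case.
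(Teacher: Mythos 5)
Your proposal is correct and follows essentially the same route as the paper: exhibit $L_{n,k}$ as classical solutions of the associated Legendre equation, check membership in $\hat Y_1$ via the endpoint behaviour (the paper does the same two integrals explicitly), and then conclude via the completeness and orthogonality of $\{L_{n,k}\}_{n\geq k}$ in $L^2(-1,1)$ combined with the embedding $\hat Y_1\hookrightarrow L^2(-1,1)$ from Lemma \ref{Poincare inequalities compact embedding result new variables k mode}. Your extra remarks on the compact embedding and the orthogonality exclusion argument merely make explicit what the paper leaves implicit by appealing to the variational characterization, so no substantive difference.
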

\begin{proof}
It is well-known in \cite{Courant-Hilbert53} that for $n\geq k$ and $\lambda_n={n(n+1)\over2}$, the associated Legendre polynomials of $k$-th order  \begin{equation}\nonumber
L_{n,k}(\gamma)=(1-\gamma^2)^{k\over2}{d^k\over d\gamma^k}L_n(\gamma)
\end{equation}
are   solutions of the equation in \eqref{eigenvalue problem2 non-zero modes varepsilon=0}.
Note that $k\geq1$ implies
\begin{equation*}\begin{aligned}
&\int_{-1}^1{1\over 1-\gamma^2}|L_{n,k}(\gamma)|^2d\gamma=\int_{-1}^1(1-\gamma^2)^{k-1}\left|{d^k\over d\gamma^k}L_n(\gamma)\right|^2d\gamma<\infty,\\
&\int_{-1}^1{ (1-\gamma^2)}|L'_{n,k}(\gamma)|^2d\gamma=\int_{-1}^1(1-\gamma^2)^{k-1}\left|-k\gamma{d^k\over d\gamma^k}L_n(\gamma)+(1-\gamma^2){d^{k+1}\over d\gamma^{k+1}}L_n(\gamma)\right|^2d\gamma<\infty,
\end{aligned}\end{equation*}
and thus, $L_{n,k}\in \hat Y_1$. Thus, $\lambda_n={n(n+1)\over2}$ is an eigenvalue of \eqref{eigenvalue problem2 non-zero modes varepsilon=0} with corresponding eigenfunction  $L_{n,k}(\gamma)$, where $n\geq k$. It suffices to show that $\{L_{n,k}\}_{n=k}^\infty$ is a complete and orthogonal basis of $\hat Y_1$ under the inner product of  $L^2(-1,1)$.
In fact,  $\{L_{n,k}\}_{n=k}^\infty$ is a complete and orthogonal basis of $L^2(-1,1)$  \cite{Courant-Hilbert53,Dominguez-Heuer-Sayas11}. The conclusion follows from the embedding $\hat Y_1\hookrightarrow L^2(-1,1)$ by Lemma \ref{Poincare inequalities compact embedding result new variables k mode}.
\end{proof}

In summary, under the new coordinate $(x,\gamma=\tanh(y))\in\mathbb{T}_{2\pi}\times (-1,1)$,  the associated eigenvalue problem \eqref{elip02} is transformed to
\begin{align}\label{elip02-x-gamma}
-{1\over1-\gamma^2}\pa_x^2\Psi-\pa_\gamma\left((1-\gamma^2)\pa_\gamma\Psi\right)=2\lambda(\Psi-\tilde P_0\Psi),\quad \Psi\in\tilde Y_0,
\end{align}
where $\Psi(x, \gamma) = \psi(x, y)$, $\tilde P_0$ is defined in \eqref{def-tilde-P0-Psi} and $\tilde{Y}_0$ is given in \eqref{tilde-Y0-def}.
\if0
\begin{align*}
\tilde P_0\Psi =& \frac{1}{4\pi}\int_{-1}^1 \int_{0}^{2\pi} \Psi(x, \gamma) dx d \gamma,\\
\tilde{Y}_0 =& \left\{ \Psi \bigg| \int_{-1}^1\int_{0}^{2\pi}{1\over1-\gamma^2}|\Psi_{x}|^2+(1-\gamma^2)|\Psi_{\gamma}|^2dx d\gamma< \infty \text{ and }\int_0^{2\pi} \Psi(x, 0)dx= 0 \right\}.
\end{align*}
\fi

Combining the conclusions for  the $0$-mode in Lemma \ref{sol to eigenvalue problem} and for the non-zero modes in Lemma \ref{sol to eigenvalue problem non-zero modes varepsilon=0 original}, we solve the eigenvalue problems \eqref{elip02-x-gamma} and \eqref{elip02}.
\begin{Theorem}\label{associate_ep0}
All the eigenvalues of the eigenvalue problem \eqref{elip02-x-gamma} are $ \lambda_n  = \frac{n(n+1)}{2}$, $n\geq1$. For $n\geq1$, the eigenspace associated to $\lambda_n$ is  spanned by
\begin{align*}
 L_{n}(\gamma) - L_n(0), \quad  L_{n,k}(\gamma)\cos(kx), \quad L_{n,k}(\gamma)\sin(kx), \quad  1 \leq k\leq n.
 \end{align*}
Consequently, all the eigenvalues of the associated eigenvalue problem \eqref{elip02} are $\lambda_n  = \frac{n(n+1)}{2}$, $n\geq1$. For $n\geq1$, the eigenspace associated to $\lambda_n$ is  spanned by
\begin{align}\label{sol-elip02}
 L_{n}(\tanh(y)) - L_n(0), \quad  L_{n,k}(\tanh(y))\cos(kx), \quad L_{n,k}(\tanh(y))\sin(kx), \quad  1 \leq k\leq n.
 \end{align}
\end{Theorem}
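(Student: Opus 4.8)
The plan is to solve \eqref{elip02-x-gamma} by separation of variables in $(x,\gamma)$ and then transport the conclusion back through the change of variable $\gamma=\tanh(y)$, $\Psi(x,\gamma)=\psi(x,y)$, to obtain the statement for \eqref{elip02}. Write $\Psi(x,\gamma)=\sum_{k\in\mathbb{Z}}\widehat{\Psi}_k(\gamma)e^{ikx}$. By \eqref{tilde-Y0-def} and Parseval, $\Psi\in\tilde Y_0$ is equivalent to $\widehat{\Psi}_0\in\hat Y_0$, $\widehat{\Psi}_k\in\hat Y_1$ for $k\neq0$, together with summability of the mode-wise norms. The crucial simplification is that the nonlocal term decouples trivially: since $\tilde P_0\Psi=\frac1{4\pi}\iint_{\tilde\Omega}\Psi\,dxd\gamma$ only sees the $x$-average, one has $\tilde P_0\Psi=\frac12\int_{-1}^1\widehat{\Psi}_0\,d\gamma=\hat P_0\widehat{\Psi}_0$ and $\tilde P_0\big(\widehat{\Psi}_k e^{ikx}\big)=0$ for $k\neq0$. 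Projecting \eqref{elip02-x-gamma} onto the $k$-th Fourier mode therefore produces exactly \eqref{eigenvalue problem for 0 mode} when $k=0$ and, for each $k\geq1$, the general Legendre equation \eqref{eigenvalue problem2 non-zero modes varepsilon=0} (the modes $+k$ and $-k$ giving the same ODE, and the real and imaginary parts of $\widehat{\Psi}_k e^{ikx}$ producing the $\cos(kx)$ and $\sin(kx)$ factors).

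Next I would invoke the two ODE lemmas. By Lemma \ref{sol to eigenvalue problem} the $0$-mode equation \eqref{eigenvalue problem for 0 mode} has eigenvalues $\lambda_n=\frac{n(n+1)}2$, $n\geq1$, with one-dimensional eigenspaces $\mathrm{span}\{L_n(\gamma)-L_n(0)\}$; by Lemma \ref{sol to eigenvalue problem non-zero modes varepsilon=0 original}, for each fixed $k\geq1$ the equation \eqref{eigenvalue problem2 non-zero modes varepsilon=0} has eigenvalues $\lambda_n=\frac{n(n+1)}2$, $n\geq k$, with eigenspaces $\mathrm{span}\{L_{n,k}(\gamma)\}$. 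Collecting the contributions to a fixed value $\lambda_n=\frac{n(n+1)}2$: the zero mode contributes $L_n(\gamma)-L_n(0)$ (legitimate since $n\geq1$), and the $k$-th mode contributes $L_{n,k}(\gamma)\cos(kx)$ and $L_{n,k}(\gamma)\sin(kx)$ precisely for $1\leq k\leq n$. Each of these $2n+1$ functions lies in $\tilde Y_0$ (the two norm integrals are finite, exactly as checked in the proofs of the two lemmas) and solves \eqref{elip02-x-gamma} with $\lambda=\lambda_n$, which one confirms by direct substitution using the Legendre and associated Legendre ODEs together with $\hat P_0(L_n-L_n(0))=-L_n(0)$ and the vanishing of $\tilde P_0$ on nonzero modes.

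It then remains to show there are no further eigenvalues and that, for each $n$, the listed functions span the entire eigenspace. For this I would use the standard variational framework: the compact embedding $\tilde Y_0\hookrightarrow\tilde Z_0$ of Lemma \ref{compact2P-new-variable0}~(2) makes \eqref{elip02-x-gamma} an eigenvalue problem whose eigenfunctions form a complete orthogonal basis of $\tilde Y_0$ with respect to the $\tilde Z_0$ inner product, just as in \eqref{variational problem2}. Since $\{e^{ikx}\}_{k\in\mathbb{Z}}$ is complete in $L^2(\mathbb{T}_{2\pi})$, since $\{L_n(\gamma)-L_n(0)\}_{n\geq1}$ is complete in $\hat Y_0$ (proof of Lemma \ref{sol to eigenvalue problem}), and since $\{L_{n,k}(\gamma)\}_{n\geq k}$ is complete in $\hat Y_1$ for every $k\geq1$ (proof of Lemma \ref{sol to eigenvalue problem non-zero modes varepsilon=0 original}), the family $\{L_n(\gamma)-L_n(0)\}\cup\{L_{n,k}(\gamma)\cos(kx),\,L_{n,k}(\gamma)\sin(kx):1\leq k\leq n\}_{n\geq1}$ is complete in $\tilde Y_0$; hence it exhausts the spectrum and realizes each eigenspace exactly as claimed. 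Finally, substituting back $\gamma=\tanh(y)$ turns \eqref{elip02-x-gamma} into \eqref{elip02} and the eigenfunctions into \eqref{sol-elip02}.

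I expect the main obstacle to be the bookkeeping in the first paragraph, namely verifying rigorously that the Fourier decomposition of a $\tilde Y_0$-function is compatible with the mode-wise spaces $\hat Y_0,\hat Y_1$ and that projecting the weak formulation of \eqref{elip02-x-gamma} onto each mode is legitimate, with the $\hat P_0$/$\tilde P_0$ term handled correctly; everything after that reduces to the already-established ODE lemmas and the completeness of the trigonometric system.
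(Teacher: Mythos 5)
Your proposal is correct and follows essentially the same route as the paper: Fourier decomposition in $x$ with the observation that $\tilde P_0$ (equivalently $P_0$) vanishes on the nonzero modes, reduction to the $0$-mode problem \eqref{eigenvalue problem for 0 mode} and the $k$-mode problems \eqref{eigenvalue problem2 non-zero modes varepsilon=0}, invocation of Lemmas \ref{sol to eigenvalue problem} and \ref{sol to eigenvalue problem non-zero modes varepsilon=0 original}, and the completeness/orthogonality argument in the $\tilde Z_0$ inner product to show the listed functions exhaust the spectrum and each eigenspace. The only difference — performing the mode decomposition in the $(x,\gamma)$ variables rather than in $(x,y)$ before the change of variable $\gamma=\tanh(y)$ — is an immaterial reordering of the same steps.
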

In particular, we obtain the kernel of  the operator $\tilde A_0$ and a decomposition of $\tilde X_{0}$ as follows.

\begin{Corollary}\label{kernel of  the operator tilde A0 and a decomposition of tilde X0}
$(1)$ $\ker (\tilde A_0)={\rm{span}}\left\{\tanh(y),{\cos(x)\over \cosh(y)},{\sin(x)\over \cosh(y)}\right\}$.

$(2)$ Let $\tilde X_{0+}=\tilde X_0 \ominus\ker (\tilde A_0)$. Then
\begin{align*}
\langle \tilde A_0 \psi,\psi\rangle \geq {2\over3} \| \psi\|_{\tilde X_0}^2, \quad \quad \psi\in \tilde X_{0+}.
\end{align*}
\end{Corollary}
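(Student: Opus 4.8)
The plan is to read off both statements from Theorem \ref{associate_ep0} together with the identification, built into the definition of $\tilde A_0$ and the derivation of \eqref{elip02}, between $\ker(\tilde A_0)$ and the principal eigenspace of the PDE eigenvalue problem \eqref{elip02}.

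For part (1), I would note that $\psi\in\ker(\tilde A_0)$ means exactly that $\langle\tilde A_0\psi,\tilde\psi\rangle=0$ for all $\tilde\psi\in\tilde X_0$, which is the weak form of $-\Delta\psi=g'(\psi_0)(\psi-P_0\psi)$; that is, $\psi$ solves \eqref{elip02} with $\lambda=1$. Since $n(n+1)/2$ is strictly increasing in $n$, the value $\lambda=1$ is $\lambda_1$ and corresponds solely to $n=1$, so by Theorem \ref{associate_ep0} its eigenspace is spanned by $L_1(\tanh(y))-L_1(0)$ and $L_{1,1}(\tanh(y))\cos x$, $L_{1,1}(\tanh(y))\sin x$. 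As $L_1(\gamma)=\gamma$ one gets $L_1(\tanh(y))-L_1(0)=\tanh(y)$ and $L_{1,1}(\gamma)=(1-\gamma^2)^{1/2}$, hence $L_{1,1}(\tanh(y))=\sech(y)=1/\cosh(y)$, which gives the claimed basis.

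For part (2), the key observation is that orthogonality to $\ker(\tilde A_0)$ in $\tilde X_0$ implies orthogonality in the weighted space. Indeed, for $\psi\in\tilde X_{0+}$ and $\phi\in\ker(\tilde A_0)$, integrating by parts, using $-\Delta\phi=g'(\psi_0)(\phi-P_0\phi)$ and $\iint_\Omega g'(\psi_0)(\phi-P_0\phi)\,dxdy=0$, yields
\begin{align*}
0=\iint_\Omega\nabla\psi\cdot\nabla\phi\,dxdy=\iint_\Omega g'(\psi_0)(\psi-P_0\psi)(\phi-P_0\phi)\,dxdy .
\end{align*}
Thus $\psi$ is $Z_0$-orthogonal to the $3$-dimensional eigenspace for $\lambda_1=\lambda_2=\lambda_3=1$. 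By the variational characterization \eqref{variational problem2} and the list of eigenvalues in Theorem \ref{associate_ep0} (the next distinct eigenvalue being $\lambda_4=3$), this forces
\begin{align*}
\|\nabla\psi\|_{L^2(\Omega)}^2\geq 3\iint_\Omega g'(\psi_0)(\psi-P_0\psi)^2\,dxdy,\qquad \psi\in\tilde X_{0+}.
\end{align*}
Plugging this into the formula \eqref{A0-quadratic form} for $\langle\tilde A_0\psi,\psi\rangle$ gives
\begin{align*}
\langle\tilde A_0\psi,\psi\rangle=\|\nabla\psi\|_{L^2(\Omega)}^2-\iint_\Omega g'(\psi_0)(\psi-P_0\psi)^2\,dxdy\geq\|\nabla\psi\|_{L^2(\Omega)}^2-\tfrac13\|\nabla\psi\|_{L^2(\Omega)}^2=\tfrac23\|\psi\|_{\tilde X_0}^2 ,
\end{align*}
which is the desired estimate.

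The only point needing care is the passage from $\tilde X_0$-orthogonality to $Z_0$-orthogonality and checking that the multiplicities line up so that the restricted Rayleigh quotient is bounded below by $\lambda_4=3$ (equivalently, that $\ker(\tilde A_0)$ is exactly the $\lambda=1$ eigenspace and is $3$-dimensional). Alternatively one may bypass the variational argument entirely: since Theorem \ref{associate_ep0} furnishes a complete orthogonal eigenbasis of $\tilde X_0$, expand $\psi\in\tilde X_{0+}=\sum_j a_j e_j$ with $e_j$ normalized so that $\iint_\Omega g'(\psi_0)(e_j-P_0e_j)^2\,dxdy=1$ and $\|e_j\|_{\tilde X_0}^2=\lambda_j$, giving $\langle\tilde A_0\psi,\psi\rangle=\sum_j a_j^2(\lambda_j-1)$ and $\|\psi\|_{\tilde X_0}^2=\sum_j a_j^2\lambda_j$ with all $\lambda_j\geq3$; then $(\lambda_j-1)/\lambda_j\geq 2/3$ closes the estimate. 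Everything else is direct substitution, so I expect no genuine obstacle here.
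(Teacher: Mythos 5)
Your proposal is correct and follows essentially the same route as the paper: identify $\ker(\tilde A_0)$ with the $\lambda=1$ eigenspace from Theorem \ref{associate_ep0} (giving $\tanh(y)$, $\sech(y)\cos x$, $\sech(y)\sin x$), observe that $\tilde X_0$-orthogonality to the kernel coincides with $Z_0$-orthogonality, and then invoke the variational characterization \eqref{variational problem2} with the next eigenvalue $3$ to bound the quadratic form \eqref{A0-quadratic form} below by $\tfrac23\|\psi\|_{\tilde X_0}^2$. The paper's proof is the same argument, so no further comment is needed.
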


\begin{proof}
By Theorem \ref{associate_ep0}, we infer that $\lambda_1=1$ is the principal eigenvalue of \eqref{elip02} with multiplicity $3$, and the corresponding eigenfunctions are $\tanh(y),{\cos(x)\over \cosh(y)},{\sin(x)\over \cosh(y)}$. This proves (1).

For $\psi\in\tilde X_0$ and $\phi\in\ker (\tilde A_0)$, we note that $(\psi,\phi)_{Z_0}=\iint_{\Omega}g'(\psi_0)(\psi-P_0\psi)\phi dxdy=\iint_{\Omega}g'(\psi_0)\psi\phi dxdy=\iint_{\Omega}\psi(-\Delta)\phi dxdy=(\psi,\phi)_{\tilde X_0}$, where we used $P_0\phi=0$.
Since $\lambda_2=3$ is the second eigenvalue of \eqref{elip02}, we get by the variational problem \eqref{variational problem2} that
\begin{align*}
{1\over3}\iint_{\Omega}|\nabla\psi|^2dxdy\geq\iint_{\Omega}g'(\psi_0)(\psi-P_0\psi)^2dxdy,\quad \psi\in\tilde X_{0+},
\end{align*}
and thus, by \eqref{A0-quadratic form} we have
\begin{align*}
 \langle\tilde{A}_0\psi,\psi\rangle=\iint_\Omega|\nabla\psi|^2-g'(\psi_0)(\psi - P_0\psi)^2dxdy\geq {2\over3} \| \psi\|_{\tilde X_0}^2.
\end{align*}
This proves (2).
\end{proof}
We also get  the kernel of  the operator $A_0$ defined in \eqref{A0 without projection} and a decomposition of $\tilde X_{0}$ associated to $A_0$, which plays important roles in the study on nonlinear stability.
\begin{Corollary}\label{kernel of  the operator A0 and a decomposition of tilde X0}
$(1)$ $\ker ( A_0)=\ker (\tilde A_0)={\rm{span}}\left\{\tanh(y),{\cos(x)\over \cosh(y)},{\sin(x)\over \cosh(y)}\right\}$.

$(2)$ Let $\tilde X_{0+}$ be defined as above. Then
\begin{align*}
\langle  A_0 \psi,\psi\rangle \geq C_0 \| \psi\|_{\tilde X_0}^2, \quad \quad \psi\in \tilde X_{0+}
\end{align*}
for some $C_0>0$.
\end{Corollary}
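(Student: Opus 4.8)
The plan is to read off both assertions from the identity \eqref{tilde A0-A0}, which after polarization says that $\langle A_0\psi,\phi\rangle=\langle\tilde A_0\psi,\phi\rangle-8\pi(P_0\psi)(P_0\phi)$ for all $\psi,\phi\in\tilde X_0$, combined with the spectral picture for $\tilde A_0$ from Corollary \ref{kernel of  the operator tilde A0 and a decomposition of tilde X0} (that $\ker\tilde A_0$ is the three--dimensional principal eigenspace ${\rm span}\{\tanh y,\ \cos x/\cosh y,\ \sin x/\cosh y\}$ and that $\langle\tilde A_0\psi,\psi\rangle\geq\tfrac23\|\psi\|_{\tilde X_0}^2$ on $\tilde X_{0+}$), and with the elementary remark --- already used there --- that $P_0$ annihilates $\ker\tilde A_0$: indeed each of the three generators satisfies $\iint_\Omega g'(\psi_0)\phi\,dxdy=0$, the first being odd in $y$ and the other two having zero $x$--mean.

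For $(1)$ the inclusion $\ker\tilde A_0\subseteq\ker A_0$ is immediate from the polarized identity: if $\tilde A_0\psi=0$ then $P_0\psi=0$, whence $\langle A_0\psi,\phi\rangle=0$ for every $\phi$. For the reverse inclusion I would take $\psi\in\ker A_0$, split $\psi=\psi_{\mathrm k}+\psi_{+}$ with $\psi_{\mathrm k}\in\ker\tilde A_0$ and $\psi_{+}\in\tilde X_{0+}$, note that $P_0\psi=P_0\psi_{+}$ and $\langle\tilde A_0\psi,\psi\rangle=\langle\tilde A_0\psi_{+},\psi_{+}\rangle$, so that \eqref{tilde A0-A0} becomes $\langle\tilde A_0\psi_{+},\psi_{+}\rangle=8\pi(P_0\psi_{+})^2$, while testing $A_0\psi=0$ against $\tilde X_{0+}$ identifies $\tilde A_0\psi_{+}$ on $\tilde X_{0+}$ with $(P_0\psi_{+})\,g'(\psi_0)$. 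Since the coercivity estimate makes $\tilde A_0$ an isomorphism of $\tilde X_{0+}$ onto $\tilde X_{0+}^{*}$, $\psi_{+}$ must be a scalar multiple of the unique solution $\chi\in\tilde X_{0+}$ of $\tilde A_0\chi=g'(\psi_0)$; rewriting this zero--mode problem in the variable $\gamma=\tanh y$ turns it into an inhomogeneous Legendre equation solvable in closed form, from which one sees that $\psi_{+}$ is forced to vanish, so $\psi\in\ker\tilde A_0$. Combining this with Theorem \ref{associate_ep0} (principal eigenvalue $\lambda_1=1$ of \eqref{elip02}) then yields $\ker A_0=\ker\tilde A_0={\rm span}\{\tanh y,\ \cos x/\cosh y,\ \sin x/\cosh y\}$.

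For $(2)$, with $\psi\in\tilde X_{0+}$, the identity \eqref{tilde A0-A0} reduces the claim to absorbing the non--negative correction $8\pi(P_0\psi)^2$ into a fixed fraction of $\langle\tilde A_0\psi,\psi\rangle$, which is $\geq\tfrac23\|\psi\|_{\tilde X_0}^2$ by Corollary \ref{kernel of  the operator tilde A0 and a decomposition of tilde X0}. To do this I would expand $\psi$ in the $Z_0$--orthonormal basis of eigenfunctions of \eqref{elip02} produced by Theorem \ref{associate_ep0}; since $\psi\perp\ker\tilde A_0$, only the eigenvalues $\lambda_n=n(n+1)/2\geq3$ occur, so $\langle\tilde A_0\psi,\psi\rangle=\sum_{n\geq2}(\lambda_n-1)|c_n|^2$ and $\|\psi\|_{\tilde X_0}^2=\sum_{n\geq2}\lambda_n|c_n|^2$, while $P_0\psi$ is built only from the zero--mode coefficients, weighted by the Legendre values $L_n(0)$ (which vanish for odd $n$). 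A Cauchy--Schwarz estimate of $8\pi(P_0\psi)^2$ against $\sum_{n\geq2}(\lambda_n-1)|c_{n,0}|^2$ then reduces everything to a numerical inequality for the explicit Legendre series $\sum_{n\geq2}\frac{(2n+1)L_n(0)^2}{\lambda_n-1}$. This quantitative step is the main obstacle: the bare positivity of $\tilde A_0$ is insufficient, and one must control the nonlocal projection $P_0$ on $\tilde X_{0+}$ sharply against the spectral gap; once that is done, one obtains $\langle A_0\psi,\psi\rangle\geq C_0\|\psi\|_{\tilde X_0}^2$ with an explicit $C_0>0$, which is what plays a role in the nonlinear analysis.
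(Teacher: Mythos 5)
Your argument for part (1) takes a genuinely different route from the paper's (the paper splits $\psi$ into its $x$-average and the nonzero modes, observes that $A_0=\tilde A_0$ on the nonzero modes, and solves the zero-mode ODE $-\widehat\psi_0''-g'(\psi_0)\widehat\psi_0=0$ pointwise for $y\neq0$, with fundamental system $\{\tanh(y),\,y\tanh(y)-1\}$, discarding the second solution because it is not in $\dot H^1(\mathbb{R})$), but your version stops exactly where the content is. From $A_0\psi=0$ and \eqref{tilde A0-A0} you correctly get $\tilde A_0\psi_{+}=(P_0\psi_{+})\,g'(\psi_0)$, hence $\psi_{+}=c\,\chi$ with $c=P_0\psi_{+}$ and $\tilde A_0\chi=g'(\psi_0)$; but unique solvability of this inhomogeneous problem does not by itself force $\psi_{+}=0$. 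You still need the scalar compatibility condition $c=c\,P_0\chi$, i.e.\ you must verify $P_0\chi\neq1$, and you neither state nor check it. Worse, in the weak formulation you never leave (the zero mode of every test function vanishes at $y=0$, so a Dirac mass on $\{y=0\}$ is invisible to the pairing), the unique solution in $\tilde X_{0+}$ is $\chi=2|\tanh(y)|$ (that is, $2|\gamma|$ after $\gamma=\tanh(y)$), and a direct computation gives $P_0\chi=1$; so the deferred ``inhomogeneous Legendre equation'' step does not deliver your conclusion. What carries the paper's proof is precisely the pointwise ODE analysis away from $y=0$ together with the $\dot H^1$ decay requirement — information a purely dual/isomorphism argument cannot see, since it cannot distinguish $\tanh(y)$ from $|\tanh(y)|$.

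For part (2) the gap is quantitative and fatal to the scheme you propose: the Cauchy--Schwarz reduction requires the series $\sum_{n\geq2}\frac{(2n+1)L_n(0)^2}{n(n+1)/2-1}$ to be strictly below $1$, but the sharp constant in the absorbing inequality $8\pi(P_0\psi)^2\leq K\,\langle\tilde A_0\psi,\psi\rangle$ on $\tilde X_{0+}$ is exactly $1$: equality already occurs at $\psi=|\tanh(y)|$, which has finite Dirichlet norm, zero mode vanishing at $y=0$, is $\tilde X_0$-orthogonal to all three kernel functions, and satisfies $P_0\psi=\tfrac12$ with both sides equal to $2\pi$. So absorbing $8\pi(P_0\psi)^2$ into the spectral gap of $\tilde A_0$ can only yield $\langle A_0\psi,\psi\rangle\geq0$, never a positive $C_0$. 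The paper avoids estimating the projection altogether: it proves nonnegativity of $A_0$ on the zero modes by completing a square, $\langle A_0\phi,\phi\rangle=2\pi\int_{\mathbb{R}}\bigl(\phi'-\phi\,\sech^2(y)/\tanh(y)\bigr)^2dy$, notes that on the nonzero modes $A_0$ coincides with $\tilde A_0\geq0$ (Theorem \ref{associate_ep0}), and then extracts the constant from compactness of the embedding $\tilde X_0\hookrightarrow L^2_{g'(\psi_0)}(\Omega)$ (Lemma \ref{compact2P}): the Rayleigh quotient constrained to the weighted-$L^2$ orthogonal complement of $\ker(A_0)$ has infimum $\mu_0>1$, giving $C_0=1-1/\mu_0$. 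If you want to rescue your approach, you must first settle (1) in the paper's pointwise sense (no singular part supported on $\{y=0\}$), because the borderline direction $|\tanh(y)|$ identified above is exactly what that step is meant to exclude.
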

\begin{proof} (1)
Since $P_0|_{\ker ( A_0)}=0$, we have by \eqref{tilde A0-A0} that $\ker (\tilde A_0)\subset \ker ( A_0)$.
For $\psi=\widehat\psi_0+\psi_{\neq0}\in \ker ( A_0)\backslash\ker (\tilde A_0)$, we have $\psi=\widehat\psi_0$ since $\tilde A_0\psi_{\neq0}=A_0\psi_{\neq0}=0$.
Then $\langle A_0 \widehat\psi_0,\phi\rangle=2\pi\int_\mathbb{R}\left(\widehat\psi_0'\phi'-g'(\psi_0)\widehat\psi_0\phi \right)dy=0$ for $\phi\in Y_0=\{\phi|\phi\in\dot{H}^1(\mathbb{R}),\phi(0)=0\}$. Thus,
  $-\widehat\psi_0''-g'(\psi_0)\widehat\psi_0=a_0\delta(y)$ for some  $a_0\in \mathbb{R}$. Thus,  $-\widehat\psi_0''-g'(\psi_0)\widehat\psi_0=0$ for $y\neq0$. Then $\widehat\psi_0(y)=c_1\tanh(y)+c_2(y\tanh(y)-1)$ for $y\neq0$. Since $y\tanh(y)-1\notin \dot{H}^1(\mathbb{R})$, we have $\widehat\psi_0(y)=c_1\tanh(y)$. Thus, $\ker (\tilde A_0)=\ker ( A_0)$.

(2) First, we claim that $\langle A_0\phi,\phi\rangle\geq0$ for $\phi\in Y_0$.
In fact, since $(\sech^2(y))'=-2\sech^2(y)$ $\tanh(y)$, we have
\begin{align*}
\langle A_0\phi,\phi\rangle=&2\pi\int_{-\infty}^\infty\left(|\phi'(y)|^2+{(\sech^2(y))'\over\tanh(y)}\phi(y)^2\right)dy\\
=&2\pi\int_{-\infty}^\infty|\phi'(y)|^2dy+2\pi{\sech^2(y)\phi(y)^2\over\tanh(y)}\bigg|_{-\infty}^\infty\\
&-
2\pi\int_{-\infty}^\infty\left({2\phi(y)\phi'(y)\sech^2(y)\over\tanh(y)}
-{\phi(y)^2\sech^4(y)\over\tanh^2(y)}\right)dy\\
=&2\pi\int_{-\infty}^\infty\left(\phi'(y)
-{\phi(y)\sech^2(y)\over\tanh(y)}\right)^2dy\geq0,
\end{align*}
where we used $\phi(y)^2\leq\|\phi'\|_{L^2(\mathbb{R})}^2|y|$, $\phi(y)=\tanh(y)\sum_{k\geq0}P_k(\tanh(y))$, and $P_k(\tanh(y))$ is a polynomial of degree $k$ in $\tanh(y)$.

Let $\psi=\widehat\psi_0+\psi_{\neq0}\in \tilde X_0$. Then
$\langle A_0 \psi_{\neq0},\psi_{\neq0}\rangle =
\langle \tilde A_0 \psi_{\neq0},\psi_{\neq0}\rangle \geq 0$ by Theorem \ref{associate_ep0}.
Thus, $
\langle A_0\psi,\psi\rangle= \langle A_0\widehat\psi_0,\widehat\psi_0\rangle+\langle A_0\psi_{\neq0},\psi_{\neq0}\rangle\geq0$.
Since  $\tilde X_0$ is compactly embedded in $L^2_{g'(\psi_0)}(\Omega)$ by Lemma \ref{compact2P}, we have
\begin{align*}\inf_{\psi \in \tilde X_0, (\psi, \phi)_{L^2_{g'(\psi_0)}(\Omega)} = 0, \phi\in\ker(A_0)}{\iint_\Omega|\nabla\psi|^2dxdy\over\iint_\Omega g'(\psi_0)\psi^2dxdy}=\mu_0>1,
\end{align*}
which implies that
\begin{align*}
 \langle A_0\psi,\psi\rangle=\iint_\Omega|\nabla\psi|^2-g'(\psi_0)\psi^2dxdy\geq \left(1-{1\over\mu_0}\right) \| \psi\|_{\tilde X_0}^2,\quad \psi\in\tilde X_{0+},
\end{align*}
where we used  $(\psi,\phi)_{L^2_{g'(\psi_0)}(\Omega)}=\iint_{\Omega}g'(\psi_0)\psi\phi dxdy=\iint_{\Omega}\nabla\psi\cdot\nabla\phi dxdy=(\psi,\phi)_{\tilde X_0}$ for $\phi\in\ker (\tilde A_0)$.
\if0
 In fact, by \eqref{p0-psi-estimates-2} and \eqref{tilde A0-A0} we have
\begin{align*}
\langle\tilde  A_0 \psi,\psi\rangle=\langle A_0 \psi,\psi\rangle+8\pi(P_0 \psi)^2\leq\langle A_0 \psi,\psi\rangle+\iint_\Omega g'(\psi_0)|\psi|^2dxdy,\quad \psi\in\tilde X_{0+}.
\end{align*}
Thus, we infer from \eqref{sec-eigenvalue1-tilde-A0}-\eqref{sec-eigenvalue2} that
\begin{align*}
\langle A_0 \psi,\psi\rangle\geq
\langle\tilde  A_0 \psi,\psi\rangle-\iint_\Omega g'(\psi_0)|\psi|^2dxdy\geq {2\over3} \| \psi\|_{\tilde X_0}^2-{1\over3} \| \psi\|_{\tilde X_0}^2={1\over3} \| \psi\|_{\tilde X_0}^2.
\end{align*}
\fi
\end{proof}

\if0
\begin{proof}

We know that $\{ (\lambda_n, \phi_n^0) \}_{n \geq 1}$ in \eqref{eigen00} solves \eqref{eigenvalue problem for 0 mode}. Now we prove that the eigenspace for each eigenvalue $\lambda_n$ is $span \{\phi_n^0\}$, and there are no more other eigenvalues for \eqref{eigenvalue problem for 0 mode}. We first claim that $\{\phi_n^0\}_{n=1}^\infty$ is an orthogonal basis in $Z_0$ under the inner product of $Z_m$ where
$$Z_m = \{ \phi | \int_{-1}^1 |\phi (\gamma) - \tilde{P}_0 \phi |^2 d \gamma < \infty \}$$
with inner product
$$(\phi_1, \phi_2)_{Z_m} = \int_{-1}^1 (\phi_1 (\gamma) - \tilde{P}_0 \phi_1)(\psi_2 (\gamma) - \tilde{P}_0 \phi_2)  d \gamma \text{ for any } \phi_1, \phi_2 \in Z_m.$$
We have the orthogonality of $\{\phi_n^0\}_{n=1}^\infty$ from
\begin{align*}
(\phi_i^0, \phi_j^0)_{Z_m}
& = \int_{-1}^1 (\phi_i^0 (\gamma) - \tilde{P}_0 \phi_i^0)(\phi_j^0 (\gamma) - \tilde{P}_0 \phi_j^0)d \gamma \\
& = \int_{-1}^1 (\phi_i^0 (\gamma) + L_i(0))(\phi_j^0 (\gamma) + L_j(0))d \gamma \\
& = \int_{-1}^1 L_i(\gamma)L_j(\gamma)d \gamma \\
& = \frac{2}{2i+1}\delta_{ij}
\end{align*}
where $\delta_{ij} = 1$ if $i = j$ else $0$. Now we prove that $\{\phi_n^0\}_{n=1}^\infty$ spans $Z_0$. Since \{$L_n\}_{n=0}^\infty$ is an orthogonal and complete basis of $L^2(-1,1)$. For any $\phi \in Z_0$, we have
$$\phi(\gamma) = \sum_{n=0}^\infty a_n L_n(\gamma)$$ for any fixed $\gamma \in (-1, 1)$, where $a_n = \frac{2n+1}{2}\int_{-1}^1 \phi(\gamma) L_n(\gamma) d \gamma$. Since $\phi \in Z_0$, we have $\phi(0) = \sum_{n=0}^\infty a_n L_n(0) = 0$. So, we have
$$\phi(\gamma) = \sum_{n=0}^\infty a_n (L_n(\gamma) - L_n(0)) =  \sum_{n=1}^\infty a_n \phi_n^0(\gamma) \text{ for } \gamma \in (-1, 1)$$
and
$$a_n = \frac{2n+1}{2}(\phi, \phi_n^0)_{Z_m}.$$ Moreover, for any $\ep > 0$, there exists $N_\ep > 0$ such that
$$\|\phi(\gamma) - \sum_{n=0}^{N_\ep} a_n L_n(\gamma) \|_{L^2(-1,1)} < \frac \ep 2 \text{ and } |\sum_{n=0}^{N_\ep} a_n L_n(0)| < \frac{\ep}{2\sqrt{2}}.$$
So
\begin{align*}
&\|\phi - \sum_{n=1}^{N_\ep} a_n \phi^0_n \|_{L^2(-1,1)} \\
= & \|\phi - \sum_{n=0}^{N_\ep} a_n (L_n - L_n(0))\|_{L^2(-1,1)} \\
\leq & \|\phi - \sum_{n=0}^{N_\ep} a_n L_n \|_{L^2(-1,1)}  + \|\sum_{n=0}^{N_\ep} a_n L_n(0)\|_{L^2(-1,1)} \\
< & \frac{\ep}{2} + \frac{\ep}{2} = \ep,
\end{align*}
and
\begin{align*}
& \|\phi - \sum_{n=1}^{N_\ep} a_n \phi^0_n \|_{Z_m}\\
 \leq & \|\phi - \sum_{n=1}^{N_\ep} a_n \phi^0_n \|_{L^2(-1,1)} + \| \tilde{P}_0( \phi - \sum_{n=1}^{N_\ep} a_n \phi^0_n) \|_{L^2(-1,1)} \\
 < & \ep + \sqrt{2} |\tilde{P}_0( \phi - \sum_{n=1}^{N_\ep} a_n \phi^0_n)| \\
 < & \ep + \frac{\sqrt{2} }{2} \|\phi - \sum_{n=1}^{N_\ep} a_n \phi^0_n\|_{L^1(-1,1)} \\
 < & \ep + \ep = 2\ep.
\end{align*}
So we proved that $Z_0 \subseteq span \{\phi_n^0\}_{n=1}^\infty$. Now we are going to prove that $span \{\phi_n^0\}_{n=1}^\infty \subseteq Z_0$. Define
$$J(\phi) = \frac{\int_{-1}^1(1-\gamma^2)|\phi'(\gamma)|^2 d\gamma }{2\int_{-1}^1 (\phi - \tilde{P}\phi)^2 d \gamma}$$
for $\phi \in Z_0$. By the Poincar\'e-type inequality from Lemma \ref{poincare1}
$$\int_{-1}^1 (\phi - \tilde{P}\phi)^2 d \gamma \leq C \int_{-1}^1(1-\gamma^2)|\phi'(\gamma)|^2 d\gamma$$
and the compactness of the embedding $Z_0 \hookrightarrow Z_m$ (Lemma \ref{compact2}), we can inductively define $\mu_n$ as follows:
\begin{align*}
\mu_1 &= \min_{\phi \in Z_0} J(\phi) = J(\phi_{\mu_1}), \\
\mu_2 &= \min_{\phi \in Z_0, (\phi, \phi_{\mu_1})_{Z_m} = 0} J(\phi) = J(\phi_{\mu_2}),\\
\mu_3 &= \min_{\phi \in Z_0, (\phi, \phi_{\mu_i})_{Z_m} = 0, i = 1, 2} J(\phi) = J(\phi_{\mu_3}),\\
\cdots\\
\mu_n &= \min_{\phi \in Z_0, (\phi, \phi_{\mu_i})_{Z_m} = 0, i = 1, 2, \cdots, n-1} J(\phi) = J(\phi_{\mu_n}),\\
\cdots
\end{align*}
for $\phi_{\mu_n} \in Z_0$ such that $\int_{-1}^1 (\phi_{\mu_n} - \tilde{P}\phi_{\mu_n})^2 d \gamma = 1$, $n \geq 1$. Then we have
$$\frac{d}{d \tau} J(\phi_{\mu_n} + \tau \phi)|_{\tau = 0} = \int_{-1}^1 (1-\gamma^2)\phi'_{\mu_n}\phi' - \mu_n (\phi_{\mu_n} - \tilde{P}\phi_{\mu_n})\phi d \gamma$$
for any $\phi \in Z_0$. So $\{\lambda = \mu_n, \phi^0 = \phi_{\mu_n}\}_{n=1}^\infty$ gives all the nontrivial weak solutions of \eqref{eigenvalue problem for 0 mode}. Then we have $\{\lambda_n\}_{n=1}^\infty \subseteq \{\mu_n\}_{n=1}^\infty$, and
$$Z_0 \subseteq span\{\phi^0_n\}_{n=1}^\infty \subseteq span\{\phi_{\mu_n}\}_{n=1}^\infty \subseteq Z_0.$$
So we have $$span\{\phi^0_n\}_{n=1}^\infty = span\{\phi_{\mu_n}\}_{n=1}^\infty = Z_0.$$
Suppose there exists $\lambda_* \in \{\mu_n\}_{n=1}^\infty \backslash \{\lambda_n\}_{n=1}^\infty$ with $\lambda_* = J(\phi_*)$ and $\int_{-1}^1 (\phi_*- \tilde{P}\phi_*)^2 d \gamma = 1$. Then $\phi_* = \sum_{n=1}^\infty b_n \phi^0_n$ with $b_n \in \mathbb{R}$. But $(\phi_*, \phi^0_n)_{Z_m} = 0$ for all $n \geq 1$, which gives that $\phi_* = 0$, contradicting to the the fact that $\int_{-1}^1 (\phi_*- \tilde{P}\phi_*)^2 d \gamma = 1$. So we have $\lambda_* \in \{\mu_n\}_{n=1}^\infty = \{\lambda_n\}_{n=1}^\infty$. Now we prove that the eigenspace of $\lambda_n$ is $span \{\phi^0_n\}$ for all $n\geq 1$. Suppose there exists $n_0 \geq 1$ such that the eigenspace of $\lambda_{n_0}$ is not exactly $span \{\phi^0_{n_0}\}$, i.e. there exists another eigenfunction $\phi_* \in Z_0$ of $\lambda_{n_0}$ such that $\phi_* \neq c \phi^0_{n_0}$ for any $c \neq 0$. Then there exists an eigenfunction $0\neq \phi_{n_0} \in span \{\phi^0_{n_0}, \phi_*\}$ of $\lambda_{n_0}$ such that $(\phi_{n_0}, \phi^0_{n_0})_{Z_m} = 0$ and $J(\phi_{n_0}) = \lambda_{n_0}$. Note that $\phi_{n_0} = \sum_{n=1}^\infty d_n \phi_n^0$ with $d_n \in \mathbb{R}$ and $(\phi_{n_0}, \phi^0_n)_{Z_m} = 0$ for all $1\leq n \leq n_0$. Thus $\phi_{n_0} = \sum_{n=n_0 + 1}^\infty d_n \phi_n^0 \in span \{\phi^0_n\}_{n=n_0 + 1}^\infty$, which implies that
$J(\phi_{n_0}) \geq \min_{\phi \in span \{\phi^0_n\}_{n=n_0 + 1}^\infty} J(\phi) = \lambda_{n_0 + 1}$. This is a contradiction. So $\{ (\lambda_n, \phi_n^0) \}_{n \geq 1}$ in \eqref{eigen00} solves the eigenvalue problem \eqref{eigenvalue problem for 0 mode}. Moreover, $\{\phi_n^0) \}_{n \geq 1}$ is an orthogonal basis of $Z_0$ under the inner product of $Z_m$ and each $\phi_n^0$ spans the eigenspace of $\lambda_n$.

For the case of non-zero modes, it's easy to see that $\{L_{n,k}(\gamma))\}_{n=k}^{\infty}$ is an orthogonal basis of $Z_k$ in \eqref{spacekk} under the inner product of $L^2(-1,1)$. Define
$$J_k(\phi) = \frac{\int_{-1}^1(1-\gamma^2)|\phi'(\gamma)|^2 + \frac{k^2}{1-\gamma^2}|\phi(\gamma)|^2 d\gamma }{2\int_{-1}^1 \phi (\gamma)|^2 d \gamma}$$
for $\phi \in Z_k$. By the Poincar\'e-type inequality
$$\int_{-1}^1 |\phi(\gamma)|^2 d \gamma \leq C \int_{-1}^1(1-\gamma^2)|\phi'(\gamma)|^2 + \frac{k^2}{1-\gamma^2} |\phi(\gamma)|^2 d\gamma, \forall \phi \in Z_k$$
and the compactness of the embedding $Z_k \hookrightarrow L^2(-1,1)$ (Lemma \ref{compact1}), a similar argument as above gives that  the eigenspace for $\lambda_n$ is $span \{L_{n,k}(\gamma)\}$ and that there are no other eigenvalues for \eqref{modekk}.
\end{proof}
\fi

\begin{remark} If we neglect the projection term $-\lambda g'(\psi_0)P_0\psi$ in \eqref{elip02}, the equation takes the form
 \begin{align}\label{elip02-without-projection}
-\Delta \psi = \lambda g'(\psi_0)\psi.
\end{align}
By changing the variable $y$ to $\gamma=\tanh(y)$ and denoting $\psi(x,y)=\Psi(x,\gamma)$, we have
\begin{align*}
-{1\over1-\gamma^2}\pa_x^2\Psi-\pa_\gamma\left((1-\gamma^2)\pa_\gamma\Psi\right)=2\lambda\Psi.
\end{align*}
Furthermore, by changing the variable $\gamma$ to $\beta=\cos^{-1}(\gamma)$, $\beta\in(0,\pi)$, and denoting $\Psi(x,\gamma)=\hat \Psi(x,\beta)$, we have
\begin{align}\label{spherical Laplacian}
-\Delta^*\hat\Psi=-{1\over\sin^2(\beta)}\pa_x^2\hat \Psi-{1\over \sin(\beta)}\pa_\beta\left(\sin(\beta)\pa_\beta\hat\Psi\right)=2\lambda\hat \Psi,
\end{align}
where $\Delta^*$ is the spherical Laplacian. It is well-known \cite{Courant-Hilbert53} that
 if $\hat\Psi\in L^2(S^2)$, and the boundary terms $\hat\Psi(\cdot,0)$ and $\hat\Psi(\cdot,\pi)$ are regular, then all the eigenvalues  of \eqref{spherical Laplacian} are $\lambda=\frac{n(n+1)}{2}$ with $n\geq0$.
For $n\geq0$, the eigenspace associated to $\lambda_n$ is  spanned by
$$ L_{n}(\cos(\beta)) , \quad  L_{n,k}(\cos(\beta))\cos(kx), \quad L_{n,k}(\cos(\beta))\sin(kx), \quad  0 \leq k\leq n,$$
which are exactly the spherical harmonic functions of degree $n$ and order $k$. Moreover, the  spherical harmonic functions
 form a complete and orthonormal basis of $L^2(S^2)$.
 Correspondingly, we find a series of solutions to \eqref{elip02-without-projection}
\begin{align*} L_{n}(\tanh(y)) , \quad  L_{n,k}(\tanh(y))\cos(kx), \quad L_{n,k}(\tanh(y))\sin(kx), \quad  0 \leq k\leq n,
\end{align*}
with $\lambda=\lambda_n =\frac{n(n+1)}{2}$, where $n\geq0$ is an integer.
 The difference between  \eqref{elip02-without-projection}  and our case  \eqref{elip02} is that we need to deal with the projection occurring in the equation \eqref{elip02}  as well as  the function spaces.
 The change of variables $\gamma=\tanh(y)$ and $\beta=\cos^{-1}(\gamma)$
 is interesting independently.
\end{remark}

\subsection{Change of variables for  Kelvin--Stuart vortices  and reduction to the shear case}\label{Kelvin--Stuart cats eyes flows}
Unlike the hyperbolic tangent shear flow ($\ep=0$), the Kelvin--Stuart vortex $\omega_\ep$ ($0<\ep<1$) depends genuinely on both $x$ and $y$, so the problem is no longer separable. In the original variables $(x,y)$, this prevents us from decomposing the associated eigenvalue problem arising from the variational problem
 into a family of 1-dimensional eigenvalue problems, as in the reduction from \eqref{elip02} to \eqref{mode0}-\eqref{modek} for the shear case.
We overcome this difficulty by introducing a suitable change of variables, which reduces the non-shear case $0<\ep<1$ to the shear case $\ep=0$.

\subsubsection{Change of variables}\label{change of variables for cat's eyes  flows}
The main difficulty for the Kelvin--Stuart vortex $\omega_\ep$ ($0<\ep<1$) is to understand
the associated eigenvalue problem
\begin{align}\label{eigen-p-cat-eyes}
-\Delta \psi = \lambda g'(\psi_\epsilon)(I - P_\epsilon)\psi
\end{align}
in a suitable function space $\tilde X_\ep$ (see \eqref{tilde-X-e}).
\if0
$\dot{H}^1(\Omega)$ with an additional suitable condition to remove the disturbing of constants,
\fi
Here, $g'(\psi_\epsilon)$ is defined in \eqref{def-g-psi-ep-derivative} and $P_\epsilon$ (see  \eqref{P-ep}) is a similar projection as $P_0$.
The change of variable $\gamma=\tanh(y)$ used in the shear case does not work here, since $g'(\psi_\epsilon)$ depends essentially on $x$. In the shear case ($\ep=0$), the transformation $\gamma=\tanh(y)$ is motivated by the explicit eigenpairs in \eqref{eigen value-function} for the eigenvalue problem \eqref{mode0}. For the non-shear case ($0<\ep<1$), we therefore look for explicit solutions of \eqref{eigen-p-cat-eyes}, which in turn suggest a suitable change of variables.
By taking derivative of  $-\Delta \psi_\epsilon = g(\psi_\epsilon)$, we see that    $\lambda=1$ is an eigenvalue   of $
-\Delta \psi = \lambda g'(\psi_\epsilon)\psi, \psi\in\dot{H}^1(\Omega)
$  with   eigenfunctions  $\partial_x \psi_\ep, \partial_y \psi_\ep$ and $\partial_\epsilon \psi_\ep$  for all $0<\epsilon<1$. The eigenfunctions  could be viewed  as bifurcation from  the three eigenfunctions of the eigenvalue $\lambda=1$ for the corresponding equation
$-\Delta\psi=\lambda g'(\psi_0)\psi, \psi\in\dot{H}^1(\Omega)$  (i.e. $\ep=0$) as follows:
\begin{align} \label{bifurcation1}\begin{array}{llll}
\ep=0&&0<\ep<1\\
{\sin(x)\over \cosh(y)}&\longrightarrow&\frac{\sin(x)}{\cosh(y)+\epsilon \cos(x)}=-{1\over \ep}{\partial \psi_\ep\over\partial x},\\
 \tanh(y)&\longrightarrow& \frac{\sinh(y)}{\cosh(y)+\epsilon \cos(x)}=\frac{ \partial \psi_\ep}{\partial y}, \\
 {\cos(x)\over \cosh(y)}&\longrightarrow&  \frac{\epsilon \cosh(y) + \cos(x)}{\cosh(y)+\epsilon \cos(x)}=(1 - \epsilon^2) \frac{ \partial \psi_\ep}{\partial \ep}.
 \end{array}
\end{align}
This gives a hint that $\cosh(y)$ for $\ep=0$ branches to $\cosh(y)+\epsilon \cos(x)$ for $0<\ep<1$, and $\cos(x)$ branches to $\epsilon \cosh(y) + \cos(x)$.
 Motivated by this observation, we find that $\lambda=3$ is also an eigenvalue of $
-\Delta \psi = \lambda g'(\psi_\epsilon)\psi, \psi\in\dot{H}^1(\Omega)$ for all $0<\lambda<1$, since the eigenfunctions can be obtained by the similar bifurcation:
\begin{equation} \label{bifurcation2}
\begin{array}{llll}
\ep=0&&0<\ep<1\\
3\tanh^2-1&\longrightarrow&3\left({\sqrt{1-\ep^2}\sinh(y)\over \cosh(y)+\epsilon \cos(x)}\right)^2-1=3\left(\sqrt{1 - \epsilon^2}\frac{ \partial \psi_\ep}{\partial y}\right)^2-1,\\
 {\sin(x)\sinh(y)\over \cosh^2(y)}&\longrightarrow& \frac{\sin(x)\sinh(y)}{(\cosh(y)+\epsilon \cos(x))^2}=-{1\over \ep}{\partial \psi_\ep\over\partial x}\frac{ \partial \psi_\ep}{\partial y}, \\
 {\sinh(y)\cos(x)\over \cosh^2(y)}&\longrightarrow&  \frac{\sinh(y)(\ep\cosh(y)+\cos(x))}{(\cosh(y)+\epsilon \cos(x))^2}=\frac{ \partial \psi_\ep}{\partial y}\left((1 - \epsilon^2) \frac{ \partial \psi_\ep}{\partial \ep}\right),\\
 {\sin(2x)\over\cosh^2(y)}&\longrightarrow&  \frac{\sin(x)(\ep\cosh(y)+\cos(x))}{(\cosh(y)+\epsilon \cos(x))^2}=-{1\over \ep}{\partial \psi_\ep\over\partial x}\left((1 - \epsilon^2) \frac{ \partial \psi_\ep}{\partial \ep}\right),\\
 {\cos(2x)\over\cosh^2(y)}&\longrightarrow&  \frac{(\ep\cosh(y)+\cos(x))^2-(\sqrt{1-\ep^2}\sin(x))^2}{(\cosh(y)+\epsilon \cos(x))^2}=\left((1 - \epsilon^2) \frac{ \partial \psi_\ep}{\partial \ep}\right)^2-\left(-{\sqrt{1 - \epsilon^2}\over \ep}{\partial \psi_\ep\over\partial x}\right)^2.
 \end{array}
\end{equation}
This gives a hint that $\sin(x)$ for $\ep=0$ branches to $\sqrt{1-\ep^2}\sin(x)$ for $0<\ep<1$, and $\sinh(y)$ branches to $\sqrt{1-\ep^2}\sinh(y)$.
This also motivates us to rescale   $\partial_x \psi_\ep, \partial_y \psi_\ep$ and $\partial_\epsilon \psi_\ep$ to be
\begin{align}\label{three-kers1}
\eta_\ep(x, y) & :=  \frac{-\sqrt{1 - \epsilon^2}}{\ep} \frac{ \partial \psi_\ep}{\partial x}= \frac{\sqrt{1 - \epsilon^2}\sin(x)}{\cosh(y)+\epsilon \cos(x)},\\\label{three-kers2}
\gamma_\ep(x, y)  & :=  \sqrt{1 - \epsilon^2}\frac{ \partial \psi_\ep}{\partial y}= \frac{\sqrt{1 - \epsilon^2}\sinh(y)}{\cosh(y)+\epsilon \cos(x)},\\\label{three-kers3}
\xi_\ep(x, y)  & := (1 - \epsilon^2) \frac{ \partial \psi_\ep}{\partial \ep} = \frac{\epsilon \cosh(y) + \cos(x)}{\cosh(y)+\epsilon \cos(x)},
\end{align}
since  the above eigenfunctions of $\lambda=3$ can be written as polynomials of $\eta_\ep$, $\gamma_\ep$ and $\xi_\ep$, and
\begin{align}\label{eta-gamma-xi}\eta_\ep^2 + \gamma_\ep^2 + \xi_\ep^2 = 1.\end{align}
Now, we know how to bifurcate  $\cos(x),\sin(x),\sinh(y)$ and $\cosh(y)$ from $\ep=0$ to $0<\ep<1$.
However,   $\cos(kx)$ and $\sin(kx)$  appear in the eigenfunctions in \eqref{sol-elip02} for $\ep=0$. It is difficult to study how such functions branch to the case $0<\ep<1$. Our  observation is that using the De Moivre's formulae, we  can expand $\cos(kx)$ and $\sin(kx)$ by $\sin(x)$ and $\cos(x) $ as follows:
 \begin{align}\label{De Moivres formula1}
 &\cos(kx)=\sum_{j=0}^k\begin{pmatrix}k\\j\end{pmatrix}\cos^j(x)\sin^{k-j}(x)\cos\left({(k-j)\pi\over2}\right),\\\label{De Moivres formula2}
 &\sin(kx)=\sum_{j=0}^k\begin{pmatrix}k\\j\end{pmatrix}\cos^j(x)\sin^{k-j}(x)\sin\left({(k-j)\pi\over2}\right).
 \end{align}
 In this way, the bifurcation of $\cos(kx)$ and $\sin(kx)$ reduce to that of $\cos(x)$ and $\sin(x)$. Now, every component in the eigenfunctions of \eqref{sol-elip02}
 is a combination of $\cos(x),\sin(x),\sinh(y)$ and $\cosh(y)$.
 \if0
Another point of applying  the De Moivre's formulae is that the eigenfunctions  $L_{n,k}(\tanh(y))\cos(kx)$  and $L_{n,k}(\tanh(y))\sin(kx)$   in \eqref{sol-elip02} for $\ep=0$  can be rewritten as polynomials  of $
\eta_0,
\gamma_0,$ and $
\xi_0$:
\begin{align}\label{ep=0-cos}
 &L_{n,k}(\tanh(y))\cos(kx)
 ={d^k\over d\gamma_0^k}L_n(\gamma_0)
 \sum_{j=0}^k\begin{pmatrix}k\\j\end{pmatrix}\xi_0^j\eta_0^{k-j}\cos\left({(k-j)\pi\over2}\right),\\\label{ep=0-sin}
 &L_{n,k}(\tanh(y))\sin(kx)=
 {d^k\over d\gamma_0^k}L_n(\gamma_0)
 \sum_{j=0}^k\begin{pmatrix}k\\j\end{pmatrix}\xi_0^j\eta_0^{k-j}\sin\left({(k-j)\pi\over2}\right),
\end{align}
where $\gamma_0=\gamma=\tanh(y)$,  $\xi_0=\cos(x)\text{sech}(y)=\cos(x)\sqrt{1-\gamma_0^2}$, and $\eta_0=\sin(x)\text{sech}(y)=\sin(x)\sqrt{1-\gamma_0^2}$.
We note that the remaining eigenfunctions  $L_{n}(\tanh(y))-L_{n}(0)$ have already been functions $L_{n}(\gamma_0)-L_{n}(0)$ of $\gamma_0$.
\fi
\if0
After finding a few eigenvalues with corresponding  eigenfunctions by  bifurcation at the shear flow and combining the two observations
\fi
Using the above branches and after direct computations, the branches of  the eigenfunctions
are  polynomials of the three functions  $
\eta_\epsilon,
\gamma_\epsilon,$ and $
\xi_\epsilon$:
\begin{align}\label{ep larger than 0-sol1}
&L_{n}(\gamma_\epsilon)-L_{n}(0)\\\label{ep larger than 0-sol2}
 &{d^k\over d\gamma_\ep^k}L_n(\gamma_\ep)
 \sum_{j=0}^k\begin{pmatrix}k\\j\end{pmatrix}\xi_\ep^j\eta_\ep^{k-j}\cos\left({(k-j)\pi\over2}\right),\\\label{ep larger than 0-sol3}
 &{d^k\over d\gamma_\ep^k}L_n(\gamma_\ep)
 \sum_{j=0}^k\begin{pmatrix}k\\j\end{pmatrix}\xi_\ep^j\eta_\ep^{k-j}\sin\left({(k-j)\pi\over2}\right).
\end{align}
Another approach to obtain \eqref{ep larger than 0-sol2}-\eqref{ep larger than 0-sol3} is first applying the De Moivre's formulae to the eigenfunctions  $L_{n,k}(\tanh(y))\cos(kx)$  and $L_{n,k}(\tanh(y))\sin(kx)$   in \eqref{sol-elip02} for $\ep=0$ to get
\begin{align}\label{ep=0-cos}
 &L_{n,k}(\tanh(y))\cos(kx)
 ={d^k\over d\gamma_0^k}L_n(\gamma_0)
 \sum_{j=0}^k\begin{pmatrix}k\\j\end{pmatrix}\xi_0^j\eta_0^{k-j}\cos\left({(k-j)\pi\over2}\right),\\\label{ep=0-sin}
 &L_{n,k}(\tanh(y))\sin(kx)=
 {d^k\over d\gamma_0^k}L_n(\gamma_0)
 \sum_{j=0}^k\begin{pmatrix}k\\j\end{pmatrix}\xi_0^j\eta_0^{k-j}\sin\left({(k-j)\pi\over2}\right),
\end{align}
and then carrying out the branches from $\xi_0$, $\gamma_0$, $\eta_0$ to $\xi_\ep$, $\gamma_\ep$, $\eta_\ep$,
where $\gamma_0=\gamma=\tanh(y)$,  $\xi_0=\cos(x)\text{sech}(y)=\cos(x)\sqrt{1-\gamma_0^2}$, and $\eta_0=\sin(x)\text{sech}(y)=\sin(x)\sqrt{1-\gamma_0^2}$.
By induction one can prove that   the functions in \eqref{ep larger than 0-sol1}-\eqref{ep larger than 0-sol3} are exactly  eigenfunctions of $
-\Delta \psi = \lambda g'(\psi_\epsilon)\psi
$ with $\lambda=n(n+1)/2$ for all $0<\epsilon<1$.
A natural question is whether there are  other linearly independent eigenfunctions.
With this problem  and our approach for $\ep=0$ in mind, we proceed to look for  change of variables for $0<\ep<1$.
Since $\gamma_\ep$ is branched from $\tanh(y)$ and recall that the change of variable is $y\mapsto \tanh(y)$ for $\ep=0$,  it is reasonable to define a new variable $\gamma_\ep$  for $0<\ep<1$.
The discovery of the other new variable, which is denoted by $\theta_\epsilon$ and should be branched  from the original variable $x$, is more subtle. Note that the eigenfunctions  \eqref{ep larger than 0-sol2}-\eqref{ep larger than 0-sol3} for  $0<\ep<1$  have the same forms with the eigenfunctions  \eqref{ep=0-cos}-\eqref{ep=0-sin} for $\ep=0$. The left hand sides of  \eqref{ep=0-cos}-\eqref{ep=0-sin}  for $\ep=0$ inspire us that  in the new variables $(\theta_\ep,\gamma_\ep)$,   the eigenfunctions for $0<\ep<1$ might  have the same forms
  $L_{n,k}(\gamma_\epsilon)\cos(k\theta_\epsilon)$  and $L_{n,k}(\gamma_\epsilon)\sin(k\theta_\epsilon)$. Applying the De Moivre's formula to $\cos(k\theta_\epsilon)$  and $\sin(k\theta_\epsilon)$, we have
 \begin{align}\nonumber
&L_{n,k}(\gamma_\epsilon)\cos(k\theta_\epsilon)\\\label{eigen-ep-gamma-theta1}
=&
 {d^k\over d\gamma_\ep^k}L_n(\gamma_\ep) \sum_{j=0}^k\begin{pmatrix}k\\j\end{pmatrix}\left(\sqrt{1-\gamma_\ep^2}\cos(\theta_\ep)\right)^j\left(\sqrt{1-\gamma_\ep^2}\sin(\theta_\ep)\right)^{k-j}\cos\left({(k-j)\pi\over2}\right),\\\nonumber
 &L_{n,k}(\gamma_\epsilon)\sin(k\theta_\epsilon)\\\label{eigen-ep-gamma-theta2}
=&
 {d^k\over d\gamma_\ep^k}L_n(\gamma_\ep) \sum_{j=0}^k\begin{pmatrix}k\\j\end{pmatrix}\left(\sqrt{1-\gamma_\ep^2}\cos(\theta_\ep)\right)^j\left(\sqrt{1-\gamma_\ep^2}\sin(\theta_\ep)\right)^{k-j}\sin\left({(k-j)\pi\over2}\right).
  \end{align}
\if0
A natural question is whether other eigenfunctions for $0<\epsilon<1$ could be bifurcated  from the eigenfunctions in \eqref{sol-elip02}  for $\varepsilon=0$. Since $\gamma_\ep$ is bifurcated from $\tanh(y)$,  it is reasonable to get the bifurcations  $L_{n}(\gamma_\ep)$ and $L_{n,k}(\gamma_\ep)$  for the factors $L_{n}(\tanh(y))$ and $L_{n,k}(\tanh(y))$  in \eqref{sol-elip02}.
Thus, we could get a new variable $\gamma_\ep$  for $0<\epsilon<1$ as what we did for $\epsilon=0$.
The   difficulty is how to get suitable bifurcation for  the factors  $\cos(kx)$ and $\sin(kx)$ for $\epsilon=0$, from which we may get
another new variable $\theta_\ep$ to replace the original variable $x$.
\fi
Comparing the factors in  \eqref{ep larger than 0-sol2}-\eqref{ep larger than 0-sol3} and \eqref{eigen-ep-gamma-theta1}-\eqref{eigen-ep-gamma-theta2}, and in view of  \eqref{eta-gamma-xi}, we can
define the other new variable as  an angle $\theta_\ep \in [0, 2\pi]$ such that
\begin{align}\label{def-eta-ep}
\eta_\ep& = \sqrt{1-\gamma_\ep^2} \sin(\theta_\ep), \\\label{def-xi-ep}
\xi_\ep& = \sqrt{1-\gamma_\ep^2} \cos(\theta_\ep),
\end{align}
where $\ep\in[0,1)$. In summary, we change the original variables $(x,y)$ to the new ones  $(\theta_\ep,\gamma_\ep)$ as follows
 \begin{align} \label{transf1}
 \theta_\ep(x,y) & = \left\{ \begin{array}{rcl} \arccos \left( \frac{\xi_\ep}{\sqrt{1-\gamma_\ep^2}} \right) & \mbox{ for } & (x,y) \in [0, \pi]\times\mathbb{R}, \\
 2\pi - \arccos \left( \frac{\xi_\ep}{\sqrt{1-\gamma_\ep^2}} \right) & \mbox{ for } & (x,y) \in (\pi, 2\pi]\times\mathbb{R}, \end{array}\right.\\
  \label{transf2}
 \gamma_\ep(x,y) & = \frac{\sqrt{1 - \epsilon^2}\sinh(y)}{\cosh(y)+\epsilon \cos(x)}\quad\text{for}\quad  (x,y) \in [0, 2\pi]\times\mathbb{R}.
 \end{align}
Here, $(\theta_\ep, \gamma_\ep) \in \tilde \Omega = \mathbb{T}_{2\pi} \times [-1, 1]$  and $\ep\in[0,1)$.
 The change of variables in $\eqref{transf1}$ and $\eqref{transf2}$ is well-defined and plays an important role in solving the associated eigenvalue problem \eqref{eigen-p-cat-eyes}. First,  $\eqref{transf1}$-$\eqref{transf2}$ reduce to  the change of variable in the shear case $\ep = 0$ as $\gamma_0 = \tanh(y) = \gamma$ and $\theta_0 = x$. Second,
for the new variables $\theta_\ep$ and $\gamma_\ep$,
  the Jacobian of this transformation is
\begin{align}\label{Jacobian of the transformation-ep}
\frac{\partial (\theta_\ep, \gamma_\ep)}{\partial (x, y)} = \frac{\partial \theta_\ep}{\partial x}\frac{\partial \gamma_\ep}{\partial y} - \frac{\partial \theta_\ep}{\partial y}\frac{\partial \gamma_\ep}{\partial x} = \frac 1 2 g'(\psi_\epsilon) > 0,
\end{align}
where $\ep\in[0,1)$. More importantly, the parameter $\ep$ is fully encoded into the new variables. This enables us to reduce the eigenvalue problem in the cat's-eye case ($0<\ep<1$) to the hyperbolic tangent shear case ($\ep=0$),  which has been studied in Subsection \ref{Solutions to the eigenvalue problem ep=0}. More precisely, the associated eigenvalue problem \eqref{eigen-p-cat-eyes} is transformed to \eqref{eigenvalue problem-ep-new}, which is the same as \eqref{elip02-x-gamma}.  In particular, the eigenfunctions \eqref{ep larger than 0-sol1}-\eqref{ep larger than 0-sol3} form a complete and orthogonal basis after taking the projection terms and specific spaces in consideration.

\if0
\begin{remark}
The transformation above was inspired by the form of eigenfunctions in Lemma \ref{eigen-ep}, which was initially found by bifurcating from the explicit eigenfunctions of the special case $\ep = 0$ in Lemma \ref{associate_ep0} and then proved in a lengthy induction argument.
\end{remark}
\fi
By direct computation, we  obtain many  properties of $\eta_\ep, \gamma_\ep, \xi_\ep$ and $\theta_\ep$. We present some of them below in Propositions \ref{prop0}, \ref{prop1} and \ref{prop3}.
\begin{proposition}\label{prop0}
$(1)$ In terms of $\eta_\ep, \gamma_\ep, \xi_\ep$ and $\ep$, the steady state $ \omega_\ep$ is represented by
\begin{align}\label{omega-xi-eta-gamma-ep}\omega_\ep = -\left(\frac{(\xi_\ep - \ep)^2}{1-\ep^2} + \eta_\ep^2\right).\end{align}
$(2)$ The partial derivatives of $\eta_\ep(x, y), \gamma_\ep(x, y), \xi_\ep(x, y)$ and $\theta_\ep(x, y)$ are represented by
\begin{align*}\frac{\partial \xi_\ep}{\partial x} =& -\frac{\eta_\ep (1-\xi_\ep \ep)}{\sqrt{1-\ep^2}}, \quad \frac{\partial \xi_\ep}{\partial y} = -\frac{\gamma_\ep (\xi_\ep - \ep)}{\sqrt{1-\ep^2}},\quad
\frac{\partial \eta_\ep}{\partial x} = \frac{\xi_\ep - \ep + \eta_\ep^2 \ep}{\sqrt{1-\ep^2}}, \quad \frac{\partial \eta_\ep}{\partial y} = \frac{-\gamma_\ep \eta_\ep}{\sqrt{1-\ep^2}},\\
\frac{\partial \gamma_\ep}{\partial x} =& \frac{\ep \gamma_\ep \eta_\ep}{\sqrt{1-\ep^2}}, \quad \frac{\partial \gamma_\ep}{\partial y} = \frac{1-\xi_\ep \ep - \gamma_\ep^2}{\sqrt{1-\ep^2}},\quad
\frac{\partial \theta_\ep}{\partial x} = \frac{\gamma_{\ep y}}{1-\gamma_\ep^2}, \quad \frac{\partial \theta_\ep}{\partial y} = - \frac{\gamma_{\ep x}}{1-\gamma_\ep^2}.\end{align*}
\end{proposition}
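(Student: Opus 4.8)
The plan is to prove both parts by direct computation from the explicit rational expressions \eqref{three-kers1}--\eqref{three-kers3} for $\eta_\ep,\gamma_\ep,\xi_\ep$, keeping the common denominator $D:=\cosh(y)+\ep\cos(x)$ abbreviated throughout. The whole argument rests on two auxiliary identities obtained by clearing denominators, namely $\xi_\ep-\ep=(1-\ep^2)\cos(x)/D$ and $1-\ep\xi_\ep=(1-\ep^2)\cosh(y)/D$. For part $(1)$ I would substitute $\xi_\ep-\ep=(1-\ep^2)\cos(x)/D$ and $\eta_\ep=\sqrt{1-\ep^2}\sin(x)/D$ into the right-hand side of \eqref{omega-xi-eta-gamma-ep}, which collapses to $\frac{(\xi_\ep-\ep)^2}{1-\ep^2}+\eta_\ep^2=\frac{(1-\ep^2)\cos^2 x+(1-\ep^2)\sin^2 x}{D^2}=\frac{1-\ep^2}{D^2}=-\omega_\ep$ by \eqref{steadyw}. (A slightly more structural route: use \eqref{eta-gamma-xi} to write $\eta_\ep^2=1-\gamma_\ep^2-\xi_\ep^2$, so the right-hand side becomes $\big((1-\ep\xi_\ep)^2-(1-\ep^2)\gamma_\ep^2\big)/(1-\ep^2)$, and then apply the two auxiliary identities together with $\gamma_\ep^2=(1-\ep^2)\sinh^2(y)/D^2$.)

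For the derivatives of $\xi_\ep,\eta_\ep,\gamma_\ep$ in part $(2)$, I would apply the quotient rule to \eqref{three-kers1}--\eqref{three-kers3} using $\partial_x D=-\ep\sin x$ and $\partial_y D=\sinh y$, and then simplify each numerator with elementary identities such as $\cos x\,D+\ep\sin^2 x=\cos x\cosh y+\ep$, $\ep\cosh y+\cos x-\ep D=-(1-\ep^2)\cos x$, and $\cosh y\,D-\sinh^2 y=1+\ep\cosh y\cos x$. Each resulting expression is matched to the claimed formula by re-expressing it through $\eta_\ep,\gamma_\ep,\xi_\ep$ via the auxiliary identities; for example $\partial_x\xi_\ep=-(1-\ep^2)\sin x\cosh y/D^2=-\eta_\ep(1-\ep\xi_\ep)/\sqrt{1-\ep^2}$, and $\partial_y\gamma_\ep=\sqrt{1-\ep^2}(1+\ep\cosh y\cos x)/D^2=(1-\ep\xi_\ep-\gamma_\ep^2)/\sqrt{1-\ep^2}$. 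This step is routine but bookkeeping-heavy, so in the write-up I would display only a representative case or two and assert the rest.

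The one place that needs a little care is the pair of formulas for $\theta_\ep$, since $\theta_\ep$ is defined piecewise by $\arccos$ in \eqref{transf1}. Rather than differentiating $\arccos$ on each piece and worrying about the gluing points $\theta_\ep\in\{0,\pi\}$, I would use the complex form $\xi_\ep+i\eta_\ep=\sqrt{1-\gamma_\ep^2}\,e^{i\theta_\ep}$ (a restatement of \eqref{def-eta-ep}--\eqref{def-xi-ep}), which is valid on all of $\Omega$ because $1-\gamma_\ep^2>0$ there; taking the logarithmic differential and reading off the imaginary part gives the global identity $d\theta_\ep=\frac{\xi_\ep\,d\eta_\ep-\eta_\ep\,d\xi_\ep}{1-\gamma_\ep^2}$. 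Substituting the formulas for $\partial_x\eta_\ep,\partial_x\xi_\ep$ just derived and simplifying with $\eta_\ep^2+\xi_\ep^2=1-\gamma_\ep^2$ (that is, \eqref{eta-gamma-xi}) yields $\xi_\ep\partial_x\eta_\ep-\eta_\ep\partial_x\xi_\ep=(1-\ep\xi_\ep-\gamma_\ep^2)/\sqrt{1-\ep^2}=\partial_y\gamma_\ep$, hence $\partial_x\theta_\ep=\gamma_{\ep y}/(1-\gamma_\ep^2)$; the $y$-derivative is analogous, with $\xi_\ep\partial_y\eta_\ep-\eta_\ep\partial_y\xi_\ep=-\ep\gamma_\ep\eta_\ep/\sqrt{1-\ep^2}=-\partial_x\gamma_\ep$. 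I expect this $\theta_\ep$ computation — justifying the differential identity globally and then recognizing the simplified numerators as $\pm\gamma_{\ep y}$ and $\mp\gamma_{\ep x}$ — to be the \emph{main} (and still mild) obstacle; everything else is mechanical verification.
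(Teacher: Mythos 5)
Your computations are correct, but your route differs from the paper's in mechanics. The paper first inverts the definitions into the ratio identities $\frac{\cosh(y)}{\cos(x)}=\frac{1-\xi_\ep\ep}{\xi_\ep-\ep}$, $\tan(x)=\frac{\sqrt{1-\ep^2}\,\eta_\ep}{\xi_\ep-\ep}$, $\tanh(y)=\frac{\sqrt{1-\ep^2}\,\gamma_\ep}{1-\xi_\ep\ep}$ and $\tan(\theta_\ep)=\eta_\ep/\xi_\ep$; part (1) then follows by writing $\omega_\ep=-(1-\ep^2)\sec^2(x)\big(\tfrac{\cosh(y)}{\cos(x)}+\ep\big)^{-2}$, and part (2) by implicit differentiation of those identities, which delivers the derivatives already expressed in $\eta_\ep,\gamma_\ep,\xi_\ep$. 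You instead differentiate the explicit quotients $\eta_\ep,\gamma_\ep,\xi_\ep$ directly with $D=\cosh(y)+\ep\cos(x)$ and translate back via $\xi_\ep-\ep=(1-\ep^2)\cos(x)/D$, $1-\ep\xi_\ep=(1-\ep^2)\cosh(y)/D$ — same bookkeeping in a different order — and, for $\theta_\ep$, you replace the paper's $\tan\theta_\ep=\eta_\ep/\xi_\ep$ by the polar identity $\xi_\ep+i\eta_\ep=\sqrt{1-\gamma_\ep^2}\,e^{i\theta_\ep}$ and the differential $d\theta_\ep=\frac{\xi_\ep\,d\eta_\ep-\eta_\ep\,d\xi_\ep}{1-\gamma_\ep^2}$. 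The latter is arguably cleaner: it avoids both the piecewise $\arccos$ gluing and the points $\xi_\ep=0$ where the paper's tangent relation degenerates, and your algebra $\xi_\ep\partial_x\eta_\ep-\eta_\ep\partial_x\xi_\ep=\partial_y\gamma_\ep$, $\xi_\ep\partial_y\eta_\ep-\eta_\ep\partial_y\xi_\ep=-\partial_x\gamma_\ep$ checks out.

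One small inaccuracy: your parenthetical claim that $1-\gamma_\ep^2>0$ on all of $\Omega$ is false for $\ep\in(0,1)$; $\gamma_\ep$ attains $\pm1$ at the two isolated points $(\pi,\pm\cosh^{-1}(1/\ep))$ (mod $2\pi$ in $x$), which are exactly the "poles" where $\xi_\ep=\eta_\ep=0$ and $\theta_\ep$ itself is undefined. This does not damage the argument — the stated formulas for $\partial_x\theta_\ep,\partial_y\theta_\ep$ carry $1-\gamma_\ep^2$ in the denominator and are only meant away from those points, which is also how the paper implicitly reads them — but you should phrase the validity as "on the open set where $\gamma_\ep^2<1$" rather than "on all of $\Omega$".
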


As a consequence, the representation of $\psi_\ep = - \frac 1 2 \ln(-\omega_\ep)$ and $ g'(\psi_\ep) = -2\omega_\ep$ in terms of $\eta_\ep, \gamma_\ep, \xi_\ep$ and $\ep$ can be directly obtained by  \eqref{omega-xi-eta-gamma-ep}.
\begin{proof}
By \eqref{three-kers3}, we have
\begin{align}\label{cosh-cos}
\frac{\cosh(y)}{\cos(x)} = \frac{1-\xi_\ep \ep}{\xi_\ep - \ep}.
\end{align}
Together with \eqref{three-kers1}-\eqref{three-kers2},
we get
\begin{align}\label{tan-tanh}
\tan(x) = \frac{\sqrt{1-\ep^2}\eta_\ep}{\xi_\ep - \ep}, \quad  \tanh(y) = \frac{\sqrt{1-\ep^2}\gamma_\ep}{1- \xi_\ep\ep}.
\end{align}
Then
$$\omega_\ep =  - \frac{(1-\ep^2) \sec^2(x)}{\left( \frac{\cosh(y)}{\cos(x)} + \ep \right)^2} = -\left(\frac{(\xi_\ep - \ep)^2}{1-\ep^2} + \eta_\ep^2\right).$$
Moreover,
\begin{align}\label{tan-theta-ep}
\tan(\theta_\ep) = \frac{\eta_\ep}{\xi_\ep}.
\end{align}
The conclusions in (2) then follow from taking partial derivatives on \eqref{cosh-cos}, \eqref{tan-tanh} and \eqref{tan-theta-ep}.
\end{proof}

\begin{proposition}\label{prop1}
With $(\theta_\ep, \gamma_\ep)$ defined in \eqref{transf1}-\eqref{transf2}, we have
\begin{itemize}
\item $(\theta_\ep)_x^2 + (\theta_\ep)_y^2 =\frac 1 2 \frac{ g'(\psi_\epsilon)}{1-\gamma_\ep^2}.$

\item $-\Delta \theta_\ep = -(\theta_\ep)_{xx} - (\theta_\ep)_{yy} = 0.$

\item $$ -\Delta \eta_\ep = g'(\psi_\epsilon) \eta_\ep, \quad
-\Delta \gamma_\ep = g'(\psi_\epsilon) \gamma_\ep, \quad
-\Delta \xi_\ep = g'(\psi_\epsilon) \xi_\ep.$$

\item \begin{align*}  \nabla \eta_\ep \cdot \nabla \gamma_\ep &= -\frac 1 2 g'(\psi_\epsilon) \eta_\ep\gamma_\ep, \quad  \nabla \eta_\ep \cdot \nabla \eta_\ep = \frac 1 2 g'(\psi_\epsilon)(1 - \eta_\ep^2),  \\
 \nabla \gamma_\ep \cdot \nabla \xi_\ep &= -\frac 1 2 g'(\psi_\epsilon) \gamma_\ep\xi_\ep, \quad \nabla \gamma_\ep \cdot \nabla \gamma_\ep = \frac 1 2 g'(\psi_\epsilon) (1 - \gamma_\ep^2 ), \\
 \nabla \xi_\ep \cdot \nabla \eta_\ep &= -\frac 1 2 g'(\psi_\epsilon) \xi_\ep\eta_\ep, \quad \nabla \xi_\ep \cdot \nabla \xi_\ep = \frac 1 2 g'(\psi_\epsilon)( 1 - \xi_\ep^2).
\end{align*}

\item
\begin{align*}
-\Delta (\eta_\ep\gamma_\ep) &=  3g'(\psi_\epsilon) \eta_\ep\gamma_\ep, \quad -\Delta (3\eta_\ep^2-1) = 3g'(\psi_\epsilon) (3\eta_\ep^2 - 1), \\
-\Delta (\gamma_\ep \xi_\ep) &= 3g'(\psi_\epsilon) \gamma_\ep \xi_\ep, \quad -\Delta (3\gamma_\ep^2-1) = 3g'(\psi_\epsilon) (3\gamma_\ep^2 - 1), \\
-\Delta (\xi_\ep \eta_\ep) & =   3g'(\psi_\epsilon) \xi_\ep\eta_\ep,  \quad -\Delta (3\xi_\ep^2-1) = 3g'(\psi_\epsilon) (3\xi_\ep^2 - 1).
\end{align*}

\end{itemize}
\end{proposition}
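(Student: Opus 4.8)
\textbf{Proof proposal for Proposition \ref{prop1}.} The plan is to obtain every identity by differentiating once more the first–order formulas for $\partial_x\xi_\ep,\partial_y\xi_\ep,\partial_x\eta_\ep,\partial_y\eta_\ep,\partial_x\gamma_\ep,\partial_y\gamma_\ep,\partial_x\theta_\ep,\partial_y\theta_\ep$ recorded in Proposition \ref{prop0}(2), and then to reduce the resulting rational expressions in $(\eta_\ep,\gamma_\ep,\xi_\ep,\ep)$ to the claimed form using only two algebraic facts: the sphere relation $\eta_\ep^2+\gamma_\ep^2+\xi_\ep^2=1$ from \eqref{eta-gamma-xi}, and the representation $g'(\psi_\ep)=-2\omega_\ep=2\big(\tfrac{(\xi_\ep-\ep)^2}{1-\ep^2}+\eta_\ep^2\big)$ coming from \eqref{omega-xi-eta-gamma-ep} and \eqref{def-g-psi-ep-derivative}. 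No input beyond Proposition \ref{prop0} is required.

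I would carry this out in the following order. First, the three ``linear'' Laplacian identities $-\Delta\eta_\ep=g'(\psi_\ep)\eta_\ep$, $-\Delta\gamma_\ep=g'(\psi_\ep)\gamma_\ep$, $-\Delta\xi_\ep=g'(\psi_\ep)\xi_\ep$. Taking $\eta_\ep$ as the model case: differentiating $\eta_{\ep x}=(\xi_\ep-\ep+\eta_\ep^2\ep)/\sqrt{1-\ep^2}$ in $x$ and $\eta_{\ep y}=-\gamma_\ep\eta_\ep/\sqrt{1-\ep^2}$ in $y$, and substituting the Proposition \ref{prop0}(2) formulas, yields $\Delta\eta_\ep=\tfrac{2\eta_\ep}{1-\ep^2}\big(-1+2\ep\xi_\ep-\ep^2+\ep^2\eta_\ep^2+\gamma_\ep^2\big)$; eliminating $\gamma_\ep^2=1-\eta_\ep^2-\xi_\ep^2$ collapses the parenthesis to $-\big((\xi_\ep-\ep)^2+(1-\ep^2)\eta_\ep^2\big)$, i.e.\ $\Delta\eta_\ep=-g'(\psi_\ep)\eta_\ep$; the $\gamma_\ep$ and $\xi_\ep$ cases are entirely analogous. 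Second, the six entries of the fourth bullet: each $\nabla u\cdot\nabla v=u_xv_x+u_yv_y$ with $u,v\in\{\eta_\ep,\gamma_\ep,\xi_\ep\}$ becomes, after substitution, a rational function of $(\eta_\ep,\gamma_\ep,\xi_\ep,\ep)$ which the sphere relation reduces to $\tfrac12 g'(\psi_\ep)$ times $-uv$ in the off-diagonal cases and times $1-u^2$ in the diagonal cases. Third, the six quadratic identities of the last bullet are purely formal consequences of $-\Delta(uv)=-u\Delta v-v\Delta u-2\nabla u\cdot\nabla v$ together with what has just been proved; for instance $-\Delta(\eta_\ep\gamma_\ep)=\eta_\ep g'(\psi_\ep)\gamma_\ep+\gamma_\ep g'(\psi_\ep)\eta_\ep+g'(\psi_\ep)\eta_\ep\gamma_\ep=3g'(\psi_\ep)\eta_\ep\gamma_\ep$, and $-\Delta(3\eta_\ep^2-1)=-3\Delta(\eta_\ep^2)=6g'(\psi_\ep)\eta_\ep^2-3g'(\psi_\ep)(1-\eta_\ep^2)=3g'(\psi_\ep)(3\eta_\ep^2-1)$. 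Fourth, the $\theta_\ep$ statements: from Proposition \ref{prop0}(2), $(\theta_{\ep x},\theta_{\ep y})=\tfrac{1}{1-\gamma_\ep^2}(\gamma_{\ep y},-\gamma_{\ep x})$, so differentiating once more and using $\gamma_{\ep xy}=\gamma_{\ep yx}$ makes the two $2\gamma_\ep\gamma_{\ep x}\gamma_{\ep y}/(1-\gamma_\ep^2)^2$ terms cancel, giving $-\Delta\theta_\ep=0$ (equivalently, $\theta_\ep+i\,\mathrm{arctanh}(\gamma_\ep)$ satisfies the Cauchy--Riemann equations); and $(\theta_\ep)_x^2+(\theta_\ep)_y^2=\tfrac{1}{(1-\gamma_\ep^2)^2}\,\nabla\gamma_\ep\cdot\nabla\gamma_\ep=\tfrac{1}{2}\tfrac{g'(\psi_\ep)}{1-\gamma_\ep^2}$ by the diagonal $\gamma_\ep$-identity from the second step.

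There is no genuine obstacle here; the content is bookkeeping. The step requiring the most care is the simplification of the linear Laplacians and of $\nabla\gamma_\ep\cdot\nabla\gamma_\ep$: one must track the $\sqrt{1-\ep^2}$ denominators accumulated by the chain rule and recognize that the messy polynomial in $(\eta_\ep,\gamma_\ep,\xi_\ep,\ep)$ factors as claimed precisely once $\eta_\ep^2+\gamma_\ep^2+\xi_\ep^2=1$ is invoked. Two minor points deserve a remark: the $\theta_\ep$ identities hold on $\{\gamma_\ep^2<1\}=\mathbb{T}_{2\pi}\times\mathbb{R}$, where the derivatives in Proposition \ref{prop0}(2) are valid, and the occasional division by $\eta_\ep$, $\gamma_\ep$ or $\xi_\ep$ used while matching coefficients is harmless, since the resulting polynomial identities then extend by continuity to the zero sets of those functions.
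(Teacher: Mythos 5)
Your proposal is correct and is exactly the ``direct computation'' the paper intends (Proposition \ref{prop1} is stated there without a written proof): you differentiate the first-order formulas of Proposition \ref{prop0}(2), reduce with $\eta_\ep^2+\gamma_\ep^2+\xi_\ep^2=1$ together with $g'(\psi_\ep)=-2\omega_\ep=2\bigl(\tfrac{(\xi_\ep-\ep)^2}{1-\ep^2}+\eta_\ep^2\bigr)$, and obtain the quadratic Laplacian identities from the product rule, and I checked that your intermediate expressions (e.g.\ for $\Delta\eta_\ep$ and $|\nabla\gamma_\ep|^2$) are right. One small correction to your closing remark: for $\ep>0$ the set $\{\gamma_\ep^2<1\}$ is not all of $\mathbb{T}_{2\pi}\times\mathbb{R}$, since $\gamma_\ep^2=1$ at the two points per period where $\eta_\ep=\xi_\ep=0$ (namely $x=\pi$, $\ep\cosh y=1$); the $\theta_\ep$-identities hold away from these points, which is all the statement requires.
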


\begin{proposition}\label{prop3} Let $\Psi(\theta_\ep, \gamma_\ep) = \psi(x(\theta_\ep, \gamma_\ep),y(\theta_\ep, \gamma_\ep))$. Then
\begin{align}\label{laplacian}-\Delta \psi = \frac 1 2 g'(\psi_\ep) \left( -\frac{\Psi_{\theta_\ep \theta_\ep}}{1-\gamma_\ep^2} - \left( (1-\gamma_\ep^2)\Psi_{\gamma_\ep} \right)_{\gamma_\ep} \right)\end{align}
and
\begin{align}\label{gradient-psi}\| \nabla \psi\|_{L^2(\Omega)}^2=\iint_{\tilde \Omega}\left({1\over1-\gamma_\ep^2}|\Psi_{\theta_\ep}|^2+(1-\gamma_\ep^2)|\Psi_{\gamma_\ep}|^2\right)d \theta_\ep d\gamma_\ep.\end{align}
\end{proposition}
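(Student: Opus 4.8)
The plan is to obtain both formulas by a direct chain-rule computation, feeding in the differential-geometric identities for the coordinates $(\theta_\ep,\gamma_\ep)$ that have already been assembled in Propositions \ref{prop0} and \ref{prop1}. Writing $\psi(x,y)=\Psi(\theta_\ep(x,y),\gamma_\ep(x,y))$, the chain rule gives $\nabla\psi=\Psi_{\theta_\ep}\nabla\theta_\ep+\Psi_{\gamma_\ep}\nabla\gamma_\ep$. The first step is to record three metric identities. From the expressions $(\theta_\ep)_x=(\gamma_\ep)_y/(1-\gamma_\ep^2)$ and $(\theta_\ep)_y=-(\gamma_\ep)_x/(1-\gamma_\ep^2)$ in Proposition \ref{prop0}$(2)$ one reads off the orthogonality
\begin{align*}
\nabla\theta_\ep\cdot\nabla\gamma_\ep=\frac{(\gamma_\ep)_y(\gamma_\ep)_x}{1-\gamma_\ep^2}-\frac{(\gamma_\ep)_x(\gamma_\ep)_y}{1-\gamma_\ep^2}=0;
\end{align*}
Proposition \ref{prop1} already supplies $|\nabla\theta_\ep|^2=\tfrac12 g'(\psi_\ep)/(1-\gamma_\ep^2)$, $|\nabla\gamma_\ep|^2=\tfrac12 g'(\psi_\ep)(1-\gamma_\ep^2)$, together with $\Delta\theta_\ep=0$ and $\Delta\gamma_\ep=-g'(\psi_\ep)\gamma_\ep$.

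For the gradient identity \eqref{gradient-psi}, squaring $\nabla\psi$ and using $\nabla\theta_\ep\cdot\nabla\gamma_\ep=0$ to kill the cross term yields $|\nabla\psi|^2=\tfrac12 g'(\psi_\ep)\big(\Psi_{\theta_\ep}^2/(1-\gamma_\ep^2)+(1-\gamma_\ep^2)\Psi_{\gamma_\ep}^2\big)$. I would then apply the change-of-variables theorem: since $\partial(\theta_\ep,\gamma_\ep)/\partial(x,y)=\tfrac12 g'(\psi_\ep)>0$ by \eqref{Jacobian of the transformation-ep}, the inverse Jacobian is $2/g'(\psi_\ep)$, so the factor $\tfrac12 g'(\psi_\ep)$ cancels and $\iint_\Omega|\nabla\psi|^2\,dxdy$ becomes $\iint_{\tilde\Omega}\big(|\Psi_{\theta_\ep}|^2/(1-\gamma_\ep^2)+(1-\gamma_\ep^2)|\Psi_{\gamma_\ep}|^2\big)\,d\theta_\ep d\gamma_\ep$. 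This uses that $(x,y)\mapsto(\theta_\ep,\gamma_\ep)$ is a bijection from $\Omega$ onto $\tilde\Omega$ up to a null set, which follows from the positivity of the Jacobian and the explicit inversion recorded after \eqref{transf1}–\eqref{transf2}.

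For the Laplacian identity \eqref{laplacian}, I would expand $\Delta\psi=\operatorname{div}\!\big(\Psi_{\theta_\ep}\nabla\theta_\ep+\Psi_{\gamma_\ep}\nabla\gamma_\ep\big)$, and again use $\nabla\theta_\ep\cdot\nabla\gamma_\ep=0$ to drop the mixed second-derivative contributions, obtaining $\Delta\psi=\Psi_{\theta_\ep\theta_\ep}|\nabla\theta_\ep|^2+\Psi_{\gamma_\ep\gamma_\ep}|\nabla\gamma_\ep|^2+\Psi_{\theta_\ep}\Delta\theta_\ep+\Psi_{\gamma_\ep}\Delta\gamma_\ep$. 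Substituting the four identities from the first step gives $\Delta\psi=\tfrac12 g'(\psi_\ep)\big(\Psi_{\theta_\ep\theta_\ep}/(1-\gamma_\ep^2)+(1-\gamma_\ep^2)\Psi_{\gamma_\ep\gamma_\ep}-2\gamma_\ep\Psi_{\gamma_\ep}\big)$, and since $(1-\gamma_\ep^2)\Psi_{\gamma_\ep\gamma_\ep}-2\gamma_\ep\Psi_{\gamma_\ep}=\big((1-\gamma_\ep^2)\Psi_{\gamma_\ep}\big)_{\gamma_\ep}$, rearranging signs produces exactly \eqref{laplacian}.

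\textbf{Expected main obstacle.} There is no deep difficulty here: the proposition is essentially a bookkeeping consequence of the identities in Propositions \ref{prop0}–\ref{prop1}. The only points requiring care are $(a)$ verifying that the cross terms genuinely vanish, i.e. that the coordinate system is orthogonal—this is where the special relations $(\theta_\ep)_x=(\gamma_\ep)_y/(1-\gamma_\ep^2)$, $(\theta_\ep)_y=-(\gamma_\ep)_x/(1-\gamma_\ep^2)$ are essential; and $(b)$ justifying the global change of variables in \eqref{gradient-psi}, namely bijectivity onto $\tilde\Omega$ and the harmless behaviour near $\gamma_\ep=\pm1$, which is a null set. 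For the pointwise identity \eqref{laplacian} one should also assume $\psi$ is regular enough (say $C^2$) to apply the classical chain rule, while \eqref{gradient-psi} holds for $\psi\in\dot H^1(\Omega)$ after an approximation argument with weak derivatives.
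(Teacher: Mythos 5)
Your proposal is correct and follows essentially the same route as the paper: a direct chain-rule computation using the orthogonality $\nabla\theta_\ep\cdot\nabla\gamma_\ep=0$ and the identities $|\nabla\theta_\ep|^2=\tfrac12 g'(\psi_\ep)/(1-\gamma_\ep^2)$, $|\nabla\gamma_\ep|^2=\tfrac12 g'(\psi_\ep)(1-\gamma_\ep^2)$, $\Delta\theta_\ep=0$, $-\Delta\gamma_\ep=g'(\psi_\ep)\gamma_\ep$ from Propositions \ref{prop0}--\ref{prop1}, followed by the change of variables with Jacobian $\tfrac12 g'(\psi_\ep)$ for \eqref{gradient-psi}. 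The regularity remarks you add are reasonable but not part of the paper's argument, which treats this as a bookkeeping computation.
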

\begin{proof} First, we prove \eqref{laplacian}.
By Proposition \ref{prop1}, we have
$-\Delta \theta_\ep = 0$, $(\theta_\ep)_x(\gamma_\ep)_x + (\theta_\ep)_y(\gamma_\ep)_y=0$, $(\theta_\ep)_x^2 + (\theta_\ep)_y^2 =\frac 1 2 \frac{ g'(\psi_\epsilon)}{1-\gamma_\ep^2}$,
$-\Delta \gamma_\ep = g'(\psi_\epsilon)\gamma_\ep$, and $(\gamma_\ep)_x^2 + (\gamma_\ep)_y^2  = \frac 1 2 g'(\psi_\epsilon)(1-\gamma_\ep^2).$ Thus,
\begin{align*}
\begin{split}
- \Delta \psi & = - \psi_{xx} - \psi_{yy} \\
&= -\Psi_{\theta_\ep \theta_\ep}((\theta_\ep)_x^2 + (\theta_\ep)_y^2) + \Psi_{\theta_\ep}(-\Delta \theta_\ep) - \Psi_{\gamma_\ep \gamma_\ep}\left((\gamma_\ep)_x^2 + (\gamma_\ep)_y^2 \right) + \Psi_{\gamma_\ep}(-\Delta \gamma_\ep) \\
&= -\frac 1 2 g'(\psi_\epsilon) \frac{\Psi_{\theta_\ep \theta_\ep}}{1-\gamma_\ep^2} - \frac 1 2 g'(\psi_\epsilon)(1-\gamma_\ep^2)\Psi_{\gamma_\ep \gamma_\ep} +  g'(\psi_\epsilon)\Psi_{\gamma_\ep}\gamma_\ep\\
&= \frac 1 2 g'(\psi_\ep) \left( -\frac{\Psi_{\theta_\ep \theta_\ep}}{1-\gamma_\ep^2} - \left( (1-\gamma_\ep^2)\Psi_{\gamma_\ep} \right)_{\gamma_\ep} \right)
\end{split}
\end{align*}
and
\begin{align*}
\|\nabla \psi\|^2_{L^2(\Omega)}
& = \iint_\Omega\left( |\psi_x|^2 + |\psi_y|^2\right)dxdy \\
& = \iint_\Omega\left( |\Psi_{\theta_\ep}|^2 \left((\partial_x \theta_\ep)^2 + (\partial_y \theta_\ep)^2\right) + |\Psi_{\gamma_\ep}|^2 \left((\partial_x \gamma_\ep)^2 + (\partial_y \gamma_\ep)^2\right)\right) dxdy \\
& = \iint_\Omega \frac 1 2 g'(\psi_\ep) \left( {1\over1-\gamma_\ep^2}|\Psi_{\theta_\ep}|^2+(1-\gamma_\ep^2)|\Psi_{\gamma_\ep}|^2 \right) dx dy \\
&=\int_{-1}^1\int_{0}^{2\pi} \left({1\over1-\gamma_\ep^2}|\Psi_{\theta_\ep}|^2+(1-\gamma_\ep^2)|\Psi_{\gamma_\ep}|^2\right)d \theta_\ep d\gamma_\ep.
\end{align*}
\end{proof}
Similar to \eqref{gradient-psi}, we have
\begin{align}\label{psi-X-ep-inner product}
( \psi_1,\psi_2)_{\tilde X_\ep}=\iint_{\tilde \Omega}\left({1\over1-\gamma_\ep^2}(\Psi_{1})_{\theta_\ep}(\Psi_{2})_{\theta_\ep}+(1-\gamma_\ep^2)(\Psi_{1})_{\gamma_\ep}(\Psi_{2})_{\gamma_\ep}\right)d \theta_\ep d\gamma_\ep\end{align}
for $\Psi_i(\theta_\ep, \gamma_\ep) = \psi_i(x(\theta_\ep, \gamma_\ep),y(\theta_\ep, \gamma_\ep))$, $i=1,2$.
\if0
The new coordinate $(\theta_\ep, \gep)$ plays an important role in  the spectral  analysis for  the case $0 < \ep < 1$.
\fi
Then we will prove that under the new coordinate $(\theta_\ep, \gep)$, the associated eigenvalue problem \eqref{eigen-p-cat-eyes} can be reduced to the corresponding one \eqref{elip02-x-gamma} in the  case  $\ep = 0$, which is solved in Theorem \ref{associate_ep0}. To this end, we preliminarily clarify the space of stream functions, solvability of the Poisson equation and boundedness of the energy quadratic form in the next subsection.
\subsubsection{Space of stream functions, Poisson equation and energy quadratic form}
Let $0<\epsilon<1$ and $\Psi(\theta_\ep, \gamma_\ep) = \psi(x(\theta_\ep, \gamma_\ep),y(\theta_\ep, \gamma_\ep))$.
Recall that the space $\tilde X_0$ of stream functions $\psi$  for  $\ep = 0$ is $\dot{H}^1(\Omega)$ with an additional condition that $\widehat{\psi}_0(0)=0$. If we use the same space  $\tilde X_0$ for $0<\epsilon<1$, then
$n^-(A_\epsilon)\geq1$ for the elliptic operator $A_\epsilon$ without projection (see Remark \ref{X-ep-X0-A-ep-neg}), which is inapplicable in the proof of  nonlinear stability. Furthermore,
it is inappropriate to establish an isomorphism for the spaces  of stream functions   between $\epsilon=0$ and $0<\epsilon<1$, since the variable $\theta_\epsilon$ involves $x$ and $y$ in a very coupled way so  that in  the new variables, $\widehat{\psi}_0$ is no longer  the $0$-mode of $\Psi$ after writing it in the Fourier series with respect to $\theta_\epsilon$.  Instead, our choice is to  replace the condition $\widehat{\psi}_0(0)=0$ by $\widehat{\Psi}_0(0)=0$ in the definition of the space  of stream functions, where $\widehat{\Psi}_0(0)={1\over 2\pi}\int_{0}^{2\pi}\Psi(\theta_\epsilon,0)d\theta_\epsilon$. In this way, we can ensure not only that $\dim \ker (A_\epsilon)=3$ and $n^-(A_\epsilon)=0$ (see Corollary \ref{kernel of  the operator A-ep and a decomposition of tilde Xep}), but also that the spaces of stream functions for $\epsilon=0$ and $0<\epsilon<1$ are isomorphic.  Noting that $y=0$ if and only if $\gamma_\epsilon=0$, by Proposition \ref{prop0} (2)  we have
\begin{align}\nonumber
\widehat{\Psi}_0(0)=&{1\over2\pi}\int_{0}^{2\pi}\Psi(\theta_\epsilon,0)d\theta_\epsilon={1\over2\pi}\int_{0}^{2\pi}\psi(x(\theta_\epsilon,0),0){\partial{\theta_\epsilon}\over \partial x}|_{y=0}dx\\\nonumber
=&{1\over2\pi}\int_{0}^{2\pi}\psi(x,0){\gamma_{\epsilon y}}|_{y=0}dx
={1\over2\pi\sqrt{1-\ep^2}}\int_{0}^{2\pi}\psi(x,0)(1-\xi_\ep \ep)|_{y=0}dx\\\label{widehat-Psi-to-psi-1+ep-cos}
=&{\sqrt{1-\ep^2}\over2\pi}\int_{0}^{2\pi}\psi(x,0){1\over1+\epsilon\cos(x)} dx.
\end{align}
Thus,  we define the space  of stream functions specifically in  the original variables as follows
\begin{align}\label{tilde-X-e}\tilde{X}_\ep = \left\{ \psi\bigg| \iint_\Omega |\nabla \psi|^2 dxdy <  \infty \text{ and } \int_{0}^{2\pi}\psi(x,0){1\over1+\epsilon\cos(x)} dx = 0  \right\}.\end{align}
In  the new variables, by \eqref{gradient-psi}-\eqref{widehat-Psi-to-psi-1+ep-cos} $\tilde{X}_\ep$ is equivalent  to the following space
\begin{align*}
\tilde{Y}_\ep = \left\{ \Psi \bigg| \iint_{\tilde \Omega}\left({1\over1-\gamma_\ep^2}|\Psi_{\theta_\ep}|^2+(1-\gamma_\ep^2)|\Psi_{\gamma_\ep}|^2\right)d \theta_\ep d\gamma_\ep< \infty \text{ and } \widehat{\Psi}_0(0)=0 \right\},
\end{align*}
where $\tilde \Omega = \mathbb{T}_{2\pi} \times [-1, 1]$.
\if0
Obviously, $\eta_\ep, \gamma_\ep, \xi_\ep \in \tilde{X_\ep}$. We will prove that $\tilde{X}_\ep$ is a Hilbert space and the Poincar\'e inequalities hold in $\tilde{X}_\ep$. For the sake of convenience, we also define the space
The functions in $\tilde{X}_\ep$ and functions in $\tilde{Y}_\ep$ are equivalent under the change of variables in \eqref{transf1} and \eqref{transf2}. The constraint of $\int_{0}^{2\pi}\Psi(\theta_\ep, 0)d\theta_\ep = 0$ is chosen to make sure that the Poincar\'e inequalities in Lemma \ref{poincare1} and Lemma \ref{poincare2} hold, $\tilde{X}_\ep$ is a Hilbert space in Lemma \ref{Hilbert}, and the Poisson equation $-\Delta \psi = \omega$ has a unique weak solution in $\tilde{X}_\ep$ for any $\omega \in X_\ep$. Also, observe that we can apply another change of variables from $(\theta_\ep, \gamma_\ep)$ to $(\tilde{x}, \tilde{y})$ using \eqref{change of variables2} so that $\tilde{X}_\ep$ is changed to $\tilde{X}_0$ in the shear case.
\fi
Noting that $\tilde{Y}_\ep$ is the same space as $\tilde Y_0$ as defined in \eqref{tilde-Y0-def}, we thus get the following result.

\begin{lemma}\label{hilbert-ep}
Let $0<\epsilon<1$. Then

$(1)$ the function space $\tilde{Y}_\ep$ equipped with the inner product
 $$(\Psi_1, \Psi_2) = \iint_{\tilde{\Omega}}  \left({1\over1-\gamma_\ep^2}(\Psi_1)_{\theta_\ep}(\Psi_2)_{\theta_\ep} +(1-\gamma_\ep^2)(\Psi_1)_{\gamma_\ep}(\Psi_2)_{\gamma_\ep}\right)d \theta_\ep d\gamma_\ep, \quad \forall\; \Psi_1, \Psi_2 \in \tilde{Y}_\ep$$
 is  a Hilbert space;

$(2)$
the function space $\tilde{X}_\ep$ equipped with the inner product
$$(\psi_1, \psi_2) = \iint_{\Omega} \nabla \psi_1 \cdot \nabla \psi_2 dxdy, \quad \forall\; \psi_1, \psi_2 \in \tilde{X}_\ep$$
 is a Hilbert space. Moreover,
\begin{align}\label{Psi-psi-norm-eq} \| \psi\|_{\tilde X_\epsilon}^2= \| \nabla \psi\|_{L^2(\Omega)}^2=\iint_{\tilde \Omega}\left({1\over1-\gamma_\ep^2}|\Psi_{\theta_\ep}|^2+(1-\gamma_\ep^2)|\Psi_{\gamma_\ep}|^2\right)d \theta_\ep d\gamma_\ep=\|\Psi\|_{\tilde Y_\epsilon}^2
\end{align}
for   $\psi \in \tilde{X}_\ep$ and $\Psi \in \tilde{Y}_\ep$ such that $\psi(x,y) = \Psi(\theta_\ep, \gamma_\ep)$.
\end{lemma}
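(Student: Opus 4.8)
The plan is to reduce both assertions to the Hilbert space structure of $\tilde Y_0$ already established in Lemma \ref{Hilbert-new variables-0}, via the change of variables $(x,y)\mapsto(\theta_\ep,\gamma_\ep)$ of \eqref{transf1}-\eqref{transf2} viewed as an isometry. Part $(1)$ requires no real work: the set $\tilde Y_\ep$, its constraint $\widehat\Psi_0(0)=0$, and its inner product are written purely in terms of the integration variables $(\theta_\ep,\gamma_\ep)$ running over $\tilde\Omega=\mathbb{T}_{2\pi}\times[-1,1]$, so renaming the dummy variables turns $\tilde Y_\ep$ verbatim into the space $\tilde Y_0$ of \eqref{tilde-Y0-def} with its inner product; hence $\tilde Y_\ep$ is a Hilbert space by Lemma \ref{Hilbert-new variables-0}.

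For part $(2)$ I would exploit the correspondence $\psi(x,y)=\Psi(\theta_\ep,\gamma_\ep)$ determined by the well-defined change of variables \eqref{transf1}-\eqref{transf2}. By \eqref{gradient-psi} of Proposition \ref{prop3} one has $\|\nabla\psi\|_{L^2(\Omega)}^2=\|\Psi\|_{\tilde Y_\ep}^2$, and by \eqref{psi-X-ep-inner product} more generally $(\psi_1,\psi_2)_{\tilde X_\ep}=(\Psi_1,\Psi_2)_{\tilde Y_\ep}$; here the degeneration of the Jacobian \eqref{Jacobian of the transformation-ep} as $y\to\pm\infty$ (where $\gamma_\ep\to\pm1$) is exactly absorbed by the weights $\frac{1}{1-\gamma_\ep^2}$ and $1-\gamma_\ep^2$. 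By \eqref{widehat-Psi-to-psi-1+ep-cos} the constraint $\int_0^{2\pi}\psi(x,0)\frac{1}{1+\epsilon\cos(x)}\,dx=0$ that defines $\tilde X_\ep$ in \eqref{tilde-X-e} is equivalent to $\widehat\Psi_0(0)=0$, the constraint defining $\tilde Y_\ep$. Consequently $\psi\mapsto\Psi$ is a linear bijection of $\tilde X_\ep$ onto $\tilde Y_\ep$ preserving the inner product, i.e.\ an isometric isomorphism, and \eqref{Psi-psi-norm-eq} is precisely the norm-preservation part of this statement.

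It then remains to transport the Hilbert space axioms along this isometry. The inner product on $\tilde X_\ep$ is positive definite: $\|\psi\|_{\tilde X_\ep}=0$ forces $\|\Psi\|_{\tilde Y_\ep}=0$, hence $\Psi=0$ by part $(1)$, hence $\psi=0$ --- equivalently, a function of vanishing Dirichlet energy on $\Omega$ is constant, and the only constant lying in $\tilde X_\ep$ is $0$ since $\int_0^{2\pi}\frac{dx}{1+\epsilon\cos(x)}\neq0$. For completeness, a Cauchy sequence $\{\psi_n\}$ in $\tilde X_\ep$ maps to a Cauchy sequence $\{\Psi_n\}$ in $\tilde Y_\ep$, which converges to some $\Psi_*\in\tilde Y_\ep$; then $\psi_*$, the function with $\psi_*(x,y)=\Psi_*(\theta_\ep,\gamma_\ep)$, lies in $\tilde X_\ep$ and satisfies $\|\psi_n-\psi_*\|_{\tilde X_\ep}=\|\Psi_n-\Psi_*\|_{\tilde Y_\ep}\to0$. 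The one point deserving care is that \eqref{gradient-psi} and \eqref{psi-X-ep-inner product} genuinely hold for every $\psi\in\tilde X_\ep$ and not merely for smooth $\psi$ --- a density/approximation argument, using the locally $C^1$ (hence locally bi-Lipschitz on compacta) nature of \eqref{transf1}-\eqref{transf2} --- and I expect this, rather than any abstract functional-analytic step, to be the only mildly technical part; the rest is bookkeeping.
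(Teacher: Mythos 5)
Your proposal is correct and follows essentially the same route as the paper: part (1) by identifying $\tilde Y_\ep$ with $\tilde Y_0$ of Lemma \ref{Hilbert-new variables-0} after renaming the dummy variables, and part (2) by transporting the Hilbert structure through the isometry given by \eqref{gradient-psi}, \eqref{psi-X-ep-inner product} and the constraint correspondence \eqref{widehat-Psi-to-psi-1+ep-cos}. Your extra remarks on positive definiteness, completeness, and the density step for non-smooth $\psi$ are just careful elaborations of the argument the paper states tersely.
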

\begin{proof}
(1) follows from Lemma \ref{Hilbert-new variables-0}, and (2) is obtained by \eqref{gradient-psi}-\eqref{widehat-Psi-to-psi-1+ep-cos} and (1).
\end{proof}
\if0
\begin{proof}
Firstly, we prove that $\|\nabla \psi\|_{L^2(\Omega)} = 0$ implies $\psi = 0$ in $\tilde{X_\ep}$. Expanding $\Psi(\theta_\ep, \gamma_\ep)$ in Fourier series form and using equaiton \eqref{laplacian}, we have that
\begin{align}\label{Psi_fourier}
\|\nabla \psi\|_{L^2(\Omega)}^2
& = 2\pi \int_{-1}^{1} \sum_{k \neq 0} \frac{k^2 \Psi^k(\gamma_\ep)^2}{1-\gamma_\ep^2} + (1-\gamma_\ep^2) \left( \Psi^0_{\gamma_\ep}(\gamma_\ep)^2 + \sum_{k \neq 0}  \Psi^k_{\gamma_\ep}(\gamma_\ep)^2 \right) d \gamma_\ep = 0
\end{align}
where $$\Psi^k(\gamma_\ep) = \frac{1}{2\pi} \int_0^{2\pi} \Psi(\theta_\ep, \gamma_\ep) e^{-ik\theta_\ep} d \theta_\ep.$$
Since the integrand in \eqref{Psi_fourier} has every term nonnegative, we know that
$$ \Psi^k(\gamma_\ep) = 0 \text{ for } k \neq 0, \text{ and } \Psi^0_{\gamma_\ep}(\gamma_\ep) = 0.$$
Moreover, we have $$\Psi^0(\gamma_\ep) = \Psi^0(0) + \int_0^{\gamma_\ep} \Psi^0_{\hat{\gamma_\ep}}(\hat{\gamma_\ep})d\hat{\gamma_\ep} = 0,$$
for $\gamma_\ep \in [-1, 1]$.
So $\Psi^k(\gamma_\ep) = 0$ for all integers $k$ and thus $$\psi(x,y) = \Psi(\theta_\ep, \gamma_\ep) = 0.$$
To prove the completeness, let's introduce the following change of variables:
\begin{align}\label{change of variables2}
\left\{ \begin{array}{ccc} \tilde{x}  & = & \theta_\ep
\\ \tilde{y}  & = & \tanh(\gamma_\ep) \end{array} \right.
\text{ with }
\left\{ \begin{array}{ccc} d\tilde{x}  & = & d\theta_\ep
\\ d\tilde{y}  & = & (1-\gamma_\ep^2)^{-1}d\gamma_\ep, \end{array} \right.
\end{align}
where $(\tilde{x}, \tilde{y}) \in \tilde{\Omega} = \mathbb{T}_{2\pi} \times \mathbb{R}$.
Suppose $\{\psi_m(x, y) \}_{m=1}^{+\infty}$ is a Cauchy sequence in $\tilde{X}_\ep$, i.e.,
$$\lim_{m,n \rightarrow +\infty}\|\psi_m - \psi_n\|_{\tilde{X}_\ep} = 0,$$
where
\begin{align}\label{decom-phi}
\psi_m(x,y) = \Psi_m(\theta_\ep, \gamma_\ep) = \phi_m(\tilde{x}, \tilde{y}) = \phi_m^0(\tilde{y}) + \sum_{k\neq0}e^{ik\tilde{x}}\phi_m^k(\tilde{y}) := \phi_m^0(\tilde{y}) + \phi_m^{\neq 0}(\tilde{x}, \tilde{y}),
\end{align}
with $$\phi_m^k(\tilde{y}) = \frac{1}{2\pi} \int_0^{2\pi} \phi_m(\tilde{x}, \tilde{y})e^{-ik\tilde{x}} d\tilde{x}\quad \text{ and } \quad \phi_{m,{\neq0}}(\tilde{x}, \tilde{y}) = \sum_{k\neq0}e^{ik\tilde{x}}\phi_m^k(\tilde{y}).$$
Moreover, we have
\begin{align}\label{norm-phi}
\|\psi_m\|^2_{\tilde{X}_\ep}
& =  2\pi \int_{-1}^{1} \sum_{k \neq 0} \frac{k^2 \Psi_m^k(\gamma_\ep)^2}{1-\gamma_\ep^2} + (1-\gamma_\ep^2) \left( \left(\frac{d}{d \gamma_\ep} \Psi^0_m(\gamma_\ep)\right)^2 + \sum_{k \neq 0}  \left(\frac{d}{d \gamma_\ep} \Psi^k_m(\gamma_\ep)\right)^2  \right) d \gamma_\ep \\
& = 2\pi \int_{-\infty}^{+\infty} \sum_{k \neq 0} k^2 \phi_m^k(\tilde{y})^2 + \left(\frac{d}{d \tilde{y}} \phi^0_m(\tilde{y})\right)^2 + \sum_{k \neq 0}  \left(\frac{d}{d \tilde{y}} \phi^k_m(\tilde{y})\right)^2   d \tilde{y} \\
& = \|\frac{d}{d \tilde{y}} \phi_m^0\|^2_{L^2(\tilde{\Omega})} + \|\phi^{\neq 0}_m\|^2_{\dot{H}^1(\tilde{\Omega})} < \infty.
\end{align}
Actually, it is obvious that $\phi^{\neq 0}_m \in H^1(\tilde{\Omega})$ and $\exists \phi_{\neq0} \in H^1(\tilde{\Omega})$ such that
$$\lim_{m\rightarrow +\infty} \|\phi^{\neq 0}_m -  \phi_{\neq0}\|_{H^1(\tilde{\Omega})} = 0.$$
Also, $\exists \phi^0_*(\tilde{y}) \in L^2(\tilde{\Omega})$ such that
$$\lim_{m\rightarrow +\infty}\| \frac{d}{d \tilde{y}} \phi_m^0 -  \phi^0_*\|_{L^2(\tilde{\Omega})} = 0.$$
Now let $$\phi^0(\tilde{y}) = \int_{0}^{\tilde{y}} \phi^0_*(\hat{y}) d\hat{y},$$
so that we have $\phi^0(0) = 0$ and
$$ \lim_{m\rightarrow +\infty}\| \phi^{0}_m -  \phi^{0}\|_{\dot{H}^1(\tilde{\Omega})} = 0. $$
Finally, let $\phi^*(\tilde{x}, \tilde{y}) = \phi^0(\tilde{y}) + \phi_{\neq0}(\tilde{x}, \tilde{y}) = \Psi^*(\theta_\ep, \gamma_\ep) = \psi^*(x,y)$. We have $$\psi^*(x,y) \in \tilde{X}_\ep$$ and
$$\lim_{m \rightarrow 0}\|\psi_m - \psi^*\|^2_{\tilde{X}_\ep} = \lim_{m \rightarrow 0}\left( \|\phi^0_m - \phi^0\|^2_{\dot{H}^1(\tilde{\Omega})} +  \| \phi^{\neq 0}_m -  \phi_{\neq0}\|_{\dot{H}^1(\tilde{\Omega})} \right) = 0.$$
Therefore, $\tilde{X_\ep}$ is a Hilbert space. So do $\tilde{Y}_\ep$.
\end{proof}
\fi

Then we give the Poincar\'e inequality I for $0<\epsilon<1$.

\begin{lemma}[Poincar\'e inequality I-$\ep$]\label{poincare1ep}
$(1)$ For any $\Psi \in \tilde{Y_\ep}$, we have
\begin{align*}
 \|\Psi\|_{L^2(\tilde \Omega)}^2  \leq C \iint_{\tilde \Omega}\left({1\over1-\gamma_\ep^2}|\Psi_{\theta_\ep}|^2+(1-\gamma_\ep^2)|\Psi_{\gamma_\ep}|^2\right)d \theta_\ep d\gamma_\ep.
\end{align*}

$(2)$
For any $\psi \in \tilde{X_\ep}$, we have
\begin{align}\label{Poincare inequality I-ep22}
\iint_\Omega g'(\psi_\epsilon)|\psi|^2 dxdy  \leq C \|\nabla \psi\|_{L^2(\Omega)}^2.
\end{align}
\end{lemma}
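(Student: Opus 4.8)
The plan is to reduce both statements to the shear-case Poincar\'e inequality already established in Lemma \ref{Poincare ineqalities-new-variable0}(1), exploiting the fact recorded just before the statement that $\tilde{Y}_\ep$ coincides, as a normed space, with $\tilde{Y}_0$ of \eqref{tilde-Y0-def}. For part $(1)$: since $\tilde{Y}_\ep$ is literally the space $\tilde{Y}_0$, and the quadratic form $\iint_{\tilde\Omega}\bigl(\frac{1}{1-\gamma_\ep^2}|\Psi_{\theta_\ep}|^2+(1-\gamma_\ep^2)|\Psi_{\gamma_\ep}|^2\bigr)\,d\theta_\ep\,d\gamma_\ep$ is exactly $\|\Psi\|_{\tilde{Y}_0}^2$ (the names of the dummy integration variables $(\theta_\ep,\gamma_\ep)$ versus $(x,\gamma)$ being immaterial), the asserted bound $\|\Psi\|_{L^2(\tilde\Omega)}^2\le C\|\Psi\|_{\tilde{Y}_0}^2$ is precisely Poincar\'e inequality $\textup{I-}0'$, i.e.\ Lemma \ref{Poincare ineqalities-new-variable0}(1). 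So $(1)$ requires no new work beyond this identification of spaces.

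For part $(2)$ I would pass from $(x,y)\in\Omega$ to $(\theta_\ep,\gamma_\ep)\in\tilde\Omega$ via the change of variables \eqref{transf1}--\eqref{transf2}, whose Jacobian is $\frac{\partial(\theta_\ep,\gamma_\ep)}{\partial(x,y)}=\frac12 g'(\psi_\ep)>0$ by \eqref{Jacobian of the transformation-ep}, so that $dx\,dy=\frac{2}{g'(\psi_\ep)}\,d\theta_\ep\,d\gamma_\ep$. Writing $\psi(x,y)=\Psi(\theta_\ep,\gamma_\ep)$ with $\Psi\in\tilde{Y}_\ep$, this gives
\[
\iint_\Omega g'(\psi_\ep)|\psi|^2\,dx\,dy
=\iint_{\tilde\Omega} g'(\psi_\ep)|\Psi|^2\cdot\frac{2}{g'(\psi_\ep)}\,d\theta_\ep\,d\gamma_\ep
=2\|\Psi\|_{L^2(\tilde\Omega)}^2,
\]
while by \eqref{gradient-psi} (equivalently \eqref{Psi-psi-norm-eq}) one has $\|\nabla\psi\|_{L^2(\Omega)}^2=\|\Psi\|_{\tilde{Y}_\ep}^2=\|\Psi\|_{\tilde{Y}_0}^2$. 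Combining these two identities with part $(1)$ yields $\iint_\Omega g'(\psi_\ep)|\psi|^2\,dx\,dy=2\|\Psi\|_{L^2(\tilde\Omega)}^2\le 2C\|\Psi\|_{\tilde{Y}_0}^2=2C\|\nabla\psi\|_{L^2(\Omega)}^2$, which is \eqref{Poincare inequality I-ep22} with the constant $2C$.

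There is no genuinely hard analytic step here: the content has been front-loaded into Subsection \ref{Kelvin-stuart cats eyes flows}, where $(x,y)\mapsto(\theta_\ep,\gamma_\ep)$ was shown to be a well-defined a.e.\ diffeomorphism of $\Omega$ onto $\tilde\Omega$ with the stated positive Jacobian, and where the identification $\tilde{Y}_\ep=\tilde{Y}_0$ was obtained. The only point deserving a line of care is to observe that the degenerate sets $\{\gamma_\ep=\pm1\}$ (the images of $y\to\pm\infty$) and $\{\theta_\ep=0\}$ are null, so that the change of variables transforms both integrals with no boundary contributions; this is already implicit in the construction of $\tilde{X}_\ep$ and $\tilde{Y}_\ep$. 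Granting that, part $(2)$ follows from part $(1)$ purely by the two change-of-variables identities above.
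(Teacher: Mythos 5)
Your proposal is correct and follows essentially the same route as the paper: part $(1)$ is read off from Lemma \ref{Poincare ineqalities-new-variable0}\,(1) via the identification $\tilde Y_\ep=\tilde Y_0$, and part $(2)$ is obtained by the change of variables with Jacobian $\tfrac12 g'(\psi_\ep)$ together with the gradient identity \eqref{gradient-psi}, exactly as in the paper's proof. No gaps.
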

\begin{proof}
(1) is the same as Lemma \ref{Poincare ineqalities-new-variable0} (1). To prove (2), let $  \Psi(\theta_\ep, \gamma_\ep)=\psi(x,y)$ for $\psi\in\tilde{X_\ep}$. By \eqref{Jacobian of the transformation-ep} we have
\begin{align}\label{Psi-psi-L2-ep}
2\iint_{\tilde \Omega}|\Psi|^2d \theta_\ep d\gamma_\ep= \iint_\Omega g'(\psi_\epsilon)|\psi|^2 dxdy.
\end{align}
 By \eqref{gradient-psi} and \eqref{Psi-psi-L2-ep}, we know that  (2) is a restatement of  (1) in the original variables $(x,y)$.
\end{proof}
\if0
\begin{proof}
 The proof for $0 < \ep < 1$ is slightly different from the proof of the case $\ep = 0$ in Lemma \ref{poincare1}. We write $\Psi(\theta_\ep, \gamma_\ep)$ in the Fourier series form
$$\Psi(\theta_\ep \gamma_\ep) = \Psi^0(\gamma_\ep) + \sum_{k\neq 0}e^{ik\theta_\ep}\Psi^k(\gamma_\ep), $$
with $$\Psi^k(\gamma_\ep) = \frac{1}{2\pi}\int_{0}^{2\pi} \Psi(\theta_\ep,\gamma_\ep) e^{-ik\theta_\ep} d\theta_\ep.$$
So
\begin{align}\label{gradient_norm}
\begin{split}
\|\nabla \psi\|_{L^2(\Omega)}^2
& = \int_{-1}^1 \int_0 ^{2\pi} -\frac{\Psi \Psi_{\theta_\ep \theta_\ep}}{1-\gamma_\ep^2} + (1-\gamma_\ep^2)\Psi_{\gamma_\ep}^2  d\theta_\ep d\gamma_\ep \\
& = 2\pi \int_{-1}^{1} \sum_{k \neq 0} \frac{k^2 \Psi^k(\gamma_\ep)^2}{1-\gamma_\ep^2} + (1-\gamma_\ep^2) \left( \Psi^0_{\gamma_\ep}(\gamma_\ep)^2 + \sum_{k \neq 0}  \Psi^k_{\gamma_\ep}(\gamma_\ep)^2 \right) d \gamma_\ep.
\end{split}
\end{align}
On the other hand, we have
\begin{align*}
\frac{1}{2}\iint_\Omega g'(\psi_\epsilon)\psi^2 dxdy
& =  \int_{-1}^1 \int_0 ^{2\pi} \Psi^2(\theta_\ep, \gamma_\ep) d\theta_\ep d\gamma_\ep \\
& = 2 \pi \int_{-1}^1  \Psi^0(\gamma_\ep)^2 + \sum_{k \neq 0}  \Psi^k(\gamma_\ep)^2d\gamma_\ep \\
& = 2\pi (I + II).
\end{align*}
For the non-zero modes, it is obvious that
$$II = \int_{-1}^1 \sum_{k \neq 0}  \Psi^k(\gamma_\ep)^2 d\gamma_\ep
\leq \int_{-1}^1 \sum_{k \neq 0} \frac{k^2 \Psi^k(\gamma_\ep)^2}{1-\gamma_\ep^2} d\gamma_\ep
\leq C \|\nabla \psi\|_{L^2(\Omega)}^2.$$
For the zero mode part, we have
\begin{align*}
I & = \int_{-1}^1  \Psi^0(\gamma_\ep)^2 d\gamma_\ep \\
& = \int_{-1}^1 \left( \Psi^0(\gamma_\ep) - \Psi^0(0)\right)^2 d\gamma_\ep \\
& = \int_{-1}^1 \left(\int_0^\gamma \Psi^0_{\hat{\gamma_\ep}}(\hat{\gamma_\ep}) d\hat{\gamma_\ep} \right)^2 d\gamma_\ep \\
& = \int_{-1}^0 \left(\int_{\gamma_\ep}^0 \Psi^0_{\hat{\gamma_\ep}}(\hat{\gamma_\ep}) d\hat{\gamma_\ep} \right)^2 d\gamma_\ep +  \int_{0}^1 \left(\int_0^{\gamma_\ep} \Psi^0_{\hat{\gamma_\ep}}(\hat{\gamma_\ep}) d\hat{\gamma_\ep} \right)^2 d\gamma_\ep \\
& \leq \int_{-1}^0 \int_{\gamma_\ep}^0 (1-\hat{\gamma_\ep}^2) \Psi^0_{\hat{\gamma_\ep}}(\hat{\gamma_\ep})^2 d\hat{\gamma_\ep} \int_{\gamma_\ep}^0 \frac{1}{1-\hat{\gamma_\ep}^2}d\hat{\gamma_\ep} d \gamma_\ep \\
& \qquad \qquad \qquad \qquad +  \int_{0}^1 \int_0^{\gamma_\ep} (1-\hat{\gamma_\ep}^2) \Psi^0_{\hat{\gamma_\ep}}(\hat{\gamma_\ep})^2 d\hat{\gamma_\ep} \int_0^{\gamma_\ep} \frac{1}{1-\hat{\gamma_\ep}^2}d\hat{\gamma_\ep} d \gamma_\ep \\
& \leq \int_{-1}^0 (1-\hat{\gamma_\ep}^2) \Psi^0_{\hat{\gamma_\ep}}(\hat{\gamma_\ep})^2 d\hat{\gamma_\ep} \int_{-1}^0  \int_{\gamma_\ep}^0 \frac{1}{1-\hat{\gamma_\ep}^2}d\hat{\gamma_\ep} d \gamma_\ep \\
& \qquad \qquad \qquad \qquad +   \int_0^{1} (1-\hat{\gamma_\ep}^2) \Psi^0_{\hat{\gamma_\ep}}(\hat{\gamma_\ep})^2 d\hat{\gamma_\ep} \int_{0}^1 \int_0^{\gamma_\ep} \frac{1}{1-\hat{\gamma_\ep}^2}d\hat{\gamma_\ep} d \gamma_\ep  \\
& = \int_{-1}^{1} (1-\hat{\gamma_\ep}^2) \Psi^0_{\hat{\gamma_\ep}}(\hat{\gamma_\ep})^2 d\hat{\gamma_\ep} \int_{0}^1 \int_0^{\gamma_\ep} \frac{1}{1-\hat{\gamma_\ep}^2}d\hat{\gamma_\ep} d \gamma_\ep  \\
&= \ln(2) \int_{-1}^{1} (1-\hat{\gamma_\ep}^2) \Psi^0_{\hat{\gamma_\ep}}(\hat{\gamma_\ep})^2 d\hat{\gamma_\ep} \\
& \leq C \|\nabla \psi\|_{L^2(\Omega)}^2.
\end{align*}
(2) is obtained by \eqref{gradient-psi} and (1).\end{proof}
\fi

For $0<\epsilon<1$, we define the projection
\begin{align}\label{P-ep}P_\ep \psi := \frac{\iint_\Omega g'(\psi_\ep)\psi dxdy}{\iint_\Omega g'(\psi_\ep) dxdy}={\iint_\Omega g'(\psi_\ep)\psi dxdy\over8\pi},\quad\psi \in \tilde{X}_\ep,\end{align}
 and
\begin{align}\label{P-ep-new-variables}
\tilde P_\ep\Psi :=\frac{\iint_{\tilde \Omega}  \Psi d \theta_\ep d \gamma_\ep}{\iint_{\tilde \Omega}   d \theta_\ep d \gamma_\ep}= \frac{\iint_{\tilde \Omega}  \Psi d \theta_\ep d \gamma_\ep}{4\pi},\quad \Psi \in \tilde{Y}_\ep.\end{align}

\begin{Corollary}\label{P-welldefined}
The projections $P_\ep $
 and
$\tilde P_\ep$ are well-defined. Moreover,
$P_\ep \psi = \tilde P_\ep\Psi$
for  $\psi \in \tilde{X}_\ep$ and $\Psi \in \tilde{Y}_\ep$ such that $\psi(x,y) = \Psi(\theta_\ep, \gamma_\ep)$.
\end{Corollary}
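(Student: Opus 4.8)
The plan is to deduce both assertions from the change of variables $(x,y)\mapsto(\theta_\ep,\gamma_\ep)$ together with the Poincar\'e inequality I-$\ep$ already established in Lemma \ref{poincare1ep}. First I would record the consequence of the Jacobian identity \eqref{Jacobian of the transformation-ep}: since $\frac{\partial(\theta_\ep,\gamma_\ep)}{\partial(x,y)} = \frac12 g'(\psi_\ep) > 0$, for any $\psi(x,y) = \Psi(\theta_\ep,\gamma_\ep)$ one has $\iint_\Omega g'(\psi_\ep)\psi\,dxdy = 2\iint_{\tilde\Omega}\Psi\,d\theta_\ep d\gamma_\ep$; taking $\psi\equiv 1$ gives in particular $\iint_\Omega g'(\psi_\ep)\,dxdy = 2\iint_{\tilde\Omega}d\theta_\ep d\gamma_\ep = 8\pi$ for every $\ep\in(0,1)$, which is exactly what makes the normalizing constants in \eqref{P-ep} and \eqref{P-ep-new-variables} consistent. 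Dividing the first identity by $8\pi$ then yields $P_\ep\psi = \tilde P_\ep\Psi$ at once.

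Next I would show $\tilde P_\ep$ is a bounded linear functional on $\tilde Y_\ep$. By Cauchy--Schwarz and $|\tilde\Omega| = 4\pi$, $|\tilde P_\ep\Psi| \le \frac{1}{4\pi}\iint_{\tilde\Omega}|\Psi|\,d\theta_\ep d\gamma_\ep \le (4\pi)^{-1/2}\|\Psi\|_{L^2(\tilde\Omega)}$, and Poincar\'e inequality I-$\ep$ (Lemma \ref{poincare1ep}(1)) bounds $\|\Psi\|_{L^2(\tilde\Omega)}$ by $C\|\Psi\|_{\tilde Y_\ep}$; hence $\tilde P_\ep$ is well-defined on $\tilde Y_\ep$. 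For $P_\ep$ on $\tilde X_\ep$ I would run the parallel estimate directly in the $(x,y)$ variables: Cauchy--Schwarz against the weight $g'(\psi_\ep)$ gives $|P_\ep\psi| \le \frac{1}{8\pi}\bigl(\iint_\Omega g'(\psi_\ep)|\psi|^2\,dxdy\bigr)^{1/2}\bigl(\iint_\Omega g'(\psi_\ep)\,dxdy\bigr)^{1/2}$, the first factor being controlled by $C\|\nabla\psi\|_{L^2(\Omega)}$ via \eqref{Poincare inequality I-ep22} and the second factor being $\sqrt{8\pi}$. Alternatively, well-definedness of $P_\ep$ follows immediately from the identity $P_\ep\psi = \tilde P_\ep\Psi$ combined with the isometry $\|\psi\|_{\tilde X_\ep} = \|\Psi\|_{\tilde Y_\ep}$ of Lemma \ref{hilbert-ep}(2).

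There is no substantive obstacle here; the only point needing a moment's care is confirming that $\iint_\Omega g'(\psi_\ep)\,dxdy$ equals $8\pi$ uniformly in $\ep$, so that $P_\ep$ and $\tilde P_\ep$ are genuine projections with matching normalizations, and this is precisely what the Jacobian computation \eqref{Jacobian of the transformation-ep} delivers. The corollary is the natural $0<\ep<1$ counterpart of Corollary \ref{projection} and Lemma \ref{Poincare ineqalities-new-variable0}(2), and the argument is essentially a transcription of those proofs through the new change of variables.
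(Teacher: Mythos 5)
Your proposal is correct and follows essentially the same route as the paper: the Jacobian identity \eqref{Jacobian of the transformation-ep} converts the weighted integral over $\Omega$ into the plain integral over $\tilde\Omega$ (giving both the normalization $\iint_\Omega g'(\psi_\ep)\,dxdy=8\pi$ and the identity $P_\ep\psi=\tilde P_\ep\Psi$), and the Poincar\'e inequality I-$\ep$ yields boundedness of the projections. The paper merely phrases the boundedness step by observing that $\tilde P_\ep$ coincides with $\tilde P_0$ and citing Lemma \ref{Poincare ineqalities-new-variable0}(2), which is the same estimate you re-derive via Cauchy--Schwarz.
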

\begin{proof}
The projection $\tilde P_\ep$ is the same as  $\tilde P_0$ in \eqref{def-tilde-P0-Psi}. Let $\psi \in \tilde{X}_\ep$ and $\Psi \in \tilde{Y}_\ep$ such that $\psi(x,y) = \Psi(\theta_\ep, \gamma_\ep)$. Then $\tilde P_\ep$ is well-defined and $|\tilde P_\epsilon\Psi|\leq C \|\Psi\|_{\tilde Y_\epsilon}$ by Lemma
\ref{Poincare ineqalities-new-variable0} (2).
By \eqref{Jacobian of the transformation-ep},  $P_\ep \psi = \tilde P_\ep\Psi$ follows directly from  the definitions of $P_\ep$ and $\tilde P_\ep$. Then we have by \eqref{Psi-psi-norm-eq} that
\begin{align}\label{projection-controlled by-X-ep}
|P_\epsilon\psi|=|\tilde P_\epsilon\Psi|\leq C \|\Psi\|_{\tilde Y_\epsilon}=C \| \psi\|_{\tilde X_\epsilon}.
\end{align}
\end{proof}

\if0
\begin{proof}
Since $$\iint_\Omega g'(\psi_\ep) dxdy = \int_{-1}^{1} \int_0^{2\pi} 2 d\theta_\ep d \gamma_\ep = 8\pi,$$
we have
$$P_\ep \psi = \frac{1}{8\pi} \iint_\Omega g'(\psi_\ep)\psi dxdy = \frac{1}{4\pi}\int_{-1}^1 \int_{0}^{2\pi} \Psi(\theta_\ep, \gamma_\ep) d \theta_\ep d \gamma_\ep$$
and
\begin{align}\label{Pep}
\begin{split}
P_\ep \psi
& = \frac{1}{8\pi} \iint_\Omega g'(\psi_\ep)\psi dxdy\\
& \leq \frac{1}{8\pi} \left(\iint_\Omega g'(\psi_\ep)\psi^2 dxdy\right)^{1/2} \left( \iint_\Omega g'(\psi_\ep)dxdy \right)^{1/2} \\
& \leq C \|\nabla \psi\|_{L^2(\Omega)} \\
& < + \infty.
\end{split}
\end{align}
\end{proof}
\fi

Next,  we give the Poincar\'e inequality II for $0<\epsilon<1$.
\begin{lemma}[Poincar\'e inequality II-$\ep$]\label{poincare2ep}
$(1)$ For any $\Psi \in \tilde{Y_\ep}$,
we have
\begin{align*}
\iint_{\tilde \Omega} (\Psi -\tilde P_\epsilon\Psi)^2 d \theta_\ep d\gamma_\ep  \leq C \iint_{\tilde \Omega}\left({1\over1-\gamma_\ep^2}|\Psi_{\theta_\ep}|^2+(1-\gamma_\ep^2)|\Psi_{\gamma_\ep}|^2\right)d \theta_\ep d\gamma_\ep.
\end{align*}

$(2)$ For any $\psi \in \tilde{X}_\ep$,
we have
\begin{align}\label{Poincare inequality II-ep22}
\iint_\Omega g'(\psi_\epsilon)(\psi - P_\epsilon\psi)^2 dxdy  \leq C \|\nabla \psi\|_{L^2(\Omega)}^2.
\end{align}

\end{lemma}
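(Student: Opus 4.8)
The plan is to reduce both parts to facts already in hand, precisely as was done for Lemma \ref{poincare1ep} (Poincar\'e inequality I-$\ep$); no new estimate is needed.

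First, part (1) requires no work at all. The function space $\tilde Y_\ep$ is literally the space $\tilde Y_0$ of \eqref{tilde-Y0-def}, and by Corollary \ref{P-welldefined} the projection $\tilde P_\ep$ coincides with $\tilde P_0$ from \eqref{def-tilde-P0-Psi}. Hence the inequality asserted in (1) is nothing but Lemma \ref{Poincare ineqalities-new-variable0} (3), which was itself obtained from the original-variable Poincar\'e inequality II-$0$ (Lemma \ref{poincare2}) via the change of variable $\gamma=\tanh(y)$ together with \eqref{cor-0-new-norm}. So I would simply invoke it.

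For part (2), I would fix $\psi\in\tilde X_\ep$ and set $\Psi(\theta_\ep,\gamma_\ep)=\psi(x,y)$ under the change of variables \eqref{transf1}--\eqref{transf2}. Since the Jacobian \eqref{Jacobian of the transformation-ep} equals $\frac12 g'(\psi_\ep)>0$, the map $(x,y)\mapsto(\theta_\ep,\gamma_\ep)$ is a genuine global change of coordinates onto $\tilde\Omega$ and $d\theta_\ep d\gamma_\ep=\frac12 g'(\psi_\ep)\,dxdy$; combined with the identity $P_\ep\psi=\tilde P_\ep\Psi$ from Corollary \ref{P-welldefined} this gives
\begin{align*}
\iint_\Omega g'(\psi_\epsilon)(\psi-P_\epsilon\psi)^2\,dxdy = 2\iint_{\tilde\Omega}(\Psi-\tilde P_\epsilon\Psi)^2\,d\theta_\ep d\gamma_\ep .
\end{align*}
I would then bound the right-hand side by part (1), and finally rewrite the weighted Dirichlet integral of $\Psi$ over $\tilde\Omega$ as $\|\nabla\psi\|_{L^2(\Omega)}^2$ using \eqref{gradient-psi}. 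This produces \eqref{Poincare inequality II-ep22} and closes the proof.

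I do not expect any real obstacle here: all of the difficulty has already been absorbed upstream, into the construction and analysis of the change of variables $(x,y)\mapsto(\theta_\ep,\gamma_\ep)$ (the Jacobian identity and the transformed gradient formula of Proposition \ref{prop3}) and into the identification $P_\ep\psi=\tilde P_\ep\Psi$ of Corollary \ref{P-welldefined}. The only point that deserves a line of care is to note explicitly that the change of variables is a bona fide diffeomorphism of $\Omega$ onto $\tilde\Omega$, so that the displayed integral identity is an exact equality and not merely an inequality; this follows from the positivity of the Jacobian in \eqref{Jacobian of the transformation-ep} and the fact, recorded after \eqref{transf2}, that $(\theta_\ep,\gamma_\ep)$ sweeps out all of $\tilde\Omega=\mathbb{T}_{2\pi}\times[-1,1]$.
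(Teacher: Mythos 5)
Your proposal is correct and follows essentially the same route as the paper: part (1) is exactly Lemma \ref{Poincare ineqalities-new-variable0} (3) since $\tilde Y_\ep$ and $\tilde P_\ep$ coincide with $\tilde Y_0$ and $\tilde P_0$, and part (2) is obtained by transferring it back to $(x,y)$ via the Jacobian identity \eqref{Jacobian of the transformation-ep}, the norm identity \eqref{Psi-psi-norm-eq} (equivalently \eqref{gradient-psi}), and $P_\ep\psi=\tilde P_\ep\Psi$ from Corollary \ref{P-welldefined}. Your extra remark that the change of variables is a genuine diffeomorphism onto $\tilde\Omega$, so the displayed identity is an equality, is a fair point of care but is already implicit in the paper's use of these same references.
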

\begin{proof}
(1)  follows from Lemma \ref{Poincare ineqalities-new-variable0} (3). By \eqref{Jacobian of the transformation-ep}, \eqref{Psi-psi-norm-eq} and Corollary \ref{P-welldefined},  we infer that   (2) is a restatement of  (1) in the original variables $(x,y)$.
\end{proof}
By Lemma \ref{hilbert-ep} (2) and  the Poincar\'e inequality I-$\ep$ \eqref{Poincare inequality I-ep22}, one can prove the existence and uniqueness of solutions in $\tilde X_\ep$ to the Poisson equation $-\Delta \psi = \omega\in X_\ep$ in the weak sense. The proof is similar to
Lemma  \ref{1-1correspond}, and we omit it.

\begin{lemma}\label{1-1correspond-ep}
For any $\omega \in X_\ep$, the Poisson equation
\begin{align*}
-\Delta \psi = \omega
\end{align*}
has a unique weak solution in $\tilde{X}_\ep$.
\end{lemma}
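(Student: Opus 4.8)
The plan is to copy the structure of the proof of Lemma \ref{1-1correspond} (the shear case $\ep=0$), replacing the role of the Hilbert-space lemma (Lemma \ref{Hilbert}) by Lemma \ref{hilbert-ep}(2) and the role of Poincar\'e inequality I-$0$ (Lemma \ref{poincare1}) by the Poincar\'e inequality I-$\ep$ \eqref{Poincare inequality I-ep22}. First I would recall that $\tilde X_\ep$ equipped with $(\psi_1,\psi_2)=\iint_\Omega\nabla\psi_1\cdot\nabla\psi_2\,dxdy$ is a Hilbert space by Lemma \ref{hilbert-ep}(2), so that the Riesz Representation Theorem is available on it.

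Next, for a fixed $\omega\in X_\ep$ I would show that the map $\tilde\psi\mapsto\iint_\Omega\omega\tilde\psi\,dxdy$ is a bounded linear functional on $\tilde X_\ep$. Linearity is immediate. For boundedness one inserts the weight $g'(\psi_\ep)$ and applies the Cauchy--Schwarz inequality,
\begin{align*}
\iint_\Omega \omega\tilde\psi\,dxdy
&\le \left(\iint_\Omega\frac{|\omega|^2}{g'(\psi_\ep)}\,dxdy\right)^{1/2}\left(\iint_\Omega g'(\psi_\ep)|\tilde\psi|^2\,dxdy\right)^{1/2}\\
&\le C\,\|\omega\|_{X_\ep}\,\|\nabla\tilde\psi\|_{L^2(\Omega)}=C\,\|\omega\|_{X_\ep}\,\|\tilde\psi\|_{\tilde X_\ep},
\end{align*}
where the estimate of the second factor is exactly the Poincar\'e inequality I-$\ep$ \eqref{Poincare inequality I-ep22}. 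By the Riesz Representation Theorem there is then a unique $\psi\in\tilde X_\ep$ with $\iint_\Omega\omega\tilde\psi\,dxdy=(\psi,\tilde\psi)=\iint_\Omega\nabla\psi\cdot\nabla\tilde\psi\,dxdy$ for every $\tilde\psi\in\tilde X_\ep$; that is, $\psi$ is the unique weak solution of $-\Delta\psi=\omega$ in $\tilde X_\ep$, and this is how $(-\Delta)^{-1}\omega$ is understood for $0<\ep<1$.

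I do not expect a real obstacle here: the substantive content has been front-loaded, namely the Hilbert-space property of $\tilde X_\ep$ (Lemma \ref{hilbert-ep}, obtained through the change of variables $(x,y)\mapsto(\theta_\ep,\gamma_\ep)$ and the identification $\tilde X_\ep\cong\tilde Y_\ep=\tilde Y_0$) and the Poincar\'e inequality I-$\ep$. The one point worth a sentence is that the normalization condition $\int_0^{2\pi}\psi(x,0)(1+\ep\cos x)^{-1}\,dx=0$ built into $\tilde X_\ep$ removes the additive-constant ambiguity, so that the weak solution is genuinely unique rather than unique up to a constant; constant test functions contribute nothing on either side of the weak formulation because $\iint_\Omega\omega\,dxdy=0$ for $\omega\in X_\ep$, which makes the variational identity over $\tilde X_\ep$ consistent. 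Since all of this is an immediate transcription of the $\ep=0$ argument, I would, as the authors do, state the lemma and refer to the proof of Lemma \ref{1-1correspond} rather than rewrite it in full.
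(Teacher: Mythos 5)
Your proposal matches the paper's argument exactly: the authors state that the result follows from Lemma \ref{hilbert-ep} (2) and the Poincar\'e inequality I-$\ep$ \eqref{Poincare inequality I-ep22} by the same Riesz-representation argument as in Lemma \ref{1-1correspond}, and they omit the details. Your transcription of that argument, including the observation that the normalization condition in $\tilde X_\ep$ fixes the additive constant, is correct.
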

Recall that  $L_\ep$ and $X_\epsilon$ are defined in \eqref{J-ep-J-ep-def}-\eqref{spaceXep}, and the corresponding quadratic form for $L_\ep$ is
\begin{align*}
 \langle L_\epsilon\omega,\omega\rangle=\iint_\Omega\left(\frac {|\omega|^2} {g'(\psi_\epsilon)} - (-\Delta)^{-1}\omega\omega \right) dxdy,\quad\omega\in X_\epsilon.
\end{align*}
In view of Lemmas \ref{poincare1ep} (2) and \ref{1-1correspond-ep}, one can prove that $\langle L_\epsilon\cdot,\cdot\rangle$ is bounded on $ X_\epsilon$ by a similar way as Lemma \ref{Lbounded}.
\begin{lemma}\label{Lbounded-ep}
For any $\omega_1,\omega_1 \in X_\ep$, we have
$\langle L_\ep \omega_1, \omega_2 \rangle=\langle  \omega_1, L_\ep\omega_2 \rangle < C\|\omega_1\|_{X_\epsilon}\|\omega_2\|_{X_\epsilon}$.
\end{lemma}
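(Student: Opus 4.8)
The plan is to transcribe the argument of Lemma~\ref{Lbounded} to the range $0<\ep<1$, the only new inputs being the Poincar\'e inequality I-$\ep$ \eqref{Poincare inequality I-ep22} and the solvability of the Poisson equation in $\tilde X_\ep$ (Lemma~\ref{1-1correspond-ep}), which replace their $\ep=0$ counterparts Lemmas~\ref{poincare1} and~\ref{1-1correspond}. First I would recall that for $\omega\in X_\ep$ the element $\psi=(-\Delta)^{-1}\omega\in\tilde X_\ep$ is by definition the weak solution of \eqref{Poisson equation}, that is,
\begin{align*}
\iint_\Omega \nabla\psi\cdot\nabla\tilde\psi\,dxdy=\iint_\Omega \omega\tilde\psi\,dxdy,\qquad\forall\,\tilde\psi\in\tilde X_\ep,
\end{align*}
where the right-hand side is finite because, by Cauchy--Schwarz and \eqref{Poincare inequality I-ep22},
\begin{align*}
\iint_\Omega\omega\tilde\psi\,dxdy\le\left(\iint_\Omega\frac{|\omega|^2}{g'(\psi_\ep)}\,dxdy\right)^{1/2}\left(\iint_\Omega g'(\psi_\ep)|\tilde\psi|^2\,dxdy\right)^{1/2}\le C\|\omega\|_{X_\ep}\|\tilde\psi\|_{\tilde X_\ep}.
\end{align*}
Taking $\tilde\psi=\psi$ yields $\|\psi\|_{\tilde X_\ep}^2=\iint_\Omega\omega\psi\,dxdy\le C\|\omega\|_{X_\ep}\|\psi\|_{\tilde X_\ep}$, hence the \emph{a priori} bound $\|(-\Delta)^{-1}\omega\|_{\tilde X_\ep}\le C\|\omega\|_{X_\ep}$.

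Next I would set $\psi_i=(-\Delta)^{-1}\omega_i\in\tilde X_\ep$ for $i=1,2$ and use the weak formulation above with the admissible test function $\psi_2\in\tilde X_\ep$ to rewrite the nonlocal term, $\iint_\Omega (-\Delta)^{-1}\omega_1\,\omega_2\,dxdy=\iint_\Omega\psi_1\omega_2\,dxdy=\iint_\Omega\nabla\psi_1\cdot\nabla\psi_2\,dxdy$. Therefore
\begin{align*}
\langle L_\ep\omega_1,\omega_2\rangle=\iint_\Omega\left(\frac{\omega_1\omega_2}{g'(\psi_\ep)}-\nabla\psi_1\cdot\nabla\psi_2\right)dxdy,
\end{align*}
which is manifestly symmetric in $(\omega_1,\omega_2)$, giving $\langle L_\ep\omega_1,\omega_2\rangle=\langle\omega_1,L_\ep\omega_2\rangle$. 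Finally, applying Cauchy--Schwarz to the first term in the $X_\ep$-inner product and to the second term in $\tilde X_\ep$, and invoking the bound $\|\psi_i\|_{\tilde X_\ep}\le C\|\omega_i\|_{X_\ep}$ just established, I get $|\langle L_\ep\omega_1,\omega_2\rangle|\le\|\omega_1\|_{X_\ep}\|\omega_2\|_{X_\ep}+\|\psi_1\|_{\tilde X_\ep}\|\psi_2\|_{\tilde X_\ep}\le C\|\omega_1\|_{X_\ep}\|\omega_2\|_{X_\ep}$.

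There is no substantial obstacle here: the genuine difficulties — the lack of compactness in the unbounded $\Omega$ and the degeneration of $g'(\psi_\ep)$ as $y\to\pm\infty$, which is tamed by its exponential decay via the change of variables $(x,y)\mapsto(\theta_\ep,\gamma_\ep)$ — have already been absorbed into Lemma~\ref{poincare1ep} and Lemma~\ref{1-1correspond-ep}. The only point requiring a line of care is that $\omega_2\in X_\ep$ is a legitimate test function for the weak equation satisfied by $\psi_1$, which is precisely the estimate displayed in the first paragraph, so no additional work is needed; this is why the authors "omit" the proof.
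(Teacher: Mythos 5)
Your proposal is correct and follows exactly the route the paper intends: it transcribes the proof of Lemma \ref{Lbounded} to $0<\ep<1$, deriving the bound $\|(-\Delta)^{-1}\omega\|_{\tilde X_\ep}\leq C\|\omega\|_{X_\ep}$ from the Poincar\'e inequality I-$\ep$ and Lemma \ref{1-1correspond-ep}, rewriting the nonlocal term as $\iint_\Omega\nabla\psi_1\cdot\nabla\psi_2\,dxdy$ via the weak formulation to obtain symmetry, and concluding by Cauchy--Schwarz; this is precisely the "similar way as Lemma \ref{Lbounded}" that the paper invokes when omitting the proof. The only cosmetic remark is that the identity $\iint_\Omega\psi_1\omega_2\,dxdy=\iint_\Omega\nabla\psi_1\cdot\nabla\psi_2\,dxdy$ comes from testing the weak equation for $\psi_2$ against $\psi_1\in\tilde X_\ep$ (not from treating $\omega_2$ as a test function), a slip of phrasing that does not affect the argument.
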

\subsubsection{Reduction of the eigenvalue problems from Kelvin--Stuart vortex to hyperbolic tangent shear flow}

Define two elliptic operators
 \begin{align}\label{tilde-A-ep-A-ep}
 \tilde{A}_\ep=-\Delta-g'(\psi_\ep)(I - P_\ep): \tilde{X}_\ep \rightarrow \tilde{X}_\ep^*\quad\text{and}\quad
 A_\ep =-\Delta -g'(\psi_\ep):\tilde{X}_\ep \rightarrow \tilde{X}_\ep^*.
\end{align}
Then the corresponding quadratic forms
\begin{align*}
 \langle\tilde{A}_\ep\psi,\psi\rangle=&\iint_\Omega\left(|\nabla\psi|^2-g'(\psi_\ep)(\psi - P_\ep\psi)^2\right)dxdy
 \end{align*}
 and
 \begin{align*}
 \langle A_\ep \psi,\psi\rangle=&\iint_{\Omega}\left(|\nabla \psi|^2-g'(\psi_\ep)|\psi|^2\right)dxdy
\end{align*}
are bounded and symmetric on $\tilde{X}_\ep$ by the Poincar\'e inequalities I-$\ep$ \eqref{Poincare inequality I-ep22}, II-$\ep$ \eqref{Poincare inequality II-ep22}.
Then similar to \eqref{tilde A0-A0}, we have
\begin{align*}
\langle\tilde  A_\ep \psi,\psi\rangle=\langle A_\ep \psi,\psi\rangle+8\pi(P_\ep \psi)^2,\quad\psi\in \tilde X_\ep.
\end{align*}
Thus,
\begin{equation*}
n^{\leq0}(\tilde A_\ep)\leq n^{\leq0}(A_\ep),\quad n^{-}(\tilde A_\ep)\leq n^{-}(A_\ep).
\end{equation*}
By means of Lemmas \ref{poincare2ep} (2) and \ref{1-1correspond-ep}, we have the following result by a similar argument to Lemma \ref{equal-indices0}.

\begin{lemma}\label{equal-indices} Let $0<\epsilon<1$. Then
$$\dim \ker (\tilde{A}_\ep) = \dim \ker (L_\ep), \quad n^-(\tilde{A}_\ep) = n^-(L_\ep).$$
\end{lemma}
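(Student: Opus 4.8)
The statement is the $0<\ep<1$ analogue of Lemma \ref{equal-indices0}, and the plan is to transcribe that proof line by line, replacing the weight $g'(\psi_0)$, the projection $P_0$, and the spaces $X_0,\tilde X_0$ by $g'(\psi_\ep)$, $P_\ep$, $X_\ep,\tilde X_\ep$, and invoking the Poincar\'e inequality II-$\ep$ \eqref{Poincare inequality II-ep22}, Corollary \ref{P-welldefined}, and the solvability Lemma \ref{1-1correspond-ep} in place of their $\ep=0$ counterparts.

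First I would prove $\dim\ker(\tilde A_\ep)=\dim\ker(L_\ep)$. For $\omega\in\ker L_\ep$, set $\psi=(-\Delta)^{-1}\omega\in\tilde X_\ep$ (Lemma \ref{1-1correspond-ep}); then $\langle L_\ep\omega,\tilde\omega\rangle=\iint_\Omega\big(\frac{\omega\tilde\omega}{g'(\psi_\ep)}-\psi\tilde\omega\big)dxdy=0$ for all $\tilde\omega\in X_\ep$. Given $\tilde\psi\in\tilde X_\ep$, the function $\omega_{\tilde\psi}=g'(\psi_\ep)(\tilde\psi-P_\ep\tilde\psi)$ has $\iint_\Omega\omega_{\tilde\psi}\,dxdy=0$ by the definition \eqref{P-ep} of $P_\ep$, and lies in $X_\ep$ by \eqref{Poincare inequality II-ep22}; inserting $\tilde\omega=\omega_{\tilde\psi}$ and using $\iint_\Omega\omega\,dxdy=0$ together with $\iint_\Omega g'(\psi_\ep)(\psi-P_\ep\psi)\,dxdy=0$ gives $\langle\tilde A_\ep\psi,\tilde\psi\rangle=\iint_\Omega\big(\omega\tilde\psi-g'(\psi_\ep)(\psi-P_\ep\psi)\tilde\psi\big)dxdy=0$ for every $\tilde\psi$, so $\psi\in\ker\tilde A_\ep$; the map $\omega\mapsto\psi$ is injective since $-\Delta\psi=\omega$, whence $\dim\ker L_\ep\leq\dim\ker\tilde A_\ep$. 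Conversely, for $\psi\in\ker\tilde A_\ep$ put $\omega=g'(\psi_\ep)(\psi-P_\ep\psi)\in X_\ep$; for any $\tilde\omega\in X_\ep$, write $\psi_{\tilde\omega}=(-\Delta)^{-1}\tilde\omega$, integrate by parts, and use $\tilde A_\ep\psi=0$ to get $\langle L_\ep\omega,\tilde\omega\rangle=0$, i.e.\ $L_\ep\omega=0$; this map is injective because $\omega=0$ forces $\psi$ to be constant, which by the normalization $\int_{0}^{2\pi}\psi(x,0)\frac{1}{1+\ep\cos(x)}dx=0$ in \eqref{tilde-X-e} (and $\int_{0}^{2\pi}\frac{1}{1+\ep\cos(x)}dx>0$) must be $0$. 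Hence the two kernels have the same dimension.

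Next I would handle $n^-(\tilde A_\ep)=n^-(L_\ep)$ via two one-sided quadratic-form inequalities obtained by completing the square pointwise, exactly as in \eqref{L0omega-omega}. For $\omega\in X_\ep$ with $\psi=(-\Delta)^{-1}\omega$, using $\iint_\Omega P_\ep\psi\,\omega\,dxdy=0$,
\begin{align*}
\langle L_\ep\omega,\omega\rangle &=\|\nabla\psi\|_{L^2(\Omega)}^2+\iint_\Omega\Big(\frac{|\omega|^2}{g'(\psi_\ep)}-2(\psi-P_\ep\psi)\omega\Big)dxdy\\
&\geq\|\nabla\psi\|_{L^2(\Omega)}^2-\iint_\Omega g'(\psi_\ep)(\psi-P_\ep\psi)^2\,dxdy=\langle\tilde A_\ep\psi,\psi\rangle,
\end{align*}
and symmetrically, for $\psi\in\tilde X_\ep$ with $\tilde\omega=g'(\psi_\ep)(\psi-P_\ep\psi)\in X_\ep$ and $\psi_{\tilde\omega}=(-\Delta)^{-1}\tilde\omega$,
\begin{align*}
\langle\tilde A_\ep\psi,\psi\rangle &=\iint_\Omega\Big(\frac{\tilde\omega^2}{g'(\psi_\ep)}+|\nabla\psi|^2-2\nabla\psi_{\tilde\omega}\cdot\nabla\psi\Big)dxdy\\
&\geq\iint_\Omega\Big(\frac{\tilde\omega^2}{g'(\psi_\ep)}-|\nabla\psi_{\tilde\omega}|^2\Big)dxdy=\langle L_\ep\tilde\omega,\tilde\omega\rangle.
\end{align*}
The first inequality, with injectivity of $(-\Delta)^{-1}$, sends any subspace on which $\langle L_\ep\cdot,\cdot\rangle$ is nonpositive to a subspace of equal dimension on which $\langle\tilde A_\ep\cdot,\cdot\rangle$ is nonpositive, so $n^{\leq0}(L_\ep)\leq n^{\leq0}(\tilde A_\ep)$; the second gives the reverse. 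Together with the kernel identity, $n^-(L_\ep)=n^{\leq0}(L_\ep)-\dim\ker L_\ep=n^{\leq0}(\tilde A_\ep)-\dim\ker\tilde A_\ep=n^-(\tilde A_\ep)$.

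The main point to watch — rather than a genuine obstacle — is the bookkeeping that makes the auxiliary vorticities $\omega_{\tilde\psi},\omega,\tilde\omega$ actually belong to $X_\ep$: this is exactly what the Poincar\'e inequality II-$\ep$ \eqref{Poincare inequality II-ep22} and Corollary \ref{P-welldefined} deliver, since they simultaneously give $\iint_\Omega|\omega|^2/g'(\psi_\ep)\,dxdy<\infty$ and (through the definition of $P_\ep$) the zero-mean condition $\iint_\Omega\omega\,dxdy=0$, so that all the $P_\ep$-terms in the cross products above drop out. The only other remark needed is that passing from $n^{\leq0}$ to $n^-$ uses $n^{\leq0}(\tilde A_\ep)\leq n^{\leq0}(A_\ep)<\infty$, the finiteness being clear a posteriori from the iso-spectral reduction of $\tilde A_\ep$ to $\tilde A_0$; alternatively one may run the same argument directly on negative-definite subspaces and avoid the finiteness issue entirely.
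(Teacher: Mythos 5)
Your proposal is correct and coincides with the paper's intended argument: the paper itself proves Lemma \ref{equal-indices} by simply invoking "a similar argument to Lemma \ref{equal-indices0}" based on Lemma \ref{poincare2ep} (2) and Lemma \ref{1-1correspond-ep}, which is precisely the line-by-line transcription you carried out, including the use of $P_\ep$ to ensure the auxiliary vorticities lie in $X_\ep$ and the completion-of-the-square inequalities relating $\langle L_\ep\cdot,\cdot\rangle$ and $\langle\tilde A_\ep\cdot,\cdot\rangle$. Your explicit injectivity checks (using the normalization $\int_0^{2\pi}\psi(x,0)\frac{1}{1+\ep\cos(x)}\,dx=0$) only make explicit what the paper leaves implicit, so no further changes are needed.
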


To compute $n^-(\tilde{A}_\ep)$, we also need the compact embedding results.

\begin{lemma}\label{compact2P-new-variable-ep} Let $0<\ep<1$.
$(1)$ $\tilde Y_\ep$ is compactly embedded in $L^2(\tilde \Omega)$ and
\begin{equation*}
 \tilde Z_{\ep}:=\left\{\Psi\bigg|\iint_{\tilde \Omega}|\Psi-\tilde P_\ep\Psi|^2d\theta_\ep d\gamma_\ep<\infty\right\},
  \end{equation*}
respectively.

$(2)$  $\tilde X_\ep$ is compactly embedded in $L_{g'(\psi_\ep)}^2(\Omega)$ and
 \begin{equation*}
 Z_{\ep}:=\left\{\psi\bigg|\iint_{\Omega}g'(\psi_\ep)|\psi-P_\ep\psi|^2dxdy<\infty\right\},
  \end{equation*}
respectively.
\end{lemma}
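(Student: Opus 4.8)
The plan is to obtain both statements as direct corollaries of the already-established $\ep=0$ compact embeddings, namely Lemma \ref{compact2P} in the variables $(x,y)$ and Lemma \ref{compact2P-new-variable0} in the variables $(x,\gamma)$, transported through the change of variables $(x,y)\mapsto(\theta_\ep,\gamma_\ep)$ of \eqref{transf1}-\eqref{transf2}. The key observation, already recorded after \eqref{P-ep-new-variables} and in the proof of Corollary \ref{P-welldefined}, is that as normed spaces $\tilde Y_\ep$ is literally $\tilde Y_0$ of \eqref{tilde-Y0-def}, the projection $\tilde P_\ep$ of \eqref{P-ep-new-variables} is literally $\tilde P_0$ of \eqref{def-tilde-P0-Psi}, and consequently $\tilde Z_\ep$ is literally $\tilde Z_0$: comparing the definition of $\tilde Y_\ep$ with \eqref{tilde-Y0-def}, and the definitions of $\tilde Z_\ep$ and $\tilde Z_0$, one only relabels the dummy variables $(\theta_\ep,\gamma_\ep)\leftrightarrow(x,\gamma)$. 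Hence part $(1)$—that $\tilde Y_\ep$ embeds compactly in $L^2(\tilde\Omega)$ and in $\tilde Z_\ep$—is exactly the content of Lemma \ref{compact2P-new-variable0}$(1)$-$(2)$ and requires no new argument.

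For part $(2)$ I would transfer part $(1)$ to the original variables via the dictionary supplied by the change of variables. Given $\psi\in\tilde X_\ep$, write $\Psi(\theta_\ep,\gamma_\ep)=\psi(x,y)$. Then \eqref{Psi-psi-norm-eq} gives $\|\psi\|_{\tilde X_\ep}=\|\Psi\|_{\tilde Y_\ep}$; the Jacobian identity \eqref{Jacobian of the transformation-ep}, used exactly as in \eqref{Psi-psi-L2-ep}, gives $\iint_\Omega g'(\psi_\ep)|\psi|^2\,dxdy=2\iint_{\tilde\Omega}|\Psi|^2\,d\theta_\ep d\gamma_\ep$; and, since $P_\ep\psi=\tilde P_\ep\Psi$ by Corollary \ref{P-welldefined}, the same Jacobian identity gives $\iint_\Omega g'(\psi_\ep)|\psi-P_\ep\psi|^2\,dxdy=2\iint_{\tilde\Omega}|\Psi-\tilde P_\ep\Psi|^2\,d\theta_\ep d\gamma_\ep$. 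Thus a bounded sequence $\{\psi_n\}_{n\geq1}$ in $\tilde X_\ep$ corresponds to a bounded sequence $\{\Psi_n\}_{n\geq1}$ in $\tilde Y_\ep$; by part $(1)$, along a subsequence $\Psi_n\to\Psi_*$ in $L^2(\tilde\Omega)$, respectively in $\tilde Z_\ep$, for some limit $\Psi_*$; setting $\psi_*(x,y)=\Psi_*(\theta_\ep,\gamma_\ep)$ and applying the last two change-of-variables identities to $\psi_n-\psi_*$ yields $\psi_n\to\psi_*$ in $L^2_{g'(\psi_\ep)}(\Omega)$, respectively in $Z_\ep$. This is precisely the scheme by which Lemma \ref{compact2P-new-variable0}$(2)$ was deduced from Lemma \ref{compact2P}$(2)$.

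The proof is therefore essentially bookkeeping, and the only thing that must be in place—and it was established in Subsection \ref{Kelvin-stuart cats eyes flows}—is that $(x,y)\mapsto(\theta_\ep,\gamma_\ep)$ is a bijection of $\Omega$ onto $\tilde\Omega=\mathbb{T}_{2\pi}\times[-1,1]$ with positive Jacobian $\frac{1}{2}g'(\psi_\ep)$, so that the weighted norms $L^2_{g'(\psi_\ep)}(\Omega)$, $Z_\ep$, $\tilde X_\ep$ on the $(x,y)$ side match the unweighted norms $L^2(\tilde\Omega)$, $\tilde Z_\ep$, $\tilde Y_\ep$ on the $(\theta_\ep,\gamma_\ep)$ side. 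No genuine difficulty beyond this translation is anticipated; in particular there is no need to redo the truncation-and-trace compactness argument, which is absorbed into Lemma \ref{compact2P-new-variable0}.
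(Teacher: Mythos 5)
Your proposal is correct and follows essentially the same route as the paper, whose proof simply states that $(1)$ is equivalent to Lemma \ref{compact2P-new-variable0} and that $(2)$ follows from $(1)$, \eqref{Psi-psi-norm-eq} and Corollary \ref{P-welldefined}. You merely make explicit the Jacobian bookkeeping (as in \eqref{Psi-psi-L2-ep}) that the paper leaves implicit.
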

\begin{proof}
(1) is equivalent to Lemma \ref{compact2P-new-variable0}. (2) is a consequence of (1), \eqref{Psi-psi-norm-eq} and Corollary \ref{P-welldefined}.
\end{proof}

By the compact embedding  $\tilde X_\ep\hookrightarrow Z_{\ep}$,   we can
inductively define $\lambda_{n}(\ep)$ as follows
\begin{align}\label{variational problem2-ep}
\lambda_n(\ep)=& \inf_{\psi \in \tilde X_\ep, (\psi, \psi_{i})_{Z_\ep} = 0, i = 1, 2, \cdots, n-1}{\iint_\Omega|\nabla\psi|^2dxdy\over\iint_\Omega g'(\psi_\ep)(\psi - P_\ep\psi)^2dxdy},\quad n\geq1,
\end{align}
where the  infimum for $\lambda_i(\ep)$ is attained at $\psi_{i}\in \tilde X_\ep$ and $\iint_\Omega g'(\psi_\ep)(\psi_{i} - {P_\ep}\psi_{i})^2 dxdy = 1$, $1\leq i \leq n-1$.
By computing the first variation of the functional $G_\ep(\psi)={\iint_\Omega|\nabla\psi|^2dxdy\over\iint_\Omega g'(\psi_\ep)(\psi - P_\ep\psi)^2dxdy}$ at $\psi_{_n}$,  we have
\begin{align*}
&\frac{d}{d \tau} G_\ep(\psi_{n} + \tau \psi)|_{\tau = 0} = 2\iint_{\Omega} \left(-\Delta\psi_n - \lambda_n(\epsilon)g'(\psi_\ep)(\psi_n - P_\ep\psi_n)\right)\psi dxdy\\
=&2\iint_{\tilde\Omega}\left(-{1\over1-\gamma_\ep^2}\pa_{\theta_\ep}^2\Psi_n-\pa_{\gamma_\ep}\left((1-\gamma_\ep^2)\pa_{\gamma_\ep}\Psi_n\right)
-2\lambda_n(\epsilon)(\Psi_n-\tilde P_\ep\Psi_n)\right)\Psi d\theta_\ep d\gamma_\ep
\end{align*}
for $\psi\in \tilde X_\ep$ and $\Psi\in \tilde Y_\ep$ with $\psi(x,y)=\Psi(\theta_\ep,\gamma_\ep)$, where $\Psi_n(\theta_\ep,\gamma_\ep)=\psi_n(x,y)$.
Since   $\widehat{\Psi}_0(0)=0$ for $\Psi\in \tilde Y_\ep$, we
derive the Euler-Lagrangian equation in the new variables
\begin{align}\label{elip-ep}
-{1\over1-\gamma_\ep^2}\pa_{\theta_\ep}^2\Psi-\pa_{\gamma_\ep}\left((1-\gamma_\ep^2)\pa_{\gamma_\ep}\Psi\right)
=2\lambda(\Psi-\tilde P_\ep\Psi)+a\delta(\gamma_\ep), \quad \Psi \in \tilde{Y}_\ep,
\end{align}
where $a\in\mathbb{R}$ is to be determined. By the definition of  $\tilde P_\ep$ in \eqref{P-ep-new-variables}, integrating \eqref{elip-ep} on $\tilde \Omega$, we have
$$
2\pi a=\iint_{\tilde \Omega}\left(-{1\over1-\gamma_\ep^2}\pa_{\theta_\ep}^2\Psi-\pa_{\gamma_\ep}\left((1-\gamma_\ep^2)\pa_{\gamma_\ep}\Psi\right)
-2\lambda(\Psi-\tilde P_\ep\Psi)\right) d\theta_\ep d\gamma_\ep=0\;\;\Longrightarrow \;\;a=0,
$$
and thus,  we get the eigenvalue problem
\begin{align}\label{eigenvalue problem-ep-new}
-{1\over1-\gamma_\ep^2}\pa_{\theta_\ep}^2\Psi-\pa_{\gamma_\ep}\left((1-\gamma_\ep^2)\pa_{\gamma_\ep}\Psi\right)
=2\lambda(\Psi-\tilde P_\ep\Psi), \quad \Psi \in \tilde{Y}_\ep,
\end{align}
which, in the original variables, is exactly
\begin{align}\label{eigenvalue problem-ep-original}
-\Delta \psi = \lambda g'(\psi_\ep)(\psi -  P_\ep\psi), \quad \psi \in \tilde{X}_\ep.
\end{align}
Noting that  the eigenvalue problem \eqref{eigenvalue problem-ep-new} is the same  as
\eqref{elip02-x-gamma}, we have the following conclusions by Theorem \ref{associate_ep0}.

\begin{Theorem}\label{associate_ep-new-variable-original-variable}
All the eigenvalues of the eigenvalue problem \eqref{eigenvalue problem-ep-new} are $\lambda_n  = \frac{n(n+1)}{2}, n\geq1$. For $n\geq1$, the eigenspace associated to $\lambda_n$ is  spanned by
\begin{align*}
 L_{n}(\gamma_\ep) - L_n(0), \quad  L_{n,k}(\gamma_\ep)\cos(k\theta_\ep), \quad L_{n,k}(\gamma_\ep)\sin(k\theta_\ep), \quad  1 \leq k\leq n.
 \end{align*}
Consequently, all the eigenvalues of the associated eigenvalue problem \eqref{eigenvalue problem-ep-original} are $\lambda_n  = \frac{n(n+1)}{2}, n\geq1$. For $n\geq1$, the eigenspace associated to $\lambda_n$ is  spanned by
\begin{align*}
& L_{n}(\gamma_\ep(x,y)) - L_n(0), \quad  L_{n,k}(\gamma_\ep(x,y))\cos(k\theta_\ep(x,y)), \\
 & L_{n,k}(\gamma_\ep(x,y))\sin(k\theta_\ep(x,y)), \quad  1 \leq k\leq n,
 \end{align*}
where $\gamma_\ep(x,y)$ and $\theta_\ep(x,y)$ are defined in \eqref{transf1}-\eqref{transf2},  $L_{n,k}(\gamma_\ep)=(1-\gamma_\ep^2)^{k\over2}{d^k\over d\gamma_\ep^k}L_n(\gamma_\ep)$, and $L_n$ is the Legendre polynomial of degree $n$.
\end{Theorem}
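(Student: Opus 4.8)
The plan is to recognize that, once written in the variables $(\theta_\ep,\gamma_\ep)$, the eigenvalue problem \eqref{eigenvalue problem-ep-new} is \emph{verbatim} the shear-case problem \eqref{elip02-x-gamma} already solved in Theorem \ref{associate_ep0}, and then to transport the answer back to the original variables $(x,y)$ through the change of variables \eqref{transf1}--\eqref{transf2}. Concretely, I would first check that the three ingredients of \eqref{eigenvalue problem-ep-new} coincide with those of \eqref{elip02-x-gamma} under the identification $\theta_\ep\leftrightarrow x$, $\gamma_\ep\leftrightarrow\gamma$: the differential operator $-\tfrac{1}{1-\gamma_\ep^2}\pa_{\theta_\ep}^2-\pa_{\gamma_\ep}\big((1-\gamma_\ep^2)\pa_{\gamma_\ep}\,\cdot\,\big)$ is formally identical to the one in \eqref{elip02-x-gamma}; the space $\tilde Y_\ep$ is by its very definition the same space $\tilde Y_0$ of \eqref{tilde-Y0-def}, namely the functions on $\tilde\Omega=\bbT\times[-1,1]$ with finite weighted Dirichlet norm and vanishing zero-mode trace $\widehat\Psi_0(0)=0$; and the projection $\tilde P_\ep$ of \eqref{P-ep-new-variables} is the same functional $\Psi\mapsto\tfrac1{4\pi}\iint_{\tilde\Omega}\Psi\,d\theta_\ep\,d\gamma_\ep$ as $\tilde P_0$ of \eqref{def-tilde-P0-Psi}. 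Hence Theorem \ref{associate_ep0} applies without any change and yields that all eigenvalues of \eqref{eigenvalue problem-ep-new} are $\lambda_n=\tfrac{n(n+1)}{2}$, $n\ge1$, with $\lambda_n$-eigenspace spanned by $L_n(\gamma_\ep)-L_n(0)$ and $L_{n,k}(\gamma_\ep)\cos(k\theta_\ep)$, $L_{n,k}(\gamma_\ep)\sin(k\theta_\ep)$ for $1\le k\le n$.

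For the consequence on \eqref{eigenvalue problem-ep-original}, I would use that $(x,y)\mapsto(\theta_\ep,\gamma_\ep)$ is a bijection of $\Omega$ onto $\tilde\Omega$ with Jacobian $\tfrac12 g'(\psi_\ep)>0$ by \eqref{Jacobian of the transformation-ep}; by Lemma \ref{hilbert-ep}~(2) (in particular \eqref{Psi-psi-norm-eq}) this change of variables is an isometric isomorphism $\tilde X_\ep\cong\tilde Y_\ep$, by Proposition \ref{prop3} it sends $-\Delta\psi$ to $\tfrac12 g'(\psi_\ep)$ times the operator on the right-hand side of \eqref{laplacian}, and by Corollary \ref{P-welldefined} it matches the projections, $P_\ep\psi=\tilde P_\ep\Psi$. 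Dividing \eqref{eigenvalue problem-ep-original} by $\tfrac12 g'(\psi_\ep)>0$ therefore turns it into \eqref{eigenvalue problem-ep-new}, so $\psi$ solves \eqref{eigenvalue problem-ep-original} with eigenvalue $\lambda$ if and only if $\Psi(\theta_\ep,\gamma_\ep)=\psi(x,y)$ solves \eqref{eigenvalue problem-ep-new} with the same $\lambda$; the eigenfunctions in the original variables are obtained simply by substituting $\theta_\ep=\theta_\ep(x,y)$, $\gamma_\ep=\gamma_\ep(x,y)$ from \eqref{transf1}--\eqref{transf2}. One point I would make explicit is that these pulled-back functions genuinely belong to $\tilde X_\ep$: expanding $\cos(k\theta_\ep)$, $\sin(k\theta_\ep)$ by the De Moivre formulae \eqref{De Moivres formula1}--\eqref{De Moivres formula2} together with \eqref{def-eta-ep}--\eqref{def-xi-ep} rewrites each eigenfunction as a polynomial in the smooth functions $\eta_\ep,\gamma_\ep,\xi_\ep$ --- precisely the forms \eqref{ep larger than 0-sol1}--\eqref{ep larger than 0-sol3} --- so they are smooth on $\Omega$ with finite Dirichlet energy, while the normalization $L_n(\gamma_\ep)-L_n(0)$ together with the identity \eqref{widehat-Psi-to-psi-1+ep-cos} secures the constraint in the definition \eqref{tilde-X-e} of $\tilde X_\ep$.

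Since all of the technical scaffolding --- the well-posedness of the Poisson equation in $\tilde X_\ep$ (Lemma \ref{1-1correspond-ep}), the Poincar\'e inequalities I-$\ep$ and II-$\ep$, the compact embeddings (Lemma \ref{compact2P-new-variable-ep}), the transformation identities of Propositions \ref{prop0}--\ref{prop3}, and Theorem \ref{associate_ep0} itself --- is already available, I do not expect a genuine obstacle; the argument is essentially the bookkeeping verification that \eqref{transf1}--\eqref{transf2} is an \emph{exact} intertwiner carrying the $0<\ep<1$ eigenvalue problem onto the shear-case one. The only step demanding real care is the matching of function spaces, and in particular the verification that the boundary constraint $\widehat\Psi_0(0)=0$ defining $\tilde Y_\ep$ pulls back exactly to the constraint $\int_0^{2\pi}\psi(x,0)(1+\ep\cos x)^{-1}\,dx=0$ defining $\tilde X_\ep$ --- but this identity is already recorded as \eqref{widehat-Psi-to-psi-1+ep-cos}, so it causes no difficulty either.
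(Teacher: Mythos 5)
Your proposal is correct and follows essentially the same route as the paper: the paper's proof is precisely the observation that \eqref{eigenvalue problem-ep-new} coincides with \eqref{elip02-x-gamma} (same operator, same space $\tilde Y_\ep=\tilde Y_0$, same projection), so Theorem \ref{associate_ep0} applies verbatim, with the transfer back to $(x,y)$ resting on the already-established equivalences \eqref{laplacian}, \eqref{Psi-psi-norm-eq}, Corollary \ref{P-welldefined} and \eqref{widehat-Psi-to-psi-1+ep-cos}. Your extra verification that the pulled-back eigenfunctions lie in $\tilde X_\ep$ is a harmless elaboration of the same bookkeeping.
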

Then we get the kernel of  the operators $\tilde A_\ep$ and $A_\ep$, as well as  decompositions of $\tilde X_{\ep}$ associated to the two operators.

\begin{Corollary}\label{kernel of  the operator tilde A-ep and a decomposition of tilde Xep}
$(1)$ $\ker (\tilde A_\ep)={\rm{span}}\left\{\eta_\ep(x,y), \gamma_\ep(x,y), \xi_\ep(x,y)\right\}$.

$(2)$ Let $\tilde X_{\ep+}=\tilde X_\ep \ominus\ker (\tilde A_\ep)$. Then
\begin{align*}
\langle \tilde A_\ep \psi,\psi\rangle \geq {2\over3} \| \psi\|_{\tilde X_\ep}^2, \quad \quad \psi\in \tilde X_{\ep+}.
\end{align*}
\end{Corollary}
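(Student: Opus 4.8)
\emph{Proof proposal.} This is the exact analogue, for $0<\ep<1$, of Corollary \ref{kernel of  the operator tilde A0 and a decomposition of tilde X0}, and since the eigenvalue problem \eqref{eigenvalue problem-ep-original} has now been solved explicitly in Theorem \ref{associate_ep-new-variable-original-variable}, the plan is to run the $\ep=0$ argument essentially verbatim, with the triple $(\tanh(y),\cos(x)/\cosh(y),\sin(x)/\cosh(y))$ replaced by $(\gamma_\ep,\xi_\ep,\eta_\ep)$ and the shear-case spectral data replaced by the cat's-eyes one.

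For (1) I would first note that, by the definition \eqref{tilde-A-ep-A-ep} of $\tilde A_\ep$, a function $\psi\in\tilde X_\ep$ lies in $\ker(\tilde A_\ep)$ if and only if $-\Delta\psi=g'(\psi_\ep)(\psi-P_\ep\psi)$, i.e. if and only if $\psi$ is an eigenfunction of \eqref{eigenvalue problem-ep-original} for the eigenvalue $\lambda=1$. By Theorem \ref{associate_ep-new-variable-original-variable}, $\lambda_1=1$ is the principal eigenvalue and its eigenspace is spanned by $L_1(\gamma_\ep)-L_1(0)$, $L_{1,1}(\gamma_\ep)\cos(\theta_\ep)$, $L_{1,1}(\gamma_\ep)\sin(\theta_\ep)$; since $L_1(\gamma)=\gamma$ and $L_{1,1}(\gamma)=(1-\gamma^2)^{1/2}$, these are precisely $\gamma_\ep$, $\sqrt{1-\gamma_\ep^2}\cos(\theta_\ep)=\xi_\ep$ and $\sqrt{1-\gamma_\ep^2}\sin(\theta_\ep)=\eta_\ep$ by \eqref{def-eta-ep}--\eqref{def-xi-ep}. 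Hence $\ker(\tilde A_\ep)=\mathrm{span}\{\eta_\ep,\gamma_\ep,\xi_\ep\}$, which is $3$-dimensional. (That $\eta_\ep,\gamma_\ep,\xi_\ep\in\tilde X_\ep$ is contained in Theorem \ref{associate_ep-new-variable-original-variable}; alternatively it follows from Proposition \ref{prop1}, which gives their finite Dirichlet energy, together with the vanishing at $y=0$ of the weighted trace integral in \eqref{tilde-X-e} by parity in $x$.)

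For (2), fix a $Z_\ep$-orthonormal basis $\psi_1,\psi_2,\psi_3$ of $\ker(\tilde A_\ep)$. A short computation in the variables $(\theta_\ep,\gamma_\ep)$ — using Corollary \ref{P-welldefined} to pass from $P_\ep$ to $\tilde P_\ep$, and the fact that $\gamma_\ep$, $\cos\theta_\ep$, $\sin\theta_\ep$ are odd over $\tilde\Omega$ in the appropriate variable — shows $P_\ep\phi=0$ for every $\phi\in\ker(\tilde A_\ep)$. Therefore, for $\psi\in\tilde X_\ep$ and $\phi\in\ker(\tilde A_\ep)$,
$$(\psi,\phi)_{Z_\ep}=\iint_\Omega g'(\psi_\ep)(\psi-P_\ep\psi)\phi\, dxdy=\iint_\Omega g'(\psi_\ep)\psi\phi\, dxdy=\iint_\Omega\psi(-\Delta\phi)\, dxdy=(\psi,\phi)_{\tilde X_\ep},$$
where the third equality uses $-\Delta\phi=g'(\psi_\ep)\phi$. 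Consequently $\psi\in\tilde X_{\ep+}=\tilde X_\ep\ominus\ker(\tilde A_\ep)$ is equivalent to $(\psi,\psi_i)_{Z_\ep}=0$ for $i=1,2,3$. By Theorem \ref{associate_ep-new-variable-original-variable} the eigenvalue $1$ has multiplicity exactly $3$ and the next eigenvalue is $3$, so in the variational scheme \eqref{variational problem2-ep} one has $\lambda_1(\ep)=\lambda_2(\ep)=\lambda_3(\ep)=1$, with the minimizers spanning $\ker(\tilde A_\ep)$, and $\lambda_4(\ep)=3$. Thus for $\psi\in\tilde X_{\ep+}$,
$$\iint_\Omega g'(\psi_\ep)(\psi-P_\ep\psi)^2\, dxdy\le\tfrac13\iint_\Omega|\nabla\psi|^2\, dxdy=\tfrac13\|\psi\|_{\tilde X_\ep}^2,$$
and hence $\langle\tilde A_\ep\psi,\psi\rangle=\|\psi\|_{\tilde X_\ep}^2-\iint_\Omega g'(\psi_\ep)(\psi-P_\ep\psi)^2\, dxdy\ge\tfrac23\|\psi\|_{\tilde X_\ep}^2$.

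I do not anticipate a genuine obstacle here: all the analytic content (the explicit eigenvalues $n(n+1)/2$, their multiplicities, the completeness of the eigenfunctions, and the correct function spaces) has already been absorbed into Theorem \ref{associate_ep-new-variable-original-variable} and the lemmas preceding it. The only two points needing a little care — both handled exactly as in the $\ep=0$ case — are the identification of the $Z_\ep$- and $\tilde X_\ep$-inner products on $\ker(\tilde A_\ep)$ (which rests on $P_\ep$ annihilating the kernel functions) and the passage from ``the second eigenvalue equals $3$'' to the strict spectral-gap bound via the variational values $\lambda_n(\ep)$.
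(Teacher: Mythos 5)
Your proposal is correct and follows essentially the same route as the paper: the paper also deduces both parts from Theorem \ref{associate_ep-new-variable-original-variable} together with the variational characterization \eqref{variational problem2-ep}, mimicking the $\ep=0$ argument of Corollary \ref{kernel of  the operator tilde A0 and a decomposition of tilde X0} and using precisely the observation that $\tilde P_\ep$ (equivalently $P_\ep$) annihilates $\eta_\ep,\gamma_\ep,\xi_\ep$. The only cosmetic difference is that the paper verifies $P_\ep\phi=0$ by the explicit integrals in the $(\theta_\ep,\gamma_\ep)$ variables rather than by a parity remark in $x$ (which, for $\xi_\ep$, is cleanest in the $\theta_\ep$ variable anyway).
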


\begin{proof}
By means of  Theorem \ref{associate_ep-new-variable-original-variable} and \eqref{variational problem2-ep}, the proof is similar to Corollary \ref{kernel of  the operator tilde A0 and a decomposition of tilde X0}. Here, we used $\tilde P_\ep\eta_\ep={1\over 4\pi}\iint_{\tilde \Omega}\sqrt{1-\gamma_\ep^2}\sin(\theta_\ep)d\theta_\ep d\gamma_\ep=0,$ $\tilde P_\ep\gamma_\ep={1\over 4\pi}\iint_{\tilde \Omega}\gamma_\ep d\theta_\ep d\gamma_\ep=0$, and $\tilde P_\ep\xi_\ep={1\over 4\pi}\iint_{\tilde \Omega}\sqrt{1-\gamma_\ep^2}\cos(\theta_\ep)d\theta_\ep d\gamma_\ep=0$ by \eqref{P-ep-new-variables}.
\end{proof}
The decomposition of $\tilde X_{\ep}$ associated to $A_\ep$ will be used  in the study on nonlinear stability.
\begin{Corollary}\label{kernel of  the operator A-ep and a decomposition of tilde Xep}
$(1)$ $\ker ( A_\ep)=\ker (\tilde A_\ep)={\rm{span}}\left\{\eta_\ep(x,y), \gamma_\ep(x,y), \xi_\ep(x,y)\right\}$.

$(2)$ Let $\tilde X_{\ep+}$ be defined as above. Then
\begin{align*}
\langle  A_\ep \psi,\psi\rangle \geq C_0 \| \psi\|_{\tilde X_\ep}^2, \quad \quad \psi\in \tilde X_{\ep+}
\end{align*}
for some $C_0>0$.
\end{Corollary}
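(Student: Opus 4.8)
The plan is to reduce the entire statement to the shear case $\ep=0$ via the change of variables $(x,y)\mapsto(\theta_\ep,\gamma_\ep)$ of \eqref{transf1}--\eqref{transf2}. The obstacle, relative to Corollary~\ref{kernel of  the operator A0 and a decomposition of tilde X0}, is that $g'(\psi_\ep)$ genuinely couples $x$ and $y$, so the direct $(x,y)$-separation used there is unavailable; the resolution is that in the coordinates $(\theta_\ep,\gamma_\ep)$ the operator $A_\ep$ is precisely $A_0$ rewritten in the variables $(x,\tanh y)$. Accordingly, I would construct an isometric isomorphism $U_\ep\colon\tilde X_\ep\to\tilde X_0$ that simultaneously intertwines $A_\ep$ with $A_0$ and $\tilde A_\ep$ with $\tilde A_0$ (this is the iso-spectrality of $\tilde A_\ep$ and $\tilde A_0$ already invoked in the introduction), and then transport Corollary~\ref{kernel of  the operator A0 and a decomposition of tilde X0}.

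First I would build $U_\ep$ as a composition of three isometries: the map $\psi(x,y)\mapsto\Psi(\theta_\ep,\gamma_\ep)$, where $\psi(x,y)=\Psi(\theta_\ep(x,y),\gamma_\ep(x,y))$, is an isometry from $\tilde X_\ep$ onto $\tilde Y_\ep$ by \eqref{Psi-psi-norm-eq}; the space $\tilde Y_\ep$ coincides with $\tilde Y_0$ (noted just before Lemma~\ref{hilbert-ep}); and $\Psi(x,\gamma)\mapsto\Psi(x,\tanh y)$ is an isometry from $\tilde Y_0$ onto $\tilde X_0$ by \eqref{cor-0-new-norm}. For the intertwining, \eqref{gradient-psi} together with the Jacobian identity \eqref{Jacobian of the transformation-ep}, which gives $g'(\psi_\ep)|\psi|^2\,dxdy=2|\Psi|^2\,d\theta_\ep d\gamma_\ep$, rewrites $\langle A_\ep\psi,\psi\rangle$ as $\iint_{\tilde\Omega}\bigl({1\over1-\gamma_\ep^2}|\Psi_{\theta_\ep}|^2+(1-\gamma_\ep^2)|\Psi_{\gamma_\ep}|^2-2|\Psi|^2\bigr)\,d\theta_\ep d\gamma_\ep$; the parallel computation for $\ep=0$, using \eqref{cor-0-new} and $g'(\psi_0)=2\,\sech^2(y)$, produces the identical integral for $\langle A_0(U_\ep\psi),U_\ep\psi\rangle$. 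Since $P_\ep\psi=\tilde P_\ep\Psi$ by Corollary~\ref{P-welldefined}, the projection terms transform in exactly the same way, so by the identity $\langle\tilde A_\ep\psi,\psi\rangle=\langle A_\ep\psi,\psi\rangle+8\pi(P_\ep\psi)^2$ (the analogue of \eqref{tilde A0-A0}, following the definitions \eqref{tilde-A-ep-A-ep}) the same $U_\ep$ also intertwines $\tilde A_\ep$ with $\tilde A_0$. By polarization these equalities of quadratic forms upgrade to $A_\ep=U_\ep^{*}A_0U_\ep$ and $\tilde A_\ep=U_\ep^{*}\tilde A_0U_\ep$ in the weak sense.

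Granting this, part (1) follows from $\ker(A_\ep)=U_\ep^{-1}\ker(A_0)=U_\ep^{-1}\ker(\tilde A_0)=\ker(\tilde A_\ep)$, where the middle equality is Corollary~\ref{kernel of  the operator A0 and a decomposition of tilde X0}(1), together with $\ker(\tilde A_\ep)={\rm{span}}\{\eta_\ep,\gamma_\ep,\xi_\ep\}$ from Corollary~\ref{kernel of  the operator tilde A-ep and a decomposition of tilde Xep}(1); alternatively, applying $U_\ep^{-1}$ to the explicit basis $\{\tanh y,\ \cos x/\cosh y,\ \sin x/\cosh y\}$ of $\ker(A_0)$ and using \eqref{def-eta-ep}--\eqref{def-xi-ep} (with $\sech^2 y=1-\tanh^2 y$) yields $\{\gamma_\ep,\xi_\ep,\eta_\ep\}$ directly. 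For part (2), since $U_\ep$ is an isometry it carries $\tilde X_{\ep+}=\tilde X_\ep\ominus\ker(\tilde A_\ep)$ onto $\tilde X_0\ominus\ker(\tilde A_0)=\tilde X_{0+}$, so for $\psi\in\tilde X_{\ep+}$ one has $\langle A_\ep\psi,\psi\rangle=\langle A_0(U_\ep\psi),U_\ep\psi\rangle\geq C_0\|U_\ep\psi\|_{\tilde X_0}^2=C_0\|\psi\|_{\tilde X_\ep}^2$ by Corollary~\ref{kernel of  the operator A0 and a decomposition of tilde X0}(2), with the same constant $C_0$.

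I expect the only genuinely non-routine point, compared with the $\ep=0$ argument, to be the recognition that $(\theta_\ep,\gamma_\ep)$ are exactly the coordinates that restore the separability lost for $0<\ep<1$; the remaining work — checking that \eqref{transf1}--\eqref{transf2} is a bijection of $\Omega$ onto $\tilde\Omega$ compatible with the defining constraints of the function spaces (in particular the condition $\widehat{\Psi}_0(0)=0$ matching on both sides) — has already been carried out in Lemma~\ref{hilbert-ep} and Corollary~\ref{P-welldefined}, so beyond bookkeeping there is essentially no obstacle left.
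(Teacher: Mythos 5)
Your proposal is correct and follows essentially the same route as the paper: the paper likewise transports Corollary \ref{kernel of  the operator A0 and a decomposition of tilde X0} to $0<\ep<1$ by rewriting $\langle A_\ep\psi,\psi\rangle$ in the variables $(\theta_\ep,\gamma_\ep)$ (using \eqref{gradient-psi} and the Jacobian identity \eqref{Jacobian of the transformation-ep}) as an $\ep$-independent quadratic form on $\tilde Y_\ep=\tilde Y_0$, and then reads off the kernel $\{\gamma_\ep,\xi_\ep,\eta_\ep\}$ and the coercivity constant $C_0$ on $\tilde X_{\ep+}$. Your explicit unitary $U_\ep$ and the intertwining of $\tilde A_\ep$ with $\tilde A_0$ are just a slightly more formal packaging of the same change-of-variables argument.
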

\begin{proof}
\if0Thanks to \eqref{A-ep-tilde-A-ep} and Corollary \ref{kernel of  the operator tilde A-ep and a decomposition of tilde Xep}, the proof is similar to Corollary \ref{kernel of  the operator A0 and a decomposition of tilde X0} using the new variables $(\theta_\ep,\gamma_\ep)$.
\fi
\if0
Since $P_0\psi=0$ for any $\psi\in\ker ( A_0)$, we have by \eqref{tilde A0-A0} that $\ker (\tilde A_0)\subset \ker ( A_0)$. To show (1)-(2), it suffices to prove \eqref{A0psi}. In fact, by \eqref{p0-psi-estimates-2} and \eqref{tilde A0-A0} we have
\begin{align*}
\langle\tilde  A_0 \psi,\psi\rangle=\langle A_0 \psi,\psi\rangle+8\pi(P_0 \psi)^2\leq\langle A_0 \psi,\psi\rangle+\iint_\Omega g'(\psi_0)|\psi|^2dxdy.
\end{align*}
Thus, we infer from \eqref{sec-eigenvalue1}-\eqref{sec-eigenvalue2} that
\begin{align*}
\langle A_0 \psi,\psi\rangle\geq
\langle\tilde  A_0 \psi,\psi\rangle-\iint_\Omega g'(\psi_0)|\psi|^2dxdy\geq {2\over3} \| \psi\|_{\tilde X_0}^2-{1\over3} \| \psi\|_{\tilde X_0}^2={1\over3} \| \psi\|_{\tilde X_0}^2.
\end{align*}
\fi
Define the quadratic form
 \begin{align*}
 \langle\mathscr{A}_\ep\Psi,\Psi\rangle=\iint_{\tilde\Omega}\left({|\partial_{\theta_\ep}\Psi|^2\over 1-\gamma_\ep^2}+(1-\gamma_\ep^2)|\partial_{\gamma_\ep}\Psi|^2-2|\Psi|^2\right)d\theta_\ep d\gamma_\ep,\quad \Psi\in\tilde{Y}_\ep,
 \end{align*}
 where $\ep\in[0,1)$. Note that $\langle\mathscr{A}_\ep\Psi,\Psi\rangle=\langle{A}_\ep\psi,\psi\rangle$ for   $\psi \in \tilde{X}_\ep$ and $\Psi \in \tilde{Y}_\ep$ such that $\psi(x,y) = \Psi(\theta_\ep, \gamma_\ep)$, where $\ep\in[0,1)$.
 By Corollary \ref{kernel of  the operator A0 and a decomposition of tilde X0}, $\ker(\mathscr{A}_0)=\text{span}\{\gamma_0,\sqrt{1-\gamma_0^2}\cos(x),$ $\sqrt{1-\gamma_0^2}\sin(x)\}$, and $\langle\mathscr{A}_0\Psi,\Psi\rangle\geq C_0\|\Psi\|_{\tilde Y_0}$ for $\Psi\in \tilde Y_{0+}$, where $\tilde Y_{0+}=\tilde Y_0 \ominus\ker (\mathscr{A}_0)$. Thus, we  have $\ker(\mathscr{A}_\ep)=\text{span}\{\gamma_\ep,\sqrt{1-\gamma_\ep^2}\cos(\theta_\ep),$ $\sqrt{1-\gamma_\ep^2}\sin(\theta_\ep)\}$, and $\langle\mathscr{A}_\ep\Psi,\Psi\rangle\geq C_0\|\Psi\|_{\tilde Y_\ep}$ for $\Psi\in \tilde Y_{\ep+}$, where $\tilde Y_{\ep+}=\tilde Y_{\ep} \ominus\ker (\mathscr{A}_{\ep})$ and $\ep\in(0,1)$. This proves (1)-(2).
\end{proof}

\if0
\begin{lemma}\label{eigen-ep}
The eigenvalue problem \eqref{elipg0}
is equivalent to
\begin{align}\label{eigen_proj}
 - \frac{\partial}{\partial \gamma_\ep} \left( (1-\gamma_\ep^2) \frac{\partial \Psi}{ \partial \gamma_\ep}  \right) -\frac{1}{1-\gamma_\ep^2}\frac{\partial ^2 \Psi}{\partial \theta_\ep ^2}= 2 \lambda (\Psi - \tilde{P}_\ep \Psi), \quad  \Psi \in \tilde{Y}_\ep,
\end{align}
where
$$\tilde{P}_\ep \Psi = \frac{1}{4\pi}\int_{-1}^1 \int_{0}^{2\pi} \Psi(\theta_\ep, \gamma_\ep) d \theta_\ep d \gamma_\ep,$$
and $$\tilde{Y}_\ep = \left\{ \Psi | \int_{-1}^1\int_{0}^{2\pi}{1\over1-\gamma_\ep^2}|\Psi_{\theta_\ep}|^2+(1-\gamma_\ep^2)|\Psi_{\gamma_\ep}|^2d \theta_\ep d\gamma_\ep< \infty \text{ and } \Psi^0( 0)= 0 \right\}.$$
All the eigenvalues of this system are $\{\lambda_n = \frac{n(n+1)}{2}\}_{n=1}^\infty$. Each $\lambda_n$ is associated with exactly $2n + 1$ corresponding eigenfunctions
\begin{align}\label{eigenfuncg0}
L_{n}(\gamma_\ep)- L_{n}(0), \quad L_{n,k}(\gamma_\ep) \cos(k\theta_\ep), \quad L_{n,k}(\gamma_\ep)\sin(k\theta_\ep), \quad  1 \leq k\leq n,
\end{align}
where $L_{n,k}(\gamma_\ep) = (1-\gamma_\ep^2)^{k/2}L^{(k)}_n(\gamma_\ep)$ and $L^{(k)}_n(\gamma_\ep)$ is the $k$th derivative of the Legendre Polynomial $L_{n}(\gamma_\ep)$ for $n \geq 1 $.
\end{lemma}
\begin{proof}
Using equation \eqref{laplacian}, we have that the eigenvalue problem
$$\Delta \psi = \lambda g'(\psi_\ep)(I-P_\ep)\psi, \quad \psi \in \tilde{X}_\ep$$ in the $(x,y)$ coordinate is equivalent to equation \eqref{eigen_proj} in the $(\theta_\ep, \gamma_\ep)$ coordinate.
By expanding $\Psi(\theta_\ep, \gamma_\ep)$ in the following Fourier series form
$$\Psi(\theta_\ep, \gamma_\ep) = \Psi^0(\gamma_\ep) + \sum_{k\neq 0}e^{ik\theta_\ep} \Psi^k(\gamma_\ep),$$
and plugging it into \eqref{eigen_proj}, we can obtain equations parallel to equations \eqref{eigenvalue problem for 0 mode} and \eqref{modekk}, which give us the eigenvalues $\lambda_n$ and eigenfunctions in \eqref{eigenfuncg0}.
To prove that $\{\lambda_n\}_{n=1}^\infty$ are all the eigenvalues and that each $\lambda_n$ is associated with exactly $2n+1$ eigenfunctions, the steps are the same as the arguments in Lemma \ref{associate_ep0}.
\end{proof}
\fi
\begin{remark}\label{X-ep-X0-A-ep-neg}
In the definition of $\tilde X_\epsilon$, if we replace the condition $\widehat\Psi_0(0)=0$ by $\widehat \psi_0(0)=0$  as in $\tilde X_0$ for $\ep\in(0,1)$, then $n^-(A_\ep)\geq1$. In fact,
$\partial_\ep\psi_\ep\not\in \tilde X_\ep$ since
\begin{align*}
(\widehat{\partial_\ep\psi_\ep})_{\,0}(0)={1\over 2\pi}\int_0^{2\pi}\partial_\ep\psi_\ep(x,0)dx={1\over 2\pi}\int_0^{2\pi}\left({\epsilon\over 1-\epsilon^2}+{\cos(x)\over 1+\ep\cos(x)}\right)dx={1\over \ep-\ep^3}\neq0
\end{align*}
for $\ep\in(0,1)$. This implies that $\partial_\ep\psi_\ep-c_\ep\in\tilde X_\ep$ for $c_\ep={1\over \ep-\ep^3}$. Then
\begin{align*}
&\langle A_\ep(\partial_\ep\psi_\ep-c_\ep),\partial_\ep\psi_\ep-c_\ep\rangle=\langle(-\Delta -g'(\psi_\ep))(\partial_\ep\psi_\ep-c_\ep),\partial_\ep\psi_\ep-c_\ep\rangle\\
=&\langle g'(\psi_\ep)c_\ep,\partial_\ep\psi_\ep-c_\ep\rangle=-c_\ep^2\iint_{\Omega}g'(\psi_\ep)dxdy<0,
\end{align*}
where we used $-\Delta\partial_\ep\psi_\ep=g'(\psi_\ep)\partial_\ep\psi_\ep$ and $\iint_{\Omega} g(\psi_\ep)dxdy=8\pi\Longrightarrow\iint_{\Omega} g'(\psi_\ep)\partial_\ep \psi_\ep dxdy=0$. Thus, $n^-(A_\ep)\geq1$.
\end{remark}
\subsection{The proof of linear stability of Kelvin--Stuart vortices}
Based on our solutions to  the eigenvalue problems \eqref{elip02} and \eqref{eigenvalue problem-ep-original}, we  prove linear stability of the hyperbolic tangent shear flow and the Kelvin--Stuart vortices for  co-periodic perturbations. The approach is to apply the following index formula for general linear Hamiltonian PDEs developed in \cite{lin2022instability}.

\begin{lemma}
\label{theorem-index}
Consider a linear Hamiltonian system
$$\partial_t \omega = JL\omega, \quad \omega \in X,$$
where $X$ is a real Hilbert space.
Assume that

{\rm \textbf{(H1)}} $J:X^{\ast} \supset D(J)  \rightarrow X$ is anti-self-dual.

{\rm \textbf{(H2)}} $L:X\rightarrow X^{\ast}$ is bounded and self-dual. Moreover, there exists a decomposition of $X$ into the direct sum of three closed subspaces
$$X=X_{-}\oplus\ker L\oplus X_{+}, \quad n^-(L)= \dim X_- < \infty$$ satisfying

$\quad$ {\rm \textbf{(H2.a)}}
 $\left\langle L\omega ,\omega \right\rangle <0$ for all $\omega \in X_- \backslash \{0\}$;

$\quad$ {\rm \textbf{(H2.b)}} there exists $\delta >0$ such that
\[
\left\langle L\omega ,\omega \right\rangle \geq\delta \left\Vert \omega \right\Vert _{X}
^{2},\quad\forall \; \omega \in X_{+}.
\]

{\rm \textbf{(H3)}} $\dim\ker
L<\infty$.\\
Then
\begin{align}\label{index-formula}
k_r + 2k_c+2k_i^{\leq0}+k_0^{\leq0} = n^-(L),
\end{align}
where $k_r$ is the sum of algebraic multiplicities of positive eigenvalues of $JL$, $k_c$ is the sum of algebraic multiplicities of eigenvalues of $JL$ in the first quadrant, $k_i^{\leq 0}$ is the total number of non-positive dimensions of $\langle L\cdot, \cdot \rangle$ restricted to the generalized eigenspaces of pure imaginary eigenvalues of $JL$ with positive imaginary parts, and $k_0^{\leq 0}$ is the number of non-positive directions of $\langle L\cdot, \cdot \rangle$ restricted to the generalized kernel of $JL$ modulo $\ker L$.
\end{lemma}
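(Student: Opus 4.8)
This is the Index Theorem for linear Hamiltonian PDEs of \cite{lin2022instability}, and the plan is to recall the skeleton of that argument, highlighting which ingredients carry the weight. The first and deepest step is to pass from the abstract hypotheses \textbf{(H1)}--\textbf{(H3)} to a dynamical picture: under these assumptions $JL$ generates a $C_0$ group $e^{tJL}$ on $X$, and $X$ admits an $e^{tJL}$-invariant decomposition $X = E^u \oplus E^c \oplus E^s$, where $E^u$ (resp. $E^s$) is the span of the generalized eigenvectors of $JL$ attached to the spectrum in the open right (resp. left) half-plane, both finite-dimensional, while $e^{tJL}$ has subexponential growth on $E^c$ as $|t| \to \infty$. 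The finiteness of $n^-(L)$ and of $\dim \ker L$, together with the spectral gap in \textbf{(H2.b)}, are exactly what make this construction possible even though $J$ is unbounded and merely anti-self-dual.

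The second step is the energy (Krein signature) bookkeeping. Since $J^\ast = -J$ and $L^\ast = L$, the quadratic form $\langle L\cdot,\cdot\rangle$ is conserved by the flow, $\frac{d}{dt}\langle L e^{tJL}\omega, e^{tJL}\omega\rangle = 0$. Evaluating this for $\omega \in E^u$ and letting $t \to -\infty$, where $e^{tJL}\omega \to 0$ and $L$ is bounded, forces $\langle L\omega,\omega\rangle = 0$; hence $E^u$, and by the same argument $E^s$, is isotropic for $L$. One then checks that $L$ pairs $E^u$ with $E^s$ nondegenerately and that $E^c$ is $L$-orthogonal to $E^u \oplus E^s$, so $n^-(L) = n^-\big(L|_{E^u \oplus E^s}\big) + n^-\big(L|_{E^c}\big)$, and the isotropy of the two half-dimensional summands forces $n^-\big(L|_{E^u \oplus E^s}\big) = \dim E^u = k_r + 2k_c$, the factor two reflecting that non-real eigenvalues in the open right half-plane occur in conjugate pairs.

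The third step analyzes $L$ on the center space. One decomposes $E^c$ into the generalized kernel $E_0$ of $JL$ and the generalized eigenspaces of the nonzero purely imaginary eigenvalues; these blocks are mutually $L$-orthogonal, so $n^-\big(L|_{E^c}\big)$ is the sum of the corresponding indices. The contribution of $E_0$, after one quotients out $\ker L$ (on which $L$ vanishes), is the term $k_0^{\leq0}$; the total contribution of the purely imaginary eigenvalues is $2k_i^{\leq0}$, where for each conjugate pair $\pm i\beta$ one passes from the complex generalized eigenspace $E_{i\beta}$ to the real invariant subspace $E_{i\beta}\oplus E_{-i\beta}$ (hence the factor two), and the non-positive rather than merely negative count is forced because $\langle L\cdot,\cdot\rangle$ may degenerate on such eigenspaces. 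Adding the three contributions yields $k_r + 2k_c + 2k_i^{\leq0} + k_0^{\leq0} = n^-(L)$.

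The main obstacle lies entirely in the first step: building the invariant exponential trichotomy for an unbounded Hamiltonian generator and, in particular, controlling its purely imaginary spectrum, which in the PDE setting may be embedded in continuous spectrum and on which the energy form can fail to be definite. Once that decomposition and its $L$-orthogonality properties are in hand, the identity reduces to finite-dimensional linear algebra on $E^u \oplus E^s$ and on the center blocks. In the present paper we will apply this identity, together with its refinements for separable Hamiltonian systems, in the co-periodic, multi-periodic and modulational settings.
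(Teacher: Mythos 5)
The paper offers no proof of this lemma at all: it is quoted verbatim from \cite{lin2022instability}, and the statement is used as a black box (only the hypotheses \textbf{(H1)}--\textbf{(H3)} are verified in the applications). Your outline is a faithful summary of the strategy of that reference — exponential trichotomy $X=E^u\oplus E^c\oplus E^s$, conservation and hence isotropy of $\langle L\cdot,\cdot\rangle$ on $E^{u,s}$ giving $n^-\bigl(L|_{E^u\oplus E^s}\bigr)=\dim E^u=k_r+2k_c$, and the Krein-signature count on the center space producing $2k_i^{\leq0}+k_0^{\leq0}$ — so there is no conflict with the paper; but be aware that, as written, it defers exactly the hard parts (construction of the invariant trichotomy for the unbounded generator, the additivity of the negative index across a possibly infinite-dimensional, possibly degenerate center block, and the precise justification of the non-positive rather than negative counts $k_i^{\leq0}$, $k_0^{\leq0}$) to the cited memoir, which is the same level of rigor the paper itself adopts.
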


Now  we are in a position to prove  Theorem \ref{main result1-co-periodic perturbations}.
 \begin{proof}[Proof of Theorem \ref{main result1-co-periodic perturbations}]
We check \textbf{(H1-3)} in Lemma \ref{theorem-index} and then  apply the index formula \eqref{index-formula-stuart}  to prove  spectral stability of $\omega_\ep$, $0\leq \ep<1$. Recall that $J_\epsilon$, $L_\epsilon$ and $X_\ep$ are defined in \eqref{J-ep-J-ep-def}-\eqref{spaceXep}.
First, we define the space $\hat L^2(\Omega)=\{\omega\in L^2(\Omega)|\iint_\Omega \sqrt{g'(\psi_\ep)}\omega dxdy=0\}$ and the isometry
$$S: L^2(\Omega) \rightarrow X_\ep, \quad S\omega = \sqrt{g'(\psi_\ep)}\omega.$$
 Since $g'(\psi_\ep)\cdot$ and $\vec{u}_\ep \cdot \nabla$ are commutative, and $\nabla \cdot \vec{u}_\ep = 0$,
\begin{align}\label{tilde-J-ep}\tilde{J_\ep} := S^{-1} J_\ep (S')^{-1} = -\vec{u}_\ep \cdot \nabla:(\hat L^2(\Omega))^*\supset D(\tilde{J_\ep}) \rightarrow \hat L^2(\Omega)\end{align}
is anti-self-dual, where
$$D(\tilde{J_\ep}) =  \left\{ \omega \in  (\hat L^2(\Omega))^* | (\vec{u}_\ep \cdot \nabla) \omega \in \hat L^2(\Omega) \text{ in the distribution sense} \right\}.$$
Then $J_\ep' = -J_\ep$, and thus, \textbf{(H1)} is satisfied.
By Lemmas  \ref{Lbounded} and \ref{Lbounded-ep},
 the  operator $L_\ep:X_\ep\to X_\ep^*$ is self-dual and bounded for $0\leq \ep<1$.

It follows from Corollaries  \ref{kernel of  the operator tilde A0 and a decomposition of tilde X0} and  \ref{kernel of  the operator tilde A-ep and a decomposition of tilde Xep} that
$$n^-(\tilde{A}_\ep) = 0, \quad \dim \ker (\tilde{A}_\ep) = 3 \quad\text{ for all } \epsilon \in [0,1),$$
and $\tilde X_\ep$ can be decomposed as  $\tilde X_\ep=\ker (\tilde A_\ep) \oplus\tilde X_{\ep+}$ such that
\begin{align}\label{tilde-A-ep-psi-psi-Xep+}
\langle \tilde A_\ep \psi,\psi\rangle \geq {2\over3} \| \psi\|_{\tilde X_\ep}^2, \quad \quad \psi\in \tilde X_{\ep+}.
\end{align}
Then Lemmas \ref{equal-indices0} and \ref{equal-indices} tell us
$$n^-(L_\ep) = n^-(\tilde{A}_\ep) = 0, \quad \dim \ker (L_\ep) = \dim \ker (\tilde{A}_\ep) = 3 \quad\text{ for all } \epsilon \in [0,1).$$
Thus,  \textbf{(H2.a)} and \textbf{(H3)} are satisfied.
Since $\ker (\tilde{A}_\ep)={\rm{span}}\left\{\eta_\ep(x,y), \gamma_\ep(x,y), \xi_\ep(x,y)\right\}$  for all  $\epsilon \in [0,1)$, the kernel of $L_\ep$ is given explicitly by
\begin{align}\label{ker-L-ep}
\ker (L_\ep)={\rm{span}}\left\{g'(\psi_\ep)\eta_\ep(x,y), g'(\psi_\ep)\gamma_\ep(x,y), g'(\psi_\ep)\xi_\ep(x,y)\right\}.
\end{align}
Noting that $n^-(L_\ep)=0$, we  decompose $X_\ep$ into
$$X_\ep = \ker L_\ep \oplus X_{\ep+}.$$
  To verify \textbf{(H2.b)}, let us first note that for any $\omega \in X_{\ep+}$, we have $\psi=(-\Delta)^{-1}\omega\in \tilde X_{\ep+}$. In fact, it follows from \eqref{ker-L-ep} that $\tilde \omega:=g'(\psi_\ep)\tilde \psi\in\ker (L_\ep)$ for any $\tilde\psi\in \ker(\tilde A_\ep)$, and thus,   $(\psi,\tilde \psi)_{\tilde X_\ep}=\iint_{\Omega}-\Delta\psi\tilde \psi dxdy=\iint_{\Omega}{\omega\tilde \omega\over g'(\psi_\ep)} dxdy=(\omega,\tilde \omega)_{X_\ep}=0$.
By a similar argument to
\eqref{L0omega-omega}, we infer from \eqref{tilde-A-ep-psi-psi-Xep+} that
$$\langle L_\ep \omega, \omega \rangle \geq \langle \tilde{A}_\ep \psi, \psi \rangle\geq {2\over3} \|\nabla \psi\|_{L^2(\Omega)}^2, \quad \omega\in X_{\ep+}.$$
So, we have
\begin{align}\nonumber
\langle L_\ep \omega, \omega \rangle & = \kappa \iint_\Omega\left( \frac{\omega^2}{g'(\psi_\ep)} - |\nabla \psi|^2 \right)dxdy + (1-\kappa)\langle L_\ep \omega, \omega \rangle \\\nonumber
& \geq \kappa \iint_\Omega\left( \frac{\omega^2}{g'(\psi_\ep)} - |\nabla \psi|^2 \right)dxdy + {2\over3}(1-\kappa)  \|\nabla \psi\|_{L^2(\Omega)}^2\\\label{positive-decom}
& \geq \kappa\iint_\Omega \frac{\omega^2}{g'(\psi_\ep)} dxdy=\kappa\|\omega\|_{X_\ep}^2, \quad\forall \; \omega \in X_{\ep+}
\end{align}
by choosing $\kappa > 0$ such that ${2\over3}(1-\kappa) > \kappa$. This verifies \textbf{(H2.b)}. Now by the index formula \eqref{index-formula-stuart}, we have
  $$k_{r,\ep} + 2k_{c,\ep}+2k_{i,\ep}^{\leq0}+k_{0,\ep}^{\leq0}  = n^-(L_\epsilon) = 0.$$
In particular, $$k_{r,\ep} = 2k_{c,\ep}= 0,$$
which implies that there exist no exponential unstable solutions to the linearized vorticity equation \eqref{hami}. Therefore, the steady solution $\omega_\ep$ is spectrally stable.
\end{proof}

\section{Linear instability for  multi-periodic perturbations}\label{multi-periodic-linear}
In this section, we prove the linear instability of Kelvin--Stuart cat's-eye flows for $2m\pi$-periodic perturbations with $m\geq2$.
\subsection{Parity decomposition in the $y$ direction and separable Hamiltonian structure}

Let $\Omega_m = \mathbb{T}_{2m\pi} \times \mathbb{R}$ for $m\geq2$.
As in  \eqref{hami} for co-periodic perturbations, the linearized equation around the Kelvin--Stuart vortex $\omega_\ep$ can be written as
the  Hamiltonian system
 \begin{equation}\label{hami-m}
 \partial_t \omega = J_{\epsilon,m} L_{\epsilon,m}\omega, \quad \omega \in X_{\ep,m},
 \end{equation}
 where
 \begin{align*}
 J_{\ep,m} = -g'(\psi_\epsilon)\vec{u}_\epsilon\cdot\nabla: X_{\ep,m}^* \supset D(J_{\epsilon,m}) \rightarrow X_{\ep,m}, \quad
 L_{\epsilon,m} = \frac {1} {g'(\psi_\ep)} - (-\Delta)^{-1}: X_{\ep,m} \rightarrow X_{\ep,m}^*,\end{align*}
 and
\begin{align*}
X_{\ep,m} = \left\{\omega\bigg| \iint_{\Omega_m} \frac{|\omega|^2}{g'_\epsilon(\psi_\epsilon)} dxdy < \infty, \iint_{\Omega_m} \omega dxdy = 0 \right\},\quad \epsilon\in[0,1).
\end{align*}
To understand the linear stability/instability of the Kelvin--Stuart vortices for multi-periodic perturbations, we first try to compute the index $n^-(L_{\ep,m})$ as in the co-periodic case. Unlike the co-periodic case, $n^-(L_{\ep,m})>0$ in the multi-periodic case. Thus, if
 $$
k_{r,\ep,m} + 2k_{c,\ep,m}+2k_{i,\ep,m}^{\leq0}+k_{0,\ep,m}^{\leq0} = n^-(L_{\ep,m})
$$
  as \eqref{index-formula-stuart} in the co-periodic case,
we have to compute the two indices
$k_{i,\ep,m}^{\leq0}$ and $k_{0,\ep,m}^{\leq0}$ for $J_{\epsilon,m} L_{\epsilon,m}$, which involves a tough and tedious  study  on the pure imaginary eigenvalues of $J_{\epsilon,m} L_{\epsilon,m}$. Here, $k_{r,\ep,m}, k_{c,\ep,m}, k_{i,\ep,m}^{\leq0}, k_{0,\ep,m}^{\leq0}$ are the indices defined similarly as in \eqref{index-formula-stuart}.
To avoid such a difficult part, we  observe that $g'(\psi_\epsilon)\vec{u}_\epsilon\cdot\nabla$ is odd in $y$ and  $ g'(\psi_\epsilon)$ is even in $y$, which implies that $L_{\epsilon,m}$ maps  odd (even) functions in $y$ to  odd (even) functions in $y$, while  $J_{\epsilon,m}$ maps odd (even) functions in $y$ to even (odd) functions in $y$. Based on this observation, we find that the linearized equation \eqref{hami-m} has indeed a separable  Hamiltonian structure. To make it clear, we give some preliminaries. Define two space
\begin{align*}
X_{\ep, e} = \left\{ \omega \in X_{\ep,m} | \omega \text{ is even in }y \right\}\quad\text{and}\quad
X_{\ep, o} = \left\{ \omega \in X_{\ep,m} | \omega \text{ is odd in }y \right\}.\end{align*}
Then $X_{\ep,m}, X_{\ep, e}$ and  $X_{\ep, o}$ are  Hilbert spaces with the $\frac{1}{g'(\psi_\ep)}$-weighted $L^2$ inner product on $\Omega_m$, since they are closed subspaces of $L_{\frac{1}{g'(\psi_\ep)}}^2(\Omega_m)$. Without loss of generality, we denote the dual space of  $X_{\ep, o}$ (resp. $X_{\ep, e}$) restricted to the class of odd (resp. even) functions by $X_{\ep, o}^*$ (resp. $X_{\ep, e}^*$).
Based on above properties on $L_{\epsilon,m}$ and  $J_{\epsilon,m}$, we can define
\begin{align*}B_\ep &= -g'(\psi_\ep) \vec{u}_\ep \cdot \nabla : X_{\ep, o}^* \supset D(B_\ep) \rightarrow X_{\ep, e}, \\
 L_{\ep,o} &= \frac{1}{g'(\psi_\ep)} - (-\Delta)^{-1}: X_{\ep, o} \rightarrow X_{\ep, o}^* \quad\text{and}\quad
 L_{\ep,e} = \frac{1}{g'(\psi_\ep)} - (-\Delta)^{-1}: X_{\ep, e} \rightarrow X_{\ep, e}^*.
 \end{align*}
Here, $(-\Delta)^{-1}\omega$ is the unique  weak solution in $\tilde X_{\ep,o}$ or $\tilde X_{\ep,e}$ of $-\Delta\psi=\omega$ for $\omega\in X_{\ep, o} \text{ or }X_{\ep, e}$, see Lemma \ref{1-1correspond-Lbounded-m} (1).
Then the dual operator of $B_\ep$ is
$$B'_\ep = g'(\psi_\ep) \vec{u}_\ep \cdot \nabla : X_{\ep, e}^* \supset D(B'_\ep) \rightarrow X_{\ep, o}.$$
We decompose $\omega \in X_{\ep,m}$ as $\omega = \left( \begin{array}{c} \omega_1 \\ \omega_2 \end{array} \right) $ such that $\omega_1 \in X_{\ep, e}$ and $\omega_2 \in X_{\ep, o}$. Then the linearized equation \eqref{hami-m} can be written as  the following separable Hamiltonian system
\begin{align}\label{sep-hamiltonian}
\partial_t \left( \begin{array}{c} \omega_1 \\ \omega_2 \end{array} \right) = \left( \begin{array}{cc} 0 & B_\ep \\ -B'_\ep & 0 \end{array} \right)\left( \begin{array}{cc} L_{\ep,e} & 0 \\ 0 & L_{\ep,o} \end{array} \right) \left( \begin{array}{c} \omega_1 \\ \omega_2 \end{array} \right),
\end{align}
or
$$\partial_t \omega = \mathbf{J}_{\ep,m} \mathbf{L}_{\ep,m}  \omega,$$
where $\omega \in \mathbf{X}_{\ep,m} = X_{\ep, e} \times X_{\ep, o}$ and
\begin{align*}\mathbf{J}_{\ep,m} = \left( \begin{array}{cc} 0 & B_\ep \\ -B'_\ep & 0 \end{array} \right): \mathbf{X}_{\ep,m}^* \supset D(\mathbf{J}_{\ep,m}) \rightarrow \mathbf{X}_{\ep,m},\quad\mathbf{L}_{\ep,m} = \left( \begin{array}{cc} L_{\ep,e} & 0 \\ 0 & L_{\ep,o} \end{array} \right): \mathbf{X}_{\ep,m} \rightarrow \mathbf{X}_{\ep,m}^*.\end{align*}
One of the advantage of the separable Hamiltonian system is a  precise counting formula  of unstable modes, see the next lemma \cite{lin2020separable,lin2021linear}.

\begin{lemma}  \label{indice-theorem-sep}
Let $X$ and $Y$ be real Hilbert spaces. Consider a linear Hamiltonian system of the separable form
\begin{align}\label{sep-hamil}
\partial_t \left( \begin{array}{c} u \\ v \end{array} \right) = \left( \begin{array}{cc} 0 & B \\ -B' & 0 \end{array} \right)\left( \begin{array}{cc} L & 0 \\ 0 & A \end{array} \right) \left( \begin{array}{c} u \\ v \end{array} \right) = \mathbf{J}  \mathbf{L} \left( \begin{array}{c} u \\ v \end{array} \right),
\end{align}
where $u \in X$ and $v \in Y$.
Assume that
\begin{itemize}
\item[{\textbf{(G1)}}] The operator $B: Y^* \supset D(B) \rightarrow X$ and its dual operator $B': X^* \supset D(B') \rightarrow Y$ are densely defined and closed.
\item[{\textbf{(G2)}}] The operator $A: Y \rightarrow Y^*$ is bounded and self-dual. Moreover, there exist $\delta > 0$  and a closed subspace $Y_+ \subset Y$ such that
$$ Y = \ker A \oplus Y_+,  \quad \langle Au, u \rangle \geq \delta \|u\|_Y^2,\quad \forall\; u \in Y_+.$$
\item[{\textbf{(G3)}}] The operator $L: X \rightarrow X^*$ is bounded and self-dual, and there exists a decomposition of $X$ into the direct sum of three closed subspaces
$$X = X_- \oplus \ker L \oplus X_+, \quad \dim \ker L < \infty,  \quad n^-(L) = \dim X_- < \infty$$
satisfying
\begin{itemize}
\item[{\textbf{(G3.a)}}]
$\langle Lu, u \rangle < 0$ for all $u\in X_- \backslash \{0\}$;
\item[{\textbf{(G3.b)}}] there exists $\delta > 0$ such that
$$\langle Lu, u \rangle \geq \delta \|u\|_X^2, \quad\forall \; u \in X_+.$$
\end{itemize}
\item[{\textbf{(G4)}}] $\dim \ker L < \infty$ \text{and} $\dim \ker A < \infty$.
\end{itemize}
Then the operator $\mathbf{JL}$ generates a $C^0$ group $e^{t\mathbf{JL}}$ of bounded linear operators on $\mathbf{X} = X \times Y$ and there exists a decomposition
$$\mathbf{X} = E^u \oplus E^c \oplus E^s$$
of closed subspaces $E^{u,s,c}$ with the following properties:

{\rm(i)} $E^c, E^u$ and $ E^s$ are invariant under $e^{t\mathbf{JL}}$.

{\rm(ii)} $E^u (E^s)$ only consists of eigenvectors corresponding to positive (negative) eigenvalues of $\mathbf{JL}$ and
\begin{align}\label{index-formula-neg}
\dim E^u = \dim E^s = n^-\left(L|_{\overline{R(BA)}} \right),
\end{align}
where $n^-\left(L|_{\overline{R(BA)}} \right)$ denotes the number of negative modes of $\langle L\cdot, \cdot \rangle|_{{\overline{R(BA)}} }$. If $n^-\left(L|_{\overline{R(BA)}} \right) >0$, then there exists $M>0$ such that
\begin{align}\label{trichotomy}
|e^{t\mathbf{JL}}|_{E^s}| \leq Me^{-\lambda_ut},\quad t \geq 0; \quad |e^{t\mathbf{JL}}|_{E^u}| \leq Me^{\lambda_ut},\quad t \leq 0,
\end{align}
where $\lambda_u = \min \{ \lambda | \lambda \in \sigma(\mathbf{JL}_{E^u}) \} > 0.$

{\rm(iii)} The quadratic form $\langle \mathbf{L}\cdot, \cdot \rangle$ vanishes on $E^{u,s}$, i.e. $\langle \mathbf{L}\mathbf{u}, \mathbf{u} \rangle = 0$ for all $\mathbf{u} \in E^{u,s}$, but is non-degenerate on $E^u \oplus E^s$ and
$$E^c = \{ \mathbf{u} \in \mathbf{X} | \langle \mathbf{L}\mathbf{u}, \mathbf{v} \rangle = 0, \forall \;\mathbf{v} \in E^s \oplus E^u \}.$$
There exists $M > 0$ such that
\begin{align}\label{trichotomy2}|e^{t\mathbf{JL}}|_{E^c} | \leq M(1+|t|^3), \quad   t \in \mathbb{R}.\end{align}
\end{lemma}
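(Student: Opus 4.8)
The plan is to derive Lemma~\ref{indice-theorem-sep} by first placing the block system \eqref{sep-hamil} inside the general instability and exponential-trichotomy theory for linear Hamiltonian PDEs of \cite{lin2022instability} (of which Lemma~\ref{theorem-index} quotes only the index-formula part), and then sharpening the general count to \eqref{index-formula-neg} by exploiting the separable structure, following \cite{lin2020separable,lin2021linear}.

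\textbf{Step 1 (abstract structure).} I would first check that $\mathbf{J}$ and $\mathbf{L}$ fit the abstract framework: $\mathbf{J}$ is densely defined, closed and anti-self-dual since $B,B'$ are densely defined, closed and mutually dual by \textbf{(G1)}; $\mathbf{L}=\mathrm{diag}(L,A)$ is bounded and self-dual, with $n^-(\mathbf{L})=n^-(L)<\infty$ and $\ker\mathbf{L}=\ker L\times\ker A$ finite-dimensional by \textbf{(G3)}--\textbf{(G4)}, and the splitting $\mathbf{X}=(X_-\times\{0\})\oplus\ker\mathbf{L}\oplus(X_+\times Y_+)$ has $\langle\mathbf{L}\cdot,\cdot\rangle$ uniformly positive on the last summand (note $A\ge0$ as a form, which follows from \textbf{(G2)} using self-duality of $A$). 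The abstract theorem then yields everything except \eqref{index-formula-neg}: that $\mathbf{JL}$ generates a $C^0$ group $e^{t\mathbf{JL}}$ on $\mathbf{X}$; the invariant splitting $\mathbf{X}=E^u\oplus E^c\oplus E^s$; the vanishing and non-degeneracy of $\langle\mathbf{L}\cdot,\cdot\rangle$ as in (iii); the growth bounds \eqref{trichotomy}, \eqref{trichotomy2} (the exponent $3$ comes from the fact that Jordan chains of $\mathbf{JL}$ on $E^c$ have length at most $4$, a standard consequence of the Hamiltonian structure and the finite negative index); and the general identity $\dim E^u=\dim E^s=k_r+k_c$, where $k_r$ and $k_c$ count, with algebraic multiplicity, the positive real and the open-first-quadrant eigenvalues of $\mathbf{JL}$.

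\textbf{Step 2 (sharpening the count).} It remains to show $k_r+k_c=n^-\!\left(L|_{\overline{R(BA)}}\right)$. For $\mu\ne0$, $(u,v)$ is an eigenvector of $\mathbf{JL}$ at $\mu$ exactly when $v=-\mu^{-1}B'Lu$ and $(B'LBA)v=-\mu^2 v$, and more generally every generalized eigenvector at a nonzero eigenvalue has $u$-component in $R(BA)\subset\overline{R(BA)}$ and $v$-component in $R(B'L)$. Thus the hyperbolic and nonzero-imaginary dynamics is governed by the ``squared'' operator $\mathcal{L}_2:=B'LBA$. The key observation is that $\mathcal{L}_2$, transported to the Hilbert space $Y_A$ obtained by completing $Y/\ker A$ in the norm $\langle A\cdot,\cdot\rangle^{1/2}$, becomes (essentially) self-adjoint, because self-duality of $L$ and $A$ and the duality of $B$, $B'$ give $\langle A\mathcal{L}_2 v,w\rangle=\langle LBAv,BAw\rangle=\langle Av,\mathcal{L}_2 w\rangle$. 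Hence the spectrum of $\mathcal{L}_2$ is real and semisimple, so $\mathbf{JL}$ has no open-first-quadrant eigenvalue ($k_c=0$), all its nonzero eigenvalues are semisimple, and each negative eigenvalue $-\mu^2$ of $\mathcal{L}_2$ (with multiplicity) produces precisely one eigenvalue $+\mu$ and one $-\mu$ of $\mathbf{JL}$. Therefore $k_r$ equals the number of negative eigenvalues of $\mathcal{L}_2$ on $Y_A$, i.e.\ the negative index of the form $v\mapsto\langle A\mathcal{L}_2 v,v\rangle=\langle L(BAv),BAv\rangle$. A Sylvester-type inertia argument identifies this with $n^-\!\left(L|_{\overline{R(BA)}}\right)$: $BA$ maps a dense subset of $Y_A$ onto a dense subset of $\overline{R(BA)}$ and is injective on any subspace on which this form is negative definite, so maximal negative subspaces correspond bijectively and in dimension. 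This gives \eqref{index-formula-neg}, and then $\lambda_u=\min\sigma(\mathbf{JL}|_{E^u})>0$ because $\sigma(\mathbf{JL}|_{E^u})$ is a finite set of positive reals when $n^-\!\left(L|_{\overline{R(BA)}}\right)>0$.

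\textbf{Main obstacle.} The hard part is the functional-analytic bookkeeping around the unbounded, densely defined operators $B$, $B'$ and the possibly degenerate $A$: justifying the compositions $B'LBA$ and $BAB'L$ on suitable dense domains, making rigorous the transport of $\mathcal{L}_2$ to $Y_A$ and its essential self-adjointness when $BA^{1/2}$ need not be bounded, controlling the generalized kernel of $\mathbf{JL}$ and its interplay with $\ker A$ and $\ker L$ so that $E^c$ and the bound \eqref{trichotomy2} are correctly identified, and carrying out the inertia comparison in an infinite-dimensional setting where ``$n^-$'' must be read as the dimension of a maximal negative subspace rather than an eigenvalue count. These are precisely the technical ingredients developed in \cite{lin2020separable,lin2021linear}, which we invoke.
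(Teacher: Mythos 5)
The paper offers no proof of Lemma \ref{indice-theorem-sep}: it is stated as a quoted result from \cite{lin2020separable,lin2021linear}, so there is no internal argument to measure your proposal against, and your own write-up likewise delegates the decisive steps to those same references. With that understood, your outline does follow the route by which the result is established: verify that $\mathbf{J}$ is anti-self-dual and that $\mathbf{L}=\mathrm{diag}(L,A)$ satisfies the hypotheses of the general Hamiltonian index/trichotomy theory of \cite{lin2022instability}, then exploit the separable structure through the second-order reduction $v_{tt}=-B'LBAv$ and the symmetry of $B'LBA$ with respect to the degenerate inner product $\langle A\cdot,\cdot\rangle$ to conclude that the non-imaginary spectrum is real and semisimple and is counted by $n^-\left(L|_{\overline{R(BA)}}\right)$.

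Two points to fix or be honest about. First, in Step 1 the general theory gives $\dim E^u=\dim E^s=k_r+2k_c$, since eigenvalues in both the first and the fourth quadrants have positive real part; your $k_r+k_c$ is a slip, rendered harmless only after $k_c=0$ has actually been proved. Second, everything you list under the ``main obstacle'' --- upgrading the $A$-symmetry of $B'LBA$ to a genuine self-adjoint realization on $Y_A$ when $B$ is unbounded and $A$ degenerate, semisimplicity of the negative spectrum, the infinite-dimensional inertia argument identifying the count with $n^-\left(L|_{\overline{R(BA)}}\right)$, and the control of the generalized kernel yielding the cubic bound \eqref{trichotomy2} --- is precisely the substance of the cited theorems rather than routine bookkeeping, and your text asserts these facts and then invokes \cite{lin2020separable,lin2021linear} for them. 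Since the paper itself treats the lemma as a citation, this is acceptable as a justification, but it should be presented as an outline plus citation, not as an independent proof.
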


Lemma \ref{indice-theorem-sep} reveals that under the assumptions $\textbf{(G1-4)}$, the solutions of \eqref{sep-hamil} is spectrally stable if and only if $ L|_{{\overline{R(BA)}} }\geq0$. Moreover,  the number of unstable modes is  $n^-\left(L|_{\overline{R(BA)}} \right)$. In addition, the exponential trichotomy estimates \eqref{trichotomy}-\eqref{trichotomy2} are useful in the  study of the nonlinear dynamics, including nonlinear instability and invariant manifolds, near an unstable steady state.

To prove linear instability of the Kelvin--Stuart vortices, we apply the index formula \eqref{index-formula-neg} to the Hamiltonian system \eqref{sep-hamiltonian} after verifying assumptions $\textbf{(G1-4)}$ in Lemma \ref{indice-theorem-sep}. It then suffices to show that  $n^-\left(L_{\ep,e}|_{\overline{R(B_\ep L_{\ep,o})}}\right)>0$. As we explain below, this reduces to the construction of suitable test functions for an associated elliptic operator.


First, we show that  the Hamiltonian system \eqref{sep-hamiltonian} satisfies {\textbf{(G1)}} in Lemma \ref{indice-theorem-sep}.
Since  $(C_0^\infty(\Omega_m)/\mathbb{R})\cap X_{\ep, o}^*\subset D( B_\ep)$ and $(C_0^\infty(\Omega_m)/\mathbb{R})\cap X_{\ep, e}^*\subset D( B_\ep')$, we know that both $B_\ep$ and $B_\ep'$ are densely defined. To prove that they are closed operators, we first prove that the operator $\hat{J}_{\ep,m} = -g'(\psi_\ep) \vec{u}_\ep \cdot \nabla : \hat{X}_{\ep,m}^* \supset D(\hat{J}_{\ep,m}) \rightarrow \hat{X}_{\ep,m}$ with $\hat{X}_{\ep,m} = L^2_{\frac{1}{g'(\psi_\ep)}}(\Omega_m)$ is closed. To show this, by a similar argument to \eqref{tilde-J-ep}, we know that
$\hat{J}_{\ep,m}$
is anti-self-dual, (i.e. $\hat{J}_{\ep,m}'= -\hat{J}_{\ep,m}$), and thus, $\hat{J}_{\ep,m}$ is closed. Since $B_\ep$ and $B'_\ep$ are restrictions of  $\hat{J}_{\ep,m}$  to two closed subspaces of $\hat{X}_{\ep,m}$, we infer that  both $B_\ep$ and $B'_\ep$ are also closed operators, which can be verified directly by  Proposition 1 in Chapter 5 of \cite{Weidmann80}.
\if0
\begin{lemma}\label{closed-operator}
Let $H_1$, $H_2$ be two Hilbert spaces, and $T: H_1 \supset D(T) \rightarrow H_2$ be a closed operator. Suppose $\tilde{H}_1$ is a closed subspace of $H_1$ and the operator $\tilde{T} : \tilde{H}_1 \supset D(\tilde{T}) \rightarrow H_2$ is the operator $T$ restricting to $\tilde{H}_1$, i.e., $\tilde{T}f = Tf$ for all $f \in D(\tilde{T})$. Then $\tilde{T}$ is also closed.
\end{lemma}
\begin{proof}
From the first Proposition in Chapter 5 of \cite{weidmann2012linear}, we know that $T$ is closed if and only if the following holds: if $\{f_n\}$ is a sequence in $D(T)$ that is convergent in $H_1$ and the sequence $\{Tf_n\}$ is convergent in $H_2$, then we have $\lim f_n \in D(T)$ and $T(\lim f_n) = \lim Tf_n$. To prove $\tilde{T}$ is closed, suppose $\{f_n\}$ is a sequence in $D(\tilde{T}) = D(T) \bigcap \tilde{H}_1$, $\{f_n\}$ is convergent in $\tilde{H}_1$,  and $\{\tilde{T}f_n\}$ is convergent in $H_2$. Since $T$ is closed and $\{f_n\}$ is a sequence in $D(T)$ and converges in $H_1$ with $\{Tf_n\}$ converges in $H_2$, we have $\lim f_n \in D(T) \bigcap \tilde{H}_1 = D(\tilde{T})$ and $\tilde{T}(\lim f_n) = T(\lim f_n) = \lim T f_n = \lim \tilde{T} f_n$, from which we have that $\tilde{T}$ is closed.
\end{proof}
\fi

To confirm that system \eqref{sep-hamiltonian} satisfies {\textbf{(G2-4)}} in Lemma \ref{indice-theorem-sep}, we transform the operators $L_{\ep,o} $
 and  $L_{\ep,e}$ of vorticity to elliptic operators of stream functions like we did for the co-periodic case.  To this end, we
use the new variables $(\theta_\ep,\gamma_\ep)$ for $(x,y)\in[0,2\pi]\times\mathbb{R}$, and add the definitions $\theta_\ep(x,y)$ and $\gamma_\ep(x,y)$ for $(x,y)\in(2\pi,2m\pi]\times\mathbb{R}$ by $2\pi$-periodic extensions in the $\theta_\ep$ direction.
First, we  give the spaces of stream functions.
Let
\begin{align}\label{space-tilde-Xep-m}
\tilde{X}_{\ep,m} = \left\{\psi \bigg| \iint_{\Omega_m} |\nabla \psi|^2 dxdy <  \infty \text{ and } \int_{0}^{2m\pi}\psi(x,0){1\over1+\epsilon\cos(x)} dx = 0 \right\},
\end{align}
where $\ep\in[0,1)$.
By \eqref{gradient-psi}-\eqref{widehat-Psi-to-psi-1+ep-cos}, in the new variables, $\tilde{X}_{\ep,m}$ is equivalent  to the following space
\begin{align}\label{def-Y-ep-m}
\tilde{Y}_{\ep,m} = \left\{ \Psi \bigg| \iint_{\tilde \Omega_{m}}\left({1\over1-\gamma_\ep^2}|\Psi_{\theta_\ep}|^2+(1-\gamma_\ep^2)|\Psi_{\gamma_\ep}|^2\right)d \theta_\ep d\gamma_\ep< \infty \text{ and } \widehat{\Psi}_0(0)=0 \right\},
\end{align}
 where
  $\tilde{\Omega}_{m} = \mathbb{T}_{2m\pi} \times [-1, 1]$.
Then we  define
$$\tilde{X}_{\ep, e} = \left\{ \psi \in \tilde{X}_{\ep,m} | \psi \text{ is even in }y \right\} \quad \text{and} \quad \tilde{X}_{\ep, o} = \left\{ \psi \in \tilde{X}_{\ep,m} | \psi \text{ is odd in }y \right\},$$
$$\tilde{Y}_{\ep, e} = \left\{ \Psi \in \tilde{Y}_{\ep,m} | \Psi \text{ is even in } \gamma_\ep  \right\} \quad \text{and} \quad \tilde{Y}_{\ep, o} = \left\{ \Psi \in \tilde{Y}_{\ep,m} | \Psi \text{ is odd in }\gamma_\ep \right\}.$$
Following the same steps in Lemmas \ref{Hilbert}, \ref{Hilbert-new variables-0} and \ref{hilbert-ep}, we can prove that $\tilde{X}_{\ep,m}$ is a Hilbert space under the inner product
$$(\psi_1, \psi_2)_{\tilde{X}_{\ep,m}} = \iint_{\Omega_m} \nabla \psi_1 \cdot \nabla \psi_2 dxdy, \quad \forall \;\psi_1, \psi_2 \in \tilde{X}_{\ep,m}.$$
Then $\tilde{X}_{\ep, e}$ and  $\tilde{X}_{\ep, o}$ are Hilbert spaces since they are closed subspaces of $\tilde{X}_{\ep,m}$.
 Correspondingly, $\tilde{Y}_{\ep,m}$ is also a Hilbert space under the inner product
 $$(\Psi_1, \Psi_2)_{\tilde{Y}_{\ep,m}} = \iint_{\tilde{\Omega}_{m}}  \left({1\over1-\gamma_\ep^2}(\Psi_1)_{\theta_\ep}(\Psi_2)_{\theta_\ep} +(1-\gamma_\ep^2)(\Psi_1)_{\gamma_\ep}(\Psi_2)_{\gamma_\ep}\right)d \theta_\ep d\gamma_\ep, \; \forall\; \Psi_1, \Psi_2 \in \tilde{Y}_{\ep,m},$$
and so are $\tilde{Y}_{\ep, e}$ and  $\tilde{Y}_{\ep, o}$.
Moreover,
\begin{align*}
(\psi_1, \psi_2)_{\tilde{X}_{\ep,m}}=(\Psi_1, \Psi_2)_{\tilde{Y}_{\ep,m}}
\end{align*}
for   $\psi_i \in \tilde{X}_{\ep,m}$ and $\Psi_i \in \tilde{Y}_{\ep,m}$ such that $\psi_i(x,y) = \Psi_i(\theta_\ep, \gamma_\ep)$, $i=1,2$.
Then we give the Poincar\'e inequality I for $\ep\in[0,1)$:
\begin{align}\label{Poincare inequality I-m}
\iint_{\Omega_m} g'(\psi_\epsilon)|\psi|^2 dxdy  \leq C \|\nabla \psi\|_{L^2(\Omega_m)}^2,\quad \psi \in \tilde{X}_{\ep,m},
\end{align}
and correspondingly, in the new variables,
\begin{align}\label{Poincare inequality I-m-new}
 \|\Psi\|_{L^2(\tilde \Omega_{m})}^2  \leq C \iint_{\tilde \Omega_m}\left({1\over1-\gamma_\ep^2}|\Psi_{\theta_\ep}|^2+(1-\gamma_\ep^2)|\Psi_{\gamma_\ep}|^2\right)d \theta_\ep d\gamma_\ep,\quad \Psi \in \tilde{Y}_{\ep,m}.
\end{align}
The proof of \eqref{Poincare inequality I-m}-\eqref{Poincare inequality I-m-new} is similar to Lemmas \ref{poincare1} and \ref{Poincare ineqalities-new-variable0} (1) for $\ep=0$, and similar to Lemma \ref{poincare1ep} for $\ep\in(0,1)$.
Let the projection be defined by
\begin{align}\label{def-projection-multi periodic}
P_{\ep,m} \psi = \frac{\iint_{\Omega_m}g'(\psi_\ep) \psi dxdy}{\iint_{\Omega_m}g'(\psi_\ep) dxdy} = \frac{1}{8m\pi}\iint_{\Omega_m}g'(\psi_\ep) \psi dxdy,\quad\psi\in \tilde X_{\ep,m},
\end{align}
and in the new variables, the corresponding projection is
$$\tilde{P}_{\ep,m} \Psi = \frac{\iint_{\tilde{\Omega}_{m}} \Psi d\theta_\ep d\gep}{\iint_{\tilde{\Omega}_{m}}1 d\theta_\ep d\gep} = \frac{1}{4m\pi}\iint_{\tilde{\Omega}_{m}}\Psi d\theta_\ep d\gep,\quad\Psi\in\tilde Y_{\ep,m}.$$
By \eqref{Poincare inequality I-m}-\eqref{Poincare inequality I-m-new}, $P_{\ep,m}$ and $\tilde{P}_{\ep,m}$ are well-defined on $\tilde X_{\ep,m}$ and $\tilde Y_{\ep,m}$, respectively. Then we give the
Poincar\'e inequality II for $\ep\in[0,1)$:
\begin{align}\label{Poincare inequality II-m}
\iint_{\Omega_m} g'(\psi_\epsilon)(\psi - P_{\epsilon,m}\psi)^2 dxdy  \leq C \|\nabla \psi\|_{L^2(\Omega_m)}^2, \quad \psi \in \tilde{X}_{\ep,m},
\end{align}
and correspondingly, in the new variables,
\begin{align}\nonumber
&\iint_{\tilde \Omega_{m}} (\Psi -\tilde P_{\epsilon,m}\Psi)^2 d \theta_\ep d\gamma_\ep  \\\label{Poincare inequality II-m-new}
\leq& C \iint_{\tilde \Omega_{m}}\left({1\over1-\gamma_\ep^2}|\Psi_{\theta_\ep}|^2+(1-\gamma_\ep^2)|\Psi_{\gamma_\ep}|^2\right)d \theta_\ep d\gamma_\ep,\;\Psi \in \tilde{Y}_{\ep,m}.
\end{align}
The proof of \eqref{Poincare inequality II-m}-\eqref{Poincare inequality II-m-new} is similar to Lemmas \ref{poincare2} and \ref{Poincare ineqalities-new-variable0} (3) for $\ep=0$, and similar to Lemma \ref{poincare2ep} for $\ep\in(0,1)$.
By the fact that $X_{\ep,o}$ (resp.  $X_{\ep,e}$) is a Hilbert space and the Poincar\'e inequality I \eqref{Poincare inequality I-m}, one can prove the following results by a similar argument to Lemmas \ref{1-1correspond} and \ref{Lbounded}.

\begin{lemma}\label{1-1correspond-Lbounded-m} Let $\ep\in[0,1)$.
$(1)$ For $\omega \in X_{\ep,o}$ (resp.  $X_{\ep,e}$), the Poisson equation
$-\Delta\psi=\omega$
has a unique weak solution in $\tilde{X}_{\ep,o}$ (resp.  $\tilde X_{\ep,e}$).

$(2)$
For  $\omega_1,\omega_2 \in X_{\ep,o}$, we have
$\langle L_{\ep,o} \omega_1, \omega_2 \rangle=\langle \omega_1, L_{\ep,o} \omega_2 \rangle \leq C\|\omega_1\|_{X_{\ep,o}}\|\omega_2\|_{X_{\ep,o}}.$

$(3)$
For  $\omega_1,\omega_2 \in X_{\ep,e}$, we have
$\langle L_{\ep,e} \omega_1, \omega_2 \rangle=\langle \omega_1, L_{\ep,e} \omega_2 \rangle \leq C\|\omega_1\|_{X_{\ep,e}}\|\omega_2\|_{X_{\ep,e}}.$
\end{lemma}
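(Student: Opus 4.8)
\textbf{Proof proposal for Lemma \ref{1-1correspond-Lbounded-m}.}

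The plan is to reduce everything to the co-periodic machinery already established in Section \ref{co-periodic-linear}, replacing the domain $\Omega$ by $\Omega_m$ and keeping track of the parity constraint. First I would handle part $(1)$. Fix $\omega\in X_{\ep,o}$ (the even case is identical). The target space $\tilde X_{\ep,o}$ is a closed subspace of the Hilbert space $\tilde X_{\ep,m}$, hence a Hilbert space under the inner product $(\psi_1,\psi_2)=\iint_{\Omega_m}\nabla\psi_1\cdot\nabla\psi_2\,dxdy$. By the Poincar\'e inequality I \eqref{Poincare inequality I-m}, for any test function $\tilde\psi\in\tilde X_{\ep,o}$,
\begin{align*}
\left|\iint_{\Omega_m}\omega\tilde\psi\,dxdy\right|\leq\left(\iint_{\Omega_m}\frac{|\omega|^2}{g'(\psi_\ep)}dxdy\right)^{1/2}\left(\iint_{\Omega_m}g'(\psi_\ep)|\tilde\psi|^2dxdy\right)^{1/2}\leq C\|\omega\|_{X_{\ep,o}}\|\tilde\psi\|_{\tilde X_{\ep,o}},
\end{align*}
so $\tilde\psi\mapsto\iint_{\Omega_m}\omega\tilde\psi\,dxdy$ is a bounded linear functional on $\tilde X_{\ep,o}$. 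By the Riesz Representation Theorem there is a unique $\psi\in\tilde X_{\ep,o}$ with $\iint_{\Omega_m}\nabla\psi\cdot\nabla\tilde\psi\,dxdy=\iint_{\Omega_m}\omega\tilde\psi\,dxdy$ for all $\tilde\psi\in\tilde X_{\ep,o}$, which is by definition the weak solution of $-\Delta\psi=\omega$ in $\tilde X_{\ep,o}$. The one subtlety here is the odd-symmetry restriction: I must check that the weak solution obtained by restricting the Riesz argument to the odd subspace is genuinely a weak solution of the Poisson equation and not merely a Galerkin projection. This follows because $\omega$ is odd in $y$ and $g'(\psi_\ep)$ is even in $y$, so the reflection $\psi(x,y)\mapsto-\psi(x,-y)$ maps solutions to solutions; testing against arbitrary (not-necessarily-odd) $\tilde\psi\in\tilde X_{\ep,m}$ and splitting $\tilde\psi$ into its even and odd parts, the even part pairs to zero against the odd $\omega$ and the odd part is covered by the Riesz identity. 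This gives the full weak formulation; uniqueness is immediate from the positive-definiteness of the $\tilde X_{\ep,m}$ inner product on $\tilde X_{\ep,o}$.

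For parts $(2)$ and $(3)$, having $(-\Delta)^{-1}$ well-defined on $X_{\ep,o}$ and $X_{\ep,e}$, I would repeat verbatim the argument of Lemma \ref{Lbounded} (or Lemma \ref{Lbounded-ep}) on the domain $\Omega_m$. Writing $\psi_i=(-\Delta)^{-1}\omega_i$, the identity $\iint_{\Omega_m}\nabla\psi_i\cdot\nabla\psi_j\,dxdy=\iint_{\Omega_m}\omega_i\psi_j\,dxdy$ and the estimate $\|\psi_i\|_{\tilde X_{\ep,o}}\leq C\|\omega_i\|_{X_{\ep,o}}$ (from the same Cauchy--Schwarz plus Poincar\'e I computation as above) yield
\begin{align*}
\langle L_{\ep,o}\omega_1,\omega_2\rangle=\iint_{\Omega_m}\left(\frac{\omega_1\omega_2}{g'(\psi_\ep)}-\nabla\psi_1\cdot\nabla\psi_2\right)dxdy=\langle\omega_1,L_{\ep,o}\omega_2\rangle,
\end{align*}
and $|\langle L_{\ep,o}\omega_1,\omega_2\rangle|\leq\|\omega_1\|_{X_{\ep,o}}\|\omega_2\|_{X_{\ep,o}}+\|\psi_1\|_{\tilde X_{\ep,o}}\|\psi_2\|_{\tilde X_{\ep,o}}\leq C\|\omega_1\|_{X_{\ep,o}}\|\omega_2\|_{X_{\ep,o}}$; symmetry follows from the symmetry of both terms. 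The even case is the same with $X_{\ep,e}$, $\tilde X_{\ep,e}$ in place of the odd spaces.

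I do not expect a serious obstacle here; the entire lemma is a transcription of the co-periodic results of Subsection \ref{co-periodic-linear} to the $2m\pi$-periodic setting. The only genuinely new point requiring care is the compatibility of the parity restriction with the weak formulation of the Poisson equation in part $(1)$ — i.e. verifying that solving in the odd (even) subspace really solves $-\Delta\psi=\omega$ against all admissible test functions — which is dispatched by the even/odd decomposition of test functions together with the fact that $g'(\psi_\ep)$ is even and $\vec u_\ep$ odd in $y$. Everything else (the Poincar\'e inequalities \eqref{Poincare inequality I-m}--\eqref{Poincare inequality II-m}, the Hilbert space structure of $\tilde X_{\ep,m}$ and its closed subspaces, the boundedness and symmetry estimates) has already been set up in the preceding discussion, so I would simply cite it.
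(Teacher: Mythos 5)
Your proposal is correct and takes essentially the same route as the paper, whose (omitted) proof likewise rests on the Hilbert space structure of the parity subspaces, the Poincar\'e inequality I \eqref{Poincare inequality I-m}, and a transcription of Lemmas \ref{1-1correspond} and \ref{Lbounded} to $\Omega_m$. Your extra check that the odd/even restriction is compatible with the weak formulation is harmless but not required, since the paper defines $(-\Delta)^{-1}\omega$ as the weak solution within $\tilde X_{\ep,o}$ (resp. $\tilde X_{\ep,e}$) itself.
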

By Lemma \ref{1-1correspond-Lbounded-m} (2)-(3), both $L_{\ep,o}:X_{\ep, o}\rightarrow X_{\ep, o}^*$ and $L_{\ep,e}:X_{\ep, e}\rightarrow X_{\ep, e}^*$ are self-dual and bounded.

\subsection{Exact solutions to the associated eigenvalue problems  for the multi-periodic case}
Next, we consider the decomposition of $X_{\ep, o}$ and $X_{\ep, e}$ associated to $L_{\ep,o}$ and $L_{\ep,e}$, respectively.
Define the elliptic operators
$$\tilde{A}_{\ep,o} = -\Delta - g'(\psi_\ep)(I-P_{\ep,m}) =  -\Delta - g'(\psi_\ep): \tilde{X}_{\ep, o} \rightarrow \tilde{X}_{\ep, o}^*$$
and
$$\tilde{A}_{\ep,e} = -\Delta - g'(\psi_\ep)(I-P_{\ep,m}): \tilde{X}_{\ep, e} \rightarrow \tilde{X}_{\ep, e}^*,$$
where we used $P_{\ep,m}\psi=0$ for $\psi\in\tilde{X}_{\ep, o}$.
The dual space of  $\tilde X_{\ep, o}$ (resp. $\tilde X_{\ep, e}$)  restricted to the class of odd (resp. even) functions is denoted  by $\tilde X_{\ep, o}^*$ (resp. $\tilde X_{\ep, e}^*$).
Based on Lemma \ref{1-1correspond-Lbounded-m} and  \eqref{Poincare inequality II-m}, we  prove
\begin{align}\label{n-ker-o}
n^-(L_{\ep,o}) &= n^-(\tilde A_{\ep,o}), \quad \dim \ker(L_{\ep,o}) = \dim \ker(\tilde A_{\ep,o}),\\\label{n-ker-e}
n^-(L_{\ep,e}) & = n^-(\tilde A_{\ep,e}), \quad \dim \ker(L_{\ep,e}) = \dim \ker(\tilde A_{\ep,e})
\end{align}
by a similar way as  Lemma \ref{equal-indices0}. Similar to Lemmas \ref{compact2P}, \ref{compact2P-new-variable0} and \ref{compact2P-new-variable-ep},
 $\tilde Y_{\ep,m}$ is compactly embedded in $L^2(\tilde \Omega_{m})$ and
$$
 \tilde Z_{\ep,m}:=\left\{\Psi\bigg|\iint_{\tilde \Omega_{m}}|\Psi-\tilde P_{\ep,m}\Psi|^2d\theta_\ep d\gamma_\ep<\infty\right\},
 $$
respectively. Correspondingly,
 $\tilde X_{\ep,m}$ is compactly embedded in $L_{g'(\psi_\ep)}^2(\Omega_m)$ and
$$
 Z_{\ep,m}:=\left\{\psi\bigg|\iint_{\Omega_m}g'(\psi_\ep)|\psi-P_{\ep,m}\psi|^2dxdy<\infty\right\},
$$
respectively.
Thus,  we can
inductively define
\begin{align}\label{variational problem2-ep-m}
\lambda_n(\ep,m)=& \inf_{\psi \in \tilde X_{\ep,m}, (\psi, \psi_{i})_{Z_{\ep,m}} = 0, i = 1, 2, \cdots, n-1}{\|\psi\|_{\tilde{X}_{\ep,m}}^2\over\iint_{\Omega_m} g'(\psi_\ep)(\psi - P_{\ep,m}\psi)^2dxdy},\quad n\geq1,
\end{align}
where the  infimum for $\lambda_i(\ep,m)$ is attained at $\psi_{i} \in \tilde X_{\ep,m}$ and $\iint_{\Omega_m} g'(\psi_\ep)(\psi_{i} - {P_{\ep,m}}\psi_{i})^2 dxdy = 1$, $1\leq i \leq n-1$.
Then in the new variables,
\begin{align}\label{variational problem2-ep-m-new variables}
\lambda_n(\ep,m)
=&\inf_{\Psi \in \tilde Y_{\ep,m}, (\Psi, \Psi_{i})_{\tilde Z_{\ep,m}} = 0, i = 1, 2, \cdots, n-1}{\|\Psi\|_{\tilde{Y}_{\ep,m}}^2\over\iint_{\tilde \Omega_{m}}2|\Psi-\tilde P_{\ep,m}\Psi|^2d\theta_\ep d\gamma_\ep},\quad
n\geq1.
\end{align}
By a similar argument to \eqref{variational problem2-ep}-\eqref{eigenvalue problem-ep-original},  we arrive at the eigenvalue problem
\begin{align}\label{eigenvalue problem-ep-new-m}
-\pa_{\gamma_\ep}\left((1-\gamma_\ep^2)\pa_{\gamma_\ep}\Psi\right)-{1\over1-\gamma_\ep^2}\pa_{\theta_\ep}^2\Psi
=2\lambda(\Psi-\tilde P_{\ep,m}\Psi), \quad \Psi \in \tilde{Y}_{\ep,m},
\end{align}
which, in the original variables, is exactly
\begin{align}\label{eigenvalue problem-ep-original-m}
-\Delta \psi = \lambda g'(\psi_\ep)(\psi -  P_{\ep,m}\psi), \quad \psi \in \tilde{X}_{\ep,m}.
\end{align}
In the new variables $(\theta_\ep,\gamma_\ep)$, we use the Fourier expansion $\Psi(\theta_\ep,\gamma_\ep)=\sum_{k\in\mathbb{Z}}\widehat{\Psi}_{k}(\gamma_\ep)e^{i{k\over m} \theta_\ep}$ to separate the variables, and study the eigenvalue problem \eqref{eigenvalue problem-ep-new-m} for the $0$-mode and the non-zero modes, separately.  For the $0$-mode,
the eigenvalue problem  is
\begin{align}\label{eigenvalue problem for 0 mode-m}
-\left((1-\gamma_\ep^2)  \varphi'\right)' = 2 \lambda(\varphi-\hat{P}_{0}^\ep\varphi) \quad \text{on}\quad (-1,1),\quad\varphi\in \hat Y_{0}^\ep,
\end{align}
where $\hat{P}_{0}^\ep\varphi={1\over2}\int_{-1}^{1}\varphi(\gamma_\ep)d\gamma_\ep$  and
\begin{equation*}
\hat Y_0^\ep=\left\{\varphi\bigg|\int_{-1}^1(1-\gamma_\ep^2)|\varphi'(\gamma_\ep)|^2d\gamma_\ep<\infty\text{ and }\varphi(0)=0 \right\}.
\end{equation*}
Since the eigenvalue problem \eqref{eigenvalue problem for 0 mode-m} for the $0$-mode  is the same as \eqref{eigenvalue problem for 0 mode}, by applying Lemma
\ref{sol to eigenvalue problem},
all the eigenvalues  of the eigenvalue problem \eqref{eigenvalue problem for 0 mode-m} with corresponding eigenfunctions are as follows:
\begin{align}\label{0 mode-eigenvalue-eigenfunction-m}
\lambda_{n,0}={n(n+1)\over2},\quad\varphi_{n,0}(\gamma_\ep)= L_{n}(\gamma_\ep)-L_{n}(0),\quad n\geq1.
\end{align}

\if0
For  $\psi \in \tilde{X}_{\ep,m}$, let $\Psi \in \tilde{Y}_{\ep,m}$ such that $\Psi(\theta_\ep, \gep) = \psi(x(\theta_\ep, \gep), y(\theta_\ep, \gep))$. Let
$$\Psi(\theta_\ep, \gep) =  \sum_{k \in \mathbb{Z}} \Psi^k(\gep) e^{\frac{ik\theta_\ep}{m}}$$
where
$$\Psi^k(\gep) = \frac{1}{2m\pi} \int_0^{2m\pi} \Psi(\theta_\ep, \gep) e^{-\frac{ik\theta_\ep}{m}} d \theta_\ep.$$
\fi

The difference comes from the non-zero modes. For the $k$-mode, the eigenvalue problem \eqref{eigenvalue problem-ep-new-m}
is
\begin{equation}\label{eigenvalue problem2 non-zero modes varepsilon=0m}
-((1-\gamma_\ep^2)\varphi')'+{{k^2\over m^2}\over1-\gamma_\ep^2}\varphi =2\lambda\varphi \quad \text{on}\quad (-1,1),\quad\varphi\in \hat Y_1^\ep,
\end{equation}
where $k\neq0$ and
\begin{align}\label{Y-k-ep-def}
\hat Y_1^\ep=\left\{\varphi\bigg|\int_{-1}^1\left({1\over1-\gamma_\ep^2}|\varphi(\gamma_\ep)|^2+(1-\gamma_\ep^2)|\varphi'(\gamma_\ep)|^2\right)d\gamma_\ep
<\infty \right\},
\end{align}
which is the same space
 $\hat Y_1$ defined in \eqref{def-hat-Y1} if we replace the variable $\gamma_\ep$ by $\gamma$ in \eqref{Y-k-ep-def}.
To the best of our knowledge, the existing approach to solving the eigenvalue problem \eqref{eigenvalue problem2 non-zero modes varepsilon=0m} is via the hypergeometric functions directly, but it seems a tedious task to compute all the eigenvalues and corresponding eigenfunctions in this way. Our method is motivated as follows. For $m=2$ and $k=1$, we observe that $\varphi(\gamma_\ep)=(1-\gamma_\ep^2)^{1\over 4}$ and $\lambda={3\over 8}$ solve \eqref{eigenvalue problem2 non-zero modes varepsilon=0m}. Taking
$
\varphi=(1-\gamma_\ep^2)^{1\over4}\phi,
$
then $\phi$ solves
\begin{equation}\label{tran-m=2-equa}
(1-\gamma_\ep^2)\phi''-3\gamma_\ep\phi'+\left(-{3\over4}+2\lambda\right)\phi =0 \quad \text{on}\quad (-1,1),\quad\phi\in W_{1\over 2},
\end{equation}
where $W_{1\over 2}=\{\phi|(1-\gamma_\ep^2)^{1\over4}\phi\in\hat Y_1^\ep\}$.
Then $\phi=1$ and $\lambda={3\over 8}$ solve \eqref{tran-m=2-equa}. Moreover, $\phi=\gamma_\ep$ and $\lambda={15\over 8}$ also solve \eqref{tran-m=2-equa}. As in the co-periodic case, our perspective is that all the eigenfunctions for \eqref{tran-m=2-equa}  might be polynomials of $\gamma_\ep$. They are indeed polynomials of $\gamma_\ep$ after we find that \eqref{tran-m=2-equa} is exactly the Gegenbauer differential equation
\begin{equation}\label{tran-m=2-equa-g-2}
(1-\gamma_\ep^2)\phi''-(2\beta+1)\gamma_\ep\phi'+n(n+2\beta)\phi =0 \quad \text{on}\quad (-1,1)
\end{equation}
for $\beta=1$ in \eqref{tran-m=2-equa-g-2} and $\lambda={1\over2} \left(n^2+2n+{3\over4}\right)$, $n\geq0$, in \eqref{tran-m=2-equa}. All the solutions of \eqref{tran-m=2-equa-g-2} are given by  Gegenbauer polynomials.
To
 solve the eigenvalue problem \eqref{eigenvalue problem2 non-zero modes varepsilon=0m} for general $k\geq1$ and $m\geq2$,
we introduce the transformation
\begin{align}\label{transformation-multi-periodic perturbations}
\varphi=(1-\gamma_\ep^2)^{k\over2m}\phi.
\end{align}
Then \eqref{eigenvalue problem2 non-zero modes varepsilon=0m} is transformed to
\begin{equation}\label{eigenvalue problem2 non-zero modes varepsilon=0-transform}
(1-\gamma_\ep^2)\phi''-2\left({k\over m}+1\right)\gamma_\ep\phi'+\left(-{k^2\over m^2}-{k\over m}+2\lambda\right)\phi =0 \quad \text{on}\quad (-1,1),\quad\varphi\in W_{k\over m},
\end{equation}
where $W_{k\over m}=\{\phi|(1-\gamma_\ep^2)^{k\over2m}\phi\in\hat Y_1^\ep\}$. It is  well-known \cite{Suetin2001} that the
Gegenbauer polynomials
\begin{align}\label{Gegenbauer polynomials}
C_n^\beta(\gamma_\ep)={(-1)^n\over 2^n n!}{\Gamma(\beta+{1\over2})\Gamma(n+2\beta)\over\Gamma(2\beta)\Gamma(\beta+n+{1\over2})}(1-\gamma_\ep^2)^{-\beta+{1\over2}}{d^n\over d\gamma_\ep^n}\left((1-\gamma_\ep^2)^{n+\beta-{1\over2}}\right)
\end{align}
are solutions of
 the   Gegenbauer
 differential equations
\begin{equation}\label{Gegenbauer differential equation}
(1-\gamma_\ep^2)\phi''-(2\beta+1)\gamma_\ep\phi'+n\left(n+2\beta\right)\phi =0 \quad \text{on}\quad (-1,1),\quad\phi\in L_{\hat g_\beta}^2(-1,1),
\end{equation}
where $n\geq0$ and $\hat g_\beta(\gamma_\ep)=(1-\gamma_\ep^2)^{\beta-{1\over2}}$. Moreover, $\{C_n^\beta\}_{n=0}^\infty$ is a complete and  orthogonal basis of $ L_{\hat g_\beta}^2(-1,1)$ for $\beta>-{1\over2}$. Set
\begin{align*}
\beta\triangleq{k\over m}+{1\over2},\quad \lambda\triangleq{1\over2}\left({k^2\over m^2}+{k\over m}+n^2+{2nk\over m}+n\right)={1\over2}\left(n+{k\over m}\right)\left(n+{k\over m} +1\right),
\end{align*}
and then the two equations
in \eqref{Gegenbauer differential equation} and \eqref{eigenvalue problem2 non-zero modes varepsilon=0-transform} surprisingly coincide.
Furthermore, $(1-\gamma_\ep^2)^{k\over2m}C_n^\beta\in\hat Y_1^\ep$ for $n\geq0$.
In fact,
\begin{align}\nonumber
&\int_{-1}^1\left({1\over1-\gamma_\ep^2}(1-\gamma_\ep^2)^{k\over m}|C_n^\beta(\gamma_\ep)|^2+(1-\gamma_\ep^2)\left|\left((1-\gamma_\ep^2)^{k\over 2m}C_n^\beta(\gamma_\ep)\right)'\right|^2\right)d\gamma_\ep\\\nonumber
=&\int_{-1}^1(1-\gamma_\ep^2)^{{k\over m}-1}|C_n^\beta(\gamma_\ep)|^2d\gamma_\ep\\
&+\int_{-1}^1\left|-{k\over m}\gamma_\ep(1-\gamma_\ep^2)^{{k\over 2m}-{1\over2}} C_n^\beta(\gamma_\ep)+(1-\gamma_\ep^2)^{{k\over 2m}+{1\over2}}(C_n^\beta(\gamma_\ep))'\right|^2d\gamma_\ep
<\infty.\label{1-gammacnykep}
\end{align}
This implies that
\begin{align*}
&\varphi_{n,{k\over m}}(\gamma_\ep)\triangleq(1-\gamma_\ep^2)^{k\over2m}C_n^{{k\over m}+{1\over2}}(\gamma_\ep)\in\hat Y_1^\ep,\quad\lambda=\lambda_{n,{k\over m}}\triangleq{1\over2}\left(n+{k\over m}\right)\left(n+{k\over m} +1\right)
\end{align*}
solves \eqref{eigenvalue problem2 non-zero modes varepsilon=0m} for $n\geq0$.
Since $\{C_n^\beta\}_{n=0}^\infty$ is a complete and  orthogonal basis of $ L_{\hat g_\beta}^2(-1,1)$, and
\begin{align*}
\int_{-1}^1\hat g_\beta(\gamma_\ep) C_{n_1}^\beta (\gamma_\ep)C_{n_2}^\beta (\gamma_\ep) d\gamma_\ep=&\int_{-1}^1(1-\gamma_\ep^2)^{k\over m}C_{n_1}^\beta (\gamma_\ep)C_{n_2}^\beta (\gamma_\ep) d\gamma_\ep \\
=&\int_{-1}^1\varphi_{n_1,{k\over m}}(\gamma_\ep)\varphi_{n_2,{k\over m}}(\gamma_\ep)  d\gamma_\ep
\end{align*}
for $n_1,n_2\geq0$, we know that  $\{\varphi_{n,{k\over m}}\}_{n=0}^\infty$ is a complete and  orthogonal basis of $ L^2(-1,1)$.
Since $\hat Y_1^\ep$ is embedded in $ L^2(-1,1)$ by Lemma
\ref{Poincare inequalities compact embedding result new variables k mode}, we infer that
$\{\varphi_{n,{k\over m}}\}_{n=0}^\infty$ is a complete and  orthogonal basis of $\hat Y_1^\ep$ under the inner product of $ L^2(-1,1)$.
In summary, the eigenvalue problem \eqref{eigenvalue problem2 non-zero modes varepsilon=0m} is solved as follows.

\begin{lemma}\label{sol to eigenvalue problem non-zero modes varepsilon=0} Fix $m\geq2$ and $k\geq1.$ Then
all the eigenvalues  of the eigenvalue problem \eqref{eigenvalue problem2 non-zero modes varepsilon=0m} are $\lambda_{n,{k\over m}}={1\over2}\left(n+{k\over m}\right)\left(n+{k\over m} +1\right)$, $n\geq 0$. For $n\geq0$, the eigenspace associated to $\lambda_{n,{k\over m}}$ is $\text{span}\{\varphi_{n,{k\over m}}(\gamma_\ep)\}=\text{span}\{(1-\gamma_\ep^2)^{k\over2m}C_n^{{k\over m}+{1\over2}}(\gamma_\ep)\}$.
\end{lemma}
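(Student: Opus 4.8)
The plan is to reduce the one–dimensional eigenvalue problem \eqref{eigenvalue problem2 non-zero modes varepsilon=0m} to a classical Gegenbauer differential equation through the substitution \eqref{transformation-multi-periodic perturbations}, and then transport the known completeness of Gegenbauer polynomials in a weighted $L^2$ space back to the space $\hat Y_1^\ep$. First I would set $\varphi=(1-\gamma_\ep^2)^{k/(2m)}\phi$ in \eqref{eigenvalue problem2 non-zero modes varepsilon=0m} and compute, by routine differentiation, that $\phi$ satisfies \eqref{eigenvalue problem2 non-zero modes varepsilon=0-transform}. Putting $\beta={k\over m}+{1\over2}$ and matching the coefficients $n(n+2\beta)=-{k^2\over m^2}-{k\over m}+2\lambda$, one identifies \eqref{eigenvalue problem2 non-zero modes varepsilon=0-transform} with the Gegenbauer equation \eqref{Gegenbauer differential equation}; hence for each integer $n\geq0$ the Gegenbauer polynomial $C_n^\beta$ solves \eqref{eigenvalue problem2 non-zero modes varepsilon=0-transform} with $\lambda=\lambda_{n,k/m}={1\over2}\left(n+{k\over m}\right)\left(n+{k\over m}+1\right)$, which produces the candidate eigenpairs $\left(\lambda_{n,k/m},\,\varphi_{n,k/m}\right)$ with $\varphi_{n,k/m}=(1-\gamma_\ep^2)^{k/(2m)}C_n^\beta$.

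Next I would check that each $\varphi_{n,k/m}$ actually belongs to $\hat Y_1^\ep$; this is exactly the finiteness of the two integrals in \eqref{1-gammacnykep}, which holds because $k/m>0$ makes $(1-\gamma_\ep^2)^{k/m-1}$ integrable near $\gamma_\ep=\pm1$ and because $C_n^\beta$ together with its derivative are polynomials. Then I would prove that $\{\varphi_{n,k/m}\}_{n\geq0}$ is a complete orthogonal system in $\hat Y_1^\ep$ under the inner product of $L^2(-1,1)$. The orthogonality and completeness of $\{C_n^\beta\}_{n\geq0}$ in the weighted space $L^2_{\hat g_\beta}(-1,1)$ with $\hat g_\beta(\gamma_\ep)=(1-\gamma_\ep^2)^{\beta-1/2}=(1-\gamma_\ep^2)^{k/m}$ is classical (see \cite{Suetin2001}), and since
$\int_{-1}^1 \hat g_\beta(\gamma_\ep)\,C_{n_1}^\beta(\gamma_\ep)C_{n_2}^\beta(\gamma_\ep)\,d\gamma_\ep=\int_{-1}^1 \varphi_{n_1,k/m}(\gamma_\ep)\varphi_{n_2,k/m}(\gamma_\ep)\,d\gamma_\ep$,
this transfers directly to the desired $L^2(-1,1)$ statement about $\{\varphi_{n,k/m}\}$. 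Combined with the embedding $\hat Y_1^\ep\hookrightarrow L^2(-1,1)$ supplied by Lemma \ref{Poincare inequalities compact embedding result new variables k mode}, this yields that $\{\varphi_{n,k/m}\}_{n\geq0}$ is a complete orthogonal basis of $\hat Y_1^\ep$.

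Finally I would close the argument with the variational framework: restricted to the $k$–mode, the eigenvalues coming from \eqref{variational problem2-ep-m-new variables} are the successive minima of the Rayleigh quotient over $\hat Y_1^\ep$ in the $L^2(-1,1)$–orthogonal complement of the previously found eigenfunctions; since $\{\varphi_{n,k/m}\}_{n\geq0}$ already exhausts a complete basis and each one is an eigenfunction with eigenvalue $\lambda_{n,k/m}$, there can be no further eigenvalues and each eigenspace is one–dimensional, spanned by $\varphi_{n,k/m}$. The only point requiring a little care is the bookkeeping: one must record that $\{\lambda_{n,k/m}\}_{n\geq0}$ is strictly increasing (true because $t\mapsto{1\over2}t(t+1)$ is increasing on $[k/m,\infty)$ and $k/m>0$) so that the indexing matches the minimax ordering, and one must exclude "hidden" eigenfunctions not of the form polynomial times weight, which the completeness of $\{\varphi_{n,k/m}\}$ in $\hat Y_1^\ep$ precisely rules out. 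Apart from this accounting, the proof is essentially a transcription of the classical Gegenbauer theory, so I do not anticipate any substantive obstacle.
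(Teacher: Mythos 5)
Your proposal is correct and follows essentially the same route as the paper: the substitution $\varphi=(1-\gamma_\ep^2)^{k/(2m)}\phi$ reducing \eqref{eigenvalue problem2 non-zero modes varepsilon=0m} to the Gegenbauer equation \eqref{Gegenbauer differential equation} with $\beta={k\over m}+{1\over2}$, the membership check \eqref{1-gammacnykep}, the transfer of orthogonality and completeness from $L^2_{\hat g_\beta}(-1,1)$ to $L^2(-1,1)$ via the weight identity, and the embedding $\hat Y_1^\ep\hookrightarrow L^2(-1,1)$ combined with the variational characterization to rule out further eigenvalues. No gaps to report.
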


Combining \eqref{0 mode-eigenvalue-eigenfunction-m} and Lemma \ref{sol to eigenvalue problem non-zero modes varepsilon=0}, we solve
the eigenvalue problem \eqref{eigenvalue problem-ep-new-m} (and hence, \eqref{eigenvalue problem-ep-original-m}).
\begin{Theorem}\label{sol to eigenvalue problem varepsilon=0-pde} Fix $m\geq2$.

$(1)$
All the eigenvalues  of the eigenvalue problem \eqref{eigenvalue problem-ep-new-m} are
\begin{align}\label{km-eigenvalues}
{1\over2}n\left(n+1\right), &\quad n\geq1,\\\label{nonkm-eigenvalues}
{1\over2}\left(n+{i\over m}\right)\left( n+{i\over m}+1\right), &\quad 1\leq i\leq m-1,\; n\geq0.
\end{align} The corresponding eigenspaces are given as follows.
\if0
$(1)$ For $1\leq i\leq m-1$, the  eigenspace associated to the eigenvalue  ${i\over 2m}\left({i\over m}+1\right)$ is spanned by
 \begin{align*}
 (1-\gamma_\ep^2)^{i\over2m}C_0^{{i\over m}+{1\over2}}(\gamma_\ep)\cos\left({i\over m}\theta_\ep\right),\;\;(1-\gamma_\ep^2)^{i\over2m}C_0^{{i\over m}+{1\over2}}(\gamma_\ep)\sin\left({i\over m}\theta_\ep\right).
 \end{align*}
 \fi
 \begin{itemize}
 \item For $n\geq1$,
the eigenspace associated to the eigenvalue  ${1\over2}n\left( n+1\right)$ is spanned by
 \begin{align}\label{km-eigenfunctions}
L_{n}(\gamma_\ep)-L_{n}(0),\;\; L_{n,j}(\gamma_\ep)\cos(j\theta_\ep),\;\;
 L_{n,j}(\gamma_\ep)\sin(j\theta_\ep), \quad  1 \leq j\leq n.
 \end{align}

\item For $1\leq i\leq m-1$ and $n\geq0$,
the eigenspace associated to the eigenvalue  ${1\over2}\left(n+{i\over m}\right)$ $\left( n+{i\over m}+1\right)$ is spanned by
\begin{align}\nonumber
 &(1-\gamma_\ep^2)^{(n-j)m+i\over2m}C_j^{{(n-j)m+i\over m}+{1\over2}}(\gamma_\ep)\cos\left({(n-j)m+i\over m}\theta_\ep\right),\\\label{nonkm-eigenfunctions}
 &(1-\gamma_\ep^2)^{(n-j)m+i\over2m}C_j^{{(n-j)m+i\over m}+{1\over2}}(\gamma_\ep)\sin\left({(n-j)m+i\over m}\theta_\ep\right),\;\;0\leq j\leq n.
 \end{align}
 \end{itemize}

$(2)$
All the eigenvalues  of the associated eigenvalue problem \eqref{eigenvalue problem-ep-original-m} are
given in \eqref{km-eigenvalues}-\eqref{nonkm-eigenvalues}. The corresponding eigenspaces are given as follows.
\if0
$(1)$ For $1\leq i\leq m-1$, the  eigenspace associated to the eigenvalue  ${i\over 2m}\left({i\over m}+1\right)$ is spanned by
 \begin{align*}
 (1-\gamma_\ep^2)^{i\over2m}C_0^{{i\over m}+{1\over2}}(\gamma_\ep)\cos\left({i\over m}\theta_\ep\right),\;\;(1-\gamma_\ep^2)^{i\over2m}C_0^{{i\over m}+{1\over2}}(\gamma_\ep)\sin\left({i\over m}\theta_\ep\right).
 \end{align*}
 \fi
 \begin{itemize}
 \item For $n\geq1$,
the eigenspace associated to the eigenvalue  ${1\over2}n\left( n+1\right)$ is spanned by
 \begin{align*}
L_{n}(\gamma_\ep(x,y))-L_{n}(0),\;\; L_{n,j}(\gamma_\ep(x,y))\cos(j\theta_\ep(x,y)),\;\;
 L_{n,j}(\gamma_\ep(x,y))\sin(j\theta_\ep(x,y)), \quad  1 \leq j\leq n.
 \end{align*}
\item For $1\leq i\leq m-1$ and $n\geq0$,
the eigenspace associated to the eigenvalue  ${1\over2}\left(n+{i\over m}\right)$ $\left( n+{i\over m}+1\right)$ is spanned by
\begin{align*}
 &(1-\gamma_\ep(x,y)^2)^{(n-j)m+i\over2m}C_j^{{(n-j)m+i\over m}+{1\over2}}(\gamma_\ep(x,y))\cos\left({(n-j)m+i\over m}\theta_\ep(x,y)\right),\\
 &(1-\gamma_\ep(x,y)^2)^{(n-j)m+i\over2m}C_j^{{(n-j)m+i\over m}+{1\over2}}(\gamma_\ep(x,y))\sin\left({(n-j)m+i\over m}\theta_\ep(x,y)\right),\;\;0\leq j\leq n.
 \end{align*}
 \end{itemize}
Here $\theta_\ep(x,y)$ and  $\gamma_\ep(x,y)$ are defined in \eqref{transf1} and \eqref{transf2}.

 In particular,
the multiplicity of $ {1\over2}n\left(n+1\right)$ is $2n+1$ for  $n\geq1$, and the multiplicity of
${1\over2}\left(n+{i\over m}\right)\left( n+{i\over m}+1\right)$ is $2n+2$ for $ 1\leq i\leq m-1$ and  $n\geq0$.
\end{Theorem}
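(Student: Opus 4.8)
The plan is to prove part (1) — the assertion about the new-variable eigenvalue problem \eqref{eigenvalue problem-ep-new-m} — by separation of variables in $(\theta_\ep,\gamma_\ep)$ on $\tilde{\Omega}_m=\mathbb{T}_{2m\pi}\times[-1,1]$, and then to deduce part (2) immediately from the fact, established earlier in this section, that \eqref{eigenvalue problem-ep-original-m} transforms into \eqref{eigenvalue problem-ep-new-m} under the change of variables \eqref{transf1}--\eqref{transf2}, which is an isometry of $\tilde X_{\ep,m}$ onto $\tilde Y_{\ep,m}$. Under this correspondence each eigenfunction listed in part (1) pulls back to the one listed in part (2) with the same eigenvalue, and conversely, so part (2) requires no separate argument.

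For part (1), I would expand any $\Psi\in\tilde Y_{\ep,m}$ as $\Psi(\theta_\ep,\gamma_\ep)=\sum_{k\in\mathbb{Z}}\widehat\Psi_k(\gamma_\ep)e^{ik\theta_\ep/m}$ and note that $\tilde P_{\ep,m}\Psi$ depends only on the $k=0$ Fourier component, since $\iint_{\tilde{\Omega}_m}e^{ik\theta_\ep/m}\,d\theta_\ep\,d\gamma_\ep=0$ for $k\neq0$; moreover the constraint $\widehat\Psi_0(0)=0$ in $\tilde Y_{\ep,m}$ forces $\widehat\Psi_0\in\hat Y_0^\ep$, while $\widehat\Psi_k\in\hat Y_1^\ep$ for $k\neq0$. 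Substituting the expansion into \eqref{eigenvalue problem-ep-new-m} decouples it: the $k=0$ component satisfies the one-dimensional problem \eqref{eigenvalue problem for 0 mode-m}, which coincides with \eqref{eigenvalue problem for 0 mode} and is solved by Lemma \ref{sol to eigenvalue problem}, while each $k\neq0$ component satisfies \eqref{eigenvalue problem2 non-zero modes varepsilon=0m} and is solved by Lemma \ref{sol to eigenvalue problem non-zero modes varepsilon=0}. Hence, if $\lambda$ is an eigenvalue of \eqref{eigenvalue problem-ep-new-m} with eigenfunction $\Psi\neq0$, some $\widehat\Psi_k\neq0$ is an eigenfunction of the corresponding one-dimensional problem at the same $\lambda$, so $\lambda$ lies on the list produced by those two lemmas; and the $\lambda$-eigenspace of \eqref{eigenvalue problem-ep-new-m} is the span of the products (one-dimensional $\lambda$-eigenfunction)$\times e^{ik\theta_\ep/m}$ over all $k$ for which $\lambda$ is a one-dimensional eigenvalue. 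Completeness of the one-dimensional eigenbases in $\hat Y_0^\ep$ and $\hat Y_1^\ep$ from those lemmas, together with completeness of $\{e^{ik\theta_\ep/m}\}_{k\in\mathbb{Z}}$ in $L^2(\mathbb{T}_{2m\pi})$, ensures that no eigenvalue is missed.

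What remains is bookkeeping: reorganizing the doubly indexed list (mode $k$, one-dimensional index) into the form \eqref{km-eigenvalues}--\eqref{nonkm-eigenfunctions}. Writing $k=\pm(qm+i)$ with $q\geq0$ and $0\leq i\leq m-1$, the modes with $i=0$ (wavenumber $q$ an integer), together with $k=0$, contribute precisely the eigenvalue $\frac12 n(n+1)$, $n\geq1$: the $k=0$ term gives $L_n(\gamma_\ep)-L_n(0)$, and the mode $k=jm$, $j=1,\dots,n$, gives (after setting the one-dimensional index to $n-j$) the eigenfunction $(1-\gamma_\ep^2)^{j/2}C_{n-j}^{j+1/2}(\gamma_\ep)\,e^{\pm ij\theta_\ep}$; the classical identity $L_{n,j}(\gamma)\propto(1-\gamma^2)^{j/2}C_{n-j}^{j+1/2}(\gamma)$ (derivatives of Legendre polynomials are Gegenbauer polynomials) identifies these with $L_{n,j}(\gamma_\ep)\cos(j\theta_\ep),L_{n,j}(\gamma_\ep)\sin(j\theta_\ep)$ in \eqref{km-eigenfunctions}, and the multiplicity is $1+2n=2n+1$. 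For fixed $i$ with $1\leq i\leq m-1$, the modes $k=\pm(qm+i)$, $q\geq0$, combined with one-dimensional indices $j\geq0$, produce the eigenvalue $\frac12\bigl(n+\frac{i}{m}\bigr)\bigl(n+\frac{i}{m}+1\bigr)$ with $n=q+j$; for each fixed $n\geq0$ the decompositions $q+j=n$ with $q,j\geq0$ number $n+1$, each carrying a cosine and a sine eigenfunction, giving multiplicity $2n+2$ and, after relabelling $q=n-j$, exactly \eqref{nonkm-eigenfunctions}. Since $t\mapsto t(t+1)$ is injective on $(-\frac12,\infty)$ and $0<\frac{i}{m}<1$, the values in \eqref{km-eigenvalues} and \eqref{nonkm-eigenvalues} are pairwise distinct, so the eigenspaces do not merge across families. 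I do not expect a conceptual obstacle: the genuine analysis — solving and proving completeness for the weighted Legendre and Gegenbauer one-dimensional problems, and the change of variables — is already in place; the only points requiring care are the index matching between the theorem's $(n,j)$ and the separated problems, the multiplicity count $2n+2$, and the observation that the projection $\tilde P_{\ep,m}$ affects only the zero mode.
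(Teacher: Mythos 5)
Your proposal is correct and follows essentially the same route as the paper: Fourier decomposition in $\theta_\ep$ into modes $e^{ik\theta_\ep/m}$ (with the projection $\tilde P_{\ep,m}$ acting only on the zero mode), reduction of the zero mode to \eqref{eigenvalue problem for 0 mode-m} solved by Lemma \ref{sol to eigenvalue problem} and of the nonzero modes to \eqref{eigenvalue problem2 non-zero modes varepsilon=0m} solved by Lemma \ref{sol to eigenvalue problem non-zero modes varepsilon=0}, followed by exactly the same reindexing $k=jm$ versus $k=(n-j)m+i$ together with the identity $L_{n,j}(\gamma_\ep)\propto(1-\gamma_\ep^2)^{j/2}C_{n-j}^{j+1/2}(\gamma_\ep)$, and part (2) obtained from the change of variables. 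The multiplicity counts and the distinctness of the two eigenvalue families also match the paper's argument.
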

\begin{proof}
By
\eqref{0 mode-eigenvalue-eigenfunction-m} and Lemma \ref{sol to eigenvalue problem non-zero modes varepsilon=0}  the set of all the eigenvalues of \eqref{eigenvalue problem-ep-new-m} is
\begin{align*}
&\left\{{1\over2}n\left(n+1\right)\right\}_{n=1}^\infty\cup\left(\bigcup_{k=1}^\infty\left\{{1\over2}\left(n+{k\over m}\right)\left(n+{k\over m} +1\right)\right\}_{n=0}^\infty\right)\\
=&\left\{{1\over2}n\left(n+1\right)\right\}_{n=1}^\infty\cup
\left(\bigcup_{i=1}^{m-1}\left\{{1\over2}\left(n+{i\over m}\right)\left(n+{i\over m} +1\right)\right\}_{n=0}^\infty\right).
\end{align*}

Let  $n\geq1$. Then ${1\over2}n\left( n+1\right)$  is the eigenvalue of the $0$-mode with
an eigenfunction   $L_{n}(\gamma_\ep)-L_{n}(0)$. It is also  the eigenvalue $\lambda_{n-j,{k\over m}}$ of the $k=jm$ mode with an eigenfunction
$(1-\gamma_\ep^2)^{j\over2}C_{n-j}^{j+{1\over2}}(\gamma_\ep)$ for $1\leq j\leq n$. Then up to a constant factor, the equality \begin{align*}(1-\gamma_\ep^2)^{j\over2}C_{n-j}^{j+{1\over2}}(\gamma_\ep)=L_{n,j}(\gamma_\ep)\end{align*} gives \eqref{km-eigenfunctions}.

Let $1\leq i\leq m-1$ and  $n\geq0$. Then  ${1\over2}\left(n+{i\over m}\right)\left( n+{i\over m}+1\right)$ is the eigenvalue $\lambda_{j,{k\over m}}$ of the $k=(n-j)m+i$ mode with an eigenfunction $(1-\gamma_\ep^2)^{(n-j)m+i\over2m}C_j^{{(n-j)m+i\over m}+{1\over2}}(\gamma_\ep)$ for  $0\leq j\leq  n$, which gives \eqref{nonkm-eigenfunctions}.
\end{proof}

As an application,
we prove that  $\tilde A_{\ep,o}$ and $L_{\ep,o}$ are  non-negative, present their explicit kernel,  and obtain  decompositions of
$\tilde X_{\ep,o}$ and $X_{\ep,o}$  associated to the two operators. This verifies  {\textbf{(G2)}} in Lemma \ref{indice-theorem-sep} for \eqref{sep-hamiltonian}.

\begin{Corollary}\label{A-L-dec-o}
 Let $\ep\in[0,1)$. Then

$(1)$ $\ker (\tilde A_{\ep,o})=\textup{span}\{\gamma_\ep(x,y)\}$ and $\ker (L_{\ep,o})=\textup{span}\{g'(\psi_\ep)\gamma_\ep(x,y)\}$. Thus,  $\dim \ker(L_{\ep,o})=\dim\ker (\tilde A_{\ep,o})=1$.

$(2)$ Let $\tilde X_{\ep,o+}=\tilde X_{\ep,o} \ominus\ker (\tilde A_{\ep,o})$ and $ X_{\ep,o+}= X_{\ep,o} \ominus\ker (L_{\ep,o})$. Then
\begin{align*}
\langle \tilde A_{\ep,o} \psi,\psi\rangle \geq \left(1-{2m^2\over (m+1)(2m+1)}\right) \| \psi\|_{\tilde X_{\ep,o}}^2, \quad\forall \psi\in \tilde X_{\ep,o+},
\end{align*}
and there exists $\delta>0$ such that
\begin{align*}
\langle L_{\ep,o} \omega,\omega\rangle \geq \delta \| \omega\|_{ X_{\ep,o}}^2, \quad \forall \omega\in  X_{\ep,o+}.
\end{align*}
\end{Corollary}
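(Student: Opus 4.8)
The plan is to deduce everything from Theorem \ref{sol to eigenvalue problem varepsilon=0-pde}, which already lists all eigenvalues and eigenfunctions of the associated eigenvalue problem \eqref{eigenvalue problem-ep-new-m}, after sorting those eigenfunctions by their parity in $y$. First I would record the parity rule: since $\xi_\ep$ and $\eta_\ep$ are even in $y$ while $\gamma_\ep$ is odd in $y$, the angle $\theta_\ep$ is even in $y$, so $\psi(x,y)=\Psi(\theta_\ep,\gamma_\ep)$ lies in $\tilde X_{\ep,o}$ precisely when $\Psi$ is odd in $\gamma_\ep$; moreover $P_{\ep,m}\psi=0$ for every such $\psi$ because $g'(\psi_\ep)$ is even in $y$. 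Hence $\langle\tilde A_{\ep,o}\psi,\psi\rangle=\|\nabla\psi\|_{L^2(\Omega_m)}^2-\iint_{\Omega_m}g'(\psi_\ep)\psi^2\,dxdy$ on $\tilde X_{\ep,o}$, and, since the even and odd subspaces are invariant and mutually orthogonal for both the gradient inner product and the weighted $L^2$ pairing while $-\Delta$ and $g'(\psi_\ep)$ preserve parity, restricting the variational problem \eqref{variational problem2-ep-m} to $\tilde X_{\ep,o}$ produces exactly those eigenvalues $\lambda$ of \eqref{eigenvalue problem-ep-new-m} whose eigenspace in Theorem \ref{sol to eigenvalue problem varepsilon=0-pde} contains an odd-in-$\gamma_\ep$ element, with the odd members of those eigenspaces as eigenfunctions.

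Next I would carry out the parity count. The Legendre polynomial $L_n$ has parity $(-1)^n$, so $L_n(\gamma_\ep)-L_n(0)$ is odd iff $n$ is odd; $L_{n,j}(\gamma_\ep)=(1-\gamma_\ep^2)^{j/2}\frac{d^j}{d\gamma_\ep^j}L_n(\gamma_\ep)$ has parity $(-1)^{n-j}$, so the eigenfunctions $L_{n,j}(\gamma_\ep)\cos(j\theta_\ep)$ and $L_{n,j}(\gamma_\ep)\sin(j\theta_\ep)$ are odd iff $n-j$ is odd; and the Gegenbauer polynomial $C_j^{\beta}$ has parity $(-1)^j$, so the eigenfunctions of the family \eqref{nonkm-eigenfunctions} are odd iff $j$ is odd, which forces $n\ge1$. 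The smallest eigenvalue carrying an odd eigenfunction is thus $\lambda=1$, attained at $n=1$ in the family \eqref{km-eigenvalues} with eigenfunction $L_1(\gamma_\ep)=\gamma_\ep$. The next candidates are $\lambda=3$ (from $L_{2,1}(\gamma_\ep)\cos\theta_\ep$, $L_{2,1}(\gamma_\ep)\sin\theta_\ep$), $\lambda=6$ (from the $n=3$ member of \eqref{km-eigenvalues}), and, from \eqref{nonkm-eigenvalues} with $n=1$, $j=1$, $i=1$, the value $\tfrac12\left(1+\tfrac1m\right)\left(2+\tfrac1m\right)=\frac{(m+1)(2m+1)}{2m^2}$. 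Since $(m+1)(2m+1)-2m^2=3m+1>0$ and $6m^2-(m+1)(2m+1)=(m-1)(4m+1)>0$ for $m\ge2$, one has $1<\frac{(m+1)(2m+1)}{2m^2}<3$, so the second eigenvalue of the variational problem for $\tilde A_{\ep,o}$ is $\lambda_2^{o}=\frac{(m+1)(2m+1)}{2m^2}$.

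From this, part (1) for $\tilde A_{\ep,o}$ follows: all eigenvalues are $\ge1$, hence $n^-(\tilde A_{\ep,o})=0$, and the kernel is the odd part of the $\lambda=1$ eigenspace, namely $\mathrm{span}\{\gamma_\ep\}$ (note $\gamma_\ep\in\tilde X_{\ep,o}$ and $\langle\tilde A_{\ep,o}\gamma_\ep,\gamma_\ep\rangle=0$ since $-\Delta\gamma_\ep=g'(\psi_\ep)\gamma_\ep$ by Proposition \ref{prop1}). For $\psi\in\tilde X_{\ep,o+}$, I would check exactly as in Corollary \ref{kernel of  the operator tilde A-ep and a decomposition of tilde Xep} that $\tilde X_{\ep,o}\ominus\ker\tilde A_{\ep,o}$ coincides with the constraint space in \eqref{variational problem2-ep-m} because $(\psi,\gamma_\ep)_{Z_{\ep,m}}=(\psi,\gamma_\ep)_{\tilde X_{\ep,o}}$; then the Rayleigh quotient of $\psi$ is $\ge\lambda_2^{o}$, so $\langle\tilde A_{\ep,o}\psi,\psi\rangle=\|\nabla\psi\|^2\big(1-(\lambda_2^{o})^{-1}\big)\ge\big(1-\tfrac{2m^2}{(m+1)(2m+1)}\big)\|\psi\|_{\tilde X_{\ep,o}}^2$. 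For the statements on $L_{\ep,o}$ I would invoke \eqref{n-ker-o} to get $n^-(L_{\ep,o})=0$ and $\dim\ker(L_{\ep,o})=1$, identify $\ker(L_{\ep,o})=\mathrm{span}\{g'(\psi_\ep)\gamma_\ep\}$ by observing $(-\Delta)^{-1}(g'(\psi_\ep)\gamma_\ep)=\gamma_\ep$ so that $L_{\ep,o}(g'(\psi_\ep)\gamma_\ep)=\gamma_\ep-\gamma_\ep=0$, and then repeat the argument of \eqref{L0omega-omega}--\eqref{positive-decom}: for $\omega\in X_{\ep,o+}$ one has $\psi:=(-\Delta)^{-1}\omega\in\tilde X_{\ep,o+}$, whence $\langle L_{\ep,o}\omega,\omega\rangle\ge\langle\tilde A_{\ep,o}\psi,\psi\rangle\ge c\|\nabla\psi\|^2$, and a convex combination with $\iint_{\Omega_m}\big(\omega^2/g'(\psi_\ep)-|\nabla\psi|^2\big)\,dxdy$ yields $\langle L_{\ep,o}\omega,\omega\rangle\ge\delta\|\omega\|_{X_{\ep,o}}^2$ for a suitable $\delta>0$.

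The hard part is not any individual step — each is routine once Theorem \ref{sol to eigenvalue problem varepsilon=0-pde} is available — but the parity bookkeeping together with the elementary comparison $(m-1)(4m+1)>0$ that pins down the second eigenvalue as $\frac{(m+1)(2m+1)}{2m^2}$ rather than $3$; the explicit constant $1-\frac{2m^2}{(m+1)(2m+1)}$ in part (2) depends entirely on this comparison, so it is the one point where care is essential.
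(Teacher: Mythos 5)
Your proposal is correct and follows essentially the same route as the paper: restrict to the odd-in-$\gamma_\ep$ subspace (on which $P_{\ep,m}$ vanishes), read off from the already-solved eigenvalue problems that the smallest odd eigenvalue is $1$ with odd eigenspace $\mathrm{span}\{\gamma_\ep\}$ while the next one is $\tfrac{(m+1)(2m+1)}{2m^2}$ (attained at the $k=1$ mode), and then transfer the conclusions to $L_{\ep,o}$ via \eqref{n-ker-o} and the convex-combination argument of \eqref{positive-decom}. The only cosmetic slip is the equality sign in $\langle\tilde A_{\ep,o}\psi,\psi\rangle=\|\nabla\psi\|^2\left(1-(\lambda_2^{o})^{-1}\right)$, which should be the inequality $\geq$.
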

\begin{proof}
Note that $\psi(x,y)$ is odd in $y$ if and only if $\Psi(\theta_\ep, \gep)$ is odd in $\gep$ for $\psi \in \tilde{X}_{\ep,m}$ and $\Psi \in \tilde{Y}_{\ep,m}$ such that $\psi(x,y) = \Psi(\theta_\ep, \gamma_\ep)$. Thus, $\psi\in\tilde{X}_{\ep,o}$ if and only if $\Psi\in\tilde{Y}_{\ep,o}$. We consider the eigenvalue problem \eqref{eigenvalue problem-ep-new-m} with $\Psi \in \tilde{Y}_{\ep,o}$ by separating it into the Fourier modes.

For the $0$-mode, the eigenvalue problem \eqref{eigenvalue problem-ep-new-m} is reduced to \eqref{eigenvalue problem for 0 mode-m}. Noting that the eigenfunction $\varphi_{n,0}$ in \eqref{0 mode-eigenvalue-eigenfunction-m} is odd if and only if $n\geq1$ is odd, we obtain that all the eigenvalues and corresponding eigenfunctions  are given in \eqref{0 mode-eigenvalue-eigenfunction-m} for odd integers $n\geq1$. Thus, the principal eigenvalue  for the $0$-mode is $1$ with an eigenfunction $\gamma_\ep$. This implies that
 there is no contribution to the negative directions of $ \tilde A_{\ep,o} $ from the $0$-mode, and $\gamma_\ep(x,y)\in\ker( \tilde A_{\ep,o} )$.

For the $k$-mode with $k\neq0$, the eigenvalue problem \eqref{eigenvalue problem-ep-new-m} is reduced to \eqref{eigenvalue problem2 non-zero modes varepsilon=0m}. Noting that the eigenfunction $\varphi_{n,{k\over m}}(\gamma_\ep)$ in Lemma \ref{sol to eigenvalue problem non-zero modes varepsilon=0} is odd if and only if $n\geq0$ is odd, we know that
all the eigenvalues and corresponding  eigenfunctions  are given in Lemma  \ref{sol to eigenvalue problem non-zero modes varepsilon=0}  for odd integers $n\geq0$. Thus, the principal eigenvalue  for the $k$-mode is ${1\over2}\left(1+{k\over m}\right)\left(2+{k\over m} \right)>1$. Then  there is no contribution to the negative and kernel directions of $ \tilde A_{\ep,o} $ from the $k$-mode.
This confirms that $\ker (\tilde A_{\ep,o})=\textup{span}\{\gamma_\ep(x,y)\}$.

Since the second  eigenvalue  for the $0$-mode is $6$ and the  principal eigenvalue  for the $k$-mode is ${1\over2}\left(1+{k\over m}\right)\left(2+{k\over m} \right)>1$ with $k\neq0$, by the variational problem \eqref{variational problem2-ep-m}-\eqref{variational problem2-ep-m-new variables} we have
\begin{align*}
\iint_{\Omega_m}|\nabla\psi|^2dxdy\geq{1\over2}\left(1+{1\over m}\right)\left(2+{1\over m}\right)\iint_{\Omega_m}g'(\psi_\ep)(\psi-P_{\ep,m}\psi)^2dxdy,\;\psi\in\tilde X_{\ep,o+},
\end{align*}
where $\tilde X_{\ep,o+}=\tilde X_{\ep,o} \ominus\ker (\tilde A_{\ep,o})$.
Thus,
\begin{align*}
 \langle\tilde A_{\ep,o}\psi,\psi\rangle=&\iint_{\Omega_m} \left(|\nabla\psi|^2-g'(\psi_\epsilon)(\psi - P_{\epsilon,m}\psi)^2\right) dxdy \\
 \geq& \left(1-{2m^2\over (m+1)(2m+1)}\right) \| \psi\|_{\tilde X_{\ep,o}}^2
\end{align*}
for $\psi\in\tilde X_{\ep,o+}$.

By \eqref{n-ker-o},  $\ker (L_{\ep,o})=\textup{span}\{g'(\psi_\ep)\gamma_\ep(x,y)\}$.
The proof of $\langle L_{\ep,o} \omega,\omega\rangle \geq \delta \| \omega\|_{ X_{\ep,o}}^2$ for   $\omega\in  X_{\ep,o+} $ is similar to  \eqref{positive-decom}.
\end{proof}

Next, we give
 the explicit negative directions and  kernel of  the operators $\tilde A_{\ep,e}$ and $L_{\ep,e}$, as well as   decompositions of
$\tilde X_{\ep,e}$ and $X_{\ep,e}$  associated to $\tilde A_{\ep,e}$ and $L_{\ep,e}$, respectively. This verifies  {\textbf{(G3)}} in Lemma \ref{indice-theorem-sep} for \eqref{sep-hamiltonian}.

\begin{Corollary}\label{A-L-dec-e}
 Let $\ep\in[0,1)$. Then

$(1)$  the negative subspaces of  $\tilde X_{\ep, e}$ and  $ X_{\ep, e}$  associated to $\tilde A_{\ep,e}$ and $L_{\ep,e}$ are
 \begin{align*}
\tilde X_{\ep,e-}&=\textup{span}\left\{(1-\gamma_\ep^2)^{i\over 2m}\cos\left({i\theta_\ep\over m}\right),(1-\gamma_\ep^2)^{i\over 2m}\sin\left({i\theta_\ep\over m}\right), 1\leq i\leq m-1\right\},\\
 X_{\ep,e-}&=\textup{span}\left\{g'(\psi_\ep)(1-\gamma_\ep^2)^{i\over 2m}\cos\left({i\theta_\ep\over m}\right),g'(\psi_\ep)(1-\gamma_\ep^2)^{i\over 2m}\sin\left({i\theta_\ep\over m}\right), 1\leq i\leq m-1\right\},
\end{align*}
respectively, where $\gamma_\ep=\gamma_\ep(x,y)$ and $\theta_\ep=\theta_\ep(x,y)$.
Thus,  $\dim \tilde X_{\ep,e-}=\dim X_{\ep,e-}=2(m-1)$.

$(2)$ $\ker (\tilde A_{\ep,e})=\textup{span}\{(1-\gamma_\ep^2)^{1\over 2}\cos\left({\theta_\ep}\right),(1-\gamma_\ep^2)^{1\over 2}\sin\left({\theta_\ep}\right)\}$ and $\ker (L_{\ep,e})=\textup{span}\{g'(\psi_\ep)(1-\gamma_\ep^2)^{1\over 2}\cos\left({\theta_\ep}\right),g'(\psi_\ep)(1-\gamma_\ep^2)^{1\over 2}\sin\left({\theta_\ep}\right)\}$. Thus,  $\dim\ker (\tilde A_{\ep,e})=\dim \ker(L_{\ep,e})=2$.

$(3)$ Let  $ X_{\ep,e+}= X_{\ep,e} \ominus\left(\ker (L_{\ep,e})\oplus X_{\ep,e-}\right)$ and $\tilde X_{\ep,e+}=\tilde X_{\ep,e} \ominus\left(\ker (\tilde A_{\ep,e})\oplus \tilde  X_{\ep,e-}\right)$. Then
\begin{align*}
\langle \tilde A_{\ep,e} \psi,\psi\rangle \geq \left(1-{2m^2\over (m+1)(2m+1)}\right) \| \psi\|_{\tilde X_{\ep,e}}^2, \quad\forall \psi\in \tilde X_{\ep,e+},
\end{align*}
there exists $\delta>0$ such that
\begin{align*}
\langle L_{\ep,e} \omega,\omega\rangle \geq \delta \| \omega\|_{ X_{\ep,e}}^2, \quad \forall \omega\in  X_{\ep,e+}.
\end{align*}
\end{Corollary}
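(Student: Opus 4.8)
The plan is to follow the template of the proof of Corollary \ref{A-L-dec-o}, replacing odd functions by even ones and exploiting the complete spectral resolution of \eqref{eigenvalue problem-ep-new-m} supplied by Theorem \ref{sol to eigenvalue problem varepsilon=0-pde}. First I would record the parity dictionary: since $\theta_\ep(x,-y)=\theta_\ep(x,y)$ while $\gamma_\ep(x,-y)=-\gamma_\ep(x,y)$, a stream function $\psi$ is even in $y$ if and only if the corresponding $\Psi$ is even in $\gamma_\ep$; hence $\psi\in\tilde X_{\ep,e}$ iff $\Psi\in\tilde Y_{\ep,e}$, and the restriction of \eqref{eigenvalue problem-ep-new-m} to $\tilde Y_{\ep,e}$ is obtained by separating into Fourier modes $e^{ik\theta_\ep/m}$, $k\in\mathbb{Z}$, and keeping only those eigenfunctions that are even in $\gamma_\ep$. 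By the parity $L_n(-\gamma)=(-1)^nL_n(\gamma)$ of the Legendre polynomials and $C_n^\beta(-\gamma)=(-1)^nC_n^\beta(\gamma)$ of the Gegenbauer polynomials, this amounts to keeping exactly the even-$n$ eigenfunctions in the $0$-mode family \eqref{0 mode-eigenvalue-eigenfunction-m} and in the $k$-mode families of Lemma \ref{sol to eigenvalue problem non-zero modes varepsilon=0}.

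Carrying out this selection, the $0$ mode contributes only eigenvalues $\lambda_{n,0}=\frac12 n(n+1)$ with $n$ even, the smallest being $3$; the $k$-mode ($k\neq0$) contributes $\lambda_{n,k/m}=\frac12(n+\frac{k}{m})(n+\frac{k}{m}+1)$ with $n$ even, which for even $n\ge2$ is $\ge3$, and for $n=0$ equals $\frac12\frac{k}{m}(\frac{k}{m}+1)$, hence strictly less than $1$ for $1\le k\le m-1$, equal to $1$ for $k=m$, and greater than $1$ for $k\ge m+1$. Thus the eigenvalues of the even problem that are $\le1$ are precisely $\frac12\frac{i}{m}(\frac{i}{m}+1)$ for $1\le i\le m-1$ (all strictly less than $1$), each with two-dimensional eigenspace spanned by $(1-\gamma_\ep^2)^{i/2m}\cos(i\theta_\ep/m)$ and $(1-\gamma_\ep^2)^{i/2m}\sin(i\theta_\ep/m)$ (using $C_0^\beta\equiv1$), together with the eigenvalue $1$, whose even eigenspace is two-dimensional and spanned by $(1-\gamma_\ep^2)^{1/2}\cos\theta_\ep$ and $(1-\gamma_\ep^2)^{1/2}\sin\theta_\ep$ (equivalently the functions $L_{1,1}(\gamma_\ep)\cos\theta_\ep$, $L_{1,1}(\gamma_\ep)\sin\theta_\ep$ of the $\frac12 n(n+1)$ family with $n=1$, the two descriptions coinciding since $L_{1,1}(\gamma_\ep)=(1-\gamma_\ep^2)^{1/2}$). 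Each of these functions lies in $\tilde Y_{\ep,e}$: finiteness of the $\tilde Y_{\ep,m}$-norm is the computation \eqref{1-gammacnykep}, and $\widehat\Psi_0(0)=0$ because the $\theta_\ep$-average of $\cos(i\theta_\ep/m)$ and $\sin(i\theta_\ep/m)$ over $\mathbb{T}_{2m\pi}$ vanishes for $i\neq0$ (this also shows $P_{\ep,m}=0$ on each of them). Feeding this into the variational characterization \eqref{variational problem2-ep-m}--\eqref{variational problem2-ep-m-new variables} restricted to even functions yields at once $n^-(\tilde A_{\ep,e})=2(m-1)$ with negative subspace $\tilde X_{\ep,e-}$ as stated, $\dim\ker(\tilde A_{\ep,e})=2$ with the stated kernel, and on the complement $\tilde X_{\ep,e+}$ the bound $\|\nabla\psi\|_{L^2(\Omega_m)}^2\ge\lambda_*\iint_{\Omega_m}g'(\psi_\ep)(\psi-P_{\ep,m}\psi)^2\,dxdy$, where $\lambda_*=\frac{(m+1)(2m+1)}{2m^2}$ is the smallest even eigenvalue exceeding $1$ (attained at $k=m+1$, $n=0$, and satisfying $1<\lambda_*<3$ for $m\ge2$). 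Hence $\langle\tilde A_{\ep,e}\psi,\psi\rangle\ge(1-\lambda_*^{-1})\|\psi\|_{\tilde X_{\ep,e}}^2=\bigl(1-\frac{2m^2}{(m+1)(2m+1)}\bigr)\|\psi\|_{\tilde X_{\ep,e}}^2$ for $\psi\in\tilde X_{\ep,e+}$.

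For the $L_{\ep,e}$ statements I would invoke \eqref{n-ker-e}, which already gives $n^-(L_{\ep,e})=2(m-1)$ and $\dim\ker(L_{\ep,e})=2$. The explicit subspaces follow from the correspondence $\psi\mapsto\omega=g'(\psi_\ep)(\psi-P_{\ep,m}\psi)$ used in the proof of Lemma \ref{equal-indices0}: since $P_{\ep,m}$ annihilates each listed stream function, $\omega=g'(\psi_\ep)\psi$, and if $-\Delta\psi=\lambda g'(\psi_\ep)\psi$ then $\langle L_{\ep,e}\omega,\omega\rangle=(1-\lambda^{-1})\iint_{\Omega_m}g'(\psi_\ep)\psi^2\,dxdy$, which is negative for the eigenvalues $<1$ and zero for the eigenvalue $1$; matching dimensions against $n^-(L_{\ep,e})$ and $\dim\ker(L_{\ep,e})$ identifies $X_{\ep,e-}$ and $\ker(L_{\ep,e})$. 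Finally the coercivity $\langle L_{\ep,e}\omega,\omega\rangle\ge\delta\|\omega\|_{X_{\ep,e}}^2$ on $X_{\ep,e+}$ is obtained by the convex-combination argument of \eqref{positive-decom}: writing $\psi=(-\Delta)^{-1}\omega\in\tilde X_{\ep,e+}$, one has $\langle L_{\ep,e}\omega,\omega\rangle\ge\langle\tilde A_{\ep,e}\psi,\psi\rangle\ge(1-\lambda_*^{-1})\|\nabla\psi\|^2$, and then taking a suitable convex combination with $\langle L_{\ep,e}\omega,\omega\rangle=\iint_{\Omega_m}g'(\psi_\ep)^{-1}\omega^2\,dxdy-\|\nabla\psi\|^2$ produces a positive multiple of $\|\omega\|_{X_{\ep,e}}^2$.

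The one step requiring genuine care, rather than a routine citation of Theorem \ref{sol to eigenvalue problem varepsilon=0-pde}, is the parity bookkeeping in the first two paragraphs: one must check that the even part of the spectrum is obtained precisely by retaining the even-$n$ Legendre/Gegenbauer eigenfunctions, and — because the two families in Theorem \ref{sol to eigenvalue problem varepsilon=0-pde} overlap (the values $\frac12 n(n+1)$ recur among the $k=jm$ modes) — that no even eigenfunction with eigenvalue $\le1$ is missed and none is double-counted when assembling $\tilde X_{\ep,e-}$ and $\ker(\tilde A_{\ep,e})$. Once the even spectrum is pinned down, the dimension counts and the coercivity constant $1-\frac{2m^2}{(m+1)(2m+1)}$ emerge exactly as in Corollary \ref{A-L-dec-o}.
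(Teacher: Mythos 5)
Your proposal is correct and follows essentially the same route as the paper's proof: a mode-by-mode parity selection (even in $\gamma_\ep$ means even-$n$ Legendre/Gegenbauer eigenfunctions) in the eigenvalue problem \eqref{eigenvalue problem-ep-new-m}, identification of the eigenvalues $\le 1$ and of the smallest eigenvalue $\lambda_*=\frac{(m+1)(2m+1)}{2m^2}$ exceeding $1$, the variational characterization \eqref{variational problem2-ep-m}--\eqref{variational problem2-ep-m-new variables} for the coercivity constant, and then \eqref{n-ker-e} together with the convex-combination argument of \eqref{positive-decom} for the $L_{\ep,e}$ statements. The extra bookkeeping you flag (avoiding double counting between the two eigenvalue families) is exactly what the paper's mode-by-mode organization handles, so no gap remains.
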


\begin{proof}
Note that $\psi\in\tilde{X}_{\ep,e}$ if and only if $\Psi\in\tilde{Y}_{\ep,e}$ for $\psi \in \tilde{X}_{\ep,m}$ and $\Psi \in \tilde{Y}_{\ep,m}$ such that $\psi(x,y) = \Psi(\theta_\ep, \gamma_\ep)$.  We also consider the eigenvalue problem \eqref{eigenvalue problem-ep-new-m} with $\Psi \in \tilde{Y}_{\ep,e}$ by separating it into the Fourier modes.

For the $0$-mode, the eigenvalue problem \eqref{eigenvalue problem-ep-new-m} is reduced to \eqref{eigenvalue problem for 0 mode-m}. Since $\varphi_{n,0}$ in \eqref{0 mode-eigenvalue-eigenfunction-m} is even if and only if $n\geq1$ is even, all the eigenvalues and corresponding eigenfunctions   are given in \eqref{0 mode-eigenvalue-eigenfunction-m} for even integers $n\geq1$. Thus, the principal eigenvalue  for the $0$-mode is $3$. This implies that
 there is no contribution to the negative directions and kernel of $ \tilde A_{\ep,e} $ from the $0$-mode.

For the $k$-mode with $k\neq0$, the eigenvalue problem \eqref{eigenvalue problem-ep-new-m} is reduced to \eqref{eigenvalue problem2 non-zero modes varepsilon=0m}. Since $\varphi_{n,{k\over m}}(\gamma_\ep)$ in Lemma \ref{sol to eigenvalue problem non-zero modes varepsilon=0} is even if and only if $n\geq0$ is even, we know that
all the eigenvalues and corresponding  eigenfunctions are given in Lemma  \ref{sol to eigenvalue problem non-zero modes varepsilon=0}  for even integers $n\geq0$. Thus, the principal eigenvalue  for the $k$-mode is ${1\over2}{k\over m}\left({k\over m}+1 \right)$ with an eigenfunction $(1-\gamma_\ep^2)^{k\over 2m}$.
 For the $k$-mode  with $1\leq k\leq m-1$, the principal eigenvalue satisfies ${1\over2}{k\over m}\left({k\over m}+1 \right)<1$, which gives $2m-2$
 negative directions of $ \tilde A_{\ep,e} $
\begin{align*} (1-\gamma_\ep^2)^{k\over 2m}\cos\left({k\theta_\ep\over m}\right),(1-\gamma_\ep^2)^{k\over 2m}\sin\left({k\theta_\ep\over m}\right), 1\leq k\leq m-1.
 \end{align*}
 For the $m$-mode, the principal eigenvalue is $1$, which implies that
 \begin{align*}(1-\gamma_\ep^2)^{1\over 2}\cos\left({\theta_\ep}\right),(1-\gamma_\ep^2)^{1\over 2}\sin\left({\theta_\ep}\right) \in \ker (\tilde A_{\ep,e}).\end{align*}
 For the $k$-mode  with $k\geq m+1$, the principal eigenvalue satisfies
 \begin{align}\label{k-m+1-pr}
 {1\over2}{k\over m}\left({k\over m}+1 \right)\geq{1\over2}\left({1\over m}+1\right)\left({1\over m}+2 \right)>1.
 \end{align}
  For the $k$-mode  with $k\geq 1$, the second eigenvalue satisfies
    \begin{align}\label{k-1-se}
 {1\over2}\left({k\over m}+2\right)\left({k\over m} +3\right)>3.
 \end{align}
Then $\tilde X_{\ep,e-}$ and $\ker (\tilde A_{\ep,e})$ have no more linearly independent functions, and thus, are given in (1)-(2).

Note that the principal  eigenvalue  for the $0$-mode is $3$. By \eqref{k-m+1-pr}-\eqref{k-1-se},  the minimal  eigenvalue, which is larger than $1$,  for the non-zero modes is ${1\over2}\left({1\over m}+1\right)\left({1\over m}+2 \right)$. By the variational problem \eqref{variational problem2-ep-m}-\eqref{variational problem2-ep-m-new variables} we also have
\begin{align*}
\iint_{\Omega_m}|\nabla\psi|^2dxdy\geq{1\over2}\left(1+{1\over m}\right)\left(2+{1\over m}\right)\iint_{\Omega_m}g'(\psi_\ep)(\psi-P_{\ep,m}\psi)^2dxdy,\;\psi\in\tilde X_{\ep,e+},
\end{align*}
where $\tilde X_{\ep,e+}=X_{\ep,e} \ominus\left(\ker (L_{\ep,e})\oplus X_{\ep,e-}\right)$.
Thus,
\begin{align*}
 \langle\tilde A_{\ep,e}\psi,\psi\rangle
 \geq& \left(1-{2m^2\over (m+1)(2m+1)}\right) \| \psi\|_{\tilde X_{\ep,e}}^2,\quad \psi\in\tilde X_{\ep,e+}.
\end{align*}

The rest of the proof follows from \eqref{n-ker-e} and a similar argument to \eqref{positive-decom}.
\end{proof}

By Corollaries \ref{A-L-dec-o}-\ref{A-L-dec-e}, the assumptions \textbf{(G2-4)} in Lemma \ref{indice-theorem-sep} are verified for the Hamiltonian system
\eqref{sep-hamiltonian}.

\subsection{A linear instability criterion}
Applying Lemma \ref{indice-theorem-sep} to the Hamiltonian system \eqref{sep-hamiltonian},  the criterion for  linear instability of the cat's-eye flows is  that   $n^-\left( L_{\ep,e} |_{\overline{R(B_\ep L_{\ep,o})}} \right) \geq 1$. First, we study the relation between $\overline{R(B_\ep L_{\ep,o})}$ and $\overline{R(B_\ep)}$.
\begin{lemma}\label{range BLo-B}
$\overline{R(B_\ep L_{\ep,o})}=\overline{R(B_\ep)}$.
\end{lemma}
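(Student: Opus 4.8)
The plan is to prove the sharper identity $R(B_\ep L_{\ep,o})=R(B_\ep)$, which gives the claimed equality of closures at once. One inclusion is immediate, since $R(B_\ep L_{\ep,o})=B_\ep\big(R(L_{\ep,o})\cap D(B_\ep)\big)\subseteq R(B_\ep)$. For the reverse I would first record the structure of $L_{\ep,o}$ supplied by Corollary~\ref{A-L-dec-o}: it is self-dual, $\langle L_{\ep,o}\omega,\omega\rangle\ge\delta\|\omega\|_{X_{\ep,o}}^2$ on $X_{\ep,o+}=X_{\ep,o}\ominus\ker L_{\ep,o}$, and $\ker L_{\ep,o}=\mathrm{span}\{g'(\psi_\ep)\gamma_\ep\}$ is one-dimensional. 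Hence $L_{\ep,o}$ has closed range, $L_{\ep,o}|_{X_{\ep,o+}}\colon X_{\ep,o+}\to R(L_{\ep,o})$ is a bounded bijection, and by the closed range theorem together with self-duality $R(L_{\ep,o})$ has codimension $\dim\ker L_{\ep,o}=1$ in $X_{\ep,o}^*$.

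The heart of the matter is to exhibit an element $v\in\ker B_\ep$ with $v\notin R(L_{\ep,o})$. Since $\vec{u}_\ep\cdot\nabla\psi_\ep=\partial_y\psi_\ep\,\partial_x\psi_\ep-\partial_x\psi_\ep\,\partial_y\psi_\ep=0$, any function constant along the streamlines of $\psi_\ep$ formally annihilates $B_\ep=-g'(\psi_\ep)\vec{u}_\ep\cdot\nabla$. I would use the partition of $\Omega_m$ into the trapped region bounded by the separatrix $\{\cosh y+\ep\cos x=1+\ep\}$ and the two untrapped regions $U_\pm$ lying on the $\{\pm y>0\}$ sides of that separatrix (for $\ep=0$ the separatrix degenerates to $\{y=0\}$), and set
\begin{align*}
v=\begin{cases}0,& \text{in the trapped region},\\ \psi_\ep-\psi_\ep^{\mathrm{sep}},& \text{in }U_+,\\ -\psi_\ep+\psi_\ep^{\mathrm{sep}},& \text{in }U_-,\end{cases}
\qquad \psi_\ep^{\mathrm{sep}}=\ln\Big(\tfrac{1+\ep}{\sqrt{1-\ep^2}}\Big).
\end{align*}
Then $v$ is odd in $y$ and continuous across the separatrix (each branch vanishes there, as does $v$ at the stagnation points $(2k\pi,0)$), and since $g'(\psi_\ep)=2e^{-2\psi_\ep}$ decays like $e^{-2|y|}$ while $|v|\le C(1+|y|)$, one has $\iint_{\Omega_m}g'(\psi_\ep)v^2\,dxdy<\infty$, so $v\in X_{\ep,o}^*$; being constant along streamlines on each piece, $\vec{u}_\ep\cdot\nabla v=0$ in the distributional sense, so $v\in\ker B_\ep\subseteq D(B_\ep)$. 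Finally, by the $y\mapsto-y$ symmetry together with the signs $\gamma_\ep>0$, $\psi_\ep>\psi_\ep^{\mathrm{sep}}$, $g'(\psi_\ep)>0$ on $U_+$,
\begin{align*}
\langle v,\,g'(\psi_\ep)\gamma_\ep\rangle=\iint_{\Omega_m}v\,g'(\psi_\ep)\gamma_\ep\,dxdy=2\iint_{U_+}(\psi_\ep-\psi_\ep^{\mathrm{sep}})\,g'(\psi_\ep)\gamma_\ep\,dxdy>0;
\end{align*}
on the other hand, for every $w\in X_{\ep,o}$ one has $\langle L_{\ep,o}w,\,g'(\psi_\ep)\gamma_\ep\rangle=\langle L_{\ep,o}(g'(\psi_\ep)\gamma_\ep),\,w\rangle=0$ by self-duality and $g'(\psi_\ep)\gamma_\ep\in\ker L_{\ep,o}$, so $v\notin R(L_{\ep,o})$.

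Given such a $v$, codimension one forces $X_{\ep,o}^*=R(L_{\ep,o})\oplus\mathrm{span}\{v\}$. For arbitrary $\phi\in D(B_\ep)$, write $\phi=\phi_0+tv$ with $\phi_0\in R(L_{\ep,o})$; then $\phi_0=\phi-tv\in D(B_\ep)$ because $v\in\ker B_\ep\subseteq D(B_\ep)$, hence $\phi_0\in R(L_{\ep,o})\cap D(B_\ep)$. Letting $\psi_0\in X_{\ep,o+}$ be the unique preimage $L_{\ep,o}\psi_0=\phi_0$ from the bijection above, we obtain $L_{\ep,o}\psi_0=\phi_0\in D(B_\ep)$, so $\psi_0\in D(B_\ep L_{\ep,o})$, and therefore $B_\ep\phi=B_\ep\phi_0+tB_\ep v=B_\ep L_{\ep,o}\psi_0\in R(B_\ep L_{\ep,o})$. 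This yields $R(B_\ep)\subseteq R(B_\ep L_{\ep,o})$ and completes the proof.

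The step I expect to demand the most care is the construction of $v$: one must pin down the streamline/separatrix geometry of $\psi_\ep$ on the cylinder precisely enough to see that $\ker B_\ep$ contains an odd-in-$y$ function, and then verify that the explicit $v$ above genuinely lies in $X_{\ep,o}^*$ and in $D(B_\ep)$ — in particular its regularity across the separatrix and near the hyperbolic stagnation points $(2k\pi,0)$. The functional-analytic part (closedness and codimension one of $R(L_{\ep,o})$, and the splitting argument) is routine once Corollary~\ref{A-L-dec-o} is in hand.
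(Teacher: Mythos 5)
Your proof is correct, and it reaches a stronger conclusion than the lemma asks for: you get $R(B_\ep L_{\ep,o})=R(B_\ep)$ exactly, not merely equality of closures. The route, however, differs from the paper's in its functional-analytic scaffolding. The paper first transfers to genuine Hilbert-space adjoints via the Riesz isomorphism ($\tilde L_{\ep,o}=S L_{\ep,o}$, $\tilde B_\ep=B_\ep S^{-1}$) and reduces the claim to $\ker(\tilde L_{\ep,o}\tilde B_\ep^*)=\ker(\tilde B_\ep^*)$; if $\tilde B_\ep^*\omega=C\,g'(\psi_\ep)\gamma_\ep$ with $C\neq 0$, a contradiction is produced by pairing against an element of $\ker \tilde B_\ep$ of the form $\pm f(\psi_\ep)$, $f\in C_c^\infty(\rho_0,\infty)$, supported in the untrapped region and odd in $y$. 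You instead work directly with ranges: you use the coercivity and one-dimensional kernel from Corollary~\ref{A-L-dec-o} to show $R(L_{\ep,o})$ is closed of codimension one in $X_{\ep,o}^*$, exhibit $v\in\ker B_\ep$ with $\langle v, g'(\psi_\ep)\gamma_\ep\rangle\neq 0$ so that $X_{\ep,o}^*=R(L_{\ep,o})\oplus\mathrm{span}\{v\}$, and then absorb the complementary direction using $B_\ep v=0$. The geometric core is the same in both arguments — an odd-in-$y$ function constant on streamlines, vanishing on the trapped region, pairing nontrivially with the kernel element $g'(\psi_\ep)\gamma_\ep$ — but your choice $v=\pm(\psi_\ep-\rho_0)$ grows linearly in $|y|$, so you genuinely need the exponential decay of $g'(\psi_\ep)$ to place it in $X_{\ep,o}^*$ (the paper's compactly supported $f$ makes integrability trivial), and your argument needs the extra input that $R(L_{\ep,o})$ is closed, which the paper's adjoint/kernel formulation avoids. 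What your version buys is the sharper identity of ranges and the avoidance of adjoint and second-adjoint bookkeeping; what the paper's buys is a shorter reduction that never touches the range of $L_{\ep,o}$. The two points you flag as delicate (membership of $v$ in $X_{\ep,o}^*$ and in $D(B_\ep)$, i.e.\ the distributional identity $\vec{u}_\ep\cdot\nabla v=0$ across the separatrix and at the stagnation points) do go through, since $v$ is continuous, piecewise smooth, odd in $y$, and the interface is itself a streamline.
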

\begin{proof}
Recall that $L_{\ep,o}:X_{\ep,o}\to X_{\ep,o}^*$ is a self-dual operator, and $B_\ep: X_{\ep,o}^*\supset D(B_\ep)\to X_{\ep,e}$.
For a Hilbert space $X$, we denote  $S_X: X^*\to X$ to be the isomorphism defined by the Riesz representation theorem.
Let $\tilde L_{\ep,o}\triangleq S_{X_{\ep, o}}L_{\ep,o}:X_{\ep, o} \rightarrow X_{\ep, o}$ and $\tilde B_\ep\triangleq B_\ep S_{X_{\ep, o}}^{-1}:X_{\ep, o} \supset D(\tilde B_\ep) \rightarrow X_{\ep, e}$. Then $\tilde L_{\ep,o}$ is a self-adjoint operator.  Noting that $\overline{R(B_\ep L_{\ep,o})}=\overline{R(\tilde B_\ep \tilde L_{\ep,o})}$ and $\overline{R(B_\ep)}=\overline{R(\tilde B_\ep)}$, we will prove that $\overline{R(\tilde B_\ep \tilde L_{\ep,o})}=\overline{R(\tilde B_\ep)}$.
It is equivalent to show that $\ker(\tilde L_{\ep,o}\tilde B_\ep^*)=\ker(\tilde B_{\ep}^*)$, where $\tilde B_{\ep}^*$ is the adjoint operator of $\tilde B_{\ep}$.

It is clear that $\ker(\tilde B_{\ep}^*)\subset\ker(\tilde L_{\ep,o}\tilde B_\ep^*)$. If $\omega\in \ker(\tilde L_{\ep,o}\tilde B_\ep^*)$, then
$\tilde L_{\ep,o}\tilde B_\ep^*\omega=0$. By Corollary \ref{A-L-dec-o}, we have $\ker(\tilde L_{\ep,o})=\ker( L_{\ep,o})=\text{span}\{g'(\psi_\ep)\gamma_\ep\}$.
Thus, $\tilde B_\ep^*\omega=Cg'(\psi_\ep)\gamma_\ep$ for some $C\in\mathbb{R}$. If $C=0$, then $\omega\in\ker(\tilde B_\ep^*)$. If $C\neq0$, we will get a contradiction. In fact, since $\overline{R(\tilde B_\ep^*)}=\ker(\tilde B_\ep^{**})^{\perp}$ and $\ker(\tilde B_\ep)\subset\ker(\tilde B_\ep^{**})$, we have
\begin{align}\label{B-ep-omega-varpi-inner product}
(\tilde B_\ep^*\omega,\varpi)_{X_{\ep, o}}=0
\end{align}
 for any $\varpi\in\ker(\tilde B_\ep)$, where ``$\perp$" is under the inner product of $X_{\ep, o}$.
We denote
\begin{align}\label{def-rho0}
\rho_0 = \psi_\ep(0,0) = \ln\left(\sqrt{\frac{1+\ep}{1-\ep}}\right).
\end{align}
Let $f\in C_c^\infty(\rho_0,\infty)$, $f\geq0$ and $f\not\equiv0$.
We construct
\begin{align*}
\varpi_\ep(x,y)=  &\left\{ \begin{array}{ll}
         f(\psi_\ep(x,y)) & \mbox{for $\psi_\ep(x,y)>\rho_0$ and $y>0$},\\
         0 & \mbox{for $-\rho_0\leq\psi_\ep(x,y)\leq\rho_0$},\\
         -f(\psi_\ep(x,y)) & \mbox{for $\psi_\ep(x,y)>\rho_0$ and $y<0$}.
        \end{array} \right.
\end{align*}
Then $\varpi_\ep$ is odd in $y$ and $\varpi_\ep\in\ker(\tilde B_\ep)$.
 By
\eqref{three-kers2}, we have
\begin{align*}
\gamma_\ep= \frac{\sqrt{1 - \epsilon^2}\sinh(y)}{\cosh(y)+\epsilon \cos(x)}
&\left\{ \begin{array}{ll}
       >0 & \mbox{for  $y>0$},\\
         <0 & \mbox{for  $y<0$}.
        \end{array} \right.
\end{align*}
Then
\begin{align*}
(\tilde B_\ep^*\omega,\varpi_\ep)_{X_{\ep, o}}=(Cg'(\psi_\ep)\gamma_\ep,\varpi_\ep)_{X_{\ep, o}}\neq0.
\end{align*}
 This  contradicts \eqref{B-ep-omega-varpi-inner product}. Thus, $\omega\in \ker(\tilde B_\ep^*)$ and $\ker(\tilde L_{\ep,o}\tilde B_\ep^*)=\ker(\tilde B_{\ep}^*)$.
\end{proof}
\begin{remark}
 In the above proof, the key point is to show that  $\tilde B_\ep^*\omega=g'(\psi_\ep)\gamma_\ep$ has no solutions in $X_{\ep,e}$.
 We now give an intuitive explanation.
Indeed,  by
\eqref{three-kers2}, we have $\tilde B_\ep^*\omega=g'(\psi_\ep)\gamma_\ep= g'(\psi_\ep) \sqrt{1 - \epsilon^2}{ \partial_y \psi_\ep}$.
Formally, we have $(\vec{u}_\ep\cdot\nabla)\left({\omega\over g'(\psi_\ep) \sqrt{1 - \epsilon^2}}\right)={ \partial_y \psi_\ep}$ and thus, ${\omega\over g'(\psi_\ep) \sqrt{1 - \epsilon^2}}=x$, which is, however,  not $2\pi$-periodic in $x$.
\end{remark}
By Lemma \ref{range BLo-B}, linear instability reduces to the condition
\[
n^-\!\left(L_{\ep,e}\big|_{\overline{R(B_\ep)}}\right)\ge 1.
\]
To study this quantity, we introduce the orthogonal projection \(\bar P_{\ep,e}\) from
\[
L^2_{\frac{1}{g'(\psi_\ep)},e}(\Omega_m)
=
\left\{
\omega\in L^2_{\frac{1}{g'(\psi_\ep)}}(\Omega_m)\;\middle|\;
\omega \text{ is even in } y
\right\}
\]
onto
\[
W_{\ep,e}
=
\left\{
\omega\in L^2_{\frac{1}{g'(\psi_\ep)},e}(\Omega_m)\;\middle|\;
(\omega,\varpi)_{L^2_{\frac{1}{g'(\psi_\ep)},e}}=0
\text{ for all }\varpi\in\overline{R(B_\ep)}
\right\}.
\]
Here \(\overline{R(B_\ep)}\subset X_{\ep,e}\), while \(\ker(B_\ep^*)\subsetneq W_{\ep,e}\).

This induces a projection \(\hat P_{\ep,e}\) from
\[
L^2_{g'(\psi_\ep),e}(\Omega_m)
=
\left\{
\psi\in L^2_{g'(\psi_\ep)}(\Omega_m)\;\middle|\;
\psi \text{ is even in } y
\right\}
\]
onto
\[\hat W_{\ep,e}=\left\{\psi\;\middle|\;\psi={\omega\over g'(\psi_\ep)}, \omega\in W_{\ep,e}\right\}\]
by
\[
\hat P_{\ep,e}
=
S_{L^2_{g'(\psi_\ep),e}(\Omega_m)}
\bar P_{\ep,e}
S_{L^2_{\frac{1}{g'(\psi_\ep)},e}(\Omega_m)}.
\]
As in \cite{lin2004some}, this projection has the form
\begin{align}\label{def-hat-P-ep-e}
(\hat P_{\ep,e}\psi)\big|_{\Gamma_i(\rho)}
=
\frac{\displaystyle \oint_{\Gamma_i(\rho)} \frac{\psi}{|\nabla\psi_\ep|}}
{\displaystyle \oint_{\Gamma_i(\rho)} \frac{1}{|\nabla\psi_\ep|}}
\end{align}
for \(\psi\in L^2_{g'(\psi_\ep),e}(\Omega_m)\), where \(\rho\) lies in the range of \(\psi_\ep\) and \(\Gamma_i(\rho)\) is a branch of the level set \(\{\psi_\ep=\rho\}\). Since \(\tilde X_{\ep,e}\subset L^2_{g'(\psi_\ep),e}(\Omega_m)\), we define
\[
\hat A_{\ep,e}
=
-\Delta-g'(\psi_\ep)(I-\hat P_{\ep,e})
:
\tilde X_{\ep,e}\to \tilde X_{\ep,e}^*.
\]

Then we have the following lemma.
\begin{lemma}\label{L e-hat A} The number of unstable modes of \eqref{sep-hamiltonian} is
$$n^-\left( L_{\ep,e} |_{\overline{R(B_\ep)}} \right) = n^-\left(\hat{A}_{\ep,e}\right).$$
Consequently, if $n^-\left(\hat{A}_{\ep,e}\right)>0$, then $\omega_\ep$ is linearly unstable for $2m\pi$-periodic perturbations.
\end{lemma}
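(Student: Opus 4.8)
The plan is to feed the separable Hamiltonian system \eqref{sep-hamiltonian} into the counting formula \eqref{index-formula-neg} and then perform a Legendre-type duality reduction of the constrained quadratic form $\langle L_{\ep,e}\cdot,\cdot\rangle|_{\overline{R(B_\ep)}}$ to the elliptic operator $\hat A_{\ep,e}$, in the spirit of Lemma \ref{equal-indices0}. First I would invoke Lemma \ref{indice-theorem-sep}: its hypotheses \textbf{(G1)}--\textbf{(G4)} have already been verified for \eqref{sep-hamiltonian} (closedness of $B_\ep$ and $B_\ep'$, together with Corollaries \ref{A-L-dec-o} and \ref{A-L-dec-e}), so the number of unstable modes equals $\dim E^u = n^-\big(L_{\ep,e}|_{\overline{R(B_\ep L_{\ep,o})}}\big)$, and Lemma \ref{range BLo-B} rewrites this as $n^-\big(L_{\ep,e}|_{\overline{R(B_\ep)}}\big)$. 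Thus everything reduces to the identity $n^-\big(L_{\ep,e}|_{\overline{R(B_\ep)}}\big) = n^-(\hat A_{\ep,e})$; granted this, $n^-(\hat A_{\ep,e})>0$ forces $\dim E^u>0$, whence the trichotomy estimate \eqref{trichotomy} gives a solution of \eqref{hami-m} growing like $e^{\lambda_u t}$, i.e.\ linear instability of $\omega_\ep$ for $2m\pi$-periodic perturbations.

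For the index identity the key point is that, by its definition, $\overline{R(B_\ep)}$ is the orthogonal complement of $W_{\ep,e}$ in $L^2_{\frac{1}{g'(\psi_\ep)},e}(\Omega_m)$, which under the weighted duality $\varpi = g'(\psi_\ep)\phi$ says precisely that $\omega\in\overline{R(B_\ep)}$ if and only if $\iint_{\Omega_m}\omega\,\phi\,dxdy = 0$ for all $\phi\in\hat W_{\ep,e}$, in particular for $\phi = \hat P_{\ep,e}\psi$. Hence for $\omega\in\overline{R(B_\ep)}$ with $\psi := (-\Delta)^{-1}\omega\in\tilde X_{\ep,e}$ (Lemma \ref{1-1correspond-Lbounded-m}(1)) one may insert the projection for free and complete the square,
\[
\langle L_{\ep,e}\omega,\omega\rangle = \|\nabla\psi\|_{L^2(\Omega_m)}^2 + \iint_{\Omega_m}\Big(\tfrac{|\omega|^2}{g'(\psi_\ep)} - 2(\psi-\hat P_{\ep,e}\psi)\,\omega\Big)\,dxdy \;\geq\; \langle\hat A_{\ep,e}\psi,\psi\rangle,
\]
with equality exactly when $\omega = g'(\psi_\ep)(\psi-\hat P_{\ep,e}\psi)$; here one uses that $\hat P_{\ep,e}$ is the orthogonal projection of $L^2_{g'(\psi_\ep),e}(\Omega_m)$ onto $\hat W_{\ep,e}$ — transparent from the streamline-average formula \eqref{def-hat-P-ep-e} via the coarea formula, since $g'(\psi_\ep)$ is constant on the level curves of $\psi_\ep$ — so that $\iint g'(\psi_\ep)(\psi-\hat P_{\ep,e}\psi)^2 = \iint g'(\psi_\ep)(\psi-\hat P_{\ep,e}\psi)\psi$. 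Conversely, given $\psi\in\tilde X_{\ep,e}$ I would set $\omega := g'(\psi_\ep)(\psi-\hat P_{\ep,e}\psi)$; the Poincar\'e inequality \eqref{Poincare inequality II-m} and the orthogonality of $\hat P_{\ep,e}$ give $\omega\in W_{\ep,e}^{\perp} = \overline{R(B_\ep)}$, and with $\psi_\omega := (-\Delta)^{-1}\omega$ one computes $\langle\hat A_{\ep,e}\psi,\psi\rangle - \langle L_{\ep,e}\omega,\omega\rangle = \|\nabla\psi-\nabla\psi_\omega\|_{L^2(\Omega_m)}^2 \geq 0$. These two inequalities, together with the injectivity observations used in the proof of Lemma \ref{equal-indices0}, force $n^-\big(L_{\ep,e}|_{\overline{R(B_\ep)}}\big) = n^-(\hat A_{\ep,e})$.

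The hard part will be the careful analysis of the nonlocal projection $\hat P_{\ep,e}$. One must check that the abstract operator $\hat P_{\ep,e} = S_{L^2_{g'(\psi_\ep),e}}\,\bar P_{\ep,e}\,S_{L^2_{\frac{1}{g'(\psi_\ep)},e}}$ really coincides with the concrete streamline average \eqref{def-hat-P-ep-e}, that it is bounded on $\tilde X_{\ep,e}$ (so that $\hat A_{\ep,e}$ and its quadratic form are well defined and bounded, which also uses the Poincar\'e inequalities \eqref{Poincare inequality I-m}--\eqref{Poincare inequality II-m}), and that $W_{\ep,e} = \overline{R(B_\ep)}^{\perp}$ consists exactly of the even functions $g'(\psi_\ep)\phi$ with $\phi$ constant on each branch of each level set of $\psi_\ep$, plus the one extra constant mode forced by the zero-mean constraint defining $X_{\ep,e}$ — which is why $\ker(B_\ep^*)\subsetneqq W_{\ep,e}$. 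This amounts to identifying $\ker(\vec{u}_\ep\cdot\nabla)$ on $\Omega_m$ for the cat's-eye velocity field, with attention to the separatrices and the untrapped region; the required facts are of the type already established in \cite{lin2004some}, after which the computations sketched above are routine.
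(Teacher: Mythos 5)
Your proposal is correct and follows essentially the same route as the paper: verify \textbf{(G1)}--\textbf{(G4)}, apply Lemma \ref{indice-theorem-sep} together with Lemma \ref{range BLo-B}, and then establish the two-sided comparison between $\langle L_{\ep,e}\cdot,\cdot\rangle|_{\overline{R(B_\ep)}}$ and $\langle\hat{A}_{\ep,e}\cdot,\cdot\rangle$ by completing the square, using that $\hat{P}_{\ep,e}$ is the $L^2_{g'(\psi_\ep)}$-orthogonal streamline average so that cross terms with $\hat{P}_{\ep,e}\psi$ vanish. Your converse identity $\langle\hat{A}_{\ep,e}\psi,\psi\rangle-\langle L_{\ep,e}\omega,\omega\rangle=\|\nabla\psi-\nabla\psi_\omega\|_{L^2(\Omega_m)}^2$ and the characterization $\overline{R(B_\ep)}=W_{\ep,e}^{\perp}$ are merely repackagings of the paper's estimates, so the two arguments coincide.
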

\begin{proof} Since $\hat{P}_{\ep,e}$  commutes with $f(\psi_\ep)\cdot$ for any function $f$,  $\omega \in \overline{R(B_\ep)}$ if and only if $\hat{P}_{\ep,e} \frac{\omega}{g'(\psi_\ep)} = 0$. Note that $\bar{P}_{\ep,e}$ is orthogonal under the inner product of $  L^2_{\frac{1}{g'(\psi_\ep)},e}(\Omega_m)$.
For $\omega \in \overline{R(B_\ep)}\subset X_{\ep,e}$, there exists $\psi \in \tilde{X}_{\ep, e}$ such that $-\Delta\psi=\omega$ and
\begin{align*}
&\langle L_{\ep,e} \omega, \omega \rangle
= \iint_{\Omega_m} \left(\frac{\omega^2}{g'(\psi_\ep)} - \omega \psi\right) dxdy \\
= &\iint_{\Omega_m}\left({1\over\sqrt{g'(\psi_\ep)}}\bar{P}_{\ep,e}\left( {\omega} - g'(\psi_\ep)\psi \right)+{1\over\sqrt{g'(\psi_\ep)}}(I-\bar{P}_{\ep,e})\left( {\omega} - g'(\psi_\ep)\psi \right) \right)^2 dxdy \\
&- \iint_{\Omega_m}\left(g'(\psi_\ep)\psi^2 -|\nabla \psi|^2\right) dxdy \\
=& \iint_{\Omega_m} \left(\left( \frac{\omega}{\sqrt{g'(\psi_\ep)}} - \sqrt{g'(\psi_\ep)} (I - \hat{P}_{\ep,e}) \psi \right)^2 + g'(\psi_\ep) (\hat{P}_{\ep,e} \psi)^2  - g'(\psi_\ep)\psi^2 + |\nabla \psi|^2 \right)dxdy \\
 \geq& \iint_{\Omega_m} \left(|\nabla \psi|^2 - g'(\psi_\ep)\psi^2 + g'(\psi_\ep) (\hat{P}_{\ep,e}\psi)^2 \right)dxdy = \langle\hat{A}_{\ep,e} \psi, \psi\rangle.
\end{align*}

For $\psi\in\tilde{X}_{\ep, e}$, we have $\tilde{\omega} \triangleq g'(\psi_\ep)(I - \hat{P}_{\ep,e})\psi \in \overline{R(B_\ep)}$. Let $\tilde{\psi} = (-\Delta)^{-1}\tilde{\omega}$. Then
\begin{align}\nonumber
\langle\hat{A}_{\ep,e} \psi, \psi\rangle
&= \iint_{\Omega_m} \left( |\nabla \psi|^2 - g'(\psi_\ep)((I - \hat{P}_{\ep,e})\psi)^2\right) dxdy \\\nonumber
&= \iint_{\Omega_m} \left( |\nabla \psi|^2 - \frac{\tilde{\omega}^2}{g'(\psi_\ep)}\right)dxdy \\\nonumber
&= \iint_{\Omega_m} \left(|\nabla \psi|^2 - 2 \tilde{\omega} \psi + \frac{\tilde{\omega}^2}{g'(\psi_\ep)}\right)dxdy \\\nonumber
& \geq \iint_{\Omega_m}  \left(\frac{\tilde{\omega}^2}{g'(\psi_\ep)} - |\nabla \tilde{\psi}|^2\right) dxdy = \langle L_{\ep,e}\tilde{\omega}, \tilde{\omega} \rangle,
\end{align}
where we used $\langle\tilde \omega, \hat{P}_{\ep,e}\psi\rangle=0$.
From the two inequalities above, we have $n^{\leq0}\left( L_{\ep,e} |_{\overline{R(B_\ep)}} \right) = n^{\leq0}\left(\hat{A}_{\ep,e}\right)$.
Similar to (11.60) in \cite{lin2022instability}, we have $\dim \ker \left(L_{\ep,e} |_{\overline{R(B_\ep)}} \right)=\dim \ker (\hat{A}_{\ep,e})$. Thus,
 $n^{-}\left( L_{\ep,e} |_{\overline{R(B_\ep)}} \right) = n^{-}\left(\hat{A}_{\ep,e}\right)$.
\end{proof}

\if0
From Proposition \ref{prop3}, we know that the eigenvalue problem
\begin{align}\label{poisson-eq}
-\Delta \psi = \lambda g'(\psi_\ep) \psi, \quad \psi \in \tilde{X}_{\ep, e},
\end{align}
is equivalent to
\begin{align}\label{poisson-new}
 -\frac{\phi_{\theta_\ep \theta_\ep}}{1-\gamma_\ep^2} - \left( (1-\gamma_\ep^2)\phi_{\gamma_\ep} \right)_{\gamma_\ep} = 2\lambda \phi, \quad \phi \in \tilde{Y}_{\ep, e},
\end{align}
where $\phi(\theta_\ep, \gamma_\ep) = \psi(x(\theta_\ep, \gamma_\ep), y(\theta_\ep, \gamma_\ep))$ under the transformation \eqref{transf1} and \eqref{transf2} with periodic extension.
\fi

 To study the linear instability of the Kelvin--Stuart vortex $\omega_\ep$ for multi-periodic perturbations, we will construct a specific test function $\psi \in \tilde{X}_{\ep, e}$ such that
 \begin{align*}
\langle\hat{A}_{\ep,e} \psi, \psi \rangle=  b_{\ep, 1}(\psi) + b_{\ep, 2}(\psi) < 0,
\end{align*}
where
\begin{align*}
  b_{\ep, 1}(\psi) = \iint_{\Omega_m} \left(|\nabla \psi|^2  - g'(\psi_\ep) \psi^2 \right)dxdy
\end{align*}
and
\begin{align*}
 b_{\ep, 2}(\psi) =  \iint_{\Omega_m} g'(\psi_\ep)( \hat{P}_{\ep,e}\psi)^2 dxdy =  \int_{\min \psi_\ep}^{\infty} g'(\rho) \sum_{i=1}^{n_\rho} \frac{\left|\oint_{\Gamma_i (\rho)} \frac{\psi}{|\nabla \psi_\ep|}\right|^2}{\oint_{\Gamma_i (\rho)} \frac{1}{|\nabla \psi_\ep|}} d\rho.
 \end{align*}
Here,  $\{ \Gamma_i (\rho), i = 1, \cdots, n_\rho\}$ is the set of all the disjoint closed level curves in the level set $\{(x, y)\in \Omega_m |\psi_\ep(x,y) = \rho\}$, where $\rho\in[\min\psi_\ep,\infty)$.
Then  by Lemma \ref{L e-hat A} we have $n^-\left( L_{\ep,e} |_{\overline{R(B_\ep)}} \right) \geq 1$, and the linear instability follows from Lemma \ref{indice-theorem-sep}.

\subsection{Proof of multi-periodic instability   (even multiple case)}
In this subsection, we prove the linear instability of the  Kelvin--Stuart vortex $\omega_\ep$ for $4k\pi$-periodic perturbations. We take the test function
\begin{align}\label{test-even}
\tilde{\psi}_\ep(x, y)  =\tilde{\Psi}_\ep(\theta_\ep, \gamma_\ep)= \cos\left(\frac{\theta_\ep}{2}\right)(1-\gamma_\ep^2)^{1\over4}
\end{align}
with $(\theta_\ep, \gep) \in \tilde{\Omega}_{2k} = \mathbb{T}_{4k\pi} \times [-1, 1]$. Then $\tilde{\Psi}_\ep\in\tilde Y_{\ep,e}\Longrightarrow\tilde{\psi}_\ep\in\tilde X_{\ep,e}$. By Theorem \ref{sol to eigenvalue problem varepsilon=0-pde}, $\tilde{\psi}_\ep(x, y)$ is exactly an eigenfunction of the principal  eigenvalue  $\lambda={3\over 8}$ for
\eqref{eigenvalue problem-ep-original-m}, and thus,
$$-(\Delta +g'(\psi_\ep)) \tilde{\psi}_\ep = - \frac 5 8 g'(\psi_\ep)\tilde{\psi}_\ep.$$
Then
\begin{align}\nonumber
b_{\ep, 1}(\tilde{\psi}_\ep)
= & \int_{-\infty}^{+\infty} \int_0^{4k\pi} \left(|\nabla \tilde{\psi}_\ep|^2 - g'(\psi_\epsilon)\tilde{\psi}_\ep^2 \right)dx dy
=  -{5\over8}\int_{-\infty}^{+\infty} \int_0^{4k\pi} g'(\psi_\ep)\tilde{\psi}_\ep^2dx dy \\\label{b1-even}
= & -{5\over4} \int_0^{4k\pi}  \cos^2\left(\frac{\theta_\ep}{2}\right)d \theta_\ep\int_{-1}^{1} (1-\gamma_\ep^2)^{1\over2} d\gep
=  -{5\over4}k\pi^2.
\end{align}
$b_{\ep, 2}(\tilde{\psi}_\ep)$ vanishes by symmetry as seen in the next lemma.
\begin{lemma}\label{b2-even}
$$ b_{\ep, 2}(\tilde{\psi}_\ep) =  \int_{\min \psi_\ep}^{\max \psi_\ep} g'(\rho) \sum_{i=1}^{n_\rho} \frac{\left|\oint_{\Gamma_i (\rho)} \frac{\tilde{\psi}_\ep}{|\nabla \psi_\ep|}\right|^2}{\oint_{\Gamma_i (\rho)} \frac{1}{|\nabla \psi_\ep|}} d\rho = 0.$$
\end{lemma}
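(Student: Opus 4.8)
The plan is to show that $b_{\ep,2}(\tilde\psi_\ep)=0$ by exploiting the reflection symmetry of the steady state $\psi_\ep$ in the $y$ variable together with the sign behaviour of the test function $\tilde\psi_\ep$ under that reflection. First I would record the key structural facts: $\psi_\ep(x,-y)=\psi_\ep(x,y)$ and $|\nabla\psi_\ep(x,-y)|=|\nabla\psi_\ep(x,y)|$, so the level set $\{\psi_\ep=\rho\}$ is invariant under $(x,y)\mapsto(x,-y)$, and the arclength measure and the weight $\frac{1}{|\nabla\psi_\ep|}$ are preserved by this reflection. For $\rho>\rho_0=\psi_\ep(0,0)$ the level set consists, within one fundamental domain $\mathbb{T}_{4k\pi}\times\mathbb{R}$, of closed curves; I would argue that each such curve either lies entirely in $\{y>0\}$, entirely in $\{y<0\}$, and in either case its mirror image under $y\mapsto -y$ is another curve in the level set (the two ``cat's eye'' type branches are exchanged, or, for the untrapped large-$|y|$ branches, upper and lower copies are exchanged). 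For $\rho$ at or below the separatrix value the relevant curves straddle $y=0$ and are themselves symmetric under the reflection.

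Next I would translate $\tilde\psi_\ep$ into this picture. From \eqref{transf2}, $\gamma_\ep(x,-y)=-\gamma_\ep(x,y)$, hence $(1-\gamma_\ep^2)^{1/4}$ is even in $y$; and from \eqref{def-xi-ep}, \eqref{transf1} together with the fact that $\xi_\ep(x,-y)=\xi_\ep(x,y)$ (by Proposition~\ref{prop0}, $\xi_\ep$ depends on $y$ only through $\cosh y$), one gets $\theta_\ep(x,-y)=\theta_\ep(x,y)$, so $\cos(\theta_\ep/2)$ is also even in $y$. Wait — that would make $\tilde\psi_\ep$ even and the integral not obviously zero; the correct observation is that $\cos(\theta_\ep/2)$ changes sign when $\theta_\ep\mapsto\theta_\ep+2\pi$, i.e. $\tilde\psi_\ep$ is \emph{anti}-periodic of period $2\pi$ in $\theta_\ep$ (it is $4\pi$-periodic), which is exactly why the fundamental domain must be doubled. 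So the symmetry I want to use is not the $y$-reflection on $\tilde\psi_\ep$ directly but the translation $\theta_\ep\mapsto\theta_\ep+2\pi$ under which $\tilde\psi_\ep\mapsto-\tilde\psi_\ep$ while $\psi_\ep$, $|\nabla\psi_\ep|$, and the collection of level curves are invariant (the map $x\mapsto x+2\pi$ on $\Omega_m$ preserves $\psi_\ep$ and permutes the level curves $\Gamma_i(\rho)$, pairing each $\Gamma_i(\rho)$ with a translated companion $\Gamma_{i'}(\rho)$).

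With this in hand the computation of $b_{\ep,2}(\tilde\psi_\ep)$ is a matter of pairing. I would group the closed curves $\{\Gamma_i(\rho)\}$ into pairs $\{\Gamma_i(\rho),\Gamma_{i'}(\rho)\}$ related by the $x$-shift by $2\pi$ (for curves not fixed by the shift) and treat separately any curve $\Gamma_i(\rho)$ that is invariant under the $x$-shift by $2\pi$. On a shift-invariant curve, the shift by $2\pi$ acts as an arclength-preserving, $|\nabla\psi_\ep|$-preserving involution that sends $\tilde\psi_\ep$ to $-\tilde\psi_\ep$, hence $\oint_{\Gamma_i(\rho)}\frac{\tilde\psi_\ep}{|\nabla\psi_\ep|}=0$ and that curve contributes nothing. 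On a genuine pair $\{\Gamma_i,\Gamma_{i'}\}$, the change of variables $x\mapsto x+2\pi$ gives $\oint_{\Gamma_{i'}(\rho)}\frac{1}{|\nabla\psi_\ep|}=\oint_{\Gamma_i(\rho)}\frac{1}{|\nabla\psi_\ep|}$ and $\oint_{\Gamma_{i'}(\rho)}\frac{\tilde\psi_\ep}{|\nabla\psi_\ep|}=-\oint_{\Gamma_i(\rho)}\frac{\tilde\psi_\ep}{|\nabla\psi_\ep|}$, so the two terms $\frac{|\oint_{\Gamma_i}\tilde\psi_\ep/|\nabla\psi_\ep||^2}{\oint_{\Gamma_i}1/|\nabla\psi_\ep|}$ and $\frac{|\oint_{\Gamma_{i'}}\tilde\psi_\ep/|\nabla\psi_\ep||^2}{\oint_{\Gamma_{i'}}1/|\nabla\psi_\ep|}$ are equal rather than cancelling — so this naive pairing does not suffice either. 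The clean way is instead to use that $\hat P_{\ep,e}$ is an orthogonal projection: since $\tilde\psi_\ep$ is an eigenfunction of the local (unprojected) operator from Theorem~\ref{sol to eigenvalue problem varepsilon=0-pde} whose eigenspace for the principal mode consists of functions $\sin(\theta_\ep/2)(1-\gamma_\ep^2)^{1/4}$ and $\cos(\theta_\ep/2)(1-\gamma_\ep^2)^{1/4}$ that are \emph{constant along the level curves only after} applying $\hat P_{\ep,e}$, I would directly show $\hat P_{\ep,e}\tilde\psi_\ep=0$ pointwise: on each level curve $\Gamma_i(\rho)$ the function $\theta_\ep$ runs over an interval of length exactly $2\pi$ (or a union of such, traversed symmetrically), over which $\cos(\theta_\ep/2)$ integrates against the symmetric density $\frac{1}{|\nabla\psi_\ep|}\,ds$ to zero because $\cos((\theta_\ep+2\pi)/2)=-\cos(\theta_\ep/2)$ and the density is invariant under the corresponding half-turn of the curve. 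I expect the main obstacle to be precisely this last point: carefully parametrising the level curves $\Gamma_i(\rho)$ by $\theta_\ep$, verifying that $\theta_\ep$ increases monotonically along each branch through a total variation of $2\pi$, and checking that the measure $\frac{ds}{|\nabla\psi_\ep|}$ pushed forward to the $\theta_\ep$-line is invariant under $\theta_\ep\mapsto\theta_\ep+2\pi$ (equivalently, using $d\theta_\ep\,d\gamma_\ep=\tfrac12 g'(\psi_\ep)\,dx\,dy$ from \eqref{Jacobian of the transformation-ep} to convert the line integrals into integrals in $\theta_\ep$ at fixed $\rho=\psi_\ep$, where the $\gamma_\ep$-dependence factors out) so that $\oint_{\Gamma_i(\rho)}\frac{\cos(\theta_\ep/2)(1-\gamma_\ep^2)^{1/4}}{|\nabla\psi_\ep|}\,ds=0$ for every $i$ and $\rho$.
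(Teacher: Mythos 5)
Your proposal correctly discards the $y$-reflection (the test function is even in $y$) and correctly recognizes that pairing the $2k$ cat's eyes via the shift $x\mapsto x+2\pi$ cannot help, because every summand in $b_{\ep,2}(\tilde{\psi}_\ep)$ is nonnegative, so each loop integral $\oint_{\Gamma_i(\rho)}\tilde{\psi}_\ep/|\nabla\psi_\ep|$ must vanish individually. But the mechanism you finally rely on — that along each single level curve $\theta_\ep$ sweeps an interval of length exactly $2\pi$, so the anti-periodicity $\cos((\theta_\ep+2\pi)/2)=-\cos(\theta_\ep/2)$ kills the average — fails exactly where the lemma has content, namely on the closed orbits inside the eyes ($\rho\in(-\rho_0,\rho_0]$, with $\rho_0$ as in \eqref{def-rho0}). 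Under the diffeomorphism \eqref{transf1}--\eqref{transf2} (extended $2\pi$-periodically), a closed orbit inside the $j$-th eye is carried to a contractible loop encircling $(\theta_\ep,\gamma_\ep)=((2j-1)\pi,0)$, the image of the eye's centre; its $\theta_\ep$-range is an interval symmetric about $(2j-1)\pi$ which shrinks to a point as the orbit approaches the centre, so there is no pair of points $\theta_\ep$ and $\theta_\ep+2\pi$ on the same curve and the shift-antisymmetry never acts. Your argument therefore only disposes of the untrapped curves $\rho>\rho_0$, which genuinely are invariant under the $2\pi$-shift; the trapped-region contribution is left unproved, and the step you yourself flag as "the main obstacle" (monotone traversal of a full $2\pi$ in $\theta_\ep$) is false there.

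The missing idea — and the paper's entire proof — is a reflection rather than a translation. Under $x\mapsto 2(2j-1)\pi-x$ one has $\xi_\ep$ and $\gamma_\ep$ invariant, $\eta_\ep\mapsto-\eta_\ep$, hence $\theta_\ep\mapsto 2(2j-1)\pi-\theta_\ep$ and $\cos(\theta_\ep/2)\mapsto\cos((2j-1)\pi-\theta_\ep/2)=-\cos(\theta_\ep/2)$, while $\psi_\ep$, $|\nabla\psi_\ep|$, the arclength element and every level curve are invariant. Thus $\tilde{\psi}_\ep$ is odd about each line $x=(2j-1)\pi$ along the trajectories; this reflection fixes each trapped orbit set-wise (and each $2\pi$-block of the untrapped curves), so every weighted average vanishes, i.e. $\hat{P}_{\ep,e}\tilde{\psi}_\ep\equiv 0$ and hence $b_{\ep,2}(\tilde{\psi}_\ep)=0$. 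Without invoking these vertical reflections about the eye centres, your argument has a genuine gap for all $\rho\le\rho_0$.
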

\begin{proof} Since $\tilde{\psi}_\ep$ is   `odd'   symmetrical about  $\{x= (2j-1)\pi\}$ along any trajectory of  the steady veloctiy, $1\leq j\leq 2k$, we have $\hat{P}_{\ep,e}\tilde{\psi}_\ep \equiv0$ on $\mathbb{T}_{4k\pi}\times\mathbb{R}$, and thus, $b_{\ep, 2}(\tilde{\psi}_\ep)=0$.
 \if0
 Here we give a detailed computation for converting the curve integrals to  definite integrals, which will be used in studying the linear modulational instability of $\omega_\ep$.
 Consider $\psi_\ep$ on $\mathbb{T}_{2\pi}\times \mathbb{R}$.
By \eqref{steadyv}, $(0,0)$ and $(\pi,0)$ are critical points of $\psi_\ep$. The  Hessian matrix of $\psi_\ep$ is
\begin{align*}
\left( \begin{array}{cc} {-\ep^2-\ep\cos(x)\cosh(y)\over (\cosh(y)+\ep\cos(x))^2} & {\ep\sin(x)\sinh(y)\over (\cosh(y)+\ep\cos(x))^2} \\ {\ep\sin(x)\sinh(y)\over (\cosh(y)+\ep\cos(x))^2} & {1+\ep\cosh(y)\cos(x)\over (\cosh(y)+\ep\cos(x))^2} \end{array} \right).
\end{align*}
Then $(0,0)$ is a saddle point of $\psi_\ep$, and $(\pi,0)$ is the minimal  point of $\psi_\ep$, since $\psi_\ep(x,y)\to\infty$ as $y\to \pm\infty$ for $x\in\mathbb{T}_{2\pi}$.
Let
\begin{align}\label{def-rho0}
\rho_0 = \psi_\ep(0,0) = \ln\left(\sqrt{\frac{1+\ep}{1-\ep}}\right).
\end{align}
Then $\min \psi_\ep = \psi_\ep(\pi, 0) = - \rho_0$. We separate $b_{\ep, 2}(\tilde{\psi}_\ep)$ into two parts:
$$b_{\ep, 2}(\tilde{\psi}_\ep) =\int_{-\rho_0}^{\rho_0} g'(\rho) \sum_{i=1}^{n_\rho} \frac{\left|\oint_{\Gamma_i (\rho)} \frac{\tilde{\psi}_\ep}{|\nabla \psi_\ep|}\right|^2}{\oint_{\Gamma_i (\rho)} \frac{1}{|\nabla \psi_\ep|}} d\rho +\int_{\rho_0}^{\infty} g'(\rho) \sum_{i=1}^{n_\rho} \frac{\left|\oint_{\Gamma_i (\rho)} \frac{\tilde{\psi}_\ep}{|\nabla \psi_\ep|}\right|^2}{\oint_{\Gamma_i (\rho)} \frac{1}{|\nabla \psi_\ep|}} d\rho= I_1 + I_2. $$
For $\rho\in(-\rho_0, \rho_0]$, the streamlines are in the trapped regions and  the level set $\Gamma(\rho) = \{(x, y) \in \Omega_{2k} | \psi_\ep(x,y) = \rho\} $ has $n_\rho = 2k$ closed level curves, i.e. $\Gamma(\rho) =  \bigcup_{i=1}^{n_\rho} \Gamma_i(\rho)$, where $\Gamma_i(\rho)$ corresponds to a periodic orbit inside the $i$-th cat's-eye region. We divide each orbit into two parts, namely, the upper part
\begin{align}\label{curve-Gamma-i+rho1}
\Gamma_{i+} (\rho)
& = \{(x, y) \in [2(i-1)\pi, 2i\pi] \times \mathbb{R}  \;|\;  \psi_\ep(x,y) = \rho,  y \geq 0\},
\end{align}
and the lower part
\begin{align}\label{curve-Gamma-i+rho2}\Gamma_{i-} (\rho) = \{(x, y) \in [2(i-1)\pi, 2i\pi] \times \mathbb{R}\;|\;  \psi_\ep(x,y) = \rho,  y < 0\},\end{align}
where $i = 1, \cdots, n_\rho$. Using $x$ as the parameter,
we  represent $\Gamma_{i+}(\rho)$ and $\Gamma_{i-}(\rho)$ as follows:
$$\vec{r}_{i+} (x) = (x, \cosh^{-1}(\sqrt{1-\ep^2} e^{\rho} - \ep \cos(x))), \quad x \in [2(i-1)\pi + x_0, 2i\pi - x_0], $$
and
$$\vec{r}_{i-} (x) = (x, -\cosh^{-1}(\sqrt{1-\ep^2} e^{\rho} - \ep \cos(x))), \quad x \in (2(i-1)\pi + x_0, 2i\pi - x_0), $$
respectively. Here, $x_0 = \arccos\left( \frac{\sqrt{1-\ep^2} e^\rho - 1}{\ep} \right) $ is the point on $[0, \pi]$ such that $\psi_\ep(x_0,0)=\rho$.
Moreover, we have
\begin{align}\label{dr+}
\left|\frac{d \vec{r}_{i+}(x)}{dx}\right| = \sqrt{ 1 + \left(\frac{\ep \sin(x)}{\sinh(y(x))}\right)^2} = \sqrt{ 1 + \frac{\ep^2 \sin^2(x)}{ ( \sqrt{1-\ep^2} e^{\rho} - \ep \cos(x) )^2 - 1 }},
\end{align}
and
\begin{align}\label{dr-}
\left|\frac{d \vec{r}_{i-}(x)}{dx}\right|  = \sqrt{ 1 + \left(\frac{\ep \sin(x)}{\sinh(y(x))}\right)^2} = \sqrt{ 1 + \frac{\ep^2 \sin^2(x)}{ ( \sqrt{1-\ep^2} e^{\rho} - \ep \cos(x) )^2 - 1 }},\end{align}
where $y(x)=\cosh^{-1}(\sqrt{1-\ep^2} e^{\rho} - \ep \cos(x))$.
Then we  represent $\left| \nabla \psi_\ep \right| $ and $\tilde{\psi}_\ep$ on $\Gamma_{i+}(\rho)$ and $\Gamma_{i-}(\rho)$ in terms of the parameter $x$. Since
$\psi_\ep(x,y) = \rho$, we have $\cosh(y) + \ep \cos(x) = e^\rho \sqrt{1-\ep^2}$. So
\begin{align}\label{tilde psi-p-x}
\left| \nabla \psi_\ep \right|
= & \left| \left( - \frac{\ep \sin(x)}{e^\rho \sqrt{1-\ep^2}}, \frac{\sinh(y)}{e^\rho \sqrt{1-\ep^2}} \right) \right|
=  \frac{\sqrt{\ep^2 \sin^2(x) + \sinh^2(y)}}{e^\rho \sqrt{1-\ep^2}} \\\nonumber
=&  \sqrt{1-e^{-2\rho} - \frac{2\ep \cos(x)}{e^\rho \sqrt{1-\ep^2}}},
\end{align}
and
\begin{align}\label{psi-p-x}
\tilde{\psi}_\ep(x,y(x)) =& \cos\left(\frac{\theta_\ep}{2}\right)(1-\gep^2)^{1\over4} \\\nonumber
=& \left\{ \begin{array}{lll} \sqrt{\frac{1+\cos(\theta_\ep)}{2}} (1-\gep^2)^{1\over4} &\mbox{ for } & x \in [2(i-1)\pi + x_0, (2i - 1)\pi ],\\
 -\sqrt{\frac{1+\cos(\theta_\ep)}{2}} (1-\gep^2)^{1\over4} &\mbox{ for } & x \in ((2i - 1)\pi, 2i\pi - x_0],
 \end{array} \right.
\end{align}
where
\begin{align}\label{psi-p-x1}
1 - \gep^2 = 1 - \sinh^2(y)e^{-2\rho} = 1 - \left(\left( \sqrt{1-\ep^2} e^{\rho} - \ep \cos(x) \right)^2 - 1\right) e^{-2\rho},
\end{align}
and
\begin{align}\label{psi-p-x2}
\cos(\theta_\ep) = \frac{\xi_\ep}{\sqrt{1-\gep^2}} = \frac{\ep + \sqrt{1-\ep^2} \cos(x) e^{-\rho} }{\sqrt{1 - \left(\left( \sqrt{1-\ep^2} e^{\rho} - \ep \cos(x) \right)^2 - 1\right) e^{-2\rho}}}.
\end{align}
Thus, for $\rho\in(-\rho_0, \rho_0]$, we have
\begin{align*}
\oint_{\Gamma_i (\rho)} \frac{\tilde{\psi}_\ep}{|\nabla \psi_\ep|}
=& \int_{\Gamma_{i+} (\rho)} \frac{\tilde{\psi}_\ep}{|\nabla \psi_\ep|} + \int_{\Gamma_{i-} (\rho)} \frac{\tilde{\psi}_\ep}{|\nabla \psi_\ep|}\\
= & \int_{2(i-1)\pi + x_0}^{2i\pi - x_0} \frac{\tilde{\psi}_\ep}{|\nabla \psi_\ep|} \left|\frac{d \vec{r}_{i+}(x)}{dx}\right| dx+
 \int_{2(i-1)\pi + x_0}^{2i\pi - x_0} \frac{\tilde{\psi}_\ep}{|\nabla \psi_\ep|} \left|\frac{d \vec{r}_{i-}(x)}{dx}\right| dx.
\end{align*}
By \eqref{psi-p-x}-\eqref{psi-p-x2}, we have
\begin{align*}
\tilde{\psi}_\ep(x,\pm y(x))=-\tilde{\psi}_\ep((4i -2)\pi-x,\pm y((4i - 2)\pi-x)),\quad x \in [2(i-1)\pi + x_0, (2i - 1)\pi ].
\end{align*}
By \eqref{dr+}-\eqref{tilde psi-p-x}, we have
\begin{align*}
f(x)=f((4i - 2)\pi-x),\quad x \in [2(i-1)\pi + x_0, (2i - 1)\pi ]
\end{align*}
for $f(x)=|\nabla \psi_\ep|$, $\left|\frac{d \vec{r}_{i+}(x)}{dx}\right|$ or $\left|\frac{d \vec{r}_{i-}(x)}{dx}\right|$.
In other words, $\tilde{\psi}_\ep$ is an `odd'   function symmetrical about the point $( (2i-1)\pi,0)$, and
$|\nabla \psi_\ep|$, $\left|\frac{d \vec{r}_{i+}(x)}{dx}\right|$, $\left|\frac{d \vec{r}_{i-}(x)}{dx}\right|$ are `even'   functions symmetrical about the vertical line $x=(2i-1)\pi$.
Then
$$\int_{\Gamma_{i+} (\rho)} \frac{\tilde{\psi}_\ep}{|\nabla \psi_\ep|} = \int_{\Gamma_{i-} (\rho)} \frac{\tilde{\psi}_\ep}{|\nabla \psi_\ep|} = 0\Longrightarrow\oint_{\Gamma_i (\rho)} \frac{\tilde{\psi}_\ep}{|\nabla \psi_\ep|} = 0$$
for $i = 1, \cdots, n_\rho$.
Thus, $I_1 = 0$.

To calculate $I_2$, we note that the streamlines are not in the trapped regions for   $\rho\in(\rho_0,\infty)$, and there are only $n_\rho = 2$ closed curves $\Gamma_1(\rho), \Gamma_2(\rho)$ in the level set $\Gamma(\rho) = \{(x, y) \in \Omega_{2k} | \psi_\ep(x,y) = \rho\}$. The $x$ component of each curve runs from $0$ to  $4k\pi$. Let $i=1,2$. We divide the curve $\Gamma_i(\rho)$ into $2k$ pieces $\Gamma_{i,j}(\rho)$, where the $j$-th piece is contained in $[2(j-1)\pi,2j\pi]\times \mathbb{R}$, $j=1,\cdots, 2k$. We use the same  strategy to parameterize the curve $\Gamma_{i,j}(\rho)$ as $ \vec{r}_{i,j+}(x)$, $\vec{r}_{i,j-}(x)$, and  represent the functions $\tilde{\psi}_\ep,$ $|\nabla \psi_\ep|$, $\left|\frac{d \vec{r}_{i,j+}(x)}{dx}\right|$, $\left|\frac{d \vec{r}_{i,j-}(x)}{dx}\right|$ as above.  The main point is that
$\tilde{\psi}_\ep$ is still   `odd'   symmetrical about the point $( (2j-1)\pi,0)$, and
$|\nabla \psi_\ep|$, $\left|\frac{d \vec{r}_{i,j+}(x)}{dx}\right|$, $\left|\frac{d \vec{r}_{i,j-}(x)}{dx}\right|$ are `even'    symmetrical about the vertical line $x=(2j-1)\pi$. Thus, $I_2 = 0$.
\fi
\end{proof}
Now we get linear instability of $\omega_\ep$ for perturbations with even multiples of the period.
\begin{Theorem}\label{multi-even}
Let $\ep\in[0,1)$. Then the steady state $\omega_\ep$ is linearly unstable for $4k\pi$-periodic perturbations, where $k\geq1$ is an integer.
\end{Theorem}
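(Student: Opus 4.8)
The plan is to apply the linear instability criterion established in Lemma~\ref{L e-hat A}, namely to exhibit a test function $\tilde\psi_\ep\in\tilde X_{\ep,e}$ with $\langle\hat A_{\ep,e}\tilde\psi_\ep,\tilde\psi_\ep\rangle<0$, which forces $n^-(\hat A_{\ep,e})>0$ and hence $n^-(L_{\ep,e}|_{\overline{R(B_\ep)}})>0$; combined with Lemma~\ref{range BLo-B} and Lemma~\ref{indice-theorem-sep} (whose hypotheses \textbf{(G1-4)} were verified via Corollaries~\ref{A-L-dec-o}--\ref{A-L-dec-e}), this yields linear instability of $\omega_\ep$ for $4k\pi$-periodic perturbations. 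The natural candidate, working on $\tilde\Omega_{2k}=\mathbb{T}_{4k\pi}\times[-1,1]$, is the function $\tilde\psi_\ep(x,y)=\tilde\Psi_\ep(\theta_\ep,\gamma_\ep)=\cos(\theta_\ep/2)(1-\gamma_\ep^2)^{1/4}$ given in \eqref{test-even}, which by Theorem~\ref{sol to eigenvalue problem varepsilon=0-pde} (the $i=1$, $n=0$, $m=2k$ case, lifted to period $4k\pi$) is precisely an eigenfunction of $-\Delta\psi=\lambda g'(\psi_\ep)\psi$ with principal eigenvalue $\lambda=\tfrac{1}{2}\cdot\tfrac{1}{2k}\cdot\tfrac{2k+1}{2k}<1$; this makes the local part $b_{\ep,1}$ of the quadratic form manifestly negative.

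The execution proceeds in two steps. First I would compute $b_{\ep,1}(\tilde\psi_\ep)=\iint_{\Omega_{2k}}(|\nabla\tilde\psi_\ep|^2-g'(\psi_\ep)\tilde\psi_\ep^2)\,dxdy$: since $-(\Delta+g'(\psi_\ep))\tilde\psi_\ep=-(1-\lambda)g'(\psi_\ep)\tilde\psi_\ep$ with $\lambda$ the principal eigenvalue, integration by parts reduces this to $-(1-\lambda)\iint g'(\psi_\ep)\tilde\psi_\ep^2$, which after passing to the $(\theta_\ep,\gamma_\ep)$ coordinates via \eqref{Jacobian of the transformation-ep} becomes an explicit separable integral $-(1-\lambda)\int_0^{4k\pi}\cos^2(\theta_\ep/2)\,d\theta_\ep\int_{-1}^1(1-\gamma_\ep^2)^{1/2}\,d\gamma_\ep$, a negative constant multiple of $k\pi^2$ (for $k=1$ this is the $-\tfrac{5}{4}k\pi^2$ of \eqref{b1-even}; for the general even multiple the constant has the same sign). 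Second, I would show the nonlocal contribution $b_{\ep,2}(\tilde\psi_\ep)=\iint_{\Omega_{2k}}g'(\psi_\ep)(\hat P_{\ep,e}\tilde\psi_\ep)^2\,dxdy$ vanishes identically. This is the content of Lemma~\ref{b2-even}, and it follows from a symmetry argument: along every periodic streamline of $\vec u_\ep$ (equivalently every level curve $\Gamma_i(\rho)$ of $\psi_\ep$), the function $\tilde\psi_\ep$ is odd about the points $\{x=(2j-1)\pi\}$ while $|\nabla\psi_\ep|$ and the arc-length element are even about the corresponding vertical lines, so each curve integral $\oint_{\Gamma_i(\rho)}\tilde\psi_\ep/|\nabla\psi_\ep|$ is zero, whence $\hat P_{\ep,e}\tilde\psi_\ep\equiv0$ and $b_{\ep,2}(\tilde\psi_\ep)=0$. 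Combining, $\langle\hat A_{\ep,e}\tilde\psi_\ep,\tilde\psi_\ep\rangle=b_{\ep,1}(\tilde\psi_\ep)+b_{\ep,2}(\tilde\psi_\ep)<0$.

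One technical point to handle carefully is that the test function must genuinely lie in $\tilde X_{\ep,e}$, i.e. have finite Dirichlet energy, be even in $y$, and satisfy the normalization $\int_0^{4k\pi}\psi(x,0)(1+\ep\cos x)^{-1}dx=0$; the first two are immediate from the form of $\tilde\Psi_\ep$ (note $\tilde\Psi_\ep\in\tilde Y_{\ep,e}$, which by \eqref{Psi-psi-norm-eq} and the equivalence of $\tilde X_{\ep,m}$ with $\tilde Y_{\ep,m}$ gives membership in $\tilde X_{\ep,e}$), and the normalization holds because at $y=0$ one has $\gamma_\ep=0$ and $\theta_\ep(x,0)$ runs monotonically so that $\tilde\psi_\ep(x,0)=\cos(\theta_\ep(x,0)/2)$ integrates to zero against the relevant weight by the oddness about $x=(2j-1)\pi$ noted above. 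I expect the main obstacle to be the clean verification of the symmetry underlying $b_{\ep,2}=0$: one must check that the representation of $\psi_\ep$, $|\nabla\psi_\ep|$, and the parametrization of each level curve in terms of the horizontal coordinate $x$ all respect the reflection $x\mapsto (4j-2)\pi-x$ on each cat's-eye cell, both in the trapped region ($\rho\le\rho_0$, where there are $2k$ closed curves) and in the untrapped region ($\rho>\rho_0$, where there are two curves each winding $2k$ times), using $\rho_0=\psi_\ep(0,0)=\ln\sqrt{(1+\ep)/(1-\ep)}$ as in \eqref{def-rho0}. Once that symmetry is in hand the rest is routine, and the theorem follows for all $\ep\in[0,1)$ uniformly.
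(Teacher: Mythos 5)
Your proposal is correct and follows essentially the same route as the paper: the same test function $\tilde\psi_\ep=\cos(\theta_\ep/2)(1-\gamma_\ep^2)^{1/4}$ from \eqref{test-even}, the same computation of $b_{\ep,1}$, the same symmetry argument giving $\hat P_{\ep,e}\tilde\psi_\ep\equiv0$ and hence $b_{\ep,2}=0$ (Lemma \ref{b2-even}), and the same appeal to Lemmas \ref{L e-hat A}, \ref{range BLo-B} and \ref{indice-theorem-sep}. The only slip is the eigenvalue identification: on $\mathbb{T}_{4k\pi}$ the function $\cos(\theta_\ep/2)(1-\gamma_\ep^2)^{1/4}$ is the $j=k$ mode eigenfunction of \eqref{eigenvalue problem-ep-original-m} with eigenvalue $\tfrac38$ independent of $k$ (so that $-(\Delta+g'(\psi_\ep))\tilde\psi_\ep=-\tfrac58 g'(\psi_\ep)\tilde\psi_\ep$ and $b_{\ep,1}=-\tfrac54 k\pi^2$), not the mode-$1$ eigenvalue $\tfrac12\cdot\tfrac{1}{2k}\bigl(\tfrac{1}{2k}+1\bigr)$ you cite, but since both are less than $1$ this does not affect the sign of $b_{\ep,1}$ or the conclusion.
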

\begin{proof}
With the test function $\tilde{\psi}_\ep$ defined in \eqref{test-even}, by \eqref{b1-even} and Lemma \ref{b2-even}, we have
$$\langle\hat{A}_{\ep,e} \tilde{\psi}_\ep, \tilde{\psi}_\ep \rangle = -{5\over4}k\pi^2 < 0.$$
Then we have
$n^-\left( L_{\ep,e} |_{\overline{R(B_\ep)}} \right) = n^-\left(\hat{A}_{\ep,e}\right)\geq1$ by Lemma \ref{L e-hat A}.  The conclusion follows from  Lemma  \ref{indice-theorem-sep}.
\end{proof}

\subsection{Proof of multi-periodic instability  (odd multiple case)}
In this subsection, we study linear instability of the steady state $\omega_\ep$ for   $(4k+2)\pi$-periodic  perturbations, where $k\geq1$ is an integer.
We divide our discussion into two cases in terms of the $\ep$ values.
\medskip

\noindent{\bf{Case 1. Test functions for $\ep\in[0,{4\over5}]$.}}
\medskip

In this case,
 we take the test function to be
 \begin{align}\nonumber
\hat{\psi}_{1,\ep}(x,y)= &\hat\Psi_{1,\ep}(\theta_\ep,\gamma_\ep)\\\label{test-odd}
=  &\left\{ \begin{array}{ll}
         \sin\left(\frac{\theta_\ep}{3}\right)(1-\gamma_\ep^2)^{1\over6} & \mbox{if $(\theta_\ep,\gamma_\ep) \in [0, 6\pi]\times[-1,1]$},\\
         \sin\left(\theta_\ep\right)(1-\gamma_\ep^2)^{1\over2} & \mbox{if $(\theta_\ep,\gamma_\ep) \in (6\pi, (4k + 2)\pi]\times[-1,1]$}.\\
        \end{array} \right.
 \end{align}
 To show that $ \hat{\psi}_{1,\ep}\in \tilde X_{\ep,e}$, it suffices  to prove that $\hat\Psi_{1,\ep}\in\tilde Y_{\ep,e}$, where $\tilde Y_{\ep,e}$ is defined in  \eqref{def-Y-ep-m}. Note that $\hat\Psi_{1,\ep}\in C^0(\tilde \Omega_{\ep,2k+1})$. By Theorem \ref{sol to eigenvalue problem varepsilon=0-pde}, $\sin\left(\frac{\theta_\ep}{3}\right)(1-\gamma_\ep^2)^{1\over6}$ is an eigenfunction of the principal eigenvalue $\lambda={2\over 9}$ for  \eqref{eigenvalue problem-ep-new-m} with $m=3$. By Theorems \ref{associate_ep0} and \ref{associate_ep-new-variable-original-variable},
$\sin\left(\theta_\ep\right)(1-\gamma_\ep^2)^{1\over2}$ is an eigenfunction of the principal eigenvalue $\lambda=1$ for   \eqref{eigenvalue problem-ep-new}. Thus,
 \begin{align*}
 &\|\hat\Psi_{1,\ep}\|_{\tilde Y_{\ep,e}}^2=\left(\int_{-1}^{1} \int_0^{6\pi}
+ \int_{-1}^{1} \int_{6\pi}^{(4k+2)\pi}\right)\left({1\over1-\gamma_\ep^2}|\partial_{\theta_\ep}\hat\Psi_{1,\ep}|^2+(1-\gamma_\ep^2)|\partial_{\gamma_\ep}\hat\Psi_{1,\ep}|^2\right)d \theta_\ep d\gamma_\ep\\
=&{4\over 9}\int_{-1}^{1} \int_0^{6\pi}\sin^2\left({1\over 3}\theta_{\ep}\right)(1-\gamma_{\ep}^2)^{1\over 3}d\theta_\ep d\gamma_\ep+
2(k-1)\times 2 \int_{-1}^{1} \int_{0}^{2\pi}\sin^2(\theta_\ep)(1-\gamma_{\ep}^2)d\theta_\ep d\gamma_\ep\\
\leq &
{8\over3}\pi+{16\over3}(k-1)\pi<\infty,
 \end{align*}
 and moreover,
 \begin{align*}
 \int_{0}^{(4k+2)\pi}\hat\Psi_{1,\ep}(\theta_\ep,0)d\theta_\ep=
 \int_0^{6\pi}\sin\left({1\over 3}\theta_{\ep}\right)d\theta_\ep +
 \int_{6\pi}^{(4k+2)\pi}\sin(\theta_\ep)d\theta_\ep=0.
 \end{align*}
\if0
  By Corollary  \ref{kernel of  the operator tilde A-ep and a decomposition of tilde Xep},
 $\cos\left(\theta_\ep\right)(1-\gamma_\ep^2)^{1\over2}\in \ker(\tilde A_\ep)$,
 and thus,
 \begin{align}\label{hat-psi-4kpi-4k+2pi}
 -\Delta \hat{\psi}_\ep = g'(\psi_\ep) \hat{\psi}_\ep\quad \text{for} \quad (x,y) \in [4k\pi, (4k+2)\pi]\times \mathbb{R}.
 \end{align}
  By Lemma
\ref{sol to eigenvalue problem non-zero modes varepsilon=0}, $(1-\gamma_\ep^2)^{1\over2}$ is an
 eigenfunction of the eigenvalue $1$ for \eqref{eigenvalue problem2 non-zero modes varepsilon=0} with $k=1$. This, along with \eqref{laplacian}, gives
$$-(\Delta + g'(\psi_\ep)) \hat{\psi}_\ep = - \frac 1 2 g'(\psi_\ep) \left( \frac 3 4 \frac{\hat{\psi}_\ep}{1-\gamma_\ep^2} \right),\;\;(x,y) \in [0, 4k\pi]\times \mathbb{R}.$$
Then
\begin{align}\label{b1-odd-term1}
& \int_{-\infty}^{+\infty} \int_0^{4k\pi} \left(|\nabla \hat{\psi}_\ep|^2 - g'(\psi_\epsilon)\hat{\psi}_\ep^2\right) dx dy  = \int_{-\infty}^{+\infty} \int_0^{4k\pi} - \frac 1 2 g'(\psi_\ep) \left( \frac 3 4 \frac{\hat{\psi}_\ep^2}{1-\gamma_\ep^2} \right)dx dy \\\nonumber
= & - \int_{-1}^{1} \int_0^{4k\pi}  \left( \frac 3 4 \frac{\hat{\psi}_\ep^2}{1-\gamma_\ep^2} \right)d \theta_\ep d\gep
= -3k\pi.
\end{align}
\fi
Again by Theorems \ref{associate_ep0}, \ref{associate_ep-new-variable-original-variable} and \ref{sol to eigenvalue problem varepsilon=0-pde},
\begin{align}\nonumber
b_{\ep, 1}(\hat{\psi}_{1,\ep})
& = \left(\int_{-\infty}^{+\infty} \int_0^{6\pi}
+ \int_{-\infty}^{+\infty} \int_{6\pi}^{(4k+2)\pi}\right) \left(|\nabla \hat{\psi}_{1,\ep}|^2 - g'(\psi_\epsilon)\hat{\psi}_{1,\ep}^2\right) dx dy\\\nonumber
&=\int_{-\infty}^{+\infty} \int_0^{6\pi}
 \left(|\nabla \hat{\psi}_{1,\ep}|^2 - g'(\psi_\epsilon)\hat{\psi}_{1,\ep}^2\right) dx dy\\\nonumber
 &=-{7\over 9}\int_{-\infty}^{+\infty} \int_0^{6\pi}
  g'(\psi_\epsilon)\hat{\psi}_{1,\ep}^2 dx dy\\\nonumber
 &= -{14\over 9}\int_{0}^{6\pi}\sin^2\left({1\over 3}\theta_{\ep}\right)d\theta_\ep\int_{-1}^1(1-\gamma_\ep^2)^{1\over3}d\gamma_\ep\\\label{b-ep-1-hat-psi}
 &\leq -{14\over 9}\times 3\pi\times {42\over 25}=-{196\pi\over 25}\leq -24.61,
\end{align}
where we used the fact that $\int_{-1}^1(1-\gamma_\ep^2)^{1\over3}d\gamma_\ep\geq {42\over 25}$.
By \eqref{steadyv}, $(2j\pi,0)$ and $((2j+1)\pi,0)$ are critical points of $\psi_\ep$  on $\mathbb{T}_{(4k+2)\pi}\times \mathbb{R}$, where $j=0,\cdots,2k$. The  Hessian matrix of $\psi_\ep$ is
\begin{align*}
\left( \begin{array}{cc} {-\ep^2-\ep\cos(x)\cosh(y)\over (\cosh(y)+\ep\cos(x))^2} & {\ep\sin(x)\sinh(y)\over (\cosh(y)+\ep\cos(x))^2} \\ {\ep\sin(x)\sinh(y)\over (\cosh(y)+\ep\cos(x))^2} & {1+\ep\cosh(y)\cos(x)\over (\cosh(y)+\ep\cos(x))^2} \end{array} \right).
\end{align*}
Then $(2j\pi,0)$ is a saddle point of $\psi_\ep$, and $((2j+1)\pi,0)$ is the minimal  point of $\psi_\ep$, since $\psi_\ep(x,y)\to\infty$ as $y\to \pm\infty$ for $x\in\mathbb{T}_{2\pi}$ and $j=0,\cdots,2k$.
Recall that $\rho_0$ is defined in
\eqref{def-rho0}.
Then $\min \psi_\ep = \psi_\ep((2j+1)\pi, 0) = - \rho_0$.
For $\rho\in[-\rho_0, \rho_0]$, the streamlines are in the trapped regions and  the level set $\Gamma(\rho) = \{(x, y) \in \Omega_{2k+1} | \psi_\ep(x,y) = \rho\} $ has $n_\rho = 2k+1$ closed level curves, i.e.
\begin{align}\label{Gamma-rho}
\Gamma(\rho) =  \bigcup_{i=1}^{n_\rho} \Gamma_i(\rho),
\end{align}
 where $\Gamma_i(\rho)$ corresponds to a periodic orbit inside the $i$-th cat's-eye trapped region.
Since
$\sin\left({1\over 3}\theta_\ep\right)$ is  `odd'   symmetrical about the point $( 3\pi,0)$ and $\sin\left(\theta_\ep\right)$ is  `odd'   symmetrical about the points $( 6\pi+(2j-1)\pi,0)$ for $j=1,\cdots,2k-2$,  we have
$(\hat{P}_{\ep,e} \hat{\psi}_{1,\ep})(x,y)=0$ for $(x,y)$ in the untrapped regions of $\mathbb{T}_{(4k+2)\pi}\times \mathbb{R}$ and the $2$nd, $j$-th trapped regions for $4\leq j\leq 2k+1$, where $k\geq2$.
Now, we compute the projection term for $(x,y)$ in  the $1$st and $3$rd  trapped regions, denoted by $D_{\rm{in},1}$ and $D_{\rm{in},3}$.
Using $x$ as the parameter in the $1$st trapped region,
we  represent the upper  separatrix to be $y(x)=\cosh^{-1}(1+\ep-\ep\cos(x)), x\in[0,2\pi]$ and  the lower  separatrix to be $y(x)=-\cosh^{-1}(1+\ep-\ep\cos(x)), x\in[0,2\pi]$.
Then
\begin{align}\nonumber
b_{\ep, 2}(\hat{\psi}_{1,\ep})
& = \iint_{D_{\rm{in},1}} g'(\psi_\ep) |\hat{P}_{\ep,e} \hat{\psi}_{1,\ep}|^2 dxdy + \iint_{D_{\rm{in},3}} g'(\psi_\ep) |\hat{P}_{\ep,e} \hat{\psi}_{1,\ep}|^2 dxdy\\\nonumber
&=2\iint_{D_{\rm{in},1}} g'(\psi_\ep) |\hat{P}_{\ep,e} \hat{\psi}_{1,\ep}|^2 dxdy
=2\int_{-\rho_0}^{\rho_0} g'(\rho)  \frac{\left|\oint_{\Gamma_1 (\rho)} \frac{\hat{\psi}_{1,\ep}}{|\nabla \psi_\ep|}\right|^2}{\oint_{\Gamma_1 (\rho)} \frac{1}{|\nabla\psi_\ep|}} d\rho\\\nonumber
&\leq2\int_{-\rho_0}^{\rho_0} g'(\rho) \oint_{\Gamma_1 (\rho)} \frac{|\hat{\psi}_{1,\ep}|^2}{|\nabla \psi_\ep|}d\rho=2 \iint_{D_{\rm{in},1}} g'(\psi_\ep) | \hat{\psi}_{1,\ep}|^2 dxdy\\\nonumber
&=2\int_0^{2\pi}\int_{-\cosh^{-1}(1+\ep-\ep\cos(x))}^{\cosh^{-1}(1+\ep-\ep\cos(x))}g'(\psi_\ep)
\sin^2\left(\frac{\theta_\ep}{3}\right)(1-\gamma_\ep^2)^{1\over3}dydx\\\label{b-ep-2-hat-psi-1-ep}
&\triangleq b_{\ep,3}(\hat{\psi}_{1,\ep}).
\end{align}
To study the monotonicity of $ b_{\ep,3}(\hat{\psi}_{1,\ep})$ with respect to $\ep\in[0,1)$, we need the following lemma.
\begin{lemma}\label{D-theta-gamma-ep-nest}
Let
\begin{align*}
D_{xy,\ep}=&D_{\rm{in},1}=\{(x,y)|{-\cosh^{-1}(1+\ep-\ep\cos(x))}\leq y\leq {\cosh^{-1}(1+\ep-\ep\cos(x))}, x\in\mathbb{T}_{2\pi}\}\\
D_{\theta_\ep\gamma_\ep,\ep}=&\{(\theta_\ep,\gamma_{\ep})|\theta_\ep=\theta_\ep(x,y),\gamma_{\ep}=\gamma_{\ep}(x,y),(x,y)\in D_{xy,\ep}\}
\end{align*}
for $\ep\in[0,1)$. Then as subsets of $\mathbb{T}_{2\pi}\times[-1,1]$, we have
 \begin{align}\label{D1D2-constant}
 D_{\theta_{\ep_1}\gamma_{\ep_1},\ep_1}\subset D_{\theta_{\ep_2}\gamma_{\ep_2},\ep_2} \quad\text{ for }\quad0\leq \ep_1\leq \ep_2<1.
 \end{align}
\end{lemma}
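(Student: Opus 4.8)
The plan is to transport \eqref{D1D2-constant} to the $(\xi_\ep,\eta_\ep)$ variables, where it turns into the nesting of a family of ellipses in the closed unit disk. By \eqref{transf2} and \eqref{def-eta-ep}--\eqref{def-xi-ep}, a pair $(\theta,\gamma)\in\mathbb{T}_{2\pi}\times[-1,1]$ corresponds to the point
\[
\Phi(\theta,\gamma):=\big(\sqrt{1-\gamma^2}\cos\theta,\ \sqrt{1-\gamma^2}\sin\theta\big)\in\overline{D_1},
\]
and the map $\Phi$ does not depend on $\ep$. Now $D_{xy,\ep}=D_{\rm{in},1}$ is exactly the closed trapped region $\{\psi_\ep\le\rho_0\}$ over one period, i.e.\ $\{\cosh y+\ep\cos x\le 1+\ep\}$; since $\omega_\ep=-e^{-2\psi_\ep}$ is increasing in $\psi_\ep$ and $\rho_0=\psi_\ep(0,0)$ by \eqref{def-rho0}, we have $D_{xy,\ep}=\{\omega_\ep\le -e^{-2\rho_0}\}=\left\{\omega_\ep\le -\frac{1-\ep}{1+\ep}\right\}$. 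Substituting \eqref{omega-xi-eta-gamma-ep} and multiplying by $\frac{1+\ep}{1-\ep}>0$, this condition reads $F(\xi_\ep,\eta_\ep;\ep)\ge 1$, where
\[
F(\xi,\eta;\ep):=\frac{(\xi-\ep)^2}{(1-\ep)^2}+\frac{1+\ep}{1-\ep}\,\eta^2.
\]
Using $\xi_\ep=\sqrt{1-\gamma_\ep^2}\cos\theta_\ep$, $\eta_\ep=\sqrt{1-\gamma_\ep^2}\sin\theta_\ep$ and the bijectivity of $(x,y)\mapsto(\theta_\ep,\gamma_\ep)$ onto $\mathbb{T}_{2\pi}\times(-1,1)$ recorded in \eqref{Jacobian of the transformation-ep}, one gets $D_{\theta_\ep\gamma_\ep,\ep}=\Phi^{-1}(C_\ep)\cap\big(\mathbb{T}_{2\pi}\times(-1,1)\big)$, where $C_\ep:=\{(\xi,\eta)\in\overline{D_1}:F(\xi,\eta;\ep)\ge 1\}$. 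Since $\Phi$ is independent of $\ep$, \eqref{D1D2-constant} will follow once we prove $C_{\ep_1}\subseteq C_{\ep_2}$ for $0\le\ep_1\le\ep_2<1$; equivalently, that the closed elliptic disks $\mathrm{Ell}(\ep):=\{F(\cdot,\cdot;\ep)\le 1\}$ are nested and shrink as $\ep$ increases. (Using $\xi_\ep^2+\eta_\ep^2+\gamma_\ep^2=1$ from \eqref{eta-gamma-xi} one checks that each $\mathrm{Ell}(\ep)$ lies in $\overline{D_1}$ and is tangent to $\partial D_1$ only at $(1,0)$; this is convenient for picturing $C_\ep$ but is not needed for the argument.)

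I would then reduce the ellipse nesting to an elementary one-variable statement. Fix $(\xi,\eta)\in\overline{D_1}$. Clearing denominators in $F(\xi,\eta;\ep)\le 1$ gives $(\xi-\ep)^2+(1-\ep^2)\eta^2\le(1-\ep)^2$, which after expanding is exactly
\[
q_{\xi,\eta}(\ep):=\eta^2\ep^2-2(1-\xi)\ep+(1-\xi^2-\eta^2)\ \ge\ 0.
\]
Hence $F(\xi,\eta;\ep)\ge 1\iff q_{\xi,\eta}(\ep)\le 0$, and it suffices to show that for every fixed $(\xi,\eta)\in\overline{D_1}$ the set $\{\ep\in[0,1):q_{\xi,\eta}(\ep)\le 0\}$ is an up-closed interval, i.e.\ if it contains $\ep$ it contains $[\ep,1)$. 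This immediately yields $q_{\xi,\eta}(\ep_1)\le 0$, $\ep_1\le\ep_2$ $\Rightarrow$ $q_{\xi,\eta}(\ep_2)\le 0$, i.e.\ $C_{\ep_1}\subseteq C_{\ep_2}$.

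Finally, the up-closedness is a short case check. The polynomial $q_{\xi,\eta}$ has degree $\le 2$ with non-negative leading coefficient $\eta^2$, and $q_{\xi,\eta}(0)=1-\xi^2-\eta^2\ge 0$ (since $(\xi,\eta)\in\overline{D_1}$) while $q_{\xi,\eta}(1)=-(1-\xi)^2\le 0$. If $\eta\ne 0$, then $q_{\xi,\eta}$ is an upward parabola whose sublevel set $\{q_{\xi,\eta}\le 0\}$ is a closed interval $[r_1,r_2]$; from $q_{\xi,\eta}(0)\ge 0$ we get $0\le r_1$ and from $q_{\xi,\eta}(1)\le 0$ we get $1\le r_2$, so $\{q_{\xi,\eta}\le 0\}\cap[0,1)=[r_1,1)$. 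If $\eta=0$ and $\xi<1$, then $q_{\xi,\eta}$ is affine with negative slope and $q_{\xi,\eta}(0)\ge 0$, so $\{q_{\xi,\eta}\le 0\}\cap[0,1)=\left[\frac{1+\xi}{2},1\right)$. And if $(\xi,\eta)=(1,0)$, then $q_{\xi,\eta}\equiv 0$ and the set is all of $[0,1)$. In each case the set is up-closed, which proves $C_{\ep_1}\subseteq C_{\ep_2}$ and hence \eqref{D1D2-constant}. The main obstacle in the whole proof is really only the opening step: recognizing that the explicit trapped region $D_{xy,\ep}$ becomes, in the $(\theta_\ep,\gamma_\ep)$ chart, the $\ep$-independent preimage of the region exterior to an ellipse in $\overline{D_1}$; after that only the routine quadratic bookkeeping above remains, together with the trivial care needed for the degenerate cases $\eta=0$ and the tangency point $(1,0)$ and for the two boundary rows $\gamma=\pm1$.
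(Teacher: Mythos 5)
Your proof is correct in substance and follows the same overall strategy as the paper (pass from $(\theta_\ep,\gamma_\ep)$ to $(\xi_\ep,\eta_\ep)$ and reduce the claim to a nesting statement for the ellipse-bounded regions in $\overline{D_1}$), but the execution of the nesting is genuinely different and, in my view, cleaner. The paper proceeds in two geometric steps: first it shows $\Gamma_{1/c_\ep,\ep}$ is enclosed by $S_1$ and $S_1$ by $\Gamma_{c_\ep,\ep}$, so that $D_{\xi_\ep\eta_\ep,\ep}$ is exactly the set between the unit circle and the inner ellipse; then it proves the inner ellipses are nested by comparing $\left|\partial_\xi\bigl(\eta_{1/c_\ep,\ep}(\xi)^2\bigr)\right|$ for two parameters sharing the common vertex $(1,0)$. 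You instead observe that $D_{xy,\ep}=\{\psi_\ep\le\rho_0\}=\{\,-\omega_\ep\ge\tfrac{1-\ep}{1+\ep}\}$, which by \eqref{omega-xi-eta-gamma-ep} is the single scalar inequality $F(\xi_\ep,\eta_\ep;\ep)\ge1$, equivalent to $q_{\xi,\eta}(\ep)\le0$; the up-closedness of $\{\ep:q_{\xi,\eta}(\ep)\le0\}$ via $q(0)=1-\xi^2-\eta^2\ge0$, $q(1)=-(1-\xi)^2\le0$ and the sign of the leading coefficient then yields $C_{\ep_1}\subseteq C_{\ep_2}$ in one stroke, subsuming both of the paper's steps (the outer constraint $\xi^2+\eta^2\le1$ comes for free from \eqref{eta-gamma-xi}). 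I verified the algebra; it is right.

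One assertion needs repair. The map $(x,y)\mapsto(\theta_\ep,\gamma_\ep)$ is \emph{not} onto $\mathbb{T}_{2\pi}\times(-1,1)$ for $\ep\in(0,1)$, and \eqref{Jacobian of the transformation-ep} only records local invertibility: the points $(\theta,\gamma)=(0,\pm\sqrt{1-\ep^2})$, i.e. $(\xi,\eta)=(\ep,0)$, are the limits as $y\to\pm\infty$ and are never attained (indeed $\eta_\ep=0$ forces $\sin x=0$ while $\xi_\ep=\ep$ forces $\cos x=0$). So your identity $D_{\theta_\ep\gamma_\ep,\ep}=\Phi^{-1}(C_\ep)\cap(\mathbb{T}_{2\pi}\times(-1,1))$ fails at exactly those two points. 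This does not break the argument: the inclusion $D_{\theta_\ep\gamma_\ep,\ep}\subseteq\{F(\Phi(\cdot);\ep)\ge1\}$ needs no surjectivity, and since $F(\ep_2,0;\ep_1)=\tfrac{(\ep_2-\ep_1)^2}{(1-\ep_1)^2}<1$ for $0\le\ep_1\le\ep_2<1$, the two $\ep_2$-exceptional points never lie in $C_{\ep_1}$, so surjectivity onto the punctured strip suffices and the conclusion stands. You should state this (and, as you note, the residual fuss at $\gamma=\pm1$, where $\theta_\ep$ in \eqref{transf1} degenerates, is a null set irrelevant to the use of the lemma in Corollary \ref{b3-increasing}); with that one-line patch your argument is a valid, and arguably simpler, alternative to the paper's proof.
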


\begin{proof} It suffices to consider the case $y\geq0\Longleftrightarrow\gamma_\ep\geq0$, since $D_{xy,\ep}$ (resp. $D_{\theta_\ep\gamma_\ep,\ep}$) is symmetric with respect to the line $y=0$ (resp. $\gamma_\ep=0$). Instead of using  $(\theta_\ep,\gamma_{\ep})$ directly, we choose the equivalent variables $(\xi_\ep,\eta_\ep)$ and define
\begin{align*}
D_{\xi_\ep\eta_\ep,\ep}=&\{(\xi_\ep,\eta_\ep)|
\eta_\ep = \sqrt{1-\gamma_\ep^2} \sin(\theta_\ep),
\xi_\ep = \sqrt{1-\gamma_\ep^2} \cos(\theta_\ep),
(\theta_\ep,\gamma_{\ep})\in D_{\theta_\ep\gamma_\ep,\ep}\}.
\end{align*}
To prove \eqref{D1D2-constant}, it is sufficient to show that as subsets of the closed unit disk $D_1=\{(\xi_{\ep},\eta_{\ep})|\xi_{\ep}^2+\eta_{\ep}^2\leq1\}$,
\begin{align}\label{D1D2-constant-eta-xi}
D_{\xi_{\ep_1}\eta_{\ep_1},\ep_1}\subset D_{\xi_{\ep_2}\eta_{\ep_2},\ep_2}\quad\text{ for }\quad0\leq \ep_1\leq \ep_2<1.
\end{align}
In the original variables, $D_{xy,\ep}$ consists of the level curves $\{\psi_\ep=\rho\}$ for $\rho\in\bigg[\ln\left(\sqrt{1-\ep\over 1+\ep}\right),$ $\ln\left(\sqrt{1+\ep\over 1-\ep}\right)\bigg]$. In the variables $(\xi_\ep,\eta_\ep)$, we study the level curves of $\omega_\ep$ for convenience. By the expression  \eqref{omega-xi-eta-gamma-ep} of $\omega_\ep$ in $(\xi_\ep,\eta_\ep)$, $D_{\xi_\ep\eta_\ep,\ep}$ consists of the level curves
\begin{align}\label{elliptical}
\left\{(\xi_\ep,\eta_\ep)\bigg|\frac{(\xi_\ep - \ep)^2}{1-\ep^2} + \eta_\ep^2=-c\right\}\bigcap D_1
\end{align}
for $c\in\left[c_\ep,1/c_\ep\right]$, where $c_\ep=-{1+\ep\over 1-\ep}$.
This is a family of ellipses, with the parameters $c$ ranging from $c_\ep$ to $1/c_\ep$, intersecting with the closed unit disk $D_1$. For fixed $c\in\left[c_\ep,1/c_\ep\right]$, the center, semi-major and semi-minor axes of the ellipse are $(\ep,0)$,  $\sqrt{-c}$ and $\sqrt{-c(1-\ep^2)}$. To study the nested relationship \eqref{D1D2-constant-eta-xi}, we use the variables $\xi,\eta\in[-1,1]$, which are independent of $\ep$. Note that as a subset of the closed unit disk $D_1$,
 the curve \eqref{elliptical} is the same  if we replace the variables $(\xi_\ep,\eta_\ep)$ by $(\xi,\eta)$. Thus, $D_{\xi_\ep\eta_\ep,\ep}$ can be written as
\begin{align*}
D_{\xi_\ep\eta_\ep,\ep}=\bigcup_{c\in\left[c_\ep,1/c_\ep\right]} \left(\Gamma_{c,\ep}\cap D_1\right)=
\left\{(\xi,\eta)\bigg|-1/c_\ep\leq\frac{(\xi - \ep)^2}{1-\ep^2} + \eta^2\leq -c_\ep\right\}\bigcap D_1,
\end{align*}
where
\begin{align*}
\Gamma_{c,\ep}=\left\{(\xi,\eta)\bigg|\frac{(\xi - \ep)^2}{1-\ep^2} + \eta^2=-c\right\}.
\end{align*}
To prove \eqref{D1D2-constant-eta-xi},
 we divide our discussions into two steps.
\vspace{0.5mm}

\noindent{\bf{Step 1.}} For $\ep\in[0,1)$, we prove that
\begin{align}\label{out-in-nest}
\Gamma_{1/c_\ep,\ep}\text{  is  enclosed by } S_1, \text{ and }{S}_1\text{  is  enclosed by } \Gamma_{c_\ep,\ep},
\end{align}
where $c_\ep=-{1+\ep\over 1-\ep}$ and $S_1=\{(\xi,\eta)|\xi^2+\eta^2=1\}$ is the unit circle.
\eqref{out-in-nest} means that $\xi^2+\eta^2\leq1$ for $(\xi,\eta)\in\Gamma_{1/c_\ep,\ep}$ and $\frac{(\xi - \ep)^2}{1-\ep^2} + \eta^2\leq-c_\ep$ for $(\xi,\eta)\in S_1$.
See Figure \ref{fig3th:bdy22} for the curves  $\Gamma_{1/c_\ep,\ep}$, $S_1$ and $\Gamma_{c_\ep,\ep}$ with $\ep=0.5$.
Moreover, $\Gamma_{1/c_\ep,\ep}\cap S_1=\{(1,0)\}$ and ${S}_1\cap\Gamma_{c_\ep,\ep}=\{(-1,0)\}$ for $\ep>0$, while $\Gamma_{1/c_\ep,\ep}= S_1=\Gamma_{c_\ep,\ep}$ for $\ep=0$.
\begin{figure}[ht]
    \centering
	\includegraphics[width=0.7\textwidth]{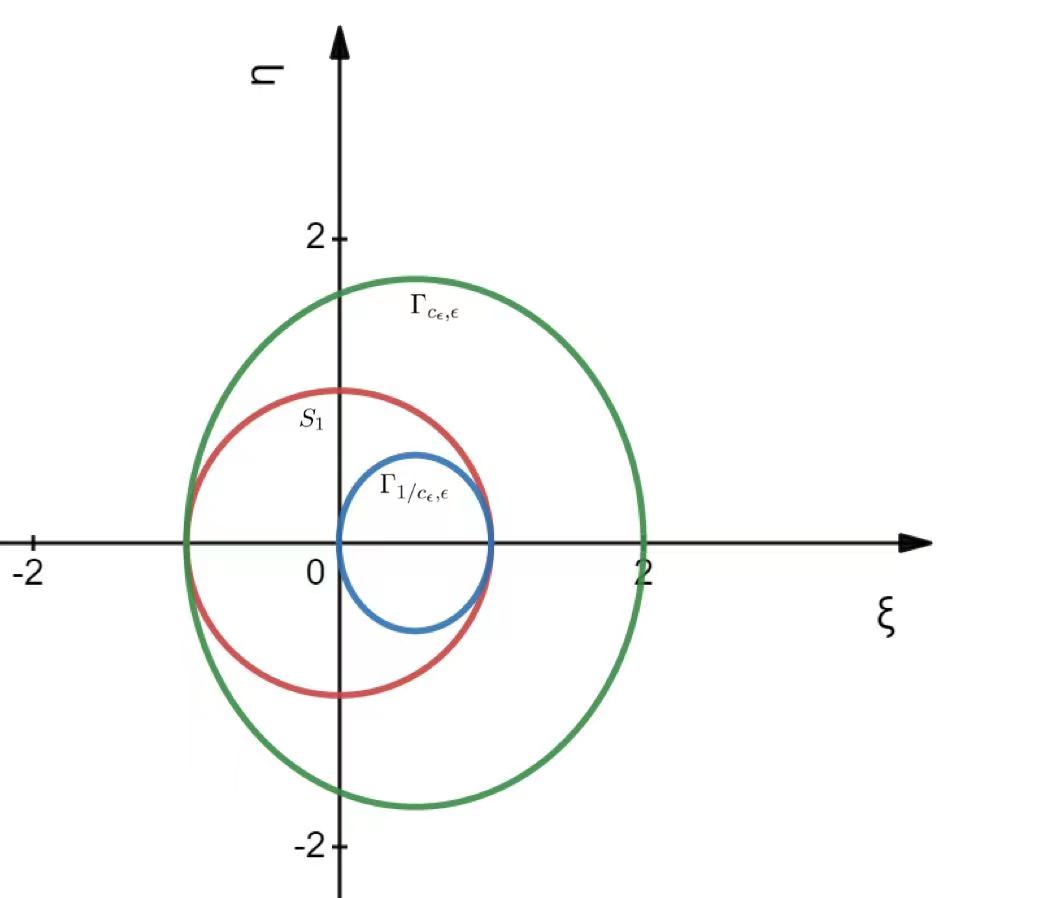}
	\caption{The curves $\Gamma_{1/c_\ep,\ep}$, $S_1$ and $\Gamma_{c_\ep,\ep}$ with $\ep=0.5$}
	\label{fig3th:bdy22}
\end{figure}


$\Gamma_{1/c_\ep,\ep}$ is given by the ellipse
\begin{align}\label{inner boundary}
\frac{(\xi - \ep)^2}{(1-\ep)^2} + {\eta^2\over {1-\ep\over 1+\ep}}=1.
\end{align}
Since the center and semi-minor axis of the ellipse \eqref{inner boundary} are $(\ep,0)$ and $1-\ep$, the right vertex of the ellipse is always $(1,0)$.
Here, we only need to consider $\eta\geq0$ since $D_{\xi_{\ep}\eta_{\ep},\ep}$ is symmetric with respect to the line $\eta=0$.
For $(\xi,\eta)\in \Gamma_{1/c_\ep,\ep}$ with $\eta\geq0$, we rewrite $\eta$ by $\eta_{1/c_\ep,\ep}(\xi)$ to indicate its dependence on  $\ep$, $c_\ep$ and $\xi$. Then $\eta_{1/c_\ep,\ep}(\xi)^2={1-\ep\over 1+\ep}-\frac{(\xi - \ep)^2}{1-\ep^2}$ for $\xi\in[2\ep-1,1]$. For  $(\xi,\eta)\in S_1$, we rewrite $\eta$ by $\eta_{S_1}(\xi)$ to indicate its dependence on $\xi$. Then $\eta_{S_1}(\xi)^2=1-\xi^2$ for $\xi\in[-1,1]$.
To prove that $\Gamma_{1/c_\ep,\ep}\text{  is  enclosed by } S_1$ and
$\Gamma_{1/c_\ep,\ep}\cap S_1=\{(1,0)\}$  for $\ep>0$, it suffices to show that $\eta_{S_1}(\xi)^2>\eta_{1/c_\ep,\ep}(\xi)^2$ for $\xi\in[\ep,1)$.
Since the right vertex of both the ellipse $\Gamma_{1/c_\ep,\ep}$ and the unit circle $S_1$ is  $(1,0)$, it suffices to verify that $\left|\partial_{\xi}\left(\eta_{S_1}(\xi)^2\right)\right|>\left|\partial_{\xi}\left(\eta_{1/c_\ep,\ep}(\xi)^2\right)\right|$ for $\xi\in[\ep,1]$. In fact, direct computation gives
\begin{align*}
&\left|\partial_{\xi}\left(\eta_{1/c_\ep,\ep}(\xi)^2\right)\right|-\left|\partial_{\xi}\left(\eta_{S_1}(\xi)^2\right)\right|=2\left(\frac{\xi - \ep}{1-\ep^2}-\xi\right)
={-2\ep(1-\ep\xi)\over 1-\ep^2}<0
\end{align*}
 for $\xi\in[\ep,1]$ and $\ep>0$.

$\Gamma_{c_\ep,\ep}$ is given by the ellipse
\begin{align}\label{boundary2}
\frac{(\xi - \ep)^2}{(1+\ep)^2} + {\eta^2\over {1+\ep\over 1-\ep}}=1.
\end{align}
Since the center and semi-minor axis of the ellipse \eqref{boundary2} are $(\ep,0)$ and $1+\ep$, the left vertex of the ellipse is always $(-1,0)$.
Here we only consider $\eta\geq0$ by  symmetry.
For $(\xi,\eta)\in \Gamma_{c_\ep,\ep}$ with $\eta\geq0$, we rewrite $\eta$ by $\eta_{c_\ep,\ep}(\xi)$. Then $\eta_{c_\ep,\ep}(\xi)^2={1+\ep\over 1-\ep}-\frac{(\xi - \ep)^2}{1-\ep^2}$ for $\xi\in[-1,1+2\ep]$. For  $(\xi,\eta)\in S_1$, $\eta_{S_1}(\xi)^2=1-\xi^2$ for $\xi\in[-1,1]$.
To prove that $S_1\text{  is  enclosed by } \Gamma_{c_\ep,\ep}$ and ${S}_1\cap\Gamma_{c_\ep,\ep}=\{(-1,0)\}$ for $\ep>0$, it suffices to show that $\eta_{c_\ep,\ep}(\xi)^2>\eta_{S_1}(\xi)^2$ for $\xi\in(-1,0]$.
Since the left vertex of both the ellipse $\Gamma_{c_\ep,\ep}$ and the unit circle $S_1$ is  $(-1,0)$, it suffices to verify that $\left|\partial_{\xi}\left(\eta_{c_\ep,\ep}(\xi)^2\right)\right|>\left|\partial_{\xi}\left(\eta_{S_1}(\xi)^2\right)\right|$ for $\xi\in[-1,0]$. Indeed,
\begin{align*}
&\left|\partial_{\xi}\left(\eta_{c_\ep,\ep}(\xi)^2\right)\right|-\left|\partial_{\xi}\left(\eta_{S_1}(\xi)^2\right)\right|=2\left(\frac{ \ep-\xi}{1-\ep^2}+\xi\right)
={2\ep(1-\ep\xi)\over 1-\ep^2}>0
\end{align*}
 for $\xi\in[-1,0]$ and $\ep>0$.

By Step 1,
\begin{align*}
D_{\xi_\ep\eta_\ep,\ep}=\left\{(\xi,\eta)\bigg|\xi^2+\eta^2\leq 1\leq\frac{(\xi - \ep)^2}{(1-\ep)^2} + {\eta^2\over {1-\ep\over 1+\ep}} \right\}.
\end{align*}
In other words, the outer boundary of $D_{\xi_\ep\eta_\ep,\ep}$ is always the unit circle $S_1$ and the inner boundary of $D_{\xi_\ep\eta_\ep,\ep}$ is the ellipse $\Gamma_{1/c_\ep,\ep}$. For $\ep=0.5$, see Figure \ref{Figure4th:orig-and-tran} for the upper trapped region $\{(x,y)\in D_{xy,\ep}|y\geq0\}$ in $(x,y)$ coordinate and  the corresponding region $D_{\xi_\ep\eta_\ep,\ep}$ in $(\xi,\eta)$ coordinate separately.

\begin{figure}[ht]
    \centering
\includegraphics[width=0.48\textwidth]{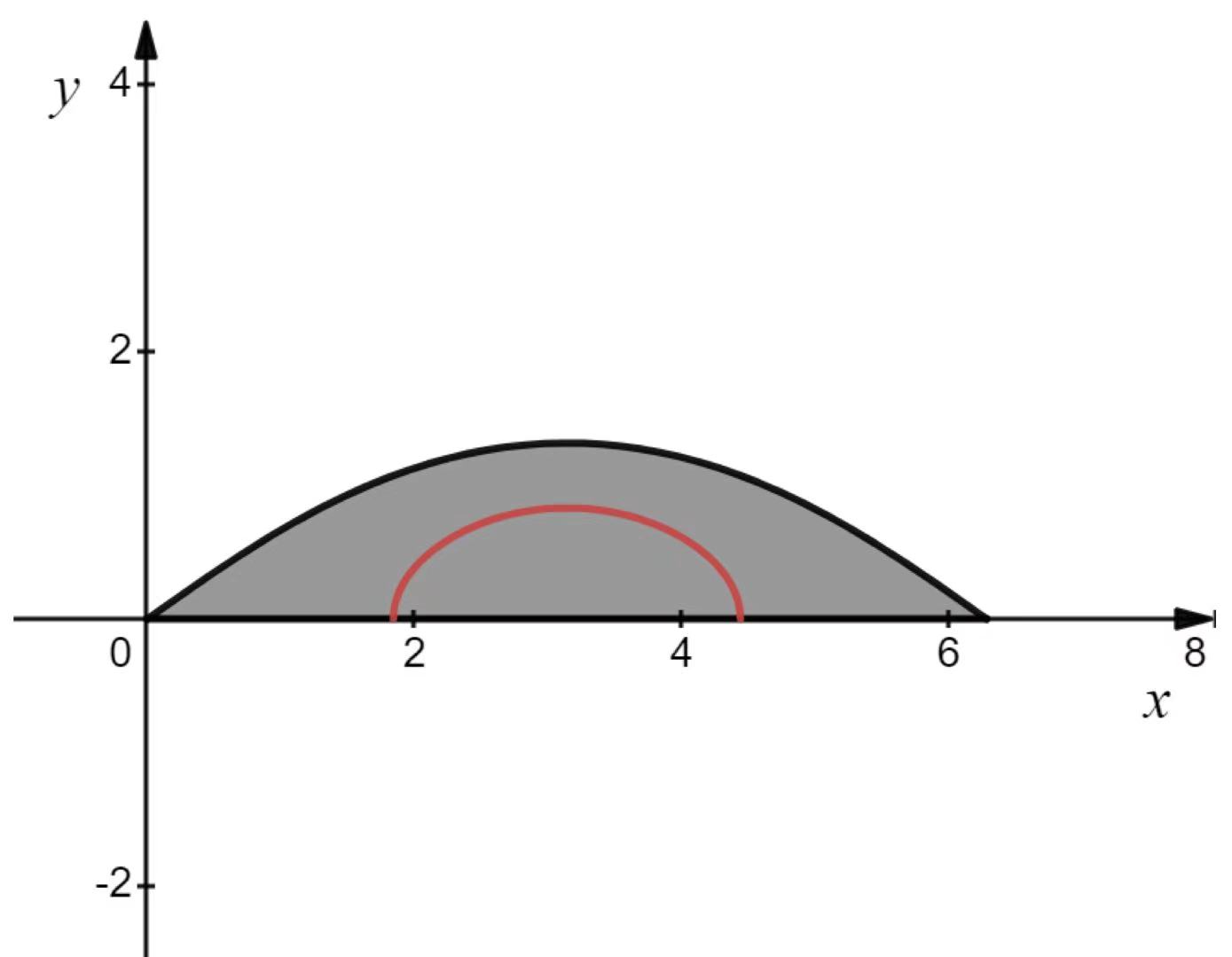}
\includegraphics[width=0.42\textwidth]{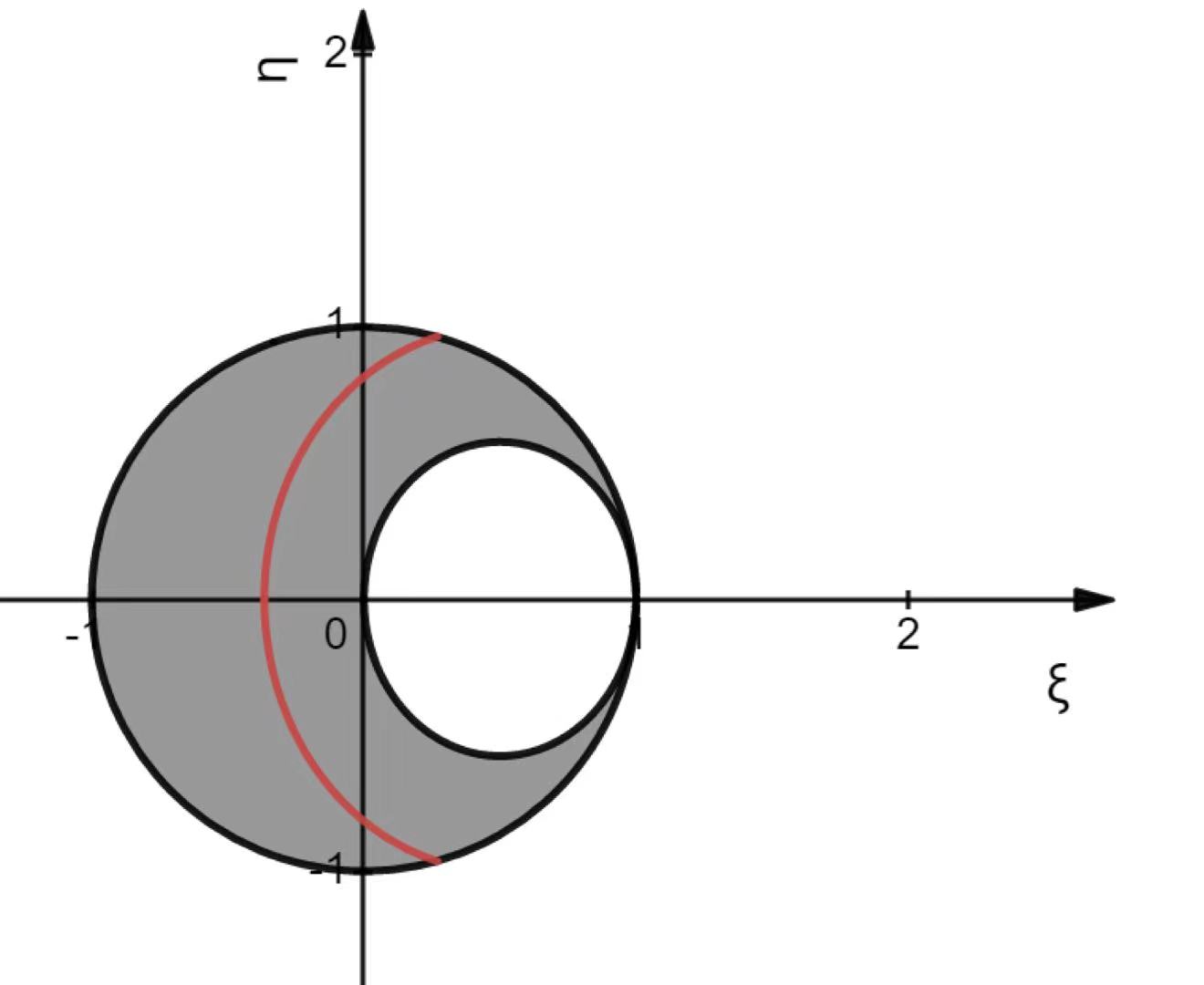}
\caption{Upper trapped region with $\ep=0.5$}
	\label{Figure4th:orig-and-tran}
\end{figure}
We point out the correspondence of  the streamlines and boundary of the upper trapped region  between the $(x,y)$ and $(\xi,\eta)$ coordinates.
\begin{itemize}
 \item For $\rho=\ln\left(\sqrt{1-\ep\over 1+\ep}\right)$, the streamline is the point $(\pi,0)$ in the $(x,y)$  coordinate, and is transformed to the  point $(-1,0)$ in the $(\xi,\eta)$ coordinate.
    \item    For $\rho=\ln\left(\sqrt{1+\ep\over 1-\ep}\right)$,  the upper
separatrix is transformed to the whole ellipse $\Gamma_{1/c_\ep,\ep}$ (the inner boundary of $D_{\xi_\ep\eta_\ep,\ep}$) in the $(\xi,\eta)$ coordinate.
\item For $\rho\in\left(\ln\left(\sqrt{1-\ep\over 1+\ep}\right),\ln\left(\sqrt{1+\ep\over 1-\ep}\right)\right)$, the upper part of the  streamline $\{\psi_\ep=\rho\}$ is transformed to the part of the ellipse $\Gamma_{-e^{-2\rho},\ep}\cap D_1$  in the $(\xi,\eta)$ coordinate, see the red curves in Figure \ref{Figure4th:orig-and-tran}.
    \item The boundary $\{y=0,x\in\mathbb{T}_{2\pi}\}$ in the $(x,y)$  coordinate is transformed to the unit circle $S_1$ (the outer boundary of $D_{\xi_\ep\eta_\ep,\ep}$) in the $(\xi,\eta)$ coordinate.
 \end{itemize}
 \vspace{0.5mm}

\noindent{\bf{Step 2.}} For $\ep\in[0,1)$, we prove the nested property for the inner boundary $\Gamma_{1/c_\ep,\ep}$ of $D_{\xi_{\ep}\eta_{\ep},\ep}$:
\begin{align}\label{Gamma12enclosed}
\Gamma_{1/c_{\ep_2},{\ep_2}}\text{  is  enclosed by }\Gamma_{1/c_{\ep_1},\ep_1} \quad\text{ if }\quad0\leq \ep_1< \ep_2<1.
\end{align}
See Figure \ref{fig5th:bdy1} for the curves  $\Gamma_{1/c_\ep,\ep}$ with $\ep=0.4, 0.5$.

By \eqref{inner boundary}, both the semi-major axis $\sqrt{1-\ep\over1+\ep}$  and semi-minor axis $1-\ep$ of $\Gamma_{1/c_{\ep},{\ep}}$ are decreasing on $\ep\in[0,1)$. Here
we only need to consider $\eta\geq0$ by symmetry.
Recall that $\eta_{1/c_\ep,\ep}(\xi)^2={1-\ep\over 1+\ep}-\frac{(\xi - \ep)^2}{1-\ep^2}, \xi\in[2\ep-1,1]$ for $(\xi,\eta_{1/c_\ep,\ep}(\xi))\in \Gamma_{1/c_{\ep},{\ep}}$.
\begin{figure}[ht]
    \centering
	\includegraphics[width=0.7\textwidth]{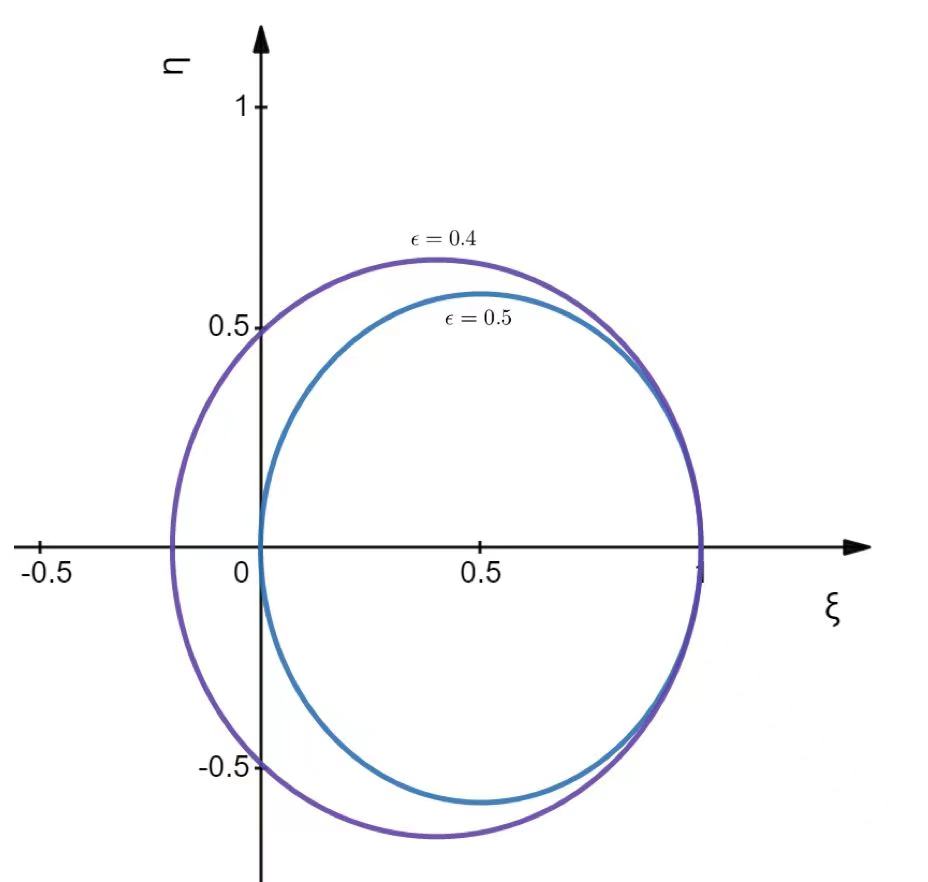}
	\caption{The curves  $\Gamma_{1/c_\ep,\ep}$ with $\ep=0.4, 0.5$}
	\label{fig5th:bdy1}
\end{figure}
To prove \eqref{Gamma12enclosed}, we will show that $\eta_{1/c_{\ep_1},\ep_1}(\xi)^2>\eta_{1/c_{\ep_2},\ep_2}(\xi)^2$ for $\xi\in[\ep_2,1)$.
Since the right vertex of the ellipse $\Gamma_{1/c_{\ep},{\ep}}$ is  $(1,0)$ for $\ep\in[0,1)$, it suffices to verify that $\left|\partial_{\xi}\left(\eta_{1/c_{\ep_1},\ep_1}(\xi)^2\right)\right|>\left|\partial_{\xi}\left(\eta_{1/c_{\ep_2},\ep_2}(\xi)^2\right)\right|$ for $\xi\in[\ep_2,1]$. In fact,
\begin{align*}
&\left|\partial_{\xi}\left(\eta_{1/c_{\ep_2},\ep_2}(\xi)^2\right)\right|-\left|\partial_{\xi}\left(\eta_{1/c_{\ep_1},\ep_1}(\xi)^2\right)\right|=2\left(\frac{\xi - \ep_2}{1-\ep_2^2}-\frac{\xi - \ep_1}{1-\ep_1^2}\right)\\
=&2{(\ep_2-\ep_1)\left((\ep_1+\ep_2)\xi-1-\ep_1\ep_2\right)\over (1-\ep_2^2)(1-\ep_1^2)}\leq 2{(\ep_2-\ep_1)\left(\ep_1+\ep_2-1-\ep_1\ep_2\right)\over (1-\ep_2^2)(1-\ep_1^2)}\\
=&2{(\ep_2-\ep_1)(\ep_1-1)(1-\ep_2)\over (1-\ep_2^2)(1-\ep_1^2)}<0
\end{align*}
 for $\xi\in[\ep_2,1]$ and $0\leq \ep_1< \ep_2<1$.

 \vspace{0.5mm}

\if0
The outer boundary of $D_{\xi_{\ep}\eta_{\ep},\ep}$ is given by the ellipse
\begin{align*}
\Gamma_{\text{out},\ep}=\left\{(\xi,\hat\eta_\ep(\xi))\bigg|\;\hat\eta_\ep(\xi)\in\{\pm\tilde \eta_\ep(\xi)\} \text{ if } |\tilde \eta_\ep(\xi)|<1, \hat\eta_\ep(\xi)\in\{\pm1\} \text{ if } |\tilde \eta_\ep(\xi)|\geq1, \xi\in[-1,1]\right\},
\end{align*}
where
$
\tilde \eta_\ep(\xi)=\sqrt{{1+\ep\over 1-\ep}-{(\xi-\ep)^2\over 1-\ep^2}}.
$
Instead of studying the outer boundary directly, we consider the ellipses
\begin{align}\label{tilde-Gamma-out}
\tilde \Gamma_{\text{out},\ep}=\left\{(\xi,\eta)\bigg|\;\frac{(\xi - \ep)^2}{(1+\ep)^2} + {\eta^2\over {1+\ep\over 1-\ep}}=1\right\}
\end{align}
for $\ep\in[0,1)$.
Then $\tilde \Gamma_{\text{out},\ep}=\left\{(\xi,\hat\eta_\ep(\xi))\bigg|\;\hat\eta_\ep(\xi)\in\{\pm\tilde \eta_\ep(\xi)\}, \xi\in[-1,1+2\ep]\right\}$.
Since the center and semi-minor axis of the ellipse \eqref{tilde-Gamma-out} are $(\ep,0)$ and $1+\ep$, the left vertex of the ellipse is always $(-1,0)$.
Note that both the semi-major axis $\sqrt{1+\ep\over1-\ep}$  and semi-minor axis $1+\ep$ are increasing on $\ep\in[0,1)$. Next, we prove that
\begin{align}\label{tilde-Gamma12enclosed}
\tilde\Gamma_{\text{in},\ep_1}\text{  is  enclosed by }\tilde\Gamma_{\text{in},\ep_2} \text{ if }0\leq \ep_1\leq \ep_2<1,
\end{align}
 See Figure \ref{figbdy2} for the curves  $\tilde\Gamma_{\text{in},\ep}$ with $\ep=0.4, 0.5$.
\begin{figure}[ht]
    \centering
	\includegraphics[width=0.7\textwidth]{Figure-bdy2.jpg}
	\caption{The curves  $\tilde\Gamma_{\text{in},\ep}$ with $\ep=0.4, 0.5$}
	\label{figbdy2}
\end{figure}
We only  consider $\eta\geq0$ by symmetry.
To prove \eqref{tilde-Gamma12enclosed}, we will show that $\tilde\eta_{\ep_1}(\xi)^2\leq\tilde\eta_{\ep_2}(\xi)^2$ for $\xi\in[-1,\ep_1]$.
Since the left vertex of the ellipses is  $(-1,0)$ for both $\ep_1$ and $\ep_2$, it suffices to verify that $\left|\partial_{\xi}\left(\eta_{\ep_1}(\xi)^2\right)\right|\leq\left|\partial_{\xi}\left(\eta_{\ep_2}(\xi)^2\right)\right|$ for $\xi\in[-1,\ep_1]$. Indeed,
\begin{align*}
&\left|\partial_{\xi}\left(\eta_{\ep_2}(\xi)^2\right)\right|-\left|\partial_{\xi}\left(\eta_{\ep_1}(\xi)^2\right)\right|=2\left(\frac{ \ep_2-\xi }{1-\ep_2^2}-\frac{\ep_1-\xi}{1-\ep_1^2}\right)\\
=&2{(\ep_2-\ep_1)\left(-(\ep_1+\ep_2)x+1+\ep_1\ep_2\right)\over (1-\ep_2^2)(1-\ep_1^2)}\geq 2{(\ep_2-\ep_1)\left(-(\ep_1+\ep_2)\ep_1+1+\ep_1\ep_2\right)\over (1-\ep_2^2)(1-\ep_1^2)}\\
=&2{(\ep_2-\ep_1)(1-\ep_1^2)\over (1-\ep_2^2)(1-\ep_1^2)}=2{\ep_2-\ep_1\over 1-\ep_2^2}\geq0
\end{align*}
 for $\xi\in[-1,\ep_1]$ and $\ep_2\geq\ep_1$.
\fi

By Step 2, we get \eqref{D1D2-constant-eta-xi}, which implies \eqref{D1D2-constant}.
\end{proof}
\begin{Corollary}\label{b3-increasing}
$ b_{\ep,3}(\hat{\psi}_{1,\ep})$ is non-decreasing on $\ep\in[0,1)$.
\end{Corollary}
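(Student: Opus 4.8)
The plan is to rewrite $b_{\ep,3}(\hat\psi_{1,\ep})$ as the integral of a single, $\ep$-independent, nonnegative function over the transformed trapped region $D_{\theta_\ep\gamma_\ep,\ep}$ of Lemma \ref{D-theta-gamma-ep-nest}, and then to read off the monotonicity directly from the nesting \eqref{D1D2-constant} proved there.

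First I would start from the definition \eqref{b-ep-2-hat-psi-1-ep} of $b_{\ep,3}$, i.e.
$$b_{\ep,3}(\hat\psi_{1,\ep})=2\iint_{D_{xy,\ep}}g'(\psi_\ep)\sin^2\left(\frac{\theta_\ep}{3}\right)(1-\gamma_\ep^2)^{1/3}\,dxdy,$$
where $D_{xy,\ep}=D_{\rm{in},1}$ is the first trapped region as in Lemma \ref{D-theta-gamma-ep-nest}, and recall that on $D_{xy,\ep}$ one has $\hat\psi_{1,\ep}(x,y)=\sin(\theta_\ep/3)(1-\gamma_\ep^2)^{1/6}$ with $(\theta_\ep,\gamma_\ep)$ given by \eqref{transf1}-\eqref{transf2}. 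Next I would change variables $(x,y)\mapsto(\theta_\ep,\gamma_\ep)$: by \eqref{Jacobian of the transformation-ep} the Jacobian equals $\frac12 g'(\psi_\ep)>0$, so that $g'(\psi_\ep)\,dxdy=2\,d\theta_\ep d\gamma_\ep$, and this transformation carries $D_{xy,\ep}$ bijectively onto $D_{\theta_\ep\gamma_\ep,\ep}$ (which is precisely how $D_{\theta_\ep\gamma_\ep,\ep}$ is defined in Lemma \ref{D-theta-gamma-ep-nest}). Hence
$$b_{\ep,3}(\hat\psi_{1,\ep})=4\iint_{D_{\theta_\ep\gamma_\ep,\ep}}\sin^2\left(\frac{\theta}{3}\right)(1-\gamma^2)^{1/3}\,d\theta\,d\gamma,$$
where now $(\theta,\gamma)$ are dummy integration variables ranging over the subset $D_{\theta_\ep\gamma_\ep,\ep}$ of $\mathbb{T}_{2\pi}\times[-1,1]$.

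Finally, for $0\le\ep_1\le\ep_2<1$ the integrand $\sin^2(\theta/3)(1-\gamma^2)^{1/3}$ is nonnegative and does not depend on $\ep$, while $D_{\theta_{\ep_1}\gamma_{\ep_1},\ep_1}\subset D_{\theta_{\ep_2}\gamma_{\ep_2},\ep_2}$ by \eqref{D1D2-constant}; therefore
$$b_{\ep_1,3}(\hat\psi_{1,\ep_1})=4\iint_{D_{\theta_{\ep_1}\gamma_{\ep_1},\ep_1}}\sin^2\left(\frac{\theta}{3}\right)(1-\gamma^2)^{1/3}\,d\theta\,d\gamma\le 4\iint_{D_{\theta_{\ep_2}\gamma_{\ep_2},\ep_2}}\sin^2\left(\frac{\theta}{3}\right)(1-\gamma^2)^{1/3}\,d\theta\,d\gamma=b_{\ep_2,3}(\hat\psi_{1,\ep_2}),$$
which shows that $b_{\ep,3}(\hat\psi_{1,\ep})$ is non-decreasing on $[0,1)$. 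The only substantive ingredient here is the nestedness of the transformed trapped regions, and that has already been established in Lemma \ref{D-theta-gamma-ep-nest} by the elliptic-curve comparison carried out there; beyond invoking it, this corollary only uses the change-of-variables formula and monotonicity of an integral under domain inclusion, so I do not expect any genuine obstacle — the whole difficulty is front-loaded into that lemma.
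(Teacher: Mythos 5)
Your proposal is correct and follows the paper's own proof essentially verbatim: change variables via the Jacobian $\tfrac12 g'(\psi_\ep)$ from \eqref{Jacobian of the transformation-ep} to rewrite $b_{\ep,3}$ as $4\iint_{D_{\theta_\ep\gamma_\ep,\ep}}\sin^2(\theta/3)(1-\gamma^2)^{1/3}\,d\theta\,d\gamma$, then invoke the nesting \eqref{D1D2-constant} of Lemma \ref{D-theta-gamma-ep-nest}. No differences worth noting.
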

\begin{proof}
By the definition of $ b_{\ep,3}(\hat{\psi}_{1,\ep})$ in \eqref{b-ep-2-hat-psi-1-ep} and Lemma \ref{D-theta-gamma-ep-nest}, we have
\begin{align*}
b_{\ep_1,3}(\hat{\psi}_{1,\ep_1})=
&2\iint_{D_{xy,\ep_1}}g'(\psi_{\ep_1})
\sin^2\left(\frac{\theta_{\ep_1}}{3}\right)(1-\gamma_{\ep_1}^2)^{1\over3}dxdy\\
=&4\iint_{D_{\theta_{\ep_1}\gamma_{\ep_1},\ep_1}}\sin^2\left(\frac{\theta}{3}\right)(1-\gamma^2)^{1\over3}d\theta d\gamma\\
\leq &4\iint_{D_{\theta_{\ep_2}\gamma_{\ep_2},\ep_2}}\sin^2\left(\frac{\theta}{3}\right)(1-\gamma^2)^{1\over3}d\theta d\gamma\\
=&2\iint_{D_{xy,\ep_2}}g'(\psi_{\ep_2})
\sin^2\left(\frac{\theta_{\ep_2}}{3}\right)(1-\gamma_{\ep_2}^2)^{1\over3}dxdy=b_{\ep_2,3}(\hat{\psi}_{1,\ep_2})
\end{align*}
for $0\leq \ep_1\leq \ep_2<1$.
\end{proof}
By   splitting  the trapped regions and taking  approximate summation for the integral in $b_{\ep,3}(\hat{\psi}_{1,\ep})|_{\ep={4\over5}}$, we have
\begin{align*}
b_{\ep,3}(\hat{\psi}_{1,\ep})|_{\ep={4\over5}}<24.38.
\end{align*}
It then follows from Corollary \ref{b3-increasing} that
 \begin{align}\label{b-ep-2-hat-psi}
 b_{\ep, 2}(\hat{\psi}_{1,\ep})<24.38 \quad\text{for}\quad\ep\in\left[0,{4\over5}\right].
 \end{align}
Combining \eqref{b-ep-1-hat-psi} and \eqref{b-ep-2-hat-psi}, we have
\begin{align}\label{test-odd-neg}
\langle\hat{A}_{\ep,e} \hat{\psi}_{1,\ep}, \hat{\psi}_{1,\ep} \rangle= b_{\ep, 1}(\hat{\psi}_{1,\ep}) + b_{\ep, 2}(\hat{\psi}_{1,\ep})<-24.61+24.38 =-0.23< 0.
\end{align}
\medskip

\noindent{\bf{Case 2. Test functions for $\ep\in\left({4\over5},1\right)$.}}
\medskip

Let
 \begin{align*}
&{\phi}_{2,\ep}(x,y)= \Phi_{2,\ep}(\theta_\ep,\gamma_\ep)\\\nonumber
=  &\left\{ \begin{array}{ll}
         \cos\left({1\over 2}\theta_\ep\right)(1-\gamma_\ep^2)^{1\over2} & \mbox{if $(\theta_\ep,\gamma_\ep) \in [0, 4k\pi]\times[-1,1]$},\\
         \cos\left(\theta_\ep\right)(1-\gamma_\ep^2)^{1\over2} & \mbox{if $(\theta_\ep,\gamma_\ep) \in \left((4k\pi,(4k+{1\over 2})\pi]\cup((4k+{3\over 2})\pi,(4k+2)\pi]\right)\times[-1,1]$},\\
         0& \mbox{if $(\theta_\ep,\gamma_\ep) \in ((4k+{1\over 2})\pi,(4k+{3\over 2})\pi]\times[-1,1]$}.
        \end{array} \right.
 \end{align*}
 Then
 \begin{align*}
 &\widehat{(\Phi_{2,\ep})}_0(0)={1\over (4k+2)\pi}\int_0^{(4k+2)\pi}\Phi_{2,\ep}(\theta_\ep,0)d\theta_{\ep}\\
 =&{1\over (4k+2)\pi}\left(\int_{4k\pi}^{(4k+{1\over2})\pi}+\int_{(4k+{3\over2})\pi}^{(4k+2)\pi}\right)\cos(\theta_{\ep})d\theta_{\ep}={1\over (2k+1)\pi}.
 \end{align*}
 We choose the test function
 \begin{align}\label{test-odd-2}
\hat{\psi}_{2,\ep}(x,y)= &\hat\Psi_{2,\ep}(\theta_\ep,\gamma_\ep)\triangleq \Phi_{2,\ep}(\theta_\ep,\gamma_\ep)-{1\over (2k+1)\pi}={\phi}_{2,\ep}(x,y)-{1\over (2k+1)\pi}
 \end{align}
 for $(\theta_\ep,\gamma_\ep) \in \mathbb{T}_{(4k+2)\pi}\times[-1,1]$.
Then $\hat\Psi_{2,\ep}\in C^0(\tilde \Omega_{2k+1})$ and
 \begin{align*}
 \|\hat\Psi_{2,\ep}\|_{\tilde Y_{\ep,e}}^2=&\left(\int_{-1}^{1} \int_0^{4k\pi}
+ \int_{-1}^{1} \int_{4k\pi}^{(4k+2)\pi}\right)\left({1\over1-\gamma_\ep^2}|\partial_{\theta_\ep}\hat\Psi_{2,\ep}|^2+(1-\gamma_\ep^2)|\partial_{\gamma_\ep}\hat\Psi_{2,\ep}|^2\right)d \theta_\ep d\gamma_\ep\\
=&\left(\int_{-1}^{1} \int_0^{4k\pi}
+ \int_{-1}^{1} \int_{4k\pi}^{(4k+2)\pi}\right)\left({1\over1-\gamma_\ep^2}|\partial_{\theta_\ep}\Phi_{2,\ep}|^2+(1-\gamma_\ep^2)|\partial_{\gamma_\ep}\Phi_{2,\ep}|^2\right)d \theta_\ep d\gamma_\ep\\
= &
k\pi+{1\over3}\pi<\infty.
 \end{align*}
 Moreover,
 \begin{align*}
 \int_{0}^{(4k+2)\pi}\hat\Psi_{2,\ep}(\theta_\ep,0)d\theta_\ep=\int_{0}^{(4k+2)\pi}\left(\Phi_{2,\ep}(\theta_\ep,0)-{1\over (2k+1)\pi}\right)d\theta_\ep=2-2
 =0.
 \end{align*}
Thus, $\hat\Psi_{2,\ep}\in\tilde Y_{\ep,e}$, which implies $ \hat{\psi}_{2,\ep}\in \tilde X_{\ep,e}$. Since $\hat{P}_{\ep,e}{1\over (2k+1)\pi}={1\over (2k+1)\pi}$, we have
\begin{align}\nonumber
\langle\hat{A}_{\ep,e} \hat{\psi}_{2,\ep}, \hat{\psi}_{2,\ep}\rangle
&= \iint_{\Omega_{2k+1}} \left( |\nabla \hat{\psi}_{2,\ep}|^2 - g'(\psi_\ep)((I - \hat{P}_{\ep,e})\hat{\psi}_{2,\ep})^2\right) dxdy\\\nonumber
&=\iint_{\Omega_{2k+1}}\left( |\nabla {\phi}_{2,\ep}|^2 - g'(\psi_\ep)((I - \hat{P}_{\ep,e}){\phi}_{2,\ep})^2\right) dxdy\\\label{Aep-psi-2-ep-2}
&=b_{\ep, 1}({\phi}_{2,\ep}) + b_{\ep, 2}({\phi}_{2,\ep}).
\end{align}
 By Corollary  \ref{kernel of  the operator A-ep and a decomposition of tilde Xep},
 $\cos\left(\theta_\ep\right)(1-\gamma_\ep^2)^{1\over2}\in \ker( A_\ep)$,
 and thus,
 \begin{align}\label{hat-psi-4kpi-4k+2pi-2}
-{1\over1-\gamma_\ep^2}\pa_{\theta_\ep}^2\Phi_{2,\ep}-\pa_{\gamma_\ep}\left((1-\gamma_\ep^2)\pa_{\gamma_\ep}\Phi_{2,\ep}\right)
=2\Phi_{2,\ep}
 \end{align}
 for $(\theta_\ep,\gamma_\ep) \in ((4k\pi,(4k+{1\over 2})\pi]\cup((4k+{3\over 2})\pi,(4k+2)\pi])\times[-1,1]$.
  By Lemma
\ref{sol to eigenvalue problem non-zero modes varepsilon=0 original}, $(1-\gamma_\ep^2)^{1\over2}$ is an
 eigenfunction of the eigenvalue $1$ for \eqref{eigenvalue problem2 non-zero modes varepsilon=0} with $k=1$. This, along with \eqref{laplacian}, gives
$$-(\Delta + g'(\psi_\ep)) {\phi}_{2,\ep} = - \frac 1 2 g'(\psi_\ep) \left( \frac 3 4 \frac{{\Phi}_{2,\ep}}{1-\gamma_\ep^2} \right),\;\;(x,y) \in [0, 4k\pi]\times \mathbb{R}.$$
Then
\begin{align}\nonumber
& \int_{-\infty}^{+\infty} \int_0^{4k\pi} \left(|\nabla {\phi}_{2,\ep}|^2 - g'(\psi_\epsilon){\phi}_{2,\ep}^2\right) dx dy  = \int_{-\infty}^{+\infty} \int_0^{4k\pi} - \frac 1 2 g'(\psi_\ep) \left( \frac 3 4 \frac{{\Phi}_{2,\ep}^2}{1-\gamma_\ep^2} \right)dx dy \\\label{b1-odd-term1-2}
= & - \int_{-1}^{1} \int_0^{4k\pi}  \left( \frac 3 4 \frac{{\Phi}_{2,\ep}^2}{1-\gamma_\ep^2} \right)d \theta_\ep d\gep
= -3k\pi.
\end{align}
Combining \eqref{hat-psi-4kpi-4k+2pi-2} and \eqref{b1-odd-term1-2}, we have
\begin{align}\nonumber
b_{\ep,1}(\phi_{2,\ep})
=&  \left( \int_{-\infty}^{+\infty} \int_0^{4k\pi}+\int_{-\infty}^{+\infty} \int_{4k\pi}^{(4k+2)\pi}\right)\left(
|\nabla {\phi}_{2,\ep}|^2 - g'(\psi_\epsilon){\phi}_{2,\ep}^2\right) dx dy\\\nonumber
=& -3k\pi+\left(\int_{-1}^1\int_0^{{\pi\over2}}+\int_{-1}^1\int_{3\pi\over2}^{2\pi}\right)\bigg(
{1\over1-\gamma_\ep^2}|\pa_{\theta_\ep}\Phi_{2,\ep}|^2\\\nonumber
&+(1-\gamma_\ep^2)|\pa_{\gamma_\ep}\Phi_{2,\ep}|^2-2|\Phi_{2,\ep}|^2\bigg)d\theta_\ep d\gamma_\ep\\\label{com-b-ep-1-phi-2-ep}
=&-3k\pi.
\end{align}
Since
$\cos\left({1\over 2}\theta_\ep\right)$ is  `odd'   symmetrical about the points $((2j-1)\pi,0)$  for $j=1,\cdots,2k$,  we have
$\hat{P}_{\ep,e} \hat{\psi}_{2,\ep}(x,y)=0$ for $(x,y)$ in the $j$-th trapped region of
$\mathbb{T}_{(4k+2)\pi}\times \mathbb{R}$, where $1\leq j\leq2k$.
Next, we compute the projection term for $(x,y)$ in  the $(2k+1)$-th trapped region, denoted by $D_{{\rm{in}},2k+1}$.
Using $x$ as the parameter,
we  represent the upper and lower  separatrix to be $y(x)=\cosh^{-1}(1+\ep-\ep\cos(x)), x\in[4k\pi,(4k+2)\pi]$ and  $y(x)=-\cosh^{-1}(1+\ep-\ep\cos(x)), x\in[4k\pi,(4k+2)\pi]$, respectively.
Then
\begin{align*}
&\iint_{D_{{\rm{in}},2k+1}} g'(\psi_\ep) |\hat{P}_{\ep,e} {\phi}_{2,\ep}|^2 dxdy
=\int_{-\rho_0}^{\rho_0} g'(\rho)  \frac{\left|\oint_{\Gamma_{2k+1} (\rho)} \frac{{\phi}_{2,\ep}}{|\nabla \psi_\ep|}\right|^2}{\oint_{\Gamma_{2k+1} (\rho)} \frac{1}{|\nabla\psi_\ep|}} d\rho\\
\leq&\int_{-\rho_0}^{\rho_0} g'(\rho) \oint_{\Gamma_{2k+1} (\rho)} \frac{|{\phi}_{2,\ep}|^2}{|\nabla \psi_\ep|}d\rho= \iint_{D_{{\rm{in}},2k+1}} g'(\psi_\ep) | {\phi}_{2,\ep}|^2 dxdy\\
\leq &\iint_{\Omega_{2k+1}\setminus\Omega_{2k}}g'(\psi_\ep)| {\phi}_{2,\ep}|^2dxdy
=2\int_{-1}^1\left(\int_0^{\pi\over 2}+\int_{3\pi\over2}^{2\pi}\right)\cos^2\left(\theta_\ep\right)(1-\gamma_\ep^2)d\theta_\ep d\gamma_\ep\\
=&{4\over3} \pi,
\end{align*}
where $\rho_0$ and $\Gamma_{2k+1} (\rho)$ are defined in \eqref{def-rho0} and \eqref{Gamma-rho}.
Now, we compute the projection term for $(x,y)$ in  the untrapped region, denoted by $D_{c}$.
\begin{align*}
&\iint_{D_{c}} g'(\psi_\ep) |\hat{P}_{\ep,e} {\phi}_{2,\ep}|^2 dxdy
=(2k+1)\left(\iint_{\Omega_{2k+1}\setminus (\Omega_{2k}\cup D_{{\rm{in}},2k+1})} g'(\psi_\ep) |\hat{P}_{\ep,e} {\phi}_{2,\ep}|^2 dxdy\right)\\
\leq&(2k+1)\left(\iint_{\Omega_{2k+1}\setminus (\Omega_{2k}\cup D_{{\rm{in}},2k+1})} g'(\psi_\ep) | {\phi}_{2,\ep}|^2 dxdy\right)\\
\leq &(2k+1)\left(\iint_{\Omega_{2k+1}\setminus (\Omega_{2k}\cup D_{{\rm{in}},2k+1})} g'(\psi_\ep)\cos^2\left(\theta_\ep\right)(1-\gamma_\ep^2)dxdy\right)\\
= &(2k+1)\left({8\over3} \pi-\iint_{D_{{\rm{in}},2k+1}} g'(\psi_\ep) \cos^2\left(\theta_\ep\right)(1-\gamma_\ep^2)dxdy\right)\\
=&(2k+1)\left({8\over3} \pi-\int_0^{2\pi}\int_{-\cosh^{-1}(1+\ep-\ep\cos(x))}^{\cosh^{-1}(1+\ep-\ep\cos(x))} g'(\psi_\ep) \cos^2\left(\theta_\ep\right)(1-\gamma_\ep^2) dydx\right)\\
\triangleq&(2k+1)\left({8\over3} \pi-b_{\ep,4}({\phi}_{2,\ep})\right).
\end{align*}
Thus,
\begin{align}\nonumber
 b_{\ep, 2}({\phi}_{2,\ep})=&\iint_{D_{{\rm{in}},2k+1}} g'(\psi_\ep) |\hat{P}_{\ep,e} {\phi}_{2,\ep}|^2 dxdy+\iint_{D_{c}} g'(\psi_\ep) |\hat{P}_{\ep,e} {\phi}_{2,\ep}|^2 dxdy\\\label{est-b-ep-2-phi-2-ep}
 \leq &{4\over3} \pi+(2k+1)\left({8\over3} \pi-b_{\ep,4}({\phi}_{2,\ep})\right).
 \end{align}
 \begin{Corollary}\label{b4-increasing}
 $b_{\ep,4}({\phi}_{2,\ep})$ is non-decreasing on $\ep\in[0,1)$.
\end{Corollary}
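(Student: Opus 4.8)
\textbf{Proof proposal for Corollary \ref{b4-increasing}.}

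The plan is to mimic exactly the argument used for Corollary \ref{b3-increasing}, reducing the monotonicity of $b_{\ep,4}({\phi}_{2,\ep})$ in $\ep$ to the nested-trapped-region property established in Lemma \ref{D-theta-gamma-ep-nest}. First I would rewrite the integral $b_{\ep,4}({\phi}_{2,\ep})$, which is a $(x,y)$-integral over the single trapped region $D_{xy,\ep}=D_{\rm{in},1}$ weighted by $g'(\psi_\ep)$, as an integral in the $(\theta_\ep,\gamma_\ep)$ variables. Using the Jacobian identity \eqref{Jacobian of the transformation-ep}, namely $\tfrac{\partial(\theta_\ep,\gamma_\ep)}{\partial(x,y)}=\tfrac12 g'(\psi_\ep)$, together with the fact that $\cos(\theta_\ep)$, $1-\gamma_\ep^2$ are already expressed in the new variables, we get
\begin{align*}
b_{\ep,4}({\phi}_{2,\ep}) = \iint_{D_{xy,\ep}} g'(\psi_\ep)\cos^2(\theta_\ep)(1-\gamma_\ep^2)\,dxdy = 2\iint_{D_{\theta_\ep\gamma_\ep,\ep}} \cos^2(\theta)(1-\gamma^2)\,d\theta d\gamma,
\end{align*}
where on the right-hand side $\theta,\gamma$ are treated as the free (and $\ep$-independent) integration variables over the region $D_{\theta_\ep\gamma_\ep,\ep}\subset\mathbb{T}_{2\pi}\times[-1,1]$ as in the proof of Corollary \ref{b3-increasing}.

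The key point is then that the integrand $\cos^2(\theta)(1-\gamma^2)$ is a \emph{nonnegative} function on $\mathbb{T}_{2\pi}\times[-1,1]$, independent of $\ep$. Hence, if $0\leq\ep_1\leq\ep_2<1$, the inclusion $D_{\theta_{\ep_1}\gamma_{\ep_1},\ep_1}\subset D_{\theta_{\ep_2}\gamma_{\ep_2},\ep_2}$ from Lemma \ref{D-theta-gamma-ep-nest} immediately gives
\begin{align*}
b_{\ep_1,4}({\phi}_{2,\ep_1}) = 2\iint_{D_{\theta_{\ep_1}\gamma_{\ep_1},\ep_1}}\cos^2(\theta)(1-\gamma^2)\,d\theta d\gamma \leq 2\iint_{D_{\theta_{\ep_2}\gamma_{\ep_2},\ep_2}}\cos^2(\theta)(1-\gamma^2)\,d\theta d\gamma = b_{\ep_2,4}({\phi}_{2,\ep_2}),
\end{align*}
which is the claimed monotonicity. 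The only routine check is that the region $D_{xy,\ep}$ in the definition of $b_{\ep,4}$ genuinely coincides with the region $D_{xy,\ep}=D_{\rm{in},1}$ of Lemma \ref{D-theta-gamma-ep-nest} — and it does, since the integration limits $y\in[-\cosh^{-1}(1+\ep-\ep\cos(x)),\cosh^{-1}(1+\ep-\ep\cos(x))]$ for $x\in\mathbb{T}_{2\pi}$ describing the first cat's-eye trapped region are exactly those in the statement of that lemma.

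There is essentially no obstacle here: the heavy lifting — the delicate nested-ellipse analysis of the trapped regions in the $(\xi_\ep,\eta_\ep)$ coordinates — has already been carried out in Lemma \ref{D-theta-gamma-ep-nest}, and this corollary is simply a change-of-variables bookkeeping step plus the positivity of a trigonometric weight, identical in structure to Corollary \ref{b3-increasing} with $\sin^2(\theta/3)(1-\gamma^2)^{1/3}$ replaced by $\cos^2(\theta)(1-\gamma^2)$. If anything, one should be slightly careful that $\cos^2(\theta)(1-\gamma^2)$ vanishes on parts of the boundary, but since it is continuous and nonnegative this does not affect the monotonicity of the integral over nested domains.
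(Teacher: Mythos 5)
Your proposal is correct and coincides with the paper's own proof: rewrite $b_{\ep,4}(\phi_{2,\ep})$ in the $(\theta_\ep,\gamma_\ep)$ variables via the Jacobian identity \eqref{Jacobian of the transformation-ep}, obtaining $2\iint_{D_{\theta_\ep\gamma_\ep,\ep}}\cos^2(\theta)(1-\gamma^2)\,d\theta d\gamma$, and then apply the nestedness of Lemma \ref{D-theta-gamma-ep-nest} together with the nonnegativity of the $\ep$-independent integrand. No gaps.
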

\begin{proof}
By the definition of $ b_{\ep,4}(\phi_{2,\ep})$ and Lemma \ref{D-theta-gamma-ep-nest}, we have
\begin{align*}
b_{\ep_1,4}(\phi_{2,\ep_1})=
&\iint_{D_{xy,\ep_1}}g'(\psi_{\ep_1}) \cos^2\left(\theta_{\ep_1}\right)(1-\gamma_{\ep_1}^2)dxdy\\
=&2\iint_{D_{\theta_{\ep_1}\gamma_{\ep_1},\ep_1}}\cos^2\left(\theta\right)(1-\gamma^2)d\theta d\gamma\\
\leq &2\iint_{D_{\theta_{\ep_2}\gamma_{\ep_2},\ep_2}}\cos^2\left(\theta\right)(1-\gamma^2)d\theta d\gamma\\
=&\iint_{D_{xy,\ep_2}}g'(\psi_{\ep_2})
\cos^2\left(\theta_{\ep_2}\right)(1-\gamma_{\ep_2}^2)dxdy=b_{\ep_2,4}(\phi_{2,\ep_2})
\end{align*}
for $0\leq \ep_1\leq \ep_2<1$.
\end{proof}
Since
\begin{align*}
b_{\ep,4}({\phi}_{2,\ep})|_{\ep={4\over5}}>6.94,
\end{align*}
by Corollary \ref{b4-increasing} we have $\min_{\ep\in[{4\over5},1)}b_{\ep,4}({\phi}_{2,\ep})>6.94.$
Then it follows from \eqref{est-b-ep-2-phi-2-ep} that
\begin{align}\label{est-b-ep-2-phi-2-ep-2}
 b_{\ep, 2}({\phi}_{2,\ep})
 \leq &{4\over3} \pi+(2k+1)\left({8\over3} \pi-6.94\right),\;\ep\in \left[{4\over5},1\right).
 \end{align}
By \eqref{Aep-psi-2-ep-2}, \eqref{com-b-ep-1-phi-2-ep} and \eqref{est-b-ep-2-phi-2-ep-2}, we have
 \begin{align}\nonumber
 &\langle\hat{A}_{\ep,e} \hat{\psi}_{2,\ep}, \hat{\psi}_{2,\ep}\rangle
=b_{\ep, 1}({\phi}_{2,\ep}) + b_{\ep, 2}({\phi}_{2,\ep})\leq -3k\pi+{4\over3} \pi+(2k+1)\left({8\over3} \pi-6.94\right)\\\label{A-ep-e-psi-2-neg}
=& \left({7\over 3}\pi-13.88\right)k+4\pi-6.94\leq {19\over 3}\pi-20.82<0
 \end{align}
 for $k\geq1$ and $\ep\in \left({4\over5},1\right)$.
 \medskip

 Combining Case 1  and Case 2, we obtain linear instability of $\omega_\ep$ for perturbations with odd multiples of the period.

\begin{Theorem}\label{multi-odd}
Let $\ep \in [0, 1)$. Then the steady state $\omega_\ep$ is linearly unstable for $(4k + 2)\pi$-periodic perturbations, where $k\geq1$ is an integer.
\end{Theorem}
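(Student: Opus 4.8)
The plan is to prove Theorem \ref{multi-odd} by combining the two constructions already prepared in Cases 1 and 2, exactly in parallel with how Theorem \ref{multi-even} was deduced from the test function $\tilde\psi_\ep$ in \eqref{test-even}. The key reduction, established in Lemmas \ref{range BLo-B} and \ref{L e-hat A}, is that the Kelvin-Stuart vortex $\omega_\ep$ is linearly unstable for $2m\pi$-periodic perturbations as soon as $n^-(\hat A_{\ep,e})\geq 1$, and the latter holds whenever one can exhibit a single test function $\psi\in\tilde X_{\ep,e}$ with $\langle\hat A_{\ep,e}\psi,\psi\rangle=b_{\ep,1}(\psi)+b_{\ep,2}(\psi)<0$. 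So the whole proof of the odd-multiple case amounts to checking that, for every $\ep\in[0,1)$, at least one of the two prepared test functions does the job on $\Omega_{2k+1}=\mathbb{T}_{(4k+2)\pi}\times\mathbb{R}$.

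Concretely, first I would invoke the verification of the hypotheses \textbf{(G1-4)} of Lemma \ref{indice-theorem-sep} for the separable Hamiltonian system \eqref{sep-hamiltonian} on $\Omega_{2k+1}$ — this was done in Corollaries \ref{A-L-dec-o} and \ref{A-L-dec-e} for general $m$, so it applies verbatim with $m=2k+1$. Then for $\ep\in[0,\tfrac45]$ I would take $\psi=\hat\psi_{1,\ep}$ defined in \eqref{test-odd}, already shown to lie in $\tilde X_{\ep,e}$; the estimate $b_{\ep,1}(\hat\psi_{1,\ep})\leq -\tfrac{196\pi}{25}\leq -24.61$ from \eqref{b-ep-1-hat-psi} together with the monotonicity bound $b_{\ep,2}(\hat\psi_{1,\ep})\leq b_{\ep,3}(\hat\psi_{1,\ep})< 24.38$ from Corollary \ref{b3-increasing} and \eqref{b-ep-2-hat-psi} yields $\langle\hat A_{\ep,e}\hat\psi_{1,\ep},\hat\psi_{1,\ep}\rangle<-0.23<0$ as in \eqref{test-odd-neg}. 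For $\ep\in(\tfrac45,1)$ I would instead take $\psi=\hat\psi_{2,\ep}$ from \eqref{test-odd-2}, also already shown to be admissible, for which \eqref{com-b-ep-1-phi-2-ep} gives $b_{\ep,1}(\phi_{2,\ep})=-3k\pi$ and \eqref{est-b-ep-2-phi-2-ep-2} (using $b_{\ep,4}(\phi_{2,\ep})>6.94$ via Corollary \ref{b4-increasing}) gives the bound leading to $\langle\hat A_{\ep,e}\hat\psi_{2,\ep},\hat\psi_{2,\ep}\rangle\leq \tfrac{19}{3}\pi-20.82<0$ as in \eqref{A-ep-e-psi-2-neg}. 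In either case $n^-\big(L_{\ep,e}|_{\overline{R(B_\ep)}}\big)=n^-(\hat A_{\ep,e})\geq 1$ by Lemma \ref{L e-hat A}, and linear instability for $(4k+2)\pi$-periodic perturbations follows from Lemma \ref{indice-theorem-sep}. Finally, Theorem \ref{main result2-multi-periodic perturbations} is obtained by pooling Theorems \ref{multi-even} and \ref{multi-odd}, since every integer $m\geq2$ is of the form $2k$ or $2k+1$.

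The main obstacle — and the genuinely delicate part already carried out in the excerpt, which the proof only needs to cite — is the control of the nonlocal projection contribution $b_{\ep,2}$ for the odd-multiple test functions. Unlike the even case, where $b_{\ep,2}(\tilde\psi_\ep)=0$ by the odd symmetry of $\tilde\psi_\ep$ about the vertical lines $\{x=(2j-1)\pi\}$ along streamlines (Lemma \ref{b2-even}), here the period-tripling piece $\sin(\theta_\ep/3)$ (resp.\ $\cos(\theta_\ep/2)$) fails to be odd on enough trapped regions, so $b_{\ep,2}$ does not vanish; it must be estimated, and the estimate must be uniform in $\ep$. The crux is Lemma \ref{D-theta-gamma-ep-nest}: the trapped region $D_{xy,\ep}$, pushed forward to the $(\xi_\ep,\eta_\ep)$-plane, becomes the annular region between the fixed unit circle $S_1$ and a nested family of ellipses $\Gamma_{1/c_\ep,\ep}$ shrinking in $\ep$ (Steps 1 and 2 of that lemma), which forces $b_{\ep,3}$ and $b_{\ep,4}$ to be monotone in $\ep$ (Corollaries \ref{b3-increasing}, \ref{b4-increasing}) and thus reduces the uniform bound to a single numerical check at $\ep=\tfrac45$. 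Splitting at $\tfrac45$ into two subintervals with two different test functions is precisely what makes the numerics close: on $[0,\tfrac45]$ the projection loss stays below $24.38<24.61$, while on $(\tfrac45,1)$ the larger trapped region is exploited by $b_{\ep,4}>6.94$ to beat the $-3k\pi$ main term. Given all of this machinery is in place, the proof of Theorem \ref{multi-odd} itself is a short assembly: quote Cases 1 and 2, conclude $n^-(\hat A_{\ep,e})\geq 1$, and apply Lemma \ref{indice-theorem-sep}.
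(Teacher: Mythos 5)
Your proposal is correct and follows essentially the same route as the paper's own proof: cite Case 1 ($\hat\psi_{1,\ep}$ with \eqref{test-odd-neg}) for $\ep\in[0,\tfrac45]$ and Case 2 ($\hat\psi_{2,\ep}$ with \eqref{A-ep-e-psi-2-neg}) for $\ep\in(\tfrac45,1)$, conclude $n^-\bigl(L_{\ep,e}|_{\overline{R(B_\ep)}}\bigr)=n^-\bigl(\hat A_{\ep,e}\bigr)\geq1$ via Lemma \ref{L e-hat A}, and apply Lemma \ref{indice-theorem-sep}. Your accompanying remarks on why the nonlocal projection term $b_{\ep,2}$ is the delicate point, and why Lemma \ref{D-theta-gamma-ep-nest} plus the monotonicity corollaries reduce it to a check at $\ep=\tfrac45$, accurately reflect the structure of the argument already carried out in the paper.
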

\begin{proof}
For $\ep\in\left[0,{4\over5}\right]$, we define
 the test function to be $\hat{\psi}_{1,\ep}$   in \eqref{test-odd}.  By \eqref{test-odd-neg}, we have
$\langle\hat{A}_{\ep,e} \hat{\psi}_{1,\ep}, \hat{\psi}_{1,\ep} \rangle < 0.$
 For $\ep\in\left({4\over5},1\right)$, we define
 the test function to be $\hat{\psi}_{2,\ep}$   in \eqref{test-odd-2}.  By \eqref{A-ep-e-psi-2-neg}, we have
$\langle\hat{A}_{\ep,e} \hat{\psi}_{2,\ep}, \hat{\psi}_{2,\ep} \rangle < 0.$ Thus, $n^-\left( L_{\ep,e} |_{\overline{R(B_\ep)}} \right) = n^-\left(\hat{A}_{\ep,e}\right)\geq1$ for $\ep\in[0,1)$ by Lemma \ref{L e-hat A}. Then linear instability is obtained by applying Lemma  \ref{indice-theorem-sep}.
\end{proof}
\begin{remark}\label{number of unstable eigenvalues for multi-periodic perturbations}


$(1)$ For $\ep\in\left[0,{4\over5}\right]$, we use
 the test function $\hat{\psi}_{1,\ep}$ to get a negative direction of
$\hat{A}_{\ep,e}.$ A conjecture is that $\hat{\psi}_{1,\ep}$ is always a negative direction of
$\hat{A}_{\ep,e}$ for $\ep\in\left[0,1\right)$. The difficulty to prove or disprove this conjecture  is how to accurately compute or estimate the projection term in a rigorous way.

$(2)$
For $\ep=0$, the number of unstable eigenvalues of the linearized vorticity operator is $2(m-1)$. Indeed, on the one hand,
since
\begin{align*}
\langle\tilde A_{0,e}\psi,\psi\rangle=&\iint_{\Omega_m}\left(|\nabla\psi|^2-g'(\psi_0)\psi^2\right)dxdy+{\left(\iint_{\Omega_m}g'(\psi_0)\widehat\psi_0dxdy\right)^2\over \iint_{\Omega_m}g'(\psi_0)dxdy}\\
\leq &\iint_{\Omega_m}\left(|\nabla\psi|^2-g'(\psi_0)\psi^2\right)dxdy+{\iint_{\Omega_m}g'(\psi_0)\widehat\psi_0^2dxdy}\\
=&\iint_{\Omega_m}\left(|\nabla\psi|^2-g'(\psi_0)\psi^2\right)dxdy+{\iint_{\Omega_m}g'(\psi_0)(\hat P_{0,e}\psi)^2dxdy}=\langle\hat A_{0,e}\psi,\psi\rangle
 \end{align*}
 for $\psi\in\tilde X_{0,e}$, we have $n^-(\hat A_{0,e})\leq n^-(\tilde A_{0,e}).$
By Corollary \ref{A-L-dec-e}, $n^-\left(\hat{A}_{0,e}\right)\leq n^-\left(\tilde A_{0,e}\right)=2(m-1)$. On the other hand,
since $\hat W_{0,e}=\{\phi(y)\in L_{{ g'(\psi_0)},e}^2(\Omega_m)\}$
 and  $\hat{P}_{0,e}\psi=0$ for  $\psi\in\tilde X_{0,e-}$,
we have $\hat{A}_{0,e}|_{X_{0,e-}}=\tilde A_{0,e}|_{X_{0,e-}}$ and thus, $n^-\left(\hat{A}_{0,e}\right)=2(m-1)$. The conclusion is then a consequence of
Lemmas \ref{L e-hat A} and \ref{indice-theorem-sep}.
This  suggests that the number of unstable eigenvalues of the linearized vorticity operator is $2(m-1)$ for $\ep\ll1$.
\end{remark}
\section{Modulational instability}\label{modulational}
In this section, we study the linear stability of $\omega_\ep$ with respect to perturbations of the form
\begin{align}u(x, y) = \widetilde{u}(x, y)e^{i\alpha x},\nonumber\\
\omega(x, y) = \widetilde{\omega}(x, y)e^{i\alpha x},\label{perturbation-modulaitonal-form}\\
\psi(x, y) = \widetilde{\psi}(x, y)e^{i\alpha x},\nonumber\end{align}
where $\alpha \in (0, \frac 1 2]$, and $\widetilde{u}, \widetilde{\omega}, \widetilde{\psi}$ are complex-valued and defined on the domain $\Omega = \mathbb{T}_{2\pi} \times \mathbb{R}$.
\subsection{Complex Hamiltonian formulation}
Recall that the   linearized vorticity  operator has the form $ J_\epsilon L_\epsilon$, where
$J_\epsilon = -g'(\psi_\epsilon)\vec{u}_\epsilon\cdot\nabla$ and
 $L_\epsilon = \frac {1} {g'(\psi_\epsilon)} - (-\Delta)^{-1}$. We seek solutions of the form \eqref{perturbation-modulaitonal-form} for the linearized equations, where $\widetilde{\omega} \in L^2_{\frac{1}{g'(\psi_\ep)}}(\Omega)$.  Then we have $J_\epsilon L_\epsilon(e^{i\alpha x}\widetilde{\omega})=e^{i\alpha x}J_{\epsilon,\alpha} L_{\epsilon,\alpha}\widetilde{\omega}$, where
\begin{align}\label{def-J-ep-al}
J_{\epsilon, \alpha} =& g'(\psi_\epsilon)\vec{u}_\epsilon\cdot\nabla_\alpha:L^2_{g'(\psi_\ep)}(\Omega) \supset D(J_{\epsilon, \alpha}) \to L^2_{\frac{1}{g'(\psi_\ep)}}(\Omega),\\\label{def-L-ep-al}
 L_{\epsilon, \alpha} =& \frac {1} {g'(\psi_\epsilon)} - (-\Delta_\alpha)^{-1}:L^2_{\frac{1}{g'(\psi_\ep)}}(\Omega)\to L^2_{g'(\psi_\ep)}(\Omega),\end{align}
and
\begin{align}\label{nabla-alpha-Delta-alpha}
 \nabla_\alpha = (\partial_x + i\alpha, \partial_y)^T,\quad
 \Delta_\alpha  = (i\alpha + \partial_x)^2 + \partial_{yy}.
 \end{align}
To make it rigorous, we need to clarify  the solvability of the  $\alpha$-Poisson equation.
\begin{lemma}
For any $\widetilde{\omega} \in L^2_{\frac{1}{g'(\psi_\ep)}}(\Omega)$, the $\alpha$-Poisson equation
\begin{align}\label{a-Poisson}- \Delta_\alpha \widetilde{\psi} = \widetilde{\omega}\end{align}
has a unique weak solution $\widetilde{\psi}$ in the Hilbert space
$$H^1_\alpha(\Omega) := \{ \phi | \| \nabla_\alpha \phi \|^2_{L^2(\Omega)}< \infty  \}$$
equipped with the inner product
$$(\phi_1, \phi_2)_{H^1_\alpha(\Omega)} = \iint_\Omega \nabla_\alpha \phi_1 \cdot \overline{\nabla_\alpha \phi_2} dxdy.$$
\end{lemma}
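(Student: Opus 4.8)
The plan is to establish existence and uniqueness via the Lax--Milgram theorem (equivalently, the Riesz representation theorem) applied in the Hilbert space $H^1_\alpha(\Omega)$. First I would record that $H^1_\alpha(\Omega)$ equipped with the stated inner product is indeed a Hilbert space: completeness follows because $\nabla_\alpha\phi = (\partial_x\phi + i\alpha\phi,\partial_y\phi)$, and for $\alpha\in(0,\tfrac12]$ one checks that $\|\nabla_\alpha\phi\|_{L^2(\Omega)}$ is equivalent to the full $H^1(\Omega)$ norm. Indeed, writing $\phi(x,y)=\sum_{k\in\mathbb{Z}}\widehat\phi_k(y)e^{ikx}$, one has
\begin{align*}
\|\nabla_\alpha\phi\|_{L^2(\Omega)}^2 = 2\pi\int_{\mathbb{R}}\sum_{k\in\mathbb{Z}}\left((k+\alpha)^2|\widehat\phi_k(y)|^2 + |\widehat\phi_k'(y)|^2\right)dy,
\end{align*}
and since $\alpha\in(0,\tfrac12]$ guarantees $(k+\alpha)^2\geq \alpha^2>0$ for every $k\in\mathbb{Z}$ (the mode $k=0$ is no longer degenerate, unlike the co-periodic case), this quantity is comparable to $\|\phi\|_{H^1(\Omega)}^2$. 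Hence a Cauchy sequence in $H^1_\alpha(\Omega)$ is Cauchy in $H^1(\Omega)$, its $H^1$-limit lies in $H^1_\alpha(\Omega)$, and the space is complete. This step is where the restriction $\alpha\in(0,\tfrac12]$ (or more generally $\alpha\in\mathbb{R}\setminus\mathbb{Z}$) is used, and it is the only mildly delicate point; no artificial normalization condition like $\widehat\psi_0(0)=0$ is needed.

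Next I would set up the weak formulation: $\widetilde\psi\in H^1_\alpha(\Omega)$ is a weak solution of $-\Delta_\alpha\widetilde\psi=\widetilde\omega$ if
\begin{align*}
\iint_\Omega \nabla_\alpha\widetilde\psi\cdot\overline{\nabla_\alpha\phi}\,dxdy = \iint_\Omega \widetilde\omega\,\overline{\phi}\,dxdy,\quad\forall\,\phi\in H^1_\alpha(\Omega),
\end{align*}
which is the natural formulation since integration by parts against $e^{i\alpha x}$-twisted test functions converts $-\Delta_\alpha$ into the sesquilinear form $(\cdot,\cdot)_{H^1_\alpha(\Omega)}$. The left-hand side is exactly the inner product of $H^1_\alpha(\Omega)$, so it is bounded and coercive by construction. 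For the right-hand side I would show it defines a bounded conjugate-linear functional on $H^1_\alpha(\Omega)$: by Cauchy--Schwarz with the weight $g'(\psi_\ep)$,
\begin{align*}
\left|\iint_\Omega\widetilde\omega\,\overline\phi\,dxdy\right| \leq \left(\iint_\Omega\frac{|\widetilde\omega|^2}{g'(\psi_\ep)}dxdy\right)^{1/2}\left(\iint_\Omega g'(\psi_\ep)|\phi|^2dxdy\right)^{1/2},
\end{align*}
and the second factor is controlled by $C\|\nabla\phi\|_{L^2(\Omega)}^2\leq C\|\nabla_\alpha\phi\|_{L^2(\Omega)}^2 = C\|\phi\|_{H^1_\alpha(\Omega)}^2$ using the Poincar\'e inequality I-$\ep$ (Lemma \ref{poincare1ep}(2) for $0<\ep<1$, Lemma \ref{poincare1} for $\ep=0$), together with the norm-equivalence $\|\nabla\phi\|_{L^2}\leq\|\nabla_\alpha\phi\|_{L^2}$ just established. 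Hence the functional $\phi\mapsto\iint_\Omega\widetilde\omega\,\overline\phi\,dxdy$ is bounded on $H^1_\alpha(\Omega)$ with norm $\leq C\|\widetilde\omega\|_{L^2_{1/g'(\psi_\ep)}(\Omega)}$.

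Finally, by the Riesz representation theorem there is a unique $\widetilde\psi\in H^1_\alpha(\Omega)$ with $(\widetilde\psi,\phi)_{H^1_\alpha(\Omega)} = \iint_\Omega\widetilde\omega\,\overline\phi\,dxdy$ for all $\phi\in H^1_\alpha(\Omega)$, which is precisely the unique weak solution of \eqref{a-Poisson}, and moreover $\|\nabla_\alpha\widetilde\psi\|_{L^2(\Omega)}\leq C\|\widetilde\omega\|_{L^2_{1/g'(\psi_\ep)}(\Omega)}$, so the operator $(-\Delta_\alpha)^{-1}:L^2_{1/g'(\psi_\ep)}(\Omega)\to H^1_\alpha(\Omega)$ is well-defined and bounded; composing with the (bounded) embedding $H^1_\alpha(\Omega)\hookrightarrow L^2_{g'(\psi_\ep)}(\Omega)$ shows $(-\Delta_\alpha)^{-1}:L^2_{1/g'(\psi_\ep)}(\Omega)\to L^2_{g'(\psi_\ep)}(\Omega)$ as claimed in \eqref{def-L-ep-al}. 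I expect the main (and really the only nontrivial) obstacle to be the completeness/norm-equivalence argument for $H^1_\alpha(\Omega)$, i.e. verifying carefully that the twisted gradient norm dominates the ordinary $H^1$ norm uniformly in the Fourier mode index; everything else is a routine application of Riesz representation combined with the already-established weighted Poincar\'e inequalities.
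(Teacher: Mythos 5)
Your proof is correct and follows essentially the same route as the paper: the paper's argument is exactly your Cauchy--Schwarz estimate with the weight $g'(\psi_\ep)$ followed by the Riesz representation theorem, and your completeness/norm-equivalence step via $(k+\alpha)^2\geq c_0(k^2+\alpha^2)$ for $\alpha\in(0,\tfrac12]$ is precisely the content of the remark the paper places right after the lemma. Two small repairs within your own framework: the literal inequality $\|\nabla\phi\|_{L^2(\Omega)}\leq\|\nabla_\alpha\phi\|_{L^2(\Omega)}$ is false as stated (take the $k=-1$ mode with $\alpha=\tfrac12$), though only a constant is needed and your norm equivalence already supplies it; and the bound $\iint_\Omega g'(\psi_\ep)|\phi|^2\,dxdy\leq C\|\phi\|_{H^1_\alpha(\Omega)}^2$ follows most directly from the boundedness of $g'(\psi_\ep)$ together with $H^1_\alpha(\Omega)\cong H^1(\Omega)$, since the weighted Poincar\'e inequality you cite is stated for functions in $\tilde X_\ep$ (with its normalization condition) rather than for general elements of $H^1_\alpha(\Omega)$.
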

\begin{remark}
Since $\mathbb{Z}\ni k\neq\alpha\in (0,{1\over2}]$, we have $c_0(k^2+\alpha^2)\leq (k+\alpha)^2$ for some $c_0>0$. Then
\begin{align*}
c_1\|  \phi \|_{H^1(\Omega)}^2\leq \| \nabla_\alpha \phi \|^2_{L^2(\Omega)}=\sum_{k\in\mathbb{Z}}\left((k+\alpha)^2\|\widehat{\phi}_k\|_{L^2(\mathbb{R})}^2 +\|\widehat{\phi}'_k\|_{L^2(\mathbb{R})}^2\right)\leq  c_2\|  \phi \|^2_{H^1(\Omega)}
\end{align*}
 for some $c_1, c_2>0$. Thus,  $H^1_\alpha(\Omega)\cong H^1(\Omega)$.
\end{remark}
\begin{proof}
 For $\widetilde{\omega} \in L^2_{\frac{1}{g'(\psi_\ep)}}(\Omega)$, we have
$$\iint_\Omega \phi \widetilde{\omega} dxdy \leq \iint_\Omega  \frac{|\widetilde{\omega}|^2}{g'(\psi_\ep)} dxdy \iint_\Omega g'(\psi_\ep) |\phi|^2 dxdy \leq C \|\widetilde{\omega}\|^2_{L^2_{\frac{1}{g'(\psi_\ep)}}(\Omega)} \|\phi\|^2_{H_\alpha^1(\Omega)}, \quad \phi \in H_\alpha^1(\Omega). $$
By the Riesz Representation Theorem, for any $\widetilde{\omega} \in L^2_{\frac{1}{g'(\psi_\ep)}}(\Omega)$, there exists a unique $\widetilde{\psi} \in H_\alpha^1(\Omega)$ such that
$$\iint_\Omega \widetilde{\omega} \phi dxdy = \langle \widetilde{\omega}, \phi \rangle = (\widetilde{\psi}, \phi)_{H_\alpha^1(\Omega)},\quad \phi \in H_\alpha^1(\Omega).$$
\end{proof}
For $\widetilde{\omega} \in L^2_{\frac{1}{g'(\psi_\ep)}}(\Omega)$, we denote $(- \Delta_\alpha)^{-1} \widetilde{\omega} \in H_\alpha^1(\Omega)$ to be the weak solution of the $\alpha$-Poisson equation
\eqref{a-Poisson}.
\if0
Thus, it is reasonable to consider
\begin{align*}
 &J_{\epsilon, \alpha} : D(J_{\epsilon, \alpha})\supset L^2_{g'(\psi_\ep)}(\Omega)\to L^2_{\frac{1}{g'(\psi_\ep)}}(\Omega),\quad
   L_{\epsilon, \alpha} :L^2_{\frac{1}{g'(\psi_\ep)}}(\Omega)\to L^2_{g'(\psi_\ep)}(\Omega).
 \end{align*}
 \fi
The linearized vorticity equation for $\widetilde{\omega}$ is  formulated as
 \begin{align}\label{complex Ham-modu}
 \partial_t \widetilde{\omega} = J_{\ep, \alpha} L_{\ep, \alpha} \widetilde{\omega}.
 \end{align}
The steady state $\omega_\ep$ is said to be linearly modulationally unstable for $\alpha\in(0,{1\over 2}]$ if the operator $J_{\ep, \alpha} L_{\ep, \alpha}$
has an unstable eigenvalue $\lambda$ with $Re(\lambda)>0$.
 \if0
 Moreover, we have
 \begin{equation}
 -\Delta_\alpha \widetilde{\psi} = \widetilde{\omega},
 \end{equation}
 and
 \begin{equation}
 \widetilde{u} =  (\partial_y \widetilde{\psi}, -(i\alpha+\partial_x) \widetilde{\psi}).
 \end{equation}
\fi

For $\widetilde{\omega} \in L^2_{\frac{1}{g'(\psi_\ep)}}(\Omega)$, let $\widetilde{\psi}=(-\Delta_\alpha)^{-1}\widetilde{\omega}\in H_\alpha^1(\Omega)$, then
\begin{align*}
\|\widetilde{\psi}\|_{H_\alpha^1(\Omega)}^2=\iint_\Omega \widetilde{\omega} \overline{\widetilde{\psi}} dxdy
 \leq C\|\omega\|_{L^2_{\frac{1}{g'(\psi_\ep)}}(\Omega)} \|\widetilde{\psi}\|_{H_\alpha^1(\Omega)}.
\end{align*}
Thus, $\|\widetilde{\psi}\|_{H_\alpha^1(\Omega)}\leq C \|\widetilde{\omega}\|_{L^2_{\frac{1}{g'(\psi_\ep)}}(\Omega)}$. Let $\widetilde\omega_i\in L^2_{\frac{1}{g'(\psi_\ep)}}(\Omega)$ and $\widetilde{\psi}_i=(-\Delta_\alpha)^{-1}\widetilde{\omega}_i\in H_\alpha^1(\Omega)$ for $i=1,2$. Then
\begin{align}\label{L-alpha-bounded}
\langle L_{\ep,\alpha} \widetilde{\omega}_1, \widetilde{\omega}_2 \rangle=
 \langle  \widetilde{\omega}_1, L_{\ep,\alpha}\widetilde{\omega}_2 \rangle
 \leq C\|\widetilde{\omega}_1\|_{L^2_{\frac{1}{g'(\psi_\ep)}}(\Omega)}\|\widetilde{\omega}_2\|_{L^2_{\frac{1}{g'(\psi_\ep)}}(\Omega)}.
\end{align}
Thus, $
 \langle L_{\ep,\alpha}\cdot,\cdot\rangle$
is bounded and symmetric  on $L^2_{\frac{1}{g'(\psi_\ep)}}(\Omega)$.

\subsection{Exact solutions to the associated eigenvalue problems  for the  modulational case}

Define
 \begin{align*}
 \tilde{A}_{\ep,\alpha}=-\Delta_\alpha-g'(\psi_\ep): H_\alpha^1(\Omega) \rightarrow H_\alpha^{1}(\Omega)^*,
\end{align*}
where the negative $\alpha$-Laplacian operator is understood in the weak sense.
Then
$
 \langle\tilde{A}_{\ep,\alpha}\cdot,\cdot\rangle
$
defines a  bounded and symmetric bilinear form on $H_\alpha^{1}(\Omega)$. Noting that $\iint_{\Omega}g'(\psi_\ep)|\psi|^2dxdy\leq \|\psi\|_{H_\alpha^1(\Omega)}^2$ for $\psi\in H_\alpha^1(\Omega)$, a
similar argument to Lemma \ref{equal-indices0} implies
\begin{align*}
\dim\ker(L_{\ep,\alpha})=\dim\ker(\tilde{A}_{\ep,\alpha}) \quad {\rm{and}} \quad n^-(L_{\ep,\alpha})=n^-(\tilde{A}_{\ep,\alpha}).
\end{align*}

Since  $H_\alpha^{1}(\Omega)$ is compactly embedded in $L_{g'(\psi_\ep)}^2(\Omega)$,
 we can
inductively define $\lambda_n$, $n\geq1$, as follows:
\begin{align}\nonumber
\lambda_n(\ep,\alpha)=& \inf_{\widetilde{\psi} \in H_\alpha^{1}(\Omega), (\widetilde{\psi}, \widetilde{\psi}_{i})_{L_{g'(\psi_\ep)}^2(\Omega)} = 0, i = 1, 2, \cdots, n-1}{\iint_\Omega|\nabla_\alpha\widetilde{\psi}|^2dxdy\over\iint_\Omega g'(\psi_\ep)|\widetilde{\psi}|^2dxdy}\\\nonumber
=&\min_{\widetilde{\psi} \in H_\alpha^{1}(\Omega), (\widetilde{\psi}, \widetilde{\psi}_{i})_{L_{g'(\psi_\ep)}^2(\Omega)} = 0, i = 1, 2, \cdots, n-1}{\|\widetilde{\psi}\|_{H_\alpha^{1}(\Omega)}^2\over\|\widetilde{\psi}\|_{L_{g'(\psi_\ep)}^2(\Omega)}^2},
\end{align}
where the infimum for $\lambda_i(\ep,\alpha)$ is attained at  $\widetilde\psi_{i} \in H_\alpha^{1}(\Omega)$ and $\|\widetilde{\psi}_i\|_{L_{g'(\psi_\ep)}^2(\Omega)} = 1$, $1\leq i \leq n-1$.
A direct computation of the first variation of $$G_{\ep,\alpha}(\widetilde{\psi})={\|\widetilde{\psi}\|_{H_\alpha^{1}(\Omega)}^2\over\|\widetilde{\psi}\|_{L_{g'(\psi_\ep)}^2(\Omega)}^2}$$
 at $\widetilde{\psi}_{n}$ gives
the corresponding Euler-Lagrangian equation
\begin{align}\label{elip02-alpha}
-\Delta_\alpha \widetilde\psi = \lambda g'(\psi_\ep)\widetilde\psi, \quad \widetilde\psi \in H_\alpha^{1}(\Omega).
\end{align}
To solve the associated eigenvalue problem \eqref{elip02-alpha}, at the first glance we try to use the new variables $(\theta_\ep,\gamma_\ep)$ directly, the transformed equation is however involved and difficult to handle. Instead, we consider  the full perturbation $\psi=\widetilde\psi e^{i\alpha x}$ and by  \eqref{elip02-alpha} it  satisfies
\begin{align}\label{elip02-alpha-full perturbation}
-\Delta(\widetilde\psi e^{i\alpha x}) = \lambda g'(\psi_\ep)(\widetilde\psi e^{i\alpha x}), \quad \widetilde\psi \in H_\alpha^{1}(\Omega).
\end{align}
Note that the full perturbation $\psi$ can also be written as $\widetilde\Psi(\theta_\ep,\gamma_\ep) e^{i\alpha \theta_\ep}$ in the new variables. This motivates us to introduce the following transformation
\begin{align}\label{transformation-modu}
\widetilde\Psi(\theta_\ep,\gamma_\ep)=\widetilde\psi(x,y) e^{i\alpha (x-\theta_\ep)}.
\end{align}
Since
$\widetilde\Psi(\theta_\ep+2\pi,\gamma_\ep)=e^{i\alpha(x(\theta_\ep+2\pi,\gamma_\ep)-\theta_\ep-2\pi)}
\widetilde\psi(x(\theta_\ep+2\pi,\gamma_\ep),y(\theta_\ep+2\pi,\gamma_\ep))=e^{i\alpha( x-\theta_\ep)}
\widetilde\psi(x,y)=\widetilde\Psi(\theta_\ep,\gamma_\ep)$, we know that  $\widetilde\Psi$ is $2\pi$-periodic  in $\theta_\ep$.
Moreover,
\begin{align*}
 \|\widetilde\psi\|_{{H}_\alpha^1(\Omega)}^2=\iint_{\tilde \Omega}\left({1\over1-\gamma_\ep^2}(|\widetilde\Psi_{\theta_\ep}+i\alpha\widetilde\Psi|^2)+(1-\gamma_\ep^2)|\widetilde\Psi_{\gamma_\ep}|^2\right)d \theta_\ep d\gamma_\ep\triangleq\|\widetilde\Psi\|_{Y_{\ep,\alpha}}^2,
\end{align*}
where $Y_{\ep,\alpha}=\{\Psi|\|\Psi\|_{Y_{\ep,\alpha}}<\infty\}$. By \eqref{elip02-alpha-full perturbation},
$\widetilde\Psi$ satisfies the eigenvalue problem
\begin{align}\label{eigenvalue problem-ep-new-alpha}
-\pa_{\gamma_\ep}\left((1-\gamma_\ep^2)\pa_{\gamma_\ep}\widetilde\Psi\right)-{1\over1-\gamma_\ep^2}(\pa_{\theta_\ep}+i\alpha)^2\widetilde\Psi
=2\lambda\widetilde\Psi, \quad \widetilde\Psi \in Y_{\ep,\alpha}.
\end{align}
Since $\widetilde\Psi$ is $2\pi$-periodic  in $\theta_\ep$, we separate it into the Fourier modes.
For the $k$-mode with $k\in\mathbb{Z}$, the eigenvalue problem \eqref{eigenvalue problem-ep-new-alpha}
is
\begin{equation}\label{eigenvalue problem2 non-zero modes varepsilon=0alpha}
-((1-\gamma_\ep^2)\varphi')'+{(k+\alpha)^2\over1-\gamma_\ep^2}\varphi =2\lambda\varphi \quad \text{on}\quad (-1,1),\quad\varphi\in \hat Y_1^\ep,
\end{equation}
where $
\hat Y_1^\ep
$ is defined in \eqref{Y-k-ep-def}.
To
 solve the eigenvalue problem \eqref{eigenvalue problem2 non-zero modes varepsilon=0alpha},
we use the transformation
\begin{align}\label{modulational-transformation}
\varphi=(1-\gamma_\ep^2)^{|k+\alpha|\over2}\phi.
\end{align}
Then \eqref{eigenvalue problem2 non-zero modes varepsilon=0alpha} is transformed to
\begin{equation}\label{eigenvalue problem2 non-zero modes varepsilon=0-transform-alpha}
(1-\gamma_\ep^2)\phi''-2\left(|k+\alpha|+1\right)\gamma_\ep\phi'+\left(-(k+\alpha)^2-|k+\alpha|+2\lambda\right)\phi =0 \quad \text{on}\quad (-1,1),
\end{equation}
where $\varphi\in W_{k+\alpha}=\{\phi|(1-\gamma_\ep^2)^{|k+\alpha|\over2}\phi\in\hat Y_1^\ep\}$.
Let
\begin{align*}
\beta=|k+\alpha|+{1\over2},\quad \lambda={1\over2}\left(n+|k+\alpha|\right)\left(n+|k+\alpha| +1\right)
\end{align*}
in \eqref{Gegenbauer differential equation} and \eqref{eigenvalue problem2 non-zero modes varepsilon=0-transform-alpha}, respectively.
Then the  equation \eqref{eigenvalue problem2 non-zero modes varepsilon=0-transform-alpha} and  the Gegenbauer differential equation \eqref{Gegenbauer differential equation} coincide. All the solutions of  \eqref{Gegenbauer differential equation} in $ L_{\hat g_\beta}^2(-1,1)$ are given by
Gegenbauer polynomials
$
C_n^\beta(\gamma_\ep), n\geq0$, in \eqref{Gegenbauer polynomials}. Since $\beta>{1\over2}$,  similar to \eqref{1-gammacnykep} we have $(1-\gamma_\ep^2)^{|k+\alpha|\over2}C_n^\beta\in\hat Y_1^\ep$ for $n\geq0$.
Thus,
\begin{align*}
&\varphi_{n,k+\alpha}(\gamma_\ep)\triangleq(1-\gamma_\ep^2)^{|k+\alpha|\over2}C_n^\beta(\gamma_\ep)\in\hat Y_1^\ep,\quad\lambda=\lambda_{n,k+\alpha}\triangleq{1\over2}\left(n+|k+\alpha|\right)\left(n+|k+\alpha| +1\right)
\end{align*}
solve \eqref{eigenvalue problem2 non-zero modes varepsilon=0alpha} for $n\geq0$.
Since $\beta>-{1\over2},$ $\{C_n^\beta\}_{n=0}^\infty$ is a complete and  orthogonal basis of $ L_{\hat g_\beta}^2(-1,1)$. This, along with the fact that   $\hat Y_1^\ep$ is embedded in $ L^2(-1,1)$, implies that
 $\{\varphi_{n,k+\alpha}$ $\}_{n=0}^\infty$ is a complete and  orthogonal basis of $\hat Y_1^\ep$ under the inner product of $ L^2(-1,1)$.
Now, we solve the eigenvalue problem \eqref{eigenvalue problem2 non-zero modes varepsilon=0alpha} for the $k$-mode, $k\in\mathbb{Z}$.
\begin{lemma}\label{sol to eigenvalue problem non-zero modes varepsilon=0-alpha-k} Fix $\alpha\in(0,{1\over2}]$ and $k\in\mathbb{Z}.$ Then
all the eigenvalues  of the eigenvalue problem \eqref{eigenvalue problem2 non-zero modes varepsilon=0alpha}  are $\lambda_{n,k+\alpha}={1\over2}\left(n+|k+\alpha|\right)\left(n+|k+\alpha| +1\right)$, $n\geq 0$. For $n\geq0$, the eigenspace associated to $\lambda_{n,{k+\alpha}}$ is $\text{span}\{\varphi_{n,k+\alpha}(\gamma_\ep)\}=\text{span}\{(1-\gamma_\ep^2)^{|k+\alpha|\over2}C_n^{|k+\alpha|+{1\over2}}(\gamma_\ep)\}$.
\end{lemma}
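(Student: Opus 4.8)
The plan is to solve \eqref{eigenvalue problem2 non-zero modes varepsilon=0alpha} by following verbatim the template already established for the multi-periodic case in Lemma \ref{sol to eigenvalue problem non-zero modes varepsilon=0}, with the single parameter change $k/m \mapsto |k+\alpha|$. All the analytic ingredients are already in place in the paragraph immediately preceding the statement: the transformation \eqref{modulational-transformation} $\varphi=(1-\gamma_\ep^2)^{|k+\alpha|/2}\phi$ converts \eqref{eigenvalue problem2 non-zero modes varepsilon=0alpha} into the Gegenbauer differential equation \eqref{Gegenbauer differential equation} with $\beta=|k+\alpha|+\tfrac12$ and $\lambda=\tfrac12(n+|k+\alpha|)(n+|k+\alpha|+1)$; the functions $(1-\gamma_\ep^2)^{|k+\alpha|/2}C_n^\beta(\gamma_\ep)$ are shown (via the $\eqref{1-gammacnykep}$-type estimate, valid since $\beta>\tfrac12$ because $k+\alpha\neq0$ for all $k\in\mathbb{Z}$, $\alpha\in(0,\tfrac12]$) to lie in $\hat Y_1^\ep$; and $\{C_n^\beta\}_{n\ge0}$ is a complete orthogonal basis of $L^2_{\hat g_\beta}(-1,1)$, which transfers to completeness of $\{\varphi_{n,k+\alpha}\}_{n\ge0}$ in $L^2(-1,1)$ after multiplying by the weight.

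First I would record that $\varphi_{n,k+\alpha}$ and $\lambda_{n,k+\alpha}$ as defined solve \eqref{eigenvalue problem2 non-zero modes varepsilon=0alpha} — this is immediate from the coincidence of \eqref{eigenvalue problem2 non-zero modes varepsilon=0-transform-alpha} with \eqref{Gegenbauer differential equation} under the stated substitution, together with the membership $(1-\gamma_\ep^2)^{|k+\alpha|/2}C_n^\beta\in\hat Y_1^\ep$. Second, I would argue that these are \emph{all} the eigenvalues and that each eigenspace is exactly one-dimensional: since $\hat Y_1^\ep\hookrightarrow L^2(-1,1)$ by Lemma \ref{Poincare inequalities compact embedding result new variables k mode} (with $\gamma$ replaced by $\gamma_\ep$), and $\{\varphi_{n,k+\alpha}\}_{n\ge0}$ is a complete orthogonal basis of $\hat Y_1^\ep$ in the $L^2(-1,1)$ inner product, the variational characterization of the eigenvalues of the self-adjoint problem \eqref{eigenvalue problem2 non-zero modes varepsilon=0alpha} (exactly as used in Lemma \ref{sol to eigenvalue problem} and Lemma \ref{sol to eigenvalue problem non-zero modes varepsilon=0}) forces the spectrum to coincide with $\{\lambda_{n,k+\alpha}\}_{n\ge0}$ and the multiplicities to be one. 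The orthogonality of distinct $\varphi_{n,k+\alpha}$ follows from $\int_{-1}^1\varphi_{n_1,k+\alpha}\varphi_{n_2,k+\alpha}\,d\gamma_\ep=\int_{-1}^1\hat g_\beta(\gamma_\ep)C_{n_1}^\beta C_{n_2}^\beta\,d\gamma_\ep=0$ for $n_1\neq n_2$, precisely the computation carried out in the preceding paragraph.

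I do not anticipate a genuine obstacle here, since the statement is a direct transcription of Lemma \ref{sol to eigenvalue problem non-zero modes varepsilon=0} with $k/m$ a general real number $|k+\alpha|$ rather than a rational; the one point deserving explicit mention is that $\beta=|k+\alpha|+\tfrac12>\tfrac12>-\tfrac12$ always, which is what guarantees both the completeness/orthogonality of the Gegenbauer system and the integrability of $\varphi_{n,k+\alpha}$ and its derivative against the $\hat Y_1^\ep$-weights — so no boundary or regularity subtlety arises at $\gamma_\ep=\pm1$. The only mildly delicate bookkeeping is keeping $k$ ranging over all of $\mathbb{Z}$ (not just $k\ge1$), so that $|k+\alpha|$ takes the values $\{|k+\alpha|:k\in\mathbb{Z}\}=\{\alpha,1-\alpha,1+\alpha,2-\alpha,\dots\}$; but since each fixed-$k$ problem is treated in isolation, this does not affect the proof of the present lemma and only matters when these per-mode results are later assembled (in the proof of modulational instability) into the full eigenvalue problem \eqref{eigenvalue problem-ep-new-alpha}.
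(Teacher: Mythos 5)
Your proposal is correct and follows essentially the same route as the paper: the substitution $\varphi=(1-\gamma_\ep^2)^{|k+\alpha|/2}\phi$ reducing \eqref{eigenvalue problem2 non-zero modes varepsilon=0alpha} to the Gegenbauer equation with $\beta=|k+\alpha|+\tfrac12$, the membership $\varphi_{n,k+\alpha}\in\hat Y_1^\ep$ (using $\beta>\tfrac12$, i.e. $k+\alpha\neq0$), and the completeness and orthogonality of $\{\varphi_{n,k+\alpha}\}_{n\ge0}$ in $\hat Y_1^\ep$ under the $L^2(-1,1)$ inner product combined with the embedding from Lemma \ref{Poincare inequalities compact embedding result new variables k mode} are exactly the paper's argument. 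Your remark that each fixed $k\in\mathbb{Z}$ is treated in isolation, with the assembly into \eqref{eigenvalue problem-ep-new-alpha} deferred, also matches the paper's organization.
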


Thus, we get the solutions of
the eigenvalue problem \eqref{eigenvalue problem-ep-new-alpha}.
\begin{Theorem}\label{sol to eigenvalue problem varepsilon=0-pde-alpha} Fix $\alpha\in(0,{1\over2}]$.

$(1)$
All the eigenvalues  of the eigenvalue problem \eqref{eigenvalue problem-ep-new-alpha} are
\begin{align}\label{km-eigenvalues-alpha}
{1\over2}\alpha\left(\alpha+1\right),\quad{1\over2}\left(n\pm\alpha\right)\left( n\pm\alpha+1\right), &\quad  n\geq1.
\end{align}
\if0
$(1)$ For $1\leq i\leq m-1$, the  eigenspace associated to the eigenvalue  ${i\over 2m}\left({i\over m}+1\right)$ is spanned by
 \begin{align*}
 (1-\gamma_\ep^2)^{i\over2m}C_0^{{i\over m}+{1\over2}}(\gamma_\ep)\cos\left({i\over m}\theta_\ep\right),\;\;(1-\gamma_\ep^2)^{i\over2m}C_0^{{i\over m}+{1\over2}}(\gamma_\ep)\sin\left({i\over m}\theta_\ep\right).
 \end{align*}
 \fi
 For  $n\geq0$,
the eigenspace associated to the eigenvalue  ${1\over2}\left(n+\alpha\right)\left( n+\alpha+1\right)$ is spanned by
\begin{align*}
&(1-\gamma_\ep^2)^{\alpha\over2}C_n^{\alpha+{1\over2}}(\gamma_\ep),\\
 &(1-\gamma_\ep^2)^{j+\alpha\over2}C_{n-j}^{{j+\alpha+{1\over2}}}(\gamma_\ep)e^{ij\theta_\ep},\;\;1\leq j\leq n.
 \end{align*}
 For  $n\geq1$,
the eigenspace associated to the eigenvalue  ${1\over2}\left(n-\alpha\right)\left( n-\alpha+1\right)$ is spanned by
\begin{align*}
 &(1-\gamma_\ep^2)^{j-\alpha\over2}C_{n-j}^{{j-\alpha+{1\over2}}}(\gamma_\ep)e^{-ij\theta_\ep},\;\;1\leq j\leq n.
 \end{align*}

\if0
$(2)$
All the eigenvalues  for the eigenvalue problem \eqref{elip02-alpha-full perturbation} are
given by \eqref{km-eigenvalues-alpha}.  For $n\geq0$,
the eigenspace associated to the eigenvalue  ${1\over2}\left(n+\alpha\right)\left( n+\alpha+1\right)$ is spanned by
\begin{align}\label{nonkm-eigenfunctions22}
&(1-\gamma_\ep^2)^{\alpha\over2}C_n^{\alpha+{1\over2}}(\gamma_\ep)e^{i\alpha\theta_\ep},\\\nonumber
 &(1-\gamma_\ep^2)^{j+\alpha\over2}C_{n-j}^{{j+\alpha+{1\over2}}}(\gamma_\ep)e^{ij\theta_\ep}e^{i\alpha\theta_\ep},\;\;1\leq j\leq n.
 \end{align}
Here $\theta_\ep(x,y)$ and  $\gamma_\ep(x,y)$ are defined in \eqref{transf1} and \eqref{transf2}.

For  $n\geq1$,
the eigenspace associated to the eigenvalue  ${1\over2}\left(n-\alpha\right)\left( n-\alpha+1\right)$ is spanned by
\begin{align}\label{nonkm-eigenfunctions22}
 &(1-\gamma_\ep^2)^{j-\alpha\over2}C_{n-j}^{{j-\alpha+{1\over2}}}(\gamma_\ep)e^{-ij\theta_\ep}e^{i\alpha\theta_\ep},\;\;1\leq j\leq n.
 \end{align}
\fi
$(2)$ All the eigenvalues  of the associated eigenvalue problem \eqref{elip02-alpha} are
given by \eqref{km-eigenvalues-alpha}.  For $n\geq0$,
the eigenspace associated to the eigenvalue  ${1\over2}\left(n+\alpha\right)\left( n+\alpha+1\right)$ is spanned by
\begin{align*}
&(1-\gamma_\ep^2)^{\alpha\over2}C_n^{\alpha+{1\over2}}(\gamma_\ep)e^{i\alpha(\theta_\ep-x)},\\\nonumber
 &(1-\gamma_\ep^2)^{j+\alpha\over2}C_{n-j}^{{j+\alpha+{1\over2}}}(\gamma_\ep)e^{ij\theta_\ep}e^{i\alpha(\theta_\ep-x)},\;\;1\leq j\leq n.
 \end{align*}
For  $n\geq1$,
the eigenspace associated to the eigenvalue  ${1\over2}\left(n-\alpha\right)\left( n-\alpha+1\right)$ is spanned by
\begin{align*}
 &(1-\gamma_\ep^2)^{j-\alpha\over2}C_{n-j}^{{j-\alpha+{1\over2}}}(\gamma_\ep)e^{-ij\theta_\ep}e^{i\alpha(\theta_\ep-x)},\;\;1\leq j\leq n.
 \end{align*}

 In particular,
the multiplicity of ${1\over2}\left(n+\alpha\right)\left( n+\alpha+1\right)$ is $n+1$ for  $n\geq0$, and the multiplicity of ${1\over2}\left(n-\alpha\right)\left( n-\alpha+1\right)$ is $n$ for  $n\geq1$.
\end{Theorem}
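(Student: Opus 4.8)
The plan is to follow the same pattern used for the multi-periodic case in Theorem~\ref{sol to eigenvalue problem varepsilon=0-pde}: separate the eigenvalue problem \eqref{eigenvalue problem-ep-new-alpha} into Fourier modes in $\theta_\ep$, solve each mode by Lemma~\ref{sol to eigenvalue problem non-zero modes varepsilon=0-alpha-k}, reassemble the full list of eigenvalues and eigenfunctions, and then regroup that list by eigenvalue rather than by mode. Part (2) will then be transported back to \eqref{elip02-alpha} through the substitution \eqref{transformation-modu}.

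For part (1), since every $\widetilde\Psi\in Y_{\ep,\alpha}$ is $2\pi$-periodic in $\theta_\ep$, expand $\widetilde\Psi=\sum_{k\in\mathbb{Z}}\widehat{\widetilde\Psi}_k(\gamma_\ep)e^{ik\theta_\ep}$. Using $(\partial_{\theta_\ep}+i\alpha)^2e^{ik\theta_\ep}=-(k+\alpha)^2e^{ik\theta_\ep}$ and the orthogonality of $\{e^{ik\theta_\ep}\}$, the norm $\|\widetilde\Psi\|_{Y_{\ep,\alpha}}^2$ equals $2\pi\sum_k\int_{-1}^1\bigl(\tfrac{(k+\alpha)^2}{1-\gamma_\ep^2}|\widehat{\widetilde\Psi}_k|^2+(1-\gamma_\ep^2)|\widehat{\widetilde\Psi}_k'|^2\bigr)d\gamma_\ep$, and since $|k+\alpha|\ge\alpha>0$ for every $k\in\mathbb{Z}$ when $\alpha\in(0,\tfrac12]$, each coefficient $\widehat{\widetilde\Psi}_k$ lies in $\hat Y_1^\ep$. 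Substituting the series into \eqref{eigenvalue problem-ep-new-alpha} shows that $\widetilde\Psi$ is an eigenfunction with eigenvalue $\lambda$ iff every nonzero $\widehat{\widetilde\Psi}_k$ solves the $k$-mode problem \eqref{eigenvalue problem2 non-zero modes varepsilon=0alpha} with the same $\lambda$. Lemma~\ref{sol to eigenvalue problem non-zero modes varepsilon=0-alpha-k} then forces $\lambda=\tfrac12(n+|k+\alpha|)(n+|k+\alpha|+1)$ for some $n\ge0$ and $\widehat{\widetilde\Psi}_k\in\mathrm{span}\{(1-\gamma_\ep^2)^{|k+\alpha|/2}C_n^{|k+\alpha|+1/2}(\gamma_\ep)\}$; conversely each such function times $e^{ik\theta_\ep}$ lies in $Y_{\ep,\alpha}$ and is an eigenfunction. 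Hence the set of eigenvalues of \eqref{eigenvalue problem-ep-new-alpha} is $\bigcup_{k\in\mathbb{Z}}\{\tfrac12(n+|k+\alpha|)(n+|k+\alpha|+1):n\ge0\}$ with the stated eigenspaces; that the listed eigenfunctions exhaust each eigenspace uses completeness of $\{\varphi_{n,k+\alpha}\}_{n\ge0}$ in $\hat Y_1^\ep$, recorded just before Lemma~\ref{sol to eigenvalue problem non-zero modes varepsilon=0-alpha-k}.

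Next I would regroup this union. For $\alpha\in(0,\tfrac12]$, $|k+\alpha|=k+\alpha$ when $k\ge0$ and $|k+\alpha|=-k-\alpha$ when $k\le-1$; setting $N=n+k$ in the first case and $M=n-k$ in the second turns the eigenvalue into $\tfrac12(N+\alpha)(N+\alpha+1)$, $N\ge0$, or $\tfrac12(M-\alpha)(M-\alpha+1)$, $M\ge1$, while for fixed $N$ (resp.\ $M$) the contributing Fourier modes are $k=0,\dots,N$ (resp.\ $k=-1,\dots,-M$). Relabeling $N$ and $M$ as $n$ yields exactly the list \eqref{km-eigenvalues-alpha} and the eigenspace bases written in the statement. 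Strict monotonicity of $t\mapsto t(t+1)$ on $\{t>-\tfrac12\}$ makes the values within each of the two families pairwise distinct, and the families $\{n+\alpha\}_{n\ge0}$ and $\{m-\alpha\}_{m\ge1}$ are disjoint precisely for $\alpha\in(0,\tfrac12)$ — the endpoint $\alpha=\tfrac12$ merges them into the $m=2$ multi-periodic spectrum of Theorem~\ref{sol to eigenvalue problem varepsilon=0-pde} — so in the generic range the multiplicities are $n+1$ and $n$ as asserted.

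Finally, part (2) follows at once from part (1): by the discussion around \eqref{transformation-modu}–\eqref{elip02-alpha-full perturbation}, the map $\widetilde\psi\mapsto\widetilde\Psi=\widetilde\psi\,e^{i\alpha(x-\theta_\ep)}$ is an isometry of $H^1_\alpha(\Omega)$ onto $Y_{\ep,\alpha}$ intertwining \eqref{elip02-alpha} with \eqref{eigenvalue problem-ep-new-alpha}, so the two problems have the same eigenvalues and the eigenfunctions of \eqref{elip02-alpha} are those of \eqref{eigenvalue problem-ep-new-alpha} multiplied by $e^{i\alpha(\theta_\ep(x,y)-x)}$, which is the list in the statement. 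I expect the only real difficulty to be the combinatorial bookkeeping of the regrouping step — checking that a given eigenvalue is hit exactly by the stated eigenfunctions, with no omission or double counting, and handling cleanly the coincidence that appears at $\alpha=\tfrac12$; everything else is a routine transcription of Lemma~\ref{sol to eigenvalue problem non-zero modes varepsilon=0-alpha-k} together with the Fourier decomposition in $\theta_\ep$.
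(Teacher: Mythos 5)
Your proposal is correct and follows essentially the same route as the paper: Fourier decomposition in $\theta_\ep$, mode-by-mode solution via Lemma \ref{sol to eigenvalue problem non-zero modes varepsilon=0-alpha-k} (Gegenbauer polynomials after the substitution \eqref{modulational-transformation}), regrouping of the modes by common eigenvalue, and transport back to \eqref{elip02-alpha} through the isometry \eqref{transformation-modu} — which is exactly how the theorem is obtained in the text. Your explicit remark that the two families $\{n+\alpha\}$ and $\{m-\alpha\}$ coincide at $\alpha=\tfrac12$, so the stated multiplicities are to be read for $\alpha\in(0,\tfrac12)$ while at the endpoint the corresponding eigenspaces merge, is a careful refinement of a point the paper leaves implicit.
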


As an application, we give
 the explicit negative directions of  $\tilde A_{\ep,\alpha}$ and $L_{\ep,\alpha}$, confirm that the two operators are non-degenerate, as well as  provide   decompositions of
$H_\alpha^1(\Omega)$ and $L^2_{\frac{1}{g'(\psi_\ep)}}(\Omega)$  associated to  the two operators, respectively.

\begin{Corollary}\label{A-L-dec-e-alpha}
 Let $\alpha\in(0,{1\over2}]$. Then

$(1)$  the negative subspaces of  $H_\alpha^1(\Omega)$ and $L^2_{\frac{1}{g'(\psi_\ep)}}(\Omega)$  associated to $\tilde A_{\ep,\alpha}$ and $L_{\ep,\alpha}$ are
 \begin{align*} H_{\alpha-}^1(\Omega)&=\textup{span}\left\{(1-\gamma_\ep^2)^{\alpha\over2}e^{i\alpha(\theta_\ep-x)},
 (1-\gamma_\ep^2)^{1-\alpha\over2}e^{-i\theta_\ep}e^{i\alpha(\theta_\ep-x)}\right\},\\
 L^2_{\frac{1}{g'(\psi_\ep)}-}(\Omega)&=\textup{span}\left\{g'(\psi_\ep)(1-\gamma_\ep^2)^{\alpha\over2}e^{i\alpha(\theta_\ep-x)},
 g'(\psi_\ep)(1-\gamma_\ep^2)^{1-\alpha\over2}e^{-i\theta_\ep}e^{i\alpha(\theta_\ep-x)}\right\},
\end{align*}
respectively, where $\gamma_\ep=\gamma_\ep(x,y)$ and $\theta_\ep=\theta_\ep(x,y)$.
Thus,  $\dim H_{\alpha-}^1(\Omega)=\dim L^2_{\frac{1}{g'(\psi_\ep)}-}(\Omega)=2$.

$(2)$ $\ker (\tilde A_{\ep,\alpha})=\{0\}$ and $\ker (L_{\ep,\alpha})=\textup{span}\{0\}$.

$(3)$ Let  $ H_{\alpha+}^1(\Omega)= H_{\alpha}^1(\Omega) \ominus H_{\alpha-}^1(\Omega)$ and $L^2_{\frac{1}{g'(\psi_\ep)}+}(\Omega)=L^2_{\frac{1}{g'(\psi_\ep)}}(\Omega) \ominus L^2_{\frac{1}{g'(\psi_\ep)}-}(\Omega)$. Then
\begin{align*}
\langle \tilde A_{\ep,\alpha} \widetilde\psi,\widetilde\psi\rangle \geq \left(1-{2\over (\alpha+1)(\alpha+2)}\right) \| \widetilde\psi\|_{H_{\alpha}^1(\Omega)}^2, \quad\forall \widetilde\psi\in H_{\alpha+}^1(\Omega),
\end{align*}
and there exists $\delta>0$ such that
\begin{align*}
\langle L_{\ep,\alpha} \widetilde\omega,\widetilde\omega\rangle \geq \delta \| \widetilde\omega\|_{L^2_{\frac{1}{g'(\psi_\ep)}}(\Omega)}^2, \quad \forall\; \widetilde\omega\in L^2_{\frac{1}{g'(\psi_\ep)}+}(\Omega).
\end{align*}
\end{Corollary}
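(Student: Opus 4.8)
\textbf{Proof proposal for Corollary \ref{A-L-dec-e-alpha}.}

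The plan is to read off all three assertions from the complete spectral resolution of the associated eigenvalue problem \eqref{elip02-alpha} furnished by Theorem \ref{sol to eigenvalue problem varepsilon=0-pde-alpha}, and then to transfer the conclusions from $\tilde A_{\ep,\alpha}$ to $L_{\ep,\alpha}$ via the same Legendre-type duality that was used in Lemma \ref{equal-indices0}. First I would record the identity $\langle\tilde A_{\ep,\alpha}\widetilde\psi,\widetilde\psi\rangle=\|\nabla_\alpha\widetilde\psi\|_{L^2(\Omega)}^2-\iint_\Omega g'(\psi_\ep)|\widetilde\psi|^2\,dxdy$, so that an eigenfunction $\widetilde\psi$ of \eqref{elip02-alpha} with eigenvalue $\lambda$ satisfies $\langle\tilde A_{\ep,\alpha}\widetilde\psi,\widetilde\psi\rangle=(\lambda-1)\iint_\Omega g'(\psi_\ep)|\widetilde\psi|^2\,dxdy$. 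Because $H_\alpha^1(\Omega)$ is compactly embedded in $L^2_{g'(\psi_\ep)}(\Omega)$, the eigenfunctions of \eqref{elip02-alpha} form a basis of $H_\alpha^1(\Omega)$ that is orthogonal both in $H_\alpha^1(\Omega)$ and in $L^2_{g'(\psi_\ep)}(\Omega)$ (the same argument as for $\tilde A_0$, $\tilde A_\ep$ earlier). Hence $n^-(\tilde A_{\ep,\alpha})$, $\ker(\tilde A_{\ep,\alpha})$, and the coercivity of $\tilde A_{\ep,\alpha}$ on $H^1_{\alpha+}(\Omega)$ are entirely governed by counting the eigenvalues of \eqref{elip02-alpha} that are $<1$, $=1$, and $>1$.

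Second, using the eigenvalue list $\frac12\alpha(\alpha+1)$ and $\frac12(n\pm\alpha)(n\pm\alpha+1)$ for $n\geq1$ from Theorem \ref{sol to eigenvalue problem varepsilon=0-pde-alpha}, I would verify the elementary inequalities for $\alpha\in(0,\frac12]$: since $x(x+1)<2$ whenever $x<1$, the only eigenvalues below $1$ are $\frac12\alpha(\alpha+1)$ and $\frac12(1-\alpha)(2-\alpha)$; no eigenvalue equals $1$ on this range (solving $x(x+1)=2$ forces $\alpha\in\{0,1\}$); and the smallest eigenvalue exceeding $1$ is $\frac12(1+\alpha)(2+\alpha)$ — here the only competitor is $\frac12(2-\alpha)(3-\alpha)$, and the comparison $(1+\alpha)(2+\alpha)-(2-\alpha)(3-\alpha)=4(2\alpha-1)\le0$ holds precisely because $\alpha\le\frac12$. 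The two low eigenfunctions, with the Gegenbauer factor $C_0^\beta$ a constant, are $(1-\gamma_\ep^2)^{\alpha/2}e^{i\alpha(\theta_\ep-x)}$ and $(1-\gamma_\ep^2)^{(1-\alpha)/2}e^{-i\theta_\ep}e^{i\alpha(\theta_\ep-x)}$; this yields $H^1_{\alpha-}(\Omega)$ in part (1), $\ker(\tilde A_{\ep,\alpha})=\{0\}$ in part (2), and, for $\widetilde\psi\in H^1_{\alpha+}(\Omega)$, the bound $\|\nabla_\alpha\widetilde\psi\|_{L^2(\Omega)}^2\geq\frac12(1+\alpha)(2+\alpha)\iint_\Omega g'(\psi_\ep)|\widetilde\psi|^2\,dxdy$, whence $\langle\tilde A_{\ep,\alpha}\widetilde\psi,\widetilde\psi\rangle\geq\left(1-\frac{2}{(\alpha+1)(\alpha+2)}\right)\|\widetilde\psi\|_{H_\alpha^1(\Omega)}^2$, which is the estimate in part (3).

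Finally, for the statements about $L_{\ep,\alpha}$, I would use that if $-\Delta_\alpha\widetilde\psi=\lambda g'(\psi_\ep)\widetilde\psi$ then $\widetilde\omega:=g'(\psi_\ep)\widetilde\psi$ satisfies $L_{\ep,\alpha}\widetilde\omega=\left(1-\frac1\lambda\right)\frac{\widetilde\omega}{g'(\psi_\ep)}$, so that multiplication by $g'(\psi_\ep)$ carries the negative, kernel, and positive subspaces of $\tilde A_{\ep,\alpha}$ onto the corresponding subspaces of $L_{\ep,\alpha}$; this gives $L^2_{\frac1{g'(\psi_\ep)}-}(\Omega)$ explicitly and $\ker(L_{\ep,\alpha})=\{0\}$, consistently with the already-established equalities $n^-(L_{\ep,\alpha})=n^-(\tilde A_{\ep,\alpha})$ and $\dim\ker(L_{\ep,\alpha})=\dim\ker(\tilde A_{\ep,\alpha})$. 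The last coercivity $\langle L_{\ep,\alpha}\widetilde\omega,\widetilde\omega\rangle\geq\delta\|\widetilde\omega\|^2_{L^2_{\frac1{g'(\psi_\ep)}}(\Omega)}$ on $L^2_{\frac1{g'(\psi_\ep)}+}(\Omega)$ then follows from the $H^1_{\alpha+}$ estimate by the same convex-combination argument as in \eqref{positive-decom}. The only mildly subtle point in the whole argument is the uniform-in-$\alpha$ identification of the spectral gap — that $\frac12(1+\alpha)(2+\alpha)$, and not $\frac12(2-\alpha)(3-\alpha)$, is the first eigenvalue above $1$ — but this is the single one-line polynomial inequality above; everything else is bookkeeping on top of Theorem \ref{sol to eigenvalue problem varepsilon=0-pde-alpha} and Lemma \ref{equal-indices0}.
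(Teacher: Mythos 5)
Your proposal is correct and follows essentially the same route as the paper: the paper's proof likewise reduces everything to the three spectral facts from Theorem \ref{sol to eigenvalue problem varepsilon=0-pde-alpha} (the only eigenvalues of \eqref{elip02-alpha} below $1$ are $\tfrac12\alpha(\alpha+1)$ and $\tfrac12(1-\alpha)(2-\alpha)$, that $1$ is not an eigenvalue, and that the smallest eigenvalue above $1$ is $\tfrac12(1+\alpha)(2+\alpha)$), with the transfer to $L_{\ep,\alpha}$ handled by the analogue of Lemma \ref{equal-indices0} and the convex-combination argument of \eqref{positive-decom}. Your write-up merely makes explicit the one-line comparison $(1+\alpha)(2+\alpha)\le(2-\alpha)(3-\alpha)$ for $\alpha\le\tfrac12$ and the eigenfunction correspondence $\widetilde\omega=g'(\psi_\ep)\widetilde\psi$, which the paper leaves implicit.
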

\begin{proof}
The proof is essentially due to the following three facts based on Theorem \ref{sol to eigenvalue problem varepsilon=0-pde-alpha}. First, the only eigenvalues, which are less than $1$, of \eqref{elip02-alpha} are ${1\over 2}\alpha(\alpha+1)$ and ${1\over 2}(1-\alpha)(2-\alpha)$. Second, $1$ is not an eigenvalue of \eqref{elip02-alpha}.
Finally, the minimal eigenvalue, which is larger than $1$, is ${1\over2}(1+\alpha)(2+\alpha)$.
\end{proof}
\subsection{A modulational  instability criterion}
Noting that $J_{\ep, \alpha}$ and  $L_{\ep, \alpha}$ are complex operators, we reformulate the linear modulational  problem in the real operators so that we can apply the index formula \eqref{index-formula-neg} for
the real separable Hamiltonian systems.

Let
\begin{align}\label{omega-triangle functions}
\omega(x,y)=\cos(\alpha x)\omega_1(x,y)+\sin(\alpha x)\omega_2(x,y),
\end{align}
where $\omega_1,\omega_2\in L^2_{\frac{1}{g'(\psi_\ep)}}(\Omega)$ are real-valued functions. We decompose
\begin{align*}
(-\Delta_\alpha)^{-1}=(-\Delta_\alpha)_1^{-1}+i(-\Delta_\alpha)_2^{-1},\quad(-\Delta_{-\alpha})^{-1}=(-\Delta_{\alpha})_1^{-1}-i(-\Delta_{\alpha})_2^{-1},
\end{align*}
where
\begin{align*}
(-\Delta_\alpha)_1^{-1}={1\over2} \left((-\Delta_\alpha)^{-1}+(-\Delta_{-\alpha})^{-1}\right),\quad
(-\Delta_{\alpha})_2^{-1}=-{i\over2} \left((-\Delta_\alpha)^{-1}-(-\Delta_{-\alpha})^{-1}\right).
\end{align*}
Here, $(-\Delta_\alpha)_1^{-1}$ is self-dual and $(-\Delta_{\alpha})_2^{-1}$ is anti-self-dual.
Since $\overline{(-\Delta_\alpha)^{-1}}=(-\Delta_{-\alpha})^{-1}$, $(-\Delta_\alpha)_1^{-1}$ and
$(-\Delta_{\alpha})_2^{-1}$ map real functions to real ones. By
\begin{align}\label{omega-i-1-2}
\omega={e^{i\alpha x}\over2}(\omega_1-i\omega_2)+{e^{-i\alpha x}\over 2}(\omega_1+i\omega_2),
\end{align}
we have
\begin{align}\nonumber
(-\Delta)^{-1}\omega=&\cos(\alpha x)\left((-\Delta_\alpha)_1^{-1}\omega_1+(-\Delta_\alpha)_2^{-1}\omega_2\right)\\\label{omega-neg-lap-triangle functions}
&+\sin(\alpha x)\left((-\Delta_\alpha)_1^{-1}\omega_2-(-\Delta_{\alpha})_2^{-1}\omega_1\right),
\end{align}
and
\begin{align}\nonumber
g'(\psi_\epsilon)\vec{u}_\epsilon\cdot\nabla \omega=&\cos(\alpha x)(g'(\psi_\epsilon)\vec{u}_\epsilon\cdot\nabla \omega_1+\alpha g'(\psi_\epsilon)u_{\ep,1}\omega_2)\\\label{g-der-nabla-triangle functions}
&+\sin(\alpha x)(g'(\psi_\epsilon)\vec{u}_\epsilon\cdot\nabla\omega_2-\alpha g'(\psi_\epsilon)u_{\ep,1}\omega_1).
\end{align}
We define the operators
\begin{align*}
\hat{J}_{\ep,\alpha}=&\left( \begin{array}{cc} g'(\psi_\epsilon)\vec{u}_\epsilon\cdot\nabla & \alpha g'(\psi_\epsilon)u_{\ep,1}\\ -\alpha g'(\psi_\epsilon)u_{\ep,1} & g'(\psi_\epsilon)\vec{u}_\epsilon\cdot\nabla \end{array} \right): \left(L^2_{g'(\psi_\ep)}(\Omega)\right)^2 \supset D(\hat{J}_{\epsilon, \alpha}) \to \left(L^2_{\frac{1}{g'(\psi_\ep)}}(\Omega)\right)^2,\\
 \hat {L}_{\epsilon, \alpha} =&
\left( \begin{array}{cc} \frac {1} {g'(\psi_\epsilon)} - (-\Delta_\alpha)_1^{-1} & -(-\Delta_\alpha)_2^{-1} \\ (-\Delta_\alpha)_2^{-1} & \frac {1} {g'(\psi_\epsilon)} - (-\Delta_\alpha)_1^{-1}  \end{array} \right)
 :\left(L^2_{\frac{1}{g'(\psi_\ep)}}(\Omega)\right)^2\to \left(L^2_{g'(\psi_\ep)}(\Omega)\right)^2.
\end{align*}
Then they are real operators,  $\hat{J}_{\ep,\alpha}$ is anti-self-dual and $\hat {L}_{\epsilon, \alpha}$ is self-dual. By \eqref{omega-triangle functions}, \eqref{omega-neg-lap-triangle functions} and \eqref{g-der-nabla-triangle functions}, $J_\epsilon L_\epsilon$ and $\hat{J}_{\ep,\alpha} \hat {L}_{\epsilon, \alpha}$ are related by
\begin{align*}
J_\epsilon L_\epsilon\omega=(\cos(\alpha x),\;\sin(\alpha x))\hat{J}_{\ep,\alpha} \hat {L}_{\epsilon, \alpha}\left( \begin{array}{cc} \omega_1 \\ \omega_2  \end{array} \right).
\end{align*}
By \eqref{omega-i-1-2}-\eqref{g-der-nabla-triangle functions},
 the  complex operators $J_{\ep,\alpha}, L_{\ep,\alpha}$ and the real  operators  $\hat{J}_{\ep,\alpha}, \hat {L}_{\epsilon, \alpha}$ are related by
\begin{align}\label{hat-J-L-alpha1}
\hat{J}_{\ep,\alpha}=&M^{-1}\left( \begin{array}{cc}J_{\ep,\alpha}  &0 \\0 & J_{\ep,-\alpha} \end{array} \right)M,\;\;
\hat{L}_{\ep,\alpha}=M^{-1}\left( \begin{array}{cc}L_{\ep,\alpha}  &0 \\0 & L_{\ep,-\alpha} \end{array} \right)M,\\\label{hat-J-L-alpha2}
\hat{J}_{\ep,\alpha}\hat{L}_{\ep,\alpha}=&M^{-1}\left( \begin{array}{cc}J_{\ep,\alpha}L_{\ep,\alpha}  &0 \\0 & J_{\ep,-\alpha}L_{\ep,-\alpha} \end{array} \right)M,
\end{align}
where
\begin{align*}
M={1\over2}\left( \begin{array}{cc}1  &-i \\1 & i \end{array} \right).
\end{align*}
By \eqref{def-J-ep-al}-\eqref{nabla-alpha-Delta-alpha}, we have
\begin{align}\label{L-JL-conjugate}
\overline{L_{\ep,\alpha}}=L_{\ep,-\alpha},\quad  \overline{J_{\ep,\alpha}L_{\ep,\alpha}}=J_{\ep,-\alpha}L_{\ep,-\alpha}.
\end{align}
By  \eqref{hat-J-L-alpha1} and \eqref{L-JL-conjugate}, we have
\begin{align*}
n^-(\hat L_{\ep,\alpha})=n^-( L_{\ep,\alpha})+n^-( L_{\ep,-\alpha})=2n^-(L_{\ep,\alpha}).
\end{align*}
For the real operator $\hat{J}_{\ep,\alpha}\hat{L}_{\ep,\alpha}$, let $k_{r,\ep,\alpha}, k_{c,\ep,\alpha}, k_{i,\ep,\alpha}^{\leq0}, k_{0,\ep,\alpha}^{\leq0}$ be the indices defined similarly as in Lemma
\ref{theorem-index}.
\if0
By Corollary \ref{A-L-dec-e}, $L_{\ep,\alpha}$
is non-degenerate on $L^2_{\frac{1}{g'(\psi_\ep)}}(\Omega)$, and thus,
\begin{align*}
k_{i,\alpha}^{\leq0}=k_{i,\alpha}^{-},\quad k_{0,\alpha}^{\leq0}=k_{0,\alpha}^{-},
\end{align*}
where $k_{i,\alpha}^-$ is the total number of negative dimensions of $\langle \hat{L}_{\ep,\alpha}\cdot, \cdot \rangle$ restricted to the generalized eigenspaces of purely imaginary eigenvalues of $\hat{J}_{\ep,\alpha}\hat{L}_{\ep,\alpha}$ with positive imaginary parts, and $k_{0,\alpha}^-$ is the number of negative dimensions of $\langle \hat{L}_{\ep,\alpha}\cdot, \cdot \rangle$ restricted to the generalized kernel of $\hat{J}_{\ep,\alpha}\hat{L}_{\ep,\alpha}$ modulo $\ker \hat{L}_{\ep,\alpha}$.
\fi
For the complex operator $J_{\ep,\alpha}L_{\ep,\alpha}$,
let $\tilde k_{r,\ep,\alpha}$ be  the sum of algebraic multiplicities of positive eigenvalues of $J_{\ep,\alpha}L_{\ep,\alpha}$,
 $\tilde k_{c,\ep,\alpha}$
be the sum of algebraic multiplicities of eigenvalues of $J_{\ep,\alpha}L_{\ep,\alpha}$ in the first
and the fourth quadrants,
$\tilde k_{i,\ep,\alpha}^{\leq0}$ be  the total number of non-positive dimensions
of $\langle L_{\ep,\alpha}\cdot,\cdot\rangle$  restricted to the  generalized eigenspaces of non-zero
pure imaginary eigenvalues of $J_{\ep,\alpha}L_{\ep,\alpha}$,
  and $\tilde k_{0,\ep,\alpha}^{\leq0}$ be the number of non-positive directions of $\langle L_{\ep,\alpha}\cdot, \cdot \rangle$ restricted to the generalized kernel of $J_{\ep,\alpha}L_{\ep,\alpha}$ modulo $\ker L_{\ep,\alpha}$.
By  \eqref{hat-J-L-alpha2}-\eqref{L-JL-conjugate}, we have
\begin{align}\label{index-k-tilde-k}
k_{r,\ep,\alpha}=2\tilde k_{r,\ep,\alpha}, \;\; k_{c,\ep,\alpha}=\tilde k_{c,\ep,\alpha},\;\;k_{i,\ep,\alpha}^{\leq0}=\tilde k_{i,\ep,\alpha}^{\leq0},\;\;k_{0,\ep,\alpha}^{\leq0}=2\tilde k_{0,\ep,\alpha}^{\leq0}.
\end{align}
Applying Lemma \ref{theorem-index} to the real operators  $\hat{J}_{\ep,\alpha}$ and $\hat{L}_{\ep,\alpha}$, by Corollary \ref{A-L-dec-e-alpha} we have
\begin{align}\label{index-formula-real-operator}
k_{r,\ep,\alpha}+ 2k_{c,\ep,\alpha}+2k_{i,\ep,\alpha}^{\leq0}+k_{0,\ep,\alpha}^{\leq0}=2n^-\left(\hat{L}_{\ep,\alpha}\right)=4.
\end{align}
Combining \eqref{index-k-tilde-k} and \eqref{index-formula-real-operator}, we get the index formula for the complex operators  $J_{\ep,\alpha}$ and $L_{\ep,\alpha}$:
\begin{align*}
\tilde k_{r,\ep,\alpha}+ \tilde k_{c,\ep,\alpha}+\tilde k_{i,\ep,\alpha}^{\leq0}+\tilde k_{0,\ep,\alpha}^{\leq0}=n^-\left(L_{\ep,\alpha}\right)=2.
\end{align*}
To study the linear modulational instability, one may try to prove that $\tilde k_{i,\ep,\alpha}^{\leq0}+\tilde k_{0,\ep,\alpha}^{\leq0}\leq 1$, it is however difficult to compute the two indices for the eigenvalues  of $J_{\ep,\alpha}L_{\ep,\alpha}$ in the imaginary axis.
Here, we use the separable Hamiltonian structure of the real operator $\hat{J}_{\ep,\alpha}\hat{L}_{\ep,\alpha}$.
Define two spaces
\begin{align*}
X_{\alpha, e} =& \left\{ \left( \begin{array}{c} \omega_1 \\ \omega_2 \end{array} \right) \in \left(L^2_{\frac{1}{g'(\psi_\ep)}}(\Omega)\right)^2 \bigg| \text{both } \omega_1 \text{ and } \omega_2 \text{ are even  in }y \right\},\\
X_{\alpha, o} =& \left\{  \left( \begin{array}{c} \omega_1 \\ \omega_2 \end{array} \right) \in \left( L^2_{\frac{1}{g'(\psi_\ep)}}(\Omega)\right)^2\bigg| \text{both } \omega_1 \text{ and } \omega_2 \text{ are odd in }y \right\}.
\end{align*}
Then $X_{\alpha, e}$ and  $X_{\alpha, o}$ are  Hilbert spaces.
The dual space of  $X_{\alpha, o}$ (resp. $X_{\alpha, e}$) restricted to the class of odd (resp. even) functions is denoted by $X_{\alpha, o}^*$ (resp. $X_{\alpha, e}^*$).
Let
\begin{align*}\hat{B}_\alpha = \hat{J}_{\ep,\alpha}|_{X_{\alpha, o}^*},\;\; \hat L_{\alpha,o} = \hat {L}_{\epsilon, \alpha}|_{X_{\alpha, o}},\;\;
\hat{L}_{\alpha,e} = \hat {L}_{\epsilon, \alpha}|_{X_{\alpha, e}}.
 \end{align*}
 Then
 \begin{align*}\hat{B}_\alpha: X_{\alpha, o}^*\supset D(B_\alpha) \rightarrow X_{\alpha, e},\;\;
 \hat L_{\alpha,o}:X_{\alpha, o} \rightarrow X_{\alpha, o}^*,\;\;
 \hat L_{\alpha,e} : X_{\alpha, e} \rightarrow X_{\alpha, e}^*.
 \end{align*}
The dual operator of $\hat{B}_\alpha$ is
$$\hat{B}_\alpha'= \left( \begin{array}{cc} -g'(\psi_\epsilon)\vec{u}_\epsilon\cdot\nabla & -\alpha g'(\psi_\epsilon)u_{\ep,1}\\ \alpha g'(\psi_\epsilon)u_{\ep,1} & -g'(\psi_\epsilon)\vec{u}_\epsilon\cdot\nabla \end{array} \right) : X_{\alpha, e}^* \supset D(B'_\alpha) \rightarrow X_{\alpha, o}.$$
We decompose $\left( \omega_1, \omega_2  \right)^T \in \left(L^2_{\frac{1}{g'(\psi_\ep)}}(\Omega)\right)^2$ as $ \left( \omega_{1,e}, \omega_{2,e}, \omega_{1,o}, \omega_{2,o} \right)^T $ such that $\left( \omega_1, \omega_2  \right)^T = \left( \omega_{1,e}, \omega_{2,e})+(\omega_{1,o},  \omega_{2,o} \right)^T$, where
$\vec{\omega}_{e}\triangleq\left( \omega_{1,e}, \omega_{2,e} \right)^T \in X_{\alpha, e}$ and $\vec{\omega}_{o}\triangleq\left(  \omega_{1,o}, \omega_{2,o} \right)^T  \in X_{\alpha, o}$. Then the linearized equation $\partial_t(\omega_1,\omega_2)^T=\hat{J}_{\ep,\alpha} \hat {L}_{\epsilon, \alpha}(\omega_1,\omega_2)^T$ can be written as  the following separable Hamiltonian system
\begin{align}\label{sep-hamiltonian-alpha}
\partial_t \left( \begin{array}{c}\vec{\omega}_{e} \\ \vec{\omega}_{o} \end{array} \right) = \left( \begin{array}{cc} 0 & \hat{B}_\alpha \\ -\hat{B}'_\alpha & 0 \end{array} \right)\left( \begin{array}{cc} \hat{L}_{\alpha,e} & 0 \\ 0 & \hat{L}_{\alpha,o} \end{array} \right) \left( \begin{array}{c} \vec{\omega}_{e} \\ \vec{\omega}_{o}\end{array} \right).
\end{align}
To apply the index formula  \eqref{index-formula-neg},
 we need to verify
{\textbf{(G1-4)}}  in Lemma \ref{indice-theorem-sep} for \eqref{sep-hamiltonian-alpha}. {\textbf{(G1)}} can be verified in a similar way as for \eqref{sep-hamiltonian}. Using \eqref{hat-J-L-alpha1},
{\textbf{(G2-4)}} can be verified by \eqref{L-alpha-bounded} and Corollary \ref{A-L-dec-e-alpha}. Then by Lemma \ref{indice-theorem-sep}, the number of unstable modes  for \eqref{sep-hamiltonian-alpha} is
$k_{r,\ep,\alpha}=n^-\left(\hat{L}_{\alpha,e}|_{\overline{R(\hat{B}_\alpha)}} \right)$ and $k_{c,\ep,\alpha}=0$. By \eqref{index-k-tilde-k} and  \eqref{hat-J-L-alpha1},
we have
\begin{align*}
2\tilde k_{r,\ep,\alpha}=k_{r,\ep,\alpha}=n^-\left(\hat{L}_{\alpha,e}|_{\overline{R(\hat{B}_\alpha)}} \right)=2n^-\left(L_{\alpha,e}|_{\overline{R(B_\alpha)}} \right)\Longrightarrow\tilde k_{r,\ep,\alpha}=n^-\left(L_{\alpha,e}|_{\overline{R(B_\alpha)}} \right),
\end{align*}
and
\begin{align}\label{index-formula-for-modulational-instabilitykc}\tilde k_{c,\ep,\alpha}=k_{c,\ep,\alpha}=0,
\end{align}
 where
 \begin{align}\label{L-alpha-e B-alpha}
 &L_{\alpha,e}=L_{\ep,\alpha}|_{L^2_{\frac{1}{g'(\psi_\ep)},e}(\Omega)},\quad B_\alpha=J_{\ep,\alpha}|_{L^2_{{g'(\psi_\ep)},o}(\Omega)}.
 \end{align}
Here, we recall that
$L^2_{\frac{1}{g'(\psi_\ep)},e}(\Omega)=\left\{\omega\in L^2_{\frac{1}{g'(\psi_\ep)}}(\Omega)\;\bigg|\;\omega \text{ is even in }y\right\},$
$L^2_{\frac{1}{g'(\psi_\ep)},o}(\Omega)=\bigg\{\omega\in$ $ L^2_{\frac{1}{g'(\psi_\ep)}}(\Omega)\;\bigg|\;\omega  \text{ is odd in } y\bigg\},$
$L^2_{{g'(\psi_\ep)},e}(\Omega)=\bigg\{\omega\in L^2_{{g'(\psi_\ep)}}(\Omega)\;\bigg|\;\omega \text{ is even in }y\bigg\}$
and $L^2_{{g'(\psi_\ep)},o}(\Omega)$ $=\bigg\{\omega\in L^2_{{g'(\psi_\ep)}}(\Omega)\;\bigg|\;\omega \text{ is odd in }y\bigg\}$.
\if0
To study $n^-\left( L_{\ep,e} |_{\overline{R(B_\ep)}} \right)$,  we define
$\bar{P}_{\ep,e}$ to be the orthogonal   projection of the space $ L_e^2(\Omega_m)=\{\phi\in L^2(\Omega_m)|\phi \text{ is even in } y\}$ on $\ker(\vec{u}_\ep\cdot\nabla)$. It induces  a   projection
$\hat{P}_{\ep,e}$ of $X_{\ep, e}^*$ on $\ker (B_\ep')$ by $\hat{P}_{\ep,e}=(S_e')^{-1}\bar{P}_{\ep,e} S_e'$, where
$S_e: L_e^2(\Omega_m) \rightarrow X_{\ep,e},  S_e\omega = g'(\psi_\ep)^{1/2}\omega$
defines an isometry.
Similar to \cite{lin2004some}, it takes the form
$$\hat{P}_{\ep,e} \psi|_{\Gamma_{i}(\rho)} = \frac{\oint_{\Gamma_i (\rho)} \frac{\phi}{|\nabla \psi_\ep|}dl}{\oint_{\Gamma_i (\rho)} \frac{1}{|\nabla \psi_\ep|}dl},$$
where $\rho$ is in the range of $\psi_\ep$ and $\Gamma_i(\rho)$ is a branch of $\{\psi_\ep = \rho\}$. Noting that $\tilde{X}_{\ep, e}\subset X_{\ep, e}^*$, we  define the operator
$$\hat{A}_{\ep,e} = - \Delta - g'(\psi_\ep)(I - \hat{P}_{\ep,e}): \tilde{X}_{\ep, e} \rightarrow \tilde{X}^*_{\ep, e}.$$
Then we have the following lemma.
\begin{lemma}\label{L e-hat A}
$$n^-\left( L_{\ep,e} |_{\overline{R(B_\ep)}} \right) = n^-\left(\hat{A}_{\ep,e}\right).$$
\end{lemma}
\begin{proof} Since $\hat{P}_{\ep,e}$  commutes with $f(\psi_\ep)\cdot$ for any function $f$,  $\omega \in \overline{R(B_\ep)}$ if and only if $\hat{P}_{\ep,e} \frac{\omega}{g'(\psi_\ep)} = 0$. Note that $\bar{P}_{\ep,e}$ is orthogonal in the $L^2$ sense.
For $\omega \in \overline{R(B_\ep)}\subset X_{\ep,e}$, there exists $\psi \in \tilde{X}_{\ep, e}$ such that $-\Delta\psi=\omega$ and
\begin{align*}
&\langle L_{\ep,e} \omega, \omega \rangle
= \iint_{\Omega_m} \left(\frac{\omega^2}{g'(\psi_\ep)} - \omega \psi\right) dxdy \\
= &\iint_{\Omega_m}\left(\bar{P}_{\ep,e}\left( \frac{\omega}{\sqrt{g'(\psi_\ep)}} - \psi \sqrt{g'(\psi_\ep)}\right)+(I-\bar{P}_{\ep,e})\left( \frac{\omega}{\sqrt{g'(\psi_\ep)}} - \psi \sqrt{g'(\psi_\ep)}\right) \right)^2 dxdy \\
&- \iint_{\Omega_m}\left(g'(\psi_\ep)\psi^2 -|\nabla \psi|^2\right) dxdy \\
=& \iint_{\Omega_m} \left(\left( \frac{\omega}{\sqrt{g'(\psi_\ep)}} - \sqrt{g'(\psi_\ep)} (I - \hat{P}_{\ep,e}) \psi \right)^2 + g'(\psi_\ep) (\hat{P}_{\ep,e} \psi)^2  - g'(\psi_\ep)\psi^2 + |\nabla \psi|^2 \right)dxdy \\
 \geq& \iint_{\Omega_m} \left(|\nabla \psi|^2 - g'(\psi_\ep)\psi^2 + g'(\psi_\ep) (\hat{P}_{\ep,e}\psi)^2 \right)dxdy = \langle\hat{A}_{\ep,e} \psi, \psi\rangle.
\end{align*}

For $\psi\in\tilde{X}_{\ep, e}$, we have $\tilde{\omega} \triangleq g'(\psi_\ep)(I - \hat{P}_{\ep,e})\psi \in \overline{R(B_\ep)}$. Let $\tilde{\psi} = (-\Delta)^{-1}\tilde{\omega}$. Then
\begin{align*}
\langle\hat{A}_{\ep,e} \psi, \psi\rangle
&= \iint_{\Omega_m} \left( |\nabla \psi|^2 - g'(\psi_\ep)((I - \hat{P}_{\ep,e})\psi)^2\right) dxdy \\
&= \iint_{\Omega_m} \left( |\nabla \psi|^2 - \frac{\tilde{\omega}^2}{g'(\psi_\ep)}\right)dxdy \\
&= \iint_{\Omega_m} |\nabla \psi|^2 - 2 \tilde{\omega} \psi + \frac{\tilde{\omega}^2}{g'(\psi_\ep)}dxdy \\
& \geq \iint_{\Omega_m}  \frac{\tilde{\omega}^2}{g'(\psi_\ep)} - |\nabla \tilde{\psi}|^2 dxdy = \langle L_{\ep,e}\tilde{\omega}, \tilde{\omega} \rangle,
\end{align*}
where we used $\langle\tilde \omega, \hat{P}_{\ep,e}\psi\rangle=0$.
From the two inequalities above, we have $n^-\left( L_{\ep,e} |_{\overline{R(B_\ep)}} \right) = n^-\left(\hat{A}_{\ep,e}\right)$.
\end{proof}
\fi

In summary, we have the following criterion for modulational instability of $\omega_\ep$.
\begin{lemma}\label{modulational case:unstable modes}
The number of unstable  modes of $J_{\ep,\alpha}L_{\ep,\alpha}$ is $n^-\left(L_{\alpha,e}|_{\overline{R(B_\alpha)}} \right)$, where $L_{\alpha,e}$ and $B_\alpha$ are defined in  \eqref{L-alpha-e B-alpha}. Consequently, if $n^-\left(L_{\alpha,e}|_{\overline{R(B_\alpha)}} \right)\geq1$, then $\omega_\ep$ is linearly modulationally unstable.
\end{lemma}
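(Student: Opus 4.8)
The plan is to mirror, in the modulational setting, exactly the reduction that worked for the multi-periodic case in Lemma \ref{L e-hat A}, so that the abstract counting formula \eqref{index-formula-neg} of Lemma \ref{indice-theorem-sep} produces the stated criterion. First I would observe that the separable Hamiltonian system \eqref{sep-hamiltonian-alpha} has already been set up and that \textbf{(G1-4)} have already been checked using \eqref{L-alpha-bounded}, Corollary \ref{A-L-dec-e-alpha}, and the conjugation relations \eqref{hat-J-L-alpha1}. Hence Lemma \ref{indice-theorem-sep} applies to \eqref{sep-hamiltonian-alpha} and gives $\dim E^u = \dim E^s = n^-\big(\hat L_{\alpha,e}|_{\overline{R(\hat B_\alpha \hat L_{\alpha,o})}}\big)$, which is precisely the number $k_{r,\ep,\alpha}$ appearing in the index bookkeeping.

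The first genuine step is to replace $\overline{R(\hat B_\alpha \hat L_{\alpha,o})}$ by $\overline{R(\hat B_\alpha)}$. This is the analogue of Lemma \ref{range BLo-B}, and I would argue it the same way: passing through the Riesz isomorphisms one reduces to showing $\ker(\tilde L_{\alpha,o}\hat B_\alpha^*)=\ker(\hat B_\alpha^*)$, which in turn follows once one knows $\ker(\tilde L_{\alpha,o})=\ker(L_{\alpha,o})$ and that the equation $\hat B_\alpha^* \omega = (\text{element of }\ker L_{\alpha,o})$ has no solution in the even class. By Corollary \ref{A-L-dec-e-alpha}(2), however, $\ker(L_{\ep,\alpha})=\{0\}$, so in the modulational case this is actually \emph{easier} than in Lemma \ref{range BLo-B}: $\ker(\tilde L_{\alpha,o})$ restricted to odd functions is trivial, hence $\overline{R(\hat B_\alpha \hat L_{\alpha,o})}=\overline{R(\hat B_\alpha)}$ immediately. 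Then, using the block-diagonal conjugation \eqref{hat-J-L-alpha1} between the real operators $\hat B_\alpha, \hat L_{\alpha,e}$ and the complex operators $B_\alpha, L_{\alpha,e}$ (restricted to even functions in $y$), the real count $n^-\big(\hat L_{\alpha,e}|_{\overline{R(\hat B_\alpha)}}\big)$ equals twice the complex count $n^-\big(L_{\alpha,e}|_{\overline{R(B_\alpha)}}\big)$, exactly as recorded just before \eqref{index-formula-for-modulational-instabilitykc}. Combining with $k_{r,\ep,\alpha}=2\tilde k_{r,\ep,\alpha}$ from \eqref{index-k-tilde-k} yields $\tilde k_{r,\ep,\alpha}=n^-\big(L_{\alpha,e}|_{\overline{R(B_\alpha)}}\big)$, i.e.\ the number of unstable eigenvalues of $J_{\ep,\alpha}L_{\ep,\alpha}$ with positive real part is exactly this quantity.

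Finally, for the \emph{consequently} clause I would note that an unstable eigenvalue of $J_{\ep,\alpha}L_{\ep,\alpha}$ is by definition one with positive real part; since \eqref{index-formula-for-modulational-instabilitykc} already gives $\tilde k_{c,\ep,\alpha}=0$ (no eigenvalues strictly inside the open right half-plane off the real axis), any such eigenvalue is a positive real one, counted by $\tilde k_{r,\ep,\alpha}$. Therefore $n^-\big(L_{\alpha,e}|_{\overline{R(B_\alpha)}}\big)\geq 1$ forces $\tilde k_{r,\ep,\alpha}\geq 1$, giving a genuine exponentially growing solution of \eqref{complex Ham-modu}, which is exactly linear modulational instability of $\omega_\ep$. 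The one point requiring a little care — the main obstacle, such as it is — is the bookkeeping that identifies the \emph{even-in-$y$} restriction $L_{\alpha,e}$, $B_\alpha$ in \eqref{L-alpha-e B-alpha} with the real blocks $\hat L_{\alpha,e}$, $\hat B_\alpha$ under the $M$-conjugation of \eqref{hat-J-L-alpha1}–\eqref{L-JL-conjugate}, since parity in $y$ and the real/complex decomposition must be shown to be compatible; but this is a routine check of the kind already performed for \eqref{sep-hamiltonian}, and no new estimate is needed. The substantive analytic content of the modulational instability theorem is thus pushed entirely into the later construction of a test function $\widetilde\psi$ with $\langle \hat A_{\ep,\alpha}\widetilde\psi,\widetilde\psi\rangle<0$, not into this criterion.
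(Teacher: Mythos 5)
Your proposal is correct and follows essentially the same route as the paper: apply Lemma \ref{indice-theorem-sep} to the real separable system \eqref{sep-hamiltonian-alpha} (whose hypotheses \textbf{(G1-4)} are verified exactly as you say), pass from $\overline{R(\hat B_\alpha\hat L_{\alpha,o})}$ to $\overline{R(\hat B_\alpha)}$ — which, as you correctly observe, is immediate here because Corollary \ref{A-L-dec-e-alpha} gives $\ker(L_{\ep,\alpha})=\{0\}$ so the odd block is coercive, in contrast to the more delicate Lemma \ref{range BLo-B}; the paper leaves this step implicit — and then use the conjugation bookkeeping \eqref{hat-J-L-alpha1}, \eqref{L-JL-conjugate}, \eqref{index-k-tilde-k} together with $\tilde k_{c,\ep,\alpha}=0$ to get $\tilde k_{r,\ep,\alpha}=n^-\left(L_{\alpha,e}|_{\overline{R(B_\alpha)}}\right)$ and the instability conclusion. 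The only cosmetic slip is your citation of Lemma \ref{L e-hat A} as the multi-periodic model for this reduction: the relevant analogue is Lemma \ref{range BLo-B} and the discussion around \eqref{sep-hamiltonian}, while Lemma \ref{L e-hat A} corresponds instead to the subsequent projection step (Lemma \ref{L e-hat A-alpha}); this does not affect the validity of your argument.
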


Let $ L_e^2(\Omega)=\{\phi\in L^2(\Omega)|\phi \text{ is even in } y\}$. Since the dual space of $ L_e^2(\Omega)$ is restricted to the class of even functions, we have $ L_e^2(\Omega)=(L_e^2(\Omega))^*$. To study $n^-\left(L_{\alpha,e}|_{\overline{R(B_\alpha)}} \right)$,  we define
$\bar{P}_{\alpha,e}$ to be the orthogonal   projection of the space $(L_e^2(\Omega))^*=L_e^2(\Omega)$ on $\ker(\vec{u}_\ep\cdot\nabla_\alpha)$. For $\widetilde{\psi}\in\ker(\vec{u}_\ep\cdot\nabla_\alpha)$, we have
$(\vec{u}_\ep\cdot\nabla)(\widetilde{\psi}e^{i\alpha x})=0$ and thus, $\widetilde{\psi}e^{i\alpha x}|_{\Gamma(\rho)}\equiv c_0$, where $\Gamma(\rho)$ is a connected closed curve of the level set $\{\psi_\ep=\rho\}$.  Recall that $\rho_0$ is defined in \eqref{def-rho0}.
For
$\rho\in[\rho_0,\infty)$, $\Gamma(\rho)$ is in the un-trapped regions. Since  $\widetilde{\psi}(0,y)=c_0=\widetilde{\psi}(2\pi,y)e^{2\alpha\pi i  }$ and $\widetilde{\psi}(0,y)=\widetilde{\psi}(2\pi,y)$, we have
\begin{align}\label{untrapped region-tilde-psi-e-ialphax}
\widetilde{\psi}e^{i\alpha x}|_{\Gamma(\rho)}\equiv c_0=0,
\end{align}
 and thus, $\widetilde{\psi}\equiv0$ in  the un-trapped regions.
For
$\rho\in[-\rho_0,\rho_0)$,  the level set $\{\psi_\ep=\rho\}$ is in the trapped region and it is exactly  one closed  curve $\Gamma(\rho)$.
Let $(X(s;x_0,y_0), Y(s;x_0,y_0))$ be the solution to the equation
\begin{align}\label{characteristic equation}
\begin{cases}
\dot{X}(s)=\partial_y\psi_\ep(X(s), Y(s)),\\
\dot{Y}(s)=-\partial_x\psi_\ep(X(s), Y(s)),
\end{cases}
\end{align}
with the initial data $X(0)=x_0,Y(0)=y_0$, where $(x_0,y_0)\in\Gamma(\rho)$. Then $\psi_{\ep}$ is conserved  along $\Gamma(\rho)$.
Let  $l_\rho$ be the arc length variable on $\Gamma(\rho)$
and $L_\rho(\ep)$ be the length of  $\Gamma(\rho)$.
 Along the trajectory, the particle solves
\begin{align*}
{d l_\rho(s)\over ds}=|\nabla\psi_\ep |(X(s;x_0,y_0),Y(s;x_0,y_0))
\end{align*}
 and the period of the particle motion is
\begin{align*}
T_\ep(\rho)=\int_0^{L_\rho(\ep)}{1\over|\nabla\psi_\ep|}dl_\rho.
\end{align*}
Define the action and angle variables by
\begin{align*}
I_\ep(\rho)&={1\over 2\pi}\int_{-\rho_0}^\rho\left(\int_0^{L_{\tilde\rho}(\ep)}{1\over|\nabla\psi_\ep|}dl_{\tilde \rho}\right)d\tilde \rho,\;\;
\theta_\ep={2\pi\over T_\ep(\rho)} \int_{0}^{l_\rho}{1\over |\nabla\psi_\ep|}dl_{\tilde \rho}.
\end{align*}
 Then $I_\ep$ is increasing on $\rho\in[-\rho_0,\rho_0)$ and  $0\leq \theta_\ep\leq  2\pi$. We define the inverse map of $I_\ep(\rho)$ by $\rho(I_\ep)$.
Define the frequency by
\begin{align*}
\vartheta_\ep(I_\ep)={2\pi\over T_\ep(\rho(I_\ep))}.
\end{align*}
The action-angle transform $(x,y)\rightarrow(I_\ep,\theta_\ep)$ is a smooth diffeomorphism with Jacobian $-1$.
 The  characteristic equation \eqref{characteristic equation} takes the form
\begin{align*}
\begin{cases}
\dot{I}_\ep=0,\\
\dot{\theta}_\ep=\vartheta_\ep(I_\ep).
\end{cases}
\end{align*}
The transport operator $\vec{u}_{\ep}\cdot\nabla$ takes the form
\begin{align*}
\vec{u}_{\ep}\cdot\nabla=\partial_y\psi_\ep\partial_x-\partial_x\psi_\ep\partial_y=
\vartheta_\ep(I_\ep)\partial_{\theta_\ep}.
\end{align*}
Thus, $\ker(\vartheta_\ep(I_\ep)\partial_{\theta_\ep})=\{f(I_\ep):f(I_\ep)\in L^2(\Omega)\;\text{and}\; f(I_\ep(\rho))=0\text{ for }\rho\in[\rho_0,\infty)\}=\{h(\psi_\ep):h(\psi_\ep)\in L^2(\Omega)\;\text{and}\; h(\psi_\ep)=0\text{ for }\psi_\ep\geq\rho_0\}=\ker(\vec{u}_{\ep}\cdot\nabla)$. Thus, $\ker(\vec{u}_{\ep}\cdot\nabla_\alpha)=\{h(\psi_\ep)e^{-i\alpha x}:h(\psi_\ep)\in L^2(\Omega)\;\text{and}\; h(\psi_\ep)=0\text{ for }\psi_\ep\geq\rho_0\}$. Let $\phi\in L_e^2(\Omega)$.
For any $\varphi=h(\psi_\ep)e^{-i\alpha x}\in \ker(\vec{u}_{\ep}\cdot\nabla_\alpha)$, we have
\begin{align*}
(\phi-\bar{P}_{\alpha,e}\phi,\varphi)_{L^2(\Omega)}=&\iint_{\Omega}(\phi-\bar{P}_{\alpha,e}\phi)\overline{ h(\psi_\ep)}e^{i\alpha x} dxdy\\
=&\int_{-\rho_0}^{\rho_0}\left(\oint_{\Gamma(\rho)}{(\phi-\bar{P}_{\alpha,e}\phi)\overline{ h(\psi_\ep)}e^{i\alpha x}\over|\nabla\psi_\ep|}\right)d\rho\\
=&\int_{-\rho_0}^{\rho_0}\overline{ h(\rho)}\left(\oint_{\Gamma(\rho)}{\phi e^{i\alpha x}\over|\nabla\psi_\ep|}-(\bar{P}_{\alpha,e}\phi e^{i\alpha x})|_{\Gamma(\rho)}\oint_{\Gamma(\rho)}{1\over|\nabla\psi_\ep|}
\right)d\rho=0,
\end{align*}
where we used $\bar{P}_{\alpha,e}\phi e^{i\alpha x}$ takes constant on $\Gamma(\rho)$ since $\bar{P}_{\alpha,e}\phi \in \ker(\vec{u}_{\ep}\cdot\nabla_\alpha)$.
This gives
\begin{align*}
(\bar{P}_{\alpha,e}\phi) |_{\Gamma(\rho)}=
\left\{ \begin{array}{lll} {\oint_{\Gamma(\rho)}{\phi e^{i\alpha x}\over|\nabla\psi_\ep|}\over\oint_{\Gamma(\rho)}{1\over|\nabla\psi_\ep|}}e^{-i\alpha x} &\mbox{ for } & \rho \in [-\rho_0, \rho_0 ),\\
 0 &\mbox{ for } & \rho\in [\rho_0, \infty).
 \end{array} \right.
\end{align*}
 It induces  a   projection
$\hat{P}_{\alpha,e}$ of $(L_{{1\over g'(\psi_\ep)},e}^2(\Omega))^*=L_{g'(\psi_\ep),e}^2(\Omega)$  on $\ker (B_\alpha')$ by $\hat{P}_{\alpha,e}=(S_e')^{-1}\bar{P}_{\alpha,e} S_e'$, where
$S_e: L_e^2(\Omega) \rightarrow L^2_{\frac{1}{g'(\psi_\ep)},e}(\Omega),  S_e\omega = g'(\psi_\ep)^{1/2}\omega$
defines an isometry. The dual space $(L_{{1\over g'(\psi_\ep)},e}^2(\Omega))^*$ is restricted to the class of even functions.
Noting that $L^2_{{g'(\psi_\ep)},e}(\Omega)= (L^2_{\frac{1}{g'(\psi_\ep)},e}$ $(\Omega))^*$, we  define the operator
$$\hat{A}_{\alpha,e} = - \Delta_\alpha - g'(\psi_\ep)(I - \hat{P}_{\alpha,e}): L^2_{{g'(\psi_\ep)},e}(\Omega) \rightarrow L^2_{\frac{1}{g'(\psi_\ep)},e}(\Omega).$$
Similar to Lemma \ref{L e-hat A}, we can estimate $n^-\left( L_{\alpha,e} |_{\overline{R(B_\alpha)}} \right)$ by studying the negative directions of $\langle\hat{A}_{\alpha,e}\cdot,\cdot\rangle$.
\begin{lemma}\label{L e-hat A-alpha}
$$n^-\left( L_{\alpha,e} |_{\overline{R(B_\alpha)}} \right) = n^-\left(\hat{A}_{\alpha,e}\right).$$
In particular, the number of unstable  modes of $J_{\ep,\alpha}L_{\ep,\alpha}$ is $n^-\left(\hat{A}_{\alpha,e}\right)$. If $n^-\left(\hat{A}_{\alpha,e}\right)\geq1$, then $\omega_\ep$ is linearly modulationally unstable.
\end{lemma}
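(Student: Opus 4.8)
The plan is to mirror the proof of Lemma \ref{L e-hat A}, replacing $-\Delta$ by $-\Delta_\alpha$, the space $\tilde X_{\ep,e}$ of stream functions by $H^1_\alpha(\Omega)\cap L_e^2(\Omega)$, and the level-set projection $\hat P_{\ep,e}$ by the twisted projection $\hat P_{\alpha,e}$ constructed above. First I would record two preliminaries. The map $\hat P_{\alpha,e}$ is idempotent and commutes with multiplication by any function $f(\psi_\ep)$, since it acts by averaging $(\cdot)\,e^{i\alpha x}$ over the connected closed level curves $\Gamma(\rho)$ with weight $1/|\nabla\psi_\ep|$ (and vanishes on the untrapped region); hence, exactly as in the co-periodic case, $\omega\in\overline{R(B_\alpha)}$ if and only if $\hat P_{\alpha,e}\big(\omega/g'(\psi_\ep)\big)=0$. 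Secondly, for $\omega\in L^2_{\frac{1}{g'(\psi_\ep)},e}(\Omega)$ the solution $\psi=(-\Delta_\alpha)^{-1}\omega$ lies in $H^1_\alpha(\Omega)$ and is even in $y$: this follows from the reflection symmetry $\psi_\ep(x,-y)=\psi_\ep(x,y)$, the identity $\overline{(-\Delta_\alpha)^{-1}}=(-\Delta_{-\alpha})^{-1}$, and uniqueness of the weak solution of the $\alpha$-Poisson equation.

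With these in hand, the first inequality is obtained as follows. Given $\omega\in\overline{R(B_\alpha)}$, set $\psi=(-\Delta_\alpha)^{-1}\omega\in H^1_\alpha(\Omega)\cap L_e^2(\Omega)$, so that $\langle L_{\alpha,e}\omega,\omega\rangle=\iint_\Omega\big(|\omega|^2/g'(\psi_\ep)-\mathrm{Re}(\overline{\psi}\,\omega)\big)\,dxdy$. Completing the square on $\omega/\sqrt{g'(\psi_\ep)}-\sqrt{g'(\psi_\ep)}\,\psi$, splitting it via the orthogonal (in $L_e^2(\Omega)$) decomposition induced by $\bar P_{\alpha,e}$, and using $\hat P_{\alpha,e}\big(\omega/g'(\psi_\ep)\big)=0$, one gets
\begin{align*}
\langle L_{\alpha,e}\omega,\omega\rangle&=\iint_\Omega\Big(\big|\tfrac{\omega}{\sqrt{g'(\psi_\ep)}}-\sqrt{g'(\psi_\ep)}(I-\hat P_{\alpha,e})\psi\big|^2\\
&\quad+|\nabla_\alpha\psi|^2-g'(\psi_\ep)|\psi|^2+g'(\psi_\ep)|\hat P_{\alpha,e}\psi|^2\Big)\,dxdy\ge\langle\hat A_{\alpha,e}\psi,\psi\rangle .
\end{align*}
For the reverse inequality, given $\psi\in H^1_\alpha(\Omega)\cap L_e^2(\Omega)$ put $\tilde\omega=g'(\psi_\ep)(I-\hat P_{\alpha,e})\psi$; idempotency of $\hat P_{\alpha,e}$ gives $\tilde\omega\in\overline{R(B_\alpha)}$, and with $\tilde\psi=(-\Delta_\alpha)^{-1}\tilde\omega$ and $\langle\tilde\omega,\hat P_{\alpha,e}\psi\rangle=0$,
\begin{align*}
\langle\hat A_{\alpha,e}\psi,\psi\rangle&=\iint_\Omega\Big(|\nabla_\alpha\psi|^2-\tfrac{|\tilde\omega|^2}{g'(\psi_\ep)}\Big)\,dxdy=\iint_\Omega\Big(|\nabla_\alpha\psi|^2-2\,\mathrm{Re}(\overline{\psi}\,\tilde\omega)+\tfrac{|\tilde\omega|^2}{g'(\psi_\ep)}\Big)\,dxdy\\
&\ge\iint_\Omega\Big(\tfrac{|\tilde\omega|^2}{g'(\psi_\ep)}-|\nabla_\alpha\tilde\psi|^2\Big)\,dxdy=\langle L_{\alpha,e}\tilde\omega,\tilde\omega\rangle ,
\end{align*}
the last inequality being the variational characterization of $(-\Delta_\alpha)^{-1}$. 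These two inequalities give $n^{\le0}\big(L_{\alpha,e}|_{\overline{R(B_\alpha)}}\big)=n^{\le0}(\hat A_{\alpha,e})$, and an argument identical to (11.60) in \cite{lin2022instability} matches the kernels, whence $n^-\big(L_{\alpha,e}|_{\overline{R(B_\alpha)}}\big)=n^-(\hat A_{\alpha,e})$. The assertion on the number of unstable modes and on linear modulational instability of $\omega_\ep$ then follows immediately from Lemma \ref{modulational case:unstable modes}.

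The main obstacle, as in the co-periodic case but now aggravated by the complex, $\alpha$-twisted setting, is the bookkeeping: one must check carefully that $\bar P_{\alpha,e}$ is a genuine orthogonal projection of $L_e^2(\Omega)$ onto $\ker(\vec u_\ep\cdot\nabla_\alpha)$ — this is exactly where the action-angle change of variables and the vanishing \eqref{untrapped region-tilde-psi-e-ialphax} on the untrapped region enter; that $\hat P_{\alpha,e}$, hence $I-\hat P_{\alpha,e}$, maps $H^1_\alpha(\Omega)\cap L_e^2(\Omega)$ boundedly into $L^2_{g'(\psi_\ep),e}(\Omega)$, so that $\tilde\omega$ and the quadratic form $\langle\hat A_{\alpha,e}\cdot,\cdot\rangle$ are well defined (using the Poincar\'e-type inequality $\iint_\Omega g'(\psi_\ep)|\psi|^2\le\|\psi\|_{H^1_\alpha(\Omega)}^2$); and that every square completion above survives passage to real parts. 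None of these steps is deep, but each requires the domain, duality and symmetry conventions to be handled with care.
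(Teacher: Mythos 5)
Your proof is correct and follows essentially the same route the paper takes: the paper proves Lemma \ref{L e-hat A-alpha} by declaring it "similar to Lemma \ref{L e-hat A}", i.e.\ exactly the two square-completion inequalities with the twisted projection $\hat P_{\alpha,e}$ (using that $\omega\in\overline{R(B_\alpha)}$ iff $\hat P_{\alpha,e}(\omega/g'(\psi_\ep))=0$ and that multiplication by functions of $\psi_\ep$ commutes with $\hat P_{\alpha,e}$), followed by the kernel-matching argument as in (11.60) of \cite{lin2022instability} and the appeal to Lemma \ref{modulational case:unstable modes}. Your additional bookkeeping checks (evenness of $(-\Delta_\alpha)^{-1}\omega$, boundedness of $\hat P_{\alpha,e}$, well-definedness of the quadratic forms) are consistent with the paper's setup and introduce nothing at odds with it.
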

\subsection{Proof of modulational instability}
 To study the linear modulational instability of the Kelvin--Stuart vortex $\omega_\ep$, we construct the test function to be
  \begin{align}\label{test-function-modulational-instability}\widetilde\psi_{\ep,\alpha} =(1-\gamma_\ep^2)^{\alpha\over2}e^{i\alpha(\theta_\ep-x)}\in L^2_{{g'(\psi_\ep)},e}(\Omega),
   \end{align}
 which is an eigenfunction of the eigenvalue ${1\over2}\alpha\left( \alpha+1\right)$ for the associated eigenvalue problem \eqref{elip02-alpha} in Theorem \ref{sol to eigenvalue problem varepsilon=0-pde-alpha}, and confirm that
 \begin{align*}
\langle\hat{A}_{\alpha,e} \widetilde\psi_{\ep,\alpha}, \widetilde\psi_{\ep,\alpha} \rangle=  b_{\alpha, 1}(\widetilde\psi_{\ep,\alpha}) + b_{\alpha, 2}(\widetilde\psi_{\ep,\alpha}) < 0,
\end{align*}
where
\begin{align}\label{func-b1-alpha}
  b_{\alpha, 1}(\widetilde\psi_{\ep,\alpha}) = \iint_{\Omega} \left(|\nabla_\alpha \widetilde\psi_{\ep,\alpha}|^2  - g'(\psi_\ep) |\widetilde\psi_{\ep,\alpha}|^2 \right)dxdy,
\end{align}
\begin{align}\label{func-b2-alpha}
 b_{\alpha, 2}(\widetilde\psi_{\ep,\alpha}) =  \iint_{\Omega} g'(\psi_\ep)( \hat{P}_{\alpha,e}\widetilde\psi_{\ep,\alpha})^2 dxdy =  \int_{-\rho_0}^{\rho_0} g'(\rho) \frac{\left|\oint_{\Gamma (\rho)} \frac{\widetilde\psi_{\ep,\alpha} e^{i \alpha x}}{|\nabla \psi_\ep|}\right|^2}{\oint_{\Gamma (\rho)} \frac{1}{|\nabla \psi_\ep|}} d\rho,
 \end{align}
 and $\rho_0 $ is defined in \eqref{def-rho0}.
Here,  $\Gamma (\rho)=\{\psi_\ep=\rho\}$ for $\rho\in[-\rho_0,\rho_0)$. Since $\widetilde\psi_{\ep,\alpha} $ is an eigenfunction of the eigenvalue ${1\over2}\alpha\left( \alpha+1\right)$ for \eqref{elip02-alpha}, we have
\begin{align}\label{func-b1-alpha-2}
  b_{\alpha, 1}(\widetilde\psi_{\ep,\alpha}) = 2\pi(\alpha(\alpha+1)-2)\int_{-1}^1(1-\gamma_\ep^2)^\alpha d\gamma_\ep.
\end{align}
To compute $ b_{\alpha, 2}(\widetilde\psi_{\ep,\alpha})$, we  convert the curve integrals to  definite integrals.
Note that $\Gamma(\rho) = \{(x, y) \in \Omega| \psi_\ep(x,y) = \rho\} $ is a closed level curve in the trapped region for $\rho\in(-\rho_0, \rho_0]$.
We divide $\Gamma(\rho)$ into two parts, namely, the upper part
\begin{align*}
\Gamma_{+} (\rho)
& = \{(x, y) \in \mathbb{T}_{2\pi} \times \mathbb{R}  \;|\;  \psi_\ep(x,y) = \rho,  y \geq 0\},
\end{align*}
and the lower part
\begin{align*}
\Gamma_{-} (\rho) = \{(x, y) \in \mathbb{T}_{2\pi} \times \mathbb{R}\;|\;  \psi_\ep(x,y) = \rho,  y < 0\}.
\end{align*}
 Using $x$ as the parameter,
we  represent $\Gamma_{+}(\rho)$ and $\Gamma_{-}(\rho)$ as follows:
$$\vec{r}_{+} (x) = (x, \cosh^{-1}(\sqrt{1-\ep^2} e^{\rho} - \ep \cos(x))), \quad x \in [ x_0, 2\pi - x_0], $$
and
$$\vec{r}_{-} (x) = (x, -\cosh^{-1}(\sqrt{1-\ep^2} e^{\rho} - \ep \cos(x))), \quad x \in ( x_0, 2\pi - x_0), $$
respectively. Here, $x_0 = \arccos\left( \frac{\sqrt{1-\ep^2} e^\rho - 1}{\ep} \right) $ is the point on $[0, \pi]$ such that $\psi_\ep(x_0,0)=\rho$.
Moreover, we have
\begin{align}\label{dr+}
\left|\frac{d \vec{r}_{\pm}(x)}{dx}\right| = \sqrt{ 1 + \left(\frac{\ep \sin(x)}{\sinh(y(x))}\right)^2},
\end{align}
where
\begin{align}\label{sinhyx}
 \sinh(y(x))=\sqrt{( \sqrt{1-\ep^2} e^{\rho} - \ep \cos(x) )^2 - 1} \end{align}
 and
\begin{align*}
 y(x)=\cosh^{-1}(\sqrt{1-\ep^2} e^{\rho} - \ep \cos(x)).
 \end{align*}
Noting that $\sinh(y(x_0))=\sinh(y(2\pi-x_0))=0$, $\left|\frac{d \vec{r}_{\pm}(x)}{dx}\right|$ is singular near $x_0$ and $2\pi-x_0$. To avoid the singularity, one might represent $\Gamma(\rho)$ in terms of the parameter $y$ near the two points $(x_0,0)$ and $(2\pi-x_0,0)$ if necessary.
Then we  represent $\left| \nabla \psi_\ep \right| $ and $\widetilde{\psi}_{\ep,\alpha}$ on $\Gamma_{+}(\rho)$ and $\Gamma_{-}(\rho)$ in terms of the parameter $x$. Since
$\psi_\ep(x,y) = \rho$, we have $\cosh(y) + \ep \cos(x) = e^\rho \sqrt{1-\ep^2}$. So
\begin{align}\label{tilde psi-p-x}
\left| \nabla \psi_\ep \right|
= & \left| \left( - \frac{\ep \sin(x)}{e^\rho \sqrt{1-\ep^2}}, \frac{\sinh(y)}{e^\rho \sqrt{1-\ep^2}} \right) \right|
=  \frac{\sqrt{\ep^2 \sin^2(x) + \sinh^2(y)}}{e^\rho \sqrt{1-\ep^2}}.
\end{align}
By \eqref{dr+}-\eqref{tilde psi-p-x}, we have
\begin{align}\nonumber
&\oint_{\Gamma (\rho)} \frac{1}{|\nabla \psi_\ep|}=2\oint_{\Gamma_+ (\rho)} \frac{1}{|\nabla \psi_\ep|}=2\int_{x_0}^{2\pi-x_0}\frac{1}{|\nabla \psi_\ep|}\left|\frac{d \vec{r}_{+}(x)}{dx}\right|dx\\\label{Gamma-rho-nabla-psi1}
=&2\int_{x_0}^{2\pi-x_0}\frac{e^\rho \sqrt{1-\ep^2}}{ \sinh(y(x))}dx
=2e^\rho \sqrt{1-\ep^2}\int_{x_0}^{2\pi-x_0}\frac{1}{ \sqrt{(e^\rho \sqrt{1-\ep^2}-\ep \cos(x))^2-1} }dx
\end{align}
and
\if0
\begin{align}\label{psi-p-x}
\tilde{\psi}_\ep(x,y(x)) =& \cos\left(\frac{\theta_\ep}{2}\right)(1-\gep^2)^{1\over4} \\\nonumber
=& \left\{ \begin{array}{lll} \sqrt{\frac{1+\cos(\theta_\ep)}{2}} (1-\gep^2)^{1\over4} &\mbox{ for } & x \in [2(i-1)\pi + x_0, (2i - 1)\pi ],\\
 -\sqrt{\frac{1+\cos(\theta_\ep)}{2}} (1-\gep^2)^{1\over4} &\mbox{ for } & x \in ((2i - 1)\pi, 2i\pi - x_0],
 \end{array} \right.
\end{align}
where
\begin{align}\label{psi-p-x1}
1 - \gep^2 = 1 - \sinh^2(y)e^{-2\rho} = 1 - \left(\left( \sqrt{1-\ep^2} e^{\rho} - \ep \cos(x) \right)^2 - 1\right) e^{-2\rho},
\end{align}
and
\begin{align}\label{psi-p-x2}
\cos(\theta_\ep) = \frac{\xi_\ep}{\sqrt{1-\gep^2}} = \frac{\ep + \sqrt{1-\ep^2} \cos(x) e^{-\rho} }{\sqrt{1 - \left(\left( \sqrt{1-\ep^2} e^{\rho} - \ep \cos(x) \right)^2 - 1\right) e^{-2\rho}}}.
\end{align}
\fi
\begin{align}\nonumber
&\oint_{\Gamma (\rho)} \frac{\widetilde\psi_{\ep,\alpha} e^{i \alpha x}}{|\nabla \psi_\ep|}=2\oint_{\Gamma_+ (\rho)} \frac{\widetilde\psi_{\ep,\alpha} e^{i \alpha x}}{|\nabla \psi_\ep|}=2\int_{x_0}^{2\pi-x_0}\frac{e^\rho \sqrt{1-\ep^2}(1-\gamma_\ep^2)^{\alpha\over2}e^{i\alpha\theta_\ep}}{ \sinh(y(x))}dx\\\label{Gamma-rho-nabla-tilde-psi2}
=&2e^\rho \sqrt{1-\ep^2}\int_{x_0}^{2\pi-x_0}\frac{(1-\gamma_\ep^2)^{\alpha\over2}(\cos(\alpha\theta_\ep)+i\sin(\alpha\theta_\ep))}{ \sqrt{(e^\rho \sqrt{1-\ep^2}-\ep \cos(x))^2-1} }dx,
\end{align}
where $x_0 = \arccos\left( \frac{\sqrt{1-\ep^2} e^\rho - 1}{\ep} \right)$,
\begin{align*}
1-\gamma_\ep^2&=1-\sinh^2(y)e^{-2\rho}=1-\left((e^\rho \sqrt{1-\ep^2}-\ep \cos(x))^2-1\right)e^{-2\rho}
\end{align*}
and
\begin{align}
\label{1-gamma-ep-2-expression}
\cos(\theta_\ep) &= \frac{\xi_\ep}{\sqrt{1-\gep^2}} = \frac{\ep + \sqrt{1-\ep^2} \cos(x) e^{-\rho} }{\sqrt{1 - \left(\left( \sqrt{1-\ep^2} e^{\rho} - \ep \cos(x) \right)^2 - 1\right) e^{-2\rho}}}.
\end{align}
 Note that \eqref{func-b1-alpha-2}, \eqref{func-b2-alpha} and \eqref{Gamma-rho-nabla-psi1}-\eqref{Gamma-rho-nabla-tilde-psi2} give the explicit expression of $\langle\hat{A}_{\alpha,e} \widetilde\psi_{\ep,\alpha}, \widetilde\psi_{\ep,\alpha} \rangle=  b_{\alpha, 1}(\widetilde\psi_{\ep,\alpha}) + b_{\alpha, 2}(\widetilde\psi_{\ep,\alpha})$.
 \begin{figure}[ht]
    \centering
	\includegraphics[width=0.56\textwidth]{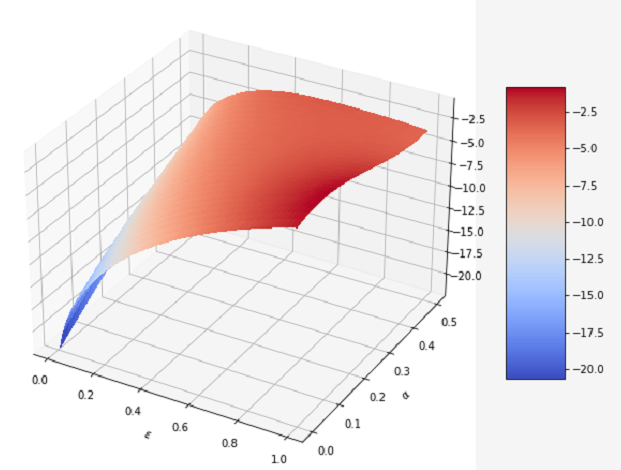}
	\caption{The value of $\langle\hat{A}_{\alpha,e} \widetilde\psi_{\ep,\alpha}, \widetilde\psi_{\ep,\alpha} \rangle$}
	\label{fig:3rdFig}
\end{figure}
 The integrals in the expression are computable, and  we compute $\langle\hat{A}_{\alpha,e} \widetilde\psi_{\ep,\alpha}, \widetilde\psi_{\ep,\alpha} \rangle$ as a real-valued function of $(\alpha,\epsilon)$ by Python.
The values of  $\langle\hat{A}_{\alpha,e} \widetilde\psi_{\ep,\alpha}, \widetilde\psi_{\ep,\alpha} \rangle$ are given in Figure \ref{fig:3rdFig}, and it reveals
 that
 \begin{align}\label{test-function-modulational-instability-quadratic form}
 \max_{\alpha\in(0,{1\over2}],\ep\in[0,1)}\langle\hat{A}_{\alpha,e} \widetilde\psi_{\ep,\alpha}, \widetilde\psi_{\ep,\alpha} \rangle=\langle\hat{A}_{\alpha,e} \widetilde\psi_{\ep,\alpha}, \widetilde\psi_{\ep,\alpha} \rangle|_{\alpha=0.01,\ep=0.99}=-0.78<0.
 \end{align}
Now, we are in a position to prove linear modulational instability for the family of steady states $\omega_\ep$, $\ep\in[0,1)$.

\begin{proof}[Proof of Theorem \ref{main result3-modulation-unstable}]
With the test function  $\widetilde\psi_{\ep,\alpha}$ defined in \eqref{test-function-modulational-instability},  we infer from \eqref{test-function-modulational-instability-quadratic form} that
$\langle\hat{A}_{\alpha,e} \widetilde\psi_{\ep,\alpha}, \widetilde\psi_{\ep,\alpha} \rangle<0$ for $\alpha\in(0,{1\over2}]$ and $\ep\in[0,1)$. Thus,
the number of unstable  modes of $J_{\ep,\alpha}L_{\ep,\alpha}$ is $n^-\left( L_{\alpha,e} |_{\overline{R(B_\alpha)}} \right) = n^-\left(\hat{A}_{\alpha,e}\right)\geq1$  by Lemma \ref{L e-hat A-alpha}. This proves  linear modulational instability of $\omega_\ep$.
\end{proof}
\begin{remark} \label{modulational-remark}

For the hyperbolic tangent shear flow ($\ep=0$), the trapped region vanishes and by \eqref{untrapped region-tilde-psi-e-ialphax}, we have $\ker(\vec{u}_0\cdot\nabla_\alpha)=\{0\}$ for $\alpha\in(0,{1\over2}]$. Thus,
 $ {\overline{R(B_\alpha)}}={L^2_{\frac{1}{g'(\psi_0)},e}(\Omega)}$. By Corollary $\ref{A-L-dec-e-alpha}$, $n^-( L_{\alpha,e} )|_{\ep=0}=n^-( L_{\ep,\alpha})|_{\ep=0}=2$. We infer from Lemma $\ref{modulational case:unstable modes}$ that for any modulational parameter $\alpha\in(0,{1\over2}]$, the number of   unstable modes in  the shear case is $2$. This also indicates that for fixed $\alpha\in(0,{1\over2}]$, the number of   unstable modes for the Kelvin--Stuart vortex $\omega_\ep$ with $\ep\ll1$ is $2$.

\if0
 (2) Let $\lambda_0$ be the most unstable eigenvalue of  $\partial_t \widetilde{\omega} = J_{\ep, \alpha} L_{\ep, \alpha} \widetilde{\omega}$. Then $\hat B_\alpha'\hat L_{\alpha,e}\hat B_\alpha \hat L_{\alpha,o}\vec{\omega}_\alpha=-\hat\lambda_\alpha^2\vec{\omega}_\alpha$ for some $\vec{\omega}_\alpha\in X_{\alpha,o}$ and
 \begin{align*}
 -\lambda_\alpha^2=\min\limits_{\langle \hat L_{\alpha,o}\vec{\omega}_o,\vec{\omega}_o\rangle=1,\vec{\omega}_o\in D( \hat L_{\alpha,o})}\langle\hat B_\alpha'\hat L_{\alpha,e}\hat B_\alpha \hat L_{\alpha,o}\vec{\omega}_o,\hat L_{\alpha,o}\vec{\omega}_o\rangle\approx\min\limits_{ \omega\in \overline{R(B_\alpha)}}{\langle\hat L_{\alpha,e}\omega,\omega\rangle\over \|\omega\|_{L^2_{\frac{1}{g'(\psi_\ep)}}(\Omega)}^2}.
 \end{align*}
By \eqref{test-function-modulational-instability-quadratic form}, $\langle\hat{A}_{\alpha,e} \widetilde\psi_{\ep,\alpha}, \widetilde\psi_{\ep,\alpha} \rangle\leq-0.78<0$   for $\alpha\in(0,{1\over2}]$ and $\ep\in[0,1)$.
Let $\widetilde \omega_{\ep,\alpha}=g'(\psi_\ep)(I - \hat{P}_{\alpha,e})\widetilde\psi_{\ep,\alpha}$. Then similar to \eqref{def-hat-A-ep-e}, we have
$\langle L_{\alpha,e}\widetilde \omega_{\ep,\alpha},\widetilde \omega_{\ep,\alpha}\rangle\leq \langle\hat{A}_{\alpha,e} \widetilde\psi_{\ep,\alpha}, \widetilde\psi_{\ep,\alpha} \rangle\leq-0.78<0$.
Moreover,
$$\|\widetilde \omega_{\ep,\alpha}\|_{L^2_{\frac{1}{g'(\psi_\ep)}}(\Omega)}^2=\iint_{\Omega}g'(\psi_\ep)|(I - \hat{P}_{\alpha,e})\widetilde\psi_{\ep,\alpha}|^2dxdy\leq \|\widetilde\psi_{\ep,\alpha}\|_{L^2_{{g'(\psi_\ep)}}(\Omega)}^2\leq C.$$
 Thus,  \begin{align*}
 -\lambda_\alpha^2\leq {\langle\hat L_{\alpha,e}\widetilde \omega_{\ep,\alpha},\widetilde \omega_{\ep,\alpha}\rangle\over \|\widetilde \omega_{\ep,\alpha}\|_{L^2_{\frac{1}{g'(\psi_\ep)}}(\Omega)}^2}\leq{-0.78\over C}<0.
 \end{align*}
This implies that  the growth rates of the most unstable direction has a positive lower bound even if $\alpha\to0^+$.
In the co-periodic case ($\alpha=0$), by Corollaries \ref{kernel of  the operator tilde A0 and a decomposition of tilde X0} and \ref{kernel of  the operator A-ep and a decomposition of tilde Xep}, we have $\langle A_\ep \psi,\psi\rangle\geq 0$ for all $\psi\in X_\ep$, $\ep\in[0,1)$.
Thus, the modulational case with $|\alpha|\ll1$ is a singular perturbation for the co-periodic case. Indeed,  the patterns  of the perturbed velocity  \eqref{perturbed  velocity} do not imply
  $\iint_\Omega \omega dxdy=0$ for  vorticity perturbations and $\tilde \psi\in H^1(\Omega)$   as long as $\alpha\neq0$, no matter how small it is.
By Theorem \ref{sol to eigenvalue problem varepsilon=0-pde-alpha}, the principal eigenvalue and a corresponding eigenfunction  of the eigenvalue problem \eqref{elip02-alpha} are
${1\over2}\alpha\left(\alpha+1\right)$ and
$(1-\gamma_\ep^2)^{\alpha\over2}$. Taking limits as $\alpha\to0^+$, we have ${1\over2}\alpha\left(\alpha+1\right)\to 0$ and
$(1-\gamma_\ep^2)^{\alpha\over2}\to1$.  Even though the limits solve the Legendre equation, they make no contribution to the spectra of the operator $ A_\ep$ in the co-periodic case due to  the projection terms \eqref{P0-psi-def} and \eqref{P-ep}, which can be traced back to the physical condition $\iint_\Omega \omega dxdy=0$.
\fi

\end{remark}
Finally, we give the relations between multi-periodic instability and modulational instability.
\begin{lemma}\label{multi-periodic-modulation} Let $\ep\in[0,1)$.
$(1)$ If the steady state $\omega_\ep$ is linearly  $2m\pi$-periodic unstable for some $m\geq2$, then there exists an integer  $1\leq\hat l\leq m-1$ such that $\omega_\ep$ is linearly modulationally unstable
for $\alpha={\hat l\over m}$.

$(2)$ If the steady state $\omega_\ep$ is linearly modulationally unstable
for some rational number $\alpha={p\over q}\in (0,{1\over2}]$ with $p,q\in\mathbb{Z}^+$, then $\omega_\ep$ is linearly  $2q\pi$-periodic unstable.
\end{lemma}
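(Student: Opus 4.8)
The plan is to prove both directions by a Floquet--Bloch decomposition in the $x$ variable with respect to the period $2\pi$ of the steady state. Fix $m\ge2$ and let $T$ be translation by $2\pi$ in $x$, acting on functions on $\Omega_m=\mathbb T_{2m\pi}\times\mathbb R$. Since $\psi_\ep$, $\vec u_\ep$, $g'(\psi_\ep)$ and the weight $(1+\ep\cos x)^{-1}$ appearing in the constraint of $\tilde X_{\ep,m}$ are all $2\pi$-periodic in $x$, the operator $T$ is unitary on $L^2_{\frac1{g'(\psi_\ep)}}(\Omega_m)$ and on $\dot H^1(\Omega_m)$, and a short check of the weak formulation of the Poisson equation shows $T$ commutes with $J_{\ep,m}L_{\ep,m}$. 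As $T^m=I$ on $\Omega_m$, its spectrum is $\{e^{2\pi i l/m}:0\le l\le m-1\}$ and the associated orthogonal eigenspaces are the Bloch sectors $\mathcal H_l=\{e^{ilx/m}\widetilde\omega:\widetilde\omega\text{ is }2\pi\text{-periodic in }x\}$; the projection $P_l$ onto $\mathcal H_l$ is the $x$-Fourier multiplier retaining the modes of $\mathbb T_{2m\pi}$ congruent to $l$ mod $m$, hence bounded (indeed orthogonal) on both $L^2_{\frac1{g'(\psi_\ep)}}(\Omega_m)$ and $\dot H^1(\Omega_m)$.

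First I would establish that on $\mathcal H_l$ the operator $J_{\ep,m}L_{\ep,m}$ is conjugate, through $\widetilde\omega\mapsto e^{ilx/m}\widetilde\omega$, to $J_{\ep,l/m}L_{\ep,l/m}$ of Section~\ref{modulational}. The multiplicative and transport parts are immediate from the definition \eqref{nabla-alpha-Delta-alpha} of $\Delta_\alpha$; the only delicate point is that the weak inverse Laplacian in $\tilde X_{\ep,m}$ restricts on $\mathcal H_l$ to $(-\Delta_{l/m})^{-1}$ on $H^1_{l/m}(\Omega)$. For this I would verify that $P_l$ maps $\tilde X_{\ep,m}$ into itself: for $1\le l\le m-1$ both the mean-zero vorticity constraint and the constraint $\int_0^{2m\pi}\psi(x,0)(1+\ep\cos x)^{-1}dx=0$ hold automatically on $\mathcal H_l$, because $l/m+k\notin\mathbb Z$ for every $k\in\mathbb Z$, so the $x$-integral over a full $2m\pi$-period of any $\mathcal H_l$-function times a $2\pi$-periodic function vanishes; for $l=0$ the sector is exactly the co-periodic space $\tilde X_\ep$. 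Since $P_l$ then commutes with the Riesz-representation definition of $(-\Delta)^{-1}$, the conjugacy follows, together with the decoupling $J_{\ep,m}L_{\ep,m}\cong\bigoplus_{l=0}^{m-1}J_{\ep,l/m}L_{\ep,l/m}$.

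Part~(1) is then immediate: if $\omega_\ep$ is linearly $2m\pi$-periodically unstable, Lemma~\ref{indice-theorem-sep} applied to \eqref{sep-hamiltonian} provides an eigenfunction $\omega\in X_{\ep,m}^{\mathbb C}$ with $J_{\ep,m}L_{\ep,m}\omega=\lambda\omega$, $\mathrm{Re}\,\lambda>0$; writing $\omega=\sum_l P_l\omega$, each nonzero component $P_{\hat l}\omega=e^{i\hat l x/m}\widetilde\omega_{\hat l}$ is again an eigenfunction, so $J_{\ep,\hat l/m}L_{\ep,\hat l/m}\widetilde\omega_{\hat l}=\lambda\widetilde\omega_{\hat l}$. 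The sector $\hat l=0$ is ruled out because $J_{\ep,0}L_{\ep,0}=J_\ep L_\ep$ has no eigenvalue with positive real part by Theorem~\ref{main result1-co-periodic perturbations}; hence $1\le\hat l\le m-1$ (and, replacing $\omega$ by $\bar\omega$ and $\hat l$ by $m-\hat l$ if desired, one may take $\hat l\le m/2$ so that $\hat l/m\in(0,\tfrac12]$), which is modulational instability at $\alpha=\hat l/m$. For Part~(2), if $\omega_\ep$ is modulationally unstable at $\alpha=p/q\in(0,\tfrac12]$ with an eigenfunction $\widetilde\omega$ of $J_{\ep,p/q}L_{\ep,p/q}$, set $\omega:=e^{ipx/q}\widetilde\omega$; this is $2q\pi$-periodic, lies in $X_{\ep,q}$ (the weighted $L^2$ norm is manifestly finite, and both constraints hold automatically since $p/q+k\notin\mathbb Z$), and by the same conjugacy read in reverse satisfies $J_{\ep,q}L_{\ep,q}\omega=\lambda\omega$; since $0<p/q\le\tfrac12$ forces $q\ge2$, this is linear $2q\pi$-periodic instability.

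I expect the main obstacle to be not the mechanics of the decomposition but the function-space bookkeeping, above all verifying that the abstract $(-\Delta)^{-1}$ defined by Riesz representation in $\tilde X_{\ep,m}$, with its nonlocal constraint, genuinely decouples across Bloch sectors and coincides with $(-\Delta_\alpha)^{-1}$ on $H^1_\alpha(\Omega)$. This hinges on the arithmetic fact that $l/m+k\notin\mathbb Z$ for $0<l<m$ and $k\in\mathbb Z$, which must be invoked in several places: compatibility of both constraints with $P_l$, orthogonality of the Bloch splitting in the weighted and $\dot H^1$ norms, and consistency of the two weak formulations of the Poisson equation. Once this is in hand, the commutation of $T$ with $J_{\ep,m}L_{\ep,m}$, the transfer of eigenfunctions, and the use of Theorem~\ref{main result1-co-periodic perturbations} to kill the $l=0$ sector are all routine.
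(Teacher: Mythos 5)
Your proposal is correct and is essentially the paper's own argument: the paper decomposes the $2m\pi$-periodic eigenfunction into its Fourier components grouped by residue class mod $m$ (your Bloch sectors $e^{ilx/m}\omega_l$ with $\omega_l$ being $2\pi$-periodic), kills the $l=0$ component using co-periodic spectral stability (Theorem \ref{main result1-co-periodic perturbations}), and for part (2) multiplies the modulational eigenfunction by $e^{i\alpha x}$ to obtain a $2q\pi$-periodic eigenfunction of $J_{\ep,q}L_{\ep,q}$. The only cosmetic difference is that the paper additionally records, via $\tilde k_{c,\ep,\alpha}=0$ from \eqref{index-formula-for-modulational-instabilitykc}, that the unstable modulational eigenvalue is real before separating real and imaginary parts, a detail your argument handles implicitly.
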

\begin{proof} (1) Let ${\lambda}_* $  be an unstable eigenvalue of $J_{\ep,m} L_{\ep,m} $ with an eigenfunction ${\omega}_*\in X_{\ep,m}$. Then
$${\omega}_*(x, y) = \sum_{k \in \mathbb{Z}} e^{\frac{ikx}{m}} \widehat{\omega}_{*,k}(y)  = \sum_{l = 0}^{m-1} e^{\frac{ilx}{m}} \omega_{*, l} (x, y), $$
where
$$\omega_{*, l} (x, y) = \sum_{n \in \mathbb{Z}} e^{inx} \widehat{\omega}_{*,mn + l}(y) \in L^2_{\frac {1} {g'(\psi_\epsilon)}}(\Omega),\quad 0\leq l\leq m-1.$$
Since $J_{\ep,m} L_{\ep,m}{\omega}_*={\lambda}_*{\omega}_*$, we have
\begin{align*}
J_\ep L_\ep {\omega}_{*, 0} + \sum_{l = 1}^{m-1} e^{\frac{ilx}{m}} J_{\ep, \frac{l}{m}}L_{\ep, \frac{l}{m}} {\omega}_{*, l} = {\lambda}_* \left({\omega}_{*, 0} + \sum_{l = 1}^{m-1} e^{\frac{ilx}{m}} {\omega}_{*, l}\right).
\end{align*}
By induction,
\begin{align*}
J_\ep L_\ep {\omega}_{*, 0} = {\lambda}_* {\omega}_{*, 0}\quad\text{and}\quad
J_{\ep, \frac{l}{m}}L_{\ep, \frac{l}{m}} {\omega}_{*, l}  = {\lambda}_* {\omega}_{*, l}  \quad\text{for} \quad l = 1, \cdots, m-1.
\end{align*}
By Theorem \ref{main result1-co-periodic perturbations},
 $\omega_\ep$ is spectrally stable for co-periodic perturbations. This, along with $Re({\lambda}_*)>0$,  implies that
${\omega}_{*, 0}\equiv 0$. Thus, there exists $1 \leq  \hat{l} \leq m - 1$ such that
$ {\omega}_{*,\hat{l}} \not\equiv 0$ and
$$J_{\ep, \frac{\hat{l}}{m}}L_{\ep, \frac{\hat{l}}{m}} {\omega}_{*, \hat{l}} = {\lambda}_*{\omega}_{*, \hat{l}},$$
which gives modulational instability of $\omega_\ep$ for $\alpha = \frac{\hat{l}}{m}$.

For $\alpha={p\over q}$, let $\lambda_\alpha$ be an unstable eigenvalue of $J_{\ep, \alpha}L_{\ep, \alpha}$ with an eigenfunction $\omega_\alpha$. Then  $e^{i\alpha x}\omega_\alpha$ is $2q\pi$-periodic in $x$ and
\begin{align}\label{modu-multi}J_{\ep,q}L_{\ep,q}(e^{i\alpha x}\omega_\alpha )=e^{i\alpha x}J_{\ep, \alpha}L_{\ep, \alpha}\omega_\alpha=\lambda_\alpha e^{i\alpha x}\omega_\alpha.
\end{align}
By \eqref{index-formula-for-modulational-instabilitykc}, $\lambda_\alpha$ is real-valued. By separating the real and imaginary parts in \eqref{modu-multi}, we know that $\lambda_\alpha$ is an unstable eigenvalue of $J_{\ep,q}L_{\ep,q}$.
\end{proof}

\begin{remark}
Motivated by the test function \eqref{test-even} for  $4\pi$-periodic perturbations, we give an alternative test function
 \begin{align*}
 \widetilde\phi_{\ep,{1\over2}} =\left({1+e^{-i\theta_\ep}\over 2}\right)(1-\gamma_\ep^2)^{1\over4}e^{{i\over2}(\theta_\ep-x)}\in L^2_{{g'(\psi_\ep)},e}(\Omega)
 \end{align*}
 for $\ep\in[0,1)$ and $\alpha={1\over2}$. The advantage of $ \widetilde\phi_{\ep,{1\over2}}$ is that  $b_{\alpha, 2}(\widetilde\phi_{\ep,\alpha})|_{\alpha={1\over2}}=0$ since $ \widetilde\phi_{\ep,{1\over2}} e^{{i\over2}x}=\cos\left({1\over2}\theta_\ep\right)(1-\gamma_\ep^2)^{1\over 4}$ is `odd' symmetrical about $\{x=\pi\}$ along any trajectory of the steady velocity.
 By \eqref{b1-even}, we have $b_{\alpha, 1}(\widetilde\phi_{\ep,\alpha})|_{\alpha={1\over2}}=-{5\over8}\pi^2$.
 Here, $b_{\alpha, 1}$ and $b_{\alpha, 2}$ are defined in \eqref{func-b1-alpha}-\eqref{func-b2-alpha}. Thus,
 $\langle\hat{A}_{\alpha,e} \widetilde\phi_{\ep,\alpha}, \widetilde\phi_{\ep,\alpha} \rangle|_{\alpha={1\over2}}=-{5\over8}\pi^2<0$ for $\ep\in[0,1)$.

By Lemma \ref{L e-hat A-alpha}, we show the  linear modulational instability of $\omega_\ep$ for $\alpha={1\over2}$ without computer assistance.
 By Lemma $\ref{multi-periodic-modulation}$ $(2)$,  again we rigorously prove   that $\omega_\ep$ is linearly unstable
 for $4k\pi$-periodic perturbations and $\ep\in[0,1)$.
\end{remark}

\section{Nonlinear orbital stability for co-periodic perturbations}\label{Sec-Nonlinear orbital stability for co-periodic perturbations}

In this section, we prove  nonlinear orbital stability for the Kelvin--Stuart vortices  $\omega_\ep$, $\ep\in(0,1)$.

\subsection{The pseudoenergy-Casimir  functional and the distance functional}
First, we separate the perturbed stream function $\tilde \psi=\psi_\ep+\psi$ in a combination of the steady part $\psi_{\ep}(x, y)$ and the perturbation part  $\psi(x,y)$,
where $\psi_{\ep}(x, y)=\ln \left(\frac{\cosh (y) + \epsilon \cos (x)}{\sqrt{1-\epsilon^2}}\right)$. Correspondingly, the perturbed  velocity and vorticity can be written as $\vec{u}_\ep+\vec{u}$ and $\tilde\omega=\omega_\ep + \omega$, respectively. Now, the nonlinear vorticity equation \eqref{vor} takes the form
\begin{equation}\label{vorr}
\partial_t \omega + \{\omega_\ep + \omega, \psi_\ep + \psi\} = 0.
\end{equation}
By Proposition 4.4 in \cite{Milisic-Razafison13}, the Green function $G(x,y)$ solving
\begin{align*}
-\Delta \phi=\delta(0,0) \quad \text{on}\quad \Omega
\end{align*}
is
\begin{align}\label{green function}
G(x,y)=-{1\over 4\pi}\ln(\cosh(y)-\cos(x)),
\end{align}
which can also be obtained by   \eqref{catseye}-\eqref{steadyw} for the point vortex case ($\ep=-1$). Note that the total  energy ${1\over2}\iint_{\Omega}|\vec{u}_\ep+\vec{u}|^2dxdy$ is not finite since $\vec{u}_\ep\to(\pm1,0)$ as $y\to \pm \infty$.
Motivated by \cite{Majda-Bertozzi02}, we introduce an alternative bounded  functional called the pseudoenergy:
\begin{align}\label{def-pseudoenergy}
PE(\tilde \omega)={1\over2}\iint_{\Omega}(G\ast\tilde \omega)\tilde \omega dxdy,
\end{align}
where $\tilde\omega\in Y_{non}$ defined in \eqref{def-X-non-ep} and $G\ast\tilde \omega$ is the usual convolution of $G$ and $\tilde \omega$ on $\Omega$.
By Proposition 4.4 in  \cite{Milisic-Razafison13}, $G=G_1+G_2$, where $G_1\in L^1\cap L^2(\Omega)$ and $G_2(x,y)=-{1\over4\pi} |y|$.
Then
\begin{align}\nonumber
|PE(\tilde \omega)|\leq& \left|{1\over2}\iint_{\Omega}(G_1\ast\tilde \omega)\tilde \omega dxdy\right|
+\left|{1\over2}\iint_{\Omega}(G_2\ast\tilde \omega)\tilde \omega dxdy\right|\\\nonumber
\leq &{1\over2}\|G_1\ast\tilde \omega\|_{L^2(\Omega)}\|\tilde \omega\|_{L^2(\Omega)}+{1\over8\pi}\iint_{\Omega}\left(\iint_{\Omega}(|y|+|\tilde y|)\tilde \omega(\tilde x,\tilde y)d\tilde x d\tilde y\right)\tilde \omega(x,y) dxdy\\\label{PE-finite}
\leq&{1\over2}\|G_1\|_{L^1(\Omega)}\|\tilde \omega\|_{L^2(\Omega)}^2+{1\over 4\pi}\|y\tilde\omega\|_{L^1(\Omega)}\|\tilde\omega\|_{L^1(\Omega)}<\infty
\end{align}
for $\tilde\omega\in Y_{non}$.
The relative  pseudoenergy (for the perturbation part) is
\begin{align*}
E_\ep(\omega)=PE(\tilde \omega)-PE(\omega_{\ep})={1\over2}\iint_{\Omega}\left((G\ast\tilde \omega)\tilde \omega -(G\ast\omega_{\ep}) \omega_{\ep}\right) dxdy,
\end{align*}
where $\omega=\tilde \omega-\omega_{\ep}$.
To study the nonlinear stability of $\omega=0$,  we construct a Lyapunov functional for the evolved system \eqref{vorr}. Since  $\omega_\epsilon = g(\psi_\epsilon) = - e^{-2\psi_\epsilon}$, we have $\psi_\epsilon=g^{-1}(\omega_\epsilon)=-{1\over 2}\ln(-\omega_\ep)$.
Define $h(s)={1\over2}(s-s\ln(-s))$ for $s<0$. Then $h'(\omega_\ep)=-{1\over2}\ln(-\omega_\ep)=\psi_\ep$.
Following  Arnol$'$d \cite{Arnold65,Arnold69}, we use the pseudoenergy-Casimir (PEC) functional for the perturbation of vorticity
\begin{align}\nonumber H_\ep(\omega)&= \iint_{\Omega}h(\omega_\ep + \omega )dxdy - E_\ep(\omega)\\\nonumber
&={1\over2}\iint_{\Omega}\left(((\omega_\ep + \omega)-(\omega_\ep + \omega)\ln(-\omega_\ep - \omega))-(G\ast\tilde \omega)\tilde \omega +(G\ast\omega_{\ep}) \omega_{\ep}\right)dxdy.
\end{align}
Then $\omega=0$ is a critical point of $H_\ep$ since
$$H_\ep'(0)= h'(\omega_\epsilon)  - \psi_\epsilon = 0,$$
where $H_\ep'$ is the first variation of the functional $H_\ep$.
The space of the perturbed vorticity is defined  in \eqref{def-X-non-ep}
and the space of vorticity perturbations is denoted by
\begin{align*}
X_{non,\ep}=\{\omega=\tilde \omega-\omega_\ep|\tilde \omega\in Y_{non}\}.
\end{align*}
The  PEC functional is well-defined in $X_{non,\ep}$ since
 $-\tilde \omega\ln(-\tilde \omega)\in L^1(\Omega)$ by Lemma \ref{tilde-omega0-kappa-properties} (8).
Note that the steady state  $\omega_\ep$ is pointwise negative, and  in the analysis of nonlinear stability, we consider the perturbed vorticity in the same fashion.
\if0
It is interesting to study nonlinear stability for  sign-changed  perturbed vorticity and the difficulty is to control the pseudoenergy for the positive part of vorticity by some non-weighted norm of vorticity.
 \fi
We prove in the Appendix the existence of weak solutions to the nonlinear 2D Euler equation with initial vorticity in $Y_{non}$. Now, we prove the existence and uniqueness of weak solutions to the Poisson equation.
\begin{lemma}\label{well-poseness-Poisson-equation-nonlinear-case}
For $\ep\in[0,1)$ and $\omega \in X_{non,\ep}$, the Poisson equation
$$-\Delta \psi = \omega$$
has a unique weak solution in $\tilde{X}_\ep$, which is defined in \eqref{tilde-X0} for $\ep=0$ and \eqref{tilde-X-e} for $\ep\in(0,1)$.
\end{lemma}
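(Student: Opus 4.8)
The plan is to produce $\psi$ by the Riesz representation theorem on the Hilbert space $\tilde X_\ep$ (a Hilbert space under the inner product $(\psi_1,\psi_2)=\iint_\Omega\nabla\psi_1\cdot\nabla\psi_2\,dxdy$ by Lemma \ref{Hilbert} when $\ep=0$ and by Lemma \ref{hilbert-ep} when $\ep\in(0,1)$), following the same scheme as in the proof of Lemma \ref{1-1correspond}. The only thing to verify is that the linear functional $\phi\mapsto\iint_\Omega\omega\phi\,dxdy$ is bounded on $\tilde X_\ep$, i.e. $|\iint_\Omega\omega\phi\,dxdy|\le C\|\nabla\phi\|_{L^2(\Omega)}$. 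The subtlety — and the reason one cannot simply invoke Lemmas \ref{1-1correspond} and \ref{1-1correspond-ep} — is that $\omega\in X_{non,\ep}$ does \emph{not} lie in the weighted vorticity space $X_\ep$: from $\tilde\omega\in Y_{non}$ (see \eqref{def-X-non-ep}) and the exponential decay of $\omega_\ep$ one only gets $\omega=\tilde\omega-\omega_\ep\in L^1(\Omega)\cap L^2(\Omega)$, $y\omega\in L^1(\Omega)$ and $\iint_\Omega\omega\,dxdy=0$ (the latter since $\iint_\Omega\tilde\omega\,dxdy=\iint_\Omega\omega_\ep\,dxdy=-4\pi$ by \eqref{perturbed vorticity condition}), whereas $\iint_\Omega|\omega|^2/g'(\psi_\ep)\,dxdy$, which weights by a factor comparable to $e^{2|y|}$ as $y\to\pm\infty$, may be infinite. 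Hence the boundedness of the functional has to be extracted from the structure of $Y_{non}$ rather than from a Poincar\'e inequality in $X_\ep$.

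To carry this out, decompose $\phi=\widehat\phi_0(y)+\phi_{\neq0}(x,y)$ into its zero and nonzero Fourier modes in $x$, and split $\iint_\Omega\omega\phi\,dxdy=2\pi\int_{\mathbb R}\widehat\omega_0(y)\widehat\phi_0(y)\,dy+\iint_\Omega\omega\phi_{\neq0}\,dxdy$, where $\widehat\omega_0(y)=\frac1{2\pi}\int_0^{2\pi}\omega(x,y)\,dx$. By Parseval in $x$ (cf. \eqref{tilde-X0-norm}) every nonzero mode carries a factor $k^2\ge1$, so $\|\phi_{\neq0}\|_{L^2(\Omega)}\le\|\nabla\phi\|_{L^2(\Omega)}$, and the nonzero part is bounded by $\|\omega\|_{L^2(\Omega)}\|\nabla\phi\|_{L^2(\Omega)}$. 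For the zero mode, use $\int_{\mathbb R}\widehat\omega_0\,dy=\frac1{2\pi}\iint_\Omega\omega\,dxdy=0$ to replace $\widehat\phi_0(y)$ by $\widehat\phi_0(y)-\widehat\phi_0(0)$; since $\widehat\phi_0'\in L^2(\mathbb R)$ with $2\pi\|\widehat\phi_0'\|_{L^2(\mathbb R)}^2\le\|\nabla\phi\|_{L^2(\Omega)}^2$ by \eqref{tilde-X0-norm}, one has $|\widehat\phi_0(y)-\widehat\phi_0(0)|\le\|\widehat\phi_0'\|_{L^2(\mathbb R)}|y|^{1/2}$, and combining $|y|^{1/2}\le1+|y|$ with $\widehat\omega_0,\,y\widehat\omega_0\in L^1(\mathbb R)$ (from $\omega,\,y\omega\in L^1(\Omega)$) bounds the zero-mode part by $C\big(\|\omega\|_{L^1(\Omega)}+\|y\omega\|_{L^1(\Omega)}\big)\|\nabla\phi\|_{L^2(\Omega)}$. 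Adding the two estimates gives the claimed bound on the functional.

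With boundedness in hand, the Riesz representation theorem furnishes a unique $\psi\in\tilde X_\ep$ with $\iint_\Omega\nabla\psi\cdot\nabla\phi\,dxdy=\iint_\Omega\omega\phi\,dxdy$ for all $\phi\in\tilde X_\ep$, which is exactly the (unique) weak solution in $\tilde X_\ep$ of $-\Delta\psi=\omega$ in the sense used in Lemma \ref{1-1correspond}; uniqueness is immediate since $\iint_\Omega\nabla\cdot\,\cdot\,\nabla\cdot\,$ is the inner product of $\tilde X_\ep$. The main obstacle is precisely the zero-mode estimate: $\widehat\phi_0$ is allowed to grow like $|y|^{1/2}$ at infinity, and this is compensated exactly by the weighted integrability $y\omega\in L^1(\Omega)$ built into $Y_{non}$, but only after the mean-zero condition $\iint_\Omega\omega\,dxdy=0$ has been used to center $\widehat\phi_0$; the nonzero modes and the behaviour near $y=0$ are routine.
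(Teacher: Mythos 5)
Your proposal is correct and follows essentially the same route as the paper: split $\phi$ into its zero mode $\widehat\phi_0$ and the nonzero modes, use $\iint_\Omega\omega\,dxdy=0$ to center $\widehat\phi_0$ and control the zero-mode pairing by $\|\widehat\phi_0'\|_{L^2}\sqrt{|y|}$ together with $\omega,\,y\omega\in L^1(\Omega)$, bound the nonzero-mode pairing by $\|\omega\|_{L^2}\|\phi_{\neq0}\|_{L^2}\le C\|\omega\|_{L^2}\|\nabla\phi\|_{L^2}$, and conclude by Riesz representation in the Hilbert space $\tilde X_\ep$. The only (immaterial) difference is that you estimate the weight via $\sqrt{|y|}\le 1+|y|$, whereas the paper uses Cauchy--Schwarz to get $\|y\tilde\omega\|_{L^1}^{1/2}\|\tilde\omega\|_{L^1}^{1/2}$.
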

\begin{proof}
For $\phi \in \tilde{X}_\ep$, similar to \eqref{psi-m-dec} we split it into the shear part  $ \widehat\phi_0 $ and the non-shear part $\phi_{\neq0}$. Then
$\|\widehat{\phi}_0\|_{\dot{H}^1(\mathbb{R})}\leq \|\phi\|_{\tilde{X}_\ep}$ and   $\|\phi_{\neq0}\|_{H^1(\Omega)}\leq C\|\phi_{\neq0}\|_{\tilde{X}_\ep}.$
Since $\iint_{\Omega}\omega dxdy=0$, we have
\begin{align*}\iint_{\Omega} \omega\widehat{\phi}_0 dxdy& = \iint_{\Omega} \omega \left(\widehat{\phi}_0(y)-\widehat{\phi}_0(0)\right) dxdy \leq
\|\phi\|_{\tilde{X}_\ep}\iint_{\Omega} |\omega| \sqrt{|y|} dxdy\\
&\leq\|\phi\|_{\tilde{X}_\ep}\left(\iint_\Omega|\omega_\ep|\sqrt{|y|}dxdy+\|y\tilde \omega \|_{L^1(\Omega)}^{1\over2}\|\tilde \omega\|_{L^1(\Omega)}^{1\over2}\right)
\leq C\|\phi\|_{\tilde{X}_\ep}
\end{align*}
and
\begin{align*}
\iint_{\Omega} \omega \phi dxdy
& = \iint_{\Omega} \omega \widehat\phi_0 dxdy + \iint_{\Omega} \omega \phi_{\neq0} dxdy \\
& \leq C\|\phi\|_{\tilde{X}_\ep} + \|\omega\|_{L^2(\Omega)}  \|\phi_{\neq0}\|_{L^2(\Omega)}\leq C\|\phi\|_{\tilde{X}_\ep}.
\end{align*}
By the Riesz Representation Theorem, there exists a unique $\psi \in \tilde{X}_\ep$ such that
$$\iint_{\Omega} \omega \phi dxdy = \iint_{\Omega}\nabla \psi \cdot \nabla\phi dxdy,\quad \phi \in \tilde{X}_\ep.$$
\end{proof}

For $\omega=\tilde \omega-\omega_{\ep}$, we give the relation between $G\ast\omega$ and the weak solution $\psi$  in Lemma \ref{well-poseness-Poisson-equation-nonlinear-case}.
\begin{lemma}\label{G-ast-omega-psi-constant}
$G\ast\omega-\psi$ is a constant for $\omega=\tilde \omega-\omega_{\ep}$, where $\ep\in[0,1)$, $\tilde \omega\in Y_{non}$ and $\psi\in\tilde{X}_\ep$ is the weak solution of $-\Delta \psi = \omega$.
\end{lemma}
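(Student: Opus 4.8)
The plan is to show that $\phi:=G\ast\omega-\psi$ is a harmonic function on $\Omega$ with at most sublinear growth in $y$, and then to conclude that it is constant by a Fourier argument on the cylinder $\mathbb T_{2\pi}\times\mathbb R$.

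First I would record the integrability of $\omega$. Since $\tilde\omega\in Y_{non}$ and $\omega_\ep$ is smooth with $\omega_\ep,\,y\omega_\ep\in L^1(\Omega)\cap L^2(\Omega)$, the perturbation $\omega=\tilde\omega-\omega_\ep$ satisfies $\omega\in L^1(\Omega)\cap L^2(\Omega)$, $y\omega\in L^1(\Omega)$ and $\iint_\Omega\omega\,dxdy=0$. Using the splitting $G=G_1+G_2$ from Proposition 4.4 of \cite{Milisic-Razafison13}, with $G_1\in L^1(\Omega)\cap L^2(\Omega)$ and $G_2(x,y)=-\frac1{4\pi}|y|$, I would check that $G\ast\omega$ is well defined and bounded: Young's inequality gives $G_1\ast\omega\in L^2(\Omega)\cap L^\infty(\Omega)$ (use $G_1\in L^2$, $\omega\in L^2$ for the $L^\infty$ bound and $G_1\in L^1$, $\omega\in L^2$ for the $L^2$ bound), while $G_2\ast\omega$ depends only on $y$, namely $(G_2\ast\omega)(x,y)=-\frac1{4\pi}\int_{\mathbb R}|y-\tilde y|\,m(\tilde y)\,d\tilde y$ with $m(\tilde y)=\int_0^{2\pi}\omega(\tilde x,\tilde y)\,d\tilde x$; since $\int_{\mathbb R}m=0$ and $\int_{\mathbb R}|\tilde y|\,|m(\tilde y)|\,d\tilde y<\infty$, writing $|y-\tilde y|=\bigl(|y-\tilde y|-|y|\bigr)+|y|$ and using $\bigl||y-\tilde y|-|y|\bigr|\le|\tilde y|$ shows $G_2\ast\omega\in L^\infty(\mathbb R)$. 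Hence $G\ast\omega\in L^\infty(\Omega)$.

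Next I would verify that both $G\ast\omega$ and $\psi$ solve $-\Delta(\cdot)=\omega$ in $\mathcal D'(\Omega)$. For $G\ast\omega$ this is $-\Delta G=\delta$; for $\psi$ one tests the weak formulation of Lemma \ref{well-poseness-Poisson-equation-nonlinear-case} against $\eta-c$ for $\eta\in C_c^\infty(\Omega)$ with $c$ the constant making $\eta-c\in\tilde X_\ep$, using $\iint_\Omega\omega=0$ to discard the constant. Therefore $\phi=G\ast\omega-\psi$ is a distribution with $\Delta\phi=0$, so by Weyl's lemma $\phi$ is a smooth harmonic function. Expanding $\phi(x,y)=\sum_{k\in\mathbb Z}\hat\phi_k(y)e^{ikx}$, each mode satisfies $\hat\phi_k''=k^2\hat\phi_k$. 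For $k\neq0$ the term $G_2\ast\omega$ contributes nothing, so $\hat\phi_k=\widehat{(G_1\ast\omega)}_k-\hat\psi_k$: the first term lies in $L^2(\mathbb R)$ because $G_1\ast\omega\in L^2(\Omega)$, and the second lies in $H^1(\mathbb R)$ because $\nabla\psi\in L^2(\Omega)$, so $\hat\phi_k\in L^2(\mathbb R)$, which together with $\hat\phi_k''=k^2\hat\phi_k$ forces $\hat\phi_k\equiv0$. For $k=0$, $\hat\phi_0$ is affine, say $\hat\phi_0(y)=a+by$; but $\widehat{(G\ast\omega)}_0$ is bounded in $y$ (it is the zeroth Fourier coefficient of an $L^\infty(\Omega)$ function), while $\hat\psi_0(y)=\hat\psi_0(0)+\int_0^y\hat\psi_0'$ with $\hat\psi_0'\in L^2(\mathbb R)$ grows at most like $|y|^{1/2}$, so $\hat\phi_0(y)=o(|y|)$ and hence $b=0$. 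Thus $\phi\equiv a$ is constant, which is the claim; the argument is uniform in $\ep\in[0,1)$, since only $\nabla\psi\in L^2(\Omega)$ (not the precise normalization built into $\tilde X_\ep$, i.e. \eqref{tilde-X0} and \eqref{tilde-X-e}) enters.

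The main obstacle is the sublinear-growth control of $G\ast\omega$: the kernel $G$ (see \eqref{green function}) decays only like $-\frac1{4\pi}|y|$, so boundedness of $G\ast\omega$ genuinely relies on the cancellation $\iint_\Omega\omega=0$ together with the moment bound $y\omega\in L^1(\Omega)$, and one must be careful that these hold for $\omega=\tilde\omega-\omega_\ep$ and not merely for $\tilde\omega$. The remaining steps — the two distributional Laplacian identities and the Liouville-type argument on the cylinder — are then routine.
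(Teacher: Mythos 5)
Your proof is correct and follows essentially the same route as the paper's: split $G=G_1+G_2$, control the growth of $G\ast\omega$ in $y$ using $\iint_\Omega\omega\,dxdy=0$ and $y\omega\in L^1(\Omega)$, note the growth bounds on $\psi\in\tilde X_\ep$, and conclude from harmonicity of $G\ast\omega-\psi$ on the cylinder that all exponential modes and the linear term must vanish. The only (harmless) difference is that your cancellation argument with $\bigl||y-\tilde y|-|y|\bigr|\le|\tilde y|$ yields full boundedness of $G_2\ast\omega$, slightly sharper than the paper's $\frac{\kappa}{2\pi}|y|+C$ sublinear bound, which is all either argument needs.
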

\begin{proof}
Since $G=G_1+G_2$, $G_1\in L^1\cap L^2(\Omega)$ and $G_2(x,y)=-{1\over4\pi} |y|$, we have
\begin{align}\label{G-omega-conv}
|(G\ast\omega)(x, y)|\leq \|G_1\|_{L^2(\Omega)}\|\omega\|_{L^2(\Omega)}+{1\over4\pi}\left|\iint_{\Omega}|y-\tilde y|(\tilde\omega-\omega_{\ep})(\tilde x,\tilde y)d\tilde xd\tilde y\right|.
\end{align}
Let $B_R=\{x\in\mathbb{T}_{2\pi},y\in[-R,R]\}$. Note that $\iint_{\Omega}(\tilde \omega-\omega_{\ep})dxdy=0$ and $\tilde \omega-\omega_{\ep}\in L^1(\Omega)$. For any $\kappa>0$, there exists $R_{\kappa}>0$ such that
\begin{align*}
\left|\iint_{B_{R_\kappa}}(\tilde\omega-\omega_{\ep})dxdy\right|<\kappa\quad\text{and}\quad
\iint_{B_{R_\kappa}^c}|\tilde\omega-\omega_{\ep}|dxdy<\kappa.
\end{align*}
Thus, for $| y|>R_{\kappa}$, we have
\begin{align}\nonumber
&\left|\iint_{\Omega}|y-\tilde y|(\tilde\omega-\omega_{\ep})(\tilde x,\tilde y)d\tilde xd\tilde y\right|\\\nonumber
\leq&\left|\iint_{B_{R_\kappa}}(y-\tilde y)(\tilde\omega-\omega_{\ep})(\tilde x,\tilde y)d\tilde xd\tilde y\right|+\iint_{B_{R_\kappa}^c}|y-\tilde y||(\tilde\omega-\omega_{\ep})(\tilde x,\tilde y)|d\tilde xd\tilde y\\\nonumber
\leq &\kappa| y|+\|y(\tilde\omega-\omega_{\ep})\|_{L^1(B_{R_\kappa})} +\kappa| y|+\|y(\tilde\omega-\omega_{\ep})\|_{L^1(B_{R_\kappa}^c)}\\\label{G-omega-conv2}
\leq &2\kappa | y|+C.
\end{align}
Combining \eqref{G-omega-conv} and \eqref{G-omega-conv2}, we have for $| y|>R_{\kappa}$,
\begin{align}\label{G-omega-conv-sum}
|(G\ast\omega)( x, y)|\leq {\kappa\over2\pi} | y|+C.
\end{align}
Since $\psi=\widehat\psi_0+\psi_{\neq0}\in \tilde X_{\ep}$, we have
\begin{align}\label{widehat-psi-property}
|\widehat\psi_0(y)|\leq \|\widehat\psi_0'\|_{L^2(\mathbb{R})}|y|^{1\over2}+|\widehat\psi_0(0)|\leq C|y|^{1\over2}+C \text{ and }\psi_{\neq0}\in  H^1(\Omega),
\end{align}
where $\widehat\psi_0$ and $\psi_{\neq0}$  are the shear part  and  the non-shear part of $\psi$, respectively.
 Since $
-\Delta(G\ast \omega-\psi)=0$, we have $G\ast \omega-\psi=\sum_{j\neq0} e^{ijx}(d_{1j}e^{jy}+d_{2j}e^{-jy})+c_1y+c_2$, where  $d_{1j}, d_{2j}, c_1, c_2\in \mathbb{R}$ for $j\neq0$. By \eqref{G-omega-conv-sum}-\eqref{widehat-psi-property}, $d_{1j}, d_{2j}, c_1=0$ for $j\neq0$, and thus, $G\ast \omega-\psi=c_2$.
\end{proof}

Note that $\lim_{y\to\pm\infty}\partial_y\psi_{\ep}(x,y)=\pm1$ for fixed $x\in \mathbb{T}_{2\pi}$.
By a similar argument to \eqref{v-mu term1}, we have $\lim_{y\to\pm\infty}(\partial_y G* \omega_\ep)(x,y)=\pm1$ for fixed $x\in \mathbb{T}_{2\pi}$, and thus,
$G\ast\omega_{\ep}-\psi_{\ep}$ is a constant.
Since $\iint_{\Omega}(G\ast\omega_{\ep}) \tilde \omega dxdy=\iint_{\Omega}(G\ast\tilde \omega) \omega_{\ep} dxdy$, by Lemma \ref{G-ast-omega-psi-constant} we have
\begin{align*}
E_\ep(\omega)=&PE(\tilde \omega)-PE(\omega_{\ep})={1\over2}\iint_{\Omega}\left((G\ast\tilde \omega)\tilde \omega -(G\ast\omega_{\ep}) \omega_{\ep}\right) dxdy\\
=&{1\over2} \iint_{\Omega}\left((G\ast\tilde \omega)\tilde \omega -(G\ast\omega_{\ep}) \tilde \omega \right) dxdy+{1\over2} \iint_{\Omega}(G\ast\omega_{\ep}) (\tilde \omega-\omega_{\ep}) dxdy\\
=&{1\over2} \iint_{\Omega}(G\ast\tilde \omega)(\tilde \omega-\omega_{\ep}) dxdy+{1\over2} \iint_{\Omega}\psi_{\ep}  \omega dxdy\\
=&{1\over2} \iint_{\Omega}(\psi_{\ep}+\psi)\omega dxdy+{1\over2} \iint_{\Omega}\psi_{\ep}  \omega dxdy= \iint_{\Omega}\psi_{\ep}\omega dxdy+{1\over2}\iint_{\Omega}|\nabla \psi|^2dxdy,
\end{align*}
where we used $\iint_{\Omega}\omega dxdy=0$, $\omega=\tilde \omega-\omega_{\ep}$ and $\psi$ is the weak solution of $-\Delta \psi = \omega$ in $\tilde{X}_\ep$.

\if0
\begin{lemma}
$$\frac {\delta^2 H(\omega)}{\delta^2 \omega} |_{\omega = 0} = L_\epsilon = \frac 1 {g' (\psi_\epsilon ) } - (-\Delta)^{-1}.$$
\end{lemma}
\begin{proof}
Since
\begin{align*}
H(\omega+ \lambda \delta \omega )
&= \iint_{\Omega}  h(\omega + \omega_\ep + \lambda \delta \omega)dxdy  - E(\omega + \lambda \delta \omega) \\
& = \iint_{\Omega}  h(\omega + \omega_\ep + \lambda \delta \omega) -\frac 1 2 (\delta + \lambda \delta \psi) (\omega + \lambda \delta \omega) - \psi_\ep(\omega + \lambda \delta \omega) dxdy,
\end{align*}
we have
\begin{align*}
 \frac {d^2}{d\lambda ^2} H(\omega + \lambda \delta \omega )|_{\lambda = 0}
& = \iint_{\Omega} h'' (\omega + \omega_\ep) (\delta \omega)^2 - \delta \psi \delta \omega dxdy\\
& = \iint_{\Omega} -\frac {1} {2(\omega +\omega_\epsilon)} (\delta \omega)^2 - \delta \psi \delta \omega dxdy \\
& = \iint_{\Omega}( -\frac {1} {2(\omega +\omega_\epsilon)} - (-\Delta)^{-1}) \delta \omega \cdot \delta \omega dxdy \\
& = \iint_{\Omega} \frac {\delta^2 H(\omega)}{\delta^2 \omega} \delta \omega \cdot \delta \omega dxdy \\
\end{align*}
with $\frac {\delta^2 H(\omega)}{\delta^2 \omega} = -\frac {1} {2(\omega +\omega_\epsilon)} - (-\Delta)^{-1}$ which is continuous at $\omega = 0$ and
$$\frac {\delta^2 H(\omega)}{\delta^2 \omega} |_{\omega = 0} = \frac 1 {g' (\psi_\epsilon ) } - (-\Delta)^{-1} = L_\ep.$$
\end{proof}

 \begin{lemma}\label{JLw0}
 $$span \{\frac {\partial \omega_\epsilon }{\partial x} , \frac {\partial \omega_\epsilon }{\partial y} , \frac {\partial \omega_\epsilon }{\partial\epsilon} \} \subseteq \ker( L_\epsilon).$$
 \end{lemma}
\begin{proof}
Since $$\frac {\delta H(\omega)}{\delta \omega} |_{\omega = 0} = h'(\omega_\ep) - \psi_\ep= 0,$$ we know
\begin{equation}\label{Lw}
\frac {\partial}{\partial x}\frac {\delta H(\omega)}{\delta \omega} |_{\omega = 0} = h''(\omega_\ep)\frac {\partial \omega_\epsilon }{\partial x} -  \frac {\partial \psi_\epsilon }{\partial x}=  ( h''(\omega_\ep) - (-\Delta)^{-1} )\frac {\partial \omega_\epsilon }{\partial x}= L_\epsilon \frac {\partial \omega_\epsilon }{\partial x} = 0.
\end{equation}
Similarly we have $L_\epsilon \frac {\partial \omega_\epsilon }{\partial y} = 0$ and $L_\epsilon \frac {\partial\omega_\epsilon }{\partial\epsilon} = 0$.
\end{proof}

With $$h(\omega_\epsilon) = \frac 1 2 \left[ \omega_\epsilon - \omega_\epsilon \ln(-\omega_\epsilon) \right],$$
\begin{align*}
h'(\omega_\epsilon) = \psi_\epsilon = -\frac 1 2 \ln(-\omega_\epsilon),
\end{align*}
$$H(\omega) = \iint_{\Omega}h(\omega_\ep + \omega )dxdy - E(\omega) = \iint_{\Omega}h(\omega_\ep + \omega ) - \frac 1 2\psi \omega - \psi_\ep \omega dxdy,$$
\fi
Since $h'(\omega_\ep)=\psi_\ep$,  we have
\begin{align*}
H_\ep(\omega) - H_\ep(0)
& =  \iint_{\Omega} f_{\omega_\ep}(\omega) dxdy-\frac 1 2\iint_\Omega  |\nabla\psi|^2 dxdy,
\end{align*}
where
\begin{align*}
f_{\omega_\ep}(\omega) = h(\omega_\ep + \omega ) - h(\omega_\ep)  - \psi_\ep \omega
\end{align*}
for  $\omega \in X_{non,\ep}$.
Define the distance functionals
\begin{align}\nonumber
d_{1}(\tilde\omega,\omega_\ep)&=\iint_{\Omega}f_{\omega_\ep}(\omega)dxdy,\quad d_2(\tilde\omega,\omega_\ep)=\iint_{\Omega}(G*\omega)\omega dxdy=\iint_{\Omega}|\nabla\psi|^2dxdy,\\\label{distance-euler case}
d(\tilde\omega,\omega_\ep)&=d_{1}(\tilde\omega,\omega_\ep)+d_{2}(\tilde\omega,\omega_\ep),
\end{align}
where $\tilde\omega\in Y_{non}$ is the perturbed vorticity. By Lemma \ref{well-poseness-Poisson-equation-nonlinear-case},  $d_2(\tilde\omega,\omega_\ep)$ is well-defined for $\tilde\omega\in Y_{non}$.
By Lemma \ref{tilde-omega0-kappa-properties} (7), we have $\psi_\ep\tilde \omega\in{L^1(\Omega)}$ for $\tilde\omega\in Y_{non}$, and thus,
 by Taylor's formula we have
\begin{align}\nonumber
0\leq&\int_0^1\iint_\Omega{(1-r)\big(\tilde \omega-\omega_{\ep}\big)^2\over 2|\omega^r|}dxdydr=d_1(\tilde \omega,\omega_{\ep})\\\nonumber
=&\iint_{\Omega}\left({1\over2}(\tilde \omega-\tilde \omega\ln(-\tilde \omega))-{1\over 2}\omega_\ep-\psi_\ep\tilde \omega \right) dxdy\\
\leq&\|\tilde \omega\|_{L^1(\Omega)}+\|\tilde \omega\|_{L^2(\Omega)}^2+\|\omega_\ep\|_{L^1(\Omega)}+\|\psi_\ep\tilde \omega\|_{L^1(\Omega)}<\infty,\label{d1-well-def}
\end{align}
where $\omega^r=r\tilde \omega+(1-r)\omega_{\ep}
$ for $r\in[0,1]$. Here, we used $s\ln s\leq s^2$ for $s>0$. Thus, $d_1(\tilde\omega,\omega_\ep)$ is well-defined for $\tilde\omega\in Y_{non}$.
\begin{remark}
 For $a\in[1,2)$ and $\tilde \omega\in Y_{non}$, we have
 \begin{align}\label{La-control-by-d}
 \|\tilde \omega-\omega_{\ep}\|_{L^a(\Omega)}\leq (3\sqrt{2\pi})^{{2\over a}-1}(\|\tilde \omega\|_{L^2(\Omega)}+\|\omega_{\ep}\|_{L^2(\Omega)})^{2-{2\over a}}d_1(\tilde\omega,\omega_\ep)^{{1\over a}-{1\over2}}.
 \end{align}
 For $\tilde \omega\in Y_{non}\cap L^3(\Omega)$, we have
 \begin{align}\nonumber
 &\|\tilde \omega-\omega_{\ep}\|_{L^2(\Omega)}\\\label{L2-control-by-d}
 \leq& {\sqrt{6}\over2}\big(\|\tilde \omega\|_{L^3(\Omega)}^3+\|\omega_\ep\|_{L^\infty(\Omega)}\|\tilde \omega\|_{L^2(\Omega)}^2 +\|\omega_\ep\|_{L^\infty(\Omega)}^2\|\tilde \omega\|_{L^1(\Omega)}+\|\omega_\ep\|_{L^3(\Omega)}^3\big)^{1\over4} d_1(\tilde\omega,\omega_\ep)^{{1\over4}}.
 \end{align}
  In fact, for $a=1$ and $\tilde \omega\in Y_{non}$, we have
\begin{align}\nonumber
&\|\tilde \omega-\omega_{\ep}\|_{L^1(\Omega)}={3\over2}\int_0^1\iint_{\Omega}\sqrt{1-r}|\tilde \omega-\omega_{\ep}|dxdydr\\\nonumber
\leq&{3\over2}\left(\int_0^1\iint_\Omega{(1-r)\big|\tilde \omega-\omega_{\ep}\big|^2\over 2|\omega^r|}dxdydr\right)^{1\over2}\left(\int_0^1\iint_\Omega2|\omega^r| dxdydr\right)^{1\over2}\\\label{L1-control-by-d-pf}
=&3\sqrt{2\pi}d_1(\tilde\omega,\omega_\ep)^{1\over2}.
\end{align}
For $a\in(1,2)$ and $\tilde \omega\in Y_{non}$, we have
\begin{align*}
&\|\tilde \omega-\omega_{\ep}\|_{L^a(\Omega)}\leq \|\tilde \omega-\omega_{\ep}\|_{L^1(\Omega)}^{{2\over a}-1}\|\tilde \omega-\omega_{\ep}\|_{L^2(\Omega)}^{2-{2\over a}},
\end{align*}
which, combined with \eqref{L1-control-by-d-pf}, yields \eqref{La-control-by-d}.
For $\tilde \omega\in Y_{non}\cap L^3(\Omega)$,  by a similar argument to \eqref{L1-control-by-d-pf}, we have
 \begin{align}\label{vorticityL2-d12}
 &\|\tilde \omega-\omega_{\ep}\|_{L^2(\Omega)}^2\leq{3\over2}\left(\iint_\Omega|\tilde \omega^3-\tilde\omega^2 \omega_\ep-\tilde \omega\omega_\ep^2+\omega_\ep^3|dxdy\right)^{1\over2} d_1(\tilde\omega,\omega_\ep)^{{1\over2}},
 \end{align}
which gives \eqref{L2-control-by-d}.
\end{remark}

\subsection{The dual functional and its regularity}
We   try to study the Taylor expansion of $H_\ep$ near $\omega=0$ directly, and use the positiveness of  $L_\ep$ in a finite co-dimensional subspace of $X_\ep$. However,
 $\|\omega\|_{L^3}$ cannot be controlled by $\|\omega\|_{L_{1\over g'(\psi_\ep)}^2}$ in general. Our approach is  to transform $H_\ep$ to its dual functional and then study the Taylor expansion of the dual functional.
We observe that
\begin{align}\nonumber&H_\ep(\omega) - H_\ep(0)
 = d_{1}(\tilde\omega,\omega_\ep)-{1\over2}d_2(\tilde\omega,\omega_\ep)\\\nonumber
 =&{1\over2}\iint_{\Omega}|\nabla\psi|^2dxdy-\iint_{\Omega}(\psi\omega-f_{\omega_\ep}(\omega))dxdy\\\label{H-omega-H-0}
 \geq &  \iint_{\Omega} \left(\frac 1 2 |\nabla \psi|^2 - f_{\omega_\ep}^*(\psi)\right) dxdy
 \end{align}
 for $\omega \in X_{non,\ep}$, where $f_{\omega_\ep}^*$ is the Legendre transformation of $f_{\omega_\ep}$.
This gives  a lower bound of $d_{1}(\tilde\omega,\omega_\ep)-{1\over2}d_2(\tilde\omega,\omega_\ep)$. Then  we compute the pointwise expression of  $f_{\omega_\ep}^*$.
\begin{lemma}\label{Legendre transformation}
Let $\ep\in[0,1)$, $(x,y)\in\Omega$ and  $f_{\omega_\ep(x,y)}(z)= h(\omega_\ep(x,y) + z ) - h(\omega_\ep(x,y))  -  h'(\omega_\ep(x,y))z $ for  $z\in(-\infty,-\omega_\ep(x,y))$.
Then
 the Legendre transformation of $f_{\omega_\ep(x,y)}$ is
\begin{align*}
f_{\omega_\ep(x,y)}^*(s)= - \frac 1 2 \omega_\ep(x,y) (e^{-2s} + 2s - 1),\quad s\in\mathbb{R}.
\end{align*}
\end{lemma}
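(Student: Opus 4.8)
The plan is to reduce the claim to an elementary pointwise computation and simply carry out the Legendre transform explicitly. Fix $(x,y)\in\Omega$ and abbreviate $a=\omega_\ep(x,y)$, which by \eqref{steadyw} is a fixed negative real number. Recall that $h(w)=\tfrac12(w-w\ln(-w))$ for $w<0$, so $h'(w)=-\tfrac12\ln(-w)$ and $h''(w)=-\tfrac1{2w}>0$. Consequently $f_a(z):=h(a+z)-h(a)-h'(a)z$ is strictly convex on the interval $z\in(-\infty,-a)$, with $f_a(0)=0$, $f_a'(0)=0$, and $f_a'(z)=h'(a+z)-h'(a)$. Since $h'(a+z)\to-\infty$ as $z\to-\infty$ and $h'(a+z)\to+\infty$ as $a+z\to0^-$, the map $f_a'$ is a strictly increasing bijection from $(-\infty,-a)$ onto $\mathbb{R}$, so the supremum defining $f_a^*(s)=\sup_{z\in(-\infty,-a)}\bigl(sz-f_a(z)\bigr)$ is attained at the unique critical point $z=z(s)$ determined by $s=f_a'(z)$.

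First I would solve the critical point equation in closed form. From $s=-\tfrac12\ln\bigl(-(a+z)\bigr)+\tfrac12\ln(-a)$ one gets $\ln\bigl(-(a+z)\bigr)=\ln(-a)-2s$, hence $a+z=ae^{-2s}$, that is,
\[
z(s)=a\bigl(e^{-2s}-1\bigr),
\]
and one checks $a+z(s)=ae^{-2s}<0$, so $z(s)$ lies in the admissible interval.

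Then I would substitute $z(s)$ into $f_a^*(s)=sz(s)-f_a(z(s))$ and simplify. Using $a+z(s)=ae^{-2s}$ and $\ln(-ae^{-2s})=\ln(-a)-2s$, a short computation gives $h(ae^{-2s})=\tfrac12 ae^{-2s}(1-\ln(-a)+2s)$, $h(a)=\tfrac12 a(1-\ln(-a))$, and $h'(a)z(s)=-\tfrac12\ln(-a)\,a(e^{-2s}-1)$; the $\ln(-a)$ terms cancel, leaving $f_a(z(s))=\tfrac12 a\bigl(e^{-2s}+2se^{-2s}-1\bigr)$. Combining this with $sz(s)=a\bigl(se^{-2s}-s\bigr)$ yields
\[
f_a^*(s)=sz(s)-f_a(z(s))=-as-\tfrac12 ae^{-2s}+\tfrac12 a=-\tfrac12 a\bigl(e^{-2s}+2s-1\bigr),
\]
which is exactly $-\tfrac12\omega_\ep(x,y)(e^{-2s}+2s-1)$, as claimed. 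There is essentially no serious obstacle here: the only points needing (mild) care are verifying that the critical point $z(s)$ stays inside $(-\infty,-\omega_\ep(x,y))$ and bookkeeping the logarithmic terms in the algebra, while strict convexity of $h$ (hence of $f_a$) is what guarantees that the Legendre transform is obtained at the critical-point value rather than only as a supremum.
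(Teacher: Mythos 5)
Your proposal is correct and follows essentially the same route as the paper: both locate the unique critical point $z(s)=\omega_\ep(e^{-2s}-1)$ of $z\mapsto sz-f_{\omega_\ep}(z)$, check it lies in $(-\infty,-\omega_\ep)$, justify attainment via the strict concavity of this map (equivalently, strict convexity of $f_{\omega_\ep}$, as you phrase it), and substitute back to obtain $f_{\omega_\ep}^*(s)=-\tfrac12\omega_\ep(e^{-2s}+2s-1)$. The algebra in your computation is correct and matches the paper's.
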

\begin{proof}
By its definition of  the Legendre transformation, $f_{\omega_\ep(x,y)}^*(s)=\sup\limits_{z<-\omega_\ep(x,y)}(sz-f_{\omega_\ep(x,y)}(z)),$ $ s\in\mathbb{R}$.
Let $F_{\omega_\ep(x,y),s}(z)=sz-f_{\omega_\ep(x,y)}(z)$ for $z<-\omega_\ep(x,y)$. Then
\begin{align*}
F_{\omega_\ep(x,y),s}'(z)= s - h'(\omega_\ep(x,y) +z) +h'(\omega_\ep(x,y)) =s+{1\over2}\ln|\omega_\ep(x,y)+z|+\psi_\ep(x,y).
\end{align*}
Thus, there exists a unique $z_{\omega_\ep(x,y)}(s)\triangleq\omega_\ep(x,y)(e^{-2s}-1)\in(-\infty,-\omega_\ep(x,y))$ such that $F_{\omega_\ep(x,y),s}'(z_{\omega_\ep(x,y)}(s))=0$ and $F_{\omega_\ep(x,y),s}''(z)={1\over 2(\omega_\ep(x,y)+z)}<0$ for $z\in(-\infty,-\omega_\ep(x,y))$, which implies
\begin{align*}
&f_{\omega_\ep(x,y)}^*(s)=F_{\omega_\ep(x,y),s}(z_{\omega_\ep(x,y)}(s))\\
 =& (s+\psi_\ep(x,y)) \omega_\ep(x,y)(e^{-2s}-1) - h(\omega_\ep(x,y)e^{-2s} ) + h(\omega_\ep)  \\
 =& - \frac 1 2 \omega_\ep(x,y) (e^{-2s} + 2s - 1),\quad s\in\mathbb{R}.
\end{align*}
\end{proof}
By \eqref{H-omega-H-0} and Lemma \ref{Legendre transformation}, we have
\begin{align*}
 d_{1}(\tilde\omega,\omega_\ep)-{1\over2}d_2(\tilde\omega,\omega_\ep)
 \geq &  \iint_{\Omega} \left(\frac 1 2 |\nabla \psi|^2 +\frac 1 2 \omega_\ep (e^{-2\psi} + 2\psi - 1)\right) dxdy.
 \end{align*}
To apply the Taylor formula of the  functional
\begin{align}\nonumber
\mathscr{B}_\ep( \psi)\triangleq &  \iint_{\Omega} \left(\frac 1 2 |\nabla \psi|^2 +\frac 1 2 \omega_\ep (e^{-2\psi} + 2\psi - 1)\right) dxdy\\\label{def-functional-B}
=  & \iint_{\Omega} \left(\frac 1 2 |\nabla \psi|^2 -\frac 1 4 g'(\psi_\ep)(e^{-2\psi} + 2\psi - 1)\right) dxdy,\quad \psi\in \tilde X_\ep,
\end{align}
we first study its regularity. To this end, we need the following inequalities.
\if0
For  $\omega \in  X_{non,\ep}$,
$$d_1(\omega,0)-{1\over2}d_2(\omega,0) \geq  \iint_{\Omega}\left(\frac 1 2 |\nabla\psi|^2- f^*(\psi) \right)dxdy.$$

Let $d_1(\omega) = \iint_{\Omega}f(\omega) dxdy$ and $d_2(\omega) = \iint_{\Omega} \psi \omega dxdy$, then for any $\tau \in [0, 1]$, we have
\begin{align*}
H(\omega) - H(0)
&= \frac 1 2  \iint_{\Omega} |\nabla \psi|^2 dxdy - \iint_{\Omega} \psi \omega - f(\omega) dxdy \\
&= \tau \left(\frac 1 2  \iint_{\Omega} |\nabla \psi|^2 dxdy - \iint_{\Omega} \psi \omega - f(\omega) dxdy \right) \\
&\qquad \qquad \qquad +  (1- \tau) \left(\frac 1 2  \iint_{\Omega} |\nabla \psi|^2 dxdy - \iint_{\Omega} \psi \omega - f(\omega) dxdy \right)\\
&\geq \tau \left(\frac 1 2  \iint_{\Omega} |\nabla \psi|^2 dxdy - \iint_{\Omega} \psi \omega - f(\omega) dxdy \right) \\
&\qquad \qquad \qquad  + (1-\tau)\left(\frac 1 2  \iint_{\Omega} |\nabla \psi|^2 dxdy - \iint_{\Omega} f^*(\psi) dxdy \right) \\
& = \tau \left(\frac 1 2  \iint_{\Omega} |\nabla \psi|^2 dxdy - \iint_{\Omega} \psi \omega - f(\omega) dxdy \right) + (1-\tau)B(\psi)\\
& = \tau\left(d_1(\omega) - \frac 1 2 d_2(\omega)\right) + (1-\tau)B(\psi)
\end{align*}
with
\begin{align}\label{funcB}
\begin{split}
B(\psi) &= \frac 1 2  \iint_{\Omega} |\nabla \psi|^2 dxdy - \iint_{\Omega} f^*(\psi) dxdy \\
& = \frac 1 2 \iint_\Omega \psi\omega + \omega_\ep ( e^{-2\psi} + 2\psi - 1) dxdy \\
& =  \frac 1 2 \iint_\Omega \psi\omega - \frac 1 2 g'(\psi_\ep) ( e^{-2\psi} + 2\psi - 1) dxdy.
\end{split}
\end{align}
for $\psi \in \tilde{X}_\ep$.
\fi
\begin{lemma}\label{Orlicz-type inequlity-lemma}
For $\ep\in[0,1)$ and $a\in\mathbb{R}$, we have
\begin{align}\label{Orlicz-type inequlity}
\iint_\Omega g'(\psi_\ep) e^ {a\psi} dxdy \leq\iint_\Omega g'(\psi_\ep) e^ {|a\psi|} dxdy \leq   C e^{Ca^2\|\psi\|_{\tilde{X}_\ep}^2},\quad\psi \in \tilde{X}_\ep.
\end{align}
In particular, for $p\in\mathbb{Z}^+$,
\begin{align*}\iint_\Omega g'(\psi_\ep) |\psi|^p dxdy \leq p!\iint_\Omega g'(\psi_\ep) e^{|\psi|} dxdy \leq  Cp! e^{C \|\psi\|_{\tilde{X}_\ep}^2},\quad\psi \in \tilde{X}_\ep.\end{align*}
\end{lemma}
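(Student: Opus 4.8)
The plan is to reduce the weighted exponential estimate to the sharp Moser--Trudinger (Onofri) inequality on the round sphere $S^2$, exploiting the fact --- already visible in \eqref{spherical Laplacian} --- that the quadratic form $\|\cdot\|_{\tilde X_\ep}^2$ is isometric to the Dirichlet energy on $S^2$. First I would invoke the change of variables $(x,y)\mapsto(\theta_\ep,\gamma_\ep)$ of \eqref{transf1}--\eqref{transf2}: by the Jacobian identity \eqref{Jacobian of the transformation-ep} one has $g'(\psi_\ep)\,dxdy=2\,d\theta_\ep d\gamma_\ep$, so with $\psi(x,y)=\Psi(\theta_\ep,\gamma_\ep)$,
\begin{align*}
\iint_\Omega g'(\psi_\ep)\,e^{|a\psi|}\,dxdy=2\iint_{\tilde\Omega}e^{|a\Psi|}\,d\theta_\ep d\gamma_\ep,\qquad \|\psi\|_{\tilde X_\ep}^2=\|\Psi\|_{\tilde Y_\ep}^2,
\end{align*}
the second identity being \eqref{Psi-psi-norm-eq}. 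Since $\tilde Y_\ep$ is the same space for every $\ep\in[0,1)$, the estimate becomes uniform in $\ep$, and it suffices to prove $\iint_{\tilde\Omega}e^{|a\Psi|}\,d\theta_\ep d\gamma_\ep\le Ce^{Ca^2\|\Psi\|_{\tilde Y_\ep}^2}$. Next I would apply the substitution $\gamma_\ep=\cos\beta$, $\beta\in(0,\pi)$, reading $(\theta_\ep,\beta)$ as longitude and colatitude on $S^2$; writing $\hat\Psi(\theta_\ep,\beta)=\Psi(\theta_\ep,\cos\beta)$ one checks directly that $d\theta_\ep\,d\gamma_\ep$ corresponds to the area element $dS$ of $S^2$ and $\|\Psi\|_{\tilde Y_\ep}^2=\int_{S^2}|\nabla_{S^2}\hat\Psi|^2\,dS$, while the Poincar\'e inequality I-$\ep$ \eqref{Poincare inequality I-ep22} gives $\|\hat\Psi\|_{L^2(S^2)}\le C\|\nabla_{S^2}\hat\Psi\|_{L^2(S^2)}=C\|\psi\|_{\tilde X_\ep}$, so $\hat\Psi\in H^1(S^2)$.

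Then I would use the classical Onofri inequality on $S^2$: there is an absolute constant $C_0$ such that
\begin{align*}
\int_{S^2}e^{u}\,dS\le C_0\exp\Big(\tfrac1{4\pi}\int_{S^2}u\,dS+\tfrac1{16\pi}\int_{S^2}|\nabla_{S^2}u|^2\,dS\Big),\qquad u\in H^1(S^2).
\end{align*}
Applying it to $u=|a\hat\Psi|$, using $|\nabla_{S^2}|a\hat\Psi||\le|a|\,|\nabla_{S^2}\hat\Psi|$ and the mean bound $\tfrac1{4\pi}\int_{S^2}|a\hat\Psi|\,dS\le|a|\,\|\hat\Psi\|_{L^2(S^2)}/\sqrt{4\pi}\le C|a|\,\|\psi\|_{\tilde X_\ep}$, I obtain
\begin{align*}
\iint_\Omega g'(\psi_\ep)\,e^{|a\psi|}\,dxdy\le C\exp\big(C|a|\,\|\psi\|_{\tilde X_\ep}+Ca^2\|\psi\|_{\tilde X_\ep}^2\big).
\end{align*}
The linear term is absorbed by Young's inequality, $C|a|\,\|\psi\|_{\tilde X_\ep}\le\tfrac12+\tfrac12C^2a^2\|\psi\|_{\tilde X_\ep}^2$, which yields $Ce^{Ca^2\|\psi\|_{\tilde X_\ep}^2}$ after enlarging $C$. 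The first inequality in \eqref{Orlicz-type inequlity} is immediate from $e^{a\psi}\le e^{|a\psi|}$ and $g'(\psi_\ep)>0$; and the power estimate follows from the pointwise bound $|s|^p\le p!\,e^{|s|}$ together with \eqref{Orlicz-type inequlity} at $a=1$.

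The main obstacle I anticipate is handling the degeneracy of the weights $\tfrac1{1-\gamma_\ep^2}$ and $1-\gamma_\ep^2$ at $\gamma_\ep=\pm1$ (i.e.\ $y\to\pm\infty$): I would carry out both changes of variables on the truncations $\{|\gamma_\ep|\le1-\delta\}$ and pass to the limit $\delta\to0^+$, observing that the corresponding spherical caps collapse to the two poles, which have zero $H^1$-capacity, so no boundary term survives; a density argument --- or the compact embedding $\tilde Y_\ep\hookrightarrow L^2(\tilde\Omega)$ already used in the paper --- makes this rigorous. Everything else is bookkeeping, since the isometry between $\|\cdot\|_{\tilde X_\ep}^2$ and the spherical Dirichlet energy, and the irrelevance of the normalization constraint in $\tilde Y_\ep$ once Poincar\'e is available, have effectively been recorded in the discussion around \eqref{spherical Laplacian} and in Lemma \ref{poincare1ep}. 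Alternatively, one may bypass $S^2$ and argue directly in the variables $\gamma=\tanh(y)$ of \eqref{change of variable for 0 mode}, but the spherical formulation makes the constants transparent.
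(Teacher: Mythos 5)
Your proof is correct, but it takes a genuinely different route from the paper's. You push the change of variables $(x,y)\mapsto(\theta_\ep,\gamma_\ep)$ all the way to the round sphere via $\gamma_\ep=\cos\beta$ and then invoke the sharp Moser--Onofri inequality on $S^2$ applied to $u=|a\hat\Psi|$, together with a removable-singularity (zero $H^1$-capacity) argument at the two poles; the Poincar\'e inequality I-$\ep$ supplies the $L^2$ bound needed both for $\hat\Psi\in H^1(S^2)$ and for the mean-value term, and Young's inequality absorbs the linear term into $Ca^2\|\psi\|_{\tilde X_\ep}^2$. The paper proceeds differently: it first proves the estimate for $\ep=0$ directly on the strip, splitting $\psi=\widehat\psi_0+\psi_{\neq0}$, controlling the shear mode pointwise by $|a\widehat\psi_0(y)|\le \tfrac{a^2}{4}\|\psi\|_{\tilde X_0}^2+|y|$ together with the exponential decay of $g'(\psi_0)$, and handling the non-shear part $\psi_{\neq0}\in H^1(\Omega)$ by the (non-sharp) Trudinger embedding of $H^1(\Omega)$ into the Orlicz space $L_{A_0}(\Omega)$ with $A_0(t)=e^{t^2}-1$ from Adams; the case $0<\ep<1$ is then deduced from the $\ep=0$ case by the same change of variables, since the inequality written in the $(\theta_\ep,\gamma_\ep)$ variables is identical for all $\ep$. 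Your version buys transparent, $\ep$-uniform constants and dispenses with the shear/non-shear decomposition, at the price of invoking the Onofri inequality (a sharper tool than strictly necessary) and of the extra care you already flag at the poles: the truncation step should be carried out, e.g.\ with logarithmic cutoffs, to justify that finite Dirichlet energy and finite $L^2$ norm on the punctured sphere give membership in $H^1(S^2)$, but this is standard. The trivial first inequality and the reduction of the power estimate via $|s|^p\le p!\,e^{|s|}$ agree with the paper.
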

\begin{proof} We first prove \eqref{Orlicz-type inequlity} for $\ep=0$.
Applying the similar decomposition \eqref{psi-m-dec} to $\psi \in \tilde{X}_\ep$, we have $\psi = \widehat\psi_0 +\psi_{\neq0}$, where
 $\psi_{\neq0} \in H^1(\Omega)$. Since $$|a\widehat\psi_0(y)| \leq|a|\|\widehat\psi_0'\|_{L^2(\mathbb{R})}|y|^{\frac 1 2}\leq |a|\|\psi\|_{\tilde{X}_0}|y|^{\frac 1 2}\leq{a^2\over4}\|\psi\|_{\tilde{X}_0}^2+|y|,$$ we have
\begin{align}\label{0-mode-or-estimate}
\sqrt{g'(\psi_0)} e^{|a\widehat\psi_0(y)|} \leq \sqrt{g'(\psi_0)} e^{{a^2\over4}\|\psi\|_{\tilde{X}_0}^2 } e^{|y|} \leq C e^{{a^2\over4}\|\psi\|_{\tilde{X}_0}^2 }. \end{align}
Without loss of generality, assume that $\|\psi_{\neq0}\|_{\tilde{X}_0} \neq 0$. It follows from Subsection 8.26 in \cite{Adams75} that $H^1(\Omega)$ is embedded in the Orlicz space $L_{A_0}(\Omega)$ with $A_0(t) = e^{t^2} - 1$.
\if0
and the norm
\begin{align*}
\|\psi_{\neq0}\|_{L_{A_0}} &= \inf \left \{ k > 0 | \iint_{\Omega} A_0(\frac{|\psi_{\neq0}(x,y)|}{k}) dx dy \leq 1 \right \} \\
&= \inf \left \{ k > 0 | \iint_{\Omega} \left( e^{\left (\frac{|\psi_{\neq0}(x,y)|}{k}\right)^2} - 1\right)dx dy \leq 1 \right \}
\end{align*}
\fi
Since $\psi_{\neq0} \in H^1(\Omega)$, we have
   $\psi_{\neq0} \in L_{A_0}(\Omega)$ and
$\|\psi_{\neq0}\|_{L_{A_0}(\Omega)} \leq C \|\psi_{\neq0}\|_{H^1(\Omega)} \leq C \|\psi\|_{\tilde{X}_0}.$
Let $k_0 = \|\psi_{\neq0}\|_{L_{A_0}(\Omega)} + \|\psi_{\neq0}\|_{\tilde{X}_0}$. Then $k_0 \leq C\|\psi\|_{\tilde{X}_0}$. By the definition of the norm $\|\cdot\|_{L_{A_0}(\Omega)}$ (see (13) in Chapter VIII), we have
\begin{align*}
\|\psi_{\neq0}\|_{L_{A_0}(\Omega)}
&= \inf \left \{ k > 0 \bigg| \iint_{\Omega} \left( e^{\left (\frac{|\psi_{\neq0}|}{k}\right)^2} - 1\right)dx dy \leq 1 \right \},
\end{align*}
and thus, there exists $k_1\in[\|\psi_{\neq0}\|_{L_{A_0}(\Omega)}, k_0)$ such that
\begin{align}\label{Orlicz2}
\iint_{\Omega} \left( e^{\left(\frac{|\psi_{\neq0}|}{k_0}\right)^2} - 1 \right)dx dy \leq \iint_{\Omega} \left( e^{\left(\frac{|\psi_{\neq0}|}{k_1}\right)^2} - 1 \right)dx dy \leq  1.\end{align}
By \eqref{0-mode-or-estimate}, \eqref{Orlicz2} and the fact that $k_0 \leq C\|\psi\|_{\tilde{X}_0}$, we have
\begin{align*}
& \iint_\Omega g'(\psi_0)e^{|a\psi|}  dxdy
 \leq   \iint_\Omega \sqrt{g'(\psi_0)}e^{|a\widehat\psi_0|} \sqrt{g'(\psi_0)} e^{|a\psi_{\neq0}|} dxdy \\
 \leq & Ce^{\frac{a^2}{4} \|\psi\|^2_{\tilde{X}_0}} \iint_\Omega \sqrt{g'(\psi_0)}e^{\left| \frac{\psi_{\neq0}}{k_0}\right|^2} e^{\frac{a^2}{4}k_0^2}  dxdy \\
 = & Ce^{\frac{a^2}{4} \left(\|\psi\|^2_{\tilde{X}_0}+k_0^2\right)} \iint_\Omega \sqrt{g'(\psi_0)} \left( e^{\left| \frac{\psi_{\neq0}}{k_0}\right|^2} - 1\right)  dxdy +  Ce^{\frac{a^2}{4} \left(\|\psi\|^2_{\tilde{X}_0}+k_0^2\right)} \iint_\Omega \sqrt{g'(\psi_0)}  dxdy \\
\leq & Ce^{Ca^2 \|\psi\|^2_{\tilde{X}_0}}  \iint_\Omega \left( e^{\left| \frac{\psi_{\neq0}}{k_0}\right|^2} - 1\right)  dxdy +  Ce^{Ca^2 \|\psi\|^2_{\tilde{X}_0}} \\
\leq & Ce^{Ca^2 \|\psi\|^2_{\tilde{X}_0}}.
\end{align*}

Now, we consider the case $\epsilon\in(0,1)$. By
\eqref{Orlicz-type inequlity} for $\epsilon=0$, we have $\iint_{\tilde\Omega} e^{a\Psi}dxd\gamma_0\leq C e^{Ca^2\|\Psi\|_{\tilde Y_0}^2}$ for $\Psi\in\tilde Y_0$ in the new variables $(x,\gamma_0=\tanh(y))$. Then $\iint_{\tilde \Omega} e^{a\Psi}d\theta_\ep d\gamma_\ep\leq C e^{Ca^2\|\Psi\|_{\tilde Y_\ep}^2}$ for $\Psi\in\tilde Y_\ep$ in the new variables $(\theta_\ep,\gamma_\ep)$ for $\epsilon\in(0,1)$. Thus, \eqref{Orlicz-type inequlity} holds true  for $\epsilon\in(0,1)$.
\end{proof}

With the help of Lemma \ref{Orlicz-type inequlity-lemma}, we prove the  required  $C^2$ regularity of $\mathscr{B}_\ep$.

\begin{lemma}\label{B-C2}
$\mathscr{B}_\ep\in C^2(\tilde X_\ep)$, and for $ \psi\in \tilde X_\ep$,
\begin{align*}
\mathscr{B}_\ep'(\psi) &= -\Delta\psi +\frac{1}{2}g'(\psi_\ep)(e^{-2\psi}-1),\\
\langle \mathscr{B}_\ep''(\psi)\phi,\varphi \rangle&=  \iint_{\Omega}\left(\nabla\phi\cdot\nabla\varphi- g'(\psi_\ep) e^{-2\psi}\phi\varphi\right)dxdy,\quad \phi,\varphi\in\tilde X_\ep,
\end{align*}
 where   $\mathscr{B}_\ep$ is defined in \eqref{def-functional-B} and $\ep\in[0,1)$.
\end{lemma}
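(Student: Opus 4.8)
The plan is to verify in turn that $\mathscr{B}_\ep$ is well defined on $\tilde X_\ep$, compute its first and second G\^ateaux derivatives, and then upgrade these to Fr\'echet derivatives and prove their continuity, using Lemma~\ref{Orlicz-type inequlity-lemma} at every step. For well-definedness I would write $e^{-2\psi}+2\psi-1=\sum_{k\ge2}\frac{(-2\psi)^k}{k!}$, so that $|e^{-2\psi}+2\psi-1|\le e^{2|\psi|}$, and integrate against $g'(\psi_\ep)$: the estimate $\iint_\Omega g'(\psi_\ep)e^{2|\psi|}\,dxdy\le Ce^{C\|\psi\|_{\tilde X_\ep}^2}<\infty$ from Lemma~\ref{Orlicz-type inequlity-lemma}, together with the obvious finiteness of $\iint_\Omega|\nabla\psi|^2\,dxdy$, shows $\mathscr{B}_\ep(\psi)$ is finite. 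A remark I would record first, since it is used repeatedly: applying the Orlicz bound to $\phi/\|\phi\|_{\tilde X_\ep}$ turns the non-homogeneous bound into the homogeneous embedding $\iint_\Omega g'(\psi_\ep)|\phi|^p\,dxdy\le C_p\|\phi\|_{\tilde X_\ep}^p$ for every $p<\infty$, i.e.\ $\tilde X_\ep\hookrightarrow L^p_{g'(\psi_\ep)}(\Omega)$ boundedly.

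For the first derivative I would differentiate $t\mapsto\mathscr{B}_\ep(\psi+t\phi)$ under the integral sign. The difference quotient of the nonlinear term is dominated, via the mean value theorem, by $g'(\psi_\ep)e^{2|\psi|+2|\phi|}|\phi|$ for $|t|\le1$, which is integrable by the Cauchy--Schwarz inequality in the measure $g'(\psi_\ep)\,dxdy$ (the exponential factor lies in $L^2_{g'(\psi_\ep)}$ by the Orlicz bound, and $\phi\in L^2_{g'(\psi_\ep)}$ by the homogeneous embedding), so dominated convergence gives
$$\langle\mathscr{B}_\ep'(\psi),\phi\rangle=\iint_\Omega\Big(\nabla\psi\cdot\nabla\phi+\tfrac{1}{2} g'(\psi_\ep)(e^{-2\psi}-1)\phi\Big)\,dxdy.$$
That $\mathscr{B}_\ep'(\psi)\in\tilde X_\ep^*$ follows from Cauchy--Schwarz: the Dirichlet part is $\le\|\psi\|_{\tilde X_\ep}\|\phi\|_{\tilde X_\ep}$, and the nonlinear part is $\le\tfrac{1}{2}\big(\iint_\Omega g'(\psi_\ep)(e^{-2\psi}-1)^2\,dxdy\big)^{1/2}\big(\iint_\Omega g'(\psi_\ep)\phi^2\,dxdy\big)^{1/2}\le C(\psi)\|\phi\|_{\tilde X_\ep}$ by the Orlicz bound (with $(e^{-2\psi}-1)^2\le 2e^{-4\psi}+2$) and the Poincar\'e inequality~I (Lemmas~\ref{poincare1} and~\ref{poincare1ep}). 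Continuity of $\psi\mapsto\mathscr{B}_\ep'(\psi)$ reduces, by the same Cauchy--Schwarz estimate, to showing $\iint_\Omega g'(\psi_\ep)(e^{-2\psi_n}-e^{-2\psi})^2\,dxdy\to0$ whenever $\psi_n\to\psi$ in $\tilde X_\ep$; I would get this from a.e.\ convergence along a subsequence, the uniform bound $\sup_n\iint_\Omega g'(\psi_\ep)e^{-8\psi_n}\,dxdy<\infty$ (from the Orlicz bound, since $\|\psi_n\|_{\tilde X_\ep}$ is bounded) which yields uniform integrability, Vitali's convergence theorem, and the usual argument to pass back to the full sequence. This gives $\mathscr{B}_\ep\in C^1(\tilde X_\ep)$.

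The second derivative is obtained in the same way: differentiating $t\mapsto\mathscr{B}_\ep'(\psi+t\phi)$ in $\tilde X_\ep^*$ produces $\langle\mathscr{B}_\ep''(\psi)\phi,\varphi\rangle=\iint_\Omega\big(\nabla\phi\cdot\nabla\varphi-g'(\psi_\ep)e^{-2\psi}\phi\varphi\big)\,dxdy$. Boundedness of this bilinear form on $\tilde X_\ep$ is exactly where the homogeneous weighted $L^p$ estimates are needed: by H\"older with exponents $(2,4,4)$ in the measure $g'(\psi_\ep)\,dxdy$,
$$\Big|\iint_\Omega g'(\psi_\ep)e^{-2\psi}\phi\varphi\,dxdy\Big|\le\Big(\iint_\Omega g'(\psi_\ep)e^{-4\psi}\,dxdy\Big)^{1/2}\|\phi\|_{L^4_{g'(\psi_\ep)}}\|\varphi\|_{L^4_{g'(\psi_\ep)}}\le C(\psi)\|\phi\|_{\tilde X_\ep}\|\varphi\|_{\tilde X_\ep},$$
the first factor being finite by the Orlicz bound and the last two controlled by the homogeneous embedding. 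To upgrade to the Fr\'echet derivative I would use the Taylor remainder $e^{-2(\psi+\phi)}-e^{-2\psi}+2e^{-2\psi}\phi=4\phi^2\int_0^1(1-s)e^{-2(\psi+s\phi)}\,ds$, pair it against $\varphi$, and apply H\"older (e.g.\ exponents $(8,8,4,2)$ for the factors $|\phi|,|\phi|,e^{-2\psi}e^{2|\phi|},|\varphi|$): for $\|\phi\|_{\tilde X_\ep}\le1$ the factor $\iint_\Omega g'(\psi_\ep)e^{-4\psi}e^{4|\phi|}\,dxdy$ is uniformly bounded by the Orlicz inequality, so the remainder is $\le C(\psi)\|\phi\|_{\tilde X_\ep}^2\|\varphi\|_{\tilde X_\ep}=o(\|\phi\|_{\tilde X_\ep})\|\varphi\|_{\tilde X_\ep}$; the linear Dirichlet parts cancel identically. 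Continuity of $\psi\mapsto\mathscr{B}_\ep''(\psi)$ in operator norm is again the Vitali/subsequence argument, reduced by H\"older to $\iint_\Omega g'(\psi_\ep)(e^{-2\psi_n}-e^{-2\psi})^2\,dxdy\to0$.

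The main technical obstacle, and the only genuinely delicate point, is organizing these H\"older splittings so that the $\psi$-dependent exponential factors are always isolated in an $L^q_{g'(\psi_\ep)}(\Omega)$ norm (where Lemma~\ref{Orlicz-type inequlity-lemma} applies and produces a finite but $\psi$-dependent constant) while the increments $\phi,\varphi$ carry only homogeneous weighted $L^p$ norms bounded by $\|\cdot\|_{\tilde X_\ep}$. The scaling trick that converts the non-homogeneous Orlicz bound into the homogeneous embedding $\tilde X_\ep\hookrightarrow L^p_{g'(\psi_\ep)}(\Omega)$ is what makes this bookkeeping go through; none of the individual estimates is hard once that split is made.
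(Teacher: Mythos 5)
Your proposal is correct and follows essentially the same route as the paper: both rest entirely on Lemma \ref{Orlicz-type inequlity-lemma} together with H\"older splittings in the weighted measure $g'(\psi_\ep)\,dxdy$ (and Poincar\'e inequality I) to compute the G\^ateaux derivatives and to control the nonlinear terms. The only differences are in implementation: you obtain continuity of the derivatives by an a.e.-subsequence/Vitali argument and add an explicit Taylor-remainder estimate for the Fr\'echet upgrade, whereas the paper gets a (slightly stronger) locally Lipschitz bound directly from the representation $e^{-2\psi_n}-e^{-2\psi}=-2\int_0^1 e^{-2(s\psi_n+(1-s)\psi)}(\psi_n-\psi)\,ds$ combined with the same H\"older bookkeeping.
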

\begin{proof}
Let $\psi\in \tilde{X}_\ep$. For  $\phi \in \tilde{X}_\ep$, by Lemmas \ref{poincare1}, \ref{poincare1ep} and \ref{Orlicz-type inequlity-lemma} we have
\begin{align*}
|\partial_\lambda \mathscr{B}_\ep(\psi + \lambda \phi)|_{\lambda = 0}|
= & \iint_\Omega  \left(-\Delta \psi+{1\over2}  g'(\psi_\ep) (e^{-2\psi}-1)\right)\phi dxdy\\
\leq &\|\psi\|_{\tilde{X}_\ep}\|\phi\|_{\tilde{X}_\ep}+ C \left(\iint_\Omega    g'(\psi_\ep) (e^{-4\psi}-2e^{-2\psi}+1)dxdy\right)^{1\over2}\|\phi \|_{\tilde{X}_\ep}\\
\leq &\left(\|\psi\|_{\tilde{X}_\ep}+ C \left(Ce^{C\|\psi\|_{\tilde{X}_\ep}^2}+C\right)^{1\over2}\right)\|\phi \|_{\tilde{X}_\ep}.
\end{align*}
Thus,  $\mathscr{B}_\ep$  is G$\hat{\text{a}}$teaux differentiable at $\psi\in  \tilde{X}_\ep$.
To show that $\mathscr{B}_\ep\in C^1(\tilde X_\ep)$, we choose $\{\psi_n\}_{n=1}^\infty\in \tilde X_\ep$ such that $\psi_n\to\psi$ in $\tilde{X}_\ep$, and prove that for fixed $\phi\in\tilde X_\ep$,
\begin{align*}
\partial_\lambda \mathscr{B}_\ep(\psi_n + \lambda \phi)|_{\lambda = 0}\to\partial_\lambda \mathscr{B}_\ep(\psi + \lambda \phi)|_{\lambda = 0}
\end{align*}
as $n\to\infty$.
In fact, there exists $N>0$ such that $\|\psi_n\|_{\tilde X_\ep}\leq \|\psi\|_{\tilde X_\ep}+1$ for $n\geq N$, and  by Lemmas \ref{poincare1}, \ref{poincare1ep} and \ref{Orlicz-type inequlity-lemma} we have for $n\geq N$,
\begin{align*}
&|\partial_\lambda \mathscr{B}_\ep(\psi_n + \lambda \phi)|_{\lambda = 0}-\partial_\lambda \mathscr{B}_\ep(\psi + \lambda \phi)|_{\lambda = 0}| \\
= & \left|\iint_{\Omega}\left(\nabla(\psi_n-\psi)\cdot\nabla\phi +{1\over 2} g'(\psi_\ep)(e^{-2\psi_n} - e^{-2\psi})\phi\right) dxdy\right|\\
\leq&\|\psi_n-\psi\|_{\tilde X_\ep}\|\phi\|_{\tilde X_\ep}+\left|\int_0^1  \iint_{\Omega} g'(\psi_\ep) e^{-2(s\psi_n + (1-s)\psi)}  (\psi_n - \psi)  \phi dxdyds\right| \\
\leq&\|\psi_n-\psi\|_{\tilde X_\ep}\|\phi\|_{\tilde X_\ep}+\|\psi_n-\psi\|_{\tilde X_\ep}\|\phi\|_{L^4_{g'(\psi_\ep)}}\int_0^1 \left( \iint_{\Omega} g'(\psi_\ep) e^{-8(s\psi_n + (1-s)\psi)}  dxdy\right)^{1\over4}ds\\
\leq &  \|\psi_n-\psi\|_{\tilde X_\ep}\|\phi\|_{\tilde X_\ep}+
\|\psi_n-\psi\|_{\tilde X_\ep}
\left(Ce^{C\|\phi\|_{\tilde X_\ep}^2}\right)^{1\over4}
\int_0^1\left(Ce^{C\|s\psi_n + (1-s)\psi\|_{\tilde X_\ep}^2}\right)^{1\over4}ds\\
\leq &\left(\|\phi\|_{\tilde X_\ep}+C_{\|\phi\|_{\tilde X_\ep}}C_{\|\psi\|_{\tilde X_\ep}}\right)\|\psi_n-\psi\|_{\tilde X_\ep}\to 0\quad \text{as}\quad n\to\infty.
\end{align*}
This proves that $\mathscr{B}_\ep\in C^1(\tilde X_\ep)$.
Then we show that the 2-th order G$\hat{\text{a}}$teaux derivative of $\mathscr{B}_\ep$ exists at $\psi\in  \tilde{X}_\ep$. For
$\phi \in  \tilde{X}_\ep$ and $\varphi\in\tilde{X}_\ep$, by Lemma  \ref{Orlicz-type inequlity-lemma} we have
\begin{align*}
&\left|\partial_\tau\partial_\lambda \mathscr{B}_\ep(\psi + \lambda\phi+\tau\varphi)|_{\lambda =\tau= 0}\right|
= \left|\iint_{\Omega}\left(\nabla\phi\cdot\nabla\varphi- g'(\psi_\ep) e^{-2\psi}\phi\varphi\right)dxdy\right|\\
\leq&\|\phi\|_{\tilde{X}_\ep}\|\varphi\|_{\tilde{X}_\ep}+\left(\iint_{\Omega} g'(\psi_\ep) e^{-4\psi} dxdy\right)^{1\over2}
\|\phi\|_{L_{g'(\psi_\ep)}^4}\|\varphi\|_{L_{g'(\psi_\ep)}^4}\\
\leq&\|\phi\|_{\tilde{X}_\ep}\|\varphi\|_{\tilde{X}_\ep}+Ce^{C\left(\|\psi\|_{\tilde{X}_\ep}^2+\|\phi\|_{\tilde{X}_\ep}^2+\|\varphi\|_{\tilde{X}_\ep}^2\right)},
\end{align*}
which implies that   $\mathscr{B}_\ep$  is 2-order G$\hat{\text{a}}$teaux differentiable at $\psi\in  \tilde{X}_\ep$.
To show that $\mathscr{B}_\ep\in C^2(\tilde X_\ep)$, we use  $\{\psi_n\}_{n=1}^\infty\in \tilde X_\ep$  as above, and
  for  $\phi,\varphi\in\tilde X_\ep$ and $n\geq N$,
\begin{align*}
&|\partial_\tau\partial_\lambda \mathscr{B}_\ep(\psi_n + \lambda \phi+\tau\varphi)|_{\lambda =\tau= 0}-\partial_\tau\partial_\lambda \mathscr{B}_\ep(\psi + \lambda \phi+\tau\varphi)|_{\lambda =\tau= 0}|\\
=&\left|2\int_0^1\iint_{\Omega}g'(\psi_\ep) e^{-2(s\psi_n + (1-s)\psi)}(\psi_n-\psi)\phi\varphi dxdyds\right|\\
\leq&C\|\psi_n-\psi\|_{\tilde X_\ep}\|\phi\|_{L_{g'(\psi_\ep)}^6}\|\varphi\|_{L_{g'(\psi_\ep)}^6}\int_0^1\left(\iint_{\Omega}g'(\psi_\ep) e^{-12(s\psi_n + (1-s)\psi)}dxdy\right)^{1\over6}ds\\
\leq&C\|\psi_n-\psi\|_{\tilde X_\ep}
\left(Ce^{C\|\phi\|_{\tilde X_\ep}^2}\right)^{1\over6}\left(Ce^{C\|\varphi\|_{\tilde X_\ep}^2}\right)^{1\over6}
\int_0^1\left(Ce^{C\|s\psi_n + (1-s)\psi\|_{\tilde X_\ep}^2}\right)^{1\over6}ds\\
\leq &C_{\|\phi\|_{\tilde X_\ep}}C_{\|\varphi\|_{\tilde X_\ep}}C_{\|\psi\|_{\tilde X_\ep}}\|\psi_n-\psi\|_{\tilde X_\ep}\to 0\quad \text{as}\quad n\to\infty.
\end{align*}
This proves that $\mathscr{B}_\ep\in C^2(\tilde X_\ep)$.
\end{proof}

\begin{remark}
In view of Lemma \ref{Orlicz-type inequlity-lemma}, one can use a similar argument in the proof of Lemma \ref{B-C2} to show that  $\mathscr{B}_\ep\in C^\infty(\tilde X_\ep)$.
\end{remark}

By Lemma \ref{B-C2}, we have $\mathscr{B}_\ep'(0) = 0$, and
\begin{align*}
\langle \mathscr{B}_\ep''(0)\psi_1,\psi_2 \rangle&=  \iint_{\Omega}\left(\nabla\psi_1\cdot\nabla\psi_2- g'(\psi_\ep) \psi_1\psi_2\right)dxdy,\quad \psi_1,\psi_2\in\tilde X_\ep.
\end{align*}
Recall that
$A_\ep =-\Delta -g'(\psi_\ep):\tilde{X}_\ep \rightarrow \tilde{X}_\ep^*$ for $\ep\in[0,1)$. Then
\begin{align}\label{B-ep-A-ep}
\langle \mathscr{B}_\ep''(0)\psi_1,\psi_2 \rangle=\langle A_\ep \psi_1,\psi_2 \rangle,\quad \psi_1,\psi_2\in\tilde X_\ep.
\end{align}
By Corollaries \ref{kernel of  the operator A0 and a decomposition of tilde X0} and \ref{kernel of  the operator A-ep and a decomposition of tilde Xep}, we have
\begin{align*}
\ker ( A_\ep)={\rm{span}}\left\{\eta_\ep(x,y), \gamma_\ep(x,y), \xi_\ep(x,y)\right\}
\end{align*}
and
\begin{align}\label{A-ep-positive-lower-bound}
\langle  A_\ep \psi,\psi\rangle \geq C_0 \| \psi\|_{\tilde X_\ep}^2, \quad \quad \psi\in \tilde X_{\ep+}=\tilde X_\ep \ominus\ker ( A_\ep)
\end{align}
for some $C_0>0$ independent of $\ep\in[0,1)$.

\subsection{Removal of the kernel generated  by translations and  parameter variation}
Let us first consider the 3 dimensional orbit
$$\Gamma = \{\omega_{\ep_1}(x+x_1, y + y_1)| \ep_1 \in (0, 1), x_1 \in \mathbb{T}_{2\pi}, y_1 \in \mathbb{R} \}.$$
To prove the nonlinear orbital stability of the steady states,  we need to carefully study the translations of the steady states in the $x$ and $y$ directions, as well as  the  variation of the parameter $\ep$, so that the perturbation of the stream function is perpendicular to the three kernel functions of $ A_\ep$.
\begin{lemma}\label{imp-vertical condition} Let $\ep_0\in(0,1)$. Then  there exists $\delta=\delta(\ep_0)>0$ such that for any  $(x_0,y_0)\in\Omega$ and  $\tilde \omega\in Y_{non}$ with $d_2(\tilde \omega,\omega_{\ep_0}(x+x_0,y+y_0))=\|\tilde\psi-\psi_{\ep_0}(x+x_0,y+y_0)\|_{\dot{H}^1(\Omega)}^2\leq \delta$, there exist $(\tilde x_0,\tilde y_0)\in\Omega$ and $\tilde \epsilon_0\in(a(\ep_0),b(\ep_0))$, depending continuously  on $(x_0,y_0)\in\Omega$ and  $\tilde \omega$, such that
\begin{align*}
\iint_{\Omega}\nabla\left(\tilde \psi(x,y)-\psi_{\tilde \ep_0}(x+\tilde x_0,y+\tilde y_0)\right)\cdot\nabla\eta_{\tilde \ep_0}\left(x+\tilde x_0,y+\tilde y_0\right)dxdy=0,\\
\iint_{\Omega}\nabla\left(\tilde \psi(x,y)-\psi_{\tilde \ep_0}(x+\tilde x_0,y+\tilde y_0)\right)\cdot\nabla\gamma_{\tilde \ep_0}\left(x+\tilde x_0,y+\tilde y_0\right)dxdy=0,\\
\iint_{\Omega}\nabla\left(\tilde \psi(x,y)-\psi_{\tilde \ep_0}(x+\tilde x_0,y+\tilde y_0)\right)\cdot\nabla\xi_{\tilde \ep_0}\left(x+\tilde x_0,y+\tilde y_0\right)dxdy=0,
\end{align*}
and
 \begin{align*}
|x_0-\tilde x_0|+|y_0-\tilde y_0|+|\ep_0-\tilde \ep_0|\leq C(\ep_0)\sqrt{\delta}
\end{align*}
for some $a(\ep_0)\in (0,\ep_0)$ and $b(\ep_0)\in(\ep_0,1)$, where $\tilde\psi=G*\tilde\omega$.
\end{lemma}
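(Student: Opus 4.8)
The plan is to produce $(\tilde x_0,\tilde y_0,\tilde\ep_0)$ by the implicit function theorem, with the small parameter $\chi:=\tilde\psi-\psi_{\ep_0}(\cdot+x_0,\cdot+y_0)$ viewed as an element of $\dot H^1(\Omega)$ (only $\nabla\chi$ will enter), so that $\|\nabla\chi\|_{L^2(\Omega)}^2=d_2(\tilde\omega,\omega_{\ep_0}(\cdot+x_0,\cdot+y_0))\le\delta$ by \eqref{distance-euler case}. A preliminary reduction removes the $(x_0,y_0)$-dependence: the substitution $(u,v)=(x+x_0,y+y_0)$ turns each of the three integrals in the statement into the same integral with $\chi$ replaced by its equinormed translate $\bar\chi(u,v)=\chi(u-x_0,v-y_0)$ and with $(\tilde x_0,\tilde y_0)$ replaced by $(\tilde x_0-x_0,\tilde y_0-y_0)$. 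Hence it suffices to treat $x_0=y_0=0$, and the resulting $\delta=\delta(\ep_0)$ is then independent of $(x_0,y_0)$; this is precisely what makes the argument survive the non-compactness of $\Omega$ in the $y$-direction.

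I then define $F=(F_1,F_2,F_3)$ on a neighbourhood of $(0,0,\ep_0;0)$ in $\mathbb R^2\times(0,1)\times\dot H^1(\Omega)$ by
\begin{align*}
F_1(a,b,c;\chi)=\iint_\Omega\nabla\big(\psi_{\ep_0}(x,y)+\chi(x,y)-\psi_c(x+a,y+b)\big)\cdot\nabla\eta_c(x+a,y+b)\,dxdy,
\end{align*}
and $F_2$, $F_3$ the same with $\eta_c$ replaced by $\gamma_c$, $\xi_c$. These integrals converge and $F$ is $C^1$ jointly: $\nabla\eta_c,\nabla\gamma_c,\nabla\xi_c$ decay exponentially as $|y|\to\infty$ locally uniformly in $c$, by Proposition \ref{prop0}$(2)$ and \eqref{three-kers1}--\eqref{three-kers3}; $\nabla\chi\in L^2(\Omega)$; and although $\nabla\psi_\ep\to(\pm1,0)\notin L^2$, the difference $\psi_{\ep_0}(x,y)-\psi_c(x+a,y+b)$ has $L^2$ gradient because its gradient tends to $0$ exponentially as $|y|\to\infty$. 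Plainly $F(0,0,\ep_0;0)=0$, and for fixed $(a,b,c)$ the map $\chi\mapsto F(a,b,c;\chi)$ is affine with linear part $\chi\mapsto\big((\nabla\chi,\nabla\eta_c(\cdot+a,\cdot+b))_{L^2},(\nabla\chi,\nabla\gamma_c(\cdot+a,\cdot+b))_{L^2},(\nabla\chi,\nabla\xi_c(\cdot+a,\cdot+b))_{L^2}\big)$, so $|F(a,b,c;\chi)-F(a,b,c;0)|\le C(\ep_0)\|\nabla\chi\|_{L^2(\Omega)}$ for $(a,b,c)$ near $(0,0,\ep_0)$, the constant being controlled by the $L^2$-norms of $\nabla\eta_c,\nabla\gamma_c,\nabla\xi_c$.

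The decisive point is that the Jacobian $\partial_{(a,b,c)}F$ at $(0,0,\ep_0;0)$ is invertible. When differentiating $F_1$ in $(a,b,c)$, the contribution of $\partial_{(a,b,c)}\nabla\eta_c(\cdot+a,\cdot+b)$ carries the factor $\psi_{\ep_0}+\chi-\psi_c(\cdot+a,\cdot+b)$, which vanishes at the base point; only the differentiation of $-\psi_c(\cdot+a,\cdot+b)$ survives, so by translation invariance $\partial_a F_1=-\iint_\Omega\nabla(\partial_x\psi_{\ep_0})\cdot\nabla\eta_{\ep_0}\,dxdy$, $\partial_b F_1=-\iint_\Omega\nabla(\partial_y\psi_{\ep_0})\cdot\nabla\eta_{\ep_0}\,dxdy$, $\partial_c F_1=-\iint_\Omega\nabla(\partial_\ep\psi_{\ep_0})\cdot\nabla\eta_{\ep_0}\,dxdy$, and analogously for $F_2$, $F_3$ with $\gamma_{\ep_0}$, $\xi_{\ep_0}$ in place of $\eta_{\ep_0}$. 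By \eqref{three-kers1}--\eqref{three-kers3}, $\partial_x\psi_{\ep_0}=-\tfrac{\ep_0}{\sqrt{1-\ep_0^2}}\eta_{\ep_0}$, $\partial_y\psi_{\ep_0}=\tfrac{1}{\sqrt{1-\ep_0^2}}\gamma_{\ep_0}$, $\partial_\ep\psi_{\ep_0}=\tfrac{1}{1-\ep_0^2}\xi_{\ep_0}$, so the Jacobian equals $-\mathcal G_{\ep_0}\,\mathrm{diag}\!\big(-\tfrac{\ep_0}{\sqrt{1-\ep_0^2}},\tfrac{1}{\sqrt{1-\ep_0^2}},\tfrac{1}{1-\ep_0^2}\big)$, where $\mathcal G_{\ep_0}$ is the Gram matrix of $\{\eta_{\ep_0},\gamma_{\ep_0},\xi_{\ep_0}\}$ in the $\dot H^1$ inner product. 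Since this triple is a basis of $\ker(A_{\ep_0})$, which is $3$-dimensional by Corollary \ref{kernel of  the operator A-ep and a decomposition of tilde Xep}, it is linearly independent, $\mathcal G_{\ep_0}$ is positive definite, and the Jacobian is invertible.

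The implicit function theorem (with the Banach-space parameter $\chi$) then gives $\delta=\delta(\ep_0)>0$ and $C^1$ maps $\chi\mapsto(\tilde a,\tilde b,\tilde c)$, defined for $\|\nabla\chi\|_{L^2(\Omega)}<\delta$ with $(\tilde a,\tilde b,\tilde c)$ in a fixed neighbourhood of $(0,0,\ep_0)$, solving $F=0$; undoing the reduction, $\tilde x_0=\tilde a+x_0\in\mathbb T_{2\pi}$, $\tilde y_0=\tilde b+y_0\in\mathbb R$, $\tilde\ep_0=\tilde c\in(a(\ep_0),b(\ep_0))$ for suitable $0<a(\ep_0)<\ep_0<b(\ep_0)<1$, and these depend continuously on $(x_0,y_0)$ and, through $\tilde\psi=G*\tilde\omega$ (cf.\ Lemmas \ref{well-poseness-Poisson-equation-nonlinear-case} and \ref{G-ast-omega-psi-constant}), on $\tilde\omega$. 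The quantitative bound follows since $F$ is affine in $\chi$ and the inverse Jacobian is bounded near the base point: $|x_0-\tilde x_0|+|y_0-\tilde y_0|+|\ep_0-\tilde\ep_0|=|(\tilde a,\tilde b,\tilde c)-(0,0,\ep_0)|\le C(\ep_0)\|\nabla\chi\|_{L^2(\Omega)}\le C(\ep_0)\sqrt\delta$. I expect the main technical obstacle to be the second paragraph — checking that the family $c\mapsto(\psi_c,\eta_c,\gamma_c,\xi_c)$ and its $c$-derivatives are $C^1$ and bounded in the relevant seminorms locally uniformly in $c$, so that every integral defining $F$ converges and $F$ is jointly $C^1$; this is exactly where the explicit formulas and exponential-decay estimates of Propositions \ref{prop0} and \ref{prop1} (and the $L^2$-gradient control of $\psi_{\ep_0}(\cdot)-\psi_c(\cdot+a,\cdot+b)$ despite $\nabla\psi_\ep\notin L^2$) are needed.
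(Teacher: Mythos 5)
Your proposal is correct and follows essentially the same route as the paper: reduce to $x_0=y_0=0$ by translation, apply the implicit function theorem to the three orthogonality functionals in the parameters $(a,b,c)$, invert the Jacobian at the base point via the identities $\partial_x\psi_{\ep_0}=-\tfrac{\ep_0}{\sqrt{1-\ep_0^2}}\eta_{\ep_0}$, $\partial_y\psi_{\ep_0}=\tfrac{1}{\sqrt{1-\ep_0^2}}\gamma_{\ep_0}$, $\partial_\ep\psi_{\ep_0}=\tfrac{1}{1-\ep_0^2}\xi_{\ep_0}$, and obtain the $C(\ep_0)\sqrt{\delta}$ bound from the Lipschitz estimate of the implicit map. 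The only (harmless) deviations are that the paper computes the Gram matrix explicitly in $(\theta_\ep,\gamma_\ep)$ coordinates, finding it diagonal with entries $\tfrac{8}{3}\pi$, where you invoke positive definiteness abstractly from $\dim\ker(A_{\ep_0})=3$, and that the paper derives the quantitative bound by an explicit contraction-mapping iteration rather than by the standard quantitative implicit function theorem.
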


\begin{proof} For $\tilde \omega\in Y_{non}$, since $\tilde \psi-\psi_{\ep_0}=G*(\tilde \omega-\omega_{\ep_0})-c$ for  some constant $c$, by Lemma \ref{G-ast-omega-psi-constant} we have $\tilde \psi-\psi_{\ep_0}\in {\dot{H}^1(\Omega)}$.
 For $x_0=y_0=0$, we
define
\if0
the space $V=\{\tilde \omega|\tilde \psi-\psi_{\ep_0}\in\dot{H}^1(\Omega)\}$ and
 \fi
 the map $S=(S_1,S_2,S_3)$ from $Y_{non}\times \mathbb{T}_{2\pi}\times\mathbb{R}\times (0,1)$ to $\mathbb{R}^3$ by
\begin{align*}
S_1(\tilde\omega,x_1,y_1,\ep_1)
=&\iint_{\Omega}\nabla\left(\tilde \psi(x,y)-\psi_{\ep_1}(x+x_1,y+y_1)\right)\cdot\nabla\eta_{\ep_1}\left(x+x_1,y+y_1\right)dxdy,\\
S_2(\tilde\omega,x_1,y_1,\ep_1)
=&\iint_{\Omega}\nabla\left(\tilde \psi(x,y)-\psi_{\ep_1}(x+x_1,y+y_1)\right)\cdot\nabla\gamma_{\ep_1}\left(x+x_1,y+y_1\right)dxdy,\\
S_3(\tilde\omega,x_1,y_1,\ep_1)
=&\iint_{\Omega}\nabla\left(\tilde \psi(x,y)-\psi_{\ep_1}(x+x_1,y+y_1)\right)\cdot\nabla\xi_{\ep_1}\left(x+x_1,y+y_1\right)dxdy.
\end{align*}
\if0
where we used
\begin{align*}
0=&\partial_1\iint_{\Omega}\omega_\ep dxdy=\partial_1\iint_{\Omega}g(\psi_\ep) dxdy=\iint_{\Omega}g'(\psi_\ep)\partial_1\psi_\ep dxdy=-2\iint_{\Omega}g(\psi_\ep)\partial_1\psi_\ep dxdy\\
=&-2\iint_{\Omega}\omega_\ep\partial_1\psi_\ep dxdy=-2\iint_{\Omega}\nabla\psi_\ep\cdot\nabla\partial_1\psi_\ep dxdy
\end{align*}
since $\iint_{\Omega}\omega_\ep dxdy=-4\pi$, and $\partial_1=\partial_x, \partial_y$ or $\partial_\ep$.
\fi
Note that $S(\omega_{\ep_0},0,0,\ep_0)=(0,0,0)$ and
\begin{align*}
&{\partial(S_1,S_2,S_3)\over \partial(x_1,y_1,\ep_1)}\bigg|_{\tilde \omega=\omega_{\ep_0},x_1=0,y_1=0,\ep_1=\ep_0}\\
=&\left| \begin{array}{cccc} -\iint_\Omega\nabla\partial_x\psi_{\ep}\cdot\nabla\eta_{\ep}dxdy & -\iint_\Omega\nabla\partial_y\psi_{\ep}\cdot\nabla\eta_{\ep}dxdy
&-\iint_\Omega\nabla\partial_\ep\psi_{\ep}\cdot\nabla\eta_{\ep}dxdy\\ -\iint_\Omega\nabla\partial_x\psi_{\ep}\cdot\nabla\gamma_{\ep}dxdy& -\iint_\Omega\nabla\partial_y\psi_{\ep}\cdot \nabla\gamma_{\ep}dxdy &
 -\iint_\Omega\nabla\partial_\ep\psi_{\ep}\cdot\nabla\gamma_{\ep}dxdy\\
 -\iint_\Omega\nabla\partial_x\psi_{\ep}\cdot\nabla\xi_{\ep}dxdy& -\iint_\Omega\nabla\partial_y\psi_{\ep}\cdot\nabla\xi_{\ep}dxdy  &-\iint_\Omega\nabla\partial_\ep\psi_{\ep}\cdot \nabla\xi_{\ep}dxdy
\end{array} \right|_{\ep=\ep_0}.
\end{align*}
By \eqref{three-kers1}-\eqref{three-kers3}, \eqref{def-eta-ep}-\eqref{def-xi-ep} and Proposition \ref{prop1}, we have
\begin{align*}
\iint_\Omega\nabla\partial_x\psi_{\ep}\cdot\nabla\eta_{\ep}dxdy&={-\ep\over \sqrt{1-\ep^2}}\iint_\Omega|\nabla\eta_\ep|^2dxdy=
{-\ep\over \sqrt{1-\ep^2}}\int_{-1}^1\int_{0}^{2\pi}(1-\eta_\ep^2)d\theta_\ep d\gamma_\ep\\
&={-\ep\over \sqrt{1-\ep^2}}\int_{-1}^1\int_{0}^{2\pi}\left(\gamma_\ep^2\sin^2(\theta_\ep)+\cos^2(\theta_\ep)\right)d\theta_\ep d\gamma_\ep={-\ep\over \sqrt{1-\ep^2}}{8\over3}\pi,\\
\iint_\Omega\nabla\partial_y\psi_{\ep}\cdot\nabla\eta_{\ep}dxdy&={1\over \sqrt{1-\ep^2}}\iint_\Omega\nabla\gamma_\ep\cdot \nabla\eta_\ep dxdy=
{-1\over \sqrt{1-\ep^2}}\int_{-1}^1\int_{0}^{2\pi}\gamma_\ep\eta_\ep d\theta_\ep d\gamma_\ep\\
&={-1\over \sqrt{1-\ep^2}}\int_{-1}^1\int_{0}^{2\pi}\gamma_\ep(1-\gamma_\ep^2)^{1\over2}\sin(\theta_\ep) d\theta_\ep d\gamma_\ep=0,\\
\iint_\Omega\nabla\partial_\ep\psi_{\ep}\cdot\nabla\eta_{\ep}dxdy&={1\over 1-\ep^2}\iint_\Omega\nabla\xi_\ep\cdot \nabla\eta_\ep dxdy={-1\over 1-\ep^2}\int_{-1}^1\int_{0}^{2\pi}\xi_\ep\eta_\ep d\theta_\ep d\gamma_\ep\\
&={-1\over 1-\ep^2}\int_{-1}^1\int_{0}^{2\pi}(1-\gamma_\ep^2)\sin(\theta_\ep)\cos(\theta_\ep) d\theta_\ep d\gamma_\ep=0,\\
\iint_\Omega\nabla\partial_y\psi_{\ep}\cdot \nabla\gamma_{\ep}dxdy&={1\over \sqrt{1-\ep^2}}\iint_\Omega|\nabla\gamma_\ep|^2 dxdy={1\over \sqrt{1-\ep^2}}\int_{-1}^1\int_{0}^{2\pi}(1-\gamma_\ep^2)d\theta_\ep d\gamma_\ep\\
&={1\over \sqrt{1-\ep^2}}{8\over3}\pi,\\
\iint_\Omega\nabla\partial_\ep\psi_{\ep}\cdot\nabla\gamma_{\ep}dxdy&={1\over 1-\ep^2}\iint_\Omega\nabla\xi_\ep \cdot\nabla\gamma_{\ep} dxdy={-1\over 1-\ep^2}\int_{-1}^1\int_{0}^{2\pi}\xi_\ep \gamma_{\ep} d\theta_\ep d\gamma_\ep\\
&={-1\over 1-\ep^2}\int_{-1}^1\int_{0}^{2\pi}(1-\gamma_\ep^2)^{1\over2}\cos(\theta_\ep) \gamma_{\ep} d\theta_\ep d\gamma_\ep=0,\\
\iint_\Omega\nabla\partial_\ep\psi_{\ep}\cdot \nabla\xi_{\ep}dxdy&={1\over 1-\ep^2}\iint_\Omega\nabla\xi_\ep \cdot\nabla\xi_{\ep} dxdy={1\over 1-\ep^2}\int_{-1}^1\int_{0}^{2\pi}(1-\xi_\ep^2) d\theta_\ep d\gamma_\ep\\
&={1\over 1-\ep^2}\int_{-1}^1\int_{0}^{2\pi}\left(\gamma_\ep^2\cos^2(\theta_\ep)+\sin^2(\theta_\ep)\right) d\theta_\ep d\gamma_\ep={1\over 1-\ep^2}{8\over3}\pi.
\end{align*}
Then
\begin{align*}
\iint_\Omega\nabla\partial_x\psi_{\ep}\cdot\nabla\gamma_{\ep}dxdy=
 \iint_\Omega\nabla\partial_x\psi_{\ep}\cdot\nabla\xi_{\ep}dxdy=\iint_\Omega\nabla\partial_y\psi_{\ep}\cdot\nabla\xi_{\ep}dxdy =0.
 \end{align*}
Thus,
\begin{align*}
{\partial(S_1,S_2,S_3)\over \partial(x_1,y_1,\ep_1)}\bigg|_{\tilde \omega=\omega_{\ep_0},x_1=0,y_1=0,\ep_1=\ep_0}
=&\left| \begin{array}{cccc} {\ep_0\over \sqrt{1-\ep_0^2}}{8\over3}\pi &0
&0\\ 0& {-1\over \sqrt{1-\ep_0^2}}{8\over3}\pi &
0\\
0&0  &{-1\over 1-\ep_0^2}{8\over3}\pi
\end{array} \right|\\
=&{\ep_0\over (1-\ep_0^2)^2}\left({8\over3}\pi\right)^3\neq0.
\end{align*}
By the Implicit Function Theorem, there exists $\delta=\delta(\ep_0)>0$ such that
for any
$\tilde \omega\in Y_{non}$ with $d_2(\tilde \omega,\omega_{\ep_0})\leq \delta$, there exist $\tilde x_0=\tilde x_0(\tilde\omega)\in\mathbb{T}_{2\pi}$, $\tilde y_0=\tilde y_0(\tilde\omega)\in\mathbb{R}$ and $\tilde \ep_0=\tilde \epsilon_0(\tilde\omega)\in(a(\ep_0),b(\ep_0))\subset(0,1)$, depending continuously on $\tilde \omega$, such that
 $S_i(\tilde\omega,\tilde x_0(\tilde\omega),$ $\tilde y_0(\tilde\omega),\tilde \ep_0(\tilde\omega))=0$ for $i=1,2,3$.

Define a mapping $:\chi \mapsto \mathcal{T}\chi$ by
\begin{align*}
(\mathcal{T}\chi)(\tilde \omega):=\chi(\tilde \omega)-\left({\partial(S_1,S_2,S_3)\over \partial(x_1,y_1,\ep_1)}\bigg|_{\tilde \omega=\omega_{\ep_0},x_1=0,y_1=0,\ep_1=\ep_0}\right)^{-1}\vec{S}(\tilde \omega,\chi(\tilde \omega)^T),
\end{align*}
where $\chi\in C(\bar{B}_{d_2}(\omega_{\ep_0},\delta),\Omega\times (0,1))$, $\bar{B}_{d_2}(\omega_{\ep_0},\delta)$ is the closed ball in $Y_{non}$ centred at $\omega_{\ep_0}$ with semi-radius $\delta$ under the distance $d_2$, and  $\vec{S}=(S_1,S_2,S_3)^T$. The distance between $\chi_1$ and $\chi_2$ is given by  $\rho(\chi_1,\chi_2)=\max_{\tilde \omega\in\bar{B}_{d_2}(\omega_{\ep_0},\delta)}|\chi_1(\tilde \omega)-\chi_2(\tilde \omega)|.$
It is  standard that $\mathcal{T}$ is a contracting mapping with rate $\mu\in(0,1)$ on $\mathcal{H}=\{\chi\in C(\bar{B}_{d_2}(\omega_{\ep_0},\delta),\Omega\times (0,1))|\chi(\omega_{\ep_0})=(0,0,\ep_0)^T,|\chi(\tilde \omega)-(0,0,\ep_0)^T|\leq\nu\}$ for some $\nu>0$, and moreover, $\chi^*$, which is defined by $\chi^*(\tilde \omega)=(\tilde x_0(\tilde \omega),\tilde y_0(\tilde \omega),\tilde \ep_0(\tilde \omega))^T$ on $\bar{B}_{d_2}(\omega_{\ep_0},\delta)$, is the unique fixed point of $\mathcal{T}$. Then $\rho(\chi,\chi^*)= \rho(\chi,\mathcal{T}\chi^*)\leq \rho(\chi,\mathcal{T}\chi)+\rho(\mathcal{T}\chi,\mathcal{T}\chi^*)\leq \rho(\chi,\mathcal{T}\chi)+\mu\rho(\chi,\chi^*)$ for $\chi\in\mathcal{H}$, which implies that $\rho(\chi,\chi^*)\leq {1\over 1-\mu}\rho(\chi,\mathcal{T}\chi)$. By  choosing $\chi_0\equiv(0,0,\ep_0)^T$,  for any $\tilde \omega\in\bar{B}_{d_2}(\omega_{\ep_0},\delta)$ we have
\begin{align*}
&|\tilde x_0(\tilde \omega)|+|\tilde y_0(\tilde \omega)|+|\tilde \ep_0(\tilde \omega)-\ep_0|\leq \rho(\chi_0,\chi^*)\leq {1\over 1-\mu}\rho(\chi_0,\mathcal{T}\chi_0)\\
\leq& {C\over 1-\mu}\left\|\left({\partial(S_1,S_2,S_3)\over \partial(x_1,y_1,\ep_1)}\bigg|_{\tilde \omega=\omega_{\ep_0},x_1=0,y_1=0,\ep_1=\ep_0}\right)^{-1}\right\|\max_{\tilde \omega\in\bar{B}_{d_2}(\omega_{\ep_0},\delta)}|\vec{S}(\tilde \omega,(0,0,\ep_0))|\leq C(\ep_0)\sqrt{\delta},
\end{align*}
where $\|\cdot\|$ is a norm on $\mathbb{R}^{3\times 3}$.

Let $x_0\neq 0$ or $y_0\neq0$. For any $\tilde \omega\in Y_{non}$ with $d_2(\tilde \omega,\omega_{\ep_0}(x+x_0,y+y_0))=\|\tilde\psi(x,y)-\psi_{\ep_0}(x+x_0,y+y_0)\|_{\dot{H}^1(\Omega)}^2\leq \delta$, we define $\tilde \psi_1(x,y)=\tilde \psi(x-x_0,y-y_0)$ and $\tilde \omega_1=-\Delta\tilde \psi_1$. Then $d_2(\tilde \omega_1,\omega_{\ep_0})=\|\tilde\psi_1-\psi_{\ep_0}\|_{\dot{H}^1(\Omega)}^2\leq \delta$, and thus, there exist $\tilde x_0(\tilde\omega_1)\in\mathbb{T}_{2\pi}$, $\tilde y_0(\tilde\omega_1)\in\mathbb{R}$ and $\tilde \epsilon_0(\tilde\omega_1)\in(a(\ep_0),b(\ep_0))$ such that
\begin{align*}
S_i(\tilde\omega_1,\tilde x_0(\tilde\omega_1), \tilde y_0(\tilde\omega_1),\tilde \ep_0(\tilde\omega_1))=S_i(\tilde\omega,x_0+\tilde x_0(\tilde\omega_1), y_0+\tilde y_0(\tilde\omega_1),\tilde\ep_0(\tilde\omega_1))=0
\end{align*}
 for $i=1,2,3$. The conclusion follows from setting $\tilde x_0=x_0+\tilde x_0(\tilde \omega_1), \tilde y_0=y_0+\tilde y_0(\tilde \omega_1)$ and $\tilde\ep_0=\tilde \ep_0(\tilde \omega_1)$.
\end{proof}

Moreover, we prove that the following functional is not locally flat on the family of steady states $\omega_{\ep}, \ep\in[0,1)$. This is useful to control the distance between  the evolved solution and the given steady state  in the $\ep$ direction.

\begin{lemma}\label{intOmega2} As a function of $\ep$,
\begin{align}\label{additional functional-2deuler}I(\omega_\ep)\triangleq\iint_{\Omega} (-\omega_\ep)^{3\over2} dxdy\end{align}
cannot be a constant on any subinterval of $(-1, 1)$, where  $\omega_\ep = - \frac{1 - \ep^2}{(\cosh(y) + \ep \cos(x))^2}$.
\end{lemma}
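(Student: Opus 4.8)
The plan is to use the explicit expression of the steady vorticity. Since $(-\omega_\ep)^{3/2}=(1-\ep^2)^{3/2}(\cosh(y)+\ep\cos(x))^{-3}$, we have
\begin{align*}
I(\omega_\ep)=(1-\ep^2)^{3/2}\iint_\Omega \frac{dxdy}{(\cosh(y)+\ep\cos(x))^3},\qquad \ep\in(-1,1).
\end{align*}
First I would record that for each fixed $\ep\in(-1,1)$ the integrand is positive, bounded (because $\cosh(y)+\ep\cos(x)\geq\cosh(y)-|\ep|\geq 1-|\ep|>0$), and decays like $e^{-3|y|}$ as $|y|\to\infty$, so $I(\omega_\ep)$ is finite; in particular $I(\omega_0)=2\pi\int_{\mathbb{R}}\sech^3(y)dy=\pi^2<\infty$.

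The core of the argument is that $\ep\mapsto I(\omega_\ep)$ is real-analytic on $(-1,1)$. For any compact $K\subset(-1,1)$ I would choose $\delta>0$ small enough that on the complex neighbourhood $K_\delta=\{z\in\mathbb{C}:\operatorname{dist}(z,K)<\delta\}$ the function $(1-z^2)^{3/2}$ is a well-defined holomorphic branch and $|\cosh(y)+z\cos(x)|$ stays bounded below by a fixed positive constant for $|y|$ bounded and grows like $e^{|y|}$ for $|y|$ large, uniformly in $z\in K_\delta$ and $x\in\mathbb{T}_{2\pi}$. Then $z\mapsto (1-z^2)^{3/2}(\cosh(y)+z\cos(x))^{-3}$ is holomorphic on $K_\delta$ with an $\ep$-independent integrable majorant, so by Morera's theorem together with Fubini's theorem $I(\omega_\ep)$ extends holomorphically to $K_\delta$; since $K$ is arbitrary, $I$ is real-analytic on $(-1,1)$.

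Next I would show $I(\omega_\ep)\to+\infty$ as $\ep\to1^-$ by a crude lower bound concentrated near the point $(x,y)=(\pi,0)$ (the minimum of $\psi_\ep$). For $|x-\pi|\leq1$, $|y|\leq1$ and $\ep\in[\tfrac12,1)$, using $\cosh(y)\leq1+y^2$ and $\cos(x)\leq -1+\tfrac12(x-\pi)^2$, one has
\begin{align*}
\cosh(y)+\ep\cos(x)\leq (1-\ep)+y^2+\tfrac12(x-\pi)^2\leq (1-\ep)+r^2,\qquad r^2:=y^2+(x-\pi)^2,
\end{align*}
hence $I(\omega_\ep)\geq (1-\ep^2)^{3/2}\iint_{r\leq1}((1-\ep)+r^2)^{-3}\,dxdy$. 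Passing to polar coordinates in $(y,x-\pi)$, the last integral is bounded below by $c(1-\ep)^{-2}$ for a constant $c>0$, so $I(\omega_\ep)\geq c'(1-\ep)^{-1/2}\to\infty$.

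Finally, if $I(\omega_\ep)$ were constant on some subinterval of $(-1,1)$, the identity theorem for real-analytic functions would force it to be constant on all of $(-1,1)$, contradicting $I(\omega_0)=\pi^2<\infty$ together with $I(\omega_\ep)\to\infty$ as $\ep\to1^-$. I expect the only delicate point to be the rigorous justification of real-analyticity (setting up the complex neighbourhood $K_\delta$ and the uniform integrable majorant); the blow-up estimate and the concluding identity-theorem step are routine. An alternative to the analyticity route would be to compute $\frac{d}{d\ep}I(\omega_\ep)=-\frac{3}{1-\ep^2}\iint_\Omega \xi_\ep(-\omega_\ep)^{3/2}dxdy$ and argue this cannot vanish on an interval, but the symmetry $\ep\mapsto-\ep$ makes this derivative vanish at $\ep=0$ and the sign analysis of $\xi_\ep$ against the positive weight $(-\omega_\ep)^{3/2}$ is less transparent, so I would prefer the analyticity argument.
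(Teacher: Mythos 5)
Your proof is correct, and its skeleton is the same as the paper's: show $\ep\mapsto I(\omega_\ep)$ is real-analytic on $(-1,1)$, show it blows up as $\ep$ approaches an endpoint, and conclude by the identity theorem. The implementations differ in two ways worth noting. For the blow-up, the paper first passes to the coordinates $(\theta_\ep,\gamma_\ep)$, using that the Jacobian equals $-\omega_\ep$ to write $I(\omega_\ep)=\int_{-1}^{1}\int_0^{2\pi}(-\omega_\ep)^{1/2}\,d\theta_\ep d\gamma_\ep$ with $-\omega_\ep=\eta_\ep^2+\tfrac{1}{1-\ep^2}(\xi_\ep-\ep)^2$, and then drops the $\eta_\ep^2$ term to get the lower bound $\tfrac{1}{\sqrt{1-\ep^2}}\iint\bigl|\sqrt{1-\gamma_\ep^2}\cos\theta_\ep-\ep\bigr|\,d\theta_\ep d\gamma_\ep\to\infty$ as $\ep\to\pm1^{\mp}$; you instead stay in the original $(x,y)$ variables and extract the divergence $I(\omega_\ep)\gtrsim(1-\ep)^{-1/2}$ from a local estimate near the elliptic point $(\pi,0)$, which is equally valid (and your rate matches the paper's), though only at the endpoint $\ep\to1^-$, which suffices. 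For the analyticity, the paper simply asserts that $I(\omega_\ep)$ is real-analytic on $(-1,1)$, whereas you supply a justification (holomorphic extension in $z$ to a complex neighbourhood via a uniform integrable majorant, Fubini and Morera), which actually fills in the one step the paper leaves implicit. So your argument is a legitimate, self-contained variant of the paper's proof that avoids the $(\theta_\ep,\gamma_\ep)$ machinery at the cost of a slightly more hands-on blow-up estimate.
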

\begin{proof}
By \eqref{Jacobian of the transformation-ep}, we have  $$\frac{\partial (\theta_\ep, \gamma_\ep)}{\partial (x, y)} =\frac{1}{2} g'(\psi_\ep) = - \omega_\ep,$$
and thus,
\begin{align*}
\iint_{\Omega} (-\omega_\ep)^{3\over2}  dxdy = \int_{-1}^1 \int_{0}^{2\pi} (-\omega_\ep)^{1\over2}  d \theta_\ep d\gamma_\ep.
\end{align*}
\if0
We use $\eta_\ep, \gamma_\ep, \xi_\ep$ and $\ep$ to represent $-\omega_\ep = \frac{1 - \ep^2}{(\cosh(y) + \ep \cos(x))^2}$. Rewriting $$\xi_\ep = \frac{\ep \cosh(y) + \cos(x)}{\cosh(y) + \ep \cos(x)} = \frac{\ep \frac{\cosh(y)}{\cos(x)} + 1}{\frac{\cosh(y)}{\cos(x)} + \ep},$$
we obtain $$\frac{\cosh(y)}{\cos(x)} + \ep = \frac{1-\ep^2}{\xi_\ep - \ep}.$$
Similarly, we have
$$\eta_\ep = \frac{\sqrt{1-\ep^2} \sin(x)}{\cosh(y) + \ep \cos(x)} = \frac{\sqrt{1-\ep^2} \tan(x) }{\frac{\cosh(y)}{\cos(x)} + \ep}$$
and $$\tan(x) = \frac{\sqrt{1-\ep^2} \eta_\ep }{\xi_\ep - \ep}.$$
\fi
By \eqref{omega-xi-eta-gamma-ep}, we have
\begin{align*}
-\omega_\ep
 = \eta_\ep^2 + \frac{1}{1-\ep^2} (\xi_\ep - \ep)^2.
\end{align*}
Recall that $
\eta_\ep = \sqrt{1-\gamma_\ep^2} \sin(\theta_\ep)$ and $
\xi_\ep = \sqrt{1-\gamma_\ep^2} \cos(\theta_\ep)$. Then
we have
\begin{align*}
I(\omega_\ep)&=\iint_{\Omega} (-\omega_\ep)^{3\over2}  dxdy
 = \int_{-1}^1 \int_{0}^{2\pi} (-\omega_\ep)^{1\over2}  d \theta_\ep d\gamma_\ep \\
& = \int_{-1}^1 \int_{0}^{2\pi} \left(\eta_\ep^2 + \frac{1}{1-\ep^2} (\xi_\ep - \ep)^2\right)^{1\over2} d \theta_\ep d\gamma_\ep \\
& = \int_{-1}^1 \int_{0}^{2\pi} \left((1-\gamma_\ep^2) \sin^2(\theta_\ep) + \frac{1}{1-\ep^2} \left( \sqrt{1-\gamma_\ep^2} \cos(\theta_\ep) -\ep \right)^2\right)^{1\over2}  d \theta_\ep d\gamma_\ep \\
& \geq\frac{1}{\sqrt{1-\ep^2}}\int_{-1}^1 \int_{0}^{2\pi} \left| \sqrt{1-\gamma_\ep^2} \cos(\theta_\ep) -\ep \right|  d \theta_\ep d\gamma_\ep
\\
&\to\infty \quad\text{as}\quad \ep\to\pm1^{\mp}.
\end{align*}
Since $I(\omega_\ep)$, as a function of $\ep$, is real-analytic on $(-1,1)$, $I(\omega_\ep)$ cannot be a constant on any subinterval of $(-1, 1)$.
\end{proof}

\subsection{Proof of nonlinear orbital stability for co-periodic perturbations}
Now, we are in a position to prove Theorem \ref{main result4-nonlinear orbital stability}.

\begin{proof}[Proof of  Theorem \ref{main result4-nonlinear orbital stability}]

We prove in the Appendix the existence of weak solutions to the 2D Euler equation with initial vorticity $\tilde\omega_0\in Y_{non}$. The first step is to construct a smooth approximate solution sequence. For $\mu>0$, let $\tilde\omega_0^\mu$ be the mollified initial vorticity defined by \eqref{tilde-omega0-kappa-def}. In Lemma \ref{lem-construction of an approximate solution sequence}, we show that the initial velocity
\[
\vec v_0^\mu=K\ast \tilde\omega_0^\mu
\]
generates a global smooth solution $\vec v^\mu(t)$ to the 2D Euler equation with $\vec v^\mu(t)\in H^q(\Omega)$ for every $q\ge 3$. The family $\{\vec v^\mu\}$ forms an approximate solution sequence with  $L^1$ and $L^2$ vorticity control; see Definition \ref{Approximate solution sequence for the 2D Euler equation}. We then prove in Lemma \ref{convergence of an approximate solution sequence} and Theorem \ref{existence of weak solution to the 2D Euler equation-thm} that $\vec v^\mu\to \vec v$ in $L^1\cap L^2(\Omega_{R,T})$ for every $R,T>0$, and that the limit $\vec v$ is a weak solution of the 2D Euler equation with initial vorticity $\tilde\omega_0\in Y_{non}$, where $\Omega_{R,T}=[0,T]\times B_R$ and $B_R=\{(x,y)\in\mathbb T_{2\pi}\times[-R,R]\}$. With this existence result in hand, we divide the proof of nonlinear orbital stability of $\omega_{\ep_0}$ into two steps.
\vspace{0.5mm}

\noindent{\bf{Step 1.}} Prove the nonlinear orbital stability for the smooth approximate solution $\omega^\mu(t)=\curl(\vec{v}^{\mu}(t))$. More precisely,
for any $\kappa>0$, there exists $\tilde\delta=\tilde\delta(\ep_0,\kappa)>0$ (independent of $\mu$) such that if
\begin{align}\nonumber&\inf_{(x_0,y_0)\in\Omega} d(\tilde \omega^\mu(0),\omega_{\ep_0}(x+x_0,y+y_0))\\
+&\inf_{(x_0,y_0)\in\Omega}\|\tilde \omega^\mu(0)-\omega_{\ep_0}(x+x_0,y+y_0)\|_{L^2(\Omega)}<\tilde\delta(\ep_0,\kappa),\label{initial-vorticity-app-small}
\end{align}
 then for any $t\geq0$, we have
\begin{align}\label{onlinear orbital stability-app}
\inf_{(x_0,y_0)\in\Omega}d(\tilde \omega^\mu(t),\omega_{\ep_0}(x+x_0,y+y_0))<\kappa.
\end{align}

By Lemma \ref{tilde-omega0-kappa-properties} (8), $\tilde \omega^\mu(0)\in Y_{non}$. It follows from Corollary \ref{y-tilde-omega-pseudoenergy-conserved} (1) that $\tilde \omega^\mu(t)\in Y_{non}$ for $t>0$. Thus, we infer from Lemma \ref{well-poseness-Poisson-equation-nonlinear-case} and \eqref{d1-well-def} that $d(\tilde \omega^\mu(t),\omega_{\ep_0}(x+x_0,y+y_0))$ is well-defined for $t>0$.
By Lemma \ref{imp-vertical condition},  there exists $\delta_0(\ep_0)>0$ such that for any  $(x_0,y_0)\in\Omega$ and  $\tilde \omega\in Y_{non}$ with $d_2(\tilde \omega,\omega_{\ep_0}(x+x_0,y+y_0))< \delta_0(\ep_0)$, there exist $(\tilde x_0,\tilde y_0)\in\Omega$ and $\tilde\epsilon_0\in(a(\ep_0),b(\ep_0))$, depending continuously on $\tilde\omega, x_0$ and $y_0$, such that
\begin{align}\label{app-lemma-imp-vertical condition}
\tilde \psi\left(x-\tilde x_0,y-\tilde y_0\right)-\psi_{\tilde\ep_0}(x,y)\perp\ker \left( A_{\tilde\ep_0}\right)\quad \text{in}\quad \dot{H}^1(\Omega)
\end{align}
 and
$
|x_0-\tilde x_0|+|y_0-\tilde y_0|+|\ep_0-\tilde \ep_0|\leq C(\ep_0)\sqrt{\delta_0(\ep_0)}
$
for some $a(\ep_0)\in (0,\ep_0)$ and $b(\ep_0)\in(\ep_0,1)$.
For any $\kappa>0$, let $\tilde\delta=\tilde\delta(\ep_0,\kappa)<\min\big\{{\kappa^2\over8C_1C_2(\ep_0)^2C_3(\ep_0)^2},$ ${\delta_0(\ep_0)\over2},1\big\}$, where $C_1, C_2(\ep_0), C_3(\ep_0)>1$ are  determined by \eqref{de-c1}, \eqref{I-omegaep1t0-omegaep0} and \eqref{de-c3}.
For the initial data $\tilde \omega^\mu(0)$ satisfying
\eqref{initial-vorticity-app-small},
there exist $(x_0^\mu(0),y_0^\mu(0))\in\Omega$ and $(x_*^\mu(0),y_*^\mu(0))\in\Omega$ such that
\begin{align}\label{initial data distance} d(\tilde \omega^\mu(0),\omega_{\ep_0}(x+x_0^\mu(0),y+y_0^\mu(0)))<\tilde\delta(\ep_0,\kappa),
\end{align}
and
\begin{align}
\|\tilde \omega^\mu(0)-\omega_{\ep_0}(x+x_*^\mu(0),y+y_*^\mu(0))\|_{L^2(\Omega)}<\tilde\delta(\ep_0,\kappa).\label{initial data}\end{align}

For $t\geq0$, we claim that if there exists $( x_0^\mu(t),y_0^\mu(t))\in\Omega$ such that $d(\tilde \omega^\mu(t),\omega_{\ep_0}(x+x_0^\mu(t),y+y_0^\mu(t)))<\delta_0(\ep_0)$, then there exist $(x_1^\mu(t),y_1^\mu(t))\in\Omega$ and $\ep_1^\mu(t)\in(a(\ep_0),b(\ep_0))$ such that
\begin{align}\label{a prior estimate}
d(\tilde \omega^\mu(t),\omega_{\ep_1^\mu(t)}(x+x_1^\mu(t),y+y_1^\mu(t)))<{\kappa^2\over4C_2(\ep_0)^2C_3(\ep_0)^2}.
\end{align}
In fact, by applying \eqref{app-lemma-imp-vertical condition} to  $\tilde \omega^\mu(t)$,  we can choose $(x_1^\mu(t),y_1^\mu(t))\in\Omega$ and $\ep_1^\mu(t)\in(a(\ep_0),b(\ep_0))$, depending continuously on $t$, such that
$\tilde \psi^\mu(x-x_1^\mu(t),$ $y-y_1^\mu(t))-\psi_{\ep_1^\mu(t)}(x,y)\perp\ker \left( A_{\ep_1^\mu(t)}\right)$
in $ \tilde X_{\ep_1^\mu(t)}$,
and
\begin{align}\label{translation distance}
|x_0^{\mu}(t)-x_1^{\mu}(t)|+|y_0^{\mu}(t)-y_1^{\mu}(t)|+|\ep_0-\ep_1^{\mu}(t)|\leq C(\ep_0)\sqrt{\delta_0(\ep_0)}.
\end{align}
By \eqref{initial data distance} and Lemma \ref{imp-vertical condition}, $\sqrt{\delta_0(\ep_0)}$ in \eqref{translation distance} can be replaced by $\sqrt{\tilde\delta(\ep_0,\kappa)}$ for $t=0$. By adding a constant if necessary, we have $\tilde \psi^\mu(x-x_1^\mu(t),$ $y-y_1^\mu(t))-\psi_{\ep_1^\mu(t)}(x,y)\in
\tilde X_{\ep_1^\mu(t)}$. Noting that if the constant is omitted, then the proof is the same since $\iint_\Omega\psi\omega dxdy=\iint_\Omega(\psi-c)\omega dxdy$ in  \eqref{H-omega-H-0} for any $c\in\mathbb{R}$ due to $\iint_\Omega\omega dxdy=0$. So in this proof, we write $\tilde \psi^\mu(x-x_1^\mu(t),$ $y-y_1^\mu(t))-\psi_{\ep_1^\mu(t)}(x,y)\in
\tilde X_{\ep_1^\mu(t)}$ in the sense that a constant difference is allowed.
By taking $\tilde\delta(\ep_0,\kappa)>0$ smaller, we infer from \eqref{translation distance} for $t=0$ that  $ d(\omega_{\ep_0}(x+x_0^\mu(0),y+y_0^\mu(0)),\omega_{\ep}(x+x_1^\mu(0),y+y_1^\mu(0)))<{\kappa^2\over8C_1C_2(\ep_0)^2C_3(\ep_0)^2}$, which along with \eqref{initial data distance}, implies
\begin{align*}
&d(\tilde \omega^\mu(0),\omega_{\ep}(x+x_1^\mu(0),y+y_1^\mu(0)))\\
\leq& d(\tilde \omega^\mu(0),\omega_{\ep_0}(x+x_0^\mu(0),y+y_0^\mu(0)))\\
&+d( \omega_{\ep_0}(x+x_0^\mu(0),y+y_0^\mu(0)), \omega_{\ep}(x+x_1^\mu(0), y+ y_1^\mu(0)))\\
\leq &{\kappa^2\over8C_1C_2(\ep_0)^2C_3(\ep_0)^2}+{\kappa^2\over8C_1C_2(\ep_0)^2C_3(\ep_0)^2}={\kappa^2\over4C_1C_2(\ep_0)^2C_3(\ep_0)^2},
\end{align*}
where $\ep=\ep_0$ or $\ep_1^\mu(0)$.
Take $\tau\in(0,1)$ small enough such that  $\left(  (1-\tau) C_0- \frac 1 2 \tau\right)>\tau$, where
$C_0>0$ is given in \eqref{A-ep-positive-lower-bound}.
By \eqref{H-omega-H-0}-\eqref{def-functional-B}, \eqref{B-ep-A-ep}-\eqref{A-ep-positive-lower-bound} and Lemma \ref{B-C2}, we have
\begin{align}\nonumber
& d(\tilde \omega^\mu(0),\omega_{\ep_1^\mu(0)}(x+x_1^\mu(0),y+y_1^\mu(0)))\\\nonumber
\geq&H_{\ep_1^\mu(0)}(\tilde \omega^\mu(0)-\omega_{\ep_1^\mu(0)}(x+x_1^\mu(0),y+y_1^\mu(0))) - H_{\ep_1^\mu(0)}(0)\\\nonumber
 =&H_{\ep_1^\mu(t)}(\tilde \omega_{tran}^\mu(t)-\omega_{\ep_1^\mu(t)}) - H_{\ep_1^\mu(t)}(0)\\\nonumber
 =&
 \tau d_1(\tilde \omega_{tran}^\mu(t),\omega_{\ep_1^\mu(t)}) - \frac 1 2 \tau d_2(\tilde \omega_{tran}^\mu(t),\omega_{\ep_1^\mu(t)}) \\\nonumber
 &+ (1-\tau) \left( d_1(\tilde \omega_{tran}^\mu(t),\omega_{\ep_1^\mu(t)}) - \frac 1 2  d_2(\tilde \omega_{tran}^\mu(t),\omega_{\ep_1^\mu(t)})\right) \\\nonumber
 \geq& \tau d_1(\tilde \omega_{tran}^\mu(t),\omega_{\ep_1^\mu(t)}) - \frac 1 2 \tau d_2(\tilde \omega_{tran}^\mu(t),\omega_{\ep_1^\mu(t)})
 + (1-\tau) \mathscr{B}_{\ep_1^\mu(t)}(\tilde \psi_{tran}^\mu(t)-\psi_{\ep_1^\mu(t)})\\\nonumber
 =&\tau d_1(\tilde \omega_{tran}^\mu(t),\omega_{\ep_1^\mu(t)}) - \frac 1 2 \tau d_2(\tilde \omega_{tran}^\mu(t),\omega_{\ep_1^\mu(t)})\\\nonumber
& + (1-\tau) \left(\langle A_{\ep_1^\mu(t)}(\tilde \psi_{tran}^\mu(t)-\psi_{\ep_1^\mu(t)}),(\tilde \psi_{tran}^\mu(t)-\psi_{\ep_1^\mu(t)})\rangle+o(d_2(\tilde \omega_{tran}^\mu(t),\omega_{\ep_1^\mu(t)}))\right)\\\nonumber
\geq&\tau d_1(\tilde \omega_{tran}^\mu(t),\omega_{\ep_1^\mu(t)})+\left(  (1-\tau) C_0- \frac 1 2 \tau\right) d_2(\tilde \omega_{tran}^\mu(t),\omega_{\ep_1^\mu(t)})\\\nonumber
&+o(d_2(\tilde \omega_{tran}^\mu(t),\omega_{\ep_1^\mu(t)}))\\\nonumber
\geq&\tau d(\tilde \omega_{tran}^\mu(t),\omega_{\ep_1^\mu(t)})+o(d(\tilde \omega_{tran}^\mu(t),\omega_{\ep_1^\mu(t)}))\\\label{t control by 0-Taylor of dual functional}
=&\tau d(\tilde \omega^\mu(t),\omega_{\ep_1^\mu(t)}(x+x_1^\mu(t),y+y_1^\mu(t)))
+o(d(\tilde \omega^\mu(t),\omega_{\ep_1^\mu(t)}(x+x_1^\mu(t),y+y_1^\mu(t)))),
\end{align}
where $\tilde \omega_{tran}^\mu(t)\triangleq\tilde \omega^\mu(t,x-x_1^\mu(t),y-y_1^\mu(t))$, $\tilde \psi_{tran}^\mu(t)\triangleq\tilde \psi^\mu(t,x-x_1^\mu(t),y-y_1^\mu(t))$, and we used the fact that $H_{\ep}(\tilde \omega^\mu(t)-\omega_{\ep}(x+x_1,y+y_1)) - H_{\ep}(0)$ is conserved for all $t, x_1, y_1$ and $\ep$. Here, the conservation for $t$ and $\ep$ can be deduced from Corollary \ref{y-tilde-omega-pseudoenergy-conserved} (2) and \eqref{H independent ep}, respectively. Then for $\kappa>0$ sufficiently small, by \eqref{t control by 0-Taylor of dual functional} and the continuity of $d(\tilde \omega^\mu(t),\omega_{\ep_1^\mu(t)}(x+x_1^\mu(t),y+y_1^\mu(t)))$ on $t$ we have
\begin{align}\nonumber
&d(\tilde \omega^\mu(t),\omega_{\ep_1^\mu(t)}(x+x_1^\mu(t),y+y_1^\mu(t)))\\\label{de-c1}
\leq& C_1 d(\tilde \omega^\mu(0),\omega_{\ep_1^\mu(0)}(x+x_1^\mu(0),y+y_1^\mu(0)))< {\kappa^2\over4C_2(\ep_0)^2C_3(\ep_0)^2},
\end{align}
where  $C_1={2\over\tau}>1$. This proves \eqref{a prior estimate}.

For any $\kappa\in(0,\min\{\delta_0(\ep_0), 1\})$, suppose that \eqref{onlinear orbital stability-app} is not true. Then there exists  $t_0>0$ such that $\inf_{(x_0,y_0)\in\Omega}d(\tilde \omega^\mu(t),\omega_{\ep_0}(x+x_0,y+y_0))<\kappa$ for $0\leq t<t_0$ and
\begin{align}\label{suppose inf=kappa}
\inf_{(x_0,y_0)\in\Omega}d(\tilde \omega^\mu(t_0),\omega_{\ep_0}(x+x_0,y+y_0))=\kappa.
\end{align}
Since $\kappa<\delta_0(\ep_0)$, there exists $( x_0^\mu(t),y_0^\mu(t))\in\Omega$, depending continuously on $t$, such that $ d(\tilde \omega^\mu(t),\omega_{\ep_0}(x+x_0^\mu(t),y+y_0^\mu(t)))<\delta_0(\ep_0)$ for $0\leq t\leq t_0$.
 By \eqref{a prior estimate},
 there exist $(x_1^\mu(t),y_1^\mu(t))\in\Omega$ and $\ep_1^\mu(t)\in(a(\ep_0),b(\ep_0))$ such that
\begin{align}\label{d-t0-ep1}
d(\tilde \omega^\mu(t),\omega_{\ep_1^\mu(t)}(x+x_1^\mu(t),y+y_1^\mu(t)))<{\kappa^2\over4C_2(\ep_0)^2C_3(\ep_0)^2}<{\kappa\over2},\quad 0\leq t\leq t_0.
\end{align}
We then show that
\begin{align}\label{ep1t0ep0}
d(\omega_{\ep_1^\mu(t_0)},\omega_{\ep_0})<{\kappa\over 2}.
\end{align}
Assume that \eqref{ep1t0ep0} is true. Then
\begin{align*}
&d(\tilde \omega^\mu(t_0),\omega_{\ep_0}(x+x_1^\mu(t_0),y+y_1^\mu(t_0)))\\
\leq& d(\tilde \omega^\mu(t_0),\omega_{\ep_1^\mu(t_0)}(x+x_1^\mu(t_0),y+y_1^\mu(t_0)))\\
&+d(\omega_{\ep_1^\mu(t_0)}(x+x_1^\mu(t_0),y+y_1^\mu(t_0)),\omega_{\ep_0}(x+x_1^\mu(t_0),y+y_1^\mu(t_0))) \\
<&{\kappa\over2}+{\kappa\over2}=\kappa.
\end{align*}
This contradicts \eqref{suppose inf=kappa}.

The rest is to prove \eqref{ep1t0ep0}. By the continuity of $
d(\omega_{\ep},\omega_{\ep_0})$ on $\ep$, it suffices to show that $|\ep_1^\mu(t_0)-\ep_0|<\delta_1(\ep_0)$ for some  $\delta_1(\ep_0)>0$ small enough.
Note that
$|\ep_1^{\mu}(0)-\ep_0|\leq C(\ep_0)\sqrt{\tilde\delta(\ep_0,\kappa)}$ by \eqref{translation distance} for $t=0$, and $\ep_1^{\mu}(t)$ is continuous on $t\in[0,t_0]$.
 By Lemma \ref{intOmega2} and taking $\tilde\delta(\ep_0,\kappa)>0$ smaller, we only need to prove that
\begin{align}\label{I-omegaep1t0-omegaep0}
|I(\omega_{\ep_1^\mu(t)})-I(\omega_{\ep_0})|<{\kappa\over C_2(\ep_0)},\quad 0\leq t\leq t_0
\end{align}
for some $C_2(\ep_0)>1$ large enough, where $I(\tilde \omega)=\iint_{\Omega}(-\tilde \omega)^{3\over2}dxdy$ for $\tilde \omega\in Y_{non}$. In fact, by Taylor's formula, we have
\begin{align}\nonumber
&d_1(\tilde \omega^\mu(t),\omega_{\ep_1^\mu(t)}(x+x_1^\mu(t),y+y_1^\mu(t)))\\\nonumber
=&\iint_{\Omega}\bigg(h(\tilde \omega^\mu(t))-h(\omega_{\ep_1^\mu(t)}(x+x_1^\mu(t),y+y_1^\mu(t)))\\\nonumber
&-h'(\omega_{\ep_1^\mu(t)}(x+x_1^\mu(t),y+y_1^\mu(t)))
(\tilde \omega^\mu(t)-\omega_{\ep_1^\mu(t)}(x+x_1^\mu(t),y+y_1^\mu(t)))\bigg)dxdy\\\nonumber
=&\int_0^1\iint_\Omega{(1-r)\big(\tilde \omega^\mu(t)-\omega_{\ep_1^\mu(t)}(x+x_1^\mu(t),y+y_1^\mu(t))\big)^2\over 2|\omega^{\mu,r}(t)|}dxdydr\\\label{d1 estimates}
\geq&\iint_\Omega{\big(\tilde \omega^\mu(t)-\omega_{\ep_1^\mu(t)}(x+x_1^\mu(t),y+y_1^\mu(t))\big)^2\over 4|\tilde \omega^\mu(t)+\omega_{\ep_1^\mu(t)}(x+x_1^\mu(t),y+y_1^\mu(t))|}dxdy,
\end{align}
where $0\leq t\leq t_0$ and $\omega^{\mu,r}(t,x,y)=r\tilde \omega^\mu(t,x,y)+(1-r)\omega_{\ep_1^\mu(t)}(x+x_1^\mu(t),y+y_1^\mu(t))
$ for $r\in[0,1]$.
Noting that $I(\tilde \omega^\mu(t))$ is conserved for all $t$, by \eqref{d1 estimates} and \eqref{d-t0-ep1}  we have
\begin{align}\nonumber
&|I(\tilde \omega^\mu(0))-I(\omega_{\ep_1^\mu(t)})|=|I(\tilde \omega^\mu(t))-I(\omega_{\ep_1^\mu(t)}(x+x_1^\mu(t),y+y_1^\mu(t)))|\\\nonumber
=&\left|\iint_\Omega\left((-\tilde \omega^\mu(t))^{3\over2}-(-\omega_{\ep_1^\mu(t)}(x+x_1^\mu(t),y+y_1^\mu(t)))^{3\over2}\right)dxdy\right|\\\nonumber
=&{3\over2}\left|\int_0^1\iint_\Omega|\omega^{\mu,r}(t)|^{1\over2}(\tilde \omega^\mu(t)-\omega_{\ep_1^\mu(t)}(x+x_1^\mu(t),y+y_1^\mu(t)))dxdydr\right|\\\nonumber
\leq&{3\over2}\bigg|\iint_\Omega|\tilde \omega^\mu(t)+\omega_{\ep_1^\mu(t)}(x+x_1^\mu(t),y+y_1^\mu(t))|^{1\over2}\cdot\\\nonumber
&|\tilde \omega^\mu(t)-\omega_{\ep_1^\mu(t)}(x+x_1^\mu(t),y+y_1^\mu(t))|dxdy\bigg|\\\nonumber
\leq&{3\over2}\left(\iint_\Omega{(\tilde \omega^\mu(t)-\omega_{\ep_1^\mu(t)}(x+x_1^\mu(t),y+y_1^\mu(t)))^2\over 4|\tilde \omega^\mu(t)+\omega_{\ep_1^\mu(t)}(x+x_1^\mu(t),y+y_1^\mu(t))|}dxdy\right)^{1\over2}\cdot\\\nonumber
&\left(\iint_\Omega4|\tilde \omega^\mu(t)+\omega_{\ep_1^\mu(t)}(x+x_1^\mu(t),y+y_1^\mu(t))|^2dxdy\right)^{1\over2}\\\nonumber
\leq &{3\sqrt{2}}d_1(\tilde \omega^\mu(t),\omega_{\ep_1^\mu(t)}(x+x_1^\mu(t),y+y_1^\mu(t)))^{1\over2}
\left(\|\tilde \omega^\mu(t)\|_{L^2(\Omega)}^2+\|\omega_{\ep_1^\mu(t)}\|_{L^2 (\Omega)}^2\right)^{1\over2}\\\nonumber
\leq&{3\sqrt{2}}d_1(\tilde \omega^\mu(t),\omega_{\ep_1^\mu(t)}(x+x_1^\mu(t),y+y_1^\mu(t)))^{1\over2}
\left(\|\tilde \omega^\mu(0)\|_{L^2(\Omega)}^2+\|\omega_{\ep_1^\mu(t)}\|_{L^2 (\Omega)}^2\right)^{1\over2}\\\nonumber
\leq&C_3(\ep_0)d_1(\tilde \omega^\mu(t),\omega_{\ep_1^\mu(t)}(x+x_1^\mu(t),y+y_1^\mu(t)))^{1\over2}\\\label{Iomega0Iomegat1}
< &{\kappa\over2C_2(\ep_0)},\quad 0\leq t\leq t_0,
\end{align}
where
\begin{align}\label{de-c3}
C_3(\ep_0)=&{3}\sqrt{2}\bigg(\left(1+\|\omega_{\ep_0}\|_{L^2(\Omega)}\right)^2+
\max_{\ep\in [a(\ep_0),b(\ep_0)]}\|\omega_{\ep}\|_{L^2 (\Omega)}^2
\bigg)^{1\over2}>1,
\end{align}
and we used
\begin{align}&\|\tilde \omega^\mu(0)\|_{L^2(\Omega)}\leq\|\tilde \omega^\mu(0)-\omega_{\ep_0}(x+x_*^\mu(0),y+y_*^\mu(0))\|_{L^2(\Omega)}+ \|\omega_{\ep_0}\|_{L^2(\Omega)}\nonumber\\\label{uniform-L2-bound-tilde-omega0}
\leq& \tilde\delta(\ep_0,\kappa)+\|\omega_{\ep_0}\|_{L^2(\Omega)}\leq 1+\|\omega_{\ep_0}\|_{L^2(\Omega)}\end{align}  due to \eqref{initial data}.
\if0
By Taylor's formula,
\begin{align*}
f(\omega) &= h(\omega_\ep + \omega) - h(\omega_\ep) - h'(\omega_\ep) \omega \\
& = \int_0^1 (1-r) h''(\omega_\ep + r\omega) \omega^2 dr\\
& = \int_0^1 -\frac{(1-r)  \omega^2}{2(\omega_\ep + r\omega)} dr.\\
\end{align*}
So,
$$d_1(\omega) = \iint_\Omega f(\omega) dxdy = \int_0^1 \iint_\Omega -\frac{(1-r)  \omega^2}{2(\omega_\ep + r\omega)} dxdydr.$$
\fi
Similar to \eqref{d1 estimates}-\eqref{Iomega0Iomegat1}, we have
\begin{align}\nonumber
&|I(\tilde \omega^\mu(0))-I(\omega_{\ep_0})|=|I(\tilde \omega^\mu(0))-I(\omega_{\ep_0}(x+x_1^\mu(0),y+y_1^\mu(0)))|\\\label{Itildeomega0Iomegaep0}
\leq& C_3(\ep_0)d_1(\tilde \omega^\mu(0),\omega_{\ep_0}(x+x_1^\mu(0),y+y_1^\mu(0)))^{1\over2}
\leq {\kappa\over2\sqrt{C_1}C_2(\ep_0)}< {\kappa\over2C_2(\ep_0)},
\end{align}
where we used \eqref{initial data distance}.
Combining \eqref{Iomega0Iomegat1} and \eqref{Itildeomega0Iomegaep0}, we have
\begin{align*}
|I(\omega_{\ep_1^\mu(t)})-I(\omega_{\ep_0})|\leq |I(\tilde \omega^\mu(0))-I(\omega_{\ep_1^\mu(t)})|+|I(\tilde \omega^\mu(0))-I(\omega_{\ep_0})|
<{\kappa\over C_2(\ep_0)}
\end{align*}
for $0\leq t\leq t_0$.
This proves \eqref{I-omegaep1t0-omegaep0}.

\vspace{0.5mm}

\noindent{\bf{Step 2.}} Prove the nonlinear orbital stability \eqref{onlinear orbital stability-goal} for the weak solution $\tilde \omega(t)$ by taking limits.

For any $\kappa>0$, let $\delta(\ep_0,\kappa)={1\over3}\tilde\delta\left(\ep_0,{1\over2}\kappa\right)$ and $\tilde \omega(0)\in Y_{non}$ such that
$$\inf_{(x_0,y_0)\in\Omega} d(\tilde \omega(0),\omega_{\ep_0}(x+x_0,y+y_0))+\inf_{(x_0,y_0)\in\Omega}\|\tilde \omega(0)-\omega_{\ep_0}(x+x_0,y+y_0)\|_{L^2(\Omega)}<\delta(\ep_0,\kappa).$$
Then there exist $(\tilde x_1, \tilde y_1), (\tilde x_2,\tilde y_2)\in\Omega$ such that
\begin{align}\label{tilde-omega0-ometa-ep0-distance}
 d(\tilde \omega(0),\omega_{\ep_0}(x+\tilde x_1,y+\tilde y_1))+\|\tilde \omega(0)-\omega_{\ep_0}(x+\tilde x_2,y+\tilde y_2)\|_{L^2(\Omega)}<\delta(\ep_0,\kappa).
\end{align}
By Lemma \ref{tilde-omega0-kappa-properties} (8),  $-\tilde \omega^\mu(0)\ln(-\tilde \omega^\mu(0))\to-\tilde \omega(0)\ln(-\tilde\omega(0))$ in $L^1(\Omega)$. Moreover, $\tilde \omega^\mu(0)\to \tilde \omega(0)$ in $L^1\cap L^2(\Omega)$ and $\psi_{\ep_0}\tilde \omega^\mu(0)\to\psi_{\ep_0}\tilde \omega(0)$ in $L^1(\Omega)$ by Lemma \ref{tilde-omega0-kappa-properties} (4) and (7). Since $\psi_{(\tilde x_1,\tilde y_1)}(0,x,y)=(-\Delta)^{-1}(\tilde \omega(0,x-\tilde x_1,y-\tilde y_1)-\omega_{\ep_0}(x,y))\in \dot{H}^1(\Omega)$ by Lemma \ref{well-poseness-Poisson-equation-nonlinear-case}, we have $\psi_{(\tilde x_1,\tilde y_1)}^\mu(0)=\hat J_{\mu}\star\psi_{(\tilde x_1,\tilde y_1)}(0)\in\dot{H}^1(\Omega)$ and $ \nabla\psi_{(\tilde x_1,\tilde y_1)}^\mu(0)\to\nabla\psi_{(\tilde x_1,\tilde y_1)}(0)$ in $(L^2(\Omega))^2$, where $\star$  is defined in \eqref{convolution-R2-def}. Thus,
\begin{align*}
&\iint_\Omega\bigg(|h(\tilde \omega^\mu(0))-h(\tilde \omega(0))|+|\psi_{\ep_0}(x+\tilde x_1,y+\tilde y_1)(\tilde \omega^\mu(0)-\tilde \omega(0))|\\
&+2|\nabla\psi_{(\tilde x_1,\tilde y_1)}^\mu(0)-\nabla\psi_{(\tilde x_1,\tilde y_1)}(0)|^2\bigg)dxdy
+
\|\tilde \omega^\mu(0)- \tilde \omega(0)\|_{L^2(\Omega)}\to0
\end{align*}
as $\mu\to0^+$. This, along with \eqref{tilde-omega0-ometa-ep0-distance}, implies
\begin{align*}
&\inf_{(x_0,y_0)\in\Omega} d(\tilde \omega^\mu(0),\omega_{\ep_0}(x+x_0,y+y_0))+\inf_{(x_0,y_0)\in\Omega}\|\tilde \omega^\mu(0)-\omega_{\ep_0}(x+x_0,y+y_0)\|_{L^2(\Omega)}\\
\leq &d(\tilde \omega^\mu(0),\omega_{\ep_0}(x+\tilde x_1,y+\tilde y_1))+\|\tilde \omega^\mu(0)-\omega_{\ep_0}(x+\tilde x_2,y+\tilde y_2)\|_{L^2(\Omega)}\\
\leq&\iint_\Omega\bigg(|h(\tilde \omega^\mu(0))-h(\tilde \omega(0))|+|\psi_{\ep_0}(x+\tilde x_1,y+\tilde y_1)(\tilde \omega^\mu(0)-\tilde \omega(0))|\\
&+2|\nabla\psi_{(\tilde x_1,\tilde y_1)}^\mu(0)-\nabla\psi_{(\tilde x_1,\tilde y_1)}(0)|^2\bigg)dxdy
+d_1(\tilde \omega(0),\omega_{\ep_0}(x+\tilde x_1,y+\tilde y_1))\\
&+2d_2(\tilde \omega(0),\omega_{\ep_0}(x+\tilde x_1,y+\tilde y_1))+\|\tilde \omega^\mu(0)- \tilde \omega(0)\|_{L^2(\Omega)}+
\|\tilde \omega(0)-\omega_{\ep_0}(x+\tilde x_2,y+\tilde y_2)\|_{L^2(\Omega)}\\
\leq &3\delta(\ep_0,\kappa)=\tilde\delta\left(\ep_0,{1\over2}\kappa\right)
\end{align*}
for $\mu>0$ sufficiently small.
For fixed  $t\geq0$, by applying  Step 1, there exists $(x_1^\mu(t),y_1^\mu(t))\in\Omega$ such that
\begin{align}\label{onlinear orbital stability-apply1}
d(\tilde \omega_{tran}^\mu(t),\omega_{\ep_0})=d(\tilde \omega^\mu(t),\omega_{\ep_0}(x+x_1^\mu(t),y+y_1^\mu(t)))<{1\over2}\kappa
\end{align}
for $\mu>0$ sufficiently small.

Then we claim  that there exists $C(\ep_0,\tilde\omega(0))>0$ (independent of $\mu$) such that  $|y_1^\mu(t)|<C(\ep_0,\tilde\omega(0))$ for $\mu>0$ sufficiently small. Indeed,
by Corollary \ref{y-tilde-omega-pseudoenergy-conserved} (1) and Lemma \ref{tilde-omega0-kappa-properties} (6), we have
\begin{align}\label{y-bounded-first term}
\left|\iint_\Omega y\tilde\omega^\mu(t)dxdy\right|=\left|\iint_\Omega y\tilde\omega^\mu(0)dxdy\right|\leq\|y\tilde\omega^\mu(0)\|_{L^1(\Omega)}\leq \|y\tilde\omega(0)\|_{L^1(\Omega)}+1
\end{align}
 for $\mu>0$ small enough. For $|y|>\ln(4)$, we have
 \begin{align*}
 \psi_{\ep_0}(x,y)= \ln \left(\frac{\cosh (y) + \epsilon_0 \cos (x)}{\sqrt{1-\epsilon_0^2}} \right)\geq\ln \left(\frac{\cosh (y) -1}{\sqrt{1-\epsilon_0^2}} \right)\geq\ln \left({e^{|y|}\over4\sqrt{1-\epsilon_0^2}} \right)>0,
\end{align*}
and thus,
\begin{align}\label{y-bounded-second term}
|y|\leq \psi_{\ep_0}(x,y)+C_4(\ep_0), \quad y\in \mathbb{R},
\end{align}
where
 \begin{align*}
 C_4(\ep_0)=\left|\ln\left(4\sqrt{1-\epsilon_0^2}\right)\right|+\ln(4)+\max\limits_{x\in\mathbb{T}_{2\pi},y\in[-\ln(4),\ln(4)]}|\psi_{\ep_0}(x,y)|.
 \end{align*}
By \eqref{y-bounded-first term}-\eqref{y-bounded-second term}, \eqref{d1-well-def} and \eqref{onlinear orbital stability-apply1}, we have
\begin{align}\nonumber
|4\pi y_1^\mu(t)|=&\left|\iint_\Omega(y-y_1^\mu(t))\tilde\omega_{tran}^\mu(t)dxdy-\iint_\Omega y\tilde\omega_{tran}^\mu(t)dxdy\right|\\\nonumber
\leq&\|y\tilde\omega(0)\|_{L^1(\Omega)}+1-\iint_\Omega\psi_{\ep_0}\tilde\omega_{tran}^\mu(t)dxdy
+C_4(\ep_0)\|\tilde\omega^\mu(t)\|_{L^1(\Omega)}\\\nonumber
\leq&\|y\tilde\omega(0)\|_{L^1(\Omega)}+1+
d_1(\tilde \omega_{tran}^\mu(t),\omega_{\ep_0})\\\nonumber
&+\iint_{\Omega}\left({1\over2}(-\tilde \omega^\mu(t)+\tilde \omega^\mu(t)\ln(-\tilde \omega^\mu(t)))+{1\over 2}\omega_{\ep_0} \right) dxdy+C_4(\ep_0)\|\tilde\omega^\mu(t)\|_{L^1(\Omega)}\\\nonumber
\leq&\|y\tilde\omega(0)\|_{L^1(\Omega)}+1+{\kappa\over2}+\left({1\over2}+C_4(\ep_0)\right)(\|\tilde\omega(0)\|_{L^1(\Omega)}+1)\\\nonumber
&+{1\over2}(\|\tilde \omega(0)\ln(-\tilde \omega(0))\|_{L^1(\Omega)}+1)+{1\over2}\|\omega_{\ep_0}\|_{L^1(\Omega)}\triangleq 4\pi C(\ep_0,\tilde\omega(0))
\end{align}
for $\mu>0$ small enough, where we used
\begin{align*}
\|\tilde\omega^\mu(t)\|_{L^1(\Omega)}=&\|\tilde\omega^\mu(0)\|_{L^1(\Omega)}\leq \|\tilde\omega(0)\|_{L^1(\Omega)}+1,\\
\|\tilde \omega^\mu(t)\ln(-\tilde \omega^\mu(t))\|_{L^1(\Omega)}=&\|\tilde \omega^\mu(0)\ln(-\tilde \omega^\mu(0))\|_{L^1(\Omega)}\leq
\|\tilde \omega(0)\ln(-\tilde \omega(0))\|_{L^1(\Omega)}+1
\end{align*}
by Lemma \ref{tilde-omega0-kappa-properties} (4) and (8).

Up to a subsequence,  $x_1^\mu(t)\to x_1(t)$ and $y_1^\mu(t)\to y_1(t)$ for some $(x_1(t), y_1(t))\in\Omega$ as $\mu\to0^+$. We denote $\tilde \omega_{tran}(t)\triangleq\tilde \omega(t,x-x_1(t),y-y_1(t))$.
 By \eqref{tilde-omega-kappa-weak convergence L1L2}, we have
\begin{align*}
&\left|\iint_\Omega\left(\tilde \omega_{tran}^\mu(t)-\tilde \omega_{tran}(t)\right)\varphi(x,y)dxdy\right|\\
=&\bigg|\iint_\Omega\bigg(\tilde \omega^\mu(t)(\varphi(x+x_1^\mu(t),y+y_1^\mu(t))-\varphi(x+x_1(t),y+y_1(t)))+\\
&(\tilde \omega^\mu(t)-\tilde \omega(t))\varphi(x+x_1(t),y+y_1(t))\bigg)dxdy\bigg|\\
\leq&\|\tilde \omega^\mu(t)\|_{L^2(\Omega)}\|\varphi(x+x_1^\mu(t),y+y_1^\mu(t))-\varphi(x+x_1(t),y+y_1(t))\|_{L^2(\Omega)}\\
&+\bigg|\iint_\Omega
(\tilde \omega^\mu(t)-\tilde \omega(t))\varphi(x+x_1(t),y+y_1(t))dxdy\bigg|\to0\text{ as } \mu\to0^+
\end{align*}
for $\varphi\in L^2(\Omega)$,
where we used  $\|\tilde \omega^\mu(t)\|_{L^2(\Omega)}\leq C$ uniformly for $\mu>0$ small enough by Lemma \ref{lem-construction of an approximate solution sequence}.
Thus,
\begin{align}\label{tilde-omega-mu-translation-convergence L2}
\tilde \omega_{tran}^\mu(t)\rightharpoonup\tilde \omega_{tran}(t) \text{ in } L^2(\Omega).
\end{align}
Since $h(s)={1\over2}(s-s\ln(-s))$ is convex on $(-\infty,0]$, $\tilde \omega(t)\leq0$ a.e. on $\Omega$ by Corollary \ref{vorticity L123}, and $\psi_\ep \in L^2(B_R)$ for any $R>0$, it follows from  Theorem 1.1, Remark (iii) in \cite{Dacorogna} (see also \cite{Morrey1966})  and  \eqref{tilde-omega-mu-translation-convergence L2}  that
\begin{align}\nonumber
&\iint_{B_R}\left(h(\tilde \omega_{tran}(t)) - h(\omega_{\ep_0})  - \psi_{\ep_0} (\tilde \omega_{tran}(t)-\omega_{\ep_0})\right)dxdy\\\nonumber
\leq& \liminf_{\mu\to0^+}\iint_{B_R}(h(\tilde \omega_{tran}^\mu(t)) - h(\omega_{\ep_0}) - \psi_{\ep_0} (\tilde \omega_{tran}^\mu(t)
-\omega_{\ep_0}))dxdy\\\label{BRtrand1control}
\leq& \liminf_{\mu\to0^+} d_1(\tilde \omega_{tran}^\mu(t),\omega_{\ep_0}),
\end{align}
where $B_R=\mathbb{T}_{2\pi}\times [-R,R]$. By \eqref{limit-for-approximate solution-L2-t},  $x_1^\mu(t)\to x_1(t)$ and $y_1^\mu(t)\to y_1(t)$, we have
\begin{align}\label{BRtrand2control}
&\|\nabla\psi_{tran}(t)\|_{L^2(B_R)}=\lim_{\mu\to0^+}
\|\nabla\psi_{tran}^\mu(t)\|_{L^2(B_R)}
\leq\lim_{\mu\to0^+} d_2(\tilde \omega_{tran}^\mu(t),\omega_{\ep_0})
\end{align}
\if0
\begin{align*}
&\|\nabla\psi^\mu(t,x-x_1^\mu(t),y-y_1^\mu(t))-\nabla\psi(t,x-x_1(t),y-y_1(t))\|_{L^2(B_R)}\\
=&\|\nabla\tilde \psi^\mu(t,x-x_1^\mu(t),y-y_1^\mu(t))-\nabla\tilde \psi(t,x-x_1(t),y-y_1(t))\|_{L^2(B_R)}\\
\leq& \|\nabla\tilde \psi^\mu(t)-\nabla\tilde \psi(t)\|_{L^2(B_R)}\\
&+\|\nabla\tilde \psi(t,x-x_1^\mu(t),y-y_1^\mu(t))-\nabla\tilde \psi(t,x-x_1(t),y-y_1(t))\|_{L^2(B_R)}\to0
\end{align*}
\fi
 for any $R>0$, where
$\psi_{tran}^\mu(t)\triangleq(-\Delta)^{-1}(\tilde \omega^\mu(t,x-x_1^\mu(t),y-y_1^\mu(t))-\omega_{\ep_0})$ and $\psi_{tran}(t)\triangleq(-\Delta)^{-1}(\tilde \omega(t,x-x_1(t),y-y_1(t))-\omega_{\ep_0})$. Taking $R\to\infty$ in
\eqref{BRtrand1control}-\eqref{BRtrand2control}, up to a subsequence, we have
\begin{align*}
d(\tilde \omega(t),\omega_{\ep_0}(x+x_1(t),y+y_1(t)))=d(\tilde \omega_{tran}(t),\omega_{\ep_0})\leq \lim_{\mu\to0}d(\tilde \omega_{tran}^\mu(t),\omega_{\ep_0})\leq {1\over 2}\kappa<\kappa,
\end{align*}
where we used \eqref{onlinear orbital stability-apply1} in the second inequality.
\end{proof}

\begin{remark}\label{rem-weaken-condition-main-result4}
As the proof of Theorem \ref{main result4-nonlinear orbital stability} shows, the term
\[
\inf_{(x_0,y_0)\in\Omega}\|\tilde \omega_0-\omega_{\ep_0}(x+x_0,y+y_0)\|_{L^2(\Omega)}
\]
in the initial assumption \eqref{initial-condition-main result4-nonlinear orbital stability-2} is introduced only to guarantee a uniform \(L^2\)-bound for the initial vorticity of the approximate solutions; see \eqref{uniform-L2-bound-tilde-omega0}. Accordingly, the condition \eqref{initial-condition-main result4-nonlinear orbital stability-2} can be replaced by
\[
\inf_{(x_0,y_0)\in\Omega} d(\tilde \omega_0,\omega_{\ep_0}(x+x_0,y+y_0))<\delta,
\]
together with the additional assumption \(\|\tilde \omega_0\|_{L^2(\Omega)}\le C\) for some constant \(C>0\).
\end{remark}

\begin{remark}\label{rem-La-control-main-result4}
Theorem \ref{main result4-nonlinear orbital stability} also yields quantitative control of the vorticity in $L^a(\Omega)$ for $a\in[1,2)$. Specifically,
under the conditions of this theorem, by \eqref{La-control-by-d} and \eqref{onlinear orbital stability-goal} we have
\begin{align*}
\inf_{(x_0,y_0)\in\Omega}\|\tilde \omega(t)-\omega_{\ep_0}(x+x_0,y+y_0)\|_{L^a(\Omega)}<(3\sqrt{2\pi})^{{2\over a}-1}(\delta+2\|\omega_{\ep_0}\|_{L^2(\Omega)})^{2-{2\over a}}\kappa^{{1\over a}-{1\over2}},
\end{align*}
where $a\in[1,2)$.
Under an additional $L^3$-bound $M$ on the initial vorticity,
 Theorem \ref{main result4-nonlinear orbital stability} establishes $L^2$-norm  control of the vorticity.
In fact, it  follows from \eqref{L2-control-by-d} and  \eqref{onlinear orbital stability-goal} that
\begin{align*}
\inf_{(x_0,y_0)\in\Omega}\|\tilde \omega(t)-\omega_{\ep_0}(x+x_0,y+y_0)\|_{L^2(\Omega)}<C\kappa^{{1\over4}},
\end{align*}
where $C>0$ is a constant depending on $\|\omega_{\ep_0}\|_{L^\infty}, \|\omega_{\ep_0}\|_{L^2}, \|\omega_{\ep_0}\|_{L^3}, \delta$ and $M$.
\end{remark}
\begin{remark}
Another standard approach to nonlinear stability is variational: one tries to characterize the equilibrium as a global minimizer of a suitable Lyapunov functional and then exploit this minimizing property. For the Kelvin--Stuart vortices, a natural candidate is the pseudoenergy-Casimir functional
\[
H(\tilde\omega)
=
\iint_{\Omega}
\left(
\frac12\tilde\omega-\frac12\tilde\omega\ln(-\tilde\omega)
\right)\,dx\,dy
-
\frac12
\iint_{\Omega}
(G*\tilde\omega)\tilde\omega\,dx\,dy
\]
defined on the  space $Y_{non}$ in \eqref{def-X-non-ep}. A direct computation gives
\[
H'(\omega_\ep)=0,
\]
and hence
\begin{align}\label{H independent ep}
\frac{d}{d\ep}H(\omega_\ep)
=
\langle H'(\omega_\ep),\partial_\ep\omega_\ep\rangle
=
0,
\end{align}
where we used $\iint_\Omega \partial_\ep\omega_\ep\,dx\,dy=0$.

Our proof above shows that, up to spatial translations, each $\omega_\ep$ with $\ep\in(0,1)$ is a local minimizer of $H$ on $Y_{non}$; see \eqref{t control by 0-Taylor of dual functional}. Suppose now that $\omega_{\ep_0}$ is a global minimizer of $H$ for some $\ep_0\in(0,1)$. Then \eqref{H independent ep} would imply that every member of the family $\omega_\ep$, $\ep\in(0,1)$, is also a global minimizer of $H$. In particular, for any fixed $\ep$, the minimizer $\omega_\ep$ would fail to be isolated, and this creates a serious obstruction to a direct variational approach. Note that this non-isolation is not caused by spatial translations, but by variation of the parameter $\ep$ itself. A further difficulty is that $\omega_\ep$ becomes singular as $\ep\to1^-$, so the lack of compactness is too severe for one to expect convergence of minimizing sequences by standard variational methods.

\end{remark}
\section{Numerical results}\label{Numerical Results}

The numerical analysis consists of two parts. The first part is to approximate an eigenvalue with a corresponding  eigenfunction for the eigenvalue problem \eqref{elip02} in the co-periodic case, which motivates us to compute the first few eigenvalues with corresponding eigenfunctions for the $0$-mode in \eqref{eigen value-function}.
The second part shows that the number of unstable eigenvalues decreases as $\ep$ increases in the modulational case.
\if0
We use the Hermite functions to generate basis functions on $\tilde{X}_\ep$ and $H^1(\Omega)$. Then we compute approximation matrices for the operators $\tilde{A}_\ep$ and $J_{\ep\alpha} L_{\ep\alpha}(-\Delta_\alpha)$, and study the eigenvalues and eigenfunctions of the approximation matrices to simulate  the spectral results of the corresponding operators.
\fi
\if0
\subsection{Basis functions}
The Hermite polynomials $\{\hat{H}_n(y) | n \in \mathbb{N} \}$ were defined as
$$\hat{H}_n(y)=(-1)^ne^{y^2}\frac{d^n}{dy^n}e^{-y^2},\quad y \in \mathbb{R}, \quad n \in \mathbb{N}.$$
Since $\{\hat{H}_n(y) | n \in \mathbb{N} \}$ are $L^2$-orthogonal under the Gaussian weight $e^{-y^2}$, i.e.,
$$\int_{\mathbb{R}} \hat{H}_{n_1}(y) \hat{H}_{n_2}(y) e^{-y^2} dy = \sqrt{\pi}2^n n! \delta_{n_1 n_2}, \quad\forall\;\; n_1, n_2 \in \mathbb{N}, $$
the Hermite functions defined as
\begin{align*}
H_n(y)= \frac{e^{-y^2/2}}{\pi^{1/4}\sqrt{2^nn!}}\hat{H}_n(y), \quad n \in \mathbb{N},
\end{align*}
are $L^2$-orthonormal.
Indeed, the Hermite functions $\{H_n(y) | n \in \mathbb{N} \}$ form an orthonormal basis of $L^2(\mathbb{R})$. Moreover, it satisfies the recurrence relation
$$H''_n(y) = y^2 H_n(y) -(2n + 1)H_n(y),\quad n\in \mathbb{N}.$$
The above properties are used to simplify  computations in the following subsections.
\fi

 \subsection{An eigenfunction of the associated eigenvalue problem for the co-periodic case}\label{eigenfunction-motivation}
We simulate the eigenvalues and eigenfunctions of the operator $\tilde{A}_\ep$  by means of the spectral method in the co-periodic case.
We discretize the space $\tilde{X}_\ep$ with the following basis functions
 $$ \mathcal{B} =\left\{\psi_{n,k}(x,y) | n \in \mathbb{N}, k \in \mathbb{Z}\right\},$$
where
$$\psi_{n,k}(x,y) = \left\{ \begin{array}{cc} \frac{1}{\sqrt{2\pi}} \int_0^y H_n(\hat{y})d\hat{y}, & k = 0,  \\ \frac{1}{\sqrt{\pi}} H_n(y)\cos(kx), & k > 0, \\
\frac{1}{\sqrt{\pi}} H_n(y)\sin(kx), & k < 0,\end{array} \right.$$
$
H_n(y)= \frac{e^{-y^2/2}}{\pi^{1/4}\sqrt{2^nn!}}\hat{H}_n(y)$ and $\hat{H}_n(y)=(-1)^ne^{y^2}\frac{d^n}{dy^n}e^{-y^2}$, $n \in \mathbb{N}$,
are the Hermite functions and the Hermite polynomials, respectively.
Note that $\{H_n(y)|n \in \mathbb{N}\}$  form an orthonormal basis of $L^2(\mathbb{R})$. Moreover,
 $\{ \psi_{n,0}(y) = \frac{1}{\sqrt{2\pi}} \int_0^y H_n(\hat{y}) d\hat{y}| n\in \mathbb{N} \}$ is orthonormal in the sense that
\begin{align}\label{psi-nk-orthonormal}
(\psi_{n_1,0}, \psi_{n_2,0})_{\dot{H}^1(\Omega)} = \iint_{\Omega} \nabla \psi_{n_1,0} \cdot \nabla \psi_{n_2,0} dxdy = \delta_{n_1, n_2}.
\end{align}
 For any $\psi_{n_1, k_1},  \psi_{n_2, k_2} \in \mathcal{B}$, we have
 \begin{align*}
 \langle\tilde{A}_\ep \psi_{n_1, k_1}, \psi_{n_2, k_2}\rangle
 =& \iint_\Omega \nabla \psi_{n_1, k_1} \cdot {\nabla \psi_{n_2, k_2}} dxdy - \iint_\Omega g'(\psi_\ep)\psi_{n_1, k_1} {\psi_{n_2, k_2}} dxdy \\
 &  + \frac{1}{8\pi} \iint_\Omega g'(\psi_\ep)\psi_{n_1, k_1} dxdy \iint_\Omega g'(\psi_\ep)\psi_{n_2, k_2} dxdy.
 \end{align*}
 We  use the above  equality to find a finite dimensional matrix, which approximates the operator $\tilde{A}_\ep$, and obtain the spectral information of $\tilde{A}_\ep$ by studying the eigenvalues and eigenvectors of the approximate matrix.

The procedure to discretize the problem is summarized as follows:
\begin{enumerate}
 \item Choose a positive integer $N$.
 \item Truncate the basis $\mathcal{B}$ to $\mathcal{B}_N = \left\{\psi_{n,k}(x,y) | 0\leq n \leq2N,  -N \leq k \leq N\right\}$.
 \item Compute the $(2N + 1)^2 \times (2N+1)^2$ matrix $\mathbf{\tilde{A}}_\ep$ using
 $$(\mathbf{\tilde{A}}_\ep)_{(n_1, k_1), (n_2, k_2)} = \langle\tilde{A}_\ep \psi_{n_1, k_1}, \psi_{n_2, k_2}\rangle \text{ for } \psi_{n_1, k_1},  \psi_{n_2, k_2} \in \mathcal{B}_N.$$
 \item Calculate the eigenvalues $\lambda_i$ and eigenvectors $v_i$ of $\mathbf{\tilde{A}}_\ep$.
 \item Use the eigenvectors $v_i$ in (4) and the truncated basis $\mathcal{B}_N$ in (2) to compute the approximated eigenfunctions $f_i$ of $\tilde{A}_\ep$.
 \end{enumerate}

 We pick $N = 7$ and take different values for $\epsilon \in [0, 1)$. Then we  compute the $225\times225$ dimensional matrix $\mathbf{\tilde{A}}_\ep$ to approximate $\tilde{A}_\ep$ and calculate its eigenvalues. We summarize the first 10  eigenvalues of $\mathbf{\tilde{A}}_\ep$ in Table \ref{tbl:Tbl1}.
\begin{table}[ht]
\centering
\caption{The first 10  eigenvalues of $\mathbf{\tilde{A}}_\ep$}
\label{tbl:Tbl1}
\resizebox{0.8\textwidth}{!}{
\begin{tabular}{| l | r  r  r  r  r  r  r  r  r |}
\hline
  $\epsilon$ &    0.0 &    0.1 &    0.2 &    0.3 &    0.4 &    0.5 &    0.6 &    0.7 &    0.8 \\
\hline
$\lambda_1$ & 0.0000 & 0.0000 & 0.0000 & 0.0000 & 0.0001 & 0.0001 & 0.0002 & 0.0007 & 0.0041 \\
$\lambda_2$ & 0.0001 & 0.0001 & 0.0001 & 0.0001 & 0.0001 & 0.0002 & 0.0006 & 0.0024 & 0.0118 \\
$\lambda_3$ & 0.0001 & 0.0001 & 0.0001 & 0.0001 & 0.0001 & 0.0003 & 0.0008 & 0.0032 & 0.0169 \\
$\lambda_4$ & 0.6667 & 0.6682 & 0.6728 & 0.6807 & 0.6926 & 0.7094 & 0.7329 & 0.7662 & 0.8163 \\
$\lambda_5$ & 0.8336 & 0.8334 & 0.8329 & 0.8324 & 0.8322 & 0.8331 & 0.8361 & 0.8432 & 0.8588 \\
$\lambda_6$ & 0.9016 & 0.9018 & 0.9023 & 0.9034 & 0.9051 & 0.9078 & 0.9122 & 0.9192 & 0.9314 \\
$\lambda_7$ & 0.9367 & 0.9369 & 0.9375 & 0.9386 & 0.9404 & 0.9430 & 0.9468 & 0.9525 & 0.9612 \\
$\lambda_8$ & 0.9601 & 0.9603 & 0.9609 & 0.9620 & 0.9636 & 0.9659 & 0.9691 & 0.9733 & 0.9792 \\
$\lambda_9$ & 0.9738 & 0.9740 & 0.9745 & 0.9753 & 0.9766 & 0.9783 & 0.9806 & 0.9836 & 0.9875 \\
$\lambda_{10}$ & 0.9850 & 0.9851 & 0.9854 & 0.9860 & 0.9868 & 0.9879 & 0.9894 & 0.9912 & 0.9934 \\
\hline
\end{tabular}
}
\end{table}
\if0
\begin{figure}[ht]
    \centering
	\includegraphics[width=0.7\textwidth]{../graphs/eigenvalues_A.png}
	\caption{The first 10  eigenvalues of $\mathbf{\tilde{A}}_\ep$}
	\label{fig:3rdFig7}
\end{figure}
\fi
Even though the accuracy is affected for large $\epsilon$ values due to the singularity of the steady state at $\epsilon = 1$,
we could observe some interesting patterns from the numerical results.
\begin{itemize}
\item The eigenvalues $\lambda_i$ do not have a clear dependence on $\epsilon$.
\item For all $\epsilon$ values, $\mathbf{\tilde{A}}_\ep$ has three zero eigenvalues.
\item When $\ep = 0$, the first 3 eigenfunctions $f_1, f_2, f_3$ correspond to the three kernel functions of $\tilde{A}_0$, i.e.
$$f_1(x,y) = \tanh(y),\quad f_2(x,y) = \frac{\cos(x)}{\cosh(y)},\quad f_3(x,y) = \frac{\sin(x)}{\cosh(y)}.$$
\if0
The eigenfunctions with non-zero eigenvalues only depend on $y$ and are either odd or even as shown in the following Figure \ref{fig:8thFig}, where the change of colors represents the change of function's values in the domain.

\begin{figure}[ht]
    \centering
	\includegraphics[width=0.85\textwidth]{../graphs/eigenfuncs6.png}
	\caption{The heatmap of the first 6 eigenfunctions of $\mathbf{\tilde{A}}_\ep$ when $\ep = 0$}
	\label{fig:8thFig}
\end{figure}
\fi

\item The $4$-th eigenvalue $\lambda_4$ is a good approximation of the number $\frac 2 3$.
\item When $\epsilon = 0$, the $4$-th eigenfunction $f_4$ only depends on $y$ and has a bell shaped curve that matches the curve of $\tanh^2(y)$ perfectly after some linear transformation, see Figure \ref{fig:9thFig-co2}.
\begin{figure}[ht]
    \centering
	\includegraphics[width=0.85\textwidth]{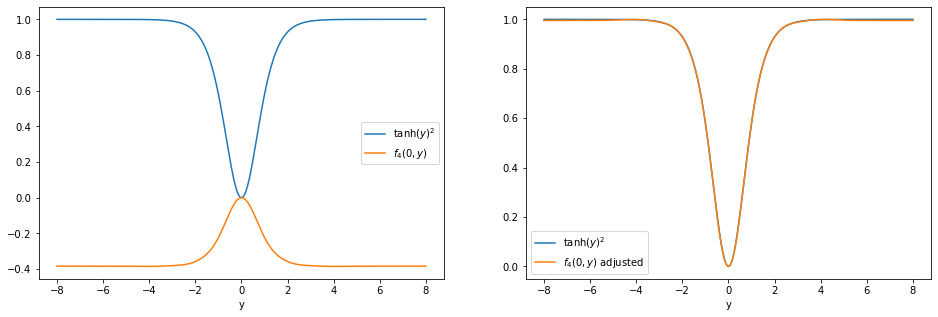}
	\caption{The 4-th eigenfunction $f_4$ of $\mathbf{\tilde{A}}_0$}
	\label{fig:9thFig-co2}
\end{figure}
\end{itemize}
The above observations  give a hint that
\begin{align}\label{eigen4}
\mathbf{\tilde{A}}_0 \vec{v}_4& = \lambda_4 \vec{v}_4 =  \frac 2 3 \vec{v}_4,\\\nonumber
v_{4,n,k}&=0\quad\text{for}\quad k\neq0\Longrightarrow
f_4=\sum\limits_{n=0}^{2N}\sum\limits_{k=-N}^Nv_{4,n,k}\psi_{n,k}=\sum\limits_{n=0}^{2N}v_{4,n,0} \psi_{n,0},
\end{align}
and $f_4$ might be $\tanh^2(y)$,
where
$\vec{v}_4=(v_{4,n,k})_{0\leq n\leq 2N,-N\leq k\leq N}$. By \eqref{psi-nk-orthonormal}, we have
$\|\vec{v}_4\|_{l^2}=\iint_{\Omega}|\nabla f_4|^2dxdy=\iint_\Omega(-\Delta f_4)f_4dxdy$.
By \eqref{eigen4}, $f_4$ approximately satisfies  $$\tilde{A}_0 f_4 = (-\Delta - g'(\psi_0)(I-P_0)) f_4 =  \frac 2 3 (-\Delta f_4),$$
which implies
$$-\Delta f_4 = 3g'(\psi_0)(I-P_0)f_4,$$
where $g'(\psi_0) = 2\sech^2(y)$.
This is exactly true when $f_4(x,y) = \tanh^2(y)$ since
$$-\Delta \tanh^2(y) = 2\sech^2(y)(3\tanh^2(y)-1) = 3g'(\psi_0)\left(\tanh^2(y) - \frac 1 3\right)$$
and $$P_0(\tanh^2(y)) = \frac{\int_0^{2\pi} \int_{-\infty}^{+\infty} g'(\psi_0) \tanh^2(y) dydx}{8\pi} = \frac1 2 \int_{-\infty}^{+\infty} \sech^2(y) \tanh^2(y)dy  = \frac 1 3.$$
By the above numerical simulation, $\tanh^2(y)$ is  an eigenfunction of the  eigenvalue $\lambda=3$ for \eqref{mode0}.  Recall that  $\tanh(y)$ is  an eigenfunction of the  eigenvalue $\lambda=1$ for \eqref{mode0}. Observing the form of these two eigenfunctions, our intuition is that all the eigenfunctions are possibly polynomials of $\tanh(y)$. This motivates us to compute
the first few eigenvalues and eigenfunctions as in \eqref{eigen value-function}, and inspires us to try
the change of variable $\gamma=\tanh(y)$ for the hyperbolic tangent shear flow. It is surprising and lucky to relate the eigenvalue problem \eqref{mode0} to the Legendre differential equations after the change of variable.

\subsection{The number of unstable modes in the modulational case}\label{The number of unstable modes in the modulational case}
In Section \ref{modulational}, we study the linear modulational instability analytically.  In this subsection, we obtain an interesting numerical phenomenon that there exists $\ep_0\in(0,1)$ such that  the number of unstable modes changes from $2$ to $1$ once $\ep$ passes through  $\ep_0$ increasingly for $\alpha={1\over 2}$ or ${1\over3}$.

To avoid solving the Poisson equation, we analyze the problem using the stream functions and solve the following generalized eigenvalue problem
 \begin{equation}\label{modeig}
 M_{\ep\alpha}\widetilde{\psi}=\sigma (-\Delta_\alpha) \widetilde{\psi}, \quad \widetilde{\psi} \in H^1(\Omega),
 \end{equation}
 where
$M_{\ep\alpha} = J_{\ep,\alpha}L_{\ep,\alpha} (-\Delta_\alpha)$, $J_{\ep,\alpha}$, $L_{\ep,\alpha}$ and $\Delta_\alpha$ are defined in \eqref{def-J-ep-al}-\eqref{nabla-alpha-Delta-alpha}.
The study of modulational instability is equivalent to the study of the generalized eigenvalue problem in \eqref{modeig}. We use a spectral method to discretize this problem and study the resulting generalized eigenvalue problem with two approximation matrices. We take the basis
$$\tilde{\mathcal{B}} = \{\tilde{\psi}_{n,k}(x,y) | n \in \mathbb{N}, k \in \mathbb{Z}\},$$
where
$\tilde{\psi}_{n,k}(x,y) = \frac {1} {\sqrt{2\pi}}e^{ikx}H_n(y).$
We know that $\tilde{\mathcal{B}}$ is an orthonormal basis of $H^1(\Omega)$ and for any $\tilde{\psi}_{n_1,k_1}, \tilde{\psi}_{n_2,k_2} \in \tilde{\mathcal{B}}$,
\begin{align*}
\langle M_{\ep\alpha} \tilde{\psi}_{n_1,k_1} ,\tilde{\psi}_{n_2,k_2}\rangle
  =  \iint_\Omega M_{\ep\alpha} \tilde{\psi}_{n_1,k_1}(x,y) \overline{\tilde{\psi}_{n_2,k_2}(x,y)} dxdy
  \end{align*}
and
\begin{align*}
\langle-\Delta_\alpha \tilde{\psi}_{n_1,k_1} ,\tilde{\psi}_{n_2,k_2}\rangle
  = & \iint_\Omega -\Delta_\alpha \tilde{\psi}_{n_1,k_1}(x,y) \overline{\tilde{\psi}_{n_2,k_2}(x,y)} dxdy.\\
\end{align*}

 \subsubsection{Algorithm}
The procedure to discretize the problem is summarized as follows:
\begin{enumerate}
 \item Choose a positive integer $N$.
 \item Truncate the basis $\tilde{\mathcal{B}}$ to $\tilde{\mathcal{B}}_N = \left\{\tilde{\psi}_{n,k}(x,y) | 0\leq n \leq2N,  -N \leq k \leq N\right\}$.
 \item Compute the $(2N + 1)^2 \times (2N+1)^2$ matrices $\mathbf{M}_{\ep\alpha}$, $\mathbf{D}_{\alpha}$ with the entries
 $$(\mathbf{M}_{\ep\alpha})_{(n_1, k_1), (n_2, k_2)} = (M_{\ep\alpha} \tilde{\psi}_{n_1, k_1}, \tilde{\psi}_{n_2, k_2}) $$
 and
 $$(\mathbf{D}_{\alpha})_{(n_1, k_1), (n_2, k_2)} = (-\Delta_{\alpha} \tilde{\psi}_{n_1, k_1}, \tilde{\psi}_{n_2, k_2}) $$
 for $\tilde{\psi}_{n_1, k_1},  \tilde{\psi}_{n_2, k_2} \in \tilde{\mathcal{B}}_N.$
 \item Solve $\sigma$ from the generalized eigenvalue problem
\begin{align}\label{eigen-modu-app}
\mathbf{M}_{\ep\alpha}^* =\sigma \mathbf{D}_{\alpha}^*.
\end{align}
 \end{enumerate}
Here, $\mathbf{M}_{\ep\alpha}^*$ is the  conjugate transpose of $\mathbf{M}_{\ep\alpha}$.
 \subsubsection{Results}
 We pick $N = 7$ and take different values for $\epsilon \in (0, 1)$ and $\alpha \in (0, \frac 1 2]$. Then  we compute the $225 \times 225$ dimensional matrices $\mathbf{M}_{\ep\alpha}$, $\mathbf{D}_{\alpha}$ and calculate the generalized eigenvalues $\sigma$.
 \if0
\begin{figure}[ht]
    \centering
	\includegraphics[width=0.85\textwidth]{../graphs/multi-periodic23.png}
	\caption{Generalized eigenvalues of \eqref{eigen-modu-app} for  $\ep = 0.1$}
	\label{fig:tenthFig-mul}
\end{figure}

When $\epsilon = 0.1$, we obtain the generalized eigenvalues $\sigma$ for $\alpha = \frac 1 2$ and $\alpha = \frac 1 3$, and plot them in Figure \ref{fig:tenthFig-mul}. Since there exist two positive eigenvalues  of \eqref{eigen-modu-app}, it indicates that when $\ep = 0.1$, the steady state is linearly modulationally unstable for $\alpha = \frac 1 2$ and $\alpha = \frac 1 3$, which justifies our analytical results in Section 4.
More precisely, as explained in Remark \ref{modulational-remark},
 the number of modulational  unstable eigenvalues for  $\omega_\ep$ with $\ep\ll1$ is $2$, and our numerical results for $\ep=0.1$ coincide with the theoretical analysis.
\fi
\begin{figure}[ht]
    \centering
	\includegraphics[scale = 0.4]{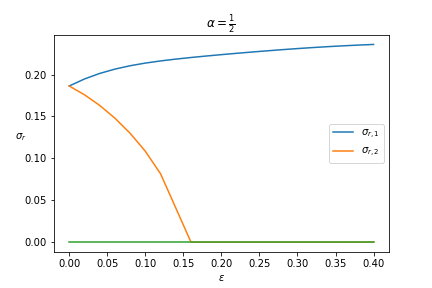}
        \includegraphics[scale = 0.4]{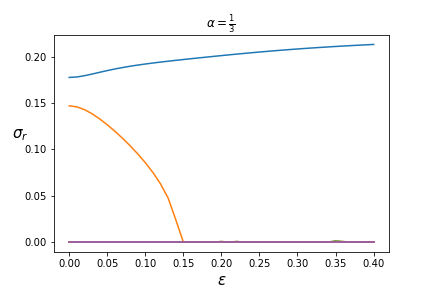}
	\caption{Positive real parts of the generalized eigenvalues of \eqref{eigen-modu-app}  }
	\label{fig:eleventhFig}
\end{figure}

Our numerical results provide an interesting piece of information.
Figure \ref{fig:eleventhFig} shows the correspondence between the positive real parts of the unstable eigenvalues and $\ep$ for  $\alpha={1\over2}, {1\over3}$. When $\alpha = \frac{1}{2}$, as $\ep$ grows from $0$ to $0.4$, there are two unstable directions with the same positive growth rates $0.186$ in the beginning, and then one of them decreases to $0$ at $\ep = 0.16$ while the other slowly increases up to $0.235$. This result compares well with the result in Figure 3 of \cite{pierrehumbert1982two}. Similarly, when $\alpha = \frac{1}{3}$, there are two unstable directions with positive growth rates. One of them decreases to $0$ at $\ep = 0.14$ and the other slowly increases up to $0.210$. This indicates that the number of unstable eigenvalues changes from $2$ to $1$ as $\ep$ grows far from $0$.
From the analytical perspective, the area of  the trapped region of the cat's-eye flow is getting larger and the effect of  the projection term is increasing as $\ep$ grows. Thus, the value of  the quadratic form
$ b_{\alpha, 2}$ in \eqref{func-b2-alpha} increases,  which leads to a decrease in the number of negative directions of $L_{\alpha,e} |_{\overline{R(B_\alpha)}}  $ as well as the unstable eigenvalues.

If we take $\alpha$ close to $0$, then the numerical simulations could only give us one unstable eigenvalue for $\ep$ small enough. Indeed, there are exactly $2$  unstable eigenvalues in this case  by Remark \ref{modulational-remark}.
We explain why numerically there is only one unstable eigenvalue for $\ep$ small enough. Note that we use the Hermite functions as the basis of $\tilde X_\ep$, and these functions decay very fast (with a Gaussian rate $e^{-y^2/2}$) near $\pm\infty$.
As one of the negative directions of $\tilde A_{\ep,\alpha}$  is
$(1-\gamma_\ep^2)^{\alpha\over2}e^{i\alpha(\theta_\ep-x)}$ decaying like $\sech^{\alpha}(y)$ near $\pm\infty$ by Corollary
\ref{A-L-dec-e-alpha},
the eigenfunction corresponding to the unstable eigenvalue with smaller growth rate may decay more slowly for $\alpha\ll1$, and our numerical simulations only detect the low-frequency part of the eigenfunctions (we pick $N=7$). If we take $N$ to be larger than 20, then the computational cost increases dramatically.

\section{Stability and instability of Kelvin--Stuart magnetic islands}\label{Stability and instability of magnetic islands0-sec}
The Kelvin--Stuart cat's-eye profiles form a family of static equilibria of the planar ideal MHD equations.
 The equilibria are given by the magnetic island solutions $(\omega=0,\phi_{\ep})$, where $\phi_\ep$ is given in \eqref{Kelvin--Stuart cat's eyes-mhd-m-p}. In this section, we prove spectral stability and conditional nonlinear orbital stability for co-periodic perturbations, and coalescence instability of
 the Kelvin--Stuart magnetic islands $(\omega=0,\phi_{\ep})$.

 For the steady magnetic potential $\phi_{\ep}(x,y)=\ln \left(\frac{\cosh (y) + \epsilon \cos (x)}{\sqrt{1-\epsilon^2}} \right)$, we have
 \begin{align}\label{steady-mhd}
\phi_{\ep}=G*J^{\ep}-\ln\sqrt{1-\epsilon^2},
 \end{align}
 where $G$ is defined in \eqref{green function}.
 In fact, since
\begin{align*}
 &(G*J^{\ep})(x,y)-|y|={1\over 4\pi}\iint_{\Omega}\ln(\cosh(y-\tilde y)-\cos(x-\tilde x)){1\over 2}g'(\psi_\ep(\tilde x,\tilde y))d\tilde x d\tilde y-|y|\\
 =&
 {1\over 4\pi}\int_{-1}^1\int_0^{2\pi}\ln{\cosh(y-\tilde y)-\cos(x-\tilde x)\over e^{|y|}}d\tilde\theta_\ep d\tilde\gamma_{\ep}\to\ln{1\over2}
 \end{align*}
  and
  $\ln(\cosh(y)+\ep\cos(x))-|y|=\ln{\cosh(y)+\ep\cos(x)\over e^{|y|}}\to\ln{1\over2}$
  as $y\to\pm\infty$, we infer from $-\Delta(G*J^{\ep})=-\Delta\ln(\cosh(y)+\ep\cos(x))= J^{\ep}$ that
\begin{align*}
G*J^{\ep}(x,y)=\ln \left(\cosh (y) + \epsilon \cos (x) \right),
\end{align*}
where $\tilde\theta_\ep=\theta_\ep (\tilde x,\tilde y)$ and $\tilde \gamma_{\ep}=\gamma_{\ep}(\tilde x,\tilde y)$.
\subsection{Spectral stability for co-periodic perturbations}
We consider the co-periodic perturbations of the magnetic island solutions $(\omega=0,\phi_{\ep})$ for $\ep\in[0,1)$.
Linearizing \eqref{mhd} around $(\omega=0,\phi_{\ep})$, we have
\begin{align}\label{linearized mhd}
\left\{ \begin{array}{lll} \partial_t \phi=-\{\phi_\ep,\psi\},\\
 \partial_t \omega=-\{\phi_{\ep},(-\Delta-g'(\phi_{\ep}))\phi\}.
 \end{array} \right.
\end{align}
Unlike the linearized  2D Euler equation around the Kelvin--Stuart vortex,
the linearized equation \eqref{linearized mhd} has a different separable Hamiltonian structure
\begin{align}\label{linearized mhd-sep-hamiltonian}
\partial_t \left( \begin{array}{c} \phi \\ \omega \end{array} \right) = \left( \begin{array}{cc} 0 & D_\ep \\ -D_{\ep}' & 0 \end{array} \right)\left( \begin{array}{cc}-\Delta-g'(\phi_{\ep}) & 0 \\ 0 & (-\Delta)^{-1} \end{array} \right) \left( \begin{array}{c} \phi \\ \omega \end{array} \right),
\end{align}
where  $-\Delta-g'(\phi_{\ep}):\tilde W_{\ep}\to\tilde W_{\ep}^*$,
\begin{align*}
\tilde W_{\ep}=\left\{ \phi\in \dot{H}^1(\Omega) :\iint_\Omega g'(\phi_\ep)\phi dxdy=0 \right\},
\end{align*}
 $(-\Delta)^{-1}:\tilde X_\ep^*\to\tilde X_\ep$ is defined by
\begin{align*}(-\Delta)^{-1}\omega=I_{\tilde X_\ep}\omega,\quad\omega\in\tilde X_\ep^*,
\end{align*}
$I_{\tilde X_\ep}:\tilde X_\ep^*\to \tilde X_\ep$ is the isomorphism defined by the Riesz representation
theorem,
 $D_\ep=-\{\phi_{\ep},\cdot\}:\tilde X_\ep\supset D(D_\ep)\to\tilde W_{\ep}$, and $\tilde{X}_\ep$ is defined in \eqref{tilde-X0} for $\ep=0$ and \eqref{tilde-X-e} for $\ep\in(0,1)$.
 Since $\iint_{\Omega}g'(\phi_\ep)\phi(t) dxdy$ is conserved for the linearized equation \eqref{linearized mhd},  it is reasonable to consider the perturbation of the magnetic potential  to satisfy $\iint_\Omega g'(\phi_\ep)\phi dxdy=0$ in the space $\tilde W_{\ep}$.

Since $P_\ep\phi=0$ for $\phi\in \tilde W_\ep$, we have $-\Delta-g'(\phi_{\ep})=-\Delta-g'(\phi_{\ep})(I-P_\ep)=\tilde A_\ep: \tilde W_{\ep}\to\tilde W_{\ep}^*$, where $P_\ep$ takes the form  \eqref{P-ep}.
For any $\phi\in \tilde W_\ep$, there exist $\phi_*\in \tilde X_\ep$ and a constant $c_*$ such that $\phi-\phi_*=c_*$, and
\begin{align}\label{n-tilde A-w-x}
\langle\tilde A_\ep\phi,\phi\rangle=\langle\tilde A_\ep\phi_*,\phi_*\rangle.
\end{align}
Thus, the properties of the quadratic form $\langle\tilde A_\ep\cdot,\cdot\rangle|_{\tilde W_\ep}$ are equivalent to those of the quadratic form $\langle\tilde A_\ep\cdot,\cdot\rangle|_{\tilde X_\ep}$, which was studied in  Section \ref{co-periodic-linear}.

Now, we verify the assumptions $\textbf{(G1-4)}$ in Lemma \ref{indice-theorem-sep} for the separable Hamiltonian system
\eqref{linearized mhd-sep-hamiltonian}. By a similar argument as for $B_{\ep}$ and $B_{\ep}'$ in
\eqref{sep-hamiltonian}, we infer that $D_\ep$ and $D_\ep'$ are densely defined and closed. This verifies $\textbf{(G1)}$. Since
\begin{align*}
\langle (-\Delta)^{-1}\omega_1,\omega_2\rangle=\iint_{\Omega}(I_{\tilde X_\ep}\omega_1)\omega_2 dxdy=(\omega_1,\omega_2),
\end{align*}
we know that $(-\Delta)^{-1}$ is bounded and self-dual, $\ker((-\Delta)^{-1})=\{0\}$, $\langle(-\Delta)^{-1}\omega,\omega\rangle=\|\omega\|_{\tilde X_\ep^*}^2$ for $\omega\in\tilde X_\ep^*$, and thus,  $\textbf{(G2)}$ is verified.
 $\textbf{(G3-4)}$ are verified by \eqref{n-tilde A-w-x} and Corollaries \ref{kernel of  the operator tilde A0 and a decomposition of tilde X0}, \ref{kernel of  the operator tilde A-ep and a decomposition of tilde Xep}.
 By  Lemma \ref{indice-theorem-sep}, we obtain that
 \begin{align}\label{stab-criteria-co-mhd}
 (\omega=0,\phi_{\ep}) \text{ is spectrally stable if and only if } n^-\left(\tilde A_{\ep}|_{\overline{R(D_{\ep})}}\right)=0.
 \end{align}
Again by \eqref{n-tilde A-w-x} and Corollaries \ref{kernel of  the operator tilde A0 and a decomposition of tilde X0}, \ref{kernel of  the operator tilde A-ep and a decomposition of tilde Xep},  $\langle\tilde A_\ep\cdot,\cdot\rangle|_{\tilde W_\ep}\geq0$ and thus, $n^-\left(\tilde A_{\ep}|_{\overline{R(D_{\ep})}}\right)=0$  in the  co-periodic case for $\ep\in[0,1)$. This proves Theorem \ref{main result1-mhd-all1} (2).
\subsection{Proof of coalescence instability}
In this subsection, we prove coalescence instability of the magnetic island equilibria $(\omega=0,\phi_{\ep})$, that is, linear double-periodic instability of the whole family. The proof is based on the separable Hamiltonian structure of the linearized MHD equations together with our analysis of linear double-periodic instability for Kelvin--Stuart vortices in the 2D Euler case.
Let $\Omega_2 = \mathbb{T}_{4\pi} \times \mathbb{R}$.
The linearized equation around $(\omega=0,\phi_{\ep})$ is
\begin{align}\label{linearized mhd-sep-hamiltonian-2}
\partial_t \left( \begin{array}{c} \phi \\ \omega \end{array} \right) = \left( \begin{array}{cc} 0 & D_{\ep,2} \\ -D_{\ep,2}' & 0 \end{array} \right)\left( \begin{array}{cc}-\Delta-g'(\phi_{\ep}) & 0 \\ 0 & (-\Delta)^{-1} \end{array} \right) \left( \begin{array}{c} \phi \\ \omega \end{array} \right),
\end{align}
where  $-\Delta-g'(\phi_{\ep}):\tilde W_{\ep,2}\to\tilde W_{\ep,2}^*$,
\begin{align*}
\tilde W_{\ep,2}=\left\{ \phi \bigg| \|\nabla \phi\|_{L^2(\Omega_2)} < \infty\quad {\rm{ and }}\quad\iint_{\Omega_2} g'(\phi_\ep)\phi dxdy=0 \right\},
\end{align*}
$(-\Delta)^{-1}:
\tilde{X}_{\ep,2}^*\to\tilde{X}_{\ep,2}$ is defined by
\begin{align*}(-\Delta)^{-1}\omega=I_{\tilde{X}_{\ep,2}}\omega,\quad\omega\in \tilde{X}_{\ep,2}^*,
\end{align*}
 $D_{\ep,2}=-\{\phi_{\ep},\cdot\}:\tilde{X}_{\ep,2}\supset D(D_{\ep,2})\to\tilde W_{\ep,2}$, and $\tilde{X}_{\ep,2}$ is defined in
\eqref{space-tilde-Xep-m} for $m=2$. Similar to \eqref{linearized mhd-sep-hamiltonian},
$\textbf{(G1-2)}$ in Lemma \ref{indice-theorem-sep} can be verified for \eqref{linearized mhd-sep-hamiltonian-2}.
Note that $-\Delta\phi-g'(\phi_{\ep})\phi=-\Delta\phi-g'(\phi_{\ep})(I-P_{\ep,2})\phi=\tilde A_{\ep,2}\phi$ due to $P_{\ep,2}\phi=0$ for $\phi\in \tilde W_{\ep,2}$.
\if0
Note that $\tilde A_{\ep,2}\psi-A_{\ep,2}\psi=g'(\psi_\ep)P_{\ep,2}\psi$ for $\psi\in\tilde X_{\ep,2}$, and the projection term $g'(\psi_\ep)P_{\ep,2}\psi$ does not vanish only for the $0$-mode, where the eigenvalue problem \eqref{eigenvalue problem for 0 mode-m} is the same one to the co-periodic case. Thus, a simple modification of the proof of Corollaries \ref{kernel of  the operator A0 and a decomposition of tilde X0} and \ref{kernel of  the operator A-ep and a decomposition of tilde Xep} implies that $n^-( A_{\ep,2})=2$, $\ker( A_{\ep,2})=3$ and $\langle  A_{\ep,2}\psi,\psi\rangle\geq C\|\psi\|_{\tilde X_{\ep,2}}^2$ for some $C>0$, where $\psi\in\tilde X_{\ep,2+}$.
\fi
By Corollaries \ref{A-L-dec-o} and \ref{A-L-dec-e}, a similar argument to \eqref{n-tilde A-w-x} implies
$n^-(\tilde A_{\ep,2}|_{\tilde W_{\ep,2}})=2$, $\ker(\tilde A_{\ep,2}|_{\tilde W_{\ep,2}})=3$ and $\langle \tilde A_{\ep,2}\phi,\phi\rangle\geq C\|\phi\|_{\tilde W_{\ep,2}}^2$ for some $C>0$, where $\phi\in\tilde W_{\ep,2+}$.
   This verifies
  $\textbf{(G3-4)}$ in Lemma \ref{indice-theorem-sep} for  \eqref{linearized mhd-sep-hamiltonian-2}.
 By  Lemma \ref{indice-theorem-sep}, we have
 \begin{align}\label{stab-criteria-coalescence-mhd}
 (\omega=0,\phi_{\ep}) \text{ is coalescence unstable if and only if } n^-\left(\tilde A_{\ep,2}|_{\overline{R(D_{\ep,2})}}\right)>0.
 \end{align}
We take the test function $\tilde{\psi}_\ep$ defined in \eqref{test-even}, where $(\theta_\ep, \gep) \in \tilde{\Omega}_{2} = \mathbb{T}_{4\pi} \times [-1, 1]$ are given in \eqref{transf1}-\eqref{transf2}. Noting that
\begin{align*}
\iint_{\Omega_2}g'(\phi_\ep)\tilde{\psi}_\ep dxdy  =2\int_{-1}^1\int_{0}^{4\pi}\cos\left(\frac{\theta_\ep}{2}\right)(1-\gamma_\ep^2)^{1\over4} d\theta_\ep d\gamma_\ep=0,
\end{align*}
we have $\tilde{\psi}_\ep\in\tilde W_{\ep,2}$.
Since $\tilde{\psi}_\ep$ is `odd' symmetrical about  $\{x=\pi\}$ along any trajectory of the steady velocity, a similar argument to Lemma \ref{b2-even} implies that $
\tilde{\psi}_\ep\in \overline{R(D_{\ep,2})}$. It follows from \eqref{b1-even} that $\langle \tilde A_{\ep,2} \tilde{\psi}_\ep,\tilde{\psi}_\ep\rangle<0$, and thus, $n^-\left(\tilde A_{\ep,2}|_{\overline{R(D_{\ep,2})}}\right)>0$. This proves Theorem \ref{main result1-mhd-all1} (1).

\begin{remark}
It remains open to determine whether, for odd integers $m>1$, the Kelvin--Stuart magnetic island equilibrium $(\omega=0,\phi_\ep)$ is linearly unstable under $2m\pi$-periodic perturbations. In particular, the triple-periodic case is still unresolved. The double-periodic argument developed here does not seem to extend directly to odd-periodic perturbations. For example, in the triple-periodic case, the test function used for the corresponding 2D Euler instability does not apply, since it does not belong to $\overline{R(D_{\ep,3})}$.
\end{remark}
\subsection{Nonlinear orbital stability   for co-periodic perturbations}
Let $\tilde \omega$, $\tilde \psi$, $\tilde J$ and $\tilde \phi$ be  the perturbed vorticity, stream function, current density and  magnetic potential, respectively. The perturbations of vorticity, stream function, current density and magnetic potential are denoted by $\omega=\tilde \omega-0$,  $\psi=\tilde \psi-0$,
 $J=\tilde J-J^\ep$ and $\phi=\tilde \phi-\phi_\ep$, correspondingly. The perturbed stream function is determined by $
\tilde \psi=G*\tilde \omega$ for
\begin{align}\label{tilde Y-space}\tilde\omega\in \tilde Y=\left\{\tilde\omega\in L^1\cap L^3 (\Omega)\bigg|\iint_{\Omega}\tilde\omega dxdy=0,y\tilde\omega\in L^1(\Omega)\right\}.
\end{align}
Then $(\partial_y\tilde \psi(x,y),-\partial_x\tilde \psi(x,y))\to (0,0)$ as $y\to\pm\infty$ for $x\in\mathbb{T}_{2\pi}$, and $\vec{v}=(\partial_y\tilde \psi,-\partial_x\tilde \psi)$, where $\vec{v}$ is  the perturbed velocity field.
Since the perturbed magnetic field $\vec{B}$ is required to satisfy
\[
\vec{B}(x,y)\to (\pm1,0)\qquad \text{as } y\to\pm\infty,\quad x\in\mathbb{T}_{2\pi},
\]
it is natural to impose
\[
\iint_\Omega \tilde J\,dxdy=-4\pi,
\qquad
\iint_\Omega J\,dxdy=0.
\]

For \(\tilde J\in W_{non}\), define the perturbed magnetic potential by
\[
\tilde\phi = G*\tilde J-\ln\sqrt{1-\ep^2},
\]
where
\[
W_{non}
:=
\left\{
\tilde J\in L^1(\Omega)\cap L^3(\Omega)\;\middle|\;
y\tilde J\in L^1(\Omega),\;
\iint_\Omega \tilde J\,dx\,dy=-4\pi
\right\}.
\]
As in \eqref{v-mu term2}-\eqref{v-mu term1}, one has
\[
(\partial_y\tilde\phi(x,y),-\partial_x\tilde\phi(x,y))\to (\pm 1,0)
\qquad\text{as } y\to \pm\infty,
\]
for each \(x\in\mathbb T_{2\pi}\). We therefore represent the perturbed magnetic field by
$\vec B=(\partial_y\tilde\phi,-\partial_x\tilde\phi).$ Taking the curl of Faraday's law \(\partial_t\vec B=-\curl(\vec E)\), we obtain
\[
\partial_t\tilde J=-\Delta\{\tilde\psi,\tilde\phi\}.
\]
Convolving this identity with \(G\) yields
\[
\partial_t(G*\tilde J)=\{\tilde\psi,G*\tilde J\},
\]
and hence $\partial_t\tilde\phi=\{\tilde\psi,\tilde\phi\}.$ The constant \(-\ln\sqrt{1-\ep^2}\) is included in the definition of \(\tilde\phi\) so that the steady states $\phi_\ep = G*J^\ep-\ln\sqrt{1-\ep^2},$
given in \eqref{steady-mhd}, satisfy the same Liouville's equation \eqref{elip} for all \(\ep\in[0,1)\). Without this normalization, the function \(g\) in \eqref{elip} would depend on \(\ep\), which is inconvenient for the analysis.

Let $\hat h(s)=-{1\over2}e^{-2s}$. Then $\hat h'(\phi_\ep)=e^{-2\phi_\ep}=-g( \phi_\ep)=-J^{\ep}$, where $g(s)=-e^{-2s}$.
For $\tilde \omega\in\tilde Y$ and
\begin{align}\label{def-Z-non-ep}
\tilde \phi\in \tilde Z_{non,\ep}\triangleq \{ \tilde \phi=G*\tilde J-\ln\sqrt{1-\epsilon^2}|\tilde J\in W_{non}\},
\end{align}
motivated by \cite{holm1985nonlinear}, we define the energy-Casimir (EC) functional
\begin{align}\nonumber
\hat H(\tilde \omega,\tilde \phi)=&{1\over2}\iint_{\Omega}\tilde \psi \tilde\omega dxdy+{1\over2}\iint_{\Omega}(G\ast\tilde J)\tilde J  dxdy+\iint_{\Omega}\hat h(\tilde\phi) dxdy\\\label{EC-functional-mhd}
=&{1\over2}\iint_{\Omega}(G\ast\tilde\omega)\tilde\omega dxdy+{1\over2}\iint_{\Omega}(G\ast \tilde J) \tilde J dxdy-\iint_{\Omega}{1\over2}e^{-2\tilde\phi} dxdy.
\end{align}
Similar to \eqref{PE-finite}, we have $|\iint_{\Omega}(G\ast\tilde\omega)\tilde\omega dxdy|<\infty$ and $|\iint_{\Omega}(G\ast\tilde J)\tilde J  dxdy|<\infty$.
For $\tilde\omega\in  \tilde Y$,
 by  Lemma \ref{well-poseness-Poisson-equation-nonlinear-case},
 the Poisson equation
$-\Delta \psi =\tilde \omega\in \tilde Y$   has a  unique weak solution $\psi=I_{\tilde X_\ep} \tilde\omega$ in $\tilde{X}_\ep$, where $I_{\tilde X_\ep}:\tilde X_\ep^*\to \tilde X_\ep$ is the isomorphism defined by the Riesz representation
theorem.
By Lemma \ref{G-ast-omega-psi-constant},
$G\ast\tilde\omega-I_{\tilde X_\ep} \tilde\omega$ is a constant for $\tilde\omega\in \tilde Y$. Then \begin{align}\label{G-tildeomega-omega-positive}
\iint_{\Omega}(G\ast\tilde\omega)\tilde\omega dxdy=\iint_{\Omega}(I_{\tilde X_\ep} \tilde\omega)\tilde\omega  dxdy=\iint_{\Omega}|\nabla(I_{\tilde X_\ep} \tilde\omega)|^2 dxdy>0\end{align}
for $0\neq \tilde\omega\in\tilde Y$, where we used $\iint_{\Omega}\tilde\omega dxdy=0$.
For $\tilde \phi\in \tilde Z_{non,\ep}$, by \eqref{steady-mhd} we have $\tilde \phi-\phi_\ep=G*(\tilde J-J^{\ep})=G*J$.
 The space of perturbations of  magnetic potentials is
$ Z_{non,\ep}\triangleq\{\tilde \phi-\phi_\ep=G*J|\tilde \phi\in \tilde Z_{non,\ep}\}$. Similar to Lemmas \ref{well-poseness-Poisson-equation-nonlinear-case}-\ref{G-ast-omega-psi-constant},
there exist $\phi_*\in \tilde X_\ep$ and a constant $c_*$  such that $\phi-\phi_*=c_*$ for each $\phi=G*J\in Z_{non,\ep}$. Then for $\tilde \phi\in \tilde Z_{non,\ep}$, we have
 \begin{align*}
\iint_{\Omega}{1\over2}e^{-2\tilde\phi} dxdy=&\iint_{\Omega}{1\over2}e^{-2\phi_\ep}e^{-2\phi} dxdy={1\over4}\iint_{\Omega}g'( \phi_\ep)e^{-2(\phi_*+c_*)} dxdy
\leq C e^{C\|\phi_*\|_{\tilde{X}_\ep}^2}<\infty
\end{align*}
due to Lemma \ref{Orlicz-type inequlity-lemma} and $\phi_*\in \tilde X_\ep$. Thus, the EC functional \eqref{EC-functional-mhd} is well-defined. Then $\hat H'(0,\phi_{\ep})=-\Delta\phi_{\ep}+\hat h'(\phi_{\ep})=-\Delta\phi_{\ep}-g(\phi_{\ep})=0$ and
\begin{align}\nonumber
\hat H(\tilde \omega,\tilde \phi)-\hat H( 0,  \phi_{\ep})
=&{1\over2}\iint_{\Omega}(G\ast\omega)\omega dxdy+{1\over2}\iint_{\Omega}\left((G\ast\tilde J)\tilde J-(G\ast J^{\ep}) J^{\ep}\right)dxdy\\\nonumber
&+\iint_{\Omega}\left(\hat h(\tilde\phi)-\hat h(\phi_{\ep}) \right)dxdy\\\nonumber
=&{1\over2}\iint_{\Omega}(G\ast\omega)\omega dxdy+{1\over2}\iint_{\Omega}|\nabla\phi|^2dxdy\\\nonumber
&+\iint_{\Omega}\left(\hat h(\phi_\ep+\phi)-\hat h(\phi_\ep)-\hat h'(\phi_\ep)\phi \right)dxdy\\\nonumber
=&{1\over2}\iint_{\Omega}(G\ast\omega)\omega dxdy+\iint_{\Omega} \left(\frac 1 2 |\nabla \phi|^2 -\frac 1 4 g'(\phi_\ep)(e^{-2\phi} + 2\phi - 1)\right) dxdy\\\nonumber
=&{1\over2}\iint_{\Omega}(G\ast\omega)\omega dxdy\\\nonumber
&+\iint_{\Omega} \left(\frac 1 2 |\nabla \phi|^2 -\frac 1 4 g'(\phi_\ep)(e^{-2(\phi-P_\ep\phi)} + 2(\phi-P_\ep\phi) - 1)\right) dxdy\\\label{H-omega-phi}
&+\iint_{\Omega} \left(-\frac 1 2e^{-2\phi_\ep}(e^{-2\phi}-e^{-2(\phi-P_\ep\phi)} + 2P_\ep\phi)\right) dxdy,
\end{align}
where the expression of  $P_\ep$ is given in \eqref{P-ep}.
Define two  functionals by
\begin{align}\nonumber
S_\ep( \phi)\triangleq &
\iint_{\Omega} \left(\frac 1 2 |\nabla \phi|^2 -\frac 1 4 g'(\phi_\ep)(e^{-2(\phi-P_\ep\phi)} + 2(\phi-P_\ep\phi) - 1)\right) dxdy,\quad \phi\in \tilde X_\ep,\\\label{def-functional-S}
R_\ep( \phi)\triangleq &\iint_{\Omega} \left(-\frac 1 2e^{-2\phi_\ep}(e^{-2\phi}-e^{-2(\phi-P_\ep\phi)} + 2P_\ep\phi)\right) dxdy,\quad \phi\in Z_{non,\ep},
\end{align}
and the distance functionals by
\begin{align}\nonumber
&\hat d_{1}((\tilde\omega,\tilde \phi),(0,\phi_\ep))=\iint_{\Omega}(G\ast\omega)\omega dxdy,\quad
\hat d_{2}((\tilde\omega,\tilde \phi),(0,\phi_\ep))=\iint_{\Omega}|\nabla\phi|^2dxdy,\\\label{distance3-mhd}
&\hat d_{3}((\tilde\omega,\tilde \phi),(0,\phi_\ep))=-\iint_{\Omega}\left(\hat h(\phi_\ep+\phi)-\hat h(\phi_\ep)-\hat h'(\phi_\ep)\phi \right)dxdy,\\\label{distance-mhd}
&\hat d((\tilde\omega,\tilde \phi),(0,\phi_\ep))=\hat d_{1}((\tilde\omega,\tilde \phi),(0,\phi_\ep))+\hat d_{2}((\tilde\omega,\tilde \phi),(0,\phi_\ep))+\hat d_{3}((\tilde\omega,\tilde \phi),(0,\phi_\ep))
\end{align}
for $\tilde \omega\in \tilde Y$ and $\tilde \phi\in\tilde  Z_{non,\ep}$, where we used \eqref{G-tildeomega-omega-positive} and  the fact that  $e^{-2s} + 2s - 1>0$ for $s\neq 0$ to ensure that $\hat d_1$and $\hat d_3$ are well-defined, respectively.
Then we study the $C^2$ regularity of   $S_\ep$ and prove that the remainder term $R_\ep$ is a high order term of the distance $\hat d$. We need the following inequalities.
\begin{lemma}\label{e-p-c-e-x}
For $\ep\in(0,1)$, $a\in\mathbb{R}$ and $p\in\mathbb{Z}^+$, we have $|P_\ep\phi|\leq C\|\phi\|_{\tilde X_\ep}$,
\begin{align*}
&\iint_\Omega g'(\phi_\ep) e^ {a|\phi-P_\ep\phi|} dxdy \leq  C e^{C(a)\left(\|\phi\|_{\tilde{X}_\ep}+\|\phi\|_{\tilde{X}_\ep}^2\right)},\\
&\iint_\Omega g'(\phi_\ep) |\phi-P_\ep\phi|^p dxdy  \leq  C(p) e^{C\left(\|\phi\|_{\tilde{X}_\ep}+\|\phi\|_{\tilde{X}_\ep}^2\right)}
\end{align*}
for $\phi \in \tilde{X}_\ep$.
\end{lemma}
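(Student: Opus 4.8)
\textbf{Proof proposal for Lemma \ref{e-p-c-e-x}.}

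The plan is to reduce everything to Lemma \ref{Orlicz-type inequlity-lemma} by first controlling the projection constant $P_\ep\phi$ and then handling the exponential weight term by term. First I would recall from Corollary \ref{P-welldefined} (specifically \eqref{projection-controlled by-X-ep}) that $|P_\ep\phi|\leq C\|\phi\|_{\tilde X_\ep}$ for $\phi\in\tilde X_\ep$; this is exactly the first claimed bound, so no new work is needed there. The real content is the exponential estimate. Write $\phi-P_\ep\phi$ and use the elementary inequality $|a+b|\leq|a|+|b|$ inside the exponential, so that $e^{a|\phi-P_\ep\phi|}\leq e^{|a||\phi|}e^{|a||P_\ep\phi|}\leq e^{|a|\,|\phi|}e^{C|a|\|\phi\|_{\tilde X_\ep}}$, where the last factor is a constant depending only on $a$ and $\|\phi\|_{\tilde X_\ep}$. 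Then
\begin{align*}
\iint_\Omega g'(\phi_\ep)e^{a|\phi-P_\ep\phi|}dxdy\leq e^{C|a|\|\phi\|_{\tilde X_\ep}}\iint_\Omega g'(\phi_\ep)e^{|a|\,|\phi|}dxdy\leq C e^{C|a|\|\phi\|_{\tilde X_\ep}}e^{C|a|^2\|\phi\|_{\tilde X_\ep}^2},
\end{align*}
where the last step is \eqref{Orlicz-type inequlity} in Lemma \ref{Orlicz-type inequlity-lemma} applied with the parameter $|a|$ (noting $g'(\phi_\ep)=g'(\psi_\ep)$ since $\phi_\ep$ and $\psi_\ep$ are the same function, so the Orlicz-type inequality applies verbatim). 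Absorbing the two exponents into a single constant $C(a)$ depending on $a$ gives the bound $Ce^{C(a)(\|\phi\|_{\tilde X_\ep}+\|\phi\|_{\tilde X_\ep}^2)}$ as stated.

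For the polynomial bound, I would use the pointwise inequality $|s|^p\leq p!\,e^{|s|}$ (valid for all $s\in\mathbb{R}$, since $e^{|s|}=\sum_{k\geq0}|s|^k/k!\geq |s|^p/p!$) with $s=\phi-P_\ep\phi$, so that
\begin{align*}
\iint_\Omega g'(\phi_\ep)|\phi-P_\ep\phi|^p dxdy\leq p!\iint_\Omega g'(\phi_\ep)e^{|\phi-P_\ep\phi|}dxdy\leq C(p)e^{C(\|\phi\|_{\tilde X_\ep}+\|\phi\|_{\tilde X_\ep}^2)}
\end{align*}
by the exponential estimate just proved with $a=1$. This completes all three claims.

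Since every ingredient is either quoted directly (the projection bound from Corollary \ref{P-welldefined}, the Orlicz-type weighted exponential inequality from Lemma \ref{Orlicz-type inequlity-lemma}) or entirely elementary (splitting the exponent, $|s|^p\leq p!e^{|s|}$), I do not anticipate a genuine obstacle here; the only point requiring a little care is keeping track of how the constant depends on $a$ versus on $\|\phi\|_{\tilde X_\ep}$, so that the final form matches the slightly unusual $\|\phi\|_{\tilde X_\ep}+\|\phi\|_{\tilde X_\ep}^2$ combination (which arises precisely because the linear-in-$\|\phi\|$ term comes from the projection correction while the quadratic term comes from the Orlicz embedding). This lemma is the analogue of Lemma \ref{Orlicz-type inequlity-lemma} adapted to the MHD setting where, unlike the 2D Euler case, the perturbation $\phi$ of the magnetic potential need not lie in $\tilde X_\ep$ but only differs from an element of $\tilde X_\ep$ by a constant, which is exactly why the $P_\ep\phi$ recentering is built into the statement; it will feed into the $C^2$ regularity of $S_\ep$ and the high-order estimate of the remainder $R_\ep$ in \eqref{def-functional-S}, mirroring the role Lemma \ref{Orlicz-type inequlity-lemma} played in Lemma \ref{B-C2}.
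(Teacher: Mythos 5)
Your proposal is correct and follows essentially the same route as the paper's own proof: quote the projection bound \eqref{projection-controlled by-X-ep}, pull out the factor $e^{|a||P_\ep\phi|}$, apply the Orlicz-type inequality of Lemma \ref{Orlicz-type inequlity-lemma} (which indeed transfers verbatim since $\phi_\ep$ and $\psi_\ep$ are given by the same formula), and obtain the polynomial bound from $|s|^p\leq p!\,e^{|s|}$ together with the exponential estimate at $a=1$. No gaps.
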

\begin{proof}
$|P_\ep\phi|\leq C\|\phi\|_{\tilde X_\ep}$ follows from
\eqref{projection-controlled by-X-ep} for $\phi \in \tilde{X}_\ep$.
By Lemma  \ref{Orlicz-type inequlity-lemma}, we have
\begin{align*}
\iint_\Omega g'(\phi_\ep) e^ {a|\phi-P_\ep\phi|} dxdy \leq& e^ {|a||P_\ep\phi|} \iint_\Omega g'(\phi_\ep) e^ {|a||\phi|} dxdy\leq  Ce^ {C|a|\|\phi\|_{\tilde X_\ep}+Ca^2\|\phi\|_{\tilde{X}_\ep}^2},\\
\iint_\Omega g'(\phi_\ep) |\phi-P_\ep\phi|^p dxdy \leq& p!\iint_\Omega g'(\phi_\ep) e^{|\phi-P_\ep\phi|} dxdy \leq  Cp! e^ {C\|\phi\|_{\tilde X_\ep}+C\|\phi\|_{\tilde{X}_\ep}^2},\quad \phi \in \tilde{X}_\ep.
\end{align*}
\end{proof}
The $C^2$ regularity of   $S_\ep$ is proved as follows.
\begin{Lemma}\label{C2-mhd}
$S_\ep\in C^2(\tilde X_\ep)$, $S_\ep'(0) = 0$ and
\begin{align*}
\langle S_\ep''(0)\phi_1,\phi_2 \rangle&=  \iint_{\Omega}\left(\nabla\phi_1\cdot\nabla\phi_2- g'(\phi_\ep) (\phi_1-P_\ep\phi_1)(\phi_2-P_\ep\phi_2)\right)dxdy=\langle \tilde A_\ep \phi_1,\phi_2 \rangle
\end{align*}
for $\phi_1,\phi_2\in\tilde X_\ep$,
where   $\tilde{A}_\ep$ is defined in \eqref{tilde-A-ep-A-ep} and $\ep\in(0,1)$.
\end{Lemma}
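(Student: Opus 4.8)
The plan is to mimic closely the proof of Lemma \ref{B-C2}, since the functional $S_\ep$ differs from $\mathscr{B}_\ep$ only by the insertion of the projection $P_\ep$ inside the nonlinear term. The main new ingredient is Lemma \ref{e-p-c-e-x}, which provides the Orlicz-type estimates for $\phi-P_\ep\phi$ in place of $\phi$; with these in hand the whole argument goes through verbatim. I would first record the pointwise identity $e^{-2(\phi-P_\ep\phi)}+2(\phi-P_\ep\phi)-1 = \sum_{p\geq 2}\frac{(-2)^p}{p!}(\phi-P_\ep\phi)^p$ and, more importantly, that its first two G\^ateaux derivatives in the direction $\varphi$ are $-2(e^{-2(\phi-P_\ep\phi)}-1)(\varphi-P_\ep\varphi)$ and $4e^{-2(\phi-P_\ep\phi)}(\varphi_1-P_\ep\varphi_1)(\varphi_2-P_\ep\varphi_2)$ respectively. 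Note that because $P_\ep$ is a bounded linear operator on $\tilde X_\ep$ (Corollary \ref{P-welldefined}, and the bound $|P_\ep\phi|\le C\|\phi\|_{\tilde X_\ep}$ is restated in Lemma \ref{e-p-c-e-x}), differentiation in $\phi$ simply produces the factor $\varphi-P_\ep\varphi$ by the chain rule; there is no subtlety there.

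The key steps, in order, would be: (i) compute the first G\^ateaux derivative
\[
\partial_\lambda S_\ep(\phi+\lambda\varphi)\big|_{\lambda=0}=\iint_\Omega\Big(-\Delta\phi+\tfrac12 g'(\phi_\ep)(e^{-2(\phi-P_\ep\phi)}-1)(I-P_\ep)\Big)\varphi\,dxdy,
\]
using $\iint_\Omega g'(\phi_\ep)(I-P_\ep)f\cdot h\,dxdy=\iint_\Omega g'(\phi_\ep)(I-P_\ep)f\cdot(I-P_\ep)h\,dxdy$ to symmetrize, and bound it by Lemmas \ref{poincare1ep}, \ref{poincare2ep} and \ref{e-p-c-e-x} to see it is a bounded linear functional of $\varphi$, hence $S_\ep$ is G\^ateaux differentiable; (ii) show $S_\ep\in C^1$ by taking $\psi_n\to\phi$ in $\tilde X_\ep$ and estimating the difference of the first derivatives, exactly as in Lemma \ref{B-C2}: write $e^{-2(\psi_n-P_\ep\psi_n)}-e^{-2(\phi-P_\ep\phi)}=\int_0^1 -2e^{-2(s\psi_n+(1-s)\phi-P_\ep(s\psi_n+(1-s)\phi))}((\psi_n-\phi)-P_\ep(\psi_n-\phi))\,ds$, apply H\"older with exponents $(2,4,4)$ and control the exponential factor with Lemma \ref{e-p-c-e-x} (uniformly for $n$ large since $\|\psi_n\|_{\tilde X_\ep}$ is then bounded); (iii) compute the second G\^ateaux derivative, identify it with $\langle \tilde A_\ep\phi_1,\phi_2\rangle$ at $\phi=0$ using $e^{-2(0-P_\ep 0)}=1$ and the symmetrization identity above together with the definition of $\tilde A_\ep$ in \eqref{tilde-A-ep-A-ep}, and bound it via H\"older with exponents $(2,4,4)$ and Lemma \ref{e-p-c-e-x}; (iv) prove continuity of the second derivative by the same telescoping-difference argument with H\"older exponents $(2,6,6)$, again as in Lemma \ref{B-C2}. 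Finally $S_\ep'(0)=0$ is immediate since the $\lambda$-derivative at $\phi=0$ reduces to $\iint_\Omega(-\Delta\cdot 0)\varphi\,dxdy=0$ because $e^{-2\cdot 0}-1=0$.

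I do not expect a genuine obstacle here: the structure is identical to Lemma \ref{B-C2} and every exponential moment bound I need is precisely Lemma \ref{e-p-c-e-x}. The only point requiring a little care is bookkeeping the projection terms under differentiation and making sure the symmetrization $\iint_\Omega g'(\phi_\ep)(e^{-2(\phi-P_\ep\phi)}-1)(I-P_\ep)\varphi = \iint_\Omega g'(\phi_\ep)(\cdots)(I-P_\ep)\varphi$ is used consistently so that the quadratic form at $0$ comes out exactly as $\iint_\Omega(\nabla\phi_1\cdot\nabla\phi_2 - g'(\phi_\ep)(\phi_1-P_\ep\phi_1)(\phi_2-P_\ep\phi_2))\,dxdy$, which by \eqref{tilde-A-ep-A-ep} is $\langle\tilde A_\ep\phi_1,\phi_2\rangle$. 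One should also note, as a remark parallel to the one after Lemma \ref{B-C2}, that the same argument in fact gives $S_\ep\in C^\infty(\tilde X_\ep)$, since the higher derivatives produce only higher powers of $e^{-2(\phi-P_\ep\phi)}$ times polynomials in $\varphi-P_\ep\varphi$, all controllable by Lemma \ref{e-p-c-e-x}.
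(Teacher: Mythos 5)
Your proposal is correct and is essentially the paper's own argument: the paper proves Lemma \ref{C2-mhd} by repeating the scheme of Lemma \ref{B-C2} with $\phi-P_\ep\phi$ in place of $\psi$, using Lemma \ref{e-p-c-e-x} for the exponential moment bounds, H\"older with exponents $(2,4,4)$ and $(2,6,6)$, and a telescoping integral for continuity of the first and second G\^ateaux derivatives, with the identification $\langle S_\ep''(0)\phi_1,\phi_2\rangle=\langle\tilde A_\ep\phi_1,\phi_2\rangle$ falling out directly since $P_\ep$ is linear (your explicit symmetrization step is harmless but not needed, as the chain rule already produces the factor $\psi-P_\ep\psi$).
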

\begin{proof}
Let $\phi\in \tilde{X}_\ep$. For  $\psi \in \tilde{X}_\ep$, by Lemmas  \ref{poincare2ep} and \ref{e-p-c-e-x} we have
\begin{align*}
|\partial_\lambda S_\ep(\phi + \lambda \psi)|_{\lambda = 0}|
= & \iint_\Omega  \left(\nabla\phi\cdot\nabla\psi+{1\over2}  g'(\phi_\ep) (e^{-2(\phi-P_\ep\phi)}-1)(\psi-P_\ep\psi) \right)dxdy\\
\leq &\|\phi\|_{\tilde{X}_\ep}\|\psi\|_{\tilde{X}_\ep}+ C \left(\iint_\Omega    g'(\phi_\ep) (e^{-4(\phi-P_\ep\phi)}-2e^{-2(\phi-P_\ep\phi)}+1)dxdy\right)^{1\over2}\|\psi \|_{\tilde{X}_\ep}\\
\leq &\left(\|\phi\|_{\tilde{X}_\ep}+ C \left(C e^{C\left(\|\phi\|_{\tilde{X}_\ep}+\|\phi\|_{\tilde{X}_\ep}^2\right)}+C\right)^{1\over2}\right)\|\psi \|_{\tilde{X}_\ep}.
\end{align*}
Thus,  $S_\ep$  is G$\hat{\text{a}}$teaux differentiable at $\phi\in  \tilde{X}_\ep$.
Let $\{\phi_n\}_{n=1}^\infty\in \tilde X_\ep$ such that $\phi_n\to\phi$ in $\tilde{X}_\ep$,
and choose $N>0$ such that $\|\phi_n\|_{\tilde X_\ep}\leq \|\phi\|_{\tilde X_\ep}+1$ for $n\geq N$.   By Lemmas  \ref{poincare2ep} and \ref{e-p-c-e-x} we have for $n\geq N$ and $\psi\in\tilde X_\ep$,
\begin{align*}
&|\partial_\lambda S_\ep(\phi_n + \lambda \psi)|_{\lambda = 0}-\partial_\lambda S_\ep(\phi + \lambda \psi)|_{\lambda = 0}| \\
= & \left|\iint_{\Omega}\left(\nabla(\phi_n-\phi)\cdot\nabla\psi +{1\over 2} g'(\phi_\ep)(e^{-2(\phi_n-P_\ep\phi_n)} - e^{-2(\phi-P_\ep\phi)})(\psi-P_\ep\psi)\right) dxdy\right|\\
\leq&\|\phi_n-\phi\|_{\tilde X_\ep}\|\psi\|_{\tilde X_\ep}\\
&+\left|\int_0^1  \iint_{\Omega} g'(\phi_\ep) e^{-2(s(\phi_n-P_\ep\phi_n) + (1-s)(\phi-P_\ep\phi))}  (\phi_n - \phi-P_\ep(\phi_n-\phi))  (\psi-P_\ep\psi) dxdyds\right| \\
\leq&\|\phi_n-\phi\|_{\tilde X_\ep}\|\psi\|_{\tilde X_\ep}\\
&+\|\phi_n-\phi\|_{\tilde X_\ep}\|\psi-P_\ep\psi\|_{L^4_{g'(\phi_\ep)}}\int_0^1 \left( \iint_{\Omega} g'(\phi_\ep) e^{-8(s(\phi_n-P_\ep\phi_n) + (1-s)(\phi-P_\ep\phi))}  dxdy\right)^{1\over4}ds\\
\leq &  \|\phi_n-\phi\|_{\tilde X_\ep}\|\psi\|_{\tilde X_\ep}\\
&+\|\phi_n-\phi\|_{\tilde X_\ep}
 C e^{C\left(\|\psi\|_{\tilde{X}_\ep}+\|\psi\|_{\tilde{X}_\ep}^2\right)}
\int_0^1e^{C\left(\|s\phi_n + (1-s)\phi\|_{\tilde X_\ep}+\|s\phi_n + (1-s)\phi\|_{\tilde X_\ep}^2\right)}ds\\
\leq &\left(\|\psi\|_{\tilde X_\ep}+C_{\|\psi\|_{\tilde X_\ep}}C_{\|\phi\|_{\tilde X_\ep}}\right)\|\phi_n-\phi\|_{\tilde X_\ep}\to 0\quad \text{as}\quad n\to\infty.
\end{align*}
Thus, $S_\ep\in C^1(\tilde X_\ep)$.
 For
$\psi \in  \tilde{X}_\ep$ and $\varphi\in\tilde{X}_\ep$, by Lemma  \ref{e-p-c-e-x} we have
\begin{align*}
&\left|\partial_\tau\partial_\lambda S_\ep(\phi + \lambda\psi+\tau\varphi)|_{\lambda =\tau= 0}\right|\\
=& \left|\iint_{\Omega}\left(\nabla\psi\cdot\nabla\varphi- g'(\phi_\ep) e^{-2(\phi-P_\ep\phi)}(\psi-P_\ep\psi)(\varphi-P_\ep\varphi)\right)dxdy\right|\\
\leq&\|\psi\|_{\tilde{X}_\ep}\|\varphi\|_{\tilde{X}_\ep}+\left(\iint_{\Omega} g'(\phi_\ep) e^{-4(\phi-P_\ep\phi)} dxdy\right)^{1\over2}
\|\psi-P_\ep\psi\|_{L_{g'(\phi_\ep)}^4}\|\varphi-P_\ep\varphi\|_{L_{g'(\phi_\ep)}^4}\\
\leq&\|\psi\|_{\tilde{X}_\ep}\|\varphi\|_{\tilde{X}_\ep}+Ce^{C\left(\|\phi\|_{\tilde{X}_\ep}+\|\psi\|_{\tilde{X}_\ep}+\|\varphi\|_{\tilde{X}_\ep}+
\|\phi\|_{\tilde{X}_\ep}^2+\|\psi\|_{\tilde{X}_\ep}^2+\|\varphi\|_{\tilde{X}_\ep}^2\right)}.
\end{align*}
Let  $\{\phi_n\}_{n=1}^\infty\in \tilde X_\ep$ be defined as above.
  For  $\psi,\varphi\in\tilde X_\ep$ and $n\geq N$, we have
\begin{align*}
&|\partial_\tau\partial_\lambda S_\ep(\phi_n + \lambda \psi+\tau\varphi)|_{\lambda =\tau= 0}-\partial_\tau\partial_\lambda S_\ep(\phi + \lambda \psi+\tau\varphi)|_{\lambda =\tau= 0}|\\
=&\left|2\int_0^1\iint_{\Omega}g'(\phi_\ep) e^{-2(s(\phi_n-P_\ep\phi_n) + (1-s)(\phi-P_\ep\phi))}(\phi_n - \phi-P_\ep(\phi_n-\phi))(\psi-P_\ep\psi)(\varphi-P_\ep\varphi) dxdyds\right|\\
\leq&C\|\phi_n-\phi\|_{\tilde X_\ep}\|\psi-P_\ep\psi\|_{L_{g'(\phi_\ep)}^6}\|\varphi-P_\ep\varphi\|_{L_{g'(\phi_\ep)}^6}\\
&\int_0^1\left(\iint_{\Omega}g'(\phi_\ep) e^{-12(s(\phi_n-P_\ep\phi_n) + (1-s)(\phi-P_\ep\phi))}dxdy\right)^{1\over6}ds\\
\leq&C\|\phi_n-\phi\|_{\tilde X_\ep}
e^{C(\|\psi\|_{\tilde X_\ep}+\|\psi\|_{\tilde X_\ep}^2)}e^{C(\|\varphi\|_{\tilde X_\ep}+\|\varphi\|_{\tilde X_\ep}^2)}
\int_0^1\left(Ce^{C(\|s\phi_n + (1-s)\phi\|_{\tilde X_\ep}+\|s\phi_n + (1-s)\phi\|_{\tilde X_\ep}^2)}\right)^{1\over6}ds\\
\leq &C_{\|\psi\|_{\tilde X_\ep}}C_{\|\varphi\|_{\tilde X_\ep}}C_{\|\phi\|_{\tilde X_\ep}}\|\phi_n-\phi\|_{\tilde X_\ep}\to 0\quad \text{as}\quad n\to\infty.
\end{align*}
Thus, $S_\ep\in C^2(\tilde X_\ep)$.
\end{proof}
Next, we estimate the remainder term $R_\ep$.
\begin{lemma} \label{remainder term R}
For $\phi\in Z_{non,\ep}$ and $\left|\iint_{\Omega}(e^{-2\tilde \phi}-e^{-2\phi_\ep})dxdy\right|<1$, we have
\begin{align}\label{remainder term-estimate}
|R_\ep(\phi)|\leq O(\hat d_{3}((\tilde\omega,\tilde \phi),(0,\phi_\ep))^2)+C\left|\iint_{\Omega}(e^{-2\tilde \phi}-e^{-2\phi_\ep})dxdy\right|
\end{align}
as $\hat d_{3}((\tilde\omega,\tilde \phi),(0,\phi_\ep))\to0$.
\end{lemma}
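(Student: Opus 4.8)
The plan is to reduce $R_\ep(\phi)$ to an explicit elementary function of the single scalar $p:=P_\ep\phi$ and the scalar $\Delta_\phi:=\iint_{\Omega}(e^{-2\tilde\phi}-e^{-2\phi_\ep})\,dxdy$, and then to show that $p$ itself is controlled, to the correct order, by $\hat d_3$ and $\Delta_\phi$. First I would use that $P_\ep\phi$ is a constant, so $e^{-2(\phi-P_\ep\phi)}=e^{2p}e^{-2\phi}$; substituting this into the definition of $R_\ep$ in \eqref{def-functional-S}, and using $\tilde\phi=\phi_\ep+\phi$, $\iint_{\Omega}e^{-2\phi_\ep}\,dxdy=\tfrac12\iint_{\Omega}g'(\phi_\ep)\,dxdy=4\pi$, together with the finiteness of $\iint_{\Omega}e^{-2\tilde\phi}\,dxdy$ (established after \eqref{def-Z-non-ep} via Lemma \ref{Orlicz-type inequlity-lemma}, since $\phi=\phi_*+c_*$ with $\phi_*\in\tilde X_\ep$), one gets the closed form
\[
R_\ep(\phi)=2\pi\bigl(e^{2p}-1-2p\bigr)+\tfrac12\bigl(e^{2p}-1\bigr)\Delta_\phi .
\]

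Second, I would extract the linear identity tying $p$ to $\hat d_3$ and $\Delta_\phi$. Expanding \eqref{distance3-mhd} with $\hat h(s)=-\tfrac12 e^{-2s}$, $\hat h'(s)=e^{-2s}$, gives $\hat d_3=\tfrac12\iint_{\Omega}e^{-2\phi_\ep}\bigl(e^{-2\phi}-1+2\phi\bigr)\,dxdy$; splitting the three terms and using $\iint_{\Omega}e^{-2\phi_\ep}\phi\,dxdy=\tfrac12\iint_{\Omega}g'(\phi_\ep)\phi\,dxdy=4\pi P_\ep\phi$ (by \eqref{P-ep}, with the integrals finite for $\phi\in Z_{non,\ep}$ by Lemma \ref{e-p-c-e-x}) yields
\[
2\hat d_3=\Delta_\phi+8\pi p,\qquad\text{i.e.}\qquad p=\frac{2\hat d_3-\Delta_\phi}{8\pi}.
\]
Since $\hat d_3\to0$ and $|\Delta_\phi|<1$, this already shows $|p|$ is eventually bounded by a fixed constant (e.g. $\tfrac{1}{4\pi}$).

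Third, I would finish with elementary estimates: on the fixed range $|p|\le\tfrac{1}{4\pi}$ one has $|e^{2p}-1-2p|\le C p^2$ and $|e^{2p}-1|\le C|p|$, so from the closed form $|R_\ep(\phi)|\le C p^2+C|p|\,|\Delta_\phi|$; substituting $p=(2\hat d_3-\Delta_\phi)/(8\pi)$ and using $(2\hat d_3-\Delta_\phi)^2\le 8\hat d_3^2+2\Delta_\phi^2$, $2\hat d_3|\Delta_\phi|\le\hat d_3^2+\Delta_\phi^2$, and $\Delta_\phi^2\le|\Delta_\phi|$ (as $|\Delta_\phi|<1$) gives $|R_\ep(\phi)|\le C\hat d_3^2+C|\Delta_\phi|$, which is exactly \eqref{remainder term-estimate}.

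The argument carries no real difficulty: once the bookkeeping is set up it is purely algebraic. The one point deserving attention is the identity $8\pi P_\ep\phi=2\hat d_3-\Delta_\phi$, which is what upgrades the projection constant $P_\ep\phi$ — a priori only bounded by $\|\phi\|_{\tilde X_\ep}$ via Lemma \ref{e-p-c-e-x} — to a quantity that is small of precisely the order needed to make $e^{2p}-1-2p=O(\hat d_3^2)+O(|\Delta_\phi|)$; and one must record that every integral involved ($\iint_\Omega e^{-2\tilde\phi}$, $\iint_\Omega g'(\phi_\ep)\phi$, $R_\ep(\phi)$, $\hat d_3$) is finite for $\phi\in Z_{non,\ep}$, which follows from the decomposition $\phi=\phi_*+c_*$ with $\phi_*\in\tilde X_\ep$ together with Lemmas \ref{Orlicz-type inequlity-lemma} and \ref{e-p-c-e-x}.
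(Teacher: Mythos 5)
Your proof is correct and follows essentially the same route as the paper: it rewrites $R_\ep(\phi)$ as $2\pi(e^{2P_\ep\phi}-1-2P_\ep\phi)+\tfrac12(e^{2P_\ep\phi}-1)\iint_{\Omega}(e^{-2\tilde\phi}-e^{-2\phi_\ep})dxdy$, uses the identity $P_\ep\phi=\frac{1}{4\pi}\bigl(\hat d_3-\tfrac12\iint_{\Omega}(e^{-2\tilde\phi}-e^{-2\phi_\ep})dxdy\bigr)$, and concludes with elementary Taylor bounds together with $\bigl(\iint_{\Omega}(e^{-2\tilde\phi}-e^{-2\phi_\ep})dxdy\bigr)^2\le\bigl|\iint_{\Omega}(e^{-2\tilde\phi}-e^{-2\phi_\ep})dxdy\bigr|$. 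The only difference is that you spell out the boundedness of $P_\ep\phi$ and the finiteness of the integrals explicitly, which the paper leaves implicit.
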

\begin{proof}
By \eqref{P-ep} and \eqref{distance3-mhd}, we have
\begin{align*}
P_\ep\phi={\iint_\Omega\hat h'(\phi_\ep)\phi dxdy\over4\pi}={1\over 4\pi}\left(\hat d_{3}((\tilde\omega,\tilde \phi),(0,\phi_\ep))-{1\over2}\iint_\Omega(e^{-2\tilde \phi}-e^{-2\phi_\ep})dxdy\right)
\end{align*}
for $\phi\in Z_{non,\ep}$. Then we infer from the definition \eqref{def-functional-S} of $R_\ep$ that
\begin{align*}
|R_\ep(\phi)|=&\left|-{1\over 2}\iint_{\Omega}\left(e^{-2\tilde \phi}-e^{-2(\tilde \phi-P_\ep \phi)}+2e^{-2\phi_\ep}P_\ep\phi\right)dxdy\right|\\
\leq&\left|{1\over 2}(e^{2P_\ep \phi}-1-2P_\ep\phi)\iint_{\Omega}e^{-2\phi_\ep}dxdy\right|
+\left|{1\over 2}(e^{2P_\ep\phi}-1)\iint_{\Omega}(e^{-2\tilde \phi}-e^{-2\phi_\ep})dxdy\right|\\
\leq&(P_\ep\phi)^2O(1)+|P_\ep\phi|\left|\iint_{\Omega}(e^{-2\tilde \phi}-e^{-2\phi_\ep})dxdy\right|O(1)\\
\leq&O(\hat d_{3}((\tilde\omega,\tilde \phi),(0,\phi_\ep))^2)+C\left(\iint_{\Omega}(e^{-2\tilde \phi}-e^{-2\phi_\ep})dxdy\right)^2,
\end{align*}
which gives \eqref{remainder term-estimate}.
\end{proof}
\if0
By Corollaries \ref{kernel of  the operator A0 and a decomposition of tilde X0} and \ref{kernel of  the operator A-ep and a decomposition of tilde Xep}, we have
\begin{align*}
\ker ( A_\ep)={\rm{span}}\left\{\eta_\ep(x,y), \gamma_\ep(x,y), \xi_\ep(x,y)\right\}
\end{align*}
and
\begin{align}\label{A-ep-positive-lower-bound}
\langle  A_\ep \psi,\psi\rangle \geq C_0 \| \psi\|_{\tilde X_\ep}^2, \quad \quad \psi\in \tilde X_{\ep+}=\tilde X_\ep \ominus\ker ( A_\ep)
\end{align}
for some $C_0>0$ independent of $\ep$.
\fi
Now, we  prove Theorem \ref{main result1-mhd-all}, that is,  the Kelvin--Stuart magnetic islands $(\omega=0,\phi_{\ep_0})$ are conditionally nonlinearly orbitally stable for co-periodic perturbations, where $\ep_0\in(0,1)$.

\begin{proof}By Lemma \ref{imp-vertical condition},  there exists $\delta_0(\ep_0)>0$ such that for any  $(x_0,y_0)\in\Omega$ and  $\tilde \phi$ with $\hat d_2((\tilde \omega,\tilde \phi),(0,\phi_{\ep_0}(x+x_0,y+y_0)))< \delta_0(\ep_0)$, there exist $(\tilde x_0,\tilde y_0)\in\Omega$ and $\tilde\epsilon_0\in(a(\ep_0),b(\ep_0))$, depending continuously on $\tilde\phi, x_0$ and $y_0$, such that
\begin{align}\label{app-lemma-imp-vertical condition-mhd}
\tilde \phi\left(x-\tilde x_0,y-\tilde y_0\right)-\phi_{\tilde\ep_0}(x,y)\perp\ker \left( \tilde A_{\tilde\ep_0}\right)\quad \text{in}\quad \dot{H}^1(\Omega)
\end{align}
 and
$
|x_0-\tilde x_0|+|y_0-\tilde y_0|+|\ep_0-\tilde \ep_0|\leq C(\ep_0)\sqrt{\delta_0(\ep_0)}
$
for some $a(\ep_0)\in (0,\ep_0)$ and $b(\ep_0)\in(\ep_0,1)$.
For  $\kappa>0$, let $\delta=\delta(\ep_0,\kappa)<\min\big\{{\kappa^4\over32C_1C_2(\ep_0)^4C_3(\ep_0)^4},$ ${\delta_0(\ep_0)\over2}\big\}$, where $C_1, C_2(\ep_0), C_3(\ep_0)>1$ are  determined by \eqref{de-c1-mhd}, \eqref{I-omegaep1t0-omegaep0-mhd} and \eqref{Iomega0Iomegat1-mhd}.
For the initial data $(\tilde \omega(0)=\tilde \omega_0,\tilde \phi(0)=\tilde \phi_0)$ satisfying
\eqref{initial data-mhd},
there exists $(x_0(0),y_0(0))\in\Omega$  such that
\begin{align}\nonumber
&\hat d((\tilde\omega(0),\tilde \phi(0)),(0,\phi_{\ep_0}(x+x_0(0),y+y_0(0))))+\left|\iint_{\Omega}(e^{-2\tilde \phi(0)}-e^{-2\phi_{\ep_0}})dxdy\right|\\\label{initial data-translation-infimum}
<&\delta(\ep_0,\kappa)\leq {\kappa^4\over32C_1C_2(\ep_0)^4C_3(\ep_0)^4}.
\end{align}

For \(t\ge 0\), we claim that if there exists \((x_0(t),y_0(t))\in \Omega\) such that
\[
\hat d\bigl((\tilde\omega(t),\tilde\phi(t)),(0,\phi_{\ep_0}(x+x_0(t),y+y_0(t)))\bigr)<\delta_0(\ep_0),
\]
then there exist \((x_1(t),y_1(t))\in \Omega\) and \(\ep_1(t)\in (a(\ep_0),b(\ep_0))\) such that
\begin{align}\label{a prior estimate-mhd}
\hat d\bigl((\tilde\omega(t),\tilde\phi(t)),(0,\phi_{\ep_1(t)}(x+x_1(t),y+y_1(t)))\bigr)
<\frac{\kappa^4}{16C_2(\ep_0)^4C_3(\ep_0)^4}.
\end{align}

Indeed, by \eqref{app-lemma-imp-vertical condition-mhd}, there exist \((x_1(t),y_1(t))\in\Omega\) and \(\ep_1(t)\in(a(\ep_0),b(\ep_0))\), depending continuously on \(t\), such that
\[
\tilde\phi(x-x_1(t),y-y_1(t))-\phi_{\ep_1(t)}(x,y)\perp \ker(\tilde A_{\ep_1(t)})
\qquad\text{in }\dot H^1(\Omega),
\]
and
\[
|x_0(t)-x_1(t)|+|y_0(t)-y_1(t)|+|\ep_0-\ep_1(t)|
\le C(\ep_0)\sqrt{\delta_0(\ep_0)}
\]
for \(t>0\), while at \(t=0\),
\begin{align}\label{initial-x0x1-y0y1-ep0ep1}
|x_0(0)-x_1(0)|+|y_0(0)-y_1(0)|+|\ep_0-\ep_1(0)|
\le C(\ep_0)\sqrt{\delta(\ep_0,\kappa)}.
\end{align}

Recall that
\[
\langle \tilde A_\ep\varphi,\varphi\rangle \ge C_0\|\varphi\|_{\tilde X_\ep}^2,
\qquad
\varphi\in \tilde X_{\ep,+}:=\tilde X_\ep\ominus \ker(\tilde A_\ep),
\]
where
\[
\ker(\tilde A_\ep)=\mathrm{span}\{\eta_\ep,\gamma_\ep,\xi_\ep\}.
\]
By choosing \(\delta(\ep_0,\kappa)>0\) smaller if necessary, it follows from
\eqref{initial-x0x1-y0y1-ep0ep1} and \eqref{initial data-translation-infimum} that
\[
\hat d\bigl((0,\phi_{\ep_0}(x+x_0(0),y+y_0(0))),(0,\phi_{\ep}(x+x_1(0),y+y_1(0)))\bigr)
< \frac{\kappa^4}{32C_1C_2(\ep_0)^4C_3(\ep_0)^4},
\]
and
\[
\hat d\bigl((\tilde\omega(0),\tilde\phi(0)),(0,\phi_\ep(x+x_1(0),y+y_1(0)))\bigr)
+\left|\iint_\Omega \bigl(e^{-2\tilde\phi(0)}-e^{-2\phi_{\ep_0}}\bigr)\,dx\,dy\right|
\le
\frac{\kappa^4}{16C_1C_2(\ep_0)^4C_3(\ep_0)^4}
\]
for \(\ep=\ep_0\) or \(\ep=\ep_1(0)\).

Finally, choose \(\tau\in(0,\tfrac12)\) sufficiently small so that
\[
-\frac12\tau +(1+\tau)C_0>\tau.
\]
By \eqref{H-omega-phi}-\eqref{def-functional-S} and Lemmas \ref{C2-mhd}-\ref{remainder term R} we have
\begin{align}\nonumber
& \hat d((\tilde \omega(0),\tilde \phi(0)),(0,\phi_{\ep_1(0)}(x+x_1(0),y+y_1(0)))\\\nonumber
\geq&\hat H(\tilde \omega(0),\tilde \phi(0))-\left(\hat H( 0, \phi_{\ep_1(0)}(x+x_1(0),y+y_1(0)))+4\pi\ln\sqrt{1-\ep_1(0)^2}\right)+4\pi\ln\sqrt{1-\ep_1(0)^2}\\\nonumber
 \geq& \hat H(\tilde \omega(t),\tilde \phi_{tran}(t))-\hat H( 0, \phi_{\ep_1(t)})-4\pi\ln\sqrt{1-\ep_1(t)^2}+4\pi\ln\sqrt{1-\ep_1(0)^2}\\\nonumber
=&{1\over2}\iint_\Omega (G*\tilde \omega(t))\tilde\omega(t) dxdy+{1\over2}\iint_\Omega (2(G*J^t)J^{\ep_1(t)}+(G*J^t)J^t) dxdy\\\nonumber
&+\iint_\Omega (\hat h(\phi_{\ep_1(t)}+\phi^t)-\hat h (\phi_{\ep_1(t)})) dxdy-4\pi\ln\sqrt{1-\ep_1(t)^2}+4\pi\ln\sqrt{1-\ep_1(0)^2}\\\nonumber
=&{1\over2}\iint_\Omega (G*\tilde \omega(t))\tilde\omega(t) dxdy+{1\over2}\iint_\Omega |\nabla\phi^t|^2 dxdy-4\pi\ln\sqrt{1-\ep_1(t)^2}+4\pi\ln\sqrt{1-\ep_1(0)^2}\\\nonumber
&+\iint_\Omega (\hat h(\phi_{\ep_1(t)}+\phi^t)-\hat h (\phi_{\ep_1(t)})-\hat h '(\phi_{\ep_1(t)})(G*J^t)) dxdy\\\nonumber
=&{1\over2}\iint_\Omega (G*\tilde \omega(t))\tilde\omega(t) dxdy+{1\over2}\iint_\Omega |\nabla\phi^t|^2 dxdy-4\pi\ln\sqrt{1-\ep_1(t)^2}+4\pi\ln\sqrt{1-\ep_1(0)^2}\\\nonumber
&+\iint_\Omega \left(\hat h(\phi_{\ep_1(t)}+\phi^t)-\hat h (\phi_{\ep_1(t)})-\hat h '(\phi_{\ep_1(t)})(\phi^t-\ln\sqrt{1-\ep_1(t)^2}+\ln\sqrt{1-\ep_0^2}) \right)dxdy\\\nonumber
=&{1\over2}\iint_\Omega (G*\tilde \omega(t))\tilde\omega(t) dxdy+{1\over2}\iint_\Omega |\nabla\phi^t|^2 dxdy-4\pi\ln\sqrt{1-\ep_0^2}+4\pi\ln\sqrt{1-\ep_1(0)^2}\\\nonumber
&+\iint_\Omega \left(\hat h(\phi_{\ep_1(t)}+\phi^t)-\hat h (\phi_{\ep_1(t)})-\hat h '(\phi_{\ep_1(t)})\phi^t \right)dxdy\\\nonumber
 =&\left({1\over2}\hat d_1+{1\over2}\hat d_2-\hat d_3\right)((\tilde \omega(t),\tilde \phi_{tran}(t)), ( 0, \phi_{\ep_1(t)}))-4\pi\ln\sqrt{1-\ep_0^2}+4\pi\ln\sqrt{1-\ep_1(0)^2}\\\nonumber
 =&{1\over2}\hat d_1((\tilde \omega(t),\tilde \phi_{tran}(t)), ( 0, \phi_{\ep_1(t)})) +
 \tau \left(\hat d_3-{1\over2}\hat d_2\right)((\tilde \omega(t),\tilde \phi_{tran}(t)), ( 0, \phi_{\ep_1(t)}))+\\\nonumber
 &(1+\tau) \left({1\over2}\hat d_2-\hat d_3\right)((\tilde \omega(t),\tilde \phi_{tran}(t)), ( 0, \phi_{\ep_1(t)}))-4\pi\ln\sqrt{1-\ep_0^2}+4\pi\ln\sqrt{1-\ep_1(0)^2} \\\nonumber
 =& \left( {1\over2}\hat d_1+\tau \left(\hat d_3-{1\over2}\hat d_2\right)\right)((\tilde \omega(t),\tilde \phi_{tran}(t)), ( 0, \phi_{\ep_1(t)}))
 + (1+\tau) S_{\ep_1(t)}( \phi^t-c_*(t))\\\nonumber
 &+(1+\tau)R_{\ep_1(t)}(\phi^t)-4\pi\ln\sqrt{1-\ep_0^2}+4\pi\ln\sqrt{1-\ep_1(0)^2}\\\nonumber
  \geq&\left( {1\over2}\hat d_1+\tau \left(\hat d_3-{1\over2}\hat d_2\right)\right)((\tilde \omega(t),\tilde \phi_{tran}(t)), ( 0, \phi_{\ep_1(t)}))+(1+\tau)\cdot
  \\\nonumber
 &\langle  \tilde A_{\ep_1(t)} (\phi^t-c_*(t)),\phi^t-c_*(t)\rangle+ o(\hat d_2((\tilde \omega(t),\tilde \phi_{tran}(t)),(0,\phi_{\ep_1(t)}))) \\\nonumber
 &-o(\hat d_{3}((\tilde \omega(t),\tilde \phi_{tran}(t)),(0,\phi_{\ep_1(t)})))-C\left|\iint_{\Omega}(e^{-2\tilde \phi_{tran}(t)}-e^{-2\phi_{\ep_1(t)}})dxdy\right|\\\nonumber
 &-4\pi\ln\sqrt{1-\ep_0^2}+4\pi\ln\sqrt{1-\ep_1(0)^2}\\\nonumber
 \geq&\left({1\over2} \hat d_1+\tau \hat d_3\right)((\tilde \omega(t),\tilde \phi_{tran}(t)), ( 0, \phi_{\ep_1(t)}))+\left(-{1\over2}\tau+(1+\tau)C_0\right)\hat d_2((\tilde \omega(t),\tilde \phi_{tran}(t)), ( 0, \phi_{\ep_1(t)}))\\\nonumber
 &
 + o(\hat d((\tilde \omega(t),\tilde \phi_{tran}(t)),(0,\phi_{\ep_1(t)})))-C\left|\iint_{\Omega}(e^{-2\tilde \phi(0)}-e^{-2\phi_{\ep_0}})dxdy\right|
 \\\nonumber
 &-4\pi\ln\sqrt{1-\ep_0^2}+4\pi\ln\sqrt{1-\ep_1(0)^2}\\\nonumber
\geq&\tau \hat d((\tilde \omega(t),\tilde \phi(t)), ( 0, \phi_{\ep_1(t)}(x+x_1(t),y+y_1(t))))\\\nonumber
&+o(\hat d((\tilde \omega(t),\tilde \phi(t)), ( 0, \phi_{\ep_1(t)}(x+x_1(t),y+y_1(t)))))-C\left|\iint_{\Omega}(e^{-2\tilde \phi(0)}-e^{-2\phi_{\ep_0}})dxdy\right|
\\\nonumber
 &-4\pi\ln\sqrt{1-\ep_0^2}+4\pi\ln\sqrt{1-\ep_1(0)^2},
\end{align}
where $\phi^t=\tilde \phi_{tran}(t)-\phi_{\ep_1(t)}$, $J^t=\tilde J_{tran}(t)-J^{\ep_1(t)}$,   $\tilde \phi_{tran}(t)=\tilde \phi(t;x-x_1(t),y-y_1(t))$, $\tilde J_{tran}(t)=\tilde J(t;x-x_1(t),y-y_1(t))$, $c_*(t)$ is chosen such that  $\phi^t-c_*(t)\in \tilde  X_{\ep_1(t)}$. Here, we used
$\tilde \phi(t)=G*\tilde J(t)-\ln\sqrt{1-\ep_0^2}$ for the initial data $\tilde \phi(0)=G*\tilde J(0)-\ln\sqrt{1-\ep_0^2}\in \tilde Z_{non,\ep_0}$,
\begin{align*}
&\tilde \phi_{tran}(t)=G*\tilde J_{tran}(t)-\ln\sqrt{1-\ep_0^2}\\
=&G*(J^{\ep_1(t)}+J^t)-\ln\sqrt{1-\ep_1(t)^2}+\ln\sqrt{1-\ep_1(t)^2}-\ln\sqrt{1-\ep_0^2}\\
=&\phi_{\ep_1(t)}+G*J^t+\ln\sqrt{1-\ep_1(t)^2}-\ln\sqrt{1-\ep_0^2},\\
\Longrightarrow\phi^t=&G*J^t+\ln\sqrt{1-\ep_1(t)^2}-\ln\sqrt{1-\ep_0^2},
\end{align*}
$S_{\ep_1(t)}(\phi^t)=S_{\ep_1(t)}(\phi^t-c_*(t))$, and $\hat H(0,\omega_\ep)+4\pi\ln\sqrt{1-\ep^2}$ is conserved for $\ep$, since
\begin{align*}
{d\over d\ep}\hat H(0,\phi_{\ep})=\iint_{\Omega}\partial_\ep(G* J^\ep)J^\ep dxdy=\iint_{\Omega}\partial_\ep(\phi_\ep+\ln\sqrt{1-\ep^2})J^\ep dxdy=-4\pi{d\over d\ep}\ln\sqrt{1-\ep^2}.
\end{align*}
Then for $\kappa>0$ sufficiently small, by assumption (ii) and  taking $\delta(\ep_0,\kappa)>0$ smaller, we have
\begin{align}\nonumber
&\hat d((\tilde \omega(t),\tilde \phi(t)), ( 0, \phi_{\ep_1(t)}(x+x_1(t),y+y_1(t))))\\\nonumber
\leq& C_1 \hat d((\tilde \omega(0),\tilde \phi(0)), ( 0, \phi_{\ep_1(0)}(x+x_1(0),y+y_1(0))))+ C_1 \left|\iint_{\Omega}(e^{-2\tilde \phi(0)}-e^{-2\phi_{\ep_0}})dxdy\right|\\\label{de-c1-mhd}
&+4\pi|\ln\sqrt{1-\ep_0^2}-\ln\sqrt{1-\ep_1(0)^2}|
< {\kappa^4\over16C_2(\ep_0)^4C_3(\ep_0)^4}
\end{align}
for some $C_1>1$.

For any $\kappa\in(0,\min\{\delta_0(\ep_0), 1\})$, suppose that \eqref{onlinear orbital stability-goal-mhd} is not true. Then there exist  $t_0>0$ and $( x_0(t),y_0(t))\in\Omega$, depending continuously on $t$, such that $  \hat d((\tilde\omega(t),\tilde \phi(t)),(0,\phi_{\ep_0}(x+x_0(t),y+y_0(t))))<\kappa<\delta_0(\ep_0)$ for $0\leq t< t_0$, and
\begin{align}\label{infimum-point-t0}
 \inf_{(x_0,y_0)\in\Omega}\hat d((\tilde\omega(t_0),\tilde \phi(t_0)),(0,\phi_{\ep_0}(x+x_0,y+y_0)))=\kappa.
\end{align}
 By \eqref{a prior estimate-mhd},
 there exist $(x_1(t),y_1(t))\in\Omega$ and $\ep_1(t)\in(a(\ep_0),b(\ep_0))$, depending continuously on $t$, such that
\begin{align}\label{d-t0-ep1-mhd}
\hat d((\tilde \omega(t),\tilde \phi(t)),( 0, \phi_{\ep_1(t)}(x+x_1(t),y+y_1(t))))<{\kappa^4\over16C_2(\ep_0)^4C_3(\ep_0)^4}<{\kappa\over2}
\end{align}
for $0\leq t\leq t_0$.
If we can prove that
$
\hat d((0,\phi_{\ep_1(t_0)}),(0,\phi_{\ep_0}))<{\kappa\over 2},
$
 then
$
\hat d((\tilde \omega(t_0),\tilde \phi(t_0)),( 0, \phi_{\ep_0}(x+x_1(t_0),y+y_1(t_0))))<\kappa,
$
which contradicts \eqref{infimum-point-t0}.

Now, we prove that $\hat d((0,\phi_{\ep_1(t_0)}),(0,\phi_{\ep_0}))<{\kappa\over 2}$.   By Lemma \ref{intOmega2}, \eqref{initial-x0x1-y0y1-ep0ep1} and taking $\delta(\ep_0,\kappa)>0$ smaller, it suffices to show that
\begin{align}\label{I-omegaep1t0-omegaep0-mhd}
\left|I\left(-e^{-2\phi_{\ep_1(t)}}\right)-I\left(-e^{-2\phi_{\ep_0}}\right)\right|<{\kappa\over C_2(\ep_0)}
\end{align}
for  some $C_2(\ep_0)>1$ large enough, where $0\leq t\leq t_0$ and $I(J)=\iint_{\Omega}(-J)^{3\over2}dxdy$. In fact,
\begin{align}\nonumber
&\hat d_3((\tilde \omega(t),\tilde \phi(t)),( 0, \phi_{\ep_1(t)}(x+x_1(t),y+y_1(t))))\\\nonumber
=&-\iint_{\Omega}\bigg(\hat h(\tilde \phi(t))-\hat h(\phi_{\ep_1(t)}(x+x_1(t),y+y_1(t)))\\\nonumber
&-\hat h'(\phi_{\ep_1(t)}(x+x_1(t),y+y_1(t)))
(\tilde \phi(t)-\phi_{\ep_1(t)}(x+x_1(t),y+y_1(t)))\bigg)dxdy\\\nonumber
=&\int_0^1\iint_\Omega2(1-r)e^{-2\phi^r(t)}\big(\tilde \phi(t)-\phi_{\ep_1(t)}(x+x_1(t),y+y_1(t))\big)^2dxdydr\\\nonumber
=&\int_0^1\iint_{\Omega}2(1-r)e^{-2\phi_{\ep_1(t)}}e^{-2r\phi^t}\left(\phi^t\right)^2dxdydr\\\nonumber
\geq&\int_0^1\iint_{\Omega}2(1-r)e^{-2\phi_{\ep_1(t)}}e^{-2\left|\phi^t\right|}\left(\phi^t\right)^2dxdydr\\\label{d1 estimates-mhd}
=&{1\over2}\iint_{\Omega}g'\left(\phi_{\ep_1(t)}\right)e^{-2\left|\phi^t\right|}\left(\phi^t\right)^2dxdy,
\end{align}
where $0\leq t\leq t_0$ and $\phi^r(t,x,y)=r\tilde \phi(t,x,y)+(1-r)\phi_{\ep_1(t)}(x+x_1(t),y+y_1(t))$ for $r\in[0,1]$.
Moreover, by Lemmas \ref{e-p-c-e-x}, \ref{poincare2ep}, \eqref{initial data-translation-infimum} and \eqref{d-t0-ep1-mhd} we have
\begin{align*}
&\iint_{\Omega}g'\left(\phi_{\ep_1(t)}\right)e^{7\left|\phi^t\right|}dxdy\\
\leq& e^{7\left|P_{\ep_1(t)}(\phi^t)\right|}\iint_{\Omega}g'\left(\phi_{\ep_1(t)}\right)e^{7\left|\phi^t-c_*(t)-P_{\ep_1(t)}(\phi^t-c_*(t))\right|}dxdy\\
\leq&Ce^{C|\iint_\Omega\hat h'(\phi_{\ep_1(t)})\phi^tdxdy|} e^{C\left(\|\phi^t\|_{\tilde{X}_\ep}+\|\phi^t\|_{\tilde{X}_\ep}^2\right)}\\
\leq &
Ce^{C\hat d_3((\tilde \omega(t),\tilde \phi(t)),( 0, \phi_{\ep_1(t)}(x+x_1(t),y+y_1(t))))+C\left|\iint_{\Omega}(e^{-2\tilde \phi(0)}-e^{-2\phi_{\ep_0}})dxdy\right|}\cdot\\
&e^{C\hat d_2((\tilde \omega(t),\tilde \phi(t)),( 0, \phi_{\ep_1(t)}(x+x_1(t),y+y_1(t))))^{1\over2}+C\hat d_2((\tilde \omega(t),\tilde \phi(t)),( 0, \phi_{\ep_1(t)}(x+x_1(t),y+y_1(t))))}\\
\leq &Ce^{C\kappa}e^{C\kappa^{1\over2}+C\kappa}\leq C,\\
&\iint_{\Omega}g'\left(\phi_{\ep_1(t)}\right)\left|\phi^t\right|^2dxdy\\
\leq&2\iint_{\Omega}g'\left(\phi_{\ep_1(t)}\right)\left|\phi^t-c_*(t)-P_{\ep_1(t)}(\phi^t-c_*(t))\right|^2dxdy+2\left|P_{\ep_1(t)}(\phi^t)\right|^2\iint_{\Omega}g'\left(\phi_{\ep_1(t)}\right)dxdy\\
\leq &\hat d_2((\tilde \omega(t),\tilde \phi(t)),( 0, \phi_{\ep_1(t)}(x+x_1(t),y+y_1(t))))\\
&+C\hat d_3((\tilde \omega(t),\tilde \phi(t)),( 0, \phi_{\ep_1(t)}(x+x_1(t),y+y_1(t))))^2+C\left|\iint_{\Omega}(e^{-2\tilde \phi(0)}-e^{-2\phi_{\ep_0}})dxdy\right|^2\leq C
\end{align*}
for $0\leq t\leq t_0$. Thus, by  \eqref{d-t0-ep1-mhd} and \eqref{d1 estimates-mhd}  we have
\begin{align}\nonumber
&\left|I\left(-e^{-2\tilde \phi(t)}\right)-I\left(-e^{-2\phi_{\ep_1(t)}}\right)\right|=\left|I\left(-e^{-2\tilde \phi(t)}\right)-I\left(-e^{-2\phi_{\ep_1(t)}(x+x_1(t),y+y_1(t))}\right)\right|\\\nonumber
=&\left|\iint_\Omega\left(e^{-3\tilde \phi(t)}-e^{-3\phi_{\ep_1(t)}(x+x_1(t),y+y_1(t))}\right)dxdy\right|\\\nonumber
=&3\bigg|\int_0^1\iint_\Omega e^{-3\phi^r(t)}\left(\tilde\phi(t)- \phi_{\ep_1(t)}(x+x_1(t),y+y_1(t)) \right)dxdydr\bigg|\\\nonumber
=&3\bigg|\int_0^1\iint_\Omega e^{-3\phi_{\ep_1(t)}}
e^{-3r\phi^t}\phi^t dxdydr\bigg|\\\nonumber
\leq&3\iint_\Omega e^{-3\phi_{\ep_1(t)}}
e^{3\left|\phi^t\right|}
\left|\phi^t \right|dxdy\\\nonumber
\leq&{3\over2}\left\|e^{-\phi_{\ep_1(t)}}\right\|_{L^\infty(\Omega)}
\iint_\Omega\left(\sqrt{2}e^{-\phi_{\ep_1(t)}}e^{{7\over2}\left|\phi^t\right|}\right)
\left(2^{1\over4}e^{-{1\over2}\phi_{\ep_1(t)}}e^{-{1\over2}\left|\phi^t\right|}\left|\phi^t\right|^{1\over2}\right)\\\nonumber
&\left(2^{1\over4}e^{-{1\over2}\phi_{\ep_1(t)}}\left|\phi^t\right|^{1\over2}\right)dxdy\\\nonumber
\leq &{3\over2}\left({1+b(\ep_0)\over1-b(\ep_0)}\right)^{1\over2}\left(\iint_{\Omega}g'\left(\phi_{\ep_1(t)}\right)e^{7\left|\phi^t\right|}dxdy\right)^{1\over2}
\left(\iint_{\Omega}g'\left(\phi_{\ep_1(t)}\right)e^{-2\left|\phi^t\right|}\left|\phi^t\right|^2dxdy\right)^{1\over4}\\\nonumber
&\left(\iint_{\Omega}g'\left(\phi_{\ep_1(t)}\right)\left|\phi^t\right|^2dxdy\right)^{1\over4}\\\nonumber
\leq&C_3(\ep_0)\hat d_3((\tilde \omega(t),\tilde \phi(t)),( 0, \phi_{\ep_1(t)}(x+x_1(t),y+y_1(t))))^{1\over4}\\\label{Iomega0Iomegat1-mhd}
< &{\kappa\over2C_2(\ep_0)},
\end{align}
where $0\leq t\leq t_0$ and we used $\left\|e^{-\phi_{\ep_1(t)}}\right\|_{L^\infty(\Omega)}\leq  \left({1+\ep_1(t)\over 1-\ep_1(t)}\right)^{1\over2}\leq \left({1+b(\ep_0)\over 1-b(\ep_0)}\right)^{1\over2}$.
Similar to \eqref{d1 estimates-mhd}-\eqref{Iomega0Iomegat1-mhd} and by the fact that $\hat d((\tilde\omega(0),\tilde \phi(0)),(0,\phi_{\ep_0}(x+x_1(0),y+y_1(0))))<{\kappa^4\over16C_1C_2(\ep_0)^4C_3(\ep_0)^4}$, we have
\begin{align}\nonumber
&\left|I\left(-e^{-2\tilde \phi(0)}\right)-I\left(-e^{-2\phi_{\ep_0}}\right)\right|=\left|I\left(-e^{-2\tilde \phi(0)}\right)-I\left(-e^{-2\phi_{\ep_0}(x+x_1(0),y+y_1(0))}\right)\right|\\\nonumber
\leq& C_3(\ep_0)\hat d_3((\tilde \omega(0),\tilde \phi(0)),( 0, \phi_{\ep_0}(x+x_1(0),y+y_1(0))))^{1\over4}\\\label{Itildeomega0Iomegaep0-mhd}
\leq& {\kappa\over2C_1^{1\over4}C_2(\ep_0)}< {\kappa\over2C_2(\ep_0)}.
\end{align}
By  \eqref{Iomega0Iomegat1-mhd}-\eqref{Itildeomega0Iomegaep0-mhd} and assumption (iii), we obtain \eqref{I-omegaep1t0-omegaep0-mhd}.
\end{proof}
\begin{appendix}

{\makeatletter
\renewcommand{\@seccntformat}[1]{} 
\makeatother
\section*{Appendix: Existence of weak solutions to 2D Euler equation with non-vanishing velocity at infinity}}

In the Appendix, we prove the existence of weak solutions to the 2D Euler equation with initial vorticity in \(Y_{non}\), defined in \eqref{def-X-non-ep}. Our approach is inspired by the work of Majda \cite{DiPerna-Majda87,Majda-Bertozzi02} for the whole plane \(\mathbb R^2\). We begin by constructing an approximate solution sequence through mollification of the initial data. We then analyze the corresponding approximate initial data and establish several basic properties of the sequence that are used  in the nonlinear analysis of Section \ref{Sec-Nonlinear orbital stability for co-periodic perturbations}. In place of the radial-energy decomposition used in \(\mathbb R^2\), we introduce a shear-energy decomposition adapted to the strip \(\Omega=\mathbb T_{2\pi}\times\mathbb R\) in order to prove global existence of the approximate solutions. Finally, we prove the \(L^1_{{loc}}\cap L^2_{loc}\) convergence of the approximate solution sequence and pass to the limit in the approximation parameter to obtain a weak solution with the prescribed initial vorticity.
{\setcounter{section}{1}

\setcounter{equation}{0}
\makeatletter
\global\c@section=1          
\global\c@equation=0         
\global\c@Theorem=0          
\makeatother
\subsection{Properties of the approximate initial data}

The definitions of a  weak solution and an approximate solution sequence for the 2D Euler equation are given as follows.

\begin{definition}
[Weak solution] A velocity field $\vec{u}(t,x,y)$ with initial data $\vec{u}_0$ is a weak solution of the 2D Euler equation if

$(\rm{i})$ $\vec{u}\in L^1(\Omega_{R,T})$ for any $T, R>0$,

$(\rm{ii})$ $u_iu_j\in L^1(\Omega_{R,T})$ for $i,j=1,2$,

$(\rm{iii})$ ${\mathrm{div}}(\vec{u})=0$ in the sense of distributions, i.e. $\iint_{\Omega}\nabla\varphi\cdot\vec{u} dxdy=0$ for any $\varphi\in C([0,T],C_0^1(\Omega))$,

$(\rm{iv})$ for any $\vec{\Phi}=(\Phi_1,\Phi_2)\in C^1([0,T], C_0^1(\Omega))$ with ${\mathrm{div}}(\vec{\Phi})=0$ in the sense of distributions,
\begin{align*}
\iint_{\Omega} (\vec{\Phi}\cdot\vec{u})(t,x,y)|_{t=0}^T dxdy=\int_0^T\iint_{\Omega}\left(\partial_t\vec{\Phi}\cdot \vec{u}+(\vec{u}\cdot\nabla)\vec{\Phi}\cdot\vec{u}\right)dxdydt,
\end{align*}
where $\Omega_{R,T}=[0,T]\times B_R$ and $B_R=\{x\in\mathbb{T}_{2\pi},y\in[-R,R]\}$.
\end{definition}
\begin{definition} [Approximate solution sequence for the 2D Euler equation]
\label{Approximate solution sequence for the 2D Euler equation} A sequence $\{\vec{u}^\mu\}$  is an approximate solution sequence for the 2D Euler equation if

$(\rm{i})$  $\vec{u}^\mu\in C([0,T],L_{\text{loc}}^2(\Omega))$, and   $\max_{0\leq t\leq T}\iint_{B_R}|\vec{u}^\mu(t,x,y)|^2dxdy\leq C(T, R) $ independent of $\mu$ for any $T, R>0$,

$(\rm{ii})$ ${\mathrm{div}}(\vec{u}^\mu)=0$ in the sense of distributions,

$(\rm{iii})$
$\lim_{\mu\to0}\int_0^T\iint_\Omega\left(\partial_t\vec{\Phi}\cdot \vec{u}^\mu+(\vec{u}^\mu\cdot\nabla)\vec{\Phi}\cdot\vec{u}^\mu\right)dxdydt=0$ for any $\vec{\Phi}\in C_0^\infty([0,T]\times\Omega)$ with ${\mathrm{div}}(\vec{\Phi})=0$.
\\
The approximate solution sequence  $\{\vec{u}^\mu\}$ is said to have $L^1$ vorticity control if, in addition,

 $(\rm{iv})$ $\max_{0\leq t\leq T}\iint_{\Omega}|\omega^\mu(t,x,y)|dxdy<C(T)$ for any $T>0$, where $\omega^\mu=\curl(\vec{u}^\mu)$.\\
The approximate solution sequence  $\{\vec{u}^\mu\}$ with $L^1$ vorticity control  is said to have $L^q$ vorticity control ($q>1$) if, in addition,

$(\rm{v})$ $\max_{0\leq t\leq T}\iint_{\Omega}|\omega^\mu(t,x,y)|^qdxdy<C(T)$ for any $T>0$.
\end{definition}

\begin{remark}
An approximate solution sequence  $\{\vec{u}^\mu\}$  for the 2D Euler equation satisfies
\begin{align*}
\|\varphi\vec{u}^\mu(t_1)-\varphi\vec{u}^\mu(t_2)\|_{H_{\text{loc}}^{-L}(\Omega)}\leq C|t_1-t_2|
\end{align*}
for $0\leq t_1,t_2\leq T$, $L>0$ and $\varphi\in C_0^\infty(\Omega)$, i.e. $\{\varphi\vec{u}^\mu\}$ is uniformly bounded in $Lip([0,T],$ $H_{\text{loc}}^{-L}(\Omega))$.
\end{remark}

To construct an approximate solution sequence $\{\vec{v}^{\mu}\}$ for the 2D Euler equation, we decompose the initial vorticity $\tilde\omega_0\in  Y_{non}$ into the shear part and the non-shear part:
\begin{align}\label{shear-energy decomposition}\tilde\omega_0(x,y)=\tilde\omega_{0,0}(y)+ \tilde\omega_{0,\neq0}(x,y),
 \end{align}
 where $\tilde\omega_{0,\neq0}(x,y)=\sum_{j\neq0}e^{ijx}\tilde\omega_{0,j}(y)$. Then $\iint_{\Omega}\tilde\omega_0 dxdy=2\pi\int_{-\infty}^\infty\tilde\omega_{0,0}dy =-4\pi$ and $\iint_{\Omega}\tilde\omega_{0,\neq0}$ $dxdy=0$.
By \eqref{green function}, we have $\tilde\psi_{0,\neq0}=G\ast \tilde\omega_{0,\neq0}$ solves $-\Delta\phi=\tilde\omega_{0,\neq0}$, and the non-shear initial velocity is defined by
 $\vec{v}_{0,\neq0}=\nabla^{\bot}\tilde \psi_{0,\neq0}=K\ast \tilde\omega_{0,\neq0}$, where \begin{align*}
K=\nabla^{\bot}G={1\over4\pi}\left({-\sinh(y)\over \cosh(y)-\cos(x)},{\sin(x)\over \cosh(y)-\cos(x)}\right).
\end{align*}
Since $\cosh(y)=1+{y^2\over2}+o(y^2)$ and $\cos(x)=1-{x^2\over2}+o(x^2)$, we have
\begin{align}\label{K-near-0}
|K(x,y)|\sqrt{x^2+y^2}={1\over4\pi}\sqrt{\cosh(y)+\cos(x)\over \cosh(y)-\cos(x)}\sqrt{x^2+y^2}\to{1\over2\pi}
\end{align}
as $(x,y)\to(0,0)$. On the other hand,
 \begin{align}\label{K-near-pm-infty}
 K(x,y)\to\left(\mp{1\over 4\pi},0\right) \text{ with exponential rate}
 \end{align}
 as $y\to\pm\infty$ uniformly for $x\in \mathbb{T}_{2\pi}$.

Note that \eqref{shear-energy decomposition}  gives a shear-energy decomposition in the sense that $\vec{v}_{0,\neq0}=K*\tilde\omega_{0,\neq0}\in L^2(\Omega)$. In fact,
let
 \begin{align}\nonumber
 \rho\in C_0^\infty(\mathbb{R})& \text{ with }\rho(y)=1 \text{ for } |y|\leq 1, \rho(y)=0 \text{ for } |y|>2,\\\nonumber
 \rho_s(x,y)&=\rho\left({y\over s}\right) \text{ for }(x,y)\in \Omega \text{ and }s>0, \\\nonumber
(1-\rho_{s})_{>0}&\equiv(1-\rho_{s}) \text{ for } y>0 \text{ and } (1-\rho_{s})_{>0}\equiv0 \text{ for } y\leq0,\\\label{def-cut off functions}
 (1-\rho_{s})_{<0}&\equiv(1-\rho_{s}) \text{ for } y<0 \text{ and } (1-\rho_{s})_{<0}\equiv0 \text{ for } y\geq0.
 \end{align}
  By Young's inequality, we have
\begin{align*}
\|\vec{v}_{0,\neq0}\|_{L^2(\Omega)}\leq& \|(\rho_1K)\ast\tilde\omega_{0,\neq0}\|_{L^2(\Omega)}+\left\|\left((1-\rho_1)_{>0}\left(K+\left({1\over 4\pi},0\right)\right)\right)\ast\tilde\omega_{0,\neq0}\right\|_{L^2(\Omega)}\\
&+\left\|\left((1-\rho_1)_{<0}\left(K-\left({1\over 4\pi},0\right)\right)\right)\ast\tilde\omega_{0,\neq0}\right\|_{L^2(\Omega)}\\
\leq&\bigg( \|\rho_1K\|_{L^1(\Omega)}+\left\|(1-\rho_1)_{>0}\left(K+\left({1\over 4\pi},0\right)\right)\right\|_{L^1(\Omega)}\\
&+\left\|(1-\rho_1)_{<0}\left(K-\left({1\over 4\pi},0\right)\right)\right\|_{L^1(\Omega)}\bigg)\|\tilde\omega_{0,\neq0}\|_{L^2(\Omega)}\leq C\|\tilde\omega_{0}\|_{L^2(\Omega)},
\end{align*}
 where we used \eqref{K-near-pm-infty}, $(1-\rho_1)_{>0}\ast\tilde\omega_{0,\neq0}=0$ and $(1-\rho_1)_{<0}\ast\tilde\omega_{0,\neq0}=0$.

For $\tilde\omega_0\in  Y_{non}$ and $\mu>0$,
we extend $\tilde\omega_0$ from $\Omega$ to $\mathbb{R}^2$ by setting $\tilde\omega_0(x,y)=\tilde \omega_0(x-2k\pi,y)$ for $(x,y)\in[2k\pi,(2k+2)\pi)\times \mathbb{R}$, where $k\in\mathbb{Z}$ and $k\neq0$.
Then we
 define the initial data of the approximate solution sequence by
\begin{align}\label{tilde-omega0-kappa-def}
\tilde\omega_{0}^{\mu}(x,y)=(\hat J_{\mu}\star\tilde\omega_{0})(x,y)
\end{align}
 for $(x,y)\in\Omega$ and $\mu\in(0,1)$,
 where
 \begin{align}\label{convolution-R2-def}
 (\hat J_{\mu}\star\tilde\omega_{0})(x,y)\triangleq\iint_{\mathbb{R}^2}\hat J_{\mu}(x-\tilde x,y-\tilde y)\tilde\omega_{0}(\tilde x,\tilde y)d\tilde x d\tilde y,
 \end{align}
  $\hat J_{\mu}(x,y)=\mu^{-2}\hat J\left({x\over \mu},{y\over \mu}\right)$, $\hat J\in C_0^\infty(\mathbb{R}^2)$ satisfies that $\hat J\geq0$, $\hat J(x,y)=0$ if $x^2+y^2\geq1$ and $\iint_{\mathbb{R}^2}\hat J(x,y) dxdy=1$.
 Here, we use the notation $\star$ to avoid the confusion with the usual convolution  $*$.
Note that $\hat J_{\mu}(x,y)=0$ if $\sqrt{x^2+y^2}\geq\mu$ and $\iint_{\mathbb{R}^2}\hat J_{\mu}(x,y)dxdy=1$. Moreover, $\hat J_{\mu}\star\varpi\in C^{\infty}(\mathbb{R}^2)$ if $\varpi\in L_{loc}^1(\Omega)$.
To study  the inheritance and convergence  of the  approximate  initial data $\tilde\omega_{0}^{\mu}$,
we give some basic properties of $\hat J_{\mu}\star\varpi$, which are elementary to the proof of  Theorem \ref{main result4-nonlinear orbital stability}.

\begin{lemma} \label{tilde-omega0-kappa-properties}
Let $\mu>0$ and $\varpi\in L_{loc}^1(\Omega)$.

$(1)$ $\hat J_{\mu}\star\varpi$ is $2\pi$-periodic in $x$.

$(2)$ If $\varpi<0$ on $\Omega$, then $\hat J_{\mu}\star\varpi<0$ on $\Omega$.

$(3)$ If $\iint_{\Omega}\varpi dxdy=c$, then $\iint_{\Omega}\hat J_{\mu}\star\varpi dxdy=c$.

$(4)$ If $\varpi\in L^p(\Omega)$ for $1\leq p<\infty$, then $\hat J_\mu\star\varpi\in L^p(\Omega)$, $\|\hat J_\mu\star\varpi\|_{L^p(\Omega)}\leq \|\varpi\|_{L^p(\Omega)}$ and $\hat J_\mu\star\varpi \to\varpi$ in ${L^p(\Omega)}$.

$(5)$ If $\varpi\in L^2(\Omega)$, then $\|\hat J_{\mu}\star\varpi\|_{H^q(\Omega)}\leq C(\mu,q)\|\varpi\|_{L^2(\Omega)}$ and $\|D^q\hat J_{\mu}\star\varpi\|_{L^\infty(\Omega)}=\|\hat J_{\mu}\star D^q\varpi\|_{L^\infty(\Omega)}\leq C(\mu,q)\|\varpi\|_{L^2(\Omega)}$ for $q\in\mathbb{Z}^+\cup\{0\}$.

$(6)$ If $\varpi, y\varpi\in L^1(\Omega)$, then $y(\hat J_{\mu}\star\varpi)\in L^1(\Omega)$ and $y(\hat J_{\mu}\star\varpi)\to y\varpi$ in $L^1(\Omega)$.

$(7)$ If $\varpi, y\varpi\in L^1(\Omega)$, then $\psi_\ep\varpi, \psi_\ep(\hat J_{\mu}\star\varpi)\in L^1(\Omega)$ and $\psi_\ep(\hat J_{\mu}\star\varpi)\to \psi_\ep\varpi$ in $L^1(\Omega)$ for $\ep\in[0,1)$.

$(8)$ If $\varpi\in Y_{non}$, then $\hat J_{\mu}\star\varpi\in Y_{non}$, $-\varpi\ln(-\varpi),-(\hat J_{\mu}\star\varpi)\ln(-(\hat J_{\mu}\star\varpi))\in L^1(\Omega)$ and
\begin{align}\label{xlnx-convergence}
-(\hat J_{\mu}\star\varpi)\ln(-(\hat J_{\mu}\star\varpi))\to-\varpi\ln(-\varpi)\quad \text{in} \quad L^1(\Omega),
\end{align}
where $Y_{non}$ is defined in \eqref{def-X-non-ep}.
\end{lemma}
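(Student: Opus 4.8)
\textbf{Proof proposal for Lemma \ref{tilde-omega0-kappa-properties}.}

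The plan is to verify the eight items one at a time, since they are logically independent and each follows from a standard property of mollification combined with the specific structure of the periodic extension chosen in \eqref{tilde-omega0-kappa-def}. For (1), I would observe that the extension $\tilde\omega_0(x,y)=\tilde\omega_0(x-2k\pi,y)$ makes $\tilde\omega_0$ a $2\pi$-periodic function on $\mathbb{R}^2$, so $(\hat J_\mu\star\varpi)(x+2\pi,y)=(\hat J_\mu\star\varpi)(x,y)$ by a change of variables $\tilde x\mapsto\tilde x+2\pi$ in \eqref{convolution-R2-def}. Item (2) is immediate from $\hat J_\mu\geq0$, $\hat J_\mu\not\equiv0$ and $\varpi<0$: the integrand in \eqref{convolution-R2-def} is $\leq 0$ and is strictly negative on a set of positive measure for each $(x,y)$. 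Item (3) follows from Fubini's theorem and $\iint_{\mathbb{R}^2}\hat J_\mu=1$: writing the integral over $\Omega$ of $\hat J_\mu\star\varpi$ as an integral over one period and using periodicity of both $\hat J_\mu\star\varpi$ and $\tilde\omega_0$, the total mass is preserved because $\hat J_\mu$ has support of diameter $<2\mu$ and integrates to one, so mass that leaks out of one period strip re-enters from the neighboring strip by periodicity.

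Items (4) and (5) are the classical mollifier estimates. For (4) I would use Young's convolution inequality $\|\hat J_\mu\star\varpi\|_{L^p(\mathbb{R}^2)}\leq\|\hat J_\mu\|_{L^1}\|\varpi\|_{L^p(\mathbb{R}^2)}$, then pass to one period strip using periodicity to get $\|\hat J_\mu\star\varpi\|_{L^p(\Omega)}\leq\|\varpi\|_{L^p(\Omega)}$; the convergence $\hat J_\mu\star\varpi\to\varpi$ in $L^p(\Omega)$ is the standard approximate-identity statement, reduced to one period by periodicity and dominated convergence. For (5), $D^q(\hat J_\mu\star\varpi)=(D^q\hat J_\mu)\star\varpi$ and $\|D^q\hat J_\mu\|_{L^2(\mathbb{R}^2)}\leq C(\mu,q)$, so $\|D^q(\hat J_\mu\star\varpi)\|_{L^\infty}\leq\|D^q\hat J_\mu\|_{L^2}\|\varpi\|_{L^2}$ by Cauchy–Schwarz applied in $\tilde x,\tilde y$; the $H^q(\Omega)$ bound follows by summing over $q'\leq q$ on one period. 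For (6) I would split $y(\hat J_\mu\star\varpi)(x,y)=\hat J_\mu\star(y\varpi)(x,y)+(\hat J_\mu\star\varpi)(x,y)\cdot R_\mu(x,y)$ where $R_\mu=y-\hat J_\mu\star(\cdot)$ correction term has $\|R_\mu\|_{L^\infty}\leq\mu$; more precisely $y(\hat J_\mu\star\varpi)(x,y)-\hat J_\mu\star(y\varpi)(x,y)=\iint\hat J_\mu(x-\tilde x,y-\tilde y)(y-\tilde y)\varpi(\tilde x,\tilde y)\,d\tilde x\,d\tilde y$, whose $L^1(\Omega)$ norm is $\leq\mu\|\varpi\|_{L^1(\Omega)}$ since $|y-\tilde y|\leq\mu$ on the support. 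Then $y(\hat J_\mu\star\varpi)\in L^1$ because $\hat J_\mu\star(y\varpi)\in L^1$ by (4) applied to $y\varpi$, and the convergence $y(\hat J_\mu\star\varpi)\to y\varpi$ follows from $\hat J_\mu\star(y\varpi)\to y\varpi$ in $L^1$ and $\mu\|\varpi\|_{L^1}\to0$.

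For (7), I would use $|\psi_\ep(x,y)|\leq C(\ep)(|y|+1)$ from \eqref{y-bounded-second term} together with $\varpi,y\varpi\in L^1$ to get $\psi_\ep\varpi\in L^1$; the same bound applied to $\hat J_\mu\star\varpi$ gives $\psi_\ep(\hat J_\mu\star\varpi)\in L^1$ using (4) and (6), and convergence follows by writing $\psi_\ep(\hat J_\mu\star\varpi)-\psi_\ep\varpi=\psi_\ep((\hat J_\mu\star\varpi)-\varpi)$ and splitting $\Omega$ into $|y|\leq R$ (where $|\psi_\ep|$ is bounded and $(4)$ applies) and $|y|>R$ (where $|\psi_\ep|\leq C(|y|+1)$ and the tails of $\|y((\hat J_\mu\star\varpi)-\varpi)\|_{L^1}$ and $\|(\hat J_\mu\star\varpi)-\varpi\|_{L^1}$ are small by (6) and (4)), then sending $R\to\infty$. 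The main obstacle is item (8), specifically the $L^1$ convergence \eqref{xlnx-convergence}. The function $s\mapsto-s\ln(-s)$ is continuous on $(-\infty,0]$ but not Lipschitz near $0$, so one cannot simply dominate by the $L^1$ or $L^2$ difference. I would prove $-\varpi\ln(-\varpi)\in L^1$ first, using $|s\ln(-s)|\leq C(|s|^{1/2}+s^2)$ for $s<0$ so that $\varpi\in L^1\cap L^2$ with $\varpi<0$ forces $-\varpi\ln(-\varpi)\in L^1$ (and likewise for $\hat J_\mu\star\varpi$, which lies in $L^1\cap L^2$ by (4) and is negative by (2), so $\hat J_\mu\star\varpi\in Y_{non}$ using (1),(2),(3),(4),(6)). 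For the convergence, I would extract from any subsequence a further subsequence along which $\hat J_\mu\star\varpi\to\varpi$ a.e.\ (possible by (4)), so $-(\hat J_\mu\star\varpi)\ln(-(\hat J_\mu\star\varpi))\to-\varpi\ln(-\varpi)$ a.e.\ by continuity; then I would establish uniform integrability of the family $\{-(\hat J_\mu\star\varpi)\ln(-(\hat J_\mu\star\varpi))\}$ via the pointwise bound $|s\ln(-s)|\leq C(|s|^{1/2}+s^2)$, controlling $\||\hat J_\mu\star\varpi|^{1/2}\|_{L^2}^2=\|\hat J_\mu\star\varpi\|_{L^1}$ and $\|\hat J_\mu\star\varpi\|_{L^2}^2$ uniformly by (4), together with the fact that the tails are small uniformly in $\mu$ (the tail mass and tail $L^2$-norm of $\hat J_\mu\star\varpi$ on $|y|>R$ are controlled by those of $\varpi$ plus an $O(\mu)$ error). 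Vitali's convergence theorem then yields $L^1$ convergence, and since the limit is subsequence-independent the full family converges.
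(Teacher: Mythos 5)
Items (1)--(7) of your proposal are correct and essentially reproduce the paper's argument: (1)--(3) by periodicity and Fubini, (4)--(5) by the standard mollifier/Young-type estimates with a density argument, (6) by the commutator estimate $\|y(\hat J_{\mu}\star\varpi)-\hat J_{\mu}\star(y\varpi)\|_{L^1(\Omega)}\leq \mu\,C\|\varpi\|_{L^1(\Omega)}$, and (7) is fine (your elementary bound $|\psi_\ep(x,y)|\leq C(\ep)(1+|y|)$ is a slightly more direct route than the paper's Green-function decomposition, and the $R$-splitting you add is not even needed).

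The gap is in item (8). Your pointwise inequality $|s\ln(-s)|\leq C(|s|^{1/2}+s^{2})$ is true, but on the infinite-measure domain $\Omega$ the term $|\varpi|^{1/2}$ is \emph{not} controlled in $L^{1}$ by $\varpi\in L^{1}\cap L^{2}(\Omega)$: take $\varpi=-a_n$ on disjoint unit squares centered at $y=n$ with $a_n=n^{-1}(\ln n)^{-2}$ (plus a tiny negative integrable background); then $\varpi\in L^1\cap L^2$, $\varpi<0$, yet $\iint|\varpi|^{1/2}=\infty$ and in fact $\iint(-\varpi)\ln(-\varpi)\,dxdy=\infty$. So your claim that $\varpi\in L^1\cap L^2$ with $\varpi<0$ already forces $-\varpi\ln(-\varpi)\in L^1$ is false, and the same defect breaks the tail step of your Vitali argument: uniform smallness of $\iint_{|y|>R}|\hat J_{\mu}\star\varpi|\,dxdy$ and of the tail $L^2$ norm does not give uniform smallness of $\iint_{|y|>R}|\hat J_{\mu}\star\varpi|^{1/2}dxdy$, because the square root inflates small values over a region of infinite measure. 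The missing ingredient is precisely the weighted hypothesis $y\varpi\in L^{1}(\Omega)$ contained in $Y_{non}$ (note the counterexample above has $y\varpi\notin L^1$). The paper uses it through the convexity bound $|\varpi\ln(-\varpi)|\leq \varpi^{2}+|\varpi|+|\omega_\ep|+2|\psi_\ep\varpi|$, with $\psi_\ep\varpi\in L^1$ by part (7); equivalently you could split according to whether $-\varpi\geq e^{-(1+|y|)}$ (where $|\ln(-\varpi)|\leq 1+|y|$ and $(1+|y|)|\varpi|\in L^1$) or $-\varpi<e^{-(1+|y|)}$ (where $|\varpi\ln(-\varpi)|\leq Ce^{-(1+|y|)/2}$). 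With such a dominating function — whose $\mu$-uniform tail and small-set bounds follow from the analogues of (4), (6), (7) for $\hat J_{\mu}\star\varpi$ — your a.e.-subsequence plus Vitali scheme does close, and is then in spirit the same as the paper's hands-on argument (tail cut-off, absolute continuity of integrals, Lusin, uniform continuity of $s\ln s$). One smaller point: for equi-integrability of $(\hat J_{\mu}\star\varpi)^{2}$ on small sets, a uniform $L^{2}$ bound is not sufficient; invoke instead the strong convergence $\hat J_{\mu}\star\varpi\to\varpi$ in $L^{2}$ from (4), or the translation estimate $\iint_{E}(\hat J_{\mu}\star\varpi)^{2}\leq \sup_{|(\tilde x,\tilde y)|\leq\mu}\iint_{E-(\tilde x,\tilde y)}\varpi^{2}$ used in the paper.
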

\begin{proof}
We extend $\varpi$ from $\Omega$ to $\mathbb{R}^2$ as above.
Since
\begin{align*}
(\hat J_{\mu}\star\varpi)(x,y)=&\iint_{\mathbb{R}^2}\hat J_\mu(\tilde x,\tilde y)\varpi(x-\tilde x,y-\tilde y)d\tilde x d\tilde y=\iint_{\mathbb{R}^2}\hat J_\mu(\tilde x,\tilde y)\varpi(x+2\pi-\tilde x,y-\tilde y)d\tilde x d\tilde y\\
=&\hat J_{\mu}\star\varpi(x+2\pi,y)
\end{align*}
for $(x,y)\in\mathbb{R}^2$, (1) holds true. (2) is trivially verified.

 (3) follows from
 \begin{align*}
 \iint_\Omega\hat J_{\mu}\star\varpi dxdy=\iint_{\mathbb{R}^2}\hat J_\mu(\tilde x,\tilde y)\left(\iint_\Omega\varpi(x-\tilde x,y-\tilde y) dxdy\right)d\tilde x d\tilde y=c\iint_{\mathbb{R}^2}\hat J_\mu(\tilde x,\tilde y)d\tilde x d\tilde y=c.
 \end{align*}

Next, we prove (4). For $1<p<\infty$,
\begin{align}\nonumber
|(\hat J_{\mu}\star\varpi)(x,y)|\leq& \left(\iint_{\mathbb{R}^2}\hat  J_{\mu}(\tilde x,\tilde y)d\tilde x d\tilde y\right)^{1\over p'}\left(\iint_{\mathbb{R}^2}\hat J_{\mu}(\tilde x,\tilde y)|\varpi(x-\tilde x,x-\tilde y)|^p d\tilde xd\tilde y\right)^{1\over p}\\\label{hat-J-kappa-star-omega-estimate}
=&\left(\iint_{\mathbb{R}^2}\hat J_{\mu}(\tilde x,\tilde y)|\varpi(x-\tilde x,x-\tilde y)|^p d\tilde xd\tilde y\right)^{1\over p},
\end{align}
where $p'={p\over p-1}$. Then
\begin{align}\nonumber
\|\hat J_{\mu}\star\varpi\|_{L^p(\Omega)}^p\leq& \iint_\Omega\iint_{\mathbb{R}^2} \hat  J_{\mu}(\tilde x,\tilde y)|\varpi(x-\tilde x,y-\tilde y)|^p d\tilde xd\tilde y dxdy\\\label{J-omega-Lp}
=&\iint_{\mathbb{R}^2} \hat J_{\mu}(\tilde x,\tilde y)d\tilde xd\tilde y \iint_\Omega|\varpi(x-\tilde x,y-\tilde y)|^p  dxdy=\|\varpi\|_{L^p(\Omega)}^p.
\end{align}
For $p=1$, $\eqref{J-omega-Lp}$ follows directly from the definition of $\hat J_{\mu}\star\varpi$.
Let $\delta>0$ and $1\leq p<\infty$.
Choose $\varpi_1\in C_0(\Omega)$ such that $\|\varpi-\varpi_1\|_{L^p(\Omega)}<{\delta\over 3}$. By \eqref{J-omega-Lp}, we have $\|\hat J_{\mu}\star\varpi-\hat J_{\mu}\star\varpi_1\|_{L^p(\Omega)}<{\delta\over 3}$. Since $|\hat J_{\mu}\star\varpi_1(x,y)-\varpi_1(x,y)|\leq \sup_{\sqrt{(x-\tilde x)^2+(y-\tilde y)^2}\leq \mu}|\varpi_1(\tilde x,\tilde y)-\varpi_1(x,y)|$, $\varpi_1$ is uniformly continuous on $\Omega$ and $\text{supp}(\varpi_1)$ is compact, we have $\|\hat J_{\mu}\star\varpi_1-\varpi_1\|_{L^p(\Omega)}\leq {\delta\over 3}$ for $\mu$ sufficiently small. Thus, $\|\hat J_{\mu}\star\varpi-\varpi\|_{L^p(\Omega)}\leq \delta$.

To  prove (5), we
 denote $D^j\hat J=\hat J^j$ for  $0\leq j\leq q$. Since
\begin{align*}
(D^j\hat J_{\mu}\star\varpi)(x,y)=\mu^{-j-2}\iint_{\mathbb{R}^2}\hat J^j\left({x-\tilde x\over \mu},{y-\tilde y\over \mu}\right)\varpi(\tilde x,\tilde y)d\tilde x d\tilde y,
\end{align*}
we have
\begin{align}\nonumber
|(D^j\hat J_{\mu}\star\varpi)(x,y)|^2
\leq &\mu^{-2j}\left(\mu^{-2}\iint_{\mathbb{R}^2}\hat J^j\left({x-\tilde x\over \mu},{y-\tilde y\over \mu}\right)d\tilde x d\tilde y\right)\\\nonumber
&\left(\mu^{-2}\iint_{\mathbb{R}^2}\hat J^j\left({x-\tilde x\over \mu},{y-\tilde y\over \mu}\right)\varpi(\tilde x,\tilde y)^2d\tilde xd\tilde y\right)\\\label{DjhatJkappaomegaxy}
\leq &{C_j\over \mu^{2j}}\mu^{-2}\iint_{\mathbb{R}^2}\hat J^j\left({x-\tilde x\over \mu},{y-\tilde y\over \mu}\right)\varpi(\tilde x,\tilde y)^2d\tilde xd\tilde y.
\end{align}
Thus,
\begin{align*}
\sum_{0\leq j\leq q}\|D^j\hat J_{\mu}\star\varpi\|_{L^2(\Omega)}^2\leq &\sum_{0\leq j\leq q}{C_j\over \mu^{2j}}\mu^{-2}\iint_{\mathbb{R}^2}\hat J^j\left({\tilde x\over \mu},{\tilde y\over \mu}\right)\left(\iint_\Omega\varpi(x-\tilde x,y-\tilde y)^2dxdy\right)d\tilde x d\tilde y\\
\leq &\sum_{0\leq j\leq q}{C_j\over \mu^{2j}}\|\varpi\|_{L^2(\Omega)}^2\leq C(\mu,q)\|\varpi\|_{L^2(\Omega)}^2.
\end{align*}
Since $\hat J^{q}\left({x-\tilde x\over \mu},{y-\tilde y\over \mu}\right)=0$ for $\sqrt{(x-\tilde x)^2+(y-\tilde y)^2}\geq\mu$ and $\hat J^{q}\in C_0^\infty(\mathbb{R}^2)$, by \eqref{DjhatJkappaomegaxy} for $j=q$ we have
$|(D^q\hat J_{\mu}\star\varpi)(x,y)|
\leq C(\mu,q)\|\varpi\|_{L^2(\Omega)}$ for any $(x,y)\in\Omega$ and $\mu>0$ sufficiently small.

Then we prove (6). Noting that
\begin{align}\nonumber
&\|y(\hat J_{\mu}\star\varpi)\|_{L^1(\Omega)}\leq\iint_{\mathbb{R}^2}\hat J_\mu(\tilde x,\tilde y)\iint_{\Omega}|y\varpi(x-\tilde x,y-\tilde y)|dxdyd\tilde x d\tilde y\\\nonumber
\leq&\iint_{\mathbb{R}^2}\hat J_\mu(\tilde x,\tilde y)\iint_{\Omega}(|y-\tilde y|+|\tilde y|) |\varpi(x-\tilde x,y-\tilde y)|dxdyd\tilde x d\tilde y\\\nonumber
\leq&\|y\varpi\|_{L^1(\Omega)}+\|\varpi\|_{L^1(\Omega)}\iint_{\mathbb{R}^2}\hat J_\mu(\tilde x,\tilde y)|\tilde y|d\tilde x d\tilde y,
\end{align}
we have $y(\hat J_{\mu}\star\varpi)\in L^1(\Omega)$. To prove that $y(\hat J_{\mu}\star\varpi)\to y\varpi$ in $L^1(\Omega)$, it suffices to show that
$\|y(\hat J_{\mu}\star\varpi)-\hat J_{\mu}\star(y\varpi)\|_{L^1(\Omega)}\to0$ by (4). In fact,
\begin{align*}
&\|y(\hat J_{\mu}\star\varpi)-\hat J_{\mu}\star(y\varpi)\|_{L^1(\Omega)}\leq\iint_{\mathbb{R}^2}\hat J_\mu(\tilde x,\tilde y)|\tilde y|\iint_{\Omega} |\varpi(x-\tilde x,y-\tilde y)|dxdyd\tilde xd\tilde y\\
=&\|\varpi\|_{L^1(\Omega)}\iint_{x^2+y^2\leq1}\hat J(x,y)\mu|y|dxdy\to0.
\end{align*}

Now, we prove (7). Direct computation gives
\begin{align}\nonumber
&\|\psi_\ep\varpi\|_{L^1(\Omega)}=\|(G*\omega_\ep) \varpi\|_{L^1(\Omega)}+C\| \varpi\|_{L^1(\Omega)}\\\nonumber
\leq&\|G_1\|_{L^2(\Omega)}\|\omega_\ep\|_{L^2(\Omega)}\|\varpi\|_{L^1(\Omega)}
+C\|\omega_\ep\|_{L^1(\Omega)}\|y\varpi\|_{L^1(\Omega)}\\\label{psi-tilde-omegaL1}
&+C\|y\omega_\ep\|_{L^1(\Omega)}\|\varpi\|_{L^1(\Omega)}+C\| \varpi\|_{L^1(\Omega)}<\infty.
\end{align}
By (4) and (6), $\hat J_{\mu}\star\varpi,y(\hat J_{\mu}\star\varpi)\in L^1(\Omega)$, and thus, $\psi_\ep(\hat J_{\mu}\star\varpi)\in L^1(\Omega)$. It follows again from (4) and (6) that $\hat J_{\mu}\star\varpi\to\varpi$ and $y(\hat J_{\mu}\star\varpi)\to y\varpi$ in $L^1(\Omega)$. Then
\begin{align*}
&\|\psi_\ep(\hat J_{\mu}\star\varpi-\varpi)\|_{L^1(\Omega)}\\
\leq&\|G_1\|_{L^2(\Omega)}\|\omega_\ep\|_{L^2(\Omega)}\|\hat J_{\mu}\star\varpi-\varpi\|_{L^1(\Omega)}+C\|\omega_\ep\|_{L^1(\Omega)}
\|y (\hat J_{\mu}\star\varpi-\varpi)\|_{L^1(\Omega)}\\
&+C\|y\omega_\ep\|_{L^1(\Omega)}\|\hat J_{\mu}\star\varpi-\varpi\|_{L^1(\Omega)}+C\|\hat J_{\mu}\star\varpi-\varpi\|_{L^1(\Omega)}\to 0.
\end{align*}

Finally, we prove (8).
If $-\varpi\geq1$, then $0\leq-\varpi\ln(-\varpi)\leq \varpi^2$ since $0\leq\ln (s)\leq s$ for $s\geq1$. If $0<-\varpi<1$, then $0\leq \int_0^1{(1-r)(\varpi-\omega_\ep)^2\over -2\varpi^r}dr={1\over2}\varpi-{1\over2}\varpi\ln(-\varpi)-{1\over2}\omega_\ep-\psi_\ep\varpi$, and thus, $0<\varpi\ln(-\varpi)\leq \varpi-\omega_\ep-2\psi_\ep\varpi$, where $\varpi^r=r\varpi+(1-r)\omega_\ep$.  This implies
\begin{align}\label{omega-ln-omega}|\varpi\ln(-\varpi)|\leq \varpi^2+|\varpi|+|\omega_\ep|+2|\psi_\ep\varpi|
\end{align}
for all $(x,y)\in\Omega$. By \eqref{psi-tilde-omegaL1}, we have $\psi_\ep\varpi\in L^1(\Omega)$. This, along with $\varpi\in L^1\cap L^2(\Omega)$, yields $\varpi\ln(-\varpi)\in L^1(\Omega)$.
Since $\varpi\in Y_{non}$, by  (1)-(4) and (6) we have $\hat J_{\mu}\star\varpi\in Y_{non}$. Thus,
$-(\hat J_{\mu}\star\varpi)\ln(-(\hat J_{\mu}\star\varpi))\in L^1(\Omega)$. Similar to \eqref{omega-ln-omega}, we have $|(\hat J_{\mu}\star\varpi)\ln(-(\hat J_{\mu}\star\varpi))|\leq (\hat J_{\mu}\star\varpi)^2+|(\hat J_{\mu}\star\varpi)|+|\omega_\ep|+2|\psi_\ep(\hat J_{\mu}\star\varpi)|$ for all $(x,y)\in\Omega$.
Let $B_R^c=\Omega\setminus B_R$. Then
\begin{align}\nonumber
&\iint_{B_R^c}|(-\hat J_{\mu}\star\varpi)\ln(-(\hat J_{\mu}\star\varpi))-(-\varpi)\ln(-\varpi)|dxdy\\\nonumber
\leq&
\iint_{B_R^c}\bigg((\hat J_{\mu}\star\varpi)^2+|\hat J_{\mu}\star\varpi|+|\omega_\ep|+2|\psi_\ep(\hat J_{\mu}\star\varpi)|\\\label{BRc1}
&+\varpi^2+|\varpi|+|\omega_\ep|+2|\psi_\ep\varpi|\bigg)dxdy
\end{align}
for $R>1$. By \eqref{hat-J-kappa-star-omega-estimate}, we have
\begin{align}\nonumber
\iint_{B_R^c}(\hat J_{\mu}\star\varpi)^2dxdy\leq &\iint_{\tilde x^2+\tilde y^2\leq \mu^2}\hat J_{\mu}(\tilde x,\tilde y)\iint_{B_R^c}|\varpi(x-\tilde x,y-\tilde y)|^2dxdyd\tilde x d\tilde y\\\nonumber
=&\iint_{\tilde x^2+\tilde y^2\leq \mu^2}\hat J_{\mu}(\tilde x,\tilde y)\iint_{B_R^c-(\tilde x,\tilde y)}|\varpi(\hat x,\hat y)|^2d\hat xd\hat yd\tilde x d\tilde y\\\label{BRc2}
\leq& \iint_{B_{R-1}^c}|\varpi(\hat x,\hat y)|^2d\hat xd\hat y=\|\varpi\|_{L^2(B_{R-1}^c)}^2,
\end{align}
 for $\mu\in(0,1)$ and $R>1$,
where $ B_R^c-(\tilde x,\tilde y)=\{(\hat x,\hat y)|\hat x=x-\tilde x,\hat y=y-\tilde y, (x,y)\in B_R^c\}$ and in the last inequality, we used $B_R^c-(\tilde x,\tilde y)\subset B_{R-1}^c$ since $\tilde y\in[-\mu,\mu]\subset (-1,1)$.
Similarly, we have
\begin{align}\label{BRc3}
\iint_{B_R^c}|\hat J_{\mu}\star\varpi|dxdy\leq \|\varpi\|_{L^1(B_{R-1}^c)}
\end{align}  for $\mu\in(0,1)$ and $R>1$.
Noting that
\begin{align*}
\|y(\hat J_{\mu}\star\varpi)\|_{L^1(B_R^c)}\leq &\iint_{\tilde x^2+\tilde y^2\leq \mu^2}\hat J_{\mu}(\tilde x,\tilde y)\iint_{B_R^c}\left(|y-\tilde y|+|\tilde y|\right)|\varpi(x-\tilde x,y-\tilde y)|dxdyd\tilde xd\tilde y\\
= &\iint_{\tilde x^2+\tilde y^2\leq \mu^2}\hat J_{\mu}(\tilde x,\tilde y)\iint_{B_R^c-(\tilde x,\tilde y)}\left(|\hat y\varpi(\hat x,\hat y)|+|\tilde y\varpi(\hat x,\hat y)|\right)d\hat xd\hat yd\tilde xd\tilde y\\
\leq& \|y\varpi\|_{L^1(B_{R-1}^c)}+C_0\|\varpi\|_{L^1(B_{R-1}^c)},
\end{align*}
we have
\begin{align}\nonumber
&\iint_{B_R^c}|\psi_\ep(\hat J_{\mu}\star\varpi)|dxdy\leq\iint_{B_R^c}\left(|((G_1+G_2)*\omega_\ep)(\hat J_{\mu}\star\varpi)|+C|\hat J_{\mu}\star\varpi|\right)dxdy\\\nonumber
\leq&\|G_1\|_{L^2(\Omega)} \|\omega_\ep\|_{L^2(\Omega)} \iint_{B_R^c}|\hat J_{\mu}\star\varpi|dxdy\\\nonumber
&+C\iint_{B_R^c}\left(\iint_\Omega|y-\tilde y||\varpi(\tilde x,\tilde y)|d\tilde x d\tilde y\right)|(\hat J_{\mu}\star\varpi)(x,y)|dxdy+C\|\varpi\|_{L^1(B_{R-1}^c)}\\\nonumber
\leq &\|G_1\|_{L^2(\Omega)} \|\omega_\ep\|_{L^2(\Omega)}\|\varpi\|_{L^1(B_{R-1}^c)}+C(\|\varpi\|_{L^1(\Omega)}\|y(\hat J_{\mu}\star\varpi)\|_{L^1(B_R^c)}\\\nonumber
&+\|y\varpi\|_{L^1(\Omega)}\|\hat J_{\mu}\star\varpi\|_{L^1(B_R^c)})+C\|\varpi\|_{L^1(B_{R-1}^c)}\\\nonumber
\leq &\|G_1\|_{L^2(\Omega)} \|\omega_\ep\|_{L^2(\Omega)}\|\varpi\|_{L^1(B_{R-1}^c)}+C\|\varpi\|_{L^1(\Omega)}(\|y\varpi\|_{L^1(B_{R-1}^c)}+C_0\|\varpi\|_{L^1(B_{R-1}^c)})\\\label{BRc4}
&+C\|y\varpi\|_{L^1(\Omega)}\|\varpi\|_{L^1(B_{R-1}^c)}+C\|\varpi\|_{L^1(B_{R-1}^c)}
\end{align}
for $\mu\in(0,1)$ and $R>1$. Combining \eqref{BRc1}-\eqref{BRc4}, we have
\begin{align}\nonumber
&\iint_{B_R^c}|(-\hat J_{\mu}\star\varpi)\ln(-(\hat J_{\mu}\star\varpi))-(-\varpi)\ln(-\varpi)|dxdy\\\nonumber
\leq&\|\varpi\|_{L^2(B_{R-1}^c)}^2+\|\varpi\|_{L^1(B_{R-1}^c)}+2\|\omega_\ep\|_{L^1(B_R^c)}+2\|G_1\|_{L^2(\Omega)} \|\omega_\ep\|_{L^2(\Omega)}\|\varpi\|_{L^1(B_{R-1}^c)}\\\nonumber
&+2C\|\varpi\|_{L^1(\Omega)}(\|y\varpi\|_{L^1(B_{R-1}^c)}+C_0\|\varpi\|_{L^1(B_{R-1}^c)})
+2C\|y\varpi\|_{L^1(\Omega)}\|\varpi\|_{L^1(B_{R-1}^c)}\\\label{BRc-sum}
&+2C\|\varpi\|_{L^1(B_{R-1}^c)}+\|\varpi\|_{L^2(B_{R}^c)}^2+\|\varpi\|_{L^1(B_{R}^c)}+2\|\psi_\ep\varpi\|_{L^1(B_{R}^c)}
\end{align}
for $\mu\in(0,1)$ and $R>1$. Thus, for any $\varepsilon>0$, we can choose  $R_0>1$ (independent of $\mu$) such that
\begin{align}\label{R-varepsilon}
\iint_{B_{R_0}^c}|(-\hat J_{\mu}\star\varpi)\ln(-(\hat J_{\mu}\star\varpi))-(-\varpi)\ln(-\varpi)|dxdy<{\varepsilon\over 4}.
\end{align}
Let $\nu_0>0$ small enough such that $(8+2\|G_1\|_{L^2(\Omega)} \|\omega_\ep\|_{L^2(\Omega)}+2C\|\varpi\|_{L^1(\Omega)}(1+C_0)
+2C\|y\varpi\|_{L^1(\Omega)}+C)\nu_0<\varepsilon/4$. Then there exists $\delta_0>0$ (depending on $\varepsilon$) such that for any subset $E\subset \Omega$ satisfying $|E|\leq\delta_0$, we have
\begin{align}\label{absolute continuity of integral}
\max\{\|\varpi\|_{L^2(E)}^2,\|\varpi\|_{L^1(E)},\|\omega_\ep\|_{L^1(E)},\|y\varpi\|_{L^1(E)},\|\psi_\ep\varpi\|_{L^1(E)}\}\leq \nu_0.
\end{align}
By \eqref{absolute continuity of integral} and the fact that $|E-(\tilde x,\tilde y)|=|E|$ for any $(\tilde x,\tilde y)\in \mathbb{R}^2$, a similar argument to \eqref{BRc1}-\eqref{BRc-sum} implies that
\begin{align*}
\iint_{E}(\hat J_{\mu}\star\varpi)^2dxdy\leq& \nu_0,\quad \iint_{E}|\hat J_{\mu}\star\varpi|dxdy\leq \nu_0,\\
\iint_{E}|\psi_\ep(\hat J_{\mu}\star\varpi)|dxdy
\leq& \|G_1\|_{L^2(\Omega)} \|\omega_\ep\|_{L^2(\Omega)}\nu_0+C\|\varpi\|_{L^1(\Omega)}(\nu_0
+C_0\nu_0)\\
&+C\|y\varpi\|_{L^1(\Omega)}\nu_0+C\nu_0,
\end{align*} and
\begin{align}\nonumber
&\iint_{E}|(-\hat J_{\mu}\star\varpi)\ln(-(\hat J_{\mu}\star\varpi))-(-\varpi)\ln(-\varpi)|dxdy\\\nonumber
\leq&\nu_0+\nu_0+2\nu_0+2\|G_1\|_{L^2(\Omega)} \|\omega_\ep\|_{L^2(\Omega)}\nu_0\\\label{E-small}
&+2C\|\varpi\|_{L^1(\Omega)}(\nu_0+C_0\nu_0)
+2C\|y\varpi\|_{L^1(\Omega)}\nu_0
+C\nu_0+\nu_0+\nu_0+2\nu_0
\leq {\varepsilon\over 4}
\end{align}
for   $E\subset \Omega$ satisfying $|E|\leq\delta_0$.  By Lusin's Theorem, there exists a closed subset $F\subset B_{R_0}$ such that $|B_{R_0}\setminus F|<\delta_0$ and $\varpi$ is continuous on $F$. Thus, $0< \min_{(x,y)\in F}|\varpi(x,y)|\leq \max_{(x,y)\in F}|\varpi(x,y)|<\infty$. Let  $a_F\triangleq \max_{(x,y)\in F}|\varpi(x,y)|+1$.
Since $s\ln(s)$ is uniformly continuous on $[0,a_F]$, there exists $\delta_1\in(0,\min\{\min_{(x,y)\in F}|\varpi(x,y)|,1\})$ (depending on $\varepsilon, R_0, F$) such that
\begin{align}\label{uniform continuity}
|s_2\ln(s_2)-s_1\ln(s_1)|<{\varepsilon\over 16\pi R_0} \text{ for }s_1,s_2\in[0,a_F] \text{ and }|s_2-s_1|\leq\delta_1.
\end{align}
We divide $F$ into two parts
\begin{align*}
B_{1,\delta_1}^{\mu}=\{(x,y)\in  F|\;|(\hat J_\mu\star\varpi)(x,y)-\varpi(x,y)|\leq \delta_1\},\\
B_{2,\delta_1}^{\mu}=\{(x,y)\in  F|\;|(\hat J_\mu\star\varpi)(x,y)-\varpi(x,y)|> \delta_1\}.
\end{align*}
Since $(\hat J_\mu\star\varpi)\to\varpi$ in $L^1(\Omega)$, we have
\begin{align*}
|B_{2,\delta_1}^{\mu}|\delta_1\leq\|(\hat J_\mu\star\varpi)-\varpi\|_{L^1(B_{2,\delta_1}^{\mu})}\leq \|(\hat J_\mu\star\varpi)-\varpi\|_{L^1(\Omega)}\leq \delta_0\delta_1\Longrightarrow|B_{2,\delta_1}^{\mu}|\leq \delta_0
\end{align*}
for $\mu>0$ small enough. By \eqref{uniform continuity}, we have
\begin{align}\label{B1delta1kappa-small}
&\iint_{B_{1,\delta_1}^{\mu}}|(-\hat J_{\mu}\star\varpi)\ln(-(\hat J_{\mu}\star\varpi))-(-\varpi)\ln(-\varpi)|dxdy
\leq {\varepsilon\over 16\pi R_0}|B_{1,\delta_1}^{\mu}|\leq{\varepsilon\over 4}.
\end{align}
Since $|B_{R_0}\setminus F|<\delta_0$ and
$|B_{2,\delta_1}^{\mu}|\leq \delta_0$,  we infer from \eqref{E-small} that
\begin{align}\label{BR0setminusF-small}
&\iint_{B_{R_0}\setminus F}|(-\hat J_{\mu}\star\varpi)\ln(-(\hat J_{\mu}\star\varpi))-(-\varpi)\ln(-\varpi)|dxdy
\leq {\varepsilon\over 4},\\\label{B2delta1kappa-small}
&\iint_{B_{2,\delta_1}^{\mu}}|(-\hat J_{\mu}\star\varpi)\ln(-(\hat J_{\mu}\star\varpi))-(-\varpi)\ln(-\varpi)|dxdy
\leq {\varepsilon\over 4}
\end{align}
for $\mu>0$ small enough. The conclusion \eqref{xlnx-convergence} then follows from
\eqref{R-varepsilon} and \eqref{B1delta1kappa-small}-\eqref{B2delta1kappa-small}.
\end{proof}

\subsection{Global existence  of the approximate solutions}
Now, we prove the global existence  of the approximate solutions.
\begin{lemma}\label{lem-construction of an approximate solution sequence}
Let $\tilde\omega_0\in  Y_{non}$ and $\tilde\omega_0^{\mu}$ be defined in  \eqref{tilde-omega0-kappa-def} for $\mu\in(0,1)$.
For the initial data $\vec{v}_0^{\mu}=K\ast\tilde\omega_0^{\mu}$, there exists a global smooth solution $\vec{v}^{\mu}(t)=\vec{v}_{0,0}^{\mu}+\vec{v}_{\mu}(t)$ to the 2D Euler equation such that
\[
\vec v_{\mu}(t)\in H^q(\Omega),\qquad \vec v_{\mu}\in C^0([0,T];H^q(\Omega))
\]
for every $q\ge 3$ and $T>0$, where $\vec v_{0,0}^{\mu}=K\ast \tilde\omega_{0,0}^{\mu}$. Moreover, $\lim_{y\to\pm\infty}\vec v^{\mu}(t,x,y)=(\pm1,0)$
for all $t\ge 0$ and $x\in\mathbb T_{2\pi}$, and
\[
\lim_{y\to\pm\infty}\vec v^{\mu}(t,x,y)=(\pm1,0)
\]
for all $t\ge 0$ and $x\in\mathbb T_{2\pi}$. In addition, the family $\{\vec v^{\mu}\}$ provides an approximate solution sequence which has  $L^1$ and $L^2$ vorticity control, and
\[
\tilde\omega_0^{\mu}\to \tilde\omega_0
\qquad\text{in }L^1(\Omega)\cap L^2(\Omega).
\]
\end{lemma}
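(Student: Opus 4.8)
\textbf{Proof proposal for Lemma \ref{lem-construction of an approximate solution sequence}.}

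The plan is to build the approximate solution by the shear-energy decomposition announced just before the statement. First I would split $\tilde\omega_0^{\mu}$ into its shear part $\tilde\omega_{0,0}^{\mu}(y)$ and its non-shear part $\tilde\omega_{0,\neq0}^{\mu}(x,y)$ as in \eqref{shear-energy decomposition}. The shear part generates a stationary shear velocity $\vec{v}_{0,0}^{\mu}=K\ast\tilde\omega_{0,0}^{\mu}=(V_\mu(y),0)$, which is an exact steady solution of the 2D Euler equation; since $\iint_\Omega\tilde\omega_0^{\mu}dxdy=-4\pi$ by Lemma \ref{tilde-omega0-kappa-properties} (3) and \eqref{perturbed vorticity condition}, and using the asymptotics \eqref{K-near-pm-infty} of $K$ at $y\to\pm\infty$, one checks $V_\mu(y)\to\pm1$ as $y\to\pm\infty$. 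Writing $\vec{v}^{\mu}=\vec{v}_{0,0}^{\mu}+\vec{v}_\mu$, the perturbation $\vec{v}_\mu$ has vorticity of zero mean and must solve the 2D Euler equation linearized-around-plus-nonlinear in the moving background; the key point is that $\vec{v}_\mu(0)=K\ast\tilde\omega_{0,\neq0}^{\mu}\in L^2(\Omega)$ by the shear-energy bound proved just above (the $L^1$-convolution estimate using $\rho_s$ and \eqref{K-near-pm-infty}), and in fact $\vec{v}_\mu(0)\in H^q(\Omega)$ for every $q$ because $\tilde\omega_0^{\mu}=\hat J_\mu\star\tilde\omega_0$ is smooth with all derivatives in $L^2$ by Lemma \ref{tilde-omega0-kappa-properties} (4)–(5).

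Next I would invoke the classical global well-posedness theory for the 2D Euler equation in the reformulation by the perturbation: the vorticity $\omega^\mu=\curl(\vec{v}^{\mu})$ satisfies the transport equation $\partial_t\omega^\mu+\vec{v}^{\mu}\cdot\nabla\omega^\mu=0$, so $\|\omega^\mu(t)\|_{L^p}=\|\tilde\omega_0^{\mu}\|_{L^p}$ for every $p\in[1,\infty]$ (here $\|\tilde\omega_0^{\mu}\|_{L^1},\|\tilde\omega_0^{\mu}\|_{L^2}$ are bounded uniformly in $\mu$ by Lemma \ref{tilde-omega0-kappa-properties} (4)), and the Beale–Kato–Majda / Yudovich argument, applied to the $H^q$ perturbation velocity $\vec{v}_\mu$ on top of the bounded smooth shear $\vec{v}_{0,0}^{\mu}$, gives a unique global smooth solution with $\vec{v}_\mu\in C^0([0,T],H^q(\Omega))$ for every $q\ge3$ and $T>0$. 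The bound $\max_{0\le t\le T}\iint_{B_R}|\vec{v}^{\mu}|^2dxdy\le C(T,R)$ comes from $|\vec{v}_{0,0}^{\mu}|\le1+o(1)$ being bounded and $\|\vec{v}_\mu(t)\|_{L^2}$ being controlled by $\|\omega^\mu(t)\|_{L^1\cap L^2}$ via the shear-energy estimate again, uniformly in $t$ by the conservation of the vorticity norms. This verifies (i), (ii), (iv), (v) of Definition \ref{Approximate solution sequence for the 2D Euler equation}; (iii) is immediate since each $\vec{v}^{\mu}$ is an exact (hence weak) solution, so the defining identity holds with the limit being exactly zero. The asymptotic behavior $\lim_{y\to\pm\infty}\vec{v}^{\mu}(t,x,y)=(\pm1,0)$ follows because the perturbation $\vec{v}_\mu(t)\in H^q(\Omega)$ decays at infinity while $\vec{v}_{0,0}^{\mu}$ has the stated limit; preservation in time uses that the support of $\omega^\mu(t)$ stays under control and the transport structure keeps $\omega^\mu(t)-\tilde\omega_{0,0}^{\mu}$ of zero mean with the corresponding Biot–Savart correction decaying, essentially the computation behind Lemma \ref{G-ast-omega-psi-constant}. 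Finally $\tilde\omega_0^{\mu}\to\tilde\omega_0$ in $L^1\cap L^2(\Omega)$ is exactly Lemma \ref{tilde-omega0-kappa-properties} (4) with $p=1,2$.

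The main obstacle I anticipate is not the smoothness or the conservation laws — those are standard once the decomposition is in place — but rather the \emph{uniform-in-$t$, uniform-in-$\mu$} control of the perturbation energy $\|\vec{v}_\mu(t)\|_{L^2(\Omega)}$ in the unbounded channel $\Omega$, where the full kinetic energy is infinite and the Biot–Savart kernel $K$ does not decay in $x$ but only converges to the nonzero constants $(\mp\tfrac1{4\pi},0)$ as $y\to\pm\infty$. The remedy is precisely the shear-energy splitting: one must show that $K\ast\tilde\omega^\mu_{\neq0}(t)$ stays in $L^2$ with a bound depending only on $\|\omega^\mu(t)\|_{L^1\cap L^2}=\|\tilde\omega_0^{\mu}\|_{L^1\cap L^2}$, which in turn requires that the transported vorticity $\omega^\mu(t)$ retains a shear/non-shear structure compatible with the estimate — here one uses that the equation propagates the decomposition well enough (the mean in $x$ of $\omega^\mu(t)$ over each period need not be exactly $\tilde\omega_{0,0}^\mu$, but the Biot–Savart correction of the difference is handled by the $G=G_1+G_2$ splitting with $G_1\in L^1\cap L^2$ and $G_2=-\tfrac1{4\pi}|y|$, together with the zero-mean and $y\omega$ bounds). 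Carrying this estimate out carefully, uniformly in $\mu$ and $t$, is the technical heart of the proof; everything else is a bookkeeping application of classical 2D Euler theory and of Lemma \ref{tilde-omega0-kappa-properties}.
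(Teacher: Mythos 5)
Your overall strategy coincides with the paper's: the same shear-energy splitting, the same perturbation equation \eqref{euler-nonshear} for $\vec{v}_{\mu}$ about the steady shear $\vec{v}_{0,0}^{\mu}$, global existence by a BKM-type continuation argument in $H^q$, the asymptotics from the decay of the $H^q$ perturbation plus $\partial_y G\ast\tilde\omega_{0,0}^{\mu}\to\pm1$, and the convergence $\tilde\omega_0^{\mu}\to\tilde\omega_0$ from Lemma \ref{tilde-omega0-kappa-properties}. The place where your argument has a genuine gap is the verification of condition (i) of Definition \ref{Approximate solution sequence for the 2D Euler equation}. You assert that $\|\vec{v}_{\mu}(t)\|_{L^2(\Omega)}$ is controlled by the conserved vorticity norms \emph{uniformly in $t$ and $\mu$}. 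The energy estimate actually available is \eqref{basic energy estimate-gronwall}, whose Gr\"onwall factor involves $\|\nabla\vec{v}_{0,0}^{\mu}\|_{L^\infty(\Omega)}=\|\tilde\omega_{0,0}^{\mu}\|_{L^\infty(\Omega)}$; since $\tilde\omega_0$ is only in $L^1\cap L^2(\Omega)$, this quantity blows up as $\mu\to0^+$, so the resulting bound is neither uniform in $\mu$ nor uniform on $[0,T]$ in the required sense. Your fallback — propagating the shear/non-shear splitting of $\tilde\omega^{\mu}(t)$ and handling the zero-mean shear correction with the $G=G_1+G_2$ splitting and $y$-moment bounds — could in principle be made to work, but it needs the a priori control of $\|y\tilde\omega^{\mu}(t)\|_{L^1(\Omega)}$ (established only afterwards, in Corollary \ref{y-tilde-omega-pseudoenergy-conserved}) and is exactly the step you leave as "the technical heart" without carrying it out.

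The paper avoids this difficulty entirely: it first verifies the Biot--Savart law $\vec{v}^{\mu}(t)=K\ast\tilde\omega^{\mu}(t)$ at positive times (a point you use implicitly but do not justify; the paper does it by a Fourier-mode uniqueness argument), and then proves the \emph{local} bound $\|\vec{v}^{\mu}(t)\|_{L^2(B_R)}\leq C(R)\|\tilde\omega_0\|_{L^2(\Omega)}$ directly in \eqref{uniform L2 bound v kappa}, by splitting $K=\rho_1K+(1-\rho_1)K$ into an $L^1$ near part and an $L^\infty$ far part and using only $\|\tilde\omega^{\mu}(t)\|_{L^2(\Omega)}=\|\tilde\omega_0^{\mu}\|_{L^2(\Omega)}\leq\|\tilde\omega_0\|_{L^2(\Omega)}$. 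Since Definition \ref{Approximate solution sequence for the 2D Euler equation} only asks for uniform kinetic energy on $B_R$, no global-in-space $L^2$ bound on $\vec{v}_{\mu}(t)$ uniform in $\mu$ is needed; you should replace your claimed global estimate by this local kernel-splitting argument (or supply the $y$-moment estimate and the propagated decomposition in full). The remaining parts of your proposal — the local existence and BKM continuation (which the paper carries out in detail via \eqref{estimate-v-kappa-L-infty} and \eqref{BKM}, with constants that may depend on $\mu$, which is harmless), condition (iii) from exactness of the solutions, the $L^1$, $L^2$ vorticity control from transport and Lemma \ref{tilde-omega0-kappa-properties} (4) — are correct and in line with the paper.
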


\begin{proof}
We decompose $\vec{v}_{0}^{\mu}$ into the shear-energy parts: $\vec{v}_{0}^{\mu}=K\ast\tilde\omega_0^{\mu}=K\ast\tilde\omega_{0,0}^{\mu}+K\ast\tilde\omega_{0,\neq0}^{\mu}\triangleq\vec{v}_{0,0}^{\mu}+\vec{v}_{0,\neq0}^{\mu}$. Then
by Lemma \ref{tilde-omega0-kappa-properties} (5), we have $\vec{v}_{0,\neq0}^{\mu}=K\ast(\hat J_{\mu}\star\tilde\omega_{0,\neq0})=\hat J_{\mu}\star\vec{v}_{0,\neq0}\in H^q(\Omega)$ for all $q\geq3$ since $\vec{v}_{0,\neq0}\in L^2(\Omega)$.
Now we denote $\vec{v}_{\mu}$ to be the solution of the evolution equation
\begin{align}\label{euler-nonshear}
 \partial_t\vec{u}+(\vec{u}\cdot\nabla)\vec{u}+(\vec{v}_{0,0}^{\mu}\cdot\nabla)\vec{u}+(\vec{u}\cdot\nabla)\vec{v}_{0,0}^{\mu}=-\nabla p
 \end{align}
with the initial data
$\vec{v}_{\mu}(0)=\vec{v}_{0,\neq0}^{\mu}$. As in Subsection 3.2.4 of \cite{Majda-Bertozzi02}, the solution $\vec{v}_{\mu}$ to equation \eqref{euler-nonshear} exists locally in time in $H^q(\Omega)$ for $q\geq3$ and can be continued in time provided that $\|\vec{v}_{\mu}(t)\|_{H^q(\Omega)}$ remains bounded.
We use the shear-energy decomposition to derive  the BKM-type estimate  \eqref{BKM} in the cylinder version,
which proves the global existence  of the  solution $\vec{v}_{\mu}$ to the 2D Euler equation in $H^q(\Omega)$ for $q\geq3$.
The BKM criterion was originally  obtained for the 3D Euler equation on  $\mathbb{R}^3$  in
\cite{BKM84} and extended to the $\mathbb{R}^2$ version using a radial-energy decomposition for the velocity field with infinite energy (see \cite{Majda-Bertozzi02} for example). We follow the argument of \cite{Majda-Bertozzi02} and \cite{Kato1986nonlinear}.
Note that  ${\mathrm{div}}(\vec{v}_{\mu}(t))={\mathrm{div}}(\vec{v}^{\mu}(t))-{\mathrm{div}}(\vec{v}_{0,0}^{\mu})=0$ for $t\geq0$ since
${v}_{0,0,2}^{\mu}=- G\ast\partial_x \tilde\omega_{0,0}^{\mu}=0$, where ${v}_{0,0,2}^{\mu}$ is the second entry of $\vec{v}_{0,0}^{\mu}$. Then a basic energy estimate gives
\begin{align*}
{1\over 2} {d\over dt}\|\vec{v}_{\mu}(t)\|_{L^2(\Omega)}^2+\iint_{\Omega}(\vec{v}_{\mu}(t)\cdot\nabla)\vec{v}_{0,0}^{\mu}\cdot\vec{v}_{\mu}(t)dxdy=0.
 \end{align*}
Indeed, we
can first prove it for the regularized solution and then take the limit by a similar approach in Theorem 3.6 of \cite{Majda-Bertozzi02}.
\if0
used \eqref{v-mu-boundary term1}-\eqref{v-mu-boundary term2} to ensure that $\iint_{\Omega}(\vec{v}_{\mu}\cdot\nabla)\vec{v}_{\mu}\cdot\vec{v}_{\mu}dxdy
=\iint_{\Omega}\vec{v}_{\mu}\cdot \nabla({1\over2}|\vec{v}_{\mu}|^2) dxdy=-\iint_{\Omega}{\mathrm{div}}(\vec{v}_{\mu})({1\over2}|\vec{v}_{\mu}|^2) dxdy=0$, $\iint_{\Omega}(\vec{v}_{0,0}^{\mu}\cdot\nabla)\vec{v}_{\mu}\cdot\vec{v}_{\mu}dxdy=0$ and
$\iint_{\Omega}\nabla p\cdot\vec{v}_{\mu}dxdy=0$. \eqref{v-mu-boundary term1}-\eqref{v-mu-boundary term2} are verified at the end of the proof.
\fi
Then
\begin{align}\label{basic energy estimate-eq}
{d\over dt}\|\vec{v}_{\mu}(t)\|_{L^2(\Omega)}\leq \|\vec{v}_{\mu}(t)\|_{L^2(\Omega)}\|\nabla\vec{v}_{0,0}^{\mu}\|_{L^\infty(\Omega)}
 \end{align}
and Gr\"{o}nwall's inequality implies
\begin{align}\label{basic energy estimate-gronwall}
\|\vec{v}_{\mu}(t)\|_{L^2(\Omega)}\leq \|\vec{v}_{\mu}(0)\|_{L^2(\Omega)}e^{\int_0^t\|\nabla \vec{v}_{0,0}^{\mu}\|_{L^\infty(\Omega)}ds},
 \end{align}
 where $\nabla \vec{v}_{0,0}^{\mu}$ is in the form of  $2\times2$ matrix.

 We prove that $\vec{v}_{0,0}^{\mu}\in W^{j,\infty}(\Omega)$ for $j\geq0$. Since $\tilde \omega_0\in L^2(\Omega)$, we have $\tilde \omega_{0,0}\in L^2(\Omega)$ and $\|D^j\tilde \omega_{0,0}^{\mu}\|_{L^\infty(\Omega)}\leq C(\mu,j)\|\tilde \omega_{0,0}\|_{L^2(\Omega)}$ by Lemma \ref{tilde-omega0-kappa-properties} (5). Noting that $$\|D^j\tilde \omega_{0,0}^{\mu}\|_{L^1(\Omega)}=\iint_\Omega|(D^j\hat J_\mu)\star\tilde \omega_{0,0}|dxdy\leq \|D^j\hat J_\mu\|_{L^1(\mathbb{R}^2)}\|\tilde \omega_{0,0}\|_{L^1(\Omega)}\leq C(\mu,j)\|\tilde \omega_{0,0}\|_{L^1(\Omega)},$$ we have
 \begin{align*}
 \|D^j\vec{v}_{0,0}^{\mu}\|_{L^\infty(\Omega)}=&\|K\ast D^j\tilde\omega_{0,0}^{\mu}\|_{L^\infty(\Omega)}\leq \|(\rho_1K)*D^j\tilde \omega_{0,0}^{\mu}\|_{L^\infty(\Omega)}+\|((1-\rho_1)K)*D^j\tilde \omega_{0,0}^{\mu}\|_{L^\infty(\Omega)}\\
 \leq&\|(\rho_1K)\|_{L^1(\Omega)}\|D^j\tilde \omega_{0,0}^{\mu}\|_{L^\infty(\Omega)}+\|((1-\rho_1)K)\|_{L^\infty(\Omega)}\|D^j\tilde \omega_{0,0}^{\mu}\|_{L^1(\Omega)}\\
 \leq&C(\mu,j)\|\tilde \omega_{0,0}\|_{L^2(\Omega)}+C(\mu,j)\|\tilde \omega_{0,0}\|_{L^1(\Omega)}.
 \end{align*}

 Taking derivative of \eqref{euler-nonshear} and similar to \eqref{basic energy estimate-eq}-\eqref{basic energy estimate-gronwall}, we get the high-order energy estimates ($q\geq1$):
 \begin{align}\nonumber
 {d\over dt}\|\vec{v}_{\mu}(t)\|_{H^q(\Omega)}&\leq C_q \|\vec{v}_{\mu}(t)\|_{H^q(\Omega)}\left(\|\nabla \vec{v}_{\mu}(t)\|_{L^\infty(\Omega)}+\|\vec{v}_{0,0}^{\mu}\|_{W^{q+1,\infty}(\Omega)}\right),
 \end{align}
 and
 \begin{align}\label{basic energy estimate-gronwall-high-order}
\|\vec{v}_{\mu}(t)\|_{H^q(\Omega)}&\leq \|\vec{v}_{\mu}(0)\|_{H^q(\Omega)}e^{\int_0^tC_q\left(\|\nabla \vec{v}_{\mu}(t)\|_{L^\infty(\Omega)}+\|\vec{v}_{0,0}^{\mu}\|_{W^{q+1,\infty}(\Omega)}\right)ds}.
 \end{align}
By the asymptotic behavior of $|K|$ near $(x,y)=(0,0)$ in  \eqref{K-near-0} and the exponential decay rate of $|\nabla K|$ as $|y|\to \infty$, a similar argument to Lemma A3 in \cite{Kato1986nonlinear} gives
\begin{align*}
&\|\nabla\vec{v}_{\mu}(t)\|_{L^\infty(\Omega)}\leq\|\nabla\vec{v}^{\mu}(t)\|_{L^\infty(\Omega)}+\|\nabla\vec{v}_{0,0}^{\mu}\|_{L^\infty(\Omega)}\\
\leq&
 C\bigg(\|\tilde\omega_{0}^\mu\|_{L^\infty(\Omega)}+\|\tilde\omega_{0}^\mu\|_{L^2(\Omega)}
+\|\tilde\omega_{0}^\mu\|_{L^\infty(\Omega)}
\ln\left(1+{\|\vec{v}^{\mu}(t)\|_{H^3(\Omega)}\over\|\tilde\omega_{0}^\mu\|_{L^\infty(\Omega)}}\right)\\
&+\|\tilde\omega_{0,0}\|_{L^2(\Omega)}+\|\tilde\omega_{0,0}\|_{L^1(\Omega)}\bigg)\\
\leq&C\bigg(\|\tilde\omega_{0}\|_{L^2(\Omega)}+\|\tilde\omega_{0,0}\|_{L^1(\Omega)}+\|\tilde\omega_{0}\|_{L^2(\Omega)}
\ln\bigg(1+{\|\vec{v}_{0,0}^{\mu}\|_{H^3(\Omega)}\over\|\tilde\omega_{0}^\mu\|_{L^\infty(\Omega)}}+
{\|\vec{v}_{\mu}(t)\|_{H^3(\Omega)}\over\|\tilde\omega_{0}^\mu\|_{L^\infty(\Omega)}}\bigg)\bigg),
\end{align*}
where we used \eqref{B-S law}.
 Then
\begin{align}\label{estimate-v-kappa-L-infty}
\|\nabla\vec{v}_{\mu}(t)\|_{L^\infty(\Omega)}\leq C_{\|\tilde\omega_{0}^\mu\|_{L^\infty(\Omega)},\|\tilde\omega_{0,0}\|_{L^1(\Omega)},\|\tilde\omega_0\|_{L^2(\Omega)},\|\vec{v}_{0,0}^{\mu}\|_{H^3(\Omega)}}\left(1+
\ln_+(\|\vec{v}_{\mu}(t)\|_{H^3(\Omega)})\right),
\end{align}
where $\ln_+(x)=\ln(x)$ for $x>1$ and $\ln_+(x)=0$ for $0<x\leq1$.
Plugging \eqref{basic energy estimate-gronwall-high-order} for $q=3$ into \eqref{estimate-v-kappa-L-infty}, we have
\begin{align*}
\|\nabla\vec{v}_{\mu}(t)\|_{L^\infty(\Omega)}\leq C_*\left(1+\|\vec{v}_{0,0}^{\mu}\|_{W^{4,\infty}(\Omega)}t+
\int_0^t\|\nabla \vec{v}_{\mu}(t)\|_{L^\infty(\Omega)}ds\right),
\end{align*}
where $C_*=C_{\|\tilde\omega_{0}^\mu\|_{L^\infty(\Omega)},\|\tilde\omega_{0,0}\|_{L^1(\Omega)},\|\tilde\omega_0\|_{L^2(\Omega)},
\|\vec{v}_{0,0}^{\mu}\|_{H^3(\Omega)},\|\vec{v}_{\mu}(0)\|_{H^3(\Omega)}}$
depends only on the initial data.
Then Gr\"{o}nwall's inequality implies
\begin{align*}
\|\nabla\vec{v}_{\mu}(t)\|_{L^\infty(\Omega)}\leq (C_*+\tilde C_* t)e^{C_*t},
\end{align*}
where $\tilde C_*=C_*\|\vec{v}_{0,0}^{\mu}\|_{W^{4,\infty}(\Omega)}$.
Inserting this into \eqref{basic energy estimate-gronwall-high-order}  gives an a priori bound for $\|\vec{v}_{\mu}\|_{H^q(\Omega)}$:
\begin{align}\label{BKM}
\|\vec{v}_{\mu}(t)\|_{H^q(\Omega)}\leq \|\vec{v}_{\mu}(0)\|_{H^q(\Omega)}e^{\int_0^tC_q\left( (C_*+\tilde C_* t)e^{C_*t}+\|\vec{v}_{0,0}^{\mu}\|_{W^{q+1,\infty}(\Omega)}\right)ds},
 \end{align}
 which proves the global existence  of the  solution $\vec{v}^{\mu}=\vec{v}_{0,0}^{\mu}+\vec{v}_{\mu}$ to 2D Euler equation in $H^q(\Omega)$ for $q\geq3$. This verifies (iii) of Definition \ref{Approximate solution sequence for the 2D Euler equation}. (ii) is trivially  verified. Then we prove that $\{\vec{v}^\mu\}$ has  $L^1$ and $L^2$ vorticity control.
  Let $\tilde \omega^\mu=\text{curl}(\vec{v}^\mu)$.
By Lemma \ref{tilde-omega0-kappa-properties} (4),
\begin{align}\label{L12 bound-approximate solution sequence}
\iint_{\Omega}|\tilde\omega^\mu(t)|^pdxdy=&\iint_{\Omega}|\tilde\omega_0^\mu|^pdxdy\leq \|\tilde\omega_{0}\|_{L^p(\Omega)}^p
\end{align}
for $t\geq0$, and
 $\tilde \omega_0^{\mu}\to\tilde\omega_0$ in $L^p(\Omega)$ for $p=1,2$. To  verify (i), we
 note that
 \begin{align}\nonumber
 &\|\vec{v}^\mu(t)\|_{L^2(B_R)}=\|(K*\tilde \omega^\mu)(t)\|_{L^2(B_R)}\\\nonumber
 \leq& \|((\rho_1K)*\tilde \omega^\mu)(t)\|_{L^2(\Omega)}+\|(((1-\rho_1)K)*\tilde \omega^\mu)(t)\|_{L^2(B_R)}\\\nonumber
 \leq&\|\rho_1K\|_{L^1(\Omega)}\|\tilde \omega^\mu(t)\|_{L^2(\Omega)}+C(R)\|(1-\rho_1)K\|_{L^\infty(\Omega)}\|\tilde \omega^\mu(t)\|_{L^2(\Omega)}\\\label{uniform L2 bound v kappa}
 \leq&C(R)\|\tilde \omega_0\|_{L^2(\Omega)}
 \end{align}
 for any $ R>0$,
where we used $\tilde \omega^\mu(t)=\text{curl}(\vec{v}^\mu(t))$ and \eqref{L12 bound-approximate solution sequence}.

We define the stream function by $\tilde \psi^\mu(t)=G*\tilde \omega^\mu(t)$, where $\omega^\mu(t)={\mathrm{curl}}(\vec{v}^{\mu}(t))$ is the vorticity. Then the velocity can be recovered from $\tilde \psi^\mu(t)$ by the Biot-Savart law
\begin{align}\label{B-S law}
\vec{v}^{\mu}(t)=\nabla^{\bot} (G*\tilde \omega^\mu(t))=K*\tilde \omega^\mu(t)\end{align}
in our setting. In fact, let
 $\vec{\vartheta}(t)=({\vartheta}_1(t),{\vartheta}_2(t) )\triangleq K*\tilde \omega^\mu(t)-\vec{v}^{\mu}(t)$ for  $\mu\in(0,1)$ and $t\geq0$.
 Since ${\mathrm{div}}(\vec{\vartheta}(t))=0$ and ${\mathrm{curl}}(\vec{\vartheta}(t))=0$, we have $ik\widehat{\vartheta}_{1,k}(y)+\widehat{\vartheta}_{2,k}'(y)=0$, $\widehat{\vartheta}_{1,k}'(y)-ik\widehat{\vartheta}_{2,k}(y)=0$ for $k\neq0$, $\widehat{\vartheta}_{1,0}'(y)=0$ and $\widehat{\vartheta}_{2,0}'(y)=0$.
Thus, $\widehat{\vartheta}_{1,k}''(y)-k^2\widehat{\vartheta}_{1,k}(y)=0$ and $\widehat{\vartheta}_{2,k}''(y)-k^2\widehat{\vartheta}_{2,k}(y)=0$ for $k\neq0$, which implies $
\widehat{\vartheta}_{1,k}(y)=c_{1,k}e^{ky}+\tilde c_{1,k}e^{-ky}$ and $
\widehat{\vartheta}_{2,k}(y)=c_{2,k}e^{ky}+\tilde c_{2,k}e^{-ky}$ for some $c_{1,k},\tilde c_{1,k},c_{2,k},\tilde c_{2,k}\in\mathbb{C}$. Noting that
$\vec{\vartheta}= ({\vartheta}_1,{\vartheta}_2 )=K*\tilde \omega_\mu(t)-\vec{v}_{\mu}(t)$, we have ${\vartheta}_2\in L^2 (\Omega)$, where $\tilde \omega_\mu(t)=\tilde \omega^\mu(t)-\tilde \omega_{0,0}^\mu$. Thus, $\widehat{\vartheta}_{2,k}(y)=0$ for $k\in\mathbb{Z}$, which implies $\widehat{\vartheta}_{1,k}(y)=0$ for $k\neq0$  since $ik\widehat{\vartheta}_{1,k}(y)+\widehat{\vartheta}_{2,k}'(y)=0$. By the first limit in \eqref{v-mu term1 shear decomposition} and $\vec{v}_{\mu}(t)\in L^2 (\Omega)$, we have  $\widehat{\vartheta}_{1,0}(y)=0$.

Finally, we prove that
\begin{align}\label{v-mu term2}
&\lim_{y\to\pm\infty}v^{\mu,2}(t,x,y)=-\lim_{y\to\pm\infty}\partial_x\tilde\psi^\mu(t,x,y)=-\lim_{y\to\pm\infty}(\partial_x G\ast\tilde \omega^\mu)(t,x,y)=0,\\
&\lim_{y\to\pm\infty}(\partial_y G\ast\tilde \omega_\mu)(t,x,y)=0,\;\;\lim_{y\to\pm\infty}(\partial_y G\ast\tilde \omega_{0,0}^\mu)(t,x,y)=\pm1,
\label{v-mu term1 shear decomposition}
\end{align}
which implies
\begin{align}
\label{v-mu term1}
&\lim_{y\to\pm\infty}v^{\mu,1}(t,x,y)=\lim_{y\to\pm\infty}\partial_y\tilde\psi^\mu(t,x,y)=\lim_{y\to\pm\infty}(\partial_y G\ast\tilde \omega^\mu)(t,x,y)=\pm1
\end{align}
for $t\geq0$ and $x\in\mathbb{T}_{2\pi}$, where $\vec{v}^{\mu}(t)=(v^{\mu,1}(t),v^{\mu,2}(t))$.
Indeed, $\|\tilde \omega^\mu(t)\|_{L^{p'}(\Omega)}=\|\tilde \omega^\mu(0)\|_{L^{p'}(\Omega)}\leq C\|\tilde \omega^\mu(0)\|_{H^{1}(\Omega)}$, and thus, for any $\varepsilon>0$,  there exists $R_1>0$ such that
\begin{align*}
\|\tilde \omega^\mu(t)\|_{L^{p'}(B_{R_1}^c)}<{\varepsilon\over 2\left\|\partial_x G\right\|_{L^{p}(\Omega)}},
\end{align*}
where $p\in(1,2)$ and ${1\over p}+{1\over p'}=1$. Then
\begin{align}\label{B-R1-c}
&\left|\iint_{B_{R_1}^c}\partial_x G(x-\tilde x,y-\tilde y)\tilde \omega^\mu(t,\tilde x,\tilde y)d\tilde xd\tilde y\right|
\leq\left\|\partial_x G\right\|_{L^{p}(\Omega)}
\|\tilde \omega^\mu(t)\|_{L^{p'}(B_{R_1}^c)}<{\varepsilon\over 2}
\end{align}
for $(x,y)\in\Omega$. Choose $M_1>0$ such that if $|y|>M_1$, then  $\left|\partial_x G(x-\tilde x,y-\tilde y)\right|<{\varepsilon\over 2\|\tilde \omega^\mu(t)\|_{L^1(\Omega)}}$ uniformly for $(\tilde x,\tilde y)\in B_{R_1}$. Then
\begin{align*}
&\left|\iint_{B_{R_1}}\partial_x G(x-\tilde x,y-\tilde y)\tilde \omega^\mu(t,\tilde x,\tilde y)d\tilde xd\tilde y\right|\leq{\varepsilon\over 2}
\end{align*}
for $|y|>M_1$. This, along with \eqref{B-R1-c}, gives   \eqref{v-mu term2}.
 To prove $\lim_{y\to\infty}(\partial_y G\ast\tilde \omega_\mu)(t,x,y)=0$ in \eqref{v-mu term1 shear decomposition}, we denote $C_0=\max_{x\in\mathbb{T}_{2\pi},|y|>1}(|\partial_y G|+1)<\infty$. For any $\varepsilon>0$, there exists $R_2>0$ such that
\begin{align*}
\|\tilde \omega_\mu(t)\|_{L^1(\{y>R_2\})}<{\varepsilon\over 4C_0},\;\;\|\tilde \omega_\mu(t)\|_{L^{p'}(\{y>R_2\})}<{\varepsilon\over 4\|\partial_yG\|_{L^p(B_1)}},
\end{align*}
where $p\in(1,2)$.
Since
$\iint_\Omega\tilde \omega_\mu(t)dxdy=\iint_\Omega\tilde \omega^\mu(t)dxdy-\iint_\Omega\tilde \omega_{0,0}^\mu dxdy=\iint_\Omega\tilde \omega^\mu(0)dxdy-\iint_\Omega\tilde \omega_{0,0}^\mu dxdy=0$, we have
\begin{align*}
&(\partial_y G\ast\tilde \omega_\mu)(t,x,y)=((\partial_y G+{1/ (4\pi)})\ast\tilde \omega_\mu)(t,x,y)\\
=&\iint_{\{\tilde y<R_2\}}(\partial_y G(x-\tilde x,y-\tilde y)+{1/ (4\pi)})\tilde \omega_\mu(t,\tilde x,\tilde y)d\tilde x d\tilde y\\
&+\iint_{\{\tilde y>R_2\}}(\partial_y G(x-\tilde x,y-\tilde y)+{1/ (4\pi)})\tilde \omega_\mu(t,\tilde x,\tilde y)d\tilde x d\tilde y= I +II.
\end{align*}
Choose $M_2>R_2$ such that if $y>M_2$, then $|\partial_y G(x-\tilde x,y-\tilde y)+{1/(4\pi)}|<{\varepsilon\over 4\|\tilde \omega_\mu(t)\|_{L^1(\Omega)}}$ uniformly for $\tilde y<R_2$. Then
$
|I|\leq {\varepsilon\over4}
$
for $y>M_2$. For $II$, we have
\begin{align*}
|II|=&\bigg|\iint_{\{\tilde y>R_2\}\cap\{|\tilde y-y|\leq1\}}\partial_y G(x-\tilde x,y-\tilde y)\tilde \omega_\mu(t,\tilde x,\tilde y)d\tilde x d\tilde y\\
&+\iint_{\{\tilde y>R_2\}\cap\{|\tilde y-y|\leq1\}}{1/ (4\pi)}\tilde \omega_\mu(t,\tilde x,\tilde y)d\tilde x d\tilde y\\
&+\iint_{\{\tilde y>R_2\}\cap\{|\tilde y-y|>1\}}(\partial_y G(x-\tilde x,y-\tilde y)+{1/ (4\pi)})\tilde \omega_\mu(t,\tilde x,\tilde y)d\tilde x d\tilde y\bigg|\\
\leq &\|\partial_yG\|_{L^p(B_1)}\|\tilde \omega_\mu(t)\|_{L^{p'}(\{y>R_2\})}+\|\tilde \omega_\mu(t)\|_{L^1(\{y>R_2\})}+C_0\|\tilde \omega_\mu(t)\|_{L^1(\{y>R_2\})}<{3\over4}\varepsilon
\end{align*}
for $y\in\mathbb{R}$. Combining the estimates for $I$ and $II$, we have $\lim_{y\to\infty}(\partial_y G\ast\tilde \omega_\mu)(t,x,y)=0$.
Similarly, we have $\lim_{y\to-\infty}(\partial_y G\ast\tilde \omega_\mu)(t,x,y)=0$ and $\lim_{y\to\pm\infty}(\partial_y G\ast\tilde \omega_{0,0}^\mu)(t,x,y)=\pm1$.
\if0
 For any $R>0$,
 \begin{align*}
|\partial_y G\ast(\tilde \omega^\mu(t,x,y)-\tilde\omega_{0,0}^{\mu}(x,y))|\leq\|\partial_y G\|_{L^1(B_R)}\|\tilde \omega^\mu(t)-\tilde\omega_{0,0}^{\mu}\|_{L^\infty(\Omega)}+C_R\|\tilde \omega^\mu(t)-\tilde\omega_{0,0}^{\mu}\|_{L^1(\Omega)},
\end{align*}
which proves \eqref{v-mu-boundary term2}.
\fi
\end{proof}
\begin{Corollary}\label{y-tilde-omega-pseudoenergy-conserved}
Let $\{\vec{v}^\mu\}$ be the approximate solution sequence constructed in Lemma \ref{lem-construction of an approximate solution sequence}.
Then

$(1)$ for any $T>0$, there exists $C(T)>0$ (independent of $\mu$) such that  $\max_{0\leq t\leq T} \|y\tilde\omega^{\mu}(t)\|_{ L^1(\Omega)}$ $\leq C(T)$, and thus, $\tilde\omega^{\mu}(t)\in Y_{non}$ for $t\geq0$;
$\iint_\Omega y\tilde\omega^{\mu}(t,x,y)dxdy$ is conserved for all $t\geq0$;

$(2)$ the pseudoenergy
$
PE(\tilde \omega^\mu(t))={1\over2}\iint_{\Omega}(G\ast\tilde \omega^\mu)(t)\tilde \omega^{\mu}(t) dxdy
$ is conserved for all $t\geq0$.
\end{Corollary}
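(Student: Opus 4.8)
\textbf{Proof proposal for Corollary \ref{y-tilde-omega-pseudoenergy-conserved}.}

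The plan is to exploit the fact that each $\vec{v}^\mu$ is a genuine smooth strong solution to the 2D Euler equation (by Lemma \ref{lem-construction of an approximate solution sequence}), with $\vec{v}^\mu = \vec{v}_{0,0}^\mu + \vec{v}_\mu$ where $\vec{v}_\mu(t) \in H^q(\Omega)$ for all $q \geq 3$, the shear part $\vec{v}_{0,0}^\mu \in W^{j,\infty}(\Omega)$, and $\lim_{y\to\pm\infty}\vec{v}^\mu(t,x,y) = (\pm 1, 0)$. Since the vorticity $\tilde\omega^\mu(t)$ is transported along the divergence-free flow of $\vec{v}^\mu$, i.e.\ it satisfies $\partial_t \tilde\omega^\mu + \vec{v}^\mu \cdot \nabla \tilde\omega^\mu = 0$, all the quantities in question are (formally) conserved; the work is in justifying the manipulations given the non-decay of the velocity at infinity and the mild integrability of $\tilde\omega^\mu$.

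For part (1): I would first establish the uniform-in-$\mu$ bound on $\|y\tilde\omega^\mu(t)\|_{L^1(\Omega)}$ by estimating $\frac{d}{dt}\iint_\Omega |y|\,|\tilde\omega^\mu|\,dxdy$. Since $\tilde\omega^\mu$ is transported, $|\tilde\omega^\mu|$ is also transported, so $\frac{d}{dt}\iint |y||\tilde\omega^\mu| = \iint |\tilde\omega^\mu|\, \vec{v}^\mu \cdot \nabla |y|\, dxdy$, and because $|\partial_y|y|| = 1$ while the second velocity component $v^{\mu,2}$ is bounded (indeed it decays, by \eqref{v-mu term2}), one gets $\frac{d}{dt}\|y\tilde\omega^\mu(t)\|_{L^1} \leq \|v^{\mu,1}\|_{L^\infty}\cdot 0 + \|v^{\mu,2}\|_{L^\infty}\|\tilde\omega^\mu(t)\|_{L^1}$; but $v^{\mu,1}$ multiplies $\partial_x|y| = 0$, so actually only $v^{\mu,2}$ enters, and since $\|v^{\mu,2}(t)\|_{L^\infty} \le \|\vec{v}_\mu(t)\|_{H^3}$ is controlled by the BKM-type bound \eqref{BKM} (uniformly on $[0,T]$ but dependent on $\mu$ a priori), I need to be slightly more careful — I would instead use $\|v^{\mu,2}(t)\|_{L^\infty} = \|\partial_x G * \tilde\omega^\mu(t)\|_{L^\infty} \le C(\|\tilde\omega_0\|_{L^1}, \|\tilde\omega_0\|_{L^2})$ uniformly in $\mu$ and $t$, using $\tilde\omega^\mu(t) = \tilde\omega_0^\mu$ in $L^1\cap L^2$ (conservation of all $L^p$ norms along the flow) together with Lemma \ref{tilde-omega0-kappa-properties}(4). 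Then Gr\"onwall plus the uniform bound $\|y\tilde\omega_0^\mu\|_{L^1} \le \|y\tilde\omega_0\|_{L^1} + C$ from Lemma \ref{tilde-omega0-kappa-properties}(6) gives $\max_{[0,T]}\|y\tilde\omega^\mu(t)\|_{L^1} \le C(T)$. Combined with $\tilde\omega^\mu(t) < 0$, $\|\tilde\omega^\mu(t)\|_{L^1\cap L^2} = \|\tilde\omega_0^\mu\|_{L^1\cap L^2}$ finite, and $\iint_\Omega \tilde\omega^\mu(t)\,dxdy = -4\pi$ (Lemma \ref{tilde-omega0-kappa-properties}(3)), this shows $\tilde\omega^\mu(t) \in Y_{non}$. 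For the conservation of $\iint_\Omega y\tilde\omega^\mu\,dxdy$: compute $\frac{d}{dt}\iint y\tilde\omega^\mu = -\iint y(\vec{v}^\mu \cdot \nabla \tilde\omega^\mu) = \iint \tilde\omega^\mu (\vec{v}^\mu \cdot \nabla y) = \iint \tilde\omega^\mu v^{\mu,2}\,dxdy = -\iint \tilde\omega^\mu \partial_x \tilde\psi^\mu\,dxdy$; integrating by parts in $x$ (which is legitimate since everything is $2\pi$-periodic in $x$ and the product decays in $y$) gives $\iint \partial_x\tilde\omega^\mu \cdot \tilde\psi^\mu\,dxdy$, and writing $\tilde\omega^\mu = -\Delta\tilde\psi^\mu$ and symmetrizing, the integral vanishes. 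The integration-by-parts in $y$ needs the decay of $\tilde\omega^\mu$ and the control of $\tilde\psi^\mu$ growth, which I would handle via the shear-energy decomposition $\tilde\psi^\mu = \widehat{\tilde\psi}^\mu_{0,0} + \tilde\psi^\mu_{\neq 0}$ with $\tilde\psi^\mu_{\neq 0} \in H^1$.

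For part (2): $PE(\tilde\omega^\mu(t)) = \frac12\iint_\Omega (G*\tilde\omega^\mu)\tilde\omega^\mu\,dxdy$. By Lemma \ref{G-ast-omega-psi-constant} applied to $\omega = \tilde\omega^\mu(t) - \omega_\ep$ (any fixed $\ep$), $G*(\tilde\omega^\mu(t)-\omega_\ep)$ differs from the weak solution $\psi^\mu(t) \in \tilde X_\ep$ of $-\Delta\psi = \tilde\omega^\mu(t)-\omega_\ep$ by a constant, and by \eqref{PE-finite} the pseudoenergy is finite. I would compute $\frac{d}{dt}PE(\tilde\omega^\mu(t)) = \iint_\Omega (G*\tilde\omega^\mu)\partial_t\tilde\omega^\mu\,dxdy = -\iint_\Omega (G*\tilde\omega^\mu)(\vec{v}^\mu\cdot\nabla\tilde\omega^\mu)\,dxdy$; integrating by parts, $= \iint_\Omega \nabla(G*\tilde\omega^\mu)\cdot\vec{v}^\mu\,\tilde\omega^\mu\,dxdy + \iint_\Omega (G*\tilde\omega^\mu)(\nabla\cdot\vec{v}^\mu)\tilde\omega^\mu\,dxdy$; the second term vanishes since $\nabla\cdot\vec{v}^\mu = 0$, and for the first, $\nabla(G*\tilde\omega^\mu) = \nabla\tilde\psi^\mu + (\text{a constant gradient})$, and modulo the lower-order shear contribution one has $\nabla(G*\tilde\omega^\mu)$ essentially equal to $(v^{\mu,2}, -v^{\mu,1})$ up to the asymptotic constants $(\mp 1, \cdot)$; since $\vec{v}^\mu \cdot (v^{\mu,2}, -v^{\mu,1})^T = 0$ pointwise, the main part cancels, and the correction involving the constant vector contracts against $\iint \tilde\omega^\mu\,dxdy$-type integrals which are constant in $t$, and one checks these additional terms also cancel (or are time-independent and contribute zero to the derivative). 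The cleanest route is to use the shear-energy splitting $\tilde\omega^\mu = \tilde\omega_{0,0}^\mu + \tilde\omega_{\mu}$ and the corresponding splitting of $G*\tilde\omega^\mu$, so that all ``bad'' (non-$L^2$) pieces reside in the static shear part, reducing the identity to a standard conservation-of-energy argument for the finite-energy piece $\vec{v}_\mu$ solving \eqref{euler-nonshear}.

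\textbf{Main obstacle.} The routine part is the transport structure; the genuinely delicate point is justifying all the integrations by parts at the level of the strong solution $\vec{v}^\mu$ despite the non-decay of the velocity field at $y = \pm\infty$ (it tends to $(\pm 1, 0)$, not to zero) and the merely $L^1\cap L^2$ integrability of the vorticity. I expect to spend most of the effort setting up the shear-energy decomposition carefully so that every boundary term at $y = \pm\infty$ either vanishes by the exponential decay of $\tilde\omega_\mu$ and $\partial_y G$ (as in \eqref{v-mu term1 shear decomposition}) or is absorbed into an exactly time-independent shear contribution. A secondary point is confirming that the time-differentiation under the integral sign is legitimate — this follows from the $C^0([0,T], H^q)$ regularity of $\vec{v}_\mu$ for $q \ge 3$ together with dominated convergence, using the uniform-in-$t$ bounds on $[0,T]$ coming from \eqref{BKM} and the $L^p$-conservation.
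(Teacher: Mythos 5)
Your overall architecture (transport of $\tilde\omega^\mu$ along the smooth divergence-free flow, then conservation identities with boundary terms killed by the velocity asymptotics) matches the paper, and your treatment of the conservation of $\iint_\Omega y\tilde\omega^\mu\,dxdy$ and of part (2) is in substance the same computation the paper carries out (the paper writes it in Lagrangian coordinates and, for (2), computes $\frac{d}{dt}PE$ in two ways to conclude $\iint\partial_t\tilde\psi^\mu\,\tilde\omega^\mu\,dxdy=0$; your direct route via $\vec v^\mu=\nabla^\perp\tilde\psi^\mu$ and $\nabla\tilde\psi^\mu\cdot\nabla^\perp\tilde\psi^\mu=0$ is equivalent, and the "asymptotic constant" corrections you worry about do not actually enter because $\tilde\psi^\mu$ is defined as $G*\tilde\omega^\mu$).

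There is, however, a genuine gap in your first step of part (1): the claimed bound $\|v^{\mu,2}(t)\|_{L^\infty}=\|\partial_x G*\tilde\omega^\mu(t)\|_{L^\infty}\le C(\|\tilde\omega_0\|_{L^1},\|\tilde\omega_0\|_{L^2})$ uniformly in $\mu$ does not follow from the available controls. Near the origin $\partial_x G\sim -\frac{1}{2\pi}\frac{x}{x^2+y^2}$, so $\partial_x G\in L^p(\Omega)$ only for $p<2$, and convolution of such a kernel with a function controlled merely in $L^1\cap L^2$ need not be bounded (one would need uniform $L^{p'}$ control of the vorticity for some $p'>2$, or $L^\infty$ control as in Yudovich theory; but Lemma \ref{lem-construction of an approximate solution sequence} gives only $L^1$ and $L^2$ vorticity control uniformly in $\mu$, while $\|\tilde\omega_0^\mu\|_{L^\infty}\le C(\mu)\|\tilde\omega_0\|_{L^2}$ blows up as $\mu\to0^+$). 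Since the whole point of the corollary is a constant $C(T)$ independent of $\mu$ (this uniformity is what is used later when passing to the limit in the stability proof), your Gr\"onwall chain as written only yields a $\mu$-dependent bound. The repair is exactly the paper's pairing: estimate
\begin{align*}
\frac{d}{dt}\|y\tilde\omega^\mu(t)\|_{L^1(\Omega)}\le \iint_\Omega|\partial_x\tilde\psi^\mu||\tilde\omega^\mu|\,dxdy\le\|\partial_x\tilde\psi^\mu(t)\|_{L^2(\Omega)}\|\tilde\omega^\mu(t)\|_{L^2(\Omega)}\le\|\partial_xG\|_{L^1(\Omega)}\|\tilde\omega_0\|_{L^2(\Omega)}^2,
\end{align*}
using Young's inequality $L^1*L^2\to L^2$ (note $\partial_x G\in L^1(\Omega)$ since its singularity is of order $r^{-1}$ in two dimensions and it decays exponentially), which gives a bound growing linearly in $t$, uniform in $\mu$, with no need for Gr\"onwall. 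With this substitution the rest of your argument for membership in $Y_{non}$ and for the conserved quantities goes through as in the paper.
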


\begin{proof}
(1) We change the variables $(x,y)$ to $(X^\mu(t),Y^\mu(t))$, where $(X^\mu(t),Y^\mu(t))$ is the solution to $\dot{X}^\mu(t)=\partial_y\tilde\psi^\mu(t,X^\mu(t), Y^\mu(t)),
\dot{Y}^\mu(t)=-\partial_x\tilde \psi^\mu(t,X^\mu(t), Y^\mu(t))$ with the initial data $(X^\mu(0),Y^\mu(0))=(x,y)$. Noting that the vorticity $\tilde\omega^\mu$ is conserved along particle trajectories and the Jacobian of the mapping $(x,y)\to(X^\mu(t),Y^\mu(t))$ is $1$, we have
\begin{align}\nonumber
{d\over dt}\iint_\Omega |y\tilde\omega^{\mu}(t,x,y)|dxdy=&\iint_\Omega \dot{Y}^\mu(t)\tilde\omega^{\mu}(t,X^\mu(t),Y^\mu(t))sign(-Y^\mu(t))dX^\mu(t)dY^\mu(t)\\\nonumber
\leq&\|\partial_x\tilde\psi^\mu(t)\|_{L^2(\Omega)}\|\tilde\omega^{\mu}(t)\|_{L^2(\Omega)}
\leq\|\partial_xG\|_{L^1(\Omega)}\|\tilde\omega^{\mu}(t)\|_{L^2(\Omega)}^2\\\nonumber
=&\|\partial_xG\|_{L^1(\Omega)}\|\tilde\omega_0^{\mu}\|_{L^2(\Omega)}^2\leq\|\partial_xG\|_{L^1(\Omega)}\|\tilde\omega_0\|_{L^2(\Omega)}^2,
\end{align}
which, along with $y\tilde \omega^{\mu}_0\to y\tilde \omega_0$, implies that $\max_{0\leq t\leq T} \|y\tilde\omega^{\mu}(t)\|_{ L^1(\Omega)}\leq C(T)$. Moreover,
\begin{align}\nonumber
{d\over dt}\iint_\Omega y\tilde\omega^{\mu}(t,x,y)dxdy=&\iint_\Omega \dot{Y}^\mu(t)\tilde\omega^{\mu}(t,X^\mu(t),Y^\mu(t))dX^\mu(t)dY^\mu(t)\\\nonumber
=&\iint_\Omega -\partial_x\tilde\psi^\mu(t,x,y)\tilde\omega^{\mu}(t,x,y)dxdy\\\nonumber
=&-{1\over2}\iint_\Omega \partial_x|\nabla\tilde\psi^\mu(t,x,y)|^2 dxdy+\int_0^{2\pi}(\partial_x\tilde\psi^\mu\partial_y\tilde\psi^\mu)(t,x,y)|_{y=-\infty}^\infty dx\\\nonumber
=&-{1\over2}\iint_\Omega \partial_x|\nabla\tilde\psi^\mu(t,x,y)|^2 dxdy=0,
\end{align}
where we used \eqref{v-mu term2} and \eqref{v-mu term1} to ensure that $\lim_{y\to\pm\infty}(\partial_x\tilde\psi^\mu\partial_y\tilde\psi^\mu)(t,x,y)=0$ for $t>0$ and $x\in\mathbb{T}_{2\pi}$.

(2) Since $\tilde \psi^\mu(t)=G*\tilde \omega^\mu(t)$, we have
\begin{align}\nonumber
{d\over dt}PE(\tilde \omega^\mu(t))=&{1\over2}\iint_{\Omega}\partial_t\tilde \psi^\mu(t,X^\mu(t),Y^\mu(t))\tilde \omega^{\mu}(t,X^\mu(t),Y^\mu(t) )dxdy\\\nonumber
&+{1\over2}\iint_{\Omega}\nabla\tilde \psi^\mu(t, X^\mu(t),Y^\mu(t) )\cdot\nabla^\bot\tilde \psi^\mu(t,X^\mu(t),Y^\mu(t))\tilde \omega^{\mu}(t,X^\mu(t),Y^\mu(t) ) dxdy\\
=&{1\over2}\iint_{\Omega}\partial_t\tilde \psi^\mu(t,x,y)\tilde \omega^{\mu}(t,x,y )dxdy.\label{PE-derivative1}
\end{align}
On  the other hand,
\begin{align}\nonumber
{d\over dt}PE(\tilde \omega^\mu(t))=&{1\over2}\iint_{\Omega}\bigg(\partial_t(G*\tilde \omega^\mu)(t,x,y)\tilde \omega^{\mu}(t,x,y )+(G*\tilde \omega^\mu)(t,x,y)\partial_t\tilde \omega^{\mu}(t,x,y )\bigg)dxdy\\\nonumber
=&{1\over2}\iint_{\Omega}\bigg(\partial_t\tilde \psi^\mu(t,x,y)\tilde \omega^{\mu}(t,x,y )+(G*\partial_t\tilde \omega^\mu)(t,x,y)\tilde \omega^{\mu}(t,x,y )\bigg)dxdy\\
=&\iint_{\Omega}\partial_t\tilde \psi^\mu(t,x,y)\tilde \omega^{\mu}(t,x,y )dxdy.\label{PE-derivative2}
\end{align}
By \eqref{PE-derivative1}-\eqref{PE-derivative2}, we have ${d\over dt}PE(\tilde \omega^\mu(t))=\iint_{\Omega}\partial_t\tilde \psi^\mu(t,x,y)\tilde \omega^{\mu}(t,x,y )dxdy=0$.
\end{proof}

\subsection{Convergence of the approximate solutions and existence of weak solutions}
First, we prove the $L_{loc}^1$ convergence of the approximate solution sequence with $L^1$ vorticity control.
\begin{lemma}\label{convergence of an approximate solution sequence}
Let $\{\vec{v}^\mu\}$ be the approximate solution sequence constructed in Lemma \ref{lem-construction of an approximate solution sequence}. Then for any $T>0$ and $R>0$, there exists $\vec{v}\in L^1(\Omega_{R,T})$ such that $\max_{0\leq t\leq T}\iint_{B_R}|\vec{v}(t)|^2dxdy$ $\leq C(R,T)$, ${\mathrm{div}}(\vec{v})=0$, and up to a subsequence,
\begin{align}\label{limit-for-approximate solution}
\vec{v}^{\mu}\to\vec{v} \;\text{ in }\; L^1(\Omega_{R,T}),\end{align}
and
\begin{align}\label{limit-for-vorticity-approximate solution}
\textup{curl}(\vec{v}^{\mu})=\tilde\omega^{\mu}\stackrel{*}{\rightharpoonup}\tilde \omega=\textup{curl}(\vec{v}) \;\text{ in }\; \mathcal{M}(\Omega_{R,T}),\end{align}
where $\Omega_{R,T}=[0,T]\times B_R$ and $\mathcal{M}(\Omega_{R,T})=\{\mu| \mu \text{ is a Randon measure on }\Omega_{R,T} \text{ with } \mu(\Omega_{R,T})<\infty\}$. Moreover,  $\vec{v}^{\mu}(t)\in L^1(B_R)$ and
\begin{align}\label{limit-for-approximate solution-t}
\vec{v}^{\mu}(t)\to\vec{v}(t) \;\text{ in }\; L^1(B_R)\end{align}
for any $t\geq0$.
\end{lemma}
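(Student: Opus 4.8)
\textbf{Proof proposal for Lemma \ref{convergence of an approximate solution sequence}.}

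The plan is to follow the classical compactness scheme of DiPerna--Majda \cite{DiPerna-Majda87,Majda-Bertozzi02} for approximate solution sequences with $L^1$ vorticity control, adapted to the cylinder $\Omega=\mathbb{T}_{2\pi}\times\mathbb{R}$ via the shear-energy decomposition already introduced in the construction of $\{\vec{v}^\mu\}$. First I would fix $T,R>0$ and record the uniform bounds inherited from Lemma \ref{lem-construction of an approximate solution sequence} and Corollary \ref{y-tilde-omega-pseudoenergy-conserved}: $\max_{0\le t\le T}\|\tilde\omega^\mu(t)\|_{L^1\cap L^2(\Omega)}\le C$ and $\max_{0\le t\le T}\|y\tilde\omega^\mu(t)\|_{L^1(\Omega)}\le C(T)$, all independent of $\mu$. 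From the Biot--Savart representation $\vec{v}^\mu(t)=K*\tilde\omega^\mu(t)$ together with the bounds \eqref{K-near-0}--\eqref{K-near-pm-infty} on the kernel $K$ (integrable singularity at the origin, exponential approach to $(\mp1/4\pi,0)$ as $y\to\pm\infty$), I would derive, exactly as in \eqref{uniform L2 bound v kappa}, the uniform bound $\max_{0\le t\le T}\|\vec{v}^\mu(t)\|_{L^2(B_R)}\le C(R)$; this already gives the claimed bound on the limit once it is extracted. The measure bound $\max_{0\le t\le T}\|\tilde\omega^\mu(t)\|_{L^1(\Omega)}\le C$ gives that $\{\tilde\omega^\mu\}$ is bounded in $\mathcal{M}(\Omega_{R,T})$, so a subsequence converges weak-$*$ to some $\tilde\omega\in\mathcal{M}(\Omega_{R,T})$, which is \eqref{limit-for-vorticity-approximate solution} provided we already know $\tilde\omega=\textup{curl}(\vec{v})$; that identity follows once \eqref{limit-for-approximate solution} is established, by passing to the limit in the distributional identity $\textup{curl}(\vec{v}^\mu)=\tilde\omega^\mu$ against test functions.

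The core is the strong $L^1_{loc}$ convergence \eqref{limit-for-approximate solution}. Here I would invoke the Aubin--Lions--type argument: on the one hand, the family $\{\varphi\vec{v}^\mu\}$ for $\varphi\in C_0^\infty(\Omega)$ is uniformly bounded in $Lip([0,T],H^{-L}_{loc}(\Omega))$ for some large $L$ — this is the time-equicontinuity built into the definition of an approximate solution sequence (the remark following Definition \ref{Approximate solution sequence for the 2D Euler equation}), which in turn uses $u_i^\mu u_j^\mu$ being bounded in $L^1_{loc}$, itself a consequence of the uniform $L^2(B_R)$ bound on $\vec{v}^\mu$. On the other hand, I need spatial compactness in a negative Sobolev norm: write $\vec{v}^\mu=\vec{v}^\mu_{0,0}+\vec{v}^\mu_\mu$ along the shear-energy splitting; the shear part $\vec{v}^\mu_{0,0}=K*\tilde\omega^\mu_{0,0}$ is handled directly since $\tilde\omega^\mu_{0,0}$ depends only on $y$ and converges, while for the non-shear part $\textup{curl}(\vec{v}^\mu_\mu)=\tilde\omega^\mu-\tilde\omega^\mu_{0,0}$ is bounded in $L^1\cap L^2(\Omega)$, hence (by the div-curl structure and the Biot--Savart kernel being a classical singular integral on the non-shear Fourier modes) $\vec{v}^\mu_\mu$ is bounded in $W^{1,1}_{loc}\cap H^1_{loc}$-type spaces up to the kernel singularity, giving $\{\varphi\vec{v}^\mu\}$ precompact in $H^{-1}_{loc}(\Omega)$ uniformly in $t$. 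Combining the time-equicontinuity in $H^{-L}_{loc}$ with spatial precompactness in, say, $L^2_{loc}$ (interpolating between the uniform $L^2(B_R)$ bound and the $H^{-1}_{loc}$ precompactness) yields, via a standard compactness lemma (Lemma 10.4 in \cite{Majda-Bertozzi02} type), a subsequence with $\vec{v}^\mu\to\vec{v}$ in $C([0,T],L^2_{loc}(\Omega)\text{-weak})$ and strongly in $L^1(\Omega_{R,T})$; the uniform $L^2(B_R)$ control plus Vitali then upgrades this to $L^1(B_R)$ convergence for each fixed $t$, giving \eqref{limit-for-approximate solution-t}. The divergence-free condition $\textup{div}(\vec{v})=0$ passes to the limit trivially since it is linear.

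The step I expect to be the main obstacle is controlling the non-shear velocity $\vec{v}^\mu_\mu=K*\tilde\omega^\mu_{\neq0}$ uniformly near the kernel singularity at the origin: $K$ has only the $1/|(x,y)|$-type blow-up of \eqref{K-near-0}, so $K*\tilde\omega^\mu_{\neq0}$ is bounded in $L^2_{loc}$ but not obviously in any $W^{s,p}_{loc}$ with $s>0$ uniformly in $\mu$ from the $L^1\cap L^2$ vorticity bound alone. The remedy, as in \cite{DiPerna-Majda87}, is not to seek uniform Sobolev regularity of $\vec{v}^\mu$ directly but to exploit cancellation: one shows that the \emph{quadratic} quantities $v^\mu_i v^\mu_j$ are equi-integrable on $\Omega_{R,T}$ (no concentration) using the $L^1$ vorticity control together with the structure of the Biot--Savart kernel, which is precisely what is needed to pass to the limit in the weak formulation and, here, suffices for the $L^1_{loc}$ convergence of $\vec{v}^\mu$ itself. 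I would also need to take care that the exponential decay \eqref{K-near-pm-infty} and the uniform bound on $\|y\tilde\omega^\mu\|_{L^1}$ prevent any loss of mass ``at infinity'' in $y$ when localizing to $B_R$; this is routine but must be stated. The remaining bookkeeping — diagonalizing over an exhausting sequence of $R$'s and $T$'s to get a single subsequence, and verifying the measure convergence \eqref{limit-for-vorticity-approximate solution} against $C_0(\Omega_{R,T})$ test functions — is standard and I would only sketch it.
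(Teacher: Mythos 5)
Your overall framework (uniform bounds, weak-$*$ compactness of the vorticity measures, then strong $L^1_{loc}$ compactness of the velocities) is in the right DiPerna--Majda spirit, but the core spatial-compactness step is not justified as written, and the fallback you offer does not close it. You assert that the non-shear velocity is ``bounded in $W^{1,1}_{loc}\cap H^1_{loc}$-type spaces up to the kernel singularity'' without an argument, and then, acknowledging the difficulty, you claim that equi-integrability of the quadratic quantities $v^\mu_i v^\mu_j$ ``suffices for the $L^1_{loc}$ convergence of $\vec v^\mu$ itself.'' That is false on two counts: absence of concentration in $v^\mu_i v^\mu_j$ is what lets one pass to the limit in the nonlinear term of the weak formulation (the content of Theorem \ref{existence of weak solution to the 2D Euler equation-thm}), but it does not produce strong convergence of the sequence $\vec v^\mu$; and equi-integrability of the quadratic terms does not follow from $L^1$ vorticity control alone --- that is precisely the concentration phenomenon DiPerna--Majda isolate. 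You also never address the feature specific to the cylinder: $K$ does not decay but tends to the constants $(\mp 1/(4\pi),0)$ as $y\to\pm\infty$, so after cutting off the far field a bounded, non-decaying piece of the kernel remains whose contribution must be shown to vanish as $\mu_1,\mu_2\to0$; in the paper this is the term $I_5$ in \eqref{vkappa1-vkappa2}, handled via weak $L^1(\Omega)$ convergence of $\tilde\omega^\mu(t)$ at each fixed time together with dominated convergence, and your appeal to exponential decay plus the $\|y\tilde\omega^\mu\|_{L^1}$ bound does not touch it.

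For comparison, the paper does not run Aubin--Lions on the velocity at all: it proves directly that $\{\vec v^\mu\}$ is Cauchy in $L^1(\Omega_{R,T})$ by writing $\vec v^{\mu_1}-\vec v^{\mu_2}=K\ast(\tilde\omega^{\mu_1}-\tilde\omega^{\mu_2})$ and splitting $K$ into five pieces --- a small ball around the singularity (small by \eqref{K-near-0} and $L^1$ vorticity control), a compactly supported smooth middle piece (the only place compactness enters, through the uniform-in-time strong convergence $\max_{0\le t\le T}\|\varphi\tilde\omega^\mu(t)-\varphi\tilde\omega(t)\|_{H^{-s}(\Omega)}\to0$ for $s>1$, i.e. the analogue of (10.33) in \cite{Majda-Bertozzi02}, which is exactly your temporal Lipschitz bound in $H^{-L}_{loc}$ interpolated with the $L^1$ bound on the vorticity), the two exponentially decaying far-field pieces $K\pm(1/(4\pi),0)$, and the constant far-field piece $I_5$ described above; the fixed-time convergence \eqref{limit-for-approximate solution-t} is then obtained by rerunning the same splitting at each $t$, not by a Vitali argument. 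If you prefer your Aubin--Lions route, it can be made rigorous, but only by actually exploiting the $L^2$ vorticity control you already have: the weighted Calderon-Zygmund inequality (used in the paper's proof of Theorem \ref{existence of weak solution to the 2D Euler equation-thm}) gives $\|\nabla\vec v^\mu(t)\|_{L^2(\Omega)}\le C\|\tilde\omega^\mu(t)\|_{L^2(\Omega)}\le C$ uniformly in $\mu$ and $t$, which combined with \eqref{uniform L2 bound v kappa} and the Lipschitz-in-time bound in $H^{-L}_{loc}$ yields strong compactness in $C([0,T],L^2(B_R))$ and hence everything claimed; that would be a legitimate (indeed stronger) alternative, but it is not the argument you wrote down.
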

\begin{proof} By the $L^1$ vorticity control of $\{\vec{v}^\mu\}$, there exists $\tilde\omega\in \mathcal{M}(\Omega_{R,T})$ such that, up to a subsequence, \eqref{limit-for-vorticity-approximate solution} holds.
Similar to (10.33) in \cite{Majda-Bertozzi02}, $\tilde\omega\in C([0,T],H_{\text{loc}}^{-s}(\Omega))$ and
\begin{align}\label{H-s-estimate}
\max_{0\leq t\leq T}\|\varphi\tilde\omega^\mu(t)-\varphi\tilde\omega(t)\|_{H^{-s}(\Omega)}\to 0,\quad \forall\; s>1
\end{align}
for any $\varphi\in C_0^\infty(\Omega)$, where $\tilde \omega^\mu=\text{curl} (\vec{v}^\mu)$. By Lemma \ref{lem-construction of an approximate solution sequence}, we have $\tilde\omega(0)=\tilde \omega_0$.

To prove \eqref{limit-for-approximate solution}, it suffices to show that $\{\vec{v}^\mu\}$ is a Cauchy sequence in $L^1(\Omega_{R,T})$.
Let  $\rho,
 \rho_s,
(1-\rho_{s})_{>0}$ and
 $(1-\rho_{s})_{<0}$
be given in
\eqref{def-cut off functions}.
Define   $\tilde \rho_s(x,y)=\rho\left({\sqrt{x^2+y^2}\over s}\right)$ for $(x,y)\in \Omega$. Let $\delta\in(0,\pi)$ be small enough and $R'>\delta$. Then we split $\vec{v}^{\mu_1}-\vec{v}^{\mu_2}$ into five terms:
\begin{align}\nonumber
&\vec{v}^{\mu_1}-\vec{v}^{\mu_2}=K\ast(\tilde\omega^{\mu_1}-\tilde\omega^{\mu_2})\\\nonumber
=&
(\tilde\rho_\delta K)\ast(\tilde\omega^{\mu_1}-\tilde\omega^{\mu_2})+((\rho_{R'}-\tilde \rho_\delta)K)\ast(\tilde\omega^{\mu_1}-\tilde\omega^{\mu_2})\\\nonumber
&+\left((1-\rho_{R'})_{>0}\left(K+\left({1\over 4\pi},0\right)\right)\right)\ast(\tilde\omega^{\mu_1}-\tilde\omega^{\mu_2})\\\nonumber
&+\left((1-\rho_{R'})_{<0}\left(K-\left({1\over 4\pi},0\right)\right)\right)\ast(\tilde\omega^{\mu_1}-\tilde\omega^{\mu_2})\\\nonumber
&+\left(-(1-\rho_{R'})_{>0}+(1-\rho_{R'})_{<0}\right)\left({1\over 4\pi},0\right)\ast(\tilde\omega^{\mu_1}-\tilde\omega^{\mu_2})\\\label{vkappa1-vkappa2}
:=&I_1(\mu_1,\mu_2)+I_2(\mu_1,\mu_2)+I_3(\mu_1,\mu_2)+I_4(\mu_1,\mu_2)+I_5(\mu_1,\mu_2).
\end{align}
\if0
\begin{align}\label{vkappa1-vkappa2}
\vec{v}^{\mu_1}-\vec{v}^{\mu_2}=K\ast(\tilde\omega^{\mu_1}-\tilde\omega^{\mu_2})=&
(\tilde\rho_\delta K)\ast(\tilde\omega^{\mu_1}-\tilde\omega^{\mu_2})+((\rho_{R'}-\tilde \rho_\delta)K)\ast(\tilde\omega^{\mu_1}-\tilde\omega^{\mu_2})\\\nonumber
&+\left((1-\rho_{R'})_{>0}K+\left({1\over 4\pi},0\right)\right)\ast(\tilde\omega^{\mu_1}-\tilde\omega^{\mu_2})\\\nonumber
&+\left((1-\rho_{R'})_{<0}K-\left({1\over 4\pi},0\right)\right)\ast(\tilde\omega^{\mu_1}-\tilde\omega^{\mu_2})\\\nonumber
:=&I_1(\mu_1,\mu_2)+I_2(\mu_1,\mu_2)+I_3(\mu_1,\mu_2)+I_4(\mu_1,\mu_2).
\end{align}
\fi
By \eqref{K-near-0} and
the  $L^1$  vorticity control of $\{\vec{v}^\mu\}$  in Lemma \ref{lem-construction of an approximate solution sequence}, we have
\begin{align}\nonumber
\|I_1(\mu_1,\mu_2)\|_{L^1(\Omega_{R,T})}\leq& \|(\tilde\rho_\delta K)\|_{L^1(\Omega)}\|\tilde\omega^{\mu_1}-\tilde\omega^{\mu_2}\|_{L^1(\Omega\times[0,T])}\\\label{I1-estimate}
\leq& C(T)\iint_{\sqrt{x^2+y^2}<2\delta}|K(x,y)|dxdy=C(T)\delta.
\end{align}
By \eqref{K-near-pm-infty} and the  $L^1$  vorticity control of $\{\vec{v}^\mu\}$, we have
\begin{align}\nonumber
&\|I_3(\mu_1,\mu_2)\|_{L^1(\Omega_{R,T})}\\\nonumber
\leq&C(R,T)\left\|\left((1-\rho_{R'})_{>0}\left(K+\left({1\over 4\pi},0\right)\right)\right)\ast(\tilde\omega^{\mu_1}(t)-\tilde\omega^{\mu_2}(t))\right\|_{L^{\infty}(\Omega)}\\\nonumber
\leq& C(R,T)\left\|(1-\rho_{R'})_{>0}\left(K+\left({1\over 4\pi},0\right)\right)\right\|_{L^{\infty}(\Omega)}\|\tilde\omega^{\mu_1}(t)-\tilde\omega^{\mu_2}(t)\|_{L^1(\Omega)}\\\label{I3-estimate}
\leq &C(R,T) R'^{-1}
\end{align}
for $R'>0$ (independent of $\mu_1, \mu_2$) sufficiently large.
Similarly,
\begin{align}\label{I4-estimate}
\|I_4(\mu_1,\mu_2)\|_{L^1(\Omega_{R,T})}\leq C(R,T) R'^{-1}
\end{align}
for $R'>0$ (independent of $\mu_1, \mu_2$) sufficiently large. Now, we fix $R'$.
To estimate $I_5(\mu_1,\mu_2)$, let $\varphi_{R'}=(-(1-\rho_{R'})_{>0}+$ $(1-\rho_{R'})_{<0})\left({1\over 4\pi},0\right)$. By the  $L^1$  vorticity control of $\{\vec{v}^\mu\}$ again, we have $\tilde \omega^{\mu}(t)\rightharpoonup\tilde \omega(t)$ in $L^1(\Omega)$ for $t>0$. This, along with the fact that  $\varphi_{R'}\in L^\infty (\Omega)$,   gives
\begin{align*}
I_5(\mu_1,\mu_2)=\iint_{\Omega}\varphi_{R'}(x-\tilde x, y-\tilde y)(\tilde\omega^{\mu_1}-\tilde\omega^{\mu_2})(t,\tilde x,\tilde y) d\tilde xd\tilde y\to0\quad \text{as}\quad \mu_1,\mu_2\to0^+
\end{align*}
 for fixed $R'$ and $(x,y,t)\in \Omega_{R,T}$. Since $|I_5(\mu_1,\mu_2)|\leq \|\tilde\omega^{\mu_1}(t)\|_{L^1(\Omega)}+\|\tilde\omega^{\mu_2}(t)\|_{L^1(\Omega)}\leq C$, by the Dominated Convergence Theorem  we have
 \begin{align}\label{I5-estimate}
\|I_5(\mu_1,\mu_2)\|_{L^1(\Omega_{R,T})}\to0 \;\;\text{as}\;\; \mu_1,\mu_2\to0^+.
\end{align}
\if0
let $\varphi=\left(-(1-\rho_{R'})_{>0}+(1-\rho_{R'})_{<0}\right)\left({1\over 4\pi},0\right)$. For any $\varepsilon>0$, there exists $R_*>0$ such that $\|\tilde\omega^{\mu_1}(0)\|_{L^1(B_{\tilde R}^c)}+\|\tilde\omega^{\mu_2}(0)\|_{L^1(B_{\tilde R}^c)}\leq 2\|\tilde\omega(0)\|_{L^1(B_{\tilde R}^c)}\leq \varepsilon/|\Omega_{R,T}|$ for $\tilde R>R_*$.
\fi

 \if0
 and similar to \eqref{y-omega-conserved}, we have
\begin{align}\nonumber
{d\over dt}\varphi*\tilde \omega^\mu(t)=&\iint_{\Omega}\varphi'(y-Y(t))\partial_x\psi^\mu(t,X(t),Y(t))\tilde\omega^{\mu}(t,X(t),Y(t))  dxdy\\\nonumber
=&{1\over2}\partial_x\iint_{\Omega}\varphi'(y-\tilde y)|\nabla\psi^\mu|^2(t,\tilde x,\tilde y)  d\tilde xd\tilde y=0.
\end{align}
Thus, $|(\varphi*(\tilde\omega^{\mu_1}-\tilde\omega^{\mu_2}))(t,x,y)|=|(\varphi*(\tilde\omega^{\mu_1}-\tilde\omega^{\mu_2}))(0,x,y)|\leq \|\varphi\|_{L^\infty(\Omega)}\|\tilde\omega^{\mu_1}(0)-\tilde\omega^{\mu_2}(0)\|_{L^1(\Omega)}$ for $(t,x,y)\in\Omega_{R,T}$, and
\begin{align}\label{I5-estimate}
&\|I_5(\mu_1,\mu_2)\|_{L^1(\Omega_{R,T})}\leq C(R,T)\|\varphi*(\tilde\omega^{\mu_1}-\tilde\omega^{\mu_2})\|_{L^\infty(\Omega_{R,T})}\\\nonumber
\leq& C(R,T)\|\varphi\|_{L^\infty(\Omega)}\|\tilde\omega^{\mu_1}(0)-\tilde\omega^{\mu_2}(0)\|_{L^1(\Omega)}\to0 \;\;\text{as}\;\; \mu_1,\mu_2\to0^+
\end{align}
 by Lemma \ref{tilde-omega0-kappa-properties} (4).
 \fi

 By \eqref{H-s-estimate}, for $(x,y,t)\in \Omega_{R,T}$ we have
\begin{align}\nonumber
&|I_2(\mu_1,\mu_2)|\leq \|(\rho_{R'}-\tilde \rho_\delta)K\|_{H^s(\Omega)}\|\rho_{2(R'+R)}(\tilde\omega^{\mu_1}(t)-\tilde\omega^{\mu_2}(t))\|_{H^{-s}(\Omega)}\to0\\\label{I2-estimate}
\Rightarrow&\|I_2(\mu_1,\mu_2)\|_{L^1(\Omega_{R,T})}\to0 \;\;\text{as}\;\; \mu_1,\mu_2\to0^+,
\end{align}
where $s>1$ and we used $(\rho_{R'}-\tilde \rho_\delta)K\in C_0^{\infty}(\Omega)$. Combining \eqref{vkappa1-vkappa2}-\eqref{I2-estimate}, taking $\delta>0$ sufficiently small and $R'>0$ sufficiently large, we obtain that $\{\vec{v}^\mu\}$ is a Cauchy sequence in $L^1(\Omega_{R,T})$. For any $t\geq0$, the proof of
$\vec{v}^{\mu}(t)\in L^1(B_R)$ and \eqref{limit-for-approximate solution-t} is the same as above.
 $\max_{0\leq t\leq T}\iint_{B_R}|\vec{v}(t)|^2dxdy\leq C(R,T)$ follows from \eqref{uniform L2 bound v kappa}.
\end{proof}
Now, we prove the existence of weak solution to the 2D Euler equation with initial vorticity $\tilde \omega_0\in Y_{non}$.
\begin{Theorem}\label{existence of weak solution to the 2D Euler equation-thm}
Let $\{\vec{v}^\mu\}$ be the approximate solution sequence constructed in Lemma \ref{lem-construction of an approximate solution sequence}.
Then for any $R, T>0$, there exists $\vec{v}\in L^2(\Omega_{R,T})$ such that
\begin{align*}
\vec{v}^\mu\to \vec{v} \;\text{ in }\;L^2(\Omega_{R,T}),
\end{align*}
 and $\vec{v}$ is a weak solution to the 2D Euler equation.
   Moreover,  $\vec{v}^{\mu}(t)\in L^2(B_R)$ and
\begin{align}\label{limit-for-approximate solution-L2-t}
\vec{v}^{\mu}(t)\to\vec{v}(t) \;\text{ in }\; L^2(B_R)\end{align}
for any $t\geq0$.  Consequently, for any initial vorticity $\tilde \omega_0\in Y_{non}$, there exists  $\vec{v}\in L^2(\Omega_{R,T})$ such that
 $\textup{curl}(\vec{v}(0))=\tilde\omega_0$ and $\vec{v}$ is a weak solution to the 2D Euler equation.
\end{Theorem}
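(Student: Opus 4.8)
\textbf{Proof proposal for Theorem \ref{existence of weak solution to the 2D Euler equation-thm}.}

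The plan is to upgrade the $L^1_{loc}$ convergence of $\{\vec v^\mu\}$ obtained in Lemma \ref{convergence of an approximate solution sequence} to $L^2_{loc}$ convergence, and then to pass to the limit in the weak formulation of the Euler equation. First I would recall that, by Lemma \ref{lem-construction of an approximate solution sequence}, $\{\vec v^\mu\}$ is an approximate solution sequence with $L^1$ and $L^2$ vorticity control, so in particular $\max_{0\le t\le T}\|\tilde\omega^\mu(t)\|_{L^1(\Omega)\cap L^2(\Omega)}\le C(T)$ uniformly in $\mu$, and $\max_{0\le t\le T}\iint_{B_R}|\vec v^\mu(t)|^2\,dxdy\le C(R,T)$ by \eqref{uniform L2 bound v kappa}. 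From Lemma \ref{convergence of an approximate solution sequence} we already have $\vec v^\mu\to\vec v$ in $L^1(\Omega_{R,T})$ with $\vec v\in L^1(\Omega_{R,T})$, $\max_{0\le t\le T}\iint_{B_R}|\vec v(t)|^2\le C(R,T)$, $\operatorname{div}(\vec v)=0$, and $\operatorname{curl}(\vec v^\mu)=\tilde\omega^\mu\stackrel{*}{\rightharpoonup}\tilde\omega=\operatorname{curl}(\vec v)$ in $\mathcal M(\Omega_{R,T})$, together with $\vec v^\mu(t)\to\vec v(t)$ in $L^1(B_R)$ for each $t$. The first step is to show $\tilde\omega\in L^\infty([0,T],L^2(\Omega))$ and $\tilde\omega^\mu(t)\rightharpoonup\tilde\omega(t)$ weakly in $L^2_{loc}$: this follows from the uniform $L^2$ vorticity bound, weak compactness in $L^2(B_R)$, and identification of the weak limit with the measure limit $\tilde\omega(t)$.

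The heart of the argument is the gain of $L^2_{loc}$ strong convergence of the velocities, which is where I would use elliptic regularity for the Biot--Savart kernel. Writing $\vec v^\mu = K*\tilde\omega^\mu$ and splitting as in \eqref{vkappa1-vkappa2}, the singular piece $(\tilde\rho_\delta K)*\tilde\omega^\mu$ is controlled in $L^2_{loc}$ by the uniform $L^2$ bound on $\tilde\omega^\mu$ via Calder\'on--Zygmund/Young estimates, the far-field pieces converge strongly by the exponential decay \eqref{K-near-pm-infty} and the $L^1$ vorticity control exactly as in the proof of Lemma \ref{convergence of an approximate solution sequence}, and the intermediate smooth piece converges in $H^s_{loc}$. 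Combining the uniform $L^2_{loc}$ bound with the $L^1_{loc}$ strong convergence already established and interpolation ($\|f\|_{L^{2-\eta}}\le\|f\|_{L^1}^{\theta}\|f\|_{L^2}^{1-\theta}$) gives strong convergence in $L^p_{loc}$ for all $p<2$; then, since $\{\vec v^\mu\}$ is uniformly $L^2_{loc}$-bounded and converges a.e.\ (along a subsequence), Vitali's convergence theorem (uniform integrability of $|\vec v^\mu|^2$ coming from the uniform $L^2$ bound) upgrades this to $\vec v^\mu\to\vec v$ in $L^2(\Omega_{R,T})$, and similarly $\vec v^\mu(t)\to\vec v(t)$ in $L^2(B_R)$ for each fixed $t$, which is \eqref{limit-for-approximate solution-L2-t}. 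In particular $u^\mu_iu^\mu_j\to u_iu_j$ in $L^1(\Omega_{R,T})$.

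With $L^2_{loc}$ convergence in hand, verifying that $\vec v$ is a weak solution is routine: properties (i) and (ii) of the definition of weak solution hold because $\vec v\in L^1(\Omega_{R,T})$ and $u_iu_j\in L^1(\Omega_{R,T})$ for all $R,T$; property (iii), $\operatorname{div}(\vec v)=0$ in distributions, is inherited from $\operatorname{div}(\vec v^\mu)=0$ by passing to the limit in $\iint_\Omega\nabla\varphi\cdot\vec v^\mu=0$; and property (iv) follows by taking $\mu\to0$ in the weak formulation satisfied by the smooth strong solutions $\vec v^\mu$ --- the linear terms $\iint\partial_t\vec\Phi\cdot\vec v^\mu$ converge by $L^1_{loc}$ (indeed $L^2_{loc}$) convergence, and the quadratic term $\int_0^T\iint(\vec v^\mu\cdot\nabla)\vec\Phi\cdot\vec v^\mu$ converges because $\nabla\vec\Phi$ is a bounded compactly supported test tensor and $u^\mu_iu^\mu_j\to u_iu_j$ in $L^1$ on the support of $\vec\Phi$. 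Finally, for the consequence, given any $\tilde\omega_0\in Y_{non}$ one forms the mollified data $\tilde\omega_0^\mu$ as in \eqref{tilde-omega0-kappa-def}, applies Lemma \ref{lem-construction of an approximate solution sequence} to get the global smooth solutions $\vec v^\mu$ with $\operatorname{curl}(\vec v^\mu(0))=\tilde\omega_0^\mu\to\tilde\omega_0$ in $L^1\cap L^2(\Omega)$, and the limit $\vec v$ produced above is then a weak solution with $\operatorname{curl}(\vec v(0))=\tilde\omega_0$ by \eqref{limit-for-vorticity-approximate solution} at $t=0$ combined with the $L^2$ convergence $\tilde\omega_0^\mu\to\tilde\omega_0$. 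The main obstacle I anticipate is the $L^2_{loc}$ strong convergence of the velocities: the measure convergence \eqref{limit-for-vorticity-approximate solution} alone is too weak to pass to the limit in the quadratic term, so one genuinely needs to exploit both the uniform $L^2$ vorticity bound (for uniform integrability of $|\vec v^\mu|^2$ via the Biot--Savart estimate) and the already-proven $L^1_{loc}$ strong convergence, and to handle the non-compact $y$-direction carefully through the shear--energy splitting and the exponential decay of $K$ at $y=\pm\infty$.
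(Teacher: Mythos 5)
There is a genuine gap at the crucial step where you upgrade to strong $L^2$ convergence. You argue that since $\{\vec v^\mu\}$ is uniformly bounded in $L^2_{loc}$ and converges a.e.\ (along a subsequence), Vitali's theorem with ``uniform integrability of $|\vec v^\mu|^2$ coming from the uniform $L^2$ bound'' gives $\vec v^\mu\to\vec v$ in $L^2(\Omega_{R,T})$. But a uniform $L^2$ bound does \emph{not} imply uniform integrability of the squares $|\vec v^\mu|^2$: consider $f_n=n\,\mathbf 1_{[0,1/n^2]}$, which is bounded in $L^2$, converges to $0$ a.e., yet $\|f_n\|_{L^2}=1$ for all $n$. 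Equally, the interpolation inequality you invoke, $\|f\|_{L^2}\le\|f\|_{L^1}^{1-\lambda}\|f\|_{L^q}^{\lambda}$, only reaches the exponent $2$ if $q>2$, and nothing in your argument provides a uniform bound in any $L^q_{loc}$ with $q>2$ (the Biot--Savart/Young estimates you cite only give the $L^2_{loc}$ bound \eqref{uniform L2 bound v kappa}, which is already in Lemma \ref{lem-construction of an approximate solution sequence}). So as written, concentration of $|\vec v^\mu|^2$ is not excluded and the passage from $L^p_{loc}$ ($p<2$) convergence to $L^2_{loc}$ convergence does not follow.

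The paper closes exactly this gap by proving a uniform gradient bound for the velocities: using the weighted-space solvability result of Milisic--Razafison on the periodic strip, $G\ast\tilde\omega^\mu(t)$ is identified, up to a term $c_1y+c_2$, with a function in $W_0^{2,2}(\Omega)$ (the possible harmonic corrections $e^{\pm jy}e^{ijx}$ are ruled out by the sublinear growth of $G\ast\tilde\omega^\mu$), and then the weighted Calder\'on--Zygmund inequality of Sauer gives $\|\nabla\vec v^{\mu}(t)\|_{L^2(\Omega)}\le C\|\tilde\omega^{\mu}(t)\|_{L^2(\Omega)}\le C$ uniformly in $t$ and $\mu$. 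Combined with \eqref{uniform L2 bound v kappa} this yields a uniform $H^1(B_R)$ bound, hence by Sobolev embedding a uniform $L^q(B_R)$ bound for any $q\in(2,\infty)$, and then interpolation between the $L^1$ convergence of Lemma \ref{convergence of an approximate solution sequence} and the uniform $L^q$ bound gives the claimed $L^2$ convergence, both space-time and at fixed $t$. Your remaining steps (divergence-free property, passing to the limit in the linear and quadratic terms of the weak formulation once $u^\mu_iu^\mu_j\to u_iu_j$ in $L^1_{loc}$, and the identification of the initial vorticity) match the paper's argument, but they all hinge on first securing this higher integrability, which is the missing ingredient in your proposal.
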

\begin{proof}
By Proposition 25 in \cite{Milisic-Razafison16} and the fact that $\tilde \omega^{\mu}(t)\in L^2(\Omega)$ for $t\geq0$, there exists $\varphi^{\mu}(t)\in W_0^{2,2}(\Omega)$ such that $\psi=\varphi^{\mu}(t)$ solves $-\Delta\psi=\tilde \omega^{\mu}(t)$, where $W_0^{2,2}(\Omega)=\{\phi|(1+|y|^2)^{-1}\phi\in L^2(\Omega), (1+|y|^2)^{-{1\over2}}\nabla\phi\in L^2(\Omega),D^2\phi\in L^2(\Omega)\}$.  Then there exists $c_1, c_2\in \mathbb{R}$ and $d_{1j}, d_{2j}\in \mathbb{C}$, $ j\neq0$, such that $
(G\ast\tilde \omega^{\mu})(t)=\varphi^{\mu}(t)+\sum_{j\neq0} e^{ijx}(d_{1j}e^{jy}+d_{2j}e^{-jy})+c_1y+c_2.
$
We claim that $d_{1j}, d_{2j}=0$ for $ j\neq0$. In fact,
\begin{align*}
&|(G\ast\tilde \omega^{\mu})(t)|=|(G_1\ast\tilde \omega^{\mu})(t)|+|(G_2\ast\tilde \omega^{\mu})(t)|\\
\leq& \|G_1\|_{L^2(\Omega)}\|\tilde  \omega^{\mu}(t)\|_{L^2(\Omega)}+C|y|\|\tilde  \omega^{\mu}(t)\|_{L^1(\Omega)}+C\|y\tilde  \omega^{\mu}(t)\|_{L^1(\Omega)}
\end{align*}
since $\tilde \omega^\mu(t)\in L^1\cap L^2(\Omega)$ and  $y\tilde \omega^\mu(t)\in L^1(\Omega)$ by Corollary \ref{y-tilde-omega-pseudoenergy-conserved} (1). Thus, $G\ast\tilde \omega^{\mu}(t)=\varphi^{\mu}(t)+c_1y+c_2.$
By the weighted Calderon-Zygmund inequality \cite{Sauer2014},  we have
\begin{align*}
\|\nabla \vec{v}^{\mu}(t)\|_{L^2(\Omega)}=\|D^2(G\ast\tilde \omega^{\mu})(t)\|_{L^2(\Omega)}=\|D^2\varphi^{\mu}(t)\|_{L^2(\Omega)}\leq C\|\tilde \omega^{\mu}(t)\|_{L^2(\Omega)}\leq C
\end{align*}
for $t\geq0$.
By \eqref{uniform L2 bound v kappa}, $\|\vec{v}^{\mu}(t)\|_{L^2(B_R)}\leq C(R)\|\tilde \omega_0\|_{L^2(\Omega)}\leq C(R)$ for $t\geq0$. By Sobolev embedding $H^1(B_R)\hookrightarrow L^q(B_R)$ for $2< q<\infty$, we have
\begin{align*}
\|\vec{v}^{\mu}(t)\|_{L^q(B_R)}\leq C\|\vec{v}^{\mu}(t)\|_{H^1(B_R)}\leq C(R)\Longrightarrow\|\vec{v}^{\mu}\|_{L^q(\Omega_{R,T})}\leq C(R,T).
\end{align*}
This, along with \eqref{limit-for-approximate solution}, implies that there exists $\lambda\in(0,1)$ such that
\begin{align*}
\|\vec{v}^{\mu}-\vec{v}\|_{L^2(\Omega_{R,T})}\leq C\|\vec{v}^{\mu}-\vec{v}\|_{L^1(\Omega_{R,T})}^{1-\lambda}
\|\vec{v}^{\mu}-\vec{v}\|_{L^q(\Omega_{R,T})}^{\lambda}\to 0
\end{align*}
as $\mu\to 0^+$.
Similarly, for any $t\geq0$, we have  by \eqref{limit-for-approximate solution-t} that there exists $\lambda'\in(0,1)$ such that
$
\|\vec{v}^{\mu}(t)-\vec{v}(t)\|_{L^2(B_R)}\leq C\|\vec{v}^{\mu}(t)-\vec{v}(t)\|_{L^1(B_R)}^{1-\lambda'}
\|\vec{v}^{\mu}(t)-\vec{v}(t)\|_{L^q(B_R)}^{\lambda'}\to 0.
$
With the $L^2$ convergence of $\{\vec{v}^{\mu}\}$, one can verify that $\vec{v}$ is a weak solution of the 2D Euler equation by a similar argument to (A)-(C) in the proof of Theorem 10.2 in \cite{Majda-Bertozzi02}.
\end{proof}
\begin{Corollary}\label{vorticity L123}
Let $\vec{v}$ be the weak solution (obtained in Theorem \ref{existence of weak solution to the 2D Euler equation-thm}) to the 2D Euler  equation with the initial data $\tilde \omega(0)=\tilde \omega_0\in Y_{non}$, and $\tilde \omega(t)=\curl(\vec{v}(t))$ for $t\geq0$.
Then up to a subsequence,
\begin{align}\label{tilde-omega-kappa-weak convergence L1L2}
\tilde \omega^{\mu}(t)\rightharpoonup\tilde \omega(t) \text{ in }L^j(\Omega),\;\;\;\;y\tilde \omega^{\mu}(t)\rightharpoonup y\tilde \omega(t) \text{ in }L^1(\Omega),
\end{align}
$
\|\tilde \omega(t)\|_{L^j(\Omega)}\leq \|\tilde\omega (0)\|_{L^j(\Omega)}
$, $\|y\tilde \omega(t)\|_{L^1(\Omega)}\leq C(t)$,
and $\tilde \omega(t)\leq 0$ almost everywhere on $\Omega$
for all $t\geq0$ and $j=1,2$.
\end{Corollary}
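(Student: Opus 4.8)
The plan is to obtain Corollary \ref{vorticity L123} by passing to the limit $\mu\to0^+$ in the approximate solutions $\vec v^\mu$ of Lemma \ref{lem-construction of an approximate solution sequence}, exploiting that each approximate vorticity $\tilde\omega^\mu=\curl(\vec v^\mu)$ is smooth, strictly negative, and transported by its own divergence-free, area-preserving flow $\Phi^\mu_t$. First I would record the two facts the whole argument rests on at the approximate level: since $\tilde\omega_0<0$ on $\Omega$ for $\tilde\omega_0\in Y_{non}$ (see \eqref{def-X-non-ep}), Lemma \ref{tilde-omega0-kappa-properties}$(2)$ gives $\tilde\omega_0^\mu=\hat J_\mu\star\tilde\omega_0<0$, and transport along $\Phi^\mu_t$ (whose Jacobian equals $1$) yields $\tilde\omega^\mu(t,\cdot)<0$ pointwise on $\Omega$ and $\|\tilde\omega^\mu(t)\|_{L^j(\Omega)}=\|\tilde\omega_0^\mu\|_{L^j(\Omega)}\le\|\tilde\omega_0\|_{L^j(\Omega)}$ for $j=1,2$ and all $t\ge0$, the last inequality by Lemma \ref{tilde-omega0-kappa-properties}$(4)$. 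Combined with $\|y\tilde\omega^\mu(t)\|_{L^1(\Omega)}\le C(t)$ from Corollary \ref{y-tilde-omega-pseudoenergy-conserved}$(1)$, this supplies all the needed uniform bounds.

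Second, I would identify the weak limit. Along the subsequence fixed in Theorem \ref{existence of weak solution to the 2D Euler equation-thm} one has $\vec v^\mu(t)\to\vec v(t)$ in $L^2(B_R)$ for every $R>0$ and every $t\ge0$ (by \eqref{limit-for-approximate solution-L2-t}), hence $\tilde\omega^\mu(t)=\curl(\vec v^\mu(t))\to\curl(\vec v(t))=\tilde\omega(t)$ in $\mathcal D'(\Omega)$. On the other hand $\{\tilde\omega^\mu(t)\}$ is bounded in $L^2(\Omega)$, so every subsequence has a weakly convergent sub-subsequence in $L^2(\Omega)$ whose limit must be the distributional limit $\tilde\omega(t)$; by uniqueness of weak limits the full (fixed) sequence satisfies $\tilde\omega^\mu(t)\rightharpoonup\tilde\omega(t)$ in $L^2(\Omega)$, which is the case $j=2$. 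For $j=1$ I would invoke the Dunford--Pettis theorem: the $L^2$-bound makes $\{\tilde\omega^\mu(t)\}$ equi-integrable on sets of finite measure, and $\int_{\{|y|>R\}}|\tilde\omega^\mu(t)|\le R^{-1}\|y\tilde\omega^\mu(t)\|_{L^1(\Omega)}\le C(t)/R$ makes it tight, so it is relatively weakly compact in $L^1(\Omega)$; any weak $L^1$ limit agrees with the $L^2$-weak limit on $C_c^\infty(\Omega)$ and hence equals $\tilde\omega(t)$, giving $\tilde\omega^\mu(t)\rightharpoonup\tilde\omega(t)$ in $L^1(\Omega)$. The sign $\tilde\omega(t)\le0$ a.e.\ follows because the cone $\{f\le0\text{ a.e.}\}$ is closed and convex in $L^j(\Omega)$, hence weakly closed, and contains every $\tilde\omega^\mu(t)$; the inequalities $\|\tilde\omega(t)\|_{L^j(\Omega)}\le\|\tilde\omega_0\|_{L^j(\Omega)}$ then follow from weak lower semicontinuity of the norms.

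For the weighted statement $y\tilde\omega^\mu(t)\rightharpoonup y\tilde\omega(t)$ in $L^1(\Omega)$ and $\|y\tilde\omega(t)\|_{L^1(\Omega)}\le C(t)$, I would first use the already-established sign to get the bound cleanly: for a smooth cutoff $\chi_R$ with $0\le\chi_R\le1$, $\chi_R\equiv1$ on $B_R$ and $\operatorname{supp}\chi_R\subset B_{2R}$, one has $\chi_R|y|\in L^\infty\cap L^2(\Omega)$, so
\begin{align*}
\iint_\Omega\chi_R|y|\,|\tilde\omega(t)|\,dxdy&=-\iint_\Omega\chi_R|y|\,\tilde\omega(t)\,dxdy=-\lim_{\mu\to0^+}\iint_\Omega\chi_R|y|\,\tilde\omega^\mu(t)\,dxdy\\
&\le\limsup_{\mu\to0^+}\|y\tilde\omega^\mu(t)\|_{L^1(\Omega)}\le C(t),
\end{align*}
and letting $R\to\infty$ by monotone convergence gives $y\tilde\omega(t)\in L^1(\Omega)$ with $\|y\tilde\omega(t)\|_{L^1(\Omega)}\le C(t)$, i.e.\ $\tilde\omega(t)\in Y_{non}$. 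The weak $L^1$ convergence of the \emph{weighted} sequence then reduces to its equi-tightness, $\sup_\mu\int_{\{|y|>R\}}|y|\,|\tilde\omega^\mu(t)|\,dxdy\to0$ as $R\to\infty$, and I expect this to be the main obstacle, since it is not implied by the uniform bounds on $\|\tilde\omega^\mu(t)\|_{L^2}$ and $\|y\tilde\omega^\mu(t)\|_{L^1}$ alone. The natural remedy is a uniform second-moment estimate $\sup_\mu\iint_\Omega y^2|\tilde\omega^\mu(t)|\,dxdy\le C(t)$: differentiating in $t$, changing variables by the area-preserving flow, and estimating $\iint|y|\,|\partial_x\tilde\psi^\mu|\,|\tilde\omega^\mu|$ by Cauchy--Schwarz together with $\|\partial_x\tilde\psi^\mu\|_{L^4(\Omega)}\le\|\partial_x G\|_{L^{4/3}(\Omega)}\|\tilde\omega^\mu\|_{L^2(\Omega)}$ (note $\partial_x G\in L^{4/3}(\Omega)$) closes a Gr\"onwall inequality for $\bigl(\iint y^2|\tilde\omega^\mu(t)|\bigr)^{1/2}$, provided $y^2\tilde\omega_0\in L^1(\Omega)$; the general case $y\tilde\omega_0\in L^1(\Omega)$ is then recovered by a truncation/density argument in $Y_{non}$. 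In any event the three conclusions actually used in Section \ref{Sec-Nonlinear orbital stability for co-periodic perturbations} --- the sign, $\|\tilde\omega(t)\|_{L^j}\le\|\tilde\omega_0\|_{L^j}$, and $\|y\tilde\omega(t)\|_{L^1}\le C(t)$ --- together with convergence of $y\tilde\omega^\mu(t)$ tested against compactly supported functions, are obtained independently of this tightness step.
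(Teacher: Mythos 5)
Your argument is sound and, for everything the paper actually needs from this corollary, it matches or refines the paper's own proof. The paper's proof is much terser: it asserts both convergences in \eqref{tilde-omega-kappa-weak convergence L1L2} directly from the $L^1$, $L^2$ vorticity control and Corollary \ref{y-tilde-omega-pseudoenergy-conserved} (1), obtains $\|\tilde\omega(t)\|_{L^j(\Omega)}\le\|\tilde\omega(0)\|_{L^j(\Omega)}$ (using Lemma \ref{tilde-omega0-kappa-properties} (4)) and $\|y\tilde\omega(t)\|_{L^1(\Omega)}\le C(t)$ by weak lower semicontinuity of the norms — the latter along the \emph{weighted} weak convergence — and proves $\tilde\omega(t)\le0$ by testing against the indicator of a finite-measure set on which $\tilde\omega(t_0)>0$, which is your weakly-closed-convex-cone argument in concrete form. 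What you add beyond the paper: the identification of the weak limit with $\curl(\vec v(t))$ through \eqref{limit-for-approximate solution-L2-t}, the Dunford--Pettis justification of the $j=1$ case (equi-integrability from the $L^2$ bound, tightness from the first-moment bound), and a derivation of $\|y\tilde\omega(t)\|_{L^1(\Omega)}\le C(t)$ from the sign plus a cutoff and monotone convergence, which does not lean on the weighted weak convergence at all; that last point makes your route more robust than the paper's.

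The one piece of the statement you do not prove is the second convergence in \eqref{tilde-omega-kappa-weak convergence L1L2}, $y\tilde\omega^\mu(t)\rightharpoonup y\tilde\omega(t)$ in $L^1(\Omega)$, and the repair you sketch would not close it: mollification $\tilde\omega_0^\mu=\hat J_\mu\star\tilde\omega_0$ does not create a second moment when $\tilde\omega_0$ has only a first moment, so the Gr\"onwall bound on $\iint_\Omega y^2|\tilde\omega^\mu(t)|\,dxdy$ is not available uniformly in $\mu$ for the actual approximate sequence; and the truncation/density step fails because weak solutions here are not known to be unique or to depend continuously on the data in any weighted norm, so there is nothing to transfer the weighted convergence from truncated initial data back to $\tilde\omega_0$. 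You correctly identified tightness of $\{y\tilde\omega^\mu(t)\}$ as the missing ingredient; note that the paper's one-line treatment does not supply it either, so this is a soft spot of the corollary as stated rather than a defect you introduced. Since only the $L^2$ weak convergence, the sign, and the norm bounds are used in Section \ref{Sec-Nonlinear orbital stability for co-periodic perturbations}, your version proves everything that is needed downstream; to recover the full statement one must either establish $\sup_\mu\iint_{\{|y|>R\}}|y|\,|\tilde\omega^\mu(t)|\,dxdy\to0$ as $R\to\infty$, or weaken the claim to convergence against bounded compactly supported test functions, which your $L^2$ weak convergence already yields.
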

\begin{proof}
By Corollary \ref{y-tilde-omega-pseudoenergy-conserved} (1) and the $L^j$ vorticity control of the approximate solution sequence $\{\vec{v}^{\mu}\}$, we obtain \eqref{tilde-omega-kappa-weak convergence L1L2} for $t\geq0$. It then follows from Lemma \ref{tilde-omega0-kappa-properties} (4) that
\begin{align*}
\|\tilde \omega(t)\|_{L^j(\Omega)}\leq \liminf_{\mu\to0^+}\|\tilde\omega^{\mu} (t)\|_{L^j(\Omega)}=\liminf_{\mu\to0^+}\|\tilde\omega^{\mu} (0)\|_{L^j(\Omega)}=\|\tilde\omega (0)\|_{L^j(\Omega)}
\end{align*}
for $j=1,2$. By Corollary \ref{y-tilde-omega-pseudoenergy-conserved} (1),
$\|y\tilde \omega(t)\|_{L^1(\Omega)}\leq \liminf_{\mu\to0^+}\|y\tilde\omega^{\mu} (t)\|_{L^1(\Omega)}\leq C(t)$.
Suppose that there exist $t_0>0$ and $E_0\subset\Omega$ such that $|E_0|>0$ and   $\tilde \omega(t_0)>0$ on $E_0$. We assume that $|E_0|<\infty$ without loss of generality. Let $\varphi\equiv1 $ on $E_0$ and $\varphi\equiv0$ on $\Omega\setminus E_0$. Then $\varphi\in L^2(\Omega)$ and
\begin{align*}
0<\iint_{E_0}\tilde \omega(t_0)dxdy=&\iint_{\Omega}\tilde \omega(t_0)\varphi dxdy=\lim_{\mu\to0^+}\iint_{\Omega}\tilde \omega^\mu(t_0)\varphi dxdy\\
=&\lim_{\mu\to0^+}\iint_{E_0}\tilde \omega^\mu(t_0)dxdy\leq 0,
\end{align*}
which is a contradiction.
\end{proof}}
\end{appendix}

\section*{Acknowledgement}
This work was completed largely while ZL was at the Georgia Institute of Technology, where he was supported by the NSF under Grant DMS-2007457. ZL is currently supported by the National Natural Science Foundation of China under Grant 12494544. HZ is supported by the National Key R\&D Program of China under Grant 2021YFA1002400, the National Natural Science Foundation of China under Grants 12101306 and 12471229, and the Natural Science Foundation of Jiangsu Province under Grants BK20210169 and BK20250169. The authors also thank Siqi Ren for helpful discussions concerning Taylor's solutions.

\end{CJK*}
\end{document}